\newif\ifsolutions
\newif\ifcolor
\newif\iffinal
\newif\ifarxiv
\newif\ifbook
\newif\ifwork
	\newcommand{\emphcolor}{blue}
	\newcommand{\emphcolor}{black}
\definecolor{dkblue}{cmyk}{1,.54,.04,.19} 
\definecolor{dkred}{rgb}{0.7,0,0}
\definecolor{dkblue}{rgb}{0,0,0} 
\definecolor{dkred}{rgb}{0,0,0}
\newcommand{\todot}[2][]{\todo[noinline,size=\scriptsize,color=red!20!white,#1]{Tor: #2}}
\pgfplotsset{compat=1.11}
\definecolor{mycolor}{rgb}{0.122, 0.435, 0.698}
\colorlet{grayone}{black!15!white}
\colorlet{graytwo}{black!30!white}
\colorlet{graythree}{black!45!white}
\newcounter{explanacounter}
\newcounter{explanrcounter}
\pretocmd{\start@align}{\setcounter{explanacounter}{0}}{}{}
\pretocmd{\end@align}{\setcounter{explanacounter}{0}}{}{}
\newcounter{tempctr}
\newcommand{\showalpha}[1]{%
\setcounter{tempctr}{#1}%
\alph{tempctr}%
}
\newcommand{\explana}{\setcounter{explanrcounter}{0}\refstepcounter{explanacounter}\stackrel{\mathclap{\text{\tiny \texttt{(\alph{explanacounter})}}}}}
\newcommand{\explanr}[1]{%
\if\relax\detokenize{#1}\relax%
\refstepcounter{explanrcounter}\texttt{(\alph{explanrcounter})}%
\else%
\texttt{(\showalpha{#1})}%
\fi%
}
\newmdenv[innerlinewidth=0.5pt, roundcorner=4pt,linecolor=mycolor,innerleftmargin=6pt,
innerrightmargin=6pt,innertopmargin=6pt,innerbottommargin=6pt]{mybox}
\DeclareMathAlphabet{\pazocal}{OMS}{zplm}{m}{n}
\newcounter{rowcntr}[table]
\renewcommand{\therowcntr}{\thetable.\texttt{\alph{rowcntr}}}
\newcolumntype{N}{>{\refstepcounter{rowcntr}\therowcntr}c}
\setlist[enumerate,1]{label={\normalfont\texttt{(\arabic*)}},itemsep=1pt,wide,labelwidth=!,labelindent=0pt,topsep=1pt}
\setlist[enumerate,2]{label={\normalfont\texttt{(\roman*)}},itemsep=0pt,wide,labelwidth=!,labelindent=0pt,leftmargin=30pt}
\setlist[itemize,1]{label={$\circ$}}
\newlist{itemizeinner}{itemize}{1}
\setlist[itemizeinner]{label={$\circ$},leftmargin=25pt,topsep=1pt}
\newlist{enumeraterom}{enumerate}{1}
\setlist[enumeraterom]{label={\normalfont\texttt{(\roman*)}},
                       align=left,
                       leftmargin=*,
                       labelwidth=0.8em,
                       labelsep=0em}
\newlist{enumeratenotes}{enumerate}{1}
\setlist[enumeratenotes]{ref={\texttt{\thechapter.\roman*}},label={\normalfont\textbf{\uline{\texttt{\thechapter.\roman*}}}},itemsep=5pt,labelsep=10pt,wide,labelwidth=!,labelindent=0pt}
\let\eps\varepsilon
\newenvironment{algcontents}{
\begin{mdframed}[innerleftmargin=-0.1cm]
}{
\end{mdframed}}
\theoremstyle{plain}
\newtheorem{theorem}{Theorem}
\newtheorem{lemma}[theorem]{Lemma}
\newtheorem{proposition}[theorem]{Proposition}
\newtheorem{corollary}[theorem]{Corollary}
\theoremstyle{definition}
\newtheorem{definition}[theorem]{Definition}
\newtheorem{assumption}[theorem]{Assumption}
\newtheorem{remark}[theorem]{Remark}
\newtheorem{fact}[theorem]{Fact}
\newtheorem{exer}[theorem]{Exercise}
\theoremstyle{remark}
\newcommand{\solution}[1]{
\ifsolutions
\paragraph{Solution}
\noindent{}#1
\hfill \faCheckCircle
\par
\vspace{0.2cm}
\fi
}
\crefname{figure}{Figure}{Figures} 
\crefname{exer}{Exercise}{Exercises}
\crefname{assumption}{Assumption}{Assumptions}
\crefname{equation}{}{} 
\crefname{page}{page}{pages}
\crefname{rowcntr}{Table}{Tables} 
\crefname{fact}{Fact}{Facts}
\numberwithin{theorem}{chapter}
\numberwithin{lemma}{chapter}
\numberwithin{proposition}{chapter}
\numberwithin{corollary}{chapter}
\numberwithin{claim}{chapter}
\numberwithin{remark}{chapter}
\numberwithin{fact}{chapter}
\numberwithin{exer}{chapter}
\numberwithin{assumption}{chapter}
\numberwithin{definition}{chapter}
\renewcommand{\varkappa}{\mathscr K}
\newcommand{\copynotice}{\footnote{This material will be published by Cambridge University Press as Bandit Convex Optimisation by Tor Lattimore . This prepublication version is free to view and download for personal use only. Not for re-distribution, re-sale, or use in derivative works. \copyright\ Tor Lattimore, 2025}}
\newcommand{\copynotice}{}
\newcommand{\stepsection}[1]{\paragraph{#1}}
\newcommand{\Proofskippy}{Proof\, {\normalfont (\skippy)}}
\newcommand{\ceil}[1]{\left\lceil #1 \right\rceil}
\newcommand{\floor}[1]{\left\lfloor #1 \right\rfloor}
\newcommand{\cl}{\operatorname{cl}}
\newcommand{\pip}{\pi_{\text{\tiny{$\wedge$}}}}
\newcommand{\R}{\mathbb R}
\newcommand{\argmin}{\operatornamewithlimits{arg\,min}}
\newcommand{\explan}[1]{\stackrel{\mathclap{\text{\tiny \texttt{#1}}}}}
\newcommand{\explanw}[1]{\stackrel{\text{\tiny \texttt{#1}}}}
\newcommand{\ip}[1]{\left \langle #1 \right \rangle}
\newcommand{\sip}[1]{\langle #1\rangle}
\newcommand{\bip}[1]{\left\langle #1 \right\rangle}
\newcommand{\ext}{e}
\newcommand{\Reg}{\textrm{\normalfont Reg}}
\newcommand{\sReg}{\textrm{\normalfont sReg}}
\newcommand{\eReg}{\textrm{\normalfont eReg}}
\newcommand{\hqReg}{\widehat{\textrm{\normalfont qReg}}}
\newcommand{\qReg}{\textrm{\normalfont qReg}}
\newcommand{\gReg}{\textrm{\normalfont gReg}}
\newcommand{\hsReg}{\widehat{\textrm{\normalfont sReg}}}
\newcommand{\beReg}{\textrm{\normalfont eR$\overline{\textrm{eg}}$}}
\newcommand{\bReg}{\textrm{\normalfont bReg}}
\newcommand{\hReg}{\widehat{\textrm{\normalfont Reg}}}
\newcommand{\Breg}{\operatorname{D}}
\newcommand{\aff}{\operatorname{aff}}
\newcommand{\rank}{\operatorname{rank}}
\newcommand{\sphere}{\mathbb{S}^{d-1}}
\newcommand{\logs}{L}
\newcommand{\im}{\operatorname{im}}
\newcommand{\ball}{\mathbb{B}^d}
\newcommand{\laspan}{\operatorname{span}}
\newcommand{\norm}[1]{\left \Vert  #1 \right \Vert}
\newcommand{\snorm}[1]{\Vert  #1 \Vert}
\newcommand{\E}{\mathbb E}
\newcommand{\skippy}{\includegraphics[height=0.65em]{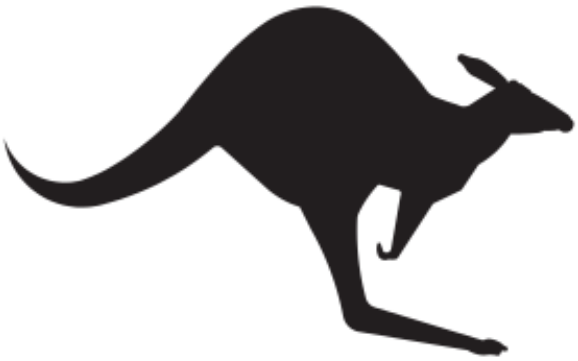}}
\newcommand{\ri}{\operatorname{ri}}
\newcommand{\cE}{\mathcal E}
\newcommand{\KL}{\operatorname{KL}}
\newcommand{\cF}{\mathscr F}
\newcommand{\cG}{\mathscr G}
\newcommand{\cC}{\mathcal C}
\newcommand{\cCS}{\mathcal C_{\mathbb S}}
\newcommand{\sG}{\mathscr G}
\newcommand{\sF}{\mathscr F}
\newcommand{\sP}{\mathscr P}
\newcommand{\sU}{\mathscr U}
\newcommand{\sS}{\mathscr S}
\newcommand{\sA}{\mathscr A}
\newcommand{\cU}{\mathcal U}
\newcommand{\cS}{\mathscr S}
\newcommand{\cnf}{\operatorname{cnf}}
\newcommand{\ISO}{\operatorname{\normalfont\textsc{iso}}}
\newcommand{\JOHN}{\operatorname{\normalfont\textsc{john}}}
\newcommand{\MVEE}{\operatorname{\normalfont\textsc{mvee}}}
\newcommand{\GRAD}{\operatorname{\normalfont\textsc{grad}}}
\newcommand{\CVX}{\operatorname{\normalfont\textsc{cvx}}}
\newcommand{\PROJ}{\operatorname{\normalfont\textsc{proj}}}
\newcommand{\SEP}{\operatorname{\normalfont\textsc{sep}}}
\newcommand{\LIN}{\operatorname{\normalfont\textsc{lin}}}
\newcommand{\SAMP}{\operatorname{\normalfont\textsc{samp}}}
\newcommand{\MEM}{\operatorname{\normalfont\textsc{mem}}}
\newcommand{\MVIE}{\operatorname{\normalfont\textsc{mvie}}}
\newcommand{\CUT}{\hyperref[alg:cut:cut]{\operatorname{\normalfont\textsc{cut}}}}
\newcommand{\BISECT}{\hyperref[alg:bisection]{\operatorname{\normalfont\textsc{bisect}}}}
\newcommand{\BAI}{\hyperref[alg:cut:bai]{\operatorname{\normalfont\textsc{bai}}}}
\newcommand{\psd}{\mathbb S^d_{+}}
\newcommand{\pd}{\mathbb S^d_{++}}
\newcommand{\cN}{\mathcal N}
\newcommand{\sB}{\mathscr B}
\newcommand{\tr}{\operatorname{tr}}
\newcommand{\zeros}{ \bm 0}
\newcommand{\ones}{ \bm 1}
\newcommand{\bbP}{\mathbb P}
\newcommand{\plin}{\tiny{\texttt{lin}}}
\newcommand{\pquad}{\tiny{\texttt{quad}}}
\newcommand{\pb}{{\tiny{\texttt{b}}}}
\newcommand{\pl}{{\tiny{\texttt{l}}}}
\newcommand{\psm}{{\tiny{\texttt{sm}}}}
\newcommand{\psc}{\tiny{\texttt{sc}}}
\newcommand{\pu}{\tiny{\texttt{i}}}
\newcommand{\ps}{{\tiny{\texttt{s}}}}
\newcommand{\lip}{\operatorname{lip}}
\newcommand{\vol}{\operatorname{vol}}
\newcommand{\polylog}{\operatorname{polylog}}
\newcommand{\interior}{\operatorname{int}}
\newcommand{\conv}{\operatorname{conv}}
\newcommand{\diam}{\operatorname{diam}}
\newcommand{\poly}{\operatorname{poly}}
\newcommand{\sind}{\bm{1}}
\renewcommand{\d}[1]{\operatorname{d}\!#1}
\newcommand{\id}{\mathds{1}}
\newcommand{\Ymax}{Y_{\max}}
\newcommand{\dom}{\operatorname{dom}}
\newcommand{\eventNoise}{{\normalfont\textsc{e1}}}
\newcommand{\eventPi}{{\normalfont \textsc{e2}}}
\newcommand{\eventAzuma}{{\normalfont\textsc{e0}}}
\newcommand{\eventGauss}{{\normalfont\textsc{e3}}}
\newcommand{\eventY}{{\normalfont\textsc{e4}}}
\newcommand{\eventQ}{{\normalfont\textsc{e5}}}
\newcommand{\eventS}{{\normalfont\textsc{e6}}}
\renewcommand{\emptyset}{\varnothing}
\newcommand{\idxdef}[1]{\index{#1|textbf}}
\newcommand{\commentAlt}[1]{\ignorespaces}
\newcommand{\commentLongAlt}[1]{\ignorespaces}
\begin{document}

\ifbook

\else
	\title{Bandit Convex Optimisation}
	\author{\large Tor Lattimore \\[1cm] November 2025}
\fi

\bookkeywords{Bandit convex optimisation; zeroth-order optimisation}
\frontmatter

\definecolor{bgcolor}{rgb}{0.0941, 0.2353, 0.4118} 

\ifbook

\begin{tikzpicture}[overlay,remember picture]
\node at (current page.center) {
\includegraphics[width=21cm]{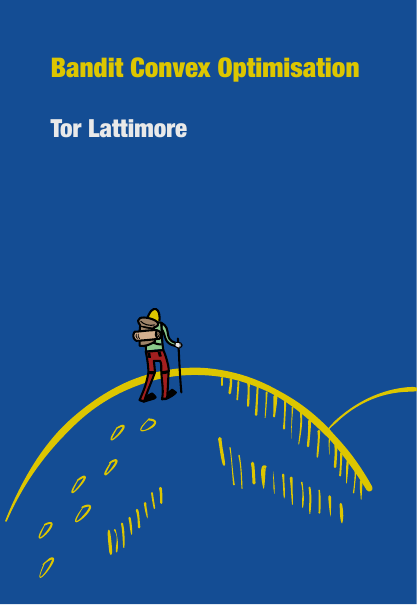}
};
\end{tikzpicture}

\fi

\maketitle
\tableofcontents

\chapter*{Preface}

This book is about zeroth-order convex optimisation; that is, approximately solving 
\begin{align*}
\argmin_{x \in K} f(x)\,,
\end{align*}
where $f \colon K \to \R$ is convex, $K \subset \R^d$ is a convex body and the learning system can only observe noisy function values
and not gradients or higher-order derivatives.
The focus is on finite-time minimax bounds on the regret or sample complexity that is standard in the multi-armed bandit literature.
The book covers all the algorithmic ideas, including gradient descent, barrier methods, cutting plane methods, exponential weights, 
information-theoretic arguments and second-order methods. 
The penultimate chapter is devoted to bandit submodular minimisation and its relation to bandit convex optimisation via the Lov\'asz extension.
The book is more or less self-contained, though a little background in optimisation and online learning will go a long way.
The content is almost entirely theoretical.   

\paragraph{Acknowledgements}
This work would not have been possible without my many wonderful collaborators, especially 
Alireza Bakhtiari, Hidde Fokkema, Andr\'as Gy\"orgy, Dirk van der Hoeven, Jack Mayo
and Csaba Szepesv\'ari.
For at least the second time I am in debt to Marcus Hutter, who read almost the entire book and made a huge number of thoughtful suggestions.
Finally, thank you Rosina for your endless love and support; and Phoebe for your love and smiles.

\mainmatter

\chapter[Introduction and Problem Statement]{Introduction and Problem Statement\copynotice}\label{chap:intro}

This book is about approximately solving problems of the form
\begin{align}
\argmin_{x \in K} f(x)\,,
\label{eq:int:opt}
\end{align}
where $K$ is a convex body in $\R^d$ (convex, compact and nonempty interior) and $f \colon K \to \R$ is a convex function.
Problems of this kind are ubiquitous in machine learning, operations research, economics and beyond.
The difficulty of this problem depends on many factors; for example, the dimension and smoothness properties of $f$.
Most important, however, is the representation of the function $f$ and constraint set $K$.\index{constraint set} 
Our focus is on zeroth-order stochastic optimisation, where you can query $f$ at any $x \in K$ and observe $y = f(x) + \text{noise}$.
By contrast, the vast majority of the literature on mathematical programming allows both the value of $f$ and its derivatives to be computed at any $x \in K$, either exactly
or with additive noise.
This is not a book about applications, but for the sake of inspiration and motivation we list a few situations where zeroth-order optimisation is a natural fit.
\begin{itemize}
\item \textit{Real-world experiments:} A chef wants to optimise the temperature and baking time when baking a souffl\'e. The constraint set $K$ is some reasonable subset of
the possible temperature/time pairings and $f(x)$ is the expected negative quality of the finished product. Noise arises here from exogenous factors, such as unintentional variation
in recipe preparation.
\item \textit{Adversarial attacks in machine learning:} A company releases an image recognition system. Can you find an image that looks to the human eye like a stop sign but is classified by
the system as something else? Unless the company has released the code and weights, you can only interact with a black-box function $C$ that accepts images as input and 
returns a classification, possibly at some cost.
A simple idea is to take an image $x_\circ$ and let $K$ be a set of images that are visually indistinguishable from $x_\circ$. Then approximately solve the following optimisation
problem:
\begin{align*}
\argmin_{x \in K} S(C(x), C(x_\circ))\,,
\end{align*}
where $S(C(x), C(y))$ is some measure of similarity between the classifications $C(x)$ and $C(y)$. That is, the problem is to find an image $x$ that looks similar to 
$x_\circ$ but with $C(x)$ maximally different from $C(x_{\circ})$. 
\item \textit{Reinforcement learning and control:} There are many ways to do reinforcement learning. Suppose that $\pi_\theta$ is a policy parameterised by $\theta \in K$ and
$f(\theta)$ is the (expected) loss when implementing policy $\pi_\theta$. Then solving \cref{eq:int:opt} corresponds to finding the optimal policy in $\{\pi_\theta \colon \theta \in K\}$.
\item \textit{Hyperparameter tuning:} Most machine learning systems have a range of parameters; for example, the width, depth and structure of a neural network or learning rates (schedules)
for the training algorithm. You may want to automate the process of finding the best architecture and training parameters. The loss in this case could be the performance of the resulting system
after training. So computing $f(x)$ requires running an entire training process, which is enormously expensive and possibly random. Generally speaking $f$ cannot be differentiated.
\item 
\textit{Dynamic pricing:} In dynamic pricing a retailer interacts sequentially with an environment. Customers arrive in the system and the retailer suggests a price $X_t \in K \subset \R$.
The loss $f(X_t)$ is the (expected) negative profit. When the price is too high the customer will not purchase and some loss is incurred (operating costs). On the other hand,
when the price is too low there is also a loss. The function $f$ is not known \textit{a priori} to the retailer and variability in customers introduces noise in the observations.  
\end{itemize}

\begin{wrapfigure}{r}{5cm}
\includegraphics[width=5cm]{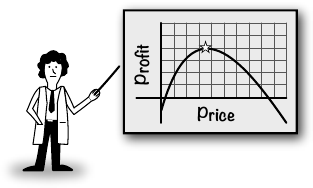}
\end{wrapfigure}
\noindent The last application reveals another consideration.
Approximately minimising the loss $f$ as in \cref{eq:int:opt} is not the only possible objective. In dynamic pricing every query to the loss function (decision) entails 
an actual cost (or profit). In this situation a more natural objective is to minimise the cumulative loss over all interactions. Most of this book is about this objective,
which is most often seen in the literature on multi-armed bandits.
As we shall soon see, an algorithm that approximately minimises the cumulative loss can be used to solve the optimisation problem \cref{eq:int:opt}, but the reverse does not hold.
Note that many practical problems are not convex. Nevertheless, enormous practical experience with first-order methods suggests this is less of a problem 
than it seems.
The same may be true in the zeroth-order setting.

Many of the proposed applications in this book are aspirational rather than truly practical. 
Automatic hyperparameter tuning is used everywhere, but the algorithms employed are usually based on
other methods, such as Gaussian process bandits.\index{bandit!Gaussian process} Presumably the reason for this is because convex bandit algorithms are not quite ready for the big stage yet.
The simple methods converge slowly, while the complicated methods are challenging to implement and tune. Hopefully in the coming years new and better algorithms will be developed.

\section{Methods and Challenges}
In spite of its simplicity, bandit (and zeroth-order) convex optimisation is still not fully understood;
and certainly much less so than first-order optimisation where the gradients of $f$ are available.
Nearly every method for bandit convex optimisation is based on estimating the gradient (and maybe higher-order derivatives) of
a surrogate loss and combining this with a classical gradient-based optimisation method.
This approach leads to a pernicious bias/variance trade-off, which is particularly exacerbated 
when the loss function is non-smooth.
Unsurprisingly, then, the primary focus of the classical literature on stochastic zeroth-order optimisation is on smooth settings.

This book explores the full range of methods, including the most classical online projected gradient descent (\cref{chap:sgd}), 
cutting plane methods (\cref{chap:ellipsoid}) and more
exotic methods based on information theory and minimax duality (\cref{chap:exp}).
Buried somewhere in nearly every approach is some kind of bias/variance trade-off.
Optimising for the regret also introduces complications. The learner cannot freely query the loss at any point they might
like without consideration for the loss suffered. This is known as the exploration/exploitation dilemma for which
reinforcement learning is famous. We will see, in fact, that in certain settings optimising for the regret is (provably) harder
than merely finding a near-optimiser of the loss. 
Another theme that raises its head in many chapters is the need to balance statistical and computational efficiency, both of the 
theoretical kind (poly time or not) and the practical kind. At present, those algorithms with the best statistical guarantees
do not have known efficient implementations.

So, let us begin. The next section explains what prerequisites you may find useful. After that, the rest of the chapter is devoted
to the formal problem settings studied later.
\cref{chap:overview} covers the history and includes large reference tables of known results.
\cref{chap:regularity} covers the necessary mathematical tools. Nearly every chapter thereafter is based on what can be achieved
with some algorithmic idea.

\section{Prerequisites}
Most readers will benefit from a reasonable knowledge of online learning \citep{Ces06,Haz16,Ora19} and
bandits \citep{BC12,Sli18,LS20book}.
We use some theory from interior point methods,\index{interior point methods} which you could refresh by reading the lecture notes by \cite{nemirovski96}.  
None of this is essential, however, if you are prepared to take a few results on faith.
Similarly, we use a few simple results from concentration of measure. Our reference was the book by \cite{Ver18} but \cite{BLM13} also
covers the material needed. We do use martingale\index{martingale} versions of these results, which sadly do not appear in these books but are more or less trivial extensions.
The standard reference for convex analysis is the book by \cite{Roc70} to which we occasionally refer. No deep results are needed, however.
Similarly, we make use of certain elementary results in convex geometry. Our reference for these is mostly the book by \cite{ASG15}.
The symbol ($\skippy$) on a proof, section or chapter means that you could (should?) skip this part on your first pass.
The book contains a number of exercises, which are assigned a difficulty based on a star rating.
Problems with one \faStar\ are straightforward, requiring limited ingenuity or mathematical sophistication.
Problems with two \faStar\faStar\ are moderately difficult and may require considerable ingenuity.
Problems with three \faStar\faStar\faStar\ will probably take several days of work.
Problems marked with \faBook\ may require extensive literature searches and problems marked with \faQuestion\ have not been solved yet.
All difficulty assessments are subjective estimates only.

\section{Bandit Convex Optimisation}
Let $K \subset \R^d$ be convex and $f_1,\ldots,f_n \colon K \to \R$ be an unknown sequence of convex functions. 
A learner interacts with the environment over $n$ rounds. In round $t$ the learner chooses an action $X_t \in K$. \label{page:iterates} They 
then observe a noisy loss $Y_t = f_t(X_t) + \eps_t$, where $(\eps_t)_{t=1}^n$ is a sequence of noise random variables. \label{page:responses} 
The precise conditions on the noise are given in \cref{eq:noise} below, but for now you could think of the noise as a sequence of independent standard Gaussian random 
variables.
The learner's decision $X_t$ is allowed to depend on an exogenous source of randomness and the data observed already, which 
is $X_1, Y_1,\ldots,X_{t-1},Y_{t-1}$. 
The main performance metric in this book is the regret,\index{regret|textbf} which is 
\begin{align*}
\Reg_n = \sup_{x \in K} \sum_{t=1}^n (f_t(X_t) - f_t(x))\,.
\end{align*}
The regret is a random variable with the randomness coming from both the noise and the learner's decisions.
Normally the regret will be bounded in expectation or with high probability, depending on what is most convenient.
Of course, the regret also depends on the loss functions. In general we will argue that our algorithms have small regret for any convex losses within some class.
Stronger assumptions (smaller classes) lead to stronger results and/or simpler and/or more efficient algorithms.
The following definition and notation are sufficient for our purposes.

\begin{definition}\label{def:class}
Let $\cF$ be the space of convex functions from $K$ to $\R$, and
with $\norm{\cdot}$ the standard euclidean norm define the following properties of a function $f \in \cF$:
\begin{enumerate}
\item[\texttt{Prop (b)}] $f$ is bounded: $f(x) \in [0,1]$ for all $x \in K$.
\item[\texttt{Prop (l)}] $f$ is Lipschitz: $f(x) - f(y) \leq \norm{x - y}$ for all $x, y \in K$. \index{Lipschitz|textbf}
\item[\texttt{Prop (sm)}] $f$ is $\beta$-smooth: $f(x) - \frac{\beta}{2} \norm{x}^2$ is concave on $K$. \index{smooth} 
\item[\texttt{Prop (sc)}] $f$ is $\alpha$-strongly convex: $f(x) - \frac{\alpha}{2} \norm{x}^2$ is convex on $K$. \index{strongly convex} \label{page:smooth}
\item[\texttt{Prop (lin)}] $f$ is linear: $f(x) = x^\top b + c$.
\item[\texttt{Prop (quad)}] $f$ is quadratic: $f(x) = x^\top A x + x^\top b + c$.
\end{enumerate}
\end{definition}

We use the property symbols to define subsets of $\cF$. For example: 
\begin{itemize}
\item $\cF_\pb = \{f \in \cF \colon f(x)  \in[0,1] \text{ for all } x \in K\}$.
\item $\cF_{\pb,\psm,\psc}$ is the set of bounded convex functions that are smooth and strongly convex. 
\end{itemize}
When smoothness and strong convexity are involved, our bounds will depend on the parameters $\alpha > 0$ and $0 \leq \beta < \infty$, which we assume are known constants.

\subsection{Constraint Set}
The set $K$ is called the constraint set and its geometry also plays a role in the hardness of bandit convex optimisation.
We make the following assumption throughout the entire book:

\begin{assumption}
The constraint set $K \subset \R^d$ is a convex body, which means that: \index{constraint set}\index{convex body|textbf}
\begin{enumerate}
\item $K$ is convex; 
\item $K$ has a nonempty interior; and
\item $K$ is compact.
\end{enumerate}
\end{assumption}

Only convexity and the boundedness part of compactness are really essential for most results. 
Every nonempty convex set has a nonempty interior when embedded in a suitable affine subset\index{affine!subset} of $\R^d$.
Properties of $K$ that influence the regret of various algorithms include its diameter and how well-rounded it is (\cref{sec:reg:rounding}).
Mathematically speaking, nothing more is needed from the constraint set. Computationally the representation of $K$ is very important.
Standard options are as a polytope,\index{polytope} the convex hull\index{convex hull} of a point cloud or as given by a separation/membership oracle. Matters involving computation and the constraint set
are discussed in \cref{sec:reg:compute}.

\begin{remark}\label{rem:unconstrained}
You may be interested to know what happens when $K$ is not bounded; for example, when $K = \R^d$. We discuss this and other possible assumptions on the constraint set 
in Note~\ref{note:K} at the end of the chapter.\index{setting!unconstrained}
\end{remark}

\subsection{Noise}
Our assumption on the noise is that the sequence $(\eps_t)_{t=1}^n$ is conditionally subgaussian.\index{subgaussian} By this we mean: \label{page:noise}\index{noise|textbf}

\begin{assumption}\label{ass:noise} 
The noise random variables $\eps_1,\ldots,\eps_n$ are conditionally subgaussian:
\begin{align}
\E\left[\eps_t \mid X_1,Y_1,\ldots,X_{t-1},Y_{t-1},X_t\right] &= 0\,; \text{ and} \nonumber \\
\E\left[\exp(\eps_t^2) \mid X_1,Y_1,\ldots,X_{t-1},Y_{t-1},X_t\right] &\leq 2\,.
\label{eq:noise}
\end{align}
\end{assumption}

\cref{ass:noise} is considered global and will not be referred to subsequently.
Note that \cref{eq:noise} together with the fact that $x^2 \leq \exp(x^2) - 1$ for all $x$ guarantees that the conditional variance of $\eps_t$ satisfies \index{variance}
\begin{align}
\E[\eps_t^2|X_1,Y_1,\ldots,X_{t-1},Y_{t-1},X_t] \leq 1\,.
\label{eq:noise-var}
\end{align}
This definition of subgaussianity is based on the Orlicz norm\index{Orlicz norm} definitions. We give a brief summary in Appendix~\ref{app:conc}, or you can read the wonderful book
by \cite{Ver18}.
Sometimes we work in the noise-free setting where $\eps_t = 0$.
The assumption of subgaussian noise is rather standard in the bandit literature. The Gaussian distribution $\cN(0, 3/8)$ is subgaussian, and so is any distribution
that is suitably bounded almost surely. All that is really needed is a suitable concentration\index{concentration} of measure phenomenon, 
and hence all results in this book could be generalised
to considerably larger classes of noise distribution without too much effort. We keep things simple, however.

\subsection{Adversarial and Stochastic Convex Bandits}\label{sec:settings}

We have already outlined some of the assumptions on the function class to which the losses belong. 
The other major classification is whether or not the problem is adversarial or stochastic.

\subsubsection*{Adversarial Bandit Convex Optimisation}
In the adversarial setting\index{setting!adversarial} the most common assumption is that the noise $\eps_t = 0$ while the functions $f_1,\ldots,f_n$ are chosen
in an arbitrary way by the adversary.
Sometimes the adversary is allowed to choose $f_t$ at the same time as the learner chooses $X_t$, in which case we say the adversary is non-oblivious.
Perhaps more commonly, however, the adversary is obliged to choose all loss functions $f_1,\ldots,f_n$ before the interaction starts. Adversaries of this kind
are called oblivious. For our purposes it is convenient to allow nonzero noise even in the adversarial case. This is sometimes essential in applications; for example,
in bandit submodular minimisation (\cref{chap:submod}).\index{bandit!submodular minimisation} And besides, it feels natural that the adversarial setting should generalise the stochastic one.

\begin{remark}
In the adversarial setting you might be tempted to combine the noise and losses by defining the loss function to be $f_t + \eps_t$.
But this would prevent the noise distribution from depending on the action of the learner, which is permitted by \cref{ass:noise}
and is essential in certain applications, such as submodular bandits (\cref{chap:submod}).
\index{noise}
\end{remark}

\subsubsection*{Stochastic Bandit Convex Optimisation}
The stochastic setting\index{setting!stochastic} is more classical. The loss function is now constant over time: $f_t = f$ for all rounds $t$ and unknown $f$. 
The standard performance metric in bandit problems is the regret, but in the stochastic setting it also makes sense to consider the simple regret.
At the end of the interaction the learner is expected to output one last point $\widehat X \in K$ and the simple regret is
\begin{align*}
\sReg_n = f(\widehat X) - \inf_{x \in K} f(x) \,.
\end{align*}
Thanks to convexity, there is a straightforward reduction from cumulative regret to simple regret: \index{regret!simple}
simply let $\widehat X = \frac{1}{n} \sum_{t=1}^n X_t$. Then by convexity,
\begin{align}
\sReg_n \leq \frac{1}{n} \Reg_n\,.
\label{eq:simple}
\end{align}
Another standard measure of performance in the stochastic setting is the sample complexity, which is the number of \idxdef{sample complexity} 
interactions needed before the simple regret is at most $\eps > 0$ with high probability.
The following fact provides a conversion from a high probability bound on the cumulative regret to sample complexity:

\begin{fact}\label{fact:conversion}
Suppose that with probability at least $1 - \delta$ the regret of some algorithm is bounded by $\Reg_n \leq R(n)$. Then, by the above conversion, the sample complexity can be bounded by 
the smallest $n$ such that $R(n)/n \leq \eps$. Concretely, if $R(n) = A n^{p}$ for $p \in (0,1)$ and $A > 0$, then the sample complexity is at most
$1 + (A/\eps)^{1/(1-p)}$.
\end{fact}

There is also a simple reduction from a bound on the expected regret to a (high probability) sample complexity bound.
The idea is to run the regret-minimising algorithm $k_{\max} = O(\log(1/\delta))$ times with horizon $n$ large enough that $\E[\frac{1}{n}\Reg_n] \leq \frac{\eps}{4}$.
By Markov's inequality and \cref{eq:simple}, with constant probability the average iterate over any run 
is nearly optimal, which means that one of the $k_{\max}$ average iterates is nearly optimal with high probability.
Lastly, a best arm identification procedure is applied to select a near minimiser among the $k_{\max}$ candidates. \index{best arm identification}
The full procedure and its analysis follow.

\begin{algorithm}[h!]
\begin{algcontents}
\begin{lstlisting}
args: base algorithm $\textsc{alg}$, $R(\cdot)$, $\eps > 0$, $\delta \in (0,1)$
let $n = \min\{s \colon R(s)/s \leq \eps/4\}$
for $k = 1$ to $k_{\max} \triangleq \ceil{\frac{\log(2/\delta)}{\log(2)}}$:
  run $\textsc{alg}$ over $n$ rounds
  observe iterates $X_1,\ldots,X_n$
  let $\widehat x_k = \frac{1}{n} \sum_{s=1}^n X_s$
return $\BAI(\eps/2,\delta/2, \widehat x_1,\ldots,\widehat x_{k_{\max}})$ &\Comment{\cref{alg:cut:bai}}&
\end{lstlisting}
\caption{A master algorithm for obtaining high probability sample complexity bounds from a base algorithm with expected regret.
See \cref{sec:cut:bai} for a detailed explanation of the $\BAI$ subroutine (\cref{alg:cut:bai}).
}
\label{alg:reduction}
\end{algcontents}
\end{algorithm}

\FloatBarrier

\begin{proposition}\label{prop:conversion}
Suppose that $\E[\Reg_n] \leq R(n)$. Then
\begin{enumerate}
\item \cref{alg:reduction} queries the loss at most 
\begin{align*}
O\left(\left(\min\left\{n \colon \frac{R(n)}{n} \leq \frac{\eps}{4}\right\} + \frac{\log(1/\delta)}{\eps^2}\right) \log(1/\delta)\right)
\end{align*}
times; and
\item With probability at least $1 - \delta$, \cref{alg:reduction} returns an $\widehat X \in K$ such that 
\begin{align*}
f(\widehat X) \leq \inf_{x \in K} f(x) + \eps \,.
\end{align*}
\end{enumerate}
\end{proposition}

\begin{proof}
Let $f_\star = \inf_{x \in K} f(x)$.
By \cref{eq:simple}, $\E[f(\widehat x_k)] - f_\star \leq \frac{\eps}{4}$. 
Since $f(x) - f_\star \geq 0$ for all $x \in K$, by Markov's inequality,
\begin{align*}
\bbP\left(f(\widehat x_k) - f_\star \geq \frac{\eps}{2}\right) \leq \frac{1}{2} \,.
\end{align*}
Hence, by the definition of $k_{\max}$, with probability at least $1 - \delta/2$ there exists a $k \in \{1,\ldots,k_{\max}\}$ such that
\begin{align*}
f(\widehat x_k) - f_\star \leq \frac{\eps}{2} \,.
\end{align*}
Finally, referring to \cref{sec:cut:bai}, by \cref{thm:cut:bai} the call to \cref{alg:cut:bai} uses 
\begin{align*}
O\left(\frac{k_{\max}}{\eps^2} \log\left(\frac{k_{\max}}{\delta}\right)\right)
\end{align*}
queries to the loss function and with probability at least $1 - \delta/2$ returns an $\widehat X \in \{\widehat x_1,\ldots,\widehat x_{k_{\max}}\}$ with
\begin{align*}
f(\widehat X) \leq \min_{1 \leq k \leq k_{\max}} f(\widehat x_k) + \frac{\eps}{2} \,.
\end{align*}
Combining everything with a union bound completes the proof.
\end{proof}

Our focus for the remainder of the book is primarily on the cumulative regret, but we occasionally highlight the sample complexity of algorithms in order to compare to the literature.
The arguments above show that bounds on the cumulative regret imply bounds on the simple regret and sample complexity. The converse is not true.

\subsubsection*{Regret is Random}
You should note that $\Reg_n$ and $\sReg_n$ are random variables with the randomness arising from both the algorithm and the noise.
Most of our results either control $\E[\Reg_n]$ or prove that $\Reg_n$ is bounded by such-and-such with high probability.
Bounds that hold with high probability are generally preferred since they can be integrated to obtain bounds in expectation.
But we will not be too dogmatic about this. Indeed, we mostly prove bounds in expectation to avoid tedious concentration\index{concentration} of measure calculations. As far as we know,
these always work out if you try hard enough.

\section{Notation}

A full list of notation is available in Appendix~\ref{app:notation}.

\subsubsection*{Norms}
The norm $\norm{\cdot}$ is the euclidean norm for vectors and the spectral norm for matrices.
For positive definite $A$, $\ip{x, y}_A = x^\top A y$ and $\norm{x}^2_A = \ip{x, x}_A$.
Given a random variable $X$, $\norm{X}_{\psi_k} = \inf\{t > 0 \colon \E[\exp(|X/t|^k)] \leq 2\}$ for $k \in \{1,2\}$ are the Orlicz norms.
Remember, $X$ is subgaussian if $\norm{X}_{\psi_2} < \infty$ and subexponential if $\norm{X}_{\psi_1} < \infty$.\index{subgaussian}\index{subexponential}
You can read more about the Orlicz norms in Appendix~\ref{app:conc}.

\subsubsection*{Sets}
$\R$ and $\mathbb Z$ are the sets of real values and integers.
The restriction of the reals to the (strictly) positive real line are $\R_+ = [0, \infty)$ and $\R_{++} = (0, \infty)$.
The euclidean ball and sphere of radius $r$ are denoted by $\ball_r = \{x \in \R^d \colon \norm{x} \leq r\}$ and $\sphere_r = \{x \in \R^d \colon \norm{x} = r\}$.
Hopefully the latter will not be confused with the space of positive (semi-)definite matrices on $\R^d$, which we denote by ($\psd$) $\pd$. 
Given $x \in \R^d$ and $\zeros \neq \eta \in \R^d$ we let $H(x, \eta) = \{y \colon \ip{y - x, \eta} \leq 0\}$, which is a closed half-space passing through $x$ with outwards-facing normal $\eta$.
\idxdef{half-space}
Given $x \in \R^d$ and positive definite matrix $A$, $E(x, A) = \{y \in \R^d \colon \norm{x - y}_{A^{-1}} \leq 1\}$, which is an ellipsoid centred
at $x$. \idxdef{ellipsoid}
When $E = E(x, A)$ is an ellipsoid, $E(r) = E(x, \sqrt{r} A)$ denotes the same ellipsoid scaled by a factor of $r$. 
The space of probability measures on $K \subset \R^d$ is $\Delta(K)$ where we always take the Borel $\sigma$-algebra $\sB(K)$.
Given a natural number $m$, $\Delta_m = \{p \in \R^m_+ \colon \norm{p}_1 = 1\}$ and $\Delta_m^+ = \Delta_m \cap \R_{++}^m$.
For $x, y \in \R^d$ we let $[x,y] = \{(1 - \lambda) x + \lambda y \colon \lambda \in [0,1]\}$, which is the chord connecting $x$ and $y$.

\subsubsection*{Elementary Notation}
We define arithmetic operations on sets in the Minkowski fashion. Concretely,
given $A, B \subset \R^d$ the Minkowski sum is $A + B = \{x + y \colon x \in A, y \in B\}$. \idxdef{Minkowski sum}
When $u \in \R$ we let $u A = \{u x \colon x \in A \}$ and of course $-A = (-1)A$ and $A - A = A + (-A)$. Occasionally for $x \in \R^d$ we abbreviate $\{x\} + A = x + A$.
The closure of $A$ is $\cl(A)$, its interior is $\interior(A) = \{x \in A \colon \exists \epsilon > 0, x + \ball_\eps \subset A\}$
and its boundary is $\partial A = \cl(A) \setminus \interior(A)$.\label{page:boundary}\label{page:interior}
When $A$ is convex, the relative interior of $A$ is $\ri(A) = \{x \in A \colon \exists \eps > 0, (x + \ball_\eps) \cap \aff(A) \subset A\}$
where $\aff(A)$ is the affine hull of $A$. \label{page:ri}
The polar \idxdef{polar} of $A$ is $A^\circ = \{u \colon \sup_{x \in A} \ip{x, u} \leq 1\}$.
We use $\id$ for the identity matrix and $\zeros$ for the zero matrix or zero vector.
Dimensions and types will always be self-evident from the context.
The euclidean projection onto $K$ is $\Pi_K(x) = \argmin_{y \in K} \norm{x - y}$.\index{projection}
Suppose that $f \colon \R^d \supset A \to \R$ is differentiable at $x \in A$; then we write $f'(x)$ for its gradient and $f''(x)$ for its Hessian.\index{differentiable}
When $f$ is convex we write $\partial f(x)$ for the set of subderivatives of $f$ at $x$.
More generally, $Df(x)[h]$ is the directional derivative of $f$ at $x$ in the direction $h$. Higher-order directional derivatives are 
denoted by $D^kf(x)[h_1,\ldots,h_k]$.
Note that for convex $f$, $Df(x)[h]$ is defined for all $x \in \interior(\dom(f))$ and $h \in \R^d$ but the mapping $h \mapsto Df(x)[h]$ need not be linear, 
as the convex function $\R \colon x \mapsto |x|$ shows.
Densities are always with respect to the Lebesgue measure.
The diameter of a nonempty set $K$ is \index{diameter}
\begin{align*}
\diam(K) &= \sup_{x, y \in K} \norm{x - y} \,. \label{page:diam} 
\end{align*}
The Lipschitz constant of a function $f \colon K \to \R$ is 
\begin{align*}
\lip_K(f) = \sup\left\{\frac{f(x) - f(y)}{\norm{x - y}} \colon x, y \in K, x \neq y\right\} \,.
\end{align*}
When $f \colon \R^d \to \R \cup \{\infty\}$ is convex we write $\lip(f)$ to mean $\lip_{\dom(f)}(f)$ where $\dom(f) = \{x \colon f(x) < \infty\}$.\index{domain|textbf}
Suppose that $A, B \in \psd$. Then $A \preceq B$ if $B - A \in \psd$ and $A \succeq B$ if $A - B \in \psd$.
$A \prec B$ and $A \succ B$ are defined similarly but with $\psd$ replaced by $\pd$.

\subsubsection*{Probability Spaces}
We will not formally define the probability space on which the essential random variables $X_1,Y_1,\ldots,X_n,Y_n$ live. You can see
how this should be done in the book by \cite{LS20book}. In general $\bbP$ is the probability measure on some space carrying these random variables
and we let $\sF_t = \sigma(X_1,Y_1,\ldots,X_t,Y_t)$ be the $\sigma$-algebra generated by the first $t$ rounds of interaction. \label{page:filtration}\index{filtration}
Events are often defined as $\{\text{condition}\}$. For example, $\{f(X_t) > \eps\}$ is the event that $f(X_t) > \eps$.
We abbreviate $\bbP_t(\cdot) = \bbP(\cdot | \sF_t)$ and $\E_t[\cdot] = \E[\cdot|\sF_t]$.\label{page:cond-measure}\label{page:cond-expect}%
The multivariate Gaussian distribution with mean $\mu$ and covariance $\Sigma$ is $\cN(\mu, \Sigma)$.
Given $A \subset \R^d$ we let $\cU(A)$ be the uniform probability measure on $A$,\index{uniform measure} which can be defined in multiple ways.
We only need it when $A$ is finite or $A \in \{\ball_r, \sphere_r\}$, where the definition is obvious.

\subsubsection*{Regret}
Recall the regret is defined by
\begin{align*}
\Reg_n = \sup_{x \in K} \sum_{t=1}^n (f_t(X_t) - f_t(x)) \,.
\end{align*}
The $\sup$ cannot always be replaced by a $\max$ because even for compact $K$ the loss functions may not be continuous. For example, when
$K = [0,1]$ and $f \colon K \to [0,1]$ is defined by $f(0) = 1$ and $f(x) = x$ for $x > 0$, then $f$ is convex and does not have a minimiser on $K$.
We occasionally need the regret relative to a specific $x \in K$, which is
\begin{align*}
\Reg_n(x) = \sum_{t=1}^n (f_t(X_t) - f_t(x))\,.
\end{align*}

\section{Notes}

\begin{enumeratenotes}
\item \label{note:intro:smooth} The function classes outlined in \cref{def:class} are by no means the only ones considered.
For example, the definition of smoothness can be generalised beyond the second-order smoothness. There are multiple ways to do this, but
one is to call a function $f \colon \R^d \to \R$ smooth of order $p \in [2,\infty)$ on $\R^d$ if for all $x, y \in \R^d$,
\begin{align*}
\norm{D^qf(x) - D^qf(y)} \leq \beta \norm{x - y}^{p - q} \,,
\end{align*}
where $q$ is the largest integer strictly smaller than $p$ and the norm of the left-hand side is the operator norm.
When $p = 2$ and $f$ is convex this definition is equivalent to what appears in \cref{def:class} \citep[Theorem 2.1.5]{nesterov2018lectures}.
Zeroth-order convex optimisation has been studied for highly smooth functions with slightly varying definitions of smoothness by 
\cite{polyak1990optimal,bach2016highly,akhavan2020exploiting,akhavan2024gradient} and others.
This line of work is briefly discussed in the notes of \cref{chap:ftrl}.

\item \label{note:K} The focus of this book is on a particular kind of \textit{constrained} optimisation where the learner must play in a convex body $K$, which by definition is compact, convex 
and has a nonempty interior. As mentioned, only boundedness and convexity are really important for the results in this book. 
But assuming boundedness discards many interesting settings.
Most fundamentally, it prohibits the \textit{unconstrained} setting where $K = \R^d$. Similarly, $K$ cannot be a half-space or other unbounded subset of $\R^d$.\index{setting!unconstrained}
Practically speaking, unbounded domains can often be reduced to bounded ones with prior knowledge that the minimiser must lie in a bounded convex set.
But such reductions do not always play well with the other assumptions; for example, that the loss is bounded on $K$.

\item \label{note:improper} Another situation related to the constraint set that arises often is that the losses are defined on all of $\R^d$ and the learner can query any point in $\R^d$ but is
only evaluated against the best point in $K$. This setting is sometimes referred to as the \textit{improper} setting.\index{setting!improper} The definition ensures that the improper setting is easier
than both the constrained setting (the learner has more power) and the unconstrained setting (the adversary has less power).

\item Other performance criteria also exist in the literature. For example, both the distance to the minimiser and the gradient of the loss are potentially sensible measures
of quality. Neither of these is considered in this book.

\item We assume throughout that the noise either vanishes completely or is $1$-subgaussian: $\E_{t-1}[\exp(\eps_t^2)|X_t] \leq 2$ and $\E_{t-1}[\eps_t] = 0$.
At the same time, we often assume that the range of the losses is in $[0,1]$.
This means the scale of the noise is the same as the scale of the losses and makes it hard to know how the range or variance of the noise might affect
the regret bound if either of these quantities were scaled individually. One analysis that decouples these quantities is given in \cref{sec:sc:stoch}; occasionally
we give pointers to the literature or suggest exercises/open problems. 

\item More exotic interaction protocols have also been investigated. \index{setting!two-point} \label{note:two-point} 
Consider for a moment adversarial setting without noise.
A number of authors have explored what changes if the learner is allowed to choose
\textit{two} points $X_{t,1}, X_{t,2} \in K$ and observes $f_t(X_{t,1})$ and $f_t(X_{t,2})$.
One might believe that such a modification would have only a mild effect but this is not at all the case. 
Having access to two evaluations makes the bandit setup behave more like the full information setting \citep{ADX10,nesterov2017random,duchi2015optimal}.
Essentially the learner can compute directional derivatives to arbitrary precision, which is not possible in the standard setting.
Note that in the stochastic setting\index{setting!stochastic} the noise is what makes it impossible to compute directional derivatives, 
while in the adversarial setting there need not be any relation from one loss to the next.

\item At the end of the day, the number of settings and possible assumptions is \textit{enormous}. We have naturally selected a subset most aligned with our interest and expertise.
Many of the algorithms and analysis presented can be generalised straightforwardly to other settings. But sometimes the conditions and modifications are subtle.
As much as possible we try to give pointers to the literature, but no doubt much has been missed.

\end{enumeratenotes}

\chapter[Overview of Methods and History]{Overview of Methods and History\copynotice}\label{chap:overview}

This chapter briefly outlines the key algorithmic ideas and history of bandit convex optimisation.
There follow in \cref{sec:lower} and \cref{sec:table} summary tables of known lower and upper bounds for the various settings
studied in this book. 

\section{Methods for Bandit Convex Optimisation}
Methods for bandit convex optimisation can be characterised into five classes:
\begin{itemize}
\item \textit{Cutting plane methods} are important theoretical tools for linear programming\index{linear programming} and non-smooth convex optimisation. \index{cutting plane methods}
The high-level idea is to iteratively cut away pieces of $K$ that have large volume while ensuring that the minimiser stays inside the active set.
Cutting plane methods are the geometric version of elimination algorithms for bandits and consequentially  
are typically analysed in the stochastic setting.\index{setting!stochastic} 
At least three works have adapted these ideas to stochastic convex bandits. \cite{AFHKR11} and \cite{LG21a} both use the ellipsoid method, while \cite{carpentier2024simple}
uses the centre of gravity method.\index{centre of gravity method}\index{ellipsoid method}
A simple bisection algorithm is discussed in \cref{chap:bisection} 
for one-dimensional convex bandits while in \cref{chap:ellipsoid} the approach is generalised to higher dimensions using the centre of gravity method, the ellipsoid method or the method
of the inscribed ellipsoid.

\item \textit{Gradient descent} is the fundamental algorithm for (convex) optimisation and a large proportion of algorithms for convex bandits \index{gradient descent} 
use it as a building block \citep[and more]{Kle04,FK05,Sah11,HL14}. 
At a high level the idea is to estimate gradients of a smoothed version of the loss and use these in gradient descent in place of the real unknown gradients.
We explore this idea in depth in \cref{chap:sgd,chap:ftrl}.

\item \textit{Newton's method} is a second-order method that uses curvature\index{curvature} information as well as the gradient.  
One of the challenges in bandit convex optimisation is that algorithms achieving optimal regret need to behave
in a way that depends on the curvature. Second-order methods that estimate the Hessian of the actual loss or a surrogate 
have been used for bandit convex optimisation \citep{SRN21,LG23,suggala2024second,LFMV24} and are the topic of \cref{chap:ons}.

\item \textit{Continuous exponential weights} is a powerful algorithm for full information\index{setting!full information} online learning and has been used for convex bandits by \cite{BEL16}, who \index{continuous exponential weights}
combined it with the surrogate loss function described in \cref{chap:opt} along with many tricks to construct the first polynomial-time algorithm for bandit
convex optimisation in the adversarial setting with $O(\sqrt{n})$ regret and without any assumptions beyond boundedness.\index{setting!adversarial}
Their algorithm is more complex than one might like
and is not discussed here except for the special case when $d = 1$ where many details simplify and the approach 
yields a reasonably practical algorithm. More details are in \cref{chap:exp}.

\item \textit{Information-directed sampling} is a principled Bayesian algorithm for sequential decision making \citep{RV14}. \index{information-directed sampling}
\cite{BDKP15} showed how to use information-directed sampling to bound the Bayesian regret\index{regret!Bayesian} for one-dimensional convex bandits and then applied minimax
duality\index{duality!minimax} to argue that the minimax Bayesian regret is the same as the adversarial regret.
This idea was later extended by \cite{BE18} and \cite{Lat20-cvx}.
Although these methods still yield the best bounds currently known for the adversarial setting, they are entirely non-constructive\index{non-constructive} thanks to the application of
minimax duality. We explain how these ideas relate to continuous exponential weights and mirror descent in \cref{chap:exp}.

\end{itemize}

\section{History}
Bandit convex optimisation is a relative newcomer in the bandit literature, with the earliest work by \cite{Kle04} and \cite{FK05}, 
both of whom use gradient-based methods in combination with gradient estimates of the smoothed losses (explained in \cref{chap:sgd}).
At least for losses in $\cF_{\pb,\pl}$ they showed that the regret is at most $O(n^{3/4})$. 
While these works seem to be the first to have considered the regret criterion for zeroth-order convex optimisation, their algorithms 
strongly resemble those algorithms designed for zeroth-order stochastic approximation, which we briefly summarise in \cref{sec:his:sa}.

\cite{ADX10} showed that by assuming strong convexity and smoothness the regret of these algorithms could be improved to $\tilde O(\sqrt{n})$
in the improper setting where the learner is allowed to query outside $K$ (see Note~\ref{note:improper}).\index{setting!improper}
The big question was whether or not $\tilde O(\sqrt{n})$ regret is possible without assuming smoothness and strong convexity.
A resolution in the stochastic setting\index{setting!stochastic} was provided by \cite{AFHK13}, who used the ellipsoid method\index{ellipsoid method} in combination with the pyramid construction of
\citet{NY83}, which is classically used for deterministic zeroth-order optimisation.\index{setting!deterministic}
They established $\tilde O(d^{16}\sqrt{n})$ regret while assuming only that the loss is bounded and Lipschitz. 
Because their algorithm is essentially an elimination method, the idea does not generalise to the adversarial setting, where the minimiser may appear to be
in one location for a long time before moving elsewhere. 

Meanwhile, back in the adversarial setting \cite{HL14} assumed strong convexity and smoothness to prove that 
a version of follow-the-regularised-leader\index{follow-the-regularised-leader} achieves $\tilde O(\sqrt{n})$ regret without the assumption that the learner can play outside the constraint set,\index{constraint set} thus
improving the results of \cite{ADX10}. The observation is that the increased variance\index{variance} of certain estimators when the learner is playing close to the boundary
can be mitigated by additional regularisation at the boundary using a self-concordant barrier (\cref{chap:ftrl}).\index{self-concordant barrier}

One fundamental question remained, which is whether or not
$\tilde O(\sqrt{n})$ regret was possible in the adversarial setting without strong convexity or smoothness.
The first breakthrough in this regard came when \cite{BDKP15} proved that $\tilde O(\sqrt{n})$ regret \textit{is} possible in the adversarial setting with no assumptions
beyond convexity and boundedness, but only when $d = 1$.
Strikingly, their analysis was entirely non-constructive, with the argument relying on minimax duality to relate the Bayesian regret to the adversarial
regret and on information-theoretic means to bound the Bayesian regret \citep{RV14}.

\cite{BE18} subsequently extended the information-theoretic tools to $d > 1$, showing for the first time that $\poly(d) \sqrt{n}$ regret is possible in
the adversarial setting. Later, \cite{Lat20-cvx} refined these arguments to prove that the minimax regret for adversarial bandit convex optimisation
with no assumptions beyond boundedness and convexity is at most $d^{2.5} \sqrt{n}$. This remains the best result known in the adversarial setting with losses in $\cF_\pb$.
One last chapter in the information-theoretic story is a duality\index{duality!information ratio} between the information-theoretic means and classical approaches based on 
mirror descent. \cite{lattimore2021mirror} have shown that any bound obtainable with the information-theoretic machinery of \cite{RV14} can also be obtained
using mirror descent. Their argument is still non-constructive\index{non-constructive} since the mirror descent algorithm needs to solve an infinite-dimensional convex optimisation problem.
Nevertheless, we believe this is a promising area for further exploration (\cref{chap:exp}).

The most obvious remaining challenge was to find an efficient algorithm with $\tilde O(\sqrt{n})$ regret for the adversarial setting and losses in $\cF_\pb$. 
An interesting step in this direction was given by \cite{HL16} who proposed an algorithm with $\tilde O(\sqrt{n})$ regret 
but super-exponential dependence on the dimension. Their algorithm has a running time of $O(\log(n)^{\poly(d)})$.

Finally, \cite{BEL16} constructed an algorithm based on continuous exponential weights for which the regret in the adversarial setting with losses 
in $\cF_\pb$ is bounded by $\tilde O(d^{10.5} \sqrt{n})$. Furthermore, the algorithm can be implemented in polynomial time.
Although a theoretical breakthrough, this algorithm has several serious limitations. For one, the dimension-dependence is so large
that in practically all normal situations one of the earliest algorithms would have better regret. Furthermore, although the algorithm can be
implemented in polynomial time, it relies on approximate log-concave sampling and approximate convex optimisation in every round. Practically
speaking the algorithm is nearly impossible to implement. The exception is when $d = 1$ where many aspects of the algorithm simplify (\cref{chap:exp}).

The remaining challenge at this point was (and still is) to improve the practicality of the algorithms and reduce the dimension-dependence in the regret.
\cite{LG21a} used the ellipsoid method\index{ellipsoid method} in the stochastic setting\index{setting!stochastic} in combination with the surrogate loss introduced by \cite{BEL16} to show
that $\tilde O(d^{4.5} \sqrt{n})$ regret is possible in that setting with a semi-practical algorithm. Recently \cite{LG23} showed that $O(d^{1.5}\sqrt{n})$ regret
is possible in the improper\index{setting!improper} stochastic setting when the loss is Lipschitz and $K = \ball_1$, a result that was extended to the constrained setting without
the Lipschitz assumption by \cite{LFMV24}. This last algorithm is detailed in \cref{chap:ons}.

\section{Classical Stochastic Optimisation Methods}\index{non-convex}\label{sec:his:sa}
This book has a particular focus on constrained optimisation with finite-time bounds on the 
regret as a measure of performance. 
You will surely not be surprised to know that stochastic zeroth-order optimisation (often called stochastic approximation) has a long history. 
One of the earliest works is by \cite{KW52},\footnote{This paper
is just five pages long. You should go and read it right now and come back.}
who studied the one-dimensional problem and constructed an iterative algorithm based on the Robbins--Monro algorithm 
for root finding \citep{RM51}. 
The same idea was generalised to the multidimensional setting by \cite{Blu54}.

The stochastic approximation literature is now enormous.
A short and readable summary of the earlier developments is by \cite{spall1994developments} while the modern literature is
covered in detail by \cite{prashanth2025gradient}.
The algorithms developed by this community have a similar flavour to the gradient-based methods used in bandit convex optimisation.
Concretely, they propose iterative schemes based on gradient descent or Newton's method with a huge array of methods
for estimating the gradient and Hessian using zeroth-order oracles.

The fundamental difference between stochastic approximation and bandit convex optimisation is in the assumptions and analysis. 
The stochastic optimisation community has largely avoided assuming global convexity
but is quite satisfied to make assumptions on smoothness and prove asymptotic convergence bounds, often to a local minimum.
Because of this, the results have a different flavour and can be hard to compare.
For example, \cite{KW52} prove their scheme converges to the minimiser of $f$ in probability under mild assumptions on the noise and assuming that
\begin{enumerate}
\item $f$ is unimodal\index{unimodal} and has a global minimiser $x_\star \in \R$;  
\item $f$ is bounded and Lipschitz on an interval containing $x_\star$; and
\item $f$ is not arbitrarily flat away from the minimiser. Formally, there exists a function $\varrho \colon (0,\infty) \to (0, \infty)$ such that
\begin{align*}
|x - x_\star| \geq \eps \text{ implies } \inf_{\delta \in (0,\eps/2)}  \frac{|f(x + \delta) - f(x - \delta)|}{\delta} > \varrho(\eps)\,. 
\end{align*}
\end{enumerate}
Convergence in probability is rather a weak notion and later work has strengthened this considerably, 
for example by proving asymptotic normality \citep{spall1992multivariate}.

There are also considerable differences in methods of analysis. The standard method in the stochastic approximation community is to 
view the iterates of a gradient-based method as an approximation of a differential equation, while in bandit convex optimisation the workhorse
is the theory of online convex optimisation (i.e. online learning).
At the moment there seems to be a regrettable divide between the bandit convex optimisation literature and the classical stochastic optimisation literature.
The two communities focus on different aspects of related problems, with the former intent on finite-time bounds 
in the convex setting with limited assumptions
beyond global convexity. The latter lean more towards asymptotic analysis with more local assumptions.
It seems there is some scope for unification, which we do not address at all here.

\subsubsection*{Dependent Noise Model}
In many works on zeroth-order stochastic optimisation 
there is some unknown convex function $f \colon K \to \R$ to be minimised. The learner has oracle access to some
function $F \colon K \times \Omega \to \R$ and a probability measure $\rho$ on a measurable space $(\Omega, \cG)$ such that
$\int_\Omega F(x, \xi) \d{\rho}(\xi) = f(x)$ for all $x \in K$.
Equivalently, $f(x) = \E[F(x, \xi)]$ when $\xi$ has law $\rho$.
The learner can sample freely from $\rho$ and query $F$ at any point $x \in K$ and $\xi \in \Omega$.
Structural assumptions are then made on $F$, $f$ or both. For example, \cite{nesterov2017random} assume that $f$ is convex and $x \mapsto F(x, \xi)$
is Lipschitz $\rho$-almost surely.
The big difference relative to the setting we study is that the learner can query $x\mapsto F(x, \xi)$ at multiple points with the same $\xi \in \Omega$.
Our stochastic setting can more or less be modelled in this setting but with the only assumption on $F$ being that for all $x \in K$ the random variable
$F(x, \xi)$ has well-behaved moments. 
Our setting is slightly more generic because we allow the noise to depend on the history as well as the decision.
Whether or not you want to make continuity/Lipschitz/smoothness assumptions on $F$ depends on how your problem is modelled.
Here are two real-world examples.

\begin{itemize}[itemsep=2pt]
\item
You are crafting a new fizzy beverage and are deciding the sugar content. A focus group has been arranged and with each person you can give a few samples and
obtain their scores. You want to find the amount of sugar that maximises the expected score over the entire population. 
This problem fits the stochastic optimisation viewpoint because you can trial multiple recipes with each person in your focus group.
Connecting this formally to the notation, $\Omega$ is the space of potential customers and $\rho$ is some reasonable distribution over customers. 
$K$ is the space of parameters for the fizzy beverage (the amount of sugar) and $F(x, \xi)$ is the loss suffered by person $\xi$ on 
the beverage with sugar content $x$.

\item\tikz[baseline={([yshift={-0.23cm}]a.north)}]{\node[inner sep=0pt] (a) {
\begin{minipage}[t]{\linewidth}
\begin{wrapfigure}{r}{4cm}
\includegraphics[width=4cm]{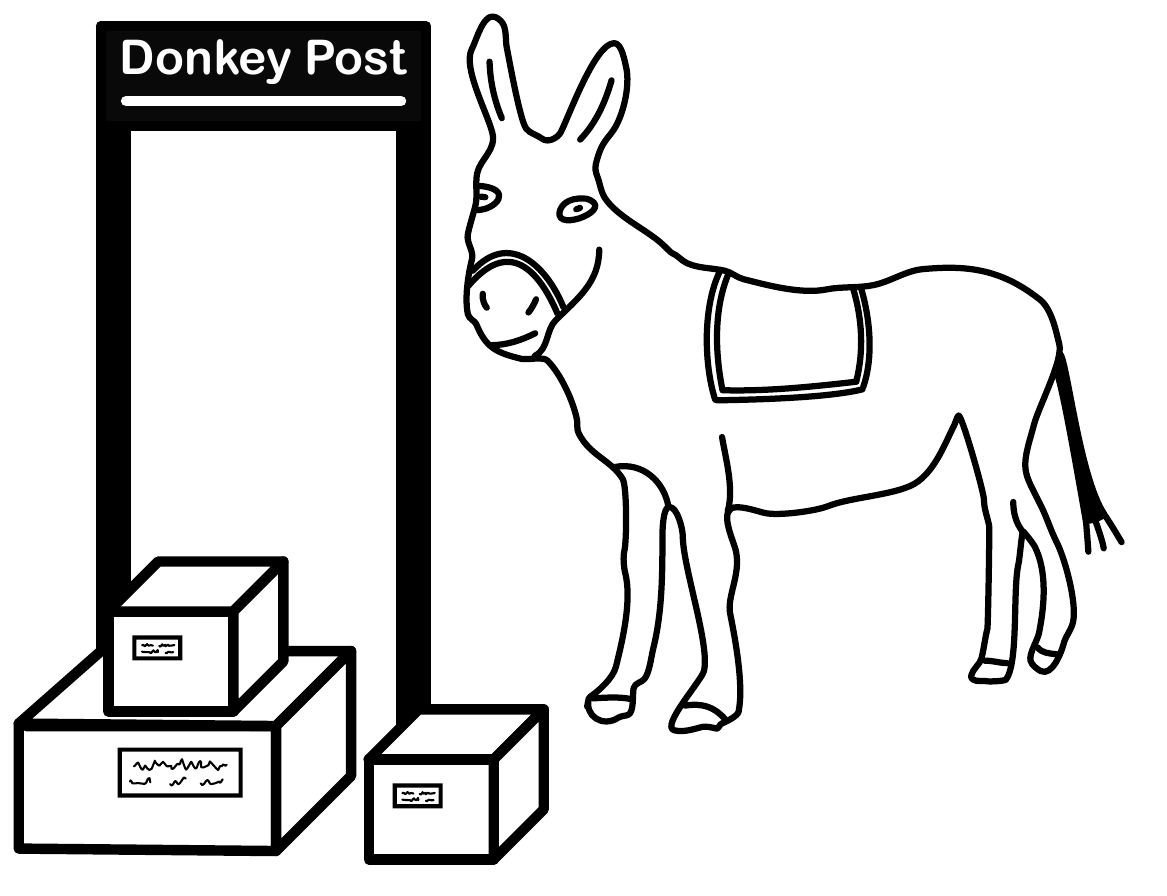}
\end{wrapfigure}
You operate a postal service using donkeys to transport mail between Sheffield and Hathersage. Donkeys are stoic creatures and do not give away how tired they are.
Every day you decide how much to load your donkey. Overload and they might have a nap along the way but obviously you want to transport as much post as possible.
The success of a journey is a function of how much mail was delivered and how long it took.
You'll get a telegraph with this information at the end of the day.
This problem is best modelled using the bandit framework because the tiredness of the donkey varies from day to day unpredictably and you only
get one try per day.
Formally in this setting $K$ is the set of possible loads for the donkey and $\rho$ is a distribution on unobservable states.
\end{minipage}
};}
\end{itemize}

There is too much material to do justice to this literature here. Some influential papers are by 
\cite{ghadimi2013stochastic} and \cite{NJL09}.

\section{Lower Bounds Summarised}\label{sec:lower}
The best lower bound known when the losses are in $\cF_\pb$ 
is that the minimax regret is at least $\Omega(d \sqrt{n})$.
Interestingly, the lower bound was established using linear losses where the upper bound is $\tilde O(d \sqrt{n})$. 
Can it really be that the hardest examples in the enormous non-parametric class of bounded convex functions $\cF_\pb$ lie in the tiny subset of linear functions?
Our intuition from the full information setting\index{setting!full information} says it could be like this. Curvature always helps in the full information setting.
We discuss in \cref{chap:ons} why, in bandit convex optimisation, curvature both helps and hinders in a complicated way.\index{curvature}
There also exist lower bounds for more structured classes of convex losses, which are summarised in \cref{tab:lower}.

\subsubsection*{Reading the Table} 
In the class column we use an \texttt{(s)} superscript to indicate that the lower bound holds in the stochastic setting
and \texttt{(i)} to show that it holds in the improper setting explained in Note~\ref{note:improper}. In the improper setting, Lipschitzness and
boundedness only hold inside $K$ while smoothness and strong convexity hold everywhere.\index{setting!improper}

\begin{table}[h!]
\small
\caption{Summary of lower bounds.
}\label{tab:lower}

\begin{subtable}{\textwidth}
\renewcommand{\arraystretch}{1.4}
\begin{tabular}{|p{2.8cm}p{2.2cm}p{3.2cm}p{1.05cm}|}
\hline 
\textsc{author} & \textsc{regret} & \textsc{class} & $K$ \\ \hline
\citealt{DHK08}$^1$ & $\Omega(d \sqrt{n})$ & $\cF^{\ps}_{\pb,\plin}$ & $\prod_{k=1}^{d/2} \mathbb S^1_1$  \\
\citealt{Sha13} & $\Omega(d \sqrt{n})$ & $\cF^{\ps,\pu}_{\pb,\pl,\psc,\psm}$, $\alpha = \frac{1}{2}, \beta = \frac{7}{2}$ & $\ball_1$ \\
\citealt{Sha13} & $\Omega(d \sqrt{n})$ & $\cF^{\ps}_{\pb,\pl,\pquad,\psc,\psm}$, $\alpha = \beta = 1$ & $\ball_1$ \\
\citealt{akhavan2024gradient}$^2$ & $\Omega(\frac{d \sqrt{n}}{\max(1,\alpha)})$ & $\cF^{\ps,\pu}_{\pb,\pl,\psc,\psm}$, $\beta = O(\alpha)$ & $\ball_1$ \\ \hline
\end{tabular}
\caption{Lower bounds on the regret.
$^1$The product of spheres is not convex. As \cite{shamir2015complexity} argues, in the linear setting lower bounds for non-convex sets
imply lower bounds on $\conv(K)$ via a simple reduction. 
$^2$These authors also prove a more general result for function classes with more smoothness and their results
hold for a large class of noise models.}

\end{subtable}
\begin{subtable}{\textwidth}
\renewcommand{\arraystretch}{1.4}
\begin{tabular}{|p{2.8cm}p{2.2cm}p{3.2cm}p{1.05cm}|}
\hline
\textsc{author} & \textsc{simple regret} & \textsc{class} & $K$ \\ \hline
\citealt{Sha13} & $\Omega(d / \sqrt{n})$ & $\cF^{\ps,\pu}_{\pb,\pl,\psc,\psm}$, $\alpha = \frac{1}{2}, \beta = \frac{7}{2}$ & $\ball_1$ \\
\citealt{akhavan2024gradient}$^2$ & $\Omega\left(\frac{d}{\max(1,\alpha) \sqrt{n}}\right)$ & $\cF^{\ps,\pu}_{\pb,\pl,\psc,\psm}$, $\beta = O(\alpha)$ & $\ball_1$ \\
\hline
\end{tabular}
\caption{Lower bounds on the simple regret. $^2$See the footnote for table above.}
\end{subtable}
\end{table}

\FloatBarrier

Let us make some comments on the lower bounds and how they relate to each other:
\begin{itemize}
\item 
The unconstrained (Note~\ref{note:K}) and constrained settings are both harder than the improper setting (Note~\ref{note:improper}).\index{setting!unconstrained}
Hence, lower bounds that hold in the latter also hold in the unconstrained/constrained settings.
\item 
The reduction in \cref{eq:simple} shows that lower bounds on the simple regret imply lower bounds on the cumulative regret.
\item
Lower bounds that are proven with Gaussian noise can be generalised to lower bounds with bounded noise at the price of at most logarithmic factors, which follows
by a scaling and truncation argument \citep{shamir2015complexity}.  
\item 
The adversarial setting as defined here is strictly harder than the stochastic setting, which means that all the bounds in \cref{tab:lower} also apply to the adversarial setting.
Many authors focus on the adversarial setting without noise and where the losses are assumed to be bounded.
The scaling and truncation argument by \cite{shamir2015complexity} shows that lower bounds proven with Gaussian noise also apply in this case except for logarithmic factors.

\item 
As far as we know, no one has written a minimax simple regret lower bound for linear losses. At least when $K = [-1,1]^d$, the technique for bounding the cumulative
regret by \cite{LS20book} also yields a bound on the simple regret of $\Omega(d/\sqrt{n})$.
\item 
The quadratic case with smoothness and strong convexity ($\alpha, \beta = \Theta(1)$) is quite interesting. 
The lower bound on the cumulative regret is $\Omega(d \sqrt{n})$, which is matched
by upper bounds in the unconstrained and improper settings \citep{akhavan2020exploiting} and nearly so in the constrained setting \citep{HL14}.
In the unconstrained and improper settings, however, the upper bound on the simple regret improves to $O(d^2/n)$.
This shows that the reduction in \cref{eq:simple} is not guaranteed to be tight.
\item
Many of the algorithms in this book are based on combining gradient descent with noisy gradient estimates of some surrogate loss function.
\cite{HPGySz16:BCO} explore the limitations of this argument. Their idea is to modify the information available to the learner.
Rather than observing the loss directly, the learner observes a noisy gradient estimate from an oracle that satisfies certain conditions
on its bias\index{bias} and variance.\index{variance} This allows the authors to prove a lower bound\index{lower bound} in terms of the bias and variance of the oracle that holds for any algorithm.
The main application is to argue that any analysis using the spherical smoothing estimates explained in \cref{chap:sgd} either cannot 
achieve $O(\sqrt{n})$ regret or must use some more fine-grained properties of the specific estimator than its bias and variance alone.
\end{itemize}

\section{Upper Bounds Summarised}\label{sec:table}

\newcommand{\optsc}{\textsc{opt}}
\newcommand{\svd}{\textsc{svd}}
\newcommand{\cog}{\textsc{cog}}
\newcommand{\ellipsoid}{\textsc{ellipsoid}}
\newcommand{\inscribed}{\textsc{inscribed}}
\newcommand{\low}{\textsc{l\"ow}}
\newcommand{\iso}{\textsc{iso}}

\cref{tab:upper} summarises the past and current situation. 
Those bounds that depend on $n$ are regret bounds while those that depend on $\eps$ are sample complexity bounds.\index{sample complexity}
Remember, you can use \cref{fact:conversion} or \cref{prop:conversion} to convert a regret bound into a sample complexity bound.
The superscript \texttt{(s)} in the function classes indicate whether or not the work only considers the
stochastic setting, while the superscript \texttt{(i)} is used when the algorithm needs to query outside $K$ (the improper setting of Note~\ref{note:improper}).\index{setting!improper} 
In some cases the subscript $d = 1$ in the function class indicates that the algorithm assumes the dimension is one.
The quantity $\vartheta$ is the parameter associated with a self-concordant barrier on $K$ (see \cref{chap:ftrl}) and $D$ is an abbreviation for the diameter $D = \diam(K)$.
The \textsc{compute} column gives the per-round complexity of each algorithm, which is $\infty$ in the few cases that the regret bound was established non-constructively.
Some algorithms need to position $K$ into L\"owner's position or isotropic position, marked in the \textsc{compute} column by \low{} and \iso{}, respectively.
The \textsc{compute} column also indicates whether the algorithm uses one of the classical cutting plane methods: ellipsoid method (\ellipsoid), centre of gravity method (\cog)
or the method of inscribed ellipsoid (\inscribed{}).
The symbol $\Pi$ refers to the complexity of a euclidean projection onto $K$, which depends on how $K$ is represented, and \textsc{svd} refers to the complexity of
a singular value decomposition, which is generally $O(d^3)$.\index{singular value decomposition}
Those algorithms that use $O(1)$ computation per round all query the same point for many rounds in a row. In many implementations this could take $O(d)$ computation, 
since storing/copying the iterate generally has this complexity. We offload this aspect to the oracle computing the loss function.  
The last column refers to the chapter where we analyse the relevant algorithm, if applicable.

\begin{table}[h!]
\small
\caption{Summary of upper bounds}\label{tab:upper}
\setlength\tabcolsep{1pt}
\renewcommand{\arraystretch}{1.3}
\begin{longtable}{|lp{2.7cm}lll|}
\hline
\textsc{author} & \textsc{regret/comp} & \textsc{class} & \textsc{compute} & \textsc{ch} \\ \hline
\cite{FK05} & \hyperref[thm:sgd]{$O(d^{1/2} D^{1/2}n^{3/4})$} & $\sF_{\pb,\pl}$ & $O(d), \Pi$  & \ref{chap:sgd} \\ 
\cite{FK05} & $O(d n^{5/6})$ & $\sF_\pb$ & $O(d), \Pi, \ISO$  & \textendash \\
This book & \hyperref[thm:ftrl:basic]{$\tilde O(d^{1/2} \vartheta^{1/4} n^{3/4})$} & $\sF_\pb$ & $O(d^2), \optsc, \svd$  & \ref{chap:ftrl}\\
\cite{ADX10}$^1$ & $\tilde O(d \sqrt{\beta n / \alpha})$ & $\sF^{\pu}_{\pb,\psm,\psc}$ & $O(d), \Pi$ & \textendash \\
This book & \hyperref[thm:sgd:sm-sc]{$O(d \sqrt{\beta n / \alpha})$} & $\sF^{\pu}_{\pb,\psm,\psc}$ & $O(d), \Pi$  & \ref{chap:sgd} \\
\cite{Sah11} & \hyperref[thm:ftrl:smooth]{$\tilde O([\vartheta \beta]^{1/3} [D d n]^{2/3})$} & $\sF_{\pb,\psm}$ & $O(d^2), \optsc, \svd$ & \ref{chap:ftrl} \\
\cite{BCK12} & \hyperref[thm:linear]{$\tilde O(d \sqrt{n})$} & $\sF_{\pb,\plin}$ & $O(\exp(d))$ & \ref{chap:lin} \\
\cite{AFHK13} & \hyperref[thm:bisection]{$\tilde O(\sqrt{n})$} & $\sF_{\pl,d=1}^\ps$ & $O(1)$ & \ref{chap:bisection} \\ 
\cite{AFHK13} & $\tilde O(d^{16} \sqrt{n})$ & $\sF_{\pb,\pl}^\ps$ & $O(1)$, \ellipsoid & \textendash \\
\cite{HL14} & \hyperref[thm:ftrl:sc-smooth]{$\tilde O(d \sqrt{(\vartheta + \beta/\alpha)n)}$} & $\sF_{\pb,\psm,\psc}$ & $O(d^2), \optsc, \svd$  & \ref{chap:ftrl} \\ 
\cite{BLN15} & $\tilde O(d^{7.5} / \eps^2)$ & $\sF^\ps$ & $O(1)$ & \textendash \\
\cite{BDKP15} & $\tilde O(\sqrt{n})$ & $\sF_{\pb,d=1}$ & $\infty$ & \textendash \\ 
\cite{HL16} & $\tilde O(2^{(d^4)} \sqrt{n})$ & $\sF_\pb$ & $O(\log(n)^{\poly(d)})$ & \textendash \\
\cite{BEL16} & $\tilde O(d^{10.5} \sqrt{n})$ & $\sF_\pb$ & $\poly(d, n)$ & \textendash \\
\cite{BEL16} & \hyperref[thm:exp-1d]{$\tilde O(\sqrt{n})$} & $\sF_{\pb,d=1}$ & $O(\sqrt{n})$ & \ref{chap:exp} \\ 
\cite{BEL18} & $\tilde O(d^{18} \sqrt{n})$ & $\sF_\pb$ & $\infty$ & \textendash \\
\cite{akhavan2020exploiting}$^2$ & $O(d \sqrt{\beta n / \alpha})$ & $\sF_{\pl,\psm,\psc}^{\ps,\pu}$ & $O(d)$ & \textendash \\
\cite{Lat20-cvx} & $\tilde O(d^{2.5} \sqrt{n})$ & $\sF_\pb$ & $\infty$ & \textendash \\ 
\cite{Ito20}$^3$ & $\tilde O(d \sqrt{\beta n / \alpha})$ & $\sF_{\pb,\psm,\psc}$ & $\poly(d)$ & \textendash \\ 
\cite{Ito20} & $\tilde O(d^{1.5} \sqrt{\beta n/\alpha})$ & $\sF_{\pb,\psm,\psc}$ & $\poly(d)$ & \textendash \\
\cite{SRN21} & $\tilde O(d^{16} \sqrt{n})$ & $\sF_{\pb,\pquad}$ & $\poly(d)$ & \textendash  \\
\cite{LG21a} & $\tilde O(d^{4.5} \sqrt{n})$ & $\sF_\pb^\ps$ & $O(d^2)$, \ellipsoid & \textendash \\
\cite{LG23} & $\tilde O(d^{1.5} \sqrt{n})$ & $\sF_{\pl}^{\ps,\pu}$ & $O(d^2), \svd$  & \textendash \\
\cite{LFMV24}$^4$ & \hyperref[thm:ons:bandit]{$\tilde O(d^{1.5} \sqrt{n})$} & $\sF_\pb^\ps$ & $O(d^2), \Pi, \svd$, \low &  \ref{chap:ons} \\
\cite{LFMV24} & \hyperref[thm:ons:bandit]{$\tilde O(d^{2} \sqrt{n})$} & $\sF_\pb^\ps$ & $O(d^2), \Pi, \svd$, \iso & \ref{chap:ons}\\
\cite{LFMV24} & \hyperref[thm:ons:bandit]{$\tilde O(d^{2.5} \sqrt{n})$} & $\sF_\pb$ & $\poly(d,n)$, \iso & \ref{chap:ons-adv}\\
\cite{carpentier2024simple} & \hyperref[thm:cut:cog]{$\tilde O(d^4/\eps^2)$} & $\sF_\pb^\ps$ & $O(d^2)$, \cog & \ref{chap:ellipsoid} \\
This book & \hyperref[sec:cut:ellipsoid]{$\tilde O(d^5 / \eps^2)$} & $\sF_\pb^\ps$ & $O(d^2)$, \ellipsoid & \ref{chap:ellipsoid} \\
This book & \hyperref[thm:cut:inscribed]{$\tilde O(d^4 / \eps^2)$} & $\sF_\pb^\ps$ & $O(d^2)$, \inscribed & \ref{chap:ellipsoid} \\
\hline
\end{longtable} 
\vspace{0.2cm}
\begin{minipage}{\textwidth}
$^1$The proof of this result is only sketched and both the theorem statement and proof contain minor issues.
These are corrected in \cref{sec:gd:sm-sc} where we take the opportunity to incorporate an idea by
\cite{akhavan2020exploiting} to remove the logarithmic factor. \\
$^2$These authors also show how to exploit the case where the noise is small as well as higher-order smoothness. \\
$^3$This result holds when the minimiser lies deep inside $K$. \\
$^4$This also holds if $K$ is symmetric and either in John's position or its polar is isotropic. \\
\end{minipage}
\end{table}

\FloatBarrier

\section{Notes}

\begin{enumeratenotes}
\item There are some books on zeroth-order optimisation \citep[for example]{larson2019derivative,conn2009introduction}. These works focus most of their attention
on noise-free settings and without a special focus on convexity. 
\cite{NY83} is a more theoretically focused book with one chapter on zeroth-order methods.
\cite{BPP12} and \cite{prashanth2025gradient} take the stochastic optimisation viewpoint and focus primarily on 
gradient-based methods in both convex and non-convex settings.
There is also a short survey by \cite{liu2020primer}.

\item Speaking of non-convexity,\index{non-convex} zeroth-order methods are also analysed in non-convex settings. Sometimes the objective is still to find the global minimum, but
for many non-convex problems this cannot be done efficiently. In such cases one often tries to find a point $x \in K$ such that $\norm{f'(x)}$ is small or even a local minimum.
We only study convex problems here. A recent reference for the non-convex case is the work by \cite{balasubramanian2022zeroth}.
\item There are esoteric settings that are quite interesting and may suit some applications. For example, \cite{bach2016highly} study
a problem where the learner chooses two actions in each round. The learner receives information for only the first action but is evaluated based on the quality
of the second. They also study higher levels of smoothness than we consider here.

\item Online learning has for a long time made considerable effort to prove adaptive bounds\index{adaptive} that yield stronger results when the loss functions are somehow nice
or show that the learner adapts to non-stationary environments.\index{non-stationary} 
Such results have also been commonplace in the standard bandit literature and are starting to appear in the convex bandit literature as well
\citep{zhao2021bandit,LZZ22,wang2023adaptivity,LBZ25}
\item 
We have not talked much about the efforts focused on sample complexity or simple regret for the stochastic setting.\index{setting!stochastic}
\cite{jamieson2012query} consider functions in $\cF_{\psm,\psc}$ and $K = \R^d$ and prove a sample complexity bound of $O(\frac{d^{3}}{\eps^2})$ for an algorithm
based on coordinate descent with polynomial dependence on the smoothness and strong convexity parameters hidden.
\cite{BLN15} use an algorithm based on simulated annealing to prove a sample complexity bound of $O(\frac{d^{7.5}}{\eps^2})$ \index{sample complexity}
for losses in $\cF_\pb$. In its current form their algorithm is not suitable for regret minimisation though this minor deficiency may be correctable. 
Another thing to mention about that work is that the algorithm is robust in the sense that it can (approximately) find minima of functions that are only approximately convex.
Slightly earlier, \cite{liang2014zeroth} also used a method based on random walks but obtained a worse rate of $O(\frac{d^{14}}{\eps^2})$.
\end{enumeratenotes}

\chapter[Mathematical Tools]{Mathematical Tools (\skippy)\copynotice}\label{chap:regularity}

The purpose of this chapter is to introduce the necessary tools from optimisation, convex geometry and convex analysis.
You can safely skip this chapter, referring back as needed.
The main concepts introduced are as follows:
\begin{itemize}
\item Convex bodies, the Minkowski and support functions and basic theory of polarity.
\item Basic properties associated with smoothness and strong convexity.
\item The near-Lipschitzness of convex functions and the implications of this for the location of near-minimisers of convex functions.
\item Rounding procedures for convex bodies, including the classical John's and isotropic positions of convex bodies.
\item Smoothing operators and mechanisms for extending the domain of a convex function $f \colon K \to \R$ to all of $\R^d$.
\item Methods for computing various operations on convex bodies, such as projection and optimisation.
\end{itemize}

\section{Convex Bodies}\label{sec:regularity:minkowski}\index{convex body|(}
A convex set $K \subset \R^d$ is a convex body if it is compact and has a nonempty interior.
The latter corresponds to the existence of an $x \in \R^d$ and $\eps > 0$ such that $x + \ball_\eps \subset K$.
The Minkowski functional\index{Minkowski functional|textbf}\label{page:mink} of $K$ is the function $\pi \colon \R^d \to \R$ defined by
\begin{align*}
\pi(x) = \inf\left\{t > 0 \colon x \in t K\right\}\,. 
\end{align*}
A top-down illustration is provided in \cref{fig:mink}.
Another way to visualise the Minkowski functional is via the suspension cone, which is the set \index{suspension cone|textbf}
\begin{align*}
S(K) = \{(x, y) \colon x \in \R^d, y \in \R, \pi(x) \leq y, y \geq 0\} \,.
\end{align*}
The set $S(K)$ is a cone with tip $(\zeros, 0)$ and $\{x \colon (x,1) \in S\} = K$.
In most of our applications $\zeros \in \interior(K)$ and $K$ is a convex body (hence closed). In this case $\sind_K(x) = \sind(\pi(x) \leq 1)$.
Moreover, when $K$ is a symmetric convex body, then $\pi$ is a norm and $K$ is its unit ball.
The support function is \label{page:support} \index{support function|textbf}
\begin{align*}
h(u) = \sup_{x \in K} \ip{u, x}  \,.
\end{align*}
The support function is defined so that for any $\R^d \ni u \neq \zeros$, the hyperplane $\{x \colon \ip{u, x} = h(u)\}$ is a supporting hyperplane of $K$ (\cref{fig:support}).
Of course the Minkowski functional and support function both depend on $K$ as well as $x$. 
When necessary we explicitly write $\pi_K$ or $h_K$ but in general the set will be $K$ and is omitted from the notation.

\begin{figure}[H]
\centering
\begin{subfigure}{0.48\textwidth}
\begin{tikzpicture}[thick]
\draw[fill=grayone] plot [smooth cycle] coordinates {(0,0) (1,1.5) (2,1.9) (3,0) (3,-1) (1.4,-1.5)};
\draw[fill=black] (1.5,-0.5) circle (1pt);
\node[anchor=north] at (1.5,-0.5) {$\zeros$};
\draw (1.5, -0.5) -- (-0.5,0.166);
\draw[fill=black] (-0.5,0.166) circle (1pt);
\draw[fill=black] (0,0) circle (1pt);
\node[anchor=east] at (-0.5,0.166) {$x$};
\node[anchor=north,xshift=-5pt] at (0,0) {$\frac{x}{\pi(x)}$};
\draw[fill=black] (2.25,-0.25) circle (1pt);
\draw[fill=black] (3,0) circle (1pt);
\node[anchor=north] at (2.25,-0.25) {$y$};
\node[anchor=west] at (3, 0) {$\frac{y}{\pi(y)}$};
\draw (3,0) -- (1.5,-0.5);
\node at (1.5, 0) {$K$};
\end{tikzpicture}
\caption{Minkowski functional}\label{fig:mink}
\commentAlt{Figure~\ref{fig:mink}: A convex body in 2 dimensions showing that x divided by the Minkowski functional of x likes on the boundary.}
\end{subfigure}
\hspace{0.02\textwidth}
\begin{subfigure}{0.48\textwidth}
\begin{tikzpicture}[thick]
\draw[fill=grayone] plot [smooth cycle] coordinates {(0,0) (1,1.5) (2,1.9) (3,0) (3,-1) (1.4,-1.5)};
\draw (0,-1) -- (0,1);
\draw[-latex] (0,0) -- (-0.5, 0);
\node[anchor=east] at (-0.5,0) {$u$};
\node[anchor=north,xshift=-13pt] at (0, -1) {$\{x \colon \ip{u, x} = h(u)\}$};
\node at (1.5, 0) {$K$};
\end{tikzpicture}

\caption{Support function}\label{fig:support}
\commentAlt{Figure~\ref{fig:support}: A convex body in 2 dimensions showing that the supporting hyperplane with normal u has intercept equal to the support function of u.}
\end{subfigure}
\caption{The Minkowski and support functions.}
\end{figure}
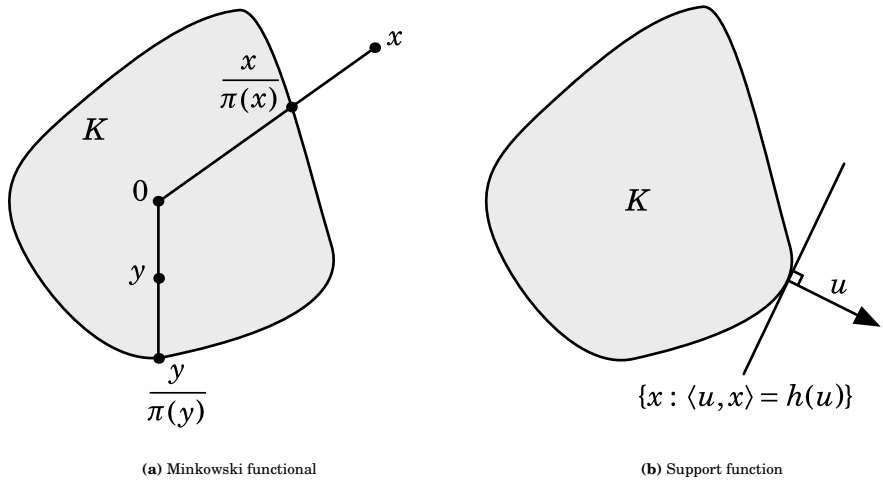
\FloatBarrier

The polar is $K^\circ = \{u \in \R^d \colon h(u) \leq 1\}$.\label{page:polar} \index{polar}
The polar is not easy to visualise and for our purposes an in-depth understanding of this concept is rarely needed. Readers looking for more intuition
and theory should read the classic text by \citet[\S14]{Roc70}.

\begin{proposition}\label{prop:polar-dual}
The following hold:
\begin{enumerate}
\item Given convex bodies $K \subset J$, polarity reverses inclusion: $J^\circ \subset K^\circ$. \label{prop:polar-dual:inc}
\item Given a convex body $K$ with $\zeros \in \interior(K)$, the polar $K^\circ$ is a convex body with $\zeros \in \interior(K^\circ)$.
\item The polar body of a nonempty ball $K = \ball_r$ is $K^\circ = \ball_{1/r}$. \label{prop:polar:ball}
\item For a symmetric convex body $K$, $\norm{\cdot}_K \triangleq \pi_K(\cdot)$ is a norm. \label{page:K-norm}
\item For a symmetric convex body $K$, the dual of $\norm{\cdot}_K$ is $\norm{\cdot}_{K^\circ}$:
\begin{align*}
\norm{u}_{K^\circ} = \max \{\ip{u, x} \colon \norm{x}_K \leq 1, x \in \R^d\} \,.
\end{align*}
\end{enumerate}
\end{proposition}

\begin{exer}
\faStar\quad
Prove \cref{prop:polar-dual}.
\end{exer}

\solution{
\begin{enumerate}
\item Suppose that $K \subset J$. 
Obviously $h_K(u) \leq h_J(u)$.
Suppose that $u \in J^\circ$. Then $1 \geq h_J(u) \geq h_K(u)$, which implies that $u \in K^\circ$.
\item Let $K$ be a convex body with $\zeros \in \interior(K)$.
Hence there exists constant $0 < r < R < \infty$ such that $\ball_r \subset K \subset \ball_R$,
which by parts \ref{prop:polar-dual:inc} and \ref{prop:polar:ball}  means that $\ball_{1/R} \subset K^\circ \subset \ball_{1/r}$.
Hence $\zeros \in \interior(K^\circ)$ and $K^\circ$ is bounded.
Convexity of $K^\circ$ follows because $h_K$ is convex. 
That $K^\circ$ is compact follows from continuity of $h_K$.
\item This follows immediately from the fact that $h_K(u) = r \norm{u}$.
\item Since $K$ is a convex body it has a nonempty interior.
Hence there exists an $r > 0$ and $x \in \R^d$ such that $x + \ball_r \subset K$.
But since $K$ is symmetric, $-x + \ball_r \subset K$ as well and by convexity $\ball_r \subset K$. 
Therefore $\pi(x) = 0$ if and only $x = \zeros$.
That $\pi(ax) = |a| \pi(x)$ is immediate from symmetry. Subadditivity follows from \cref{lem:reg:mink}.
\item 
The Minkowski functional of $K^\circ$ is
\begin{align*}
\pi_{K^\circ}(u) 
&= \inf\{t > 0 \colon u/t \in K^\circ\} \\
&= \inf\{t > 0 \colon h_K(u/t) \leq 1\} \\
&= \inf\{t > 0 \colon h_K(u) \leq t \} \\
&= h_K(u) \,. 
\end{align*}
Since $K = \{x \in \R^d \colon \pi(x) \leq 1\}$,
\begin{align*}
\norm{u}_{K^\circ} = h_K(u) = \max_{x \in K} \ip{x, u} = \max\{\ip{u,x} \colon \norm{x}_K \leq 1, x \in \R^d\} \,.
\end{align*}
\end{enumerate}
}

The Minkowski functional has many properties:

\begin{lemma}\label{lem:reg:mink}
Let $K$ be a convex body with $\zeros \in \interior(K)$ and $\pi$ the associated Minkowski functional.
The following hold:
\begin{enumerate}
\item $\pi(\alpha x) = \alpha \pi(x)$ for all $\alpha > 0$. \label{lem:reg:mink:hom}
\item $\pi$ is convex. \label{lem:reg:mink:cvx}
\item $\pi(x + y) \leq \pi(x) + \pi(y)$ for all $x, y \in \R^d$. \label{lem:reg:mink:sub}
\item $x/\pi(x) \in \partial K$ whenever $\pi(x) > 0$. \label{lem:reg:mink:proj}
\item $\pi$ is the support function of the polar body: $\pi(x) = \sup_{u \in K^\circ} \ip{x, u}$. \label{lem:reg:mink:support}
\item $D\pi(x)[h] = \ip{u, h}$ for some $u \in K^\circ$. \label{lem:reg:mink:diff}
\item $\lip(\pi) \leq 1/r$ whenever $\ball_r \subset K$. \label{lem:reg:mink:lip} 
\end{enumerate}
\end{lemma}

\begin{proof}
Part~\ref{lem:reg:mink:hom} is immediate from the definitions.
For part~\ref{lem:reg:mink:cvx},
by definition, $x \in \pi(x) K$ and $y \in \pi(y) K$.
Hence, for any $\lambda \in [0,1]$, 
\begin{align*}
(1 - \lambda) x + \lambda y \in (1 - \lambda) \pi(x) K + \lambda \pi(y) K = ((1 - \lambda) \pi(x) + \lambda \pi(y)) K \,.
\end{align*}
Therefore $\pi((1 - \lambda) x + \lambda y) \leq (1 - \lambda)\pi(x) + \lambda \pi(y)$, which establishes convexity.
Part~\ref{lem:reg:mink:sub} follows from \ref{lem:reg:mink:hom} and \ref{lem:reg:mink:cvx} since
$\pi(x + y) = \pi((2x)/2 + (2y)/2) \leq \pi(2x)/2 + \pi(2y)/2 = \pi(x) + \pi(y)$.
For part~\ref{lem:reg:mink:proj}, let $x$ be such that $\pi(x) > 0$. That $x / \pi(x) \in K$ is immediate from the definition. 
Suppose that $y = x/\pi(x) \in \interior(K)$; then there exists an $\epsilon > 0$ such that $y + \ball_\epsilon \subset K$.
A simple calculation shows there exists a $\delta \in (0, \pi(x))$ such that $x/(\pi(x) - \delta) \in K$, which contradicts the definition of $\pi(x)$.
Part~\ref{lem:reg:mink:support} is given by \citet[Theorem 14.5]{Roc70}
and part~\ref{lem:reg:mink:diff} follows from part~\ref{lem:reg:mink:support} and \citet[Corollary 23.5.3]{Roc70}. 
Part~\ref{lem:reg:mink:lip} follows because the Minkowski functional is the support function of the polar body $K^\circ$ and  
polarity reverses inclusion, $K^\circ \subset \ball_{1/r}$. Finally, the subgradients\index{subgradient!of support function} of the support function are in $K^\circ$ and the result follows \citep[Corollary 23.5.3]{Roc70}.
\end{proof}

Convex functions $f \colon K \to [0,1]$ are often not well behaved near the boundary (see Section~\ref{sec:regularity:lip}).
For this reason we often shrink $K$ towards the origin.
Given $\eps > 0$, let
\begin{align*}
K_\eps = \{(1 - \eps) x \colon x \in K\} = \{x \in K \colon \pi(x) \leq 1 - \eps\}\,.
\end{align*}
\begin{lemma}\label{lem:reg:ball}
Suppose that $\ball_r \subset K$ and $x \in K_\eps$. Then $x + \ball_{r \eps} \subset K$.
\end{lemma}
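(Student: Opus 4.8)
The plan is to use the first description of $K_{x,\epsilon}$, namely $K_{x,\epsilon} = \{\epsilon x + (1-\epsilon) z : z \in K\}$, and then exploit convexity of $K$ directly. First I would write $y = \epsilon x + (1-\epsilon) z$ for some $z \in K$, which is exactly what membership in $K_{x,\epsilon}$ gives us. The goal is to show $y + u \in K$ for every $u$ with $\norm{u} \le r\epsilon$.

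The key step is the algebraic rearrangement
\begin{align*}
y + u = \epsilon x + (1-\epsilon) z + u = \epsilon\left(x + \tfrac{u}{\epsilon}\right) + (1-\epsilon) z\,,
\end{align*}
which exhibits $y+u$ as a convex combination (with weights $\epsilon$ and $1-\epsilon$) of the two points $x + u/\epsilon$ and $z$. Since $\norm{u} \le r\epsilon$ we have $\norm{u/\epsilon} \le r$, so $x + u/\epsilon \in x + \ball_r \subset K$; and $z \in K$ by construction. Convexity of $K$ then gives $y + u \in K$, and since $u$ was an arbitrary vector of norm at most $r\epsilon$ this is precisely the claim $y + \ball_{r\epsilon} \subset K$.

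There is essentially no obstacle here — the only thing to be careful about is dividing by $\epsilon$ (legitimate since $\epsilon \in (0,1)$ is strictly positive) and tracking that the rescaled perturbation $u/\epsilon$ lands inside the ball of radius $r$ rather than some larger ball. If one preferred to work from the Minkowski-functional description $K_{x,\epsilon} = \{y \in K : \pi^K_x(y) \le 1-\epsilon\}$ instead, one could argue the same way after unwinding the definition of $\pi^K_x$, but the convex-combination form is cleaner and I would present that.
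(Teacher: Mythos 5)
Your proof is correct and is essentially the same argument as the paper's: both write $y = \epsilon x + (1-\epsilon)z$ for $z \in K$ and use convexity of $K$ together with $x + \ball_r \subset K$; the paper phrases it as a Minkowski sum $K \supset \epsilon(x + \ball_r) + (1-\epsilon)z = y + \ball_{\epsilon r}$, while you spell out the same inclusion pointwise by rescaling a perturbation $u$ to $u/\epsilon$.
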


\begin{proof}
Let $x \in K_\eps$. By the definition of the Minkowski functional\index{Minkowski functional} there exists a $y \in K$ such that
$x = (1 - \eps) y$. Since $K$ is convex and $\ball_r \subset K$, it follows that
\begin{align*}
K &\supset \eps\ball_r + (1-\eps) y = x + \ball_{\eps r} \,.
\qedhere
\end{align*}
\end{proof}

\index{convex body|)}

\section{Smoothness and Strong Convexity}\index{smooth}\index{strongly convex}

For your own understanding, you should solve the following:

\begin{exer}\label{ex:reg:sm-sc}
\faStar \quad
Suppose that $f \in \cF_{\psm,\psc}$ and $f$ is twice differentiable. Show that \index{differentiable}
\begin{align*}
\alpha \id \preceq f''(x) \preceq \beta \id \text{ for all } x \in \interior(K) \,.
\end{align*}
\end{exer}

Besides this, the only properties of smoothness and strong convexity that we need are as follows:

\begin{lemma}\label{lem:reg:sc}
If $f \in \cF$ is $\alpha$-strongly convex, then for all $y \in \interior(\dom(f))$, 
\begin{align*}
f(x) \geq f(y) + Df(y)[x - y] + \frac{\alpha}{2} \norm{x - y}^2 \,.
\end{align*}
\end{lemma}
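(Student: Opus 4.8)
The plan is to reduce the statement to the ordinary first-order subgradient inequality for convex functions applied to the shifted function $g(x) = f(x) - \frac{\alpha}{2}\norm{x}^2$, which is convex on $\dom(f)$ by \texttt{Prop (sc)}. First I would record that directional derivatives of a convex function exist on the interior of its domain (as noted after the definition of $\lip_K$), so that $Df(y)[x-y]$ is well defined for $y$ in the interior of $\dom(f)$ and $x \in \dom(f)$; this is the regime in which the inequality is meant. Then the directional derivative is additive over the decomposition $f = g + \frac{\alpha}{2}\norm{\cdot}^2$, and since $x \mapsto \frac{\alpha}{2}\norm{x}^2$ is smooth with gradient $\alpha x$, we get $Df(y)[h] = Dg(y)[h] + \alpha \ip{y, h}$ for every direction $h$.

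Next I would invoke the standard first-order characterisation of convexity for $g$: for convex $g$ one has $g(x) \geq g(y) + Dg(y)[x-y]$. (This is immediate from convexity: the map $\lambda \mapsto g(y + \lambda(x-y))$ is convex on $[0,1]$, so its right derivative at $0$, namely $Dg(y)[x-y]$, is at most $g(x) - g(y)$.) Substituting $g(z) = f(z) - \frac{\alpha}{2}\norm{z}^2$ and $Dg(y)[x-y] = Df(y)[x-y] - \alpha\ip{y, x-y}$ and rearranging gives
\begin{align*}
f(x) \geq f(y) + Df(y)[x-y] + \frac{\alpha}{2}\norm{x}^2 - \frac{\alpha}{2}\norm{y}^2 - \alpha \ip{y, x-y}\,.
\end{align*}

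Finally I would simplify the quadratic remainder via the identity $\norm{x}^2 - \norm{y}^2 - 2\ip{y, x - y} = \norm{x - y}^2$, which turns the displayed bound into exactly $f(x) \geq f(y) + Df(y)[x-y] + \frac{\alpha}{2}\norm{x-y}^2$. The only genuinely delicate point is the first-order inequality for $g$ phrased through directional derivatives rather than gradients, since $g$ need not be differentiable; but this follows purely from one-dimensional convexity of $\lambda \mapsto g(y+\lambda(x-y))$ together with existence of the one-sided derivative, so there is no real obstacle — the rest is the elementary algebra above.
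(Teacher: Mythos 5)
Your proof is correct, and it is worth noting that the paper actually leaves this lemma without any proof at all. The decomposition you use — writing $f = g + \frac{\alpha}{2}\norm{\cdot}^2$ with $g$ convex by the definition of strong convexity (\texttt{Prop (sc)}), applying the first-order inequality to $g$, and collapsing the quadratic remainder via $\norm{x}^2 - \norm{y}^2 - 2\ip{y, x - y} = \norm{x-y}^2$ — is exactly parallel to the argument the paper does supply for the adjacent smoothness result (Lemma~\ref{lem:smooth}), where $f - \frac{\beta}{2}\norm{\cdot}^2$ is used as the concave part. Your care in phrasing the first-order inequality through one-sided directional derivatives rather than gradients is the right level of rigour here, since $f$ (and hence $g$) need not be differentiable, and the additivity $Df(y)[h] = Dg(y)[h] + \alpha\ip{y,h}$ is justified because the quadratic piece is everywhere differentiable. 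No gaps.
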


\begin{proof}
Let $g(x) = f(x) - \frac{\alpha}{2} \norm{x}^2$, which by assumption is convex.
By convexity,
\begin{align*}
f(x) - \frac{\alpha}{2} \norm{x}^2 
&= g(x)  \\
&\geq g(y) + Dg(y)[x - y] \\
&= f(y) - \frac{\alpha}{2} \norm{y}^2 + Df(y)[x-y] - \alpha \ip{y, x - y}\,.
\end{align*}
Rearranging shows that
\begin{align*}
f(x) 
&\geq f(y) + Df(y)[x-y] + \frac{\alpha}{2} \norm{x}^2 - \frac{\alpha}{2} \norm{y}^2 - \alpha \ip{y, x - y} \\
&= f(y) + Df(y)[x-y] + \frac{\alpha}{2} \norm{x - y}^2 \,.
\qedhere
\end{align*}
\end{proof}

\begin{lemma}\label{lem:smooth}
If $f \in \cF$ is $\beta$-smooth and $X$ is a random variable supported in $K$ and $x = \E[X]$. Then,
\begin{align*}
\E[f(X) - f(x)] \leq \frac{\beta}{2} \E\left[\norm{X - x}^2\right] \,.
\end{align*}
\end{lemma}

\begin{proof}
Let $g(x) = f(x) - \frac{\beta}{2} \norm{x}^2$, which by assumption is concave. Then,
\begin{align*}
\E[f(X) - f(x)]
&= \E[g(X) - g(x)] + \frac{\beta}{2} \E\left[\norm{X}^2 - \norm{x}^2\right] \\
\tag*{since $g$ is concave}
&\leq \frac{\beta}{2} \E\left[\norm{X}^2 - \norm{x}^2\right] \\
\tag*{since $\E[X] = x$ \,.}
&= \frac{\beta}{2} \E\left[\norm{X - x}^2\right] 
\end{align*}
\end{proof}

\section{Scaling Properties}\label{sec:reg:scaling}

A class of problems is defined by the constraint set\index{constraint set} $K$ and the function class in which the losses lie (see \cref{def:class}) as well 
as constraints on the adversary (stochastic/non-stochastic) or the noise.
Regardless, we hope you agree that simply changing the scale of the coordinates should not affect the achievable regret.
The following proposition describes how the various constants change when the coordinates are scaled.

\begin{proposition}
Let $f \colon K \to [0,1]$ be convex and twice differentiable. \index{differentiable}
Define $g(y) = f(x/\gamma)$ and $J = \{\gamma x \colon x \in K\}$.
The following hold:
\begin{enumerate}
\item $g \colon J \to [0,1]$ is convex and twice differentiable.
\item $g'(y) = f'(x/\gamma) / \gamma$.
\item $g''(y) = f''(x/\gamma) / \gamma^2$.
\item $\diam(J) = \gamma \diam(K)$.
\end{enumerate}
\end{proposition}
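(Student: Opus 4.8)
The plan is to view $g$ as the composition $g = f \circ \phi$ of $f$ with the linear rescaling $\phi(y) = y/\gamma$, which maps $J$ bijectively onto $K$, and then to push each of the four claims through this composition. Throughout I would assume $\gamma > 0$ and read the statement as $g(y) = f(y/\gamma)$, so that $y = \gamma x$ corresponds to $x \in K$; with this reading the definitions of $g$ and $J$ are consistent.

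For the first part, boundedness of $g$ is immediate because the range of $g$ equals the range of $f$, which lies in $[0,1]$. Twice differentiability of $g$ on $J$ follows from the chain rule, since $\phi$ is affine (hence $C^\infty$) and $f$ is twice differentiable. Convexity of $J$ is clear because $J$ is the image of the convex set $K$ under a linear map, and convexity of $g$ follows from convexity of $f$: for $y_0,y_1\in J$ and $\lambda\in[0,1]$ we have $g((1-\lambda)y_0+\lambda y_1) = f\bigl((1-\lambda)\phi(y_0)+\lambda\phi(y_1)\bigr) \le (1-\lambda)f(\phi(y_0))+\lambda f(\phi(y_1)) = (1-\lambda)g(y_0)+\lambda g(y_1)$, where I used that $\phi$ is linear and hence commutes with convex combinations.

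For the second and third parts, I would simply differentiate $g = f \circ \phi$ via the chain rule. The Jacobian of $\phi$ is $\phi'(y) = \gamma^{-1}\id$, so $g'(y) = \phi'(y)^\top f'(\phi(y)) = \gamma^{-1} f'(y/\gamma)$ and, differentiating once more, $g''(y) = \phi'(y)^\top f''(\phi(y))\,\phi'(y) = \gamma^{-2} f''(y/\gamma)$; substituting $x = y/\gamma$ yields the stated formulas. For the last part, $x\mapsto\gamma x$ is a bijection from $K$ onto $J$ with $\norm{\gamma x - \gamma x'} = \gamma\norm{x - x'}$ (here $\gamma>0$ is used), so $\diam(J) = \max_{y,y'\in J}\norm{y-y'} = \gamma\max_{x,x'\in K}\norm{x-x'} = \gamma\diam(K)$.

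There is no substantive obstacle here: each assertion is a one-line consequence of the chain rule, the linearity of $\phi$, or the positive homogeneity of the Euclidean norm. The only points needing a little care are the implicit identification of the free variable in the statement (the ``$x$'' in $g(y)=f(x/\gamma)$ is meant to be $y$) and the sign of $\gamma$: for $\gamma>0$ the identity $\norm{\gamma v}=\gamma\norm v$ holds as written, whereas for $\gamma<0$ one would replace $\gamma$ by $|\gamma|$ in the diameter statement and carry the signs produced by the chain rule in the derivative formulas.
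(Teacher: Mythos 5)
The paper leaves this proposition unproved (it is presented as a routine scaling fact), and your argument is exactly the standard one-line justification one would supply: interpret $g = f\circ\phi$ with $\phi(y) = y/\gamma$ linear, apply the chain rule twice, use that linear maps send convex sets and convex combinations to convex sets and convex combinations, and use positive homogeneity of the Euclidean norm for the diameter. Your observations about the typo (the free variable in $g(y)=f(x/\gamma)$ should be $y$) and the implicit assumption $\gamma>0$ are both correct and worth noting.
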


From this we see that the product of the Lipschitz constant\index{Lipschitz} and the diameter\index{diameter} is invariant under scaling, as is the ratio of strong convexity and smoothness parameters.
You should always check that various results are compatible with these scaling results in the sense that the regret bound should be invariant to scale if the assumptions
permit scaling. 

\section{Convex Functions are Nearly Lipschitz}\label{sec:regularity:lip}\index{Lipschitz}
Let $f \colon K \to [0,1]$ be a convex function.
The example $K = [0,1]$ and $f(x) = 1 - \sqrt{x}$ shows that such functions are not always Lipschitz.
What is true is that $f$ must be Lipschitz on the interior of $K$ in some sense.
You should start by solving the following exercise:

\begin{exer}\label{ex:reg:lip}
\faStar \quad
Suppose that $f \colon \R^d \to \R \cup \{\infty\}$ is convex.
Show the following:
\begin{enumerate}
\item Suppose that $A \subset \interior(\dom(f))$. Then
\begin{align*}
\lip_A(f) \leq \sup_{x \in A} \sup_{\eta \in \sphere_1} Df(x)[\eta] \,.
\end{align*}
\item Suppose that $A$ is a bounded subset of $\R^d$ and $\dom(f) = \R^d$. Then
\begin{align*}
\lip(f) \leq \sup_{x \notin A} \sup_{\eta \in \sphere_1} Df(x)[\eta] \,.
\end{align*}
\end{enumerate}
\end{exer}

\solution{%
The first follows from the fundamental theorem of calculus and because convex functions are differentiable almost everywhere on their domain.
For the second, let $x \in A$ and $\eta \in \sphere_1$ and $g(t) = f(x + t \eta)$, which is chosen so that
$g'_+(t) = Df(x)[\eta]$ where $g'_+$ is the right-derivative of $g$.
But $g$ is convex and hence its right-derivative is non-decreasing. Moreover, since $A$ is bounded there exists a $t$ such that $x + t \eta \notin A$.
Therefore 
\begin{align*}
\sup_{x \in \R^d} \sup_{\eta \in \sphere_1} Df(x)[\eta] = \sup_{x \notin A} \sup_{\eta \in \sphere_1} Df(x)[\eta] \,.
\end{align*}
The result now follows from the first part.
}

\begin{proposition}\label{prop:lip}
Suppose that $f \in \cF_\pb$ and $r > 0$ and $x + \ball_r \subset K$. Then 
\begin{align*}
\max_{\eta \in \sphere_1} Df(x)[\eta] \leq \frac{1}{r} \,.
\end{align*}
\end{proposition}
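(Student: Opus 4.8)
The plan is to reduce the statement to the elementary one-dimensional fact that difference quotients of a convex function are monotone, combined with the boundedness hypothesis. Fix a direction $\eta \in \sphere_1$. Since $x + \ball_r \subset K$, in particular $x + t\eta \in K$ for every $t \in [0,r]$, so the scalar function $\varphi(t) = f(x + t\eta)$ is well-defined and convex on $[0,r]$, and $x \in \interior(K)$, so $Df(x)[\eta] = \varphi'(0^+)$ exists.

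The key step is convexity of $\varphi$: the map $t \mapsto \frac{\varphi(t) - \varphi(0)}{t}$ is nondecreasing on $(0,r]$. Hence
\begin{align*}
Df(x)[\eta] = \lim_{t \to 0^+} \frac{\varphi(t) - \varphi(0)}{t} \leq \frac{\varphi(r) - \varphi(0)}{r} = \frac{f(x + r\eta) - f(x)}{r}\,.
\end{align*}
Now I would invoke \texttt{Prop (b)}: since $x + r\eta \in K$ we have $f(x + r\eta) \leq 1$, and since $x \in K$ we have $f(x) \geq 0$, so the right-hand side is at most $1/r$. As $\eta \in \sphere_1$ was arbitrary, taking the maximum over $\eta$ gives the claim.

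There is essentially no obstacle here; the only thing requiring a word of justification is the monotonicity of the difference quotient, which is the standard three-slopes property of convex functions of one variable (and is exactly the fact already alluded to in the text when it notes that convex functions have directional derivatives on the interior of their domain). If one wanted to avoid even mentioning directional derivatives, one could instead bound any subgradient: for $s \in \partial f(x)$, convexity gives $f(x + r\eta) \geq f(x) + r\ip{s, \eta}$, so $\ip{s,\eta} \leq (f(x+r\eta) - f(x))/r \leq 1/r$ for all unit $\eta$, whence $\norm{s} \leq 1/r$; but the directional-derivative formulation is cleaner and matches the statement.
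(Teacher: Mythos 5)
Your proof is correct and uses the same idea as the paper: convexity gives $Df(x)[r\eta] \leq f(x+r\eta) - f(x)$, and then \texttt{Prop (b)} bounds the right-hand side by $1$. The paper phrases the convexity step as the tangent inequality $f(x+r\eta) \geq f(x) + Df(x)[r\eta]$ rather than via monotone difference quotients, but these are the same fact and the two arguments are substantively identical.
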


\begin{proof}
The assumption that $f$ is convex and bounded in $[0,1]$ on $K$ shows that
for any $\eta \in \sphere_1$,
\begin{align*}
1 &\geq f(x+r \eta) \geq f(x) + D f(x)[r \eta] \geq Df(x)[r \eta] = r Df(x)[\eta]\,.
\end{align*}
Therefore $Df(x)[\eta] \leq \frac{1}{r}$.
\end{proof}

Combining \cref{prop:lip} and the solution to \cref{ex:reg:lip} yields the following:

\begin{corollary}\label{cor:lip}
Let $f \in \cF_\pb$ be convex and suppose that $A$ is convex and $A + \ball_r \subset K$. Then
$\lip_A(f) \leq \frac{1}{r}$.
\end{corollary}

\begin{corollary}\label{cor:lip2}
Suppose that $f \colon K \to [0,1]$ is convex and $\ball_r \subset K$ and $K_\eps = (1-\eps) K$. Then $\lip_{K_\eps}(f) \leq \frac{1}{\eps r}$. 
\end{corollary}

\section{Near-Optimality on the Interior}

The observation that convex functions are Lipschitz on a suitable subset of the interior of $K$ suggests that if we want to restrict our attention
to Lipschitz functions, then we might pretend that the domain of $f$ is not $K$ but rather a subset. This idea is only fruitful because bounded convex
functions are always nearly minimised somewhere in the interior, in the following sense.
Recall the definition of $K_\eps$ from \cref{sec:regularity:minkowski}.

\begin{proposition}\label{prop:shrink}
Let $K$ be a convex body with $\zeros \in \interior(K)$ and $\eps \in (0,1)$ and $f \in \cF_\pb$.
Then
\begin{align*}
\min_{y \in K_\eps} f(y) \leq \inf_{y \in K} f(y) + \eps \,.
\end{align*}
\end{proposition}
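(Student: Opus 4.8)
The plan is to exploit the convexity of $f$ together with the fact that every point of $K_{x,\epsilon}$ is a convex combination $\epsilon x + (1-\epsilon) y$ with $y \in K$, so that a near-minimiser of $f$ on $K$ is pulled only slightly away from optimality when it is mixed with $x$. Concretely, fix $\delta > 0$ and choose $y_\delta \in K$ with $f(y_\delta) \le \inf_{y \in K} f(y) + \delta$; such a point exists by definition of the infimum (note we cannot in general take $\delta = 0$, since a convex $f$ on a convex body need not be continuous on $\partial K$ and the infimum need not be attained). Set $z_\delta = \epsilon x + (1-\epsilon) y_\delta$, which lies in $K_{x,\epsilon}$ by definition of that set.

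The key one-line estimate is then
\begin{align*}
f(z_\delta) \le \epsilon f(x) + (1-\epsilon) f(y_\delta) \le \epsilon + (1-\epsilon)\left(\inf_{y \in K} f(y) + \delta\right) \le \epsilon + \inf_{y \in K} f(y) + \delta\,,
\end{align*}
where the first inequality is convexity of $f$, the second uses $f(x) \le 1$ together with the choice of $y_\delta$, and the last uses $(1-\epsilon) t \le t$ valid because $f \ge 0$ (so the relevant quantity $\inf_{y\in K}f(y)+\delta$ is nonnegative for small $\delta$, or simply $\ge 0$). Since $z_\delta \in K_{x,\epsilon}$, the left-hand quantity $\min_{y \in K_{x,\epsilon}} f(y)$ is at most $f(z_\delta)$, hence at most $\inf_{y\in K} f(y) + \epsilon + \delta$; letting $\delta \downarrow 0$ finishes the argument.

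One small point worth recording before the chain of inequalities: the minimum on the left genuinely exists. Since $x \in \interior(K)$ we have $x + \ball_r \subset K$ for some $r > 0$, so by the lemma of Section~\ref{sec:regularity:minkowski} every $y \in K_{x,\epsilon}$ satisfies $y + \ball_{r\epsilon} \subset K$, i.e.\ $K_{x,\epsilon} \subset \interior(K)$; as $K_{x,\epsilon}$ is the continuous image of the compact set $K$ it is compact, and $f$ is continuous on the interior of $K$, so $f$ attains its minimum on $K_{x,\epsilon}$.

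\textbf{Main obstacle.} There is no real difficulty here; the only thing to be careful about is that $\inf_{y\in K} f(y)$ may not be attained (because $f$ can jump on $\partial K$), which is why I pass through a $\delta$-approximate minimiser and take a limit rather than mixing the exact minimiser directly. The other ingredient that is easy to overlook is that the step $(1-\epsilon)f(y_\delta) \le f(y_\delta)$ uses boundedness ($f \ge 0$), not just convexity, so the proposition really does belong to the $\cF_{\pb}$ setting.
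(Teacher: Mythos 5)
Your argument is correct and is essentially the paper's proof: both mix an (approximate) minimiser on $K$ with $x$, use convexity to get $f(\epsilon x + (1-\epsilon)y) \le \epsilon f(x) + (1-\epsilon)f(y) \le f(y) + \epsilon$ via $f(x)\le 1$ and $f\ge 0$, and both justify existence of the minimum on $K_{x,\epsilon}$ by compactness plus continuity of convex functions on the interior. Passing through a $\delta$-approximate minimiser and letting $\delta\downarrow 0$ is just an explicit unpacking of the paper's ``take the infimum over $y\in K$''; the content is identical.
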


\begin{proof}
$K_\eps$ is a closed subset of $\interior(K)$, hence compact. Convex functions are continuous on the interior of their domain, which means 
that $f$ is continuous on $K_\eps \subset \interior(K)$ and hence has a minimiser.
Let $y \in K$. Then $z = (1 - \eps) y \in K_\eps$ and
by convexity, $f(z) \leq (1 - \eps) f(y) + \eps f(\zeros) \leq f(y) + \eps$.
Taking the infimum over all $y \in K$ completes the proof.
\end{proof}

\section{Classical Positions and Rounding}\label{sec:reg:rounding}
\index{convex body!positions of|(}
We make frequent use of certain classical positions of convex bodies. 
\begin{itemize}
\item A convex body $K$ is in John's position if $\ball_1$ is the ellipsoid of largest volume contained in $K$. \index{John's position}
\item A convex body $K$ is in L\"owner's position if $\ball_1$ is the ellipsoid of smallest volume that contains $K$. \index{L\"owner's position}
\item A convex body $K$ is in isotropic position if $\frac{1}{\vol(K)}\int_K xx^\top \d{x} = \id$ and $\int_K x \d{x} = \zeros$. \index{isotropic position}
\end{itemize}
The unifying characteristic of these positions is that for every convex body $K$ there exists an affine map\index{affine!map} $T \colon \R^d \to \R^d$ such
that the image $T(K)$ is in the relevant position:

\begin{theorem}\label{thm:positions}
Given $\text{X} \in \{\text{John's}, \text{L\"owner's}, \text{isotropic}\}$ and convex body $K$, there exists an affine transformation $T \colon \R^d \to \R^d$
such that $T(K)$ is in X position.
\end{theorem}

Some of our analysis depends on the constraint set $K$ being well rounded. By this we mean that \index{rounded|textbf}\index{constraint set}
\begin{align*}
\ball_r \subset K \subset \ball_R\,,
\end{align*}
where $R/r$ is not too large.
The following shows that convex bodies in John's position are well rounded:

\begin{theorem}\label{thm:john}
Suppose that $K$ is in John's position. Then
\begin{align*}
\ball_1 \subset K \subset \ball_d\,.
\end{align*}
\end{theorem}

\cref{thm:john} is an immediate consequence of John's theorem\index{John's theorem} \citep[Remark 2.1.17]{ASG15}.
Combining \cref{thm:john} with \cref{thm:positions} shows that for any convex body $K$ there exists an affine map $T$ such that
\begin{align}
\ball_1 \subset T(K) \subset \ball_d\,.
\label{eq:rounded}
\end{align}
This is less constructive than we would like because even when $K$
is represented by a separation oracle,\index{separation oracle} there is no known procedure for efficiently computing John's position.
In a moment we discuss how to algorithmically find an affine mapping $T$ such that \cref{eq:rounded} holds approximately. First though, we explain how such a mapping can be used. 
Any affine $T$ for which \cref{eq:rounded} holds must be invertible. 
A learning algorithm designed for rounded constraint sets can be used on arbitrary constraint sets by first
finding a $T$ such that \cref{eq:rounded} approximately holds. The learner is then instantiated with $T(K)$ as a constraint set and proposes actions $(X_t')_{t=1}^n$
with $X_t' \in T(K)$.
The response is $Y_t = f_t(T^{-1}(X_t')) + \eps_t$. Letting $g_t = f_t \circ T^{-1}$, we have
\begin{align*}
\Reg_n 
&= \sup_{x \in K} \sum_{t=1}^n (f_t(X_t) - f_t(x)) \\
&= \sup_{x' \in T(K)} \sum_{t=1}^n (f_t(T^{-1}(X_t')) - f_t(T^{-1}(x'))) \\
&= \sup_{x' \in T(K)} \sum_{t=1}^n (g_t(X_t') - g_t(x'))\,.
\end{align*}
The Lipschitz and smoothness properties of $g_t$ may be different from $f_t$, but if $f_t$ is bounded on $K$, then $g_t$ is similarly bounded on $T(K)$.
Therefore when assuming losses are in $\sF_\pb$ and you are indifferent to computation cost you can assume that $\ball_1 \subset K \subset \ball_{d}$.
Next we discuss what is possible using a computationally efficient algorithm.

\subsubsection*{Rounding Algorithms}
In order to implement the translation we need a procedure for finding $T$. Let $\nu_K$ be the uniform probability measure\index{uniform measure} on $K$ and
$\mu = \int_K x \d{\nu_K}(x)$ be the centre of mass and $\Sigma = \int_K (x - \mu)(x - \mu)^\top \d{\nu_K}(x)$ the moment of inertia of $K$. 
Let $\ISO_K(x) = \Sigma^{-1/2} (x - \mu)$ and $J = \ISO_K(K)$.\label{page:isotropic}
A simple calculation shows that $\int_J x \d{\nu_J}(x) = \zeros$ and $\int_J xx^\top \d{\nu_J}(x) = \id$. That is, $J$ is in isotropic position.

\begin{theorem}[Theorem 4.1, \citealt{kannan1995isoperimetric}]\label{thm:isotropic}
Suppose that $J$ is in isotropic position. Then
\begin{align*}
\ball_1 \subset J \subset \ball_{1+d}\,.
\end{align*}
\end{theorem}

\begin{remark}
Be careful. Our definition of isotropic position is standard in probability theory while in geometric analysis it is normal to say that $K$ is in isotropic position
if $\int_K x \d{x} = \zeros$, $\vol(K) = 1$ and $\int_K xx^\top \d{x} = L_K \id$ for some $L_K$. The recent resolution to the `slicing conjecture'
shows that $L_K$ is upper- and lower-bound by universal constant, which means that up to constant scaling factors the two definitions of isotropic position are the same
\citep{klartag2024affirmative}.
\index{slicing conjecture}
\end{remark}

Provided that $K$ is suitably represented, then there exist algorithms that find an affine map\index{affine!map} $T$ in polynomial time such that $J = T(K)$ is close enough to isotropic
position that $\ball_1 \subset K \subset \ball_{2d}$. The procedure is based on estimating the centre of mass and moment of inertia of $K$ using uniform samples
and estimating the corresponding affine map $T$ defined above \citep{lovasz2006simulated}.

\index{convex body!positions of|)}

\section{Extension}\label{sec:reg:extension} \index{extension|(}
Some of the algorithms presented in this book are only defined for unconstrained problems where $K = \R^d$.
Furthermore, techniques designed for handling constraints such as self-concordant barriers introduce complexity and dimension-dependent constants into the analysis.
One way to mitigate these problems is to use an algorithm designed for unconstrained bandit convex optimisation on an extension of the loss function(s).\index{setting!unconstrained}
An extension of a convex function $f \colon \R^d \to \R \cup\{\infty\}$ with $K = \dom(f)$ is another convex function $e \colon \R^d \to \R$ such that
\begin{align*}
e(x) = f(x) \text{ for all } x \in K\,.
\end{align*}
Sometimes no such extension exists. For example, the function defined by
\begin{align*}
f(x) = \begin{cases} 
1-\sqrt{x} & \text{if } x \geq 0 \\
\infty & \text{if } x < 0
\end{cases}
\end{align*}
cannot be extended to a convex function with domain $\R$.
When $f$ is Lipschitz on its domain then an extension to $\R^d$ is always possible.

\begin{proposition}\label{prop:extension}
Suppose that $f \colon \R^d \to \R \cup \{\infty\}$ is convex and $\lip(f) < \infty$. Then there exists a convex function $e \colon \R^d \to \R$
such that
\begin{enumerate}
\item $e(x) = f(x)$ for all $x \in \dom(f)$; and
\item $\lip(e) = \lip(f)$.
\end{enumerate}
\end{proposition}

\begin{proof}[\Proofskippy]
To keep things simple, let us assume that $\dom(f)$ has nonempty interior (but see \cref{ex:lip-extension}). 
The idea is to define $e$ as the supremum of all tangent hyperplanes to $f$ in $\interior(\dom(f))$. 
Define
\begin{align*}
e(x) = \sup_{y \in \interior(\dom(f))} (f(y) + Df(y)[x-y])\,.
\end{align*}
Note that no convex extension can take a smaller value than this by convexity, so this $e$ is the minimal extension.
We leave it as an exercise to establish the claimed properties of $e$.
\end{proof}

\begin{exer}(\skippy)\label{ex:lip-extension}
\faStar \faBook \quad
Prove \cref{prop:extension}. We suggest you start by assuming $\dom(f)$ has nonempty interior. 
In case $\dom(f)$ has no interior you should first extend $f$ to the affine hull\index{affine!hull} of the relative interior\index{relative interior} and then extend the extension to the whole space.
You may find it useful to use the fact that for $y \in \interior(\dom(f))$, $Df(y)[h] = \sup_{g \in \partial f(y)} \ip{g, h}$.
\end{exer}

The extension in \cref{prop:extension} has the limitation that it cannot be evaluated in an unbiased way with stochastic oracle access to $f$.
The reason is that to even  approximate $e$ at some point $x \notin \dom(f)$ you need to solve an optimisation problem that may require you to evaluate $f$ at many points in $K$.
The next proposition shows there exists an extension that can be evaluated at $x \notin \dom(f)$ using only a single evaluation of $f$.

\begin{proposition}\label{prop:reg:bandit-extension}
Suppose that $\ball_r \subset K$ and $f : \R^d \to \R \cup \{\infty\}$ is a convex function 
such that $K \subset \interior(\dom(f))$, $f(K) \subset [0,1]$ and $\lip(f) < \infty$.
Let $m \in \R_+$ be such that $Df(y)[y] - f(y) \leq m$ for all $y \in K$.
Let $\pi$ be the Minkowski functional\index{Minkowski functional} of $K$
and
\begin{align*}
e(x) = \max(1, \pi(x)) f\left(\frac{x}{\max(1, \pi(x))}\right) + m (\max(1, \pi(x)) - 1) \,.
\end{align*}
The function $e$ satisfies the following:
\begin{enumerate}
\item $e(x) = f(x)$ for all $x \in K$. \label{prop:reg:bandit-extension:ext}
\item $e$ is convex. \label{prop:reg:bandit-extension:cvx}
\item $\lip(e) \leq \frac{m}{r} + \frac{1}{r} + \lip(f)$. \label{prop:reg:bandit-extension:lip}
\item For all $x \notin K$, $e(x/\pi(x)) \leq e(x)$. \label{prop:reg:bandit-extension:min}
\end{enumerate}
\end{proposition}

\begin{remark}
The condition that $K \subset \interior(\dom(f))$ ensures that $Df(y)[y]$ is real-valued for all $y \in K$.
You could replace this condition with $\dom(f) = K$ and define $Df(y)[y]$ for $y \in \partial K$ via the extension in \cref{prop:extension}.
Note, it can happen in this case that $Df(y)[y] \neq -Df(y)[-y]$.
\end{remark}

Compared to the extension in \cref{prop:extension}, the extension above has the drawback that $\lip(e)$ can be much larger than $\lip(f)$.
More positively, however, the extension above can be evaluated at $x$ by computing $\pi(x)$
and evaluating $f$ at $x/\max(1, \pi(x))$, which means the extension can be evaluated at $x$ using a single query to $f$.

\begin{remark}
Let $x \in \partial K$.
By \cref{lem:reg:mink}\ref{lem:reg:mink:hom}, the Minkowski functional is homogeneous, which means that for $t \geq 1$
$e(tx) = t f(x) + m (t - 1)$ is a linear function.
So $e$ is defined outside of $K$ by glueing together rays emanating from points $x \in \partial K$.
The most challenging part of the proof of \cref{prop:reg:bandit-extension} is establishing convexity.
\end{remark}

\begin{proof}[Proof of \cref{prop:reg:bandit-extension}]
Abbreviate $\pip(x) = \max(1, \pi(x))$.
Recall that for $x \in \interior(\dom(f))$, $h \mapsto Df(x)[h]$ is convex and positively homogeneous, and hence subadditive. \todot{citation}
Part~\ref{prop:reg:bandit-extension:ext} follows immediately from the fact that for $x \in K$, $\pi(x) \leq 1$ and therefore $\pip(x) = 1$.
Moving to part~\ref{prop:reg:bandit-extension:cvx},
define $g(z, \lambda) = \lambda f(z/\lambda)$, which is called the perspective of $f$ and according to \citet[\S2.3.3]{BV04} is jointly convex on $\R^d \times (0,\infty)$.
\index{perspective}
Let $z \in \R^d$ and $\lambda \geq \pip(z)$. Then
\begin{align}
g(z, \lambda) 
&\explana\geq g(z, \pip(z)) + \frac{\d{g(z,\theta)}}{\d{\theta}}\Big|_{\theta = \pip(z)} \left(\lambda - \pip(z)\right) \nonumber \\
&\explana= g(z, \pip(z)) + \left(f\left(\frac{z}{\pip(z)}\right) + Df\left(\frac{z}{\pip(z)}\right)\left[\frac{-z}{\pip(z)}\right] \right) \left(\lambda - \pip(z)\right) \nonumber \\
&\explana\geq g(z, \pip(z)) - m \left(\lambda - \pip(z)\right)\,,
\label{eq:ext:1}
\end{align}
where in \explanr{} the derivative is the right-derivative and the inequality follows from convexity of $g$, 
\explanr{} follows by the chain rule and the definition of $g$ and 
\explanr{} by the assumptions on $m$ in the proposition statement so that with $w = z / \pip(z) \in K$ by subadditivity of $Df(w)[\cdot]$, 
$f(w) + Df(w)[-w] \geq f(w) - Df(w)[w] \geq -m$.
Let $x, y \in \R^d$ and $p \in (0,1)$ and $z = px + (1-p)y$.
By definition,
\begin{align*}
e(z) 
&= \pip(z) f\left(\frac{z}{\pip(z)}\right) + m(\pip(z) - 1) \\
&= g(z, \pip(z)) + m(\pip(z) - 1) \\
&\explana\leq g(z, p\pip(x) + (1-p)\pip(y)) + m\left[p \pip(x) + (1-p) \pip(y) - 1\right] \\
&\explana\leq p g(x, \pip(x)) + (1-p) g(y, \pip(y)) + m \left[p\pip(x) + (1-p)\pip(y) - 1\right] \\
&= pe(x) + (1-p)e(y) \,,
\end{align*}
where \explanr{} follows from \cref{eq:ext:1} with $\lambda = p\pip(x)+(1-p)\pip(y) \geq \pip(z)$ by convexity of $\pip$, and
\explanr{} follows from joint convexity of $g$ and because $z = px + (1-p)y$. 
Therefore $e$ is convex.
Next we prove part~\ref{prop:reg:bandit-extension:lip}. Let $h \in \sphere_1$ and $x \notin K$, which means that $\pip(x) = \pi(x) > 1$.
By \cref{lem:reg:mink}\ref{lem:reg:mink:diff}, $D\pi(x)[h] = \ip{\theta, h}$ for some $\theta \in K^\circ$. 
Because polarity reverses inclusion (\cref{prop:polar-dual}) and $\ball_r \subset K$, $K^\circ \subset \ball_{1/r}$ and therefore $\norm{\theta} \leq 1/r$.
Letting $w = x/\pi(x) \in K$,
\begin{align}
D e(x)[h]
&= \ip{\theta, h} \left(m  + f(w)\right) + Df(w) \left[h - \ip{\theta, h} w\right] \nonumber \\
&\leq \ip{\theta, h} \left(m + f(w) \right) + Df(w)\left[-\ip{\theta, h} w\right] + \lip(f) \,, 
\label{eq:ext:2}
\end{align}
where the inequality follows because $Df(w)[\cdot]$ is subadditive and $Df(w)[h] \leq \lip(f)$ since $w \in \interior(\dom(f))$.
When $\ip{\theta, h} \geq 0$; then by convexity
\begin{align*}
\ip{\theta, h} \left(m + f(w)\right) + Df(w)\left[-\ip{\theta, h} w\right] 
&= \ip{\theta, h} \left( m + f(w) + Df(w)[-w]\right) \\
&\leq \ip{\theta, h} \left(m + f(\zeros)\right) 
\leq \frac{m+1}{r} \,.
\end{align*}
Alternatively, if $\ip{\theta, h} < 0$, then by the assumption that $Df(w)[w] - f(w) \leq m$,
\begin{align*}
\ip{\theta, h} \left(m + f(w)\right) + Df(w)\left[-\ip{\theta, h} w\right] 
&= \ip{\theta, h} \left(m + f(w) - Df(w)[w]\right) \\
&\leq 0\,. 
\end{align*}
Combining the previous two displays with \cref{eq:ext:2} shows that $De(x)[h] \leq (m+1)/r + \lip(f)$ for all $h \in \sphere_1$ and $x \notin K$.
The claim now follows from \cref{ex:reg:lip} and the fact that $K$ is bounded.
For part~\ref{prop:reg:bandit-extension:min}, since $x \notin K$ we have
\begin{align*}
e(x) &= \pi(x) f\left(\frac{x}{\pi(x)}\right) + m (\pi(x) - 1) 
\geq f\left(\frac{x}{\pi(x)}\right) = e\left(\frac{x}{\pi(x)}\right)\,,
\end{align*}
where we used the fact that $x/\pi(x) \in K$ so that $e(x/\pi(x)) = f(x/\pi(x))$.
\end{proof}

The next proposition uses the results from \cref{sec:regularity:lip} to refine the Lipschitz constant of the extension in
\cref{prop:reg:bandit-extension} when $f$ is extended from a suitable subset of its domain.

\begin{proposition}\label{prop:reg:bandit-extension-eps}
Suppose that $f \in \cF_\pb$ and $\ball_r \subset K$.
Let $\pi$ be the Minkowski functional of $K$ and
$\eps \in (0,1)$, and let $\pip(x) = \max(1, \pi(x)/(1-\eps))$ and 
\begin{align*}
e(x) =   \pip(x) f\left(\frac{x}{\pip(x)}\right) + \frac{1-\eps}{\eps} \left(\pip(x) - 1\right) \,.
\end{align*}
Then, the following hold:
\begin{enumerate}
\item $e(x) = f(x)$ for all $x \in K_\eps = \{x \in K \colon \pi(x) \leq 1 - \eps\}$. \label{prop:reg:bandit-extension-eps:equal}
\item $e$ is convex. \label{prop:reg:bandit-extension-eps:cvx}
\item $\lip(e) \leq \frac{2}{\eps(1-\eps) r}$. \label{prop:reg:bandit-extension-eps:lip}
\item For all $x \notin K_\eps$, $e(x/\pip(x)) \leq e(x)$. \label{prop:reg:bandit-extension-eps:leq}
\end{enumerate}
\end{proposition}

\begin{proof}
Let $y \in K_\eps$ and $z = y/(1 - \eps) \in K$, which means that $z - y = \frac{\eps}{1-\eps} y$.
Combining this with the fact that $f \in \cF_{\pb}$ yields
\begin{align*}
1 
\tag*{$f \in \cF_{\pb}$}
&\geq f(z) \\
\tag*{$f$ convex}
&\geq f(y) + Df(y)[z-y] \\
\tag*{def.\ of $z$}
&= f(y) + \frac{\eps}{1-\eps} Df(y)[y] \\
\tag*{$f \in \cF_{\pb}$}
&\geq \frac{\eps}{1-\eps} \left(Df(y)[y] - f(y)\right) \,. 
\end{align*}
Rearranging shows that for all $y \in K_\eps$, $Df(y)[y] - f(y) \leq \frac{1-\eps}{\eps} \triangleq m$.
The claim now follows by applying \cref{prop:reg:bandit-extension} to $K_\eps$, which by definition has $B_{(1-\eps)r} \subset K_\eps$.
Hence, by \cref{cor:lip2}, $\lip_{K_\eps}(f) \leq \frac{1}{\eps r}$,
which by \cref{prop:reg:bandit-extension} means
the Lipschitz constant $\lip(e)$ is bounded by
\begin{align*}
\lip(e)
&\leq \frac{m+1}{(1-\eps) r} + \frac{1}{(1-\eps)r} + \lip_{K_\eps}(f) \\
&\leq \frac{\frac{1-\eps}{\eps} + 1}{(1-\eps)r} + \frac{1}{(1-\eps)r} + \frac{1}{r \eps} \\
&= \frac{2}{\eps(1-\eps) r} \,.
\qedhere 
\end{align*}
\end{proof}

\index{extension|)}

\section{Smoothing}\label{sec:reg:smooth} \index{smoothing}

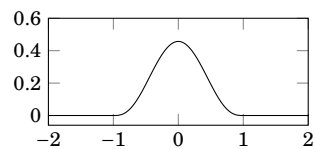
\begin{wrapfigure}[4]{r}{4cm}
\vspace{-0.5cm}
\scriptsize
\begin{tikzpicture}
\begin{axis}[xmin=-2,xmax=2,height=3cm,width=5cm,ymax=0.6]
\addplot[mark=none,smooth,samples=500] {48 / 105 * pow(max(0,1 - x * x), 3)};
\end{axis}
\end{tikzpicture}
\caption{$\phi$ in dimension one.}\label{fig:phi}
\commentAlt{A plot of phi in dimension 1 showing that it vanishes outside of the interval between minus one and one and looks like a smooth bump inside the interval.}
\end{wrapfigure}
Let $\phi \colon \R^d \to \R$ be the twice-differentiable function given by \index{differentiable}
\begin{align*}
\phi(x) &= \frac{1}{C} \left(1 - \norm{x}^2\right)^3 \sind_{\ball_1}(x) \quad \text{with}\\
C &= \int_{\ball_1} \left(1 - \norm{x}^2\right)^3 \d{x} \,.
\end{align*}
Note that $\phi$ is the density of a probability measure on $\R^d$ that is supported on $\ball_1$ (see \cref{fig:phi}).
Given $\eps > 0$, let 
\begin{align*}
\phi_\eps(x) = \eps^{-d} \phi(x/\eps)\,, 
\end{align*}
which by a change of measure is also a probability density, this time supported on $\ball_\eps$.

\newcommand{\closure}{\operatorname{cl}}
\begin{proposition}\label{prop:reg:smooth}
Suppose that $f \colon \R^d \to \R \cup \{\infty\}$ is convex and $\lip(f) < \infty$ and let $g = f * \phi_\eps$ with $*$ the convolution, which is defined
on $\dom(g) = \{x \in \R^d \colon x + \ball_\eps \subset \closure(\dom(f))\}$. 
Then the following hold:
\begin{enumerate}
\item $g$ is twice differentiable on $\interior(\dom(g))$. \label{prop:reg:smooth:diff}
\item $\lip(g) \leq \lip(f)$. \label{prop:reg:smooth:lip}
\item $g$ is smooth: $\norm{g''(x)} \leq \frac{(d+1)(d+6) \lip(f)}{\eps}$ for all $x \in \interior(\dom(g))$. \label{prop:reg:smooth:smooth}
\item $\max_{x \in \dom(g)} |f(x) - g(x)| \leq \eps \lip(f)$. \label{prop:reg:smooth:close}
\end{enumerate}
\end{proposition}

\begin{proof}
Part~\ref{prop:reg:smooth:diff} follows by writing out the definition, making a change of variables and exchanging limits and integrals. This is a good technical exercise.
Part~\ref{prop:reg:smooth:lip} is also left as an (easy) exercise.
For part~\ref{prop:reg:smooth:smooth}, the constant $C$ can be calculated by integrating in polar coordinates:
\begin{align*}
C 
&= \int_{\ball_1} \left(1 - \norm{x}^2\right)^3 \d{x} \\
\tag*{\cref{prop:radial-int}}
&= d \vol(\ball_1) \int_0^1 r^{d-1} \left(1 - r^2\right)^3 \d{r} \\
&= \frac{48\vol(\ball_1)}{(d+2)(d+4)(d+6)} \,.
\end{align*}
By convexity of the spectral norm and naive calculation,
\begin{align*}
\norm{g''(x)} 
&= \norm{\int_{\ball_\eps} f(x + u) \phi_\eps''(u) \d{u}} \\
&\explana= \norm{\int_{\ball_\eps} (f(x + u) - f(x)) \phi_\eps''(u) \d{u}} \\
&\explana\leq \eps \lip(f) \int_{\ball_\eps} \norm{\phi_\eps''(u)} \d{u} \\
&\explana= \frac{\lip(f)}{\eps} \int_{\ball_1} \norm{\phi''(u)} \d{u} \\
&\explana= \frac{\lip(f)}{C \eps} \int_{\ball_1} \norm{24 uu^\top (1 - \norm{u}^2) - 6 \id(1 - \norm{u}^2)^2} \d{u} \\
&\explana\leq \frac{\lip(f)}{C \eps} \int_{\ball_1} \left(\norm{24 uu^\top (1 - \norm{u}^2)} + 6 \norm{\id(1 - \norm{u}^2)^2}\right) \d{u} \\
&\explana= \frac{d \vol(\ball_1) \lip(f)}{C \eps} \int_0^1 r^{d-1} \left[24 r^2(1-r^2) + 6 (1 - r^2)^2\right] \d{r} \\
&= \frac{(d+1)(d+6)\lip(f)}{\eps}\,,
\end{align*}
where \explanr{} follows because $\int_{\ball_\eps} \phi_\eps''(u) \d{u} = \zeros$;
\explanr{} since $f$ is Lipschitz and the spectral norm is convex (or triangle inequality);
\explanr{} and \explanr{} by a change of measure and differentiating;
\explanr{} by the triangle inequality; and \explanr{} by \cref{prop:radial-int} and because $\snorm{uu^\top} = \norm{u}^2$.
For part~\ref{prop:reg:smooth:close}, since $f$ is Lipschitz,
\begin{align*}
\left|g(x) - f(x)\right| 
&= \left|\int_{\ball_\eps} \left(f(x+u) - f(x) \right) \phi_\eps(u) \d{u}\right|  \\
&\leq \lip(f) \int_{\ball_\eps} \norm{u} \phi_\eps(u) \d{u} \\
&\leq \eps \lip(f) \,.
\qedhere
\end{align*}
\end{proof}

\begin{exer}
\faStar \quad
Prove \cref{prop:reg:smooth}\ref{prop:reg:smooth:diff} and \ref{prop:reg:smooth:lip}.
\end{exer}

\section{Computation}
\label{sec:reg:compute}

There are a variety of standard operations that are components in (bandit) convex optimisation algorithms;
for example, projections and positioning a convex body $K$ into isotropic/John's position.

\paragraph{Standard operations}
We are interested in the following operations:
\begin{itemize}
\item $\MEM_K$ is the membership oracle: $\MEM_K(x) = \sind_K(x)$. \label{page:membership} 
\item $\SEP_K$ is a separation oracle: $\SEP_K(x) = \bot$ if $x \in K$ and otherwise $\SEP_K(x) = H$ for some half-space $H$ with $K \subset H$. \label{page:separation}\index{separation oracle}
\item $\LIN_K$ is the linear optimisation oracle: $\LIN_K(c) = \argmin_{x \in K} \ip{c, x}$. 
\item $\CVX_K$ is the function with $\CVX_K(f) = \argmin_{x \in K} f(x)$.
\item $\PROJ_{K,N}$ is the function $\PROJ_{K,N}(y) = \argmin_{x \in K} N(x - y)$ where $N$ is a norm. \index{projection}
\item $\ISO_K$ is the affine map\index{affine!map} such that $\{\ISO_K(x) \colon x \in K\}$ is istropic.
\item $\JOHN_K$ is the affine map such that $\{\JOHN_K(x) \colon x \in K\}$ is in John's position. \label{page:john}
\item $\SAMP_K$ is the oracle that returns a point sampled from the uniform distribution on $K$.\index{uniform measure}
\item $\GRAD_f$ returns the gradient of a function $f \colon \R^d \to \R$.
\end{itemize}
\cref{tab:comp} provides complexity bounds for computing one oracle from others. The bounds ignore logarithmic factors and are given in terms of
the number of arithmetic operations as well as calls to other oracles. For example, \cref{tab:comp} claims that linear optimisation can be computed 
in $\tilde O(d^3)$ arithmetic operations and $\tilde O(d)$ calls to a separation oracle for $K$.
More importantly, we are only claiming the relevant quantity can be computed \textit{approximately}. 
Moreover, the oracles used as inputs are permitted to be approximate as well. We badly want to avoid handling approximation errors for computations in this book.
We will assume exact computation in our analysis and leave it to you to carefully consider the approximation error if this concerns you. 
For some of the oracles it is not even obvious what metric should be used to define the approximation error. That too, we leave to you to figure out. Usually
the reference in \cref{tab:comp} contains what you need to know.

\begin{exer}
\faStar \quad Prove the complexity bound for all entries in \cref{tab:comp} without a reference. 
\end{exer}

\begin{table}[h!]
\caption{Computation costs for standard operations. In rows where $m$ appears we assume that $K = \{x \colon Ax \leq b\}$ with $A \in \R^{m \times d}$. In rows where $v$ appears we assume
that $K = \conv(x_1,\ldots,x_v)$. Since $K$ is assumed to be a convex body, $m = \Omega(d)$ and $v = \Omega(d)$.
} \label{tab:comp}
\fontsize{7.5}{8}\selectfont \centering \setlength\tabcolsep{5pt}
\renewcommand{\arraystretch}{1.9}
\begin{longtable}{|lll|}
\hline 
\textsc{op.} & \textsc{complexity} & \textsc{reference} \\ \hline
$\MEM_K$ & $m d$ & \textendash \\
$\MEM_K$ & $v^{2.37}$$\dagger$ & \citealt{jiang2020faster} \\
$\MEM_{K^\circ}$ & $1 + \LIN_K$  & \textendash \\
$\SEP_K$ & $md$ & \textendash \\
$\SEP_K$ & $d \MEM_K$ & \citealt{lee2018efficient} \\
$\LIN_K$ & $d^3 + d^2 \MEM_K$ & \citealt{lee2015faster}, \citealt{lee2018efficient} \\
$\LIN_K$ & $d^3 + d \SEP_K$ &  \citealt{lee2015faster} \\
$\LIN_K$ & $m^{2.37}$$\dagger$ & \citealt{jiang2020faster} \\
$\LIN_K$ & $vd$ & \textendash \\
$\CVX_K$ & $d^3 + d \GRAD_f + d^2 \MEM_K$ &  \citealt{lee2015faster}, \citealt{lee2018efficient} \\
$\CVX_K$ &  $d^3 + d \GRAD_f + d \SEP_K$ &  \citealt{lee2015faster} \\
$\CVX_K$ & $d^3 + d \GRAD_f + md^2$ & \citealt{lee2015faster} \\
$\PROJ_{K,N}$ & $d^3 + d \GRAD_N + d^2 \MEM_K$ & \textendash \\
$\PROJ_{K,N}$ & $d^3 + d \GRAD_N + d \SEP_K$ & \textendash \\
$\PROJ_{K,N}$ & $d^3 + d \GRAD_N + md^2$ & \textendash \\
$\ISO_K$ & $d^4 + d^4 \MEM_K$ &  \citealt{lovasz2006simulated} \\
$\ISO_K$ & $d^3 + d \SAMP_K$ &  \citealt{lovasz2006simulated} \\   
$\JOHN_K$ & $m^{3.5}$  & \citealt{KT93}  \\
$\JOHN_{K^\circ}$ & $v^{3.5}$ & \citealt{KT93} \\
\hline
\end{longtable}

\vspace{0.1cm}
\flushleft $\dagger$ these bounds would improve to $m^{2.055}$ and $v^{2.055}$ if matrix multiplication algorithms improved to their theoretical limits \citep{jiang2020faster}. 
\end{table}

\section{Notes}

\begin{enumeratenotes}
\item Versions of some or all the properties used here have been exploited in a similar fashion by \cite{FK05,BEL16,Lat20-cvx} and others.

\item Occasionally it would be convenient to be able to extend $\beta$-smooth functions while preserving $\beta$-smoothness to all of $\R^d$.
Curiously, this is not always possible \citep{drori2018properties}.

\item The extension in \cref{prop:reg:bandit-extension} is due to \cite{LFMV24}. A related extension was proposed by \index{extension}
\cite{mhammedi2022efficient}, who also use an extension based on the `projection' $x/\pi(x)$ but assume knowledge of the gradient of $f$ at this point. 

\end{enumeratenotes}

\chapter[Bisection in One Dimension]{Bisection in One Dimension\copynotice}\label{chap:bisection}

We start with a simple but instructive algorithm for the one-dimensional stochastic setting.\index{setting!stochastic}
The next assumption is considered global throughout the chapter:

\begin{assumption}\label{ass:bisect}
The following hold:
\begin{enumerate}
\item $d = 1$ and $K$ is a nonempty interval; 
\item the setting is stochastic: $f_t = f$ for all $t$; and
\item the loss function is Lipschitz: $f \in \cF_\pl$.
\end{enumerate}
\end{assumption} 

Like many algorithms for convex bandits, the bisection method is based on a classical technique for deterministic convex optimisation.
The algorithm in this chapter only works in the stochastic one-dimensional setting but has the advantages that it can be implemented trivially and is nearly
minimax optimal. The ideas are also quite instructive and highlight some of the challenges when moving from deterministic to noisy zeroth-order optimisation.
The main theoretical result is a proof that under \cref{ass:bisect} the regret of \cref{alg:bisection:full} is bounded with high probability by $\tilde O(\sqrt{n})$.

\section{Bisection Method without Noise} \index{bisection method!deterministic|(}
We start by considering the noise-free setting, which illustrates the main idea.
The bisection method for deterministic zeroth-order convex optimisation is very simple.

\begin{algorithm}[h!]
\begin{algcontents}
\begin{lstlisting}
let $K_1 = K$
for $k = 1$ to $\infty$:
  let $x = \min K_k$ and $y = \max K_k$ 
  let $x_0 = \frac{2}{3} x + \frac{1}{3} y$, $x_1 = \frac{1}{3} x + \frac{2}{3} y$
  if $f(x_1) \geq f(x_0)$: then $K_{k+1} = [x,x_1]$
  else: $K_{k+1} = [x_0, y]$
\end{lstlisting}
\caption{Bisection method without noise}\label{alg:bisection:det}
\end{algcontents}
\end{algorithm}

\begin{theorem}
Let $(K_k)_{k=1}^\infty$ be the sequence of sets produced by \cref{alg:bisection:det}.
Then
\begin{align*}
\max_{x \in K_k} f(x) \leq \min_{y \in K} f(y) + \left(\frac{2}{3}\right)^{k-1} \vol(K) \text{ for all } k \geq 1 \,,
\end{align*}
where $\vol(K)$ is the width of the interval $K$.
\end{theorem}

\begin{proof}
Suppose that $x \in K_k$ and $x \notin K_{k+1}$. By convexity you immediately have that
$f(x) \geq \min_{y \in K_{k+1}} f(y)$. Therefore by induction, $\min_{x \in K_k} f(x) = \min_{x \in K} f(x)$ for all $k$.
By construction of the algorithm, $\vol(K_k) = (2/3)^{k-1} \vol(K)$. Since $f$ is Lipschitz by assumption, it follows that
\begin{align*}
\max_{x \in K_k} f(x) 
&\leq \max_{x \in K_k} \min_{y \in K_k} \left(f(y) + |x - y|\right)  \\
&\leq \min_{y \in K_k} f(y) + \vol(K_k) \\
&= \min_{y \in K} f(y) + \left(\frac{2}{3}\right)^{k-1} \vol(K)\,,
\end{align*}
which completes the proof.
\end{proof}
\index{bisection method!deterministic|)}

\section{Bisection Method with Noise}
\index{bisection method!stochastic|(}

\begin{wrapfigure}[12]{r}{4cm}
\vspace{-0.5cm}
\includegraphics[width=3.6cm]{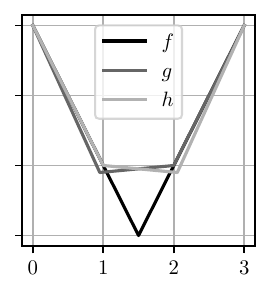}
\caption{Three convex functions}\label{fig:bisection}
\commentAlt{A plot of three piecewise linear convex functions on the interval between zero and three. 
All three have value 3 at the ends of the interval. Function f passes through one and a half, zero. Function g passes through the point nine, point nine and two, one. Function h
passes through one, one and two point one, point nine.}
\end{wrapfigure}
The generalisation of the bisection method to noisy optimisation is surprisingly subtle.
While \cref{alg:bisection:det} divides the current interval into three blocks, in the noisy setting it turns out that four blocks are necessary.
The situation is best illustrated by the example in \cref{fig:bisection}.
Suppose you have noisy (and therefore only approximate) estimates of the loss at all of $x \in \{0,1,2,3\}$. Notice
how all three convex functions $f, g$ and $h$ have very similar values at these points but the minimiser could be in any of $(0,1)$, $(1,2)$ or $(2,3)$.
Hence it will take many samples to identify which function is the truth.
Even worse, if the real function is $f$, then you are paying considerable regret while trying to identify the region where the minimiser lies.
The example illustrates the problem of exploring efficiently. A good exploration strategy will ensure that if the regret is large, then the information
gain about the identity/location of a near-minimiser is also large. The exploration strategy in \cref{fig:bisection} is not good.
The example also illustrates the challenges of generalising methods designed for deterministic zeroth-order optimisation to stochastic zeroth-order optimisation.
Fundamentally the problem is one of stability. \cref{alg:bisection:det} is not a stable algorithm because small perturbations of its observations can
dramatically change its behaviour.

We decompose the bisection method for stochastic convex optimisation into two algorithms.
The first accepts as input an interval and interacts with the loss for a number of rounds. Eventually it outputs
a new interval such that with high probability all of the following hold:
\begin{itemize}
\item The minimiser of the loss is contained in the output interval.
\item The new interval is three quarters as large as the input interval.
\item The regret suffered during the interaction is controlled.
\end{itemize}

\begin{algorithm}[h!]
\begin{algcontents}
\begin{lstlisting}
def $\BISECT$($K = [x,y]$, $n$, $\delta \in (0,1)$):
  $x_0 = \frac{3}{4} x + \frac{1}{4} y$, $x_1 = \frac{1}{2} x + \frac{1}{2} y$, $x_2 = \frac{1}{4} x + \frac{3}{4} y$
  for $t = 1$ to $n$:
    $c_t = \sqrt{\frac{24}{t} \log\left(\frac{4n}{3\delta}\right)}$
    let $X_t = x_{t \bmod 3}$ and observe $Y_t = f(X_t) + \eps_t$
    if $t \equiv 0 \mod 3$:
      let $\hat f_t(x_k) = \frac{3}{t} \sum_{u=1}^t \sind(u \equiv k\! \mod 3) Y_u$ with $k \in \{0,1,2\}$
      if $\hat f_t(x_2) - \hat f_t(x_1) \geq c_t$: return $[x, x_2]$
      if $\hat f_t(x_0) - \hat f_t(x_1) \geq c_t$: return $[x_0, y]$
  return $[x, y]$
\end{lstlisting}
\caption{Bisection episode}\label{alg:bisection}
\end{algcontents}
\end{algorithm}

\FloatBarrier

\begin{proposition}\label{prop:bisection}
Let $[z,w]$ be the interval returned by \cref{alg:bisection} with inputs $K$, $n$ and $\delta \in (0,1)$, and let
\begin{align*}
\Delta = \frac{1}{3}\left[f(x_0) + f(x_1) + f(x_2)\right] - \min_{x \in K} f(x)\,.
\end{align*}
Suppose that $\Delta > 0$. Then, with probability at least $1 - \delta$ the following both hold:
\begin{enumerate}
\item The interval $[z,w]$ returned by the algorithm satisfies  \label{prop:bisection:opt}
\begin{align*}
\min_{x \in [z,w]} f(x) = \min_{x \in K} f(x).
\end{align*}
\item The number of queries to the zeroth-order oracle is at most \label{prop:bisection:n} 
\begin{align*}
3 + \frac{384}{\Delta^2} \log\left(\frac{4n}{3\delta}\right) \,.
\end{align*}
\end{enumerate}
\end{proposition}

\begin{proof}
By convexity, $\max(f(x_0), f(x_2)) \geq f(x_1)$. Assume without loss of generality for the remainder of the proof that $f(x_2) \geq f(x_0)$ and
let $\theta = f(x_2) - f(x_1)$.
Let $G = G_{01} \cap G_{21}$ with
\begin{align*}
G_{01} &= \bigcap_{t \in I} \left\{\left|\hat f_t(x_0) - \hat f_t(x_1) - f(x_0) + f(x_1)\right| \leq c_t\right\} \quad \text{ and }\\
G_{21} &= \bigcap_{t \in I} \left\{\left|\hat f_t(x_2) - \hat f_t(x_1) - f(x_2) + f(x_1)\right| \leq c_t\right\} \,,
\end{align*}
where $I = \{1 \leq t \leq n \colon t \equiv 0 \mod 3\}$.
These are the events that $\hat f_t(x_0) - \hat f_t(x_1)$ is a reasonable approximation of $f(x_0) - f(x_1)$ for all rounds $t \in I$ and similarly for $\hat f_t(x_2) - \hat f_t(x_1)$.

\begin{exer}
\faStar \quad
Use \cref{thm:hoeffding} and a union bound to show that $\bbP(G) \geq 1 - \delta$. \index{Hoeffding's inequality}
\end{exer}

\solution{
Let $t \in I$.
By definition, $|\hat f_t(x_0) - \hat f_t(x_1) - f(x_0) + f(x_1)| = \frac{3 |S_t|}{t}$ where
\begin{align*}
S_t 
&= \sum_{u=1}^t \epsilon_u (\sind(u \equiv 0 \mod 3) - \sind(u \equiv 1 \mod 3)) \,,
\end{align*}
which is a sum of $2t/3$ independent subgaussian random variables.
By \cref{thm:hoeffding},
\begin{align*}
\bbP\left(|S_t| \geq \sqrt{\frac{8t}{3} \log\left(\frac{4n}{3\delta}\right)}\right) \leq \frac{3\delta}{2n} \,.
\end{align*}
Therefore
\begin{align*}
\bbP\left(\left|\hat f_t(x_0) - \hat f_t(x_1) - f(x_0) + f(x_1)\right| \geq \sqrt{\frac{24 \log\left(\frac{4n}{3\delta}\right)}{t}}\right) \leq \frac{3\delta}{2n} \,.
\end{align*}
A union bound over rounds $t \in I$ shows that $\bbP(G_{01}) \geq 1 - \delta/2$. The same argument shows that $\bbP(G_{21}) \geq 1 - \delta/2$ and a union bound completes the proof.
}

Suppose now that $G$ holds. We claim that $\theta \geq \frac{1}{2} \Delta$.
To reduce clutter, assume without loss of generality that $f(x_\star) = 0$. Suppose we can show that $f(x_0) + f(x_1) + f(x_2) \leq 6 \theta$.
Then
\begin{align*}
\Delta = \frac{1}{3}[f(x_0) + f(x_1) + f(x_2)] - f(x_\star) \leq 2 \theta\,,
\end{align*}
which shows that $\theta \geq \frac{1}{2} \Delta$ as required.
Proving that $f(x_0) + f(x_1) + f(x_2) \leq 6 \theta$ is a tedious case-based analysis depending on the interval containing $x_\star$.
To begin, convexity of $f$ and the assumption that $f(x_2) \geq f(x_0)$ implies that $x_\star$ can be chosen in $[x, x_2]$.
\begin{enumerate}
\item[\texttt{\uline{Case 1}:}] $[x_\star \in [x, x_0]]$. Let $\lambda \in [0,1]$ be such that $x_1 = \lambda x_2 + (1-\lambda) x_\star$.
Then $x_1 = \lambda x_2 + (1 - \lambda) x_\star \geq \lambda x_2 + (1 - \lambda) x$ and therefore $\lambda \leq (x_1 - x)/(x_2 - x) = \frac{2}{3}$.
By convexity of $f$, 
\begin{align*}
f(x_1) 
\tag*{Definition of $\lambda$}
&= f(\lambda x_2 + (1 - \lambda) x_\star) \\
\tag*{Convexity of $f$}
&\leq \lambda f(x_2) + (1 - \lambda) f(x_\star) \\
\tag*{Since $f(x_\star) = 0$}
&= \lambda f(x_2) \\
\tag*{Since $\lambda \leq \frac{2}{3}$}
&\leq \frac{2}{3} f(x_2) \\
\tag*{Definition of $\theta$}
&= \frac{2}{3} f(x_1) + \frac{2}{3} \theta \,.
\end{align*}
Rearranging shows that $f(x_1) \leq 2 \theta$.
Similarly, $f(x_0) \leq \frac{1}{2} f(x_1) \leq \theta$.
Finally, by definition, $f(x_2) = f(x_1) + \theta \leq 3 \theta$. Summing the bounds we have $f(x_0) + f(x_1) + f(x_2) \leq \theta + 2 \theta + 3 \theta = 6\theta$.
\item[\texttt{\uline{Case 2}:}] $[x_\star \in [x_0, x_1]]$. The argument follows a similar pattern. 
Let $\lambda \in [0,1]$ be such that $x_1 = \lambda x_2 + (1 - \lambda) x_\star$. 
Then $x_1 = \lambda x_2 + (1 - \lambda) x_\star \geq \lambda x_2 + (1 - \lambda) x_0$ and hence $\lambda \leq (x_1 - x_0)/(x_2 - x_0) = \frac{1}{2}$.
By convexity, $f(x_1) \leq \frac{1}{2} f(x_2) = \frac{1}{2} f(x_1) + \frac{1}{2} \theta$ and hence $f(x_1) \leq \theta$.
As before, $f(x_2) = f(x_1) + \theta \leq 2 \theta$, and by assumption $f(x_0) \leq f(x_2) \leq 2 \theta$.
Summing shows that $f(x_0) + f(x_1) + f(x_2) \leq 2\theta + \theta + 2\theta = 5\theta$.

\item[\texttt{\uline{Case 3}:}] $[x_\star \in [x_1, x_2]]$. Let $\lambda \in [0,1]$ be such that $x_1 = \lambda x_0 + (1 - \lambda) x_\star$. Then
$x_1 = \lambda x_0 + (1 - \lambda) x_\star \leq \lambda x_0 + (1 - \lambda) x_2$. Therefore $\lambda \leq (x_2 - x_1)/(x_2 - x_0) = \frac{1}{2}$.
Hence $f(x_1) \leq \frac{1}{2} f(x_0) \leq \frac{1}{2} f(x_2) = \frac{1}{2} f(x_1) + \frac{1}{2}\theta$ and so $f(x_1) \leq \theta$.
As before, $f(x_0) \leq f(x_2) = f(x_1) + \theta \leq 2 \theta$, which also shows $f(x_2) \leq 2 \theta$.
Therefore $f(x_0) + f(x_1) + f(x_2) \leq 5 \theta$.
\end{enumerate}
We are now in a position to establish the claims of the theorem, starting with
part~\ref{prop:bisection:opt}. By assumption $f(x_2) \geq f(x_1)$ and hence $x_\star$ cannot be in $[x_2, y]$. The algorithm cannot do any wrong if 
$x_\star \in [x_0, x_2]$. Suppose that $x_\star \in [x, x_0]$. By convexity $f(x_0) \leq f(x_1)$ and hence on $G$,
\begin{align*}
\hat f_t(x_0) - \hat f_t(x_1) < f(x_0) - f(x_1) + c_t \leq c_t\,,
\end{align*}
which means the algorithm does not return $[x_0, y]$.
For part~\ref{prop:bisection:n}, suppose that $c_t \leq \frac{1}{2} \theta$. Then, on event $G$,
\begin{align*}
\hat f_t(x_2) - \hat f_t(x_1)
\geq f(x_2) - f(x_1) - c_t
= \theta - c_t
\geq c_t\,,
\end{align*}
which means the algorithm halts.
Since $\theta \geq \frac{1}{2} \Delta$, it follows that on $G$ the algorithm halts once $t$ is a multiple of three and 
\begin{align*}
\frac{1}{4} \Delta \geq c_t = \sqrt{\frac{24}{t} \log\left(\frac{4n}{3\delta}\right)} \,.
\end{align*}
Solving shows the algorithm halts after at most
\begin{align*}
3 + \frac{384}{\Delta^2} \log\left(\frac{4n}{3\delta}\right)
\end{align*}
queries to the loss function.
\end{proof}

\begin{exer}
\faStar \faStar \faQuestionCircle \quad
Find a slick proof to replace the ugly case-by-case analysis in the proof of \cref{prop:bisection}.
\end{exer}

The main algorithm runs \cref{alg:bisection} iteratively on a shrinking interval and decreasing confidence parameter $\delta$ to ensure that with high probability
the returned interval contains a minimiser of $f$.

\begin{algorithm}[h!]
\begin{algcontents}
\begin{lstlisting}
args: $K = [x, y]$, $n$, $\delta \in (0,1)$
let $K_1 = [x, y]$ and $k_{\max} = 1 + \ceil{\log(n) / \log(4/3)}$. 
for $k = 1$ to $\infty$:
  let $t$ be the current round
  if $t$ = $n + 1$: exit
  $K_{k+1} = \BISECT\left(K_k, n - t + 1, \frac{\delta}{k_{\max}}\right)$ &\Comment{\cref{alg:bisection}}&
\end{lstlisting}
\caption{Bisection method}\label{alg:bisection:full}
\end{algcontents}
\end{algorithm}

\FloatBarrier

The main theorem of this chapter is the following theorem bounding the regret of \cref{alg:bisection:full}.

\begin{theorem}\label{thm:bisection}
Under \cref{ass:bisect},
with probability at least $1 - \delta$, the regret of \cref{alg:bisection:full} is bounded by
\begin{align*}
\Reg_n &= O\left(\vol(K) + \sqrt{n \log\left(\frac{n}{\delta}\right) \log(n)}\right) \,.
\end{align*}
\end{theorem}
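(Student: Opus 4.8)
The regret decomposes over the episodes run by Algorithm~\ref{alg:bisection:full}. Let $K_1 \supset K_2 \supset \cdots$ be the sequence of intervals and let $N_k$ be the number of rounds spent in episode $k$. Within episode $k$ the learner only plays the three points $x_0^{(k)}, x_1^{(k)}, x_2^{(k)}$ inside $K_k$, so the per-round regret is at most $\max_i f(x_i^{(k)}) - f(x_\star)$, which by convexity and Lipschitzness is at most $\vol(K_k) \le (3/4)^{k-1}\vol(K)$ (recall each episode shrinks the interval to three quarters of its length, by Proposition~\ref{prop:bisection}\ref{prop:bisection:opt} together with the geometry of the return rule). Hence the regret contributed by episode $k$ is at most $N_k \cdot \min\{\Delta_k + (\text{slack}), \vol(K_k)\}$ where $\Delta_k$ is the averaged suboptimality gap of the three query points in episode $k$; I will use both bounds $N_k \Delta_k$ and $N_k \vol(K_k)$ and trade them off.

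The first step is to condition on the event, of probability at least $1 - n\delta$ (union bound over at most $n$ episodes, each failing with probability at most $\delta$ by Proposition~\ref{prop:bisection}), that every episode returns correctly and uses at most $3 + \frac{100}{\Delta_k^2}\log(n/\delta)$ rounds. On this event $x_\star \in K_k$ for all $k$, so the shrinkage bound $\vol(K_k) \le (3/4)^{k-1}\vol(K)$ holds deterministically. The second step is to bound the regret in episode $k$. Since within the episode the instantaneous regret at each of the three points is at most $\vol(K_k)$ (each point lies in $K_k$ which contains $x_\star$, and $f$ is $1$-Lipschitz), the total regret in episode $k$ is at most $N_k \vol(K_k)$. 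But also $\Delta_k \le \max_i (f(x_i^{(k)}) - f(x_\star)) \le \vol(K_k)$ and $N_k \le 3 + \frac{100}{\Delta_k^2}\log(n/\delta)$, so
\begin{align*}
N_k \vol(K_k) \le 3\vol(K_k) + \frac{100 \vol(K_k)}{\Delta_k^2}\log\!\left(\frac{n}{\delta}\right)\,.
\end{align*}
The $3\vol(K_k)$ terms sum geometrically to $3\vol(K)\sum_{k\ge1}(3/4)^{k-1} = 12\vol(K)$, giving (after adding the final partial episode) the $13\vol(K)$ term.

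The third and main step is controlling $\sum_k \frac{\vol(K_k)}{\Delta_k^2}$, where I do not control $\Delta_k$ directly — it is whatever the adversary/function forces. Here is where I expect the real work: I want to combine the two per-episode regret bounds $N_k \Delta_k$ (valid because per-round regret $\le \Delta_k$ up to the confidence slack — actually one should be a little careful, per-round regret is bounded by $\max_i(f(x_i)-f(x_\star))$ which is at most $3\Delta_k$ by the case analysis in the proof of Proposition~\ref{prop:bisection}) and $N_k \Delta_k \lesssim \Delta_k(3 + \frac{100}{\Delta_k^2}\log(n/\delta)) = 3\Delta_k + \frac{100}{\Delta_k}\log(n/\delta)$. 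Since $\Delta_k \le \vol(K_k) \le (3/4)^{k-1}\vol(K)$, I have two competing estimates for the episode-$k$ regret: roughly $N_k\Delta_k$ and I want $\sum_k N_k\Delta_k \lesssim \sqrt{n\log(n/\delta)\cdot(\#\text{episodes})}$. The cleanest route: let $m$ be the total number of episodes; then $n = \sum_{k=1}^m N_k$, and the regret is $\sum_k (\text{regret}_k) \le \sum_k c\sqrt{N_k \log(n/\delta)}$ using regret$_k \lesssim N_k\Delta_k$ and $\Delta_k \lesssim \sqrt{\frac{\log(n/\delta)}{N_k}}$ (inverting $N_k \le 3 + \frac{100}{\Delta_k^2}\log(n/\delta)$, valid once $N_k$ is not tiny). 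Then by Cauchy–Schwarz $\sum_{k=1}^m \sqrt{N_k} \le \sqrt{m \sum_k N_k} = \sqrt{mn}$, and $m$ is bounded because $\vol(K_m) \ge$ (something forcing $\Delta$ small) — more simply, the algorithm only needs $m = O(\log n / \log(4/3))$ episodes to drive $\vol(K_k)$ below $1/n$, after which remaining regret is negligible; this gives $m \le 1 + \frac{\log n}{\log(4/3)}$ and hence the $10\sqrt{n\log(n/\delta)(1 + \log(n)/\log(4/3))}$ term.

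**Main obstacle.** The delicate point is the trade-off in the last step: I need $N_k$ large enough that $\Delta_k \lesssim \sqrt{\log(n/\delta)/N_k}$ is a legitimate inversion (the "$+3$" and confidence-width subtleties), and I need to handle episodes where $\Delta_k$ is so small that the episode only terminates because $\vol(K_k)$ is itself tiny rather than because the gap was detected — there, the $N_k\vol(K_k)$ bound must be invoked instead of $N_k\Delta_k$. Making both bounds valid simultaneously and choosing in each episode whichever is smaller, then summing, while keeping the number of episodes logarithmic, is the crux. Everything else (conditioning on the good event, geometric summation, Cauchy–Schwarz) is routine.
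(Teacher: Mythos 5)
Your proposal lands on the same argument as the paper: condition on the good event via a union bound over at most $n$ calls to Proposition~\ref{prop:bisection}, bound the episode-$k$ regret by $n_k\Delta_k$, split it as $n_k\Delta_k \le 3\Delta_k + 10\sqrt{n_k\log(n/\delta)}$ (the paper does this in one line rather than detouring through $n_k\vol(K_k)$), sum the $3\Delta_k \le 3(3/4)^{k-1}\vol(K)$ terms geometrically, note that episodes with $k \ge k_\star = 1 + \log(n)/\log(4/3)$ contribute at most $\vol(K)$ in total since $\Delta_k \le \vol(K)/n$ there, and apply Cauchy--Schwarz to $\sum_{k<k_\star}\sqrt{n_k}$. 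The ``main obstacle'' you flag is a non-issue: $n_k\Delta_k$ is always the true episode regret (not a hypothesis-dependent upper bound), and the clean split $n_k\Delta_k \le 3\Delta_k + 10\sqrt{n_k\log(n/\delta)}$ avoids the inversion-validity worry entirely.
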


\begin{proof}
\cref{alg:bisection:full} runs until the time horizon is reached, repeatedly calling \cref{alg:bisection}.
Let $n_k$ be the number of queries made to the loss function in episode $k$ of \cref{alg:bisection:full} and before
the time horizon $n$ has been reached. Hence $\sum_{k=1}^\infty n_k = n$. 
Let $K_k = [x_k, y_k]$ and
\begin{align*}
\Delta_k = \frac{1}{3} \left[f\left(\frac{1}{4} x_k + \frac{3}{4} y_k\right) + f\left(\frac{1}{2} x_k + \frac{1}{2} y_k\right) + f\left(\frac{3}{4} x_k + \frac{1}{4} y_k\right)\right] - f(x_\star) \,.
\end{align*}
Since $f$ is Lipschitz, provided that $x_\star \in K_k$ it holds that 
\begin{align}
\Delta_k \leq \max_{x \in K_k} (f(x) - f(x_\star)) \leq \vol(K_k) = \left(\frac{3}{4}\right)^{k-1} \vol(K) \,.
\label{eq:bisection:delta}
\end{align}
A union bound and \cref{prop:bisection} show that with probability at least $1 - \delta$ every call made to \cref{alg:bisection} in iterations $k \leq k_{\max}$ either
ends with the horizon being reached or returns a new interval containing the optimum after at most $n_k$ queries with
\begin{align}
n_k \leq 3 + \frac{384}{\Delta_k^2} \log\left(\frac{4n k_{\max}}{3\delta}\right) \,.
\label{eq:bisec:nk}
\end{align}
Assume this good event occurs.
We claim that
\begin{align}
\Reg_n
&=\sum_{t=1}^n \left(f(X_t) - f(x_\star)\right) 
\leq 2\vol(K) + \sum_{k=1}^\infty \sind(k < k_{\max}) n_k \Delta_k \,.
\label{eq:bisect:bound}
\end{align}
There are two cases. 
\begin{enumerate}
\item[\texttt{\uline{Case 1}:}] $[n_{k_{\max}} > 0]$. 
This means that all calls to $\BISECT$ in episodes $k < k_{\max}$ resulted in a smaller interval being returned. Hence, for $k < k_{\max}$, $n_k$ is a multiple of $3$ 
and the regret is $n_k \Delta_k$. The regret in the remaining episodes is bounded by $n (3/4)^{k_{\max} - 1} \leq \vol(K)$ by \cref{eq:bisection:delta}.
\item[\texttt{\uline{Case 2}:}] $[n_{k_{\max}} = 0]$. 
In this case the final episode ends before a smaller interval could be returned. Since for this $k$ it may not hold that $n_k$ is a multiple of $3$, we naively bound
the regret by $n_k \Delta_k + 2 \vol(K)$.
\end{enumerate}
Together the two cases establish \cref{eq:bisect:bound}.
By \cref{eq:bisec:nk}, for $k < k_{\max}$,
\begin{align*}
n_k \Delta_k \leq 6 \Delta_k + \sqrt{768 n_k \log\left(\frac{4n k_{\max}}{3\delta}\right)} \,.
\end{align*}
Hence, by \cref{eq:bisect:bound},
\begin{align*}
\Reg_n 
&\leq 2 \vol(K) +  \sum_{k=1}^\infty \sind(k < k_{\max}) \left[6 \Delta_k + \sqrt{768 n_k \log\left(\frac{4n k_{\max}}{3\delta}\right)}\right] \\
&\leq 2 \vol(K) + 6 \sum_{k=1}^\infty \left(\frac{3}{4}\right)^{k-1} \vol(K) + \sqrt{768 k_{\max} \sum_{k=1}^\infty n_k \log\left(\frac{4n k_{\max}}{3\delta}\right)} \\
&\leq 26 \vol(K) + \sqrt{768 k_{\max} n \log\left(\frac{4n k_{\max}}{3\delta}\right)} \\
&= O\left(\vol(K) + \sqrt{n \log(n/\delta) \log(n)} \right) \,,
\end{align*}
where we used Cauchy--Schwarz along with the fact that $\sum_{k=1}^\infty n_k = n$ and the formula for the geometric sum.
\end{proof}

\index{bisection method!stochastic|)}

\section{Notes}

\begin{enumeratenotes}
\item \cref{alg:bisection} is due to \cite{AFHKR11}. The basic principle behind the bisection method is that the volume of $K_k$ is guaranteed to decrease rapidly
with the number of iterations. Generalising this method to higher dimensions is rather non-trivial. \cite{AFHKR11} and \cite{LG21a} both used algorithms based on the
ellipsoid method \index{ellipsoid method}
while \cite{carpentier2024simple} used the centre of gravity method. These methods are covered in \cref{chap:ellipsoid}.
\item \cref{alg:bisection} works with no assumptions on $f$ beyond convexity and Lipschitzness and ensures $O(\sqrt{n} \log(n))$ regret in the stochastic setting.\index{setting!stochastic}
The algorithm is distinct from all others in this book because its regret depends only very weakly on the range of the loss function.
This is what one should expect from algorithms in the stochastic setting where the magnitude of the noise rather than the losses should determine the regret, as it does
for finite-armed bandits.\index{noise}\index{bandit!finite-armed}
\item There are various ways to refine \cref{alg:bisection:det} for the deterministic case that better exploit convexity \citep{orseau2023line}.
These ideas have not yet been exploited in the noisy (bandit) setting. Bisection-based methods for the deterministic setting\index{setting!deterministic} 
seem fast and require just $O(\log(1/\eps))$ queries to
the zeroth-order oracle to find an $\eps$-optimal point.
Remarkably, for suitably well-behaved functions Newton's method is exponentially faster with sample complexity $O(\log \log(1/\eps))$.\index{Newton's method}
\item \cref{alg:bisection:det} can be improved with a simple modification.
The algorithm evaluates $f$ at two points in each iteration, but by reusing data from previous iterations you can implement almost the same algorithm
using only one evaluation in each iteration. The algorithm is called the golden section search, and is usually analysed for unimodal\index{unimodal} function minimisation \index{golden section search}
\citep{kiefer1953sequential}. \label{note:bisection:reuse}

\item There is an interesting generalisation of the bisection search to a different model where the noise is a bit more adversarial \citep{BCCP22,BCCP22b}.
In the setting of this chapter these algorithms have about the same regret as \cref{alg:bisection:full} but are careful to reuse data as we hinted
at in Note~\ref{note:bisection:reuse} above.
\item The confidence intervals\index{confidence interval} used by \cref{alg:bisection} are generally quite conservative. In practice you may prefer to use standard statistical tests.
This will generally improve performance (quite dramatically). The price is that in extreme cases your confidence intervals will be invalid and the algorithm could suffer linear regret.
\end{enumeratenotes}

\chapter[Online Gradient Descent]{Online Gradient Descent\copynotice}\label{chap:sgd}

In this chapter we introduce an idea that is ubiquitous in zeroth-order optimisation, which is to use a gradient-based algorithm but replace the true gradients
with estimated gradients of a smoothed loss.
Except for \cref{sec:gd:sm-sc},
we assume throughout this chapter that the constraint set contains a euclidean ball of unit radius, the losses are bounded, Lipschitz and there is no noise: 

\begin{assumption}\label{ass:sgd}
The following hold:
\begin{enumerate}
\item $\ball_1 \subset K$; 
\item the loss functions $(f_t)_{t=1}^n$ are in $\cF_{\pb,\pl}$; and
\item there is no noise: $\eps_t = 0$ for all $t$.
\end{enumerate}
\end{assumption}

\begin{wrapfigure}[5]{r}{4cm}
\vspace{-2.5cm}
\includegraphics[width=4cm]{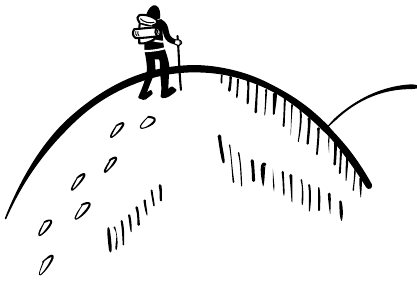}
\caption*{\textit{The long steady slog to the top. Hope the view is nice.}}
\end{wrapfigure}
In contrast to the previous chapter, the setting is now adversarial.
The most well-known optimisation algorithm is gradient descent. By default this algorithm needs access to the gradient of the loss.
We start by explaining the standard analysis of online gradient descent and then introduce spherical smoothing. These ideas are then combined to yield a simple algorithm
and analysis.
At the end of the chapter we explain how smoothness and strong convexity can be exploited to improve the bound, but only in the improper
setting where the learner is allowed to play outside of $K$.

\section{Gradient Descent}\index{gradient descent|(}
Gradient descent incrementally computes a sequence of iterates $(x_t)_{t=1}^n$ with $x_{t+1}$ computed by taking a gradient step from $x_t$.
Let $\Pi_K(x) = \argmin_{y \in K} \norm{x - y}$ be the euclidean projection onto $K$.
An abstract version of gradient descent for bandit convex optimisation is given below.

\begin{algorithm}[h!]
\begin{algcontents}
\begin{lstlisting}
args: learning rate $\eta > 0$
initialise $x_1 \in K$
for $t = 1$ to $n$
  sample $X_t$ from some distribution based on $x_t$ 
  observe $Y_t = f_t(X_t)$
  compute gradient estimate $g_t$ using $x_t$, $X_t$ and $Y_t$
  update $x_{t+1} = \Pi_{K}(x_t - \eta g_t)$ 
\end{lstlisting}
\caption{Abstract gradient descent}\label{alg:sgd-basic}
\end{algcontents}
\end{algorithm}

\FloatBarrier

Importantly, the algorithm does not evaluate the loss function at $x_t$ but rather at some random point $X_t$ whose distribution has not
been specified yet. We have rather informally written that the (conditional) law of $X_t$ should be based on $x_t$, by which we mean that 
\begin{align*}
\bbP(X_t \in A | \sF_{t-1}) = \nu(A | x_t)
\end{align*}
for some probability kernel $\nu \colon \sB(K) \times K \to [0,1]$. The kernel $\nu$ determines how the algorithm explores. \index{probability kernel} 
The gradient estimate $g_t$ is usually not an estimate of $f'_t(x)$, which may not even exist. 
Instead it is an estimate of the gradient of some surrogate loss function $s_t$ that \index{surrogate loss}
is close to $f_t$.
We return to the problem of defining the exploration kernel, surrogate and gradient estimates momentarily. Before that we give some details about gradient descent.
The analysis of gradient descent\index{gradient descent} at our disposal from the online learning literature yields a bound on the regret relative to the 
linear losses defined by the gradient
estimates $g_t$. Specifically, we have the following theorem:

\begin{theorem}\label{thm:sgd:abstract}
Let $(x_t)_{t=1}^n$ be the iterates produced by \cref{alg:sgd-basic}. Then,
for any $x \in K$,
\begin{align*}
\hReg_n(x) \triangleq \sum_{t=1}^n \ip{g_t, x_t - x} \leq \frac{\diam(K)^2}{2 \eta} + \frac{\eta}{2} \sum_{t=1}^n \norm{g_t}^2 \,.
\end{align*}
\end{theorem}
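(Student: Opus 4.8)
The plan is to run the textbook potential-function argument for projected online gradient descent, using the squared Euclidean distance $\norm{x_t - x}^2$ between the iterate $x_t$ and a fixed comparator $x \in K$ as the potential. The entire point of passing to the surrogate linear losses $u \mapsto \ip{g_t, u}$ is that the analysis then reduces to this one-line recursion, and nothing about where $g_t$ came from (or how $X_t$ was sampled) enters the proof.

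First I would isolate the only non-trivial ingredient: the Euclidean projection $\Pi_K$ onto the convex body $K$ is non-expansive, i.e.\ $\norm{\Pi_K(y) - z} \leq \norm{y - z}$ for every $y \in \R^d$ and every $z \in K$. This follows from the variational inequality $\ip{y - \Pi_K(y), z - \Pi_K(y)} \leq 0$ characterising the projection, together with the expansion $\norm{y - z}^2 = \norm{y - \Pi_K(y)}^2 + 2\ip{y - \Pi_K(y), \Pi_K(y) - z} + \norm{\Pi_K(y) - z}^2$, in which the first summand is non-negative and the middle summand is non-negative by the variational inequality. Applying this with $y = x_t - \eta g_t$ and $z = x$ (valid since $x \in K$) and expanding gives
\begin{align*}
\norm{x_{t+1} - x}^2 \leq \norm{x_t - \eta g_t - x}^2 = \norm{x_t - x}^2 - 2\eta \ip{g_t, x_t - x} + \eta^2 \norm{g_t}^2\,,
\end{align*}
which rearranges to the per-round inequality $\ip{g_t, x_t - x} \leq \frac{1}{2\eta}\left(\norm{x_t - x}^2 - \norm{x_{t+1} - x}^2\right) + \frac{\eta}{2}\norm{g_t}^2$.

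Finally I would sum this over $t = 1, \dots, n$: the distance terms telescope to $\frac{1}{2\eta}\left(\norm{x_1 - x}^2 - \norm{x_{n+1} - x}^2\right) \leq \frac{1}{2\eta}\norm{x_1 - x}^2$, and since both $x_1$ and $x$ lie in $K$ this is at most $\frac{\diam(K)^2}{2\eta}$. Adding the collected $\frac{\eta}{2}\sum_{t=1}^n \norm{g_t}^2$ term yields exactly the claimed bound. I do not expect any real obstacle: the sole step that is not routine algebra is the non-expansiveness of $\Pi_K$, and that is a two-line consequence of the first-order optimality condition for the projection. The one point worth stating carefully is that the inequality holds for every fixed $x \in K$ (the comparator was arbitrary throughout), so it remains valid when $x$ is later instantiated at a data-dependent point in order to relate $\hReg_n$ back to the true regret.
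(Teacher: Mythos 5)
Your proof is correct and follows exactly the same potential-function argument as the paper: bound $\norm{x_{t+1}-x}^2$ via non-expansiveness of $\Pi_K$, expand, rearrange, telescope, and bound $\norm{x_1-x}^2$ by $\diam(K)^2$. The only cosmetic difference is that you spell out the variational-inequality proof of non-expansiveness, which the paper takes as given.
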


\begin{remark}
In most applications of \cref{thm:sgd:abstract}, $g_t = f_t'(x_t)$. By convexity one has $f_t(x_t) - f_t(x) \leq \ip{g_t, x_t - x}$ and
\cref{thm:sgd:abstract} provides an upper bound on the regret of gradient descent with respect to the losses $(f_t)$.
As mentioned above, in the bandit setting the gradient $f_t'(x_t)$ is not available and we will let $g_t$ be an estimate of the gradient of a suitable surrogate.
\end{remark}

\begin{proof}
Let $x \in K$. 
The idea is quite simple. Suppose the instantaneous regret $r_t = \ip{g_t, x_t - x}$ is large. Then, provided that $\eta \norm{g_t}$ is not too large, 
the algorithm takes a step such that $\norm{x_{t+1} - x} \leq \norm{x_t - x}$. And indeed, the decrease can be written as a function of $r_t$.
Since the distance to $x$ is always non-negative, the cumulative change in distance to $x$ cannot be large.
This suggests a potential argument, which mathematically uses the squared norms as follows:
\begin{align*}
\frac{1}{2}\norm{x_{t+1} - x}^2
&=\frac{1}{2}\norm{\Pi_K(x_t - \eta g_t) - x}^2 \\
&\leq \frac{1}{2}\norm{x_t - x - \eta g_t}^2 \\ 
&= \frac{1}{2}\norm{x_t - x}^2 + \frac{\eta^2}{2} \norm{g_t}^2 - \eta \ip{g_t, x_t - x} \,,
\end{align*}
where in the inequality we used the fact that $\norm{z - \Pi_K(y)} \leq \norm{z - y}$ for all $z \in K$ and $y \in \R^d$.
Rearranging shows that
\begin{align*}
\hReg_n(x) &= \sum_{t=1}^n \ip{g_t, x_t - x} \\
&\leq \sum_{t=1}^n \left[\frac{\eta}{2} \norm{g_t}^2 + \frac{1}{2\eta} \norm{x_t - x}^2 - \frac{1}{2\eta} \norm{x_{t+1} - x}^2\right] \\
&\leq \frac{1}{2\eta} \norm{x_1 - x}^2 + \frac{\eta}{2} \sum_{t=1}^n \norm{g_t}^2 \\ 
&\leq \frac{\diam(K)^2}{2\eta} + \frac{\eta}{2} \sum_{t=1}^n \norm{g_t}^2 \,.
\qedhere
\end{align*}
\end{proof}

What conditions are needed on the gradients $(g_t)_{t=1}^n$ if we want to bound the actual regret in terms of $\hReg_n$?
Let $x_\star = \argmin_{x \in K} \sum_{t=1}^n f_t(x)$.
We have
\begin{align*}
\E[\Reg_n]
&= \E\left[\sum_{t=1}^n \left(f_t(X_t) - f_t(x_\star)\right)\right] \\
&= \E\left[\sum_{t=1}^n \left(\E_{t-1}[f_t(X_t)] - f_t(x_\star)\right)\right] \\
&\stackrel{(\dagger)}\lesssim \E\left[\sum_{t=1}^n \ip{\E_{t-1}[g_t], x_t - x_\star}\right] \\
&= \E\left[\sum_{t=1}^n \ip{g_t, x_t - x_\star}\right] 
= \E\left[\hReg_n(x_\star) \right]
\end{align*}
Can we ensure that $(\dagger)$ holds? 
Remember that $\bbP_{t-1}(X_t = \cdot) = \nu(\cdot|x_t)$ and we get to choose the kernel $\nu$ and the gradient estimator $g_t$.\index{probability kernel}
Since $x_\star$ is not known, the most natural objective is to try and select the kernel and gradient estimate in such a way that for all $x \in K$,
\begin{align*}
\E_{t-1}[f_t(X_t)] - f_t(x) \lesssim \ip{\E_{t-1}[g_t], x_t - x}\,.
\end{align*}
Furthermore, to bound $\E[\hReg_n]$ we need to bound $\E_{t-1}[\Vert g_t \Vert^2]$.
Summarising, a kernel $\nu$ and gradient estimate $g_t$ will yield a good regret bound if
\begin{enumerate}
\item $\E_{t-1}[f_t(X_t)] - f_t(x) \lesssim \ip{\E_{t-1}[g_t], x_t - x}$ for all $x \in K$; and \label{part:sgd:est}
\item $\E_{t-1}[\Vert g_t \Vert^2]$ is small. \label{part:sgd:var}
\end{enumerate}

\begin{remark}
If the learner has access to the gradient $g_t = f'_t(x_t)$, then $f_t(x_t) - f_t(x) \leq \ip{g_t, x_t - x}$ for all $x \in K$ by convexity
and $\norm{g_t}^2 \leq 1$ since $f_t \in \sF_{\pb,\pl}$ is Lipschitz. That is, \ref{part:sgd:est} and \ref{part:sgd:var} hold with $g_t = f_t'(x_t)$ and $X_t = x_t$.
\end{remark}

\index{gradient descent|)}

\section{Spherical Smoothing}\label{sec:gd:smooth}
Let $x \in K$ and $f \in \cF_{\pb,\pl}$. Our algorithm will play some action $X$ that is a random variable
and observe $Y = f(X)$.
We want a gradient estimator $g$ that is a function of $X$ and $Y$ such that
\begin{enumerate}
\item $\E[f(X)] - f(y) \lesssim \ip{\E[g], x - y}$ for all $y \in K$; and
\item $\E[\Vert g \Vert^2]$ is small.
\end{enumerate}
A simple and beautiful estimator is based on Stokes' theorem.\index{Stokes' theorem}
Let $r \in (0,1)$ be a precision parameter and define $s$ as the convolution between $f$ and a uniform distribution on $\ball_r$. That is,\index{uniform measure}
\begin{align*}
s(x) = \frac{1}{\vol(\ball_r)} \int_{\ball_r} f(x + u) \d{u} \,.
\end{align*}
Some examples are plotted in \cref{fig:spherical}.
The function $s$ is convex because it is the convolution of a convex function and a probability density.
We have to be careful about the domain of $s$. Because $f$ is only defined on $K$, the surrogate $s$ is only defined on \index{surrogate loss!spherical} 
\begin{align*}
\dom(s) = \{x \in K \colon x + \ball_r \subset K\} \,.
\end{align*}
By Stokes' theorem, the gradient of $s$ at $x \in \dom(s)$ is
\begin{align}
s'(x) = \frac{1}{\vol(\ball_r)} \int_{\ball_r} f'(x + u) \d{u} = \frac{d}{r} \frac{1}{\vol(\sphere_r)} \int_{\sphere_r} f(x + u)\frac{u}{r} \d{u}\,, 
\label{eq:sgd:stokes}
\end{align}
where we also used the fact from \cref{prop:vol}\ref{prop:vol:sphere} that $\vol(\sphere_r) / \vol(\ball_r) = \frac{d}{r}$.
Actually in the above display we took some liberties. What if $f$ is not differentiable? \index{differentiable}
\begin{exer}
\faStar \quad
Prove that the left-hand and right-hand sides of \cref{eq:sgd:stokes} holds even when $f$ is not differentiable.
\end{exer}

The right-hand side of \cref{eq:sgd:stokes} suggests a way of estimating $s'(x)$.
Let $U$ be uniformly distributed on $\sphere_r$ and $X = x + U$ and define the surrogate gradient estimate by
\begin{align*}
g = \frac{d Y U}{r^2}\,,
\end{align*}
which has expectation $\E[g] = s'(x)$.
How well does this estimator satisfy our criteria?
Suppose that $V$ has law $\cU(\ball_r)$. Then, since $f$ is Lipschitz and using \cref{prop:ball-mean} that $\E[\norm{V}] = \frac{rd}{d+1}$, for all $y \in \dom(s)$,
\begin{align*}
s(y) = \E[f(y + V)] \leq f(y) + \E[\norm{V}] = f(y) + \frac{r d}{d+1}\,.
\end{align*}
On the other hand, since $rV/\norm{V}$ has law $\cU(\sphere_r)$,
\begin{align*}
s(x) = \E[f(x + V)] \geq \E\left[f\left(x+\frac{rV}{\norm{V}}\right) - \norm{V - \frac{rV}{\norm{V}}}\right] = \E[f(X)] - \frac{r}{d+1} \,,
\end{align*}
where we used again that $\E[\norm{V}] = \frac{rd}{d+1}$.
Therefore, since $s$ is convex,
\begin{align}
\ip{\E[g], x - y} 
&= \ip{s'(x), x - y} \nonumber \\
&\geq s(x) - s(y) \nonumber \\
&\geq \E[f(X)] - f(y) - r \,.
\label{eq:sgd:g-diff}
\end{align}
This seems fairly promising. When $r$ is small, then \ref{part:sgd:est} above is indeed satisfied.
Moving now to \ref{part:sgd:var},
\begin{align}
\E[\norm{g}^2] = \frac{d^2}{r^2} \E[Y^2]
= \frac{d^2}{r^2} \E[f(X)^2]
\leq \frac{d^2}{r^2} \,,
\label{eq:sgd:norm}
\end{align}
where we used the fact that $\norm{U} = r$ and the assumption that $f \in \sF_\pb$ is bounded on $K$.
The situation is at a standoff. To satisfy \ref{part:sgd:est} we need $r$ to be fairly small, but then $\E[\norm{g}^2]$ will be quite large.
Nevertheless, enough has been done to make progress.

\begin{figure}
\centering
\includegraphics[height=5cm]{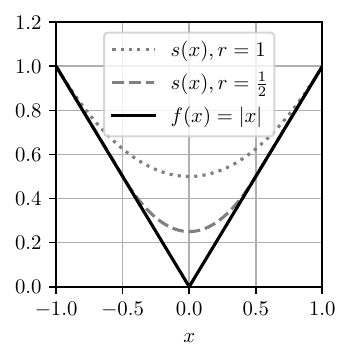}
\includegraphics[height=5cm]{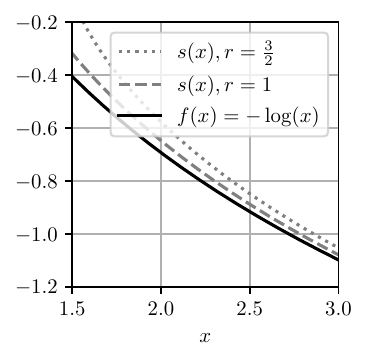}
\caption{The smoothed surrogates for different functions and precisions. Because of convexity the surrogate function is always an upper bound on
the original function. Notice how much better the approximation is for $-\log(x)$, which on the interval considered is much smoother than $|x|$.}
\label{fig:spherical}
\commentAlt{The left plot shows the absolute value loss function and two smoothed versions, both of which lie above the true loss.
The right plot shows the same thing with the logarithmic loss function where the approximation is better.}
\end{figure}

\section{Algorithm and Regret Analysis}

The surrogate and its gradient estimator can be cleanly inserted into online gradient descent to obtain the following
simple algorithm for bandit convex optimisation.

\begin{algorithm}[h!]
\begin{algcontents}
\begin{lstlisting}
args: learning rate $\eta > 0$ and precision $r \in (0,1)$
initialise $x_1 \in K_r = (1 - r)K$
for $t = 1$ to $n$
  sample $U_t$ uniformly from $\sphere_r$ and play $X_t = x_t + U_t$ 
  observe $Y_t = f_t(X_t)$
  compute gradient estimate $g_t = \frac{d Y_t U_t}{r^2}$ 
  update $x_{t+1} = \Pi_{K_r}(x_t - \eta g_t)$.
\end{lstlisting}
\caption{Bandit gradient descent}\label{alg:sgd}
\end{algcontents}
\end{algorithm}

\begin{theorem}\label{thm:sgd}
Suppose that
\begin{align*}
\eta &= \sqrt{\frac{1}{2}} \diam(K)^{\frac{3}{2}} d^{-\frac{1}{2}} n^{-\frac{3}{4}} & ~~~\text{and}~~~ & &
r &= \min\left(1, \sqrt{\frac{1}{2}} \diam(K)^{\frac{1}{2}} d^{\frac{1}{2}} n^{-\frac{1}{4}}\right) \,.
\end{align*}
Under \cref{ass:sgd} the expected regret of \cref{alg:sgd} is bounded by
\begin{align*}
\E[\Reg_n] \leq 
\sqrt{8} \diam(K)^{\frac{1}{2}} d^{\frac{1}{2}} n^{\frac{3}{4}} \,. 
\end{align*}
\end{theorem}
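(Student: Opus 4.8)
The plan is to execute the three-part recipe spelled out just before the statement: bound $\E[\Reg_n]$ by $\E[\hReg_n(x)]$ up to an additive bias, apply \cref{thm:sgd:abstract} to the linearised regret, bound $\E_{t-1}[\norm{g_t}^2]$ via \eqref{eq:sgd:norm}, and then tune $\eta$ and $r$ to balance the resulting terms.

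First I would reconcile the constraint set $K$ with the shrunken set $J = \{x \in K : x + \ball_r \subset K\}$ on which the iterates $x_t$ and the spherically-smoothed surrogate $s_t$ (the convolution of $f_t$ with the uniform law on $\ball_r$) actually live. Since $\ball_1 \subset K$ and $r \in (0,1)$, convexity gives $(1-r)K \subseteq J$, and since each $f_t$ takes values in $[0,1]$ the argument of \cref{prop:shrink} applied to $\sum_t f_t$ shows $\min_{x \in J}\sum_t f_t(x) \le \min_{x \in K}\sum_t f_t(x) + rn$; so it suffices to bound the regret against the best point of $J$ at an extra additive cost $rn$. Then, fixing $x \in J$ and using that $x_t \in J$ is $\sF_{t-1}$-measurable, the identity $\E_{t-1}[g_t] = s_t'(x_t)$ together with convexity of $s_t$ and the sandwich $\max_J |s_t - f_t| \le r$, $\E_{t-1}[f_t(X_t)] \le f_t(x_t) + r$ (Lipschitzness and $\norm{U_t} = r$) reproduces \eqref{eq:sgd:g-diff} in the conditional form $\E_{t-1}[f_t(X_t)] - f_t(x) \le \E_{t-1}[\ip{g_t, x_t - x}] + 2r$. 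Summing over $t$, taking expectations, and adding the $rn$ from the shrinkage step gives $\E[\Reg_n] \le \E[\hReg_n(x)] + O(rn)$ for the minimising $x \in J$.

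It remains to bound $\E[\hReg_n(x)]$. \cref{thm:sgd:abstract}, run with constraint set $J$ (so that $\diam(J) \le \diam(K)$), gives $\hReg_n(x) \le \diam(K)^2/(2\eta) + \tfrac{\eta}{2}\sum_t \norm{g_t}^2$, while \eqref{eq:sgd:norm} gives $\E_{t-1}[\norm{g_t}^2] \le d^2/r^2$ because $Y_t = f_t(X_t) \in [0,1]$ and $\norm{U_t} = r$. Hence
\begin{align*}
\E[\Reg_n] \;\le\; \frac{\diam(K)^2}{2\eta} + \frac{\eta n d^2}{2 r^2} + O(rn)\,.
\end{align*}
Substituting $\eta = 2^{-1/2}d^{-1/2}\diam(K)^{3/2}n^{-3/4}$ and $r = 2^{-1/2}d^{1/2}\diam(K)^{1/2}n^{-1/4}$ sends each of the first two terms to $2^{-1/2}\diam(K)^{1/2}d^{1/2}n^{3/4}$ and the bias term to the same order, for a bound of order $\diam(K)^{1/2}d^{1/2}n^{3/4}$; tracking the constants through the balancing of $\diam(K)^2/(2\eta)$ against $\eta n d^2/(2r^2)$ and the $2rn$ of \eqref{eq:sgd:g-diff} (with a careful charge for the $K$-versus-$J$ gap) yields the advertised constant $\sqrt 8$.

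There is no single hard computation; the content is the interplay. The precision $r$ must be large enough that $\eta n d^2/r^2$ stays controlled yet small enough that the smoothing bias $2rn$ stays controlled, while $\diam(K)^2/\eta$ pushes $\eta$ down; because the bias cannot be driven to zero without inflating the variance of $g_t$, this three-way tension — absent in full-information online learning, where an exact gradient lets one send $r \to 0$ — is exactly what forces the exponent $3/4$ rather than $1/2$. The one genuinely technical wrinkle is that the iterates and the surrogate live on $J$ rather than $K$; it is lower-order and, as above, relies on boundedness of the losses together with \cref{prop:shrink}.
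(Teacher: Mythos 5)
Your proof matches the paper's almost step for step: reduce to the shrunken set $J$, pass to the spherically smoothed surrogate, apply Theorem~\ref{thm:sgd:abstract} together with the second-moment bound \eqref{eq:sgd:norm}, and then balance the three terms via the stated choices of $\eta$ and $r$. If anything, your invocation of Proposition~\ref{prop:shrink} (which uses boundedness) for the $K$-versus-$J$ gap is slightly cleaner than the paper's passing appeal to Lipschitzness, which on its own would naively cost an extra $\diam(K)$ factor; both routes lead to the same $3rn$-type bias and the same final constant.
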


\begin{proof}
Suppose that $r = 1$. Then $\sqrt{1/2} \diam(K)^{\frac{1}{2}} d^{\frac{1}{2}} n^{-\frac{1}{4}} \geq 1$, which along with the assumption that
the losses are bounded in $[0,1]$ implies that
\begin{align*}
\E[\Reg_n] \leq n \leq \sqrt{1/2} \diam(K)^{\frac{1}{2}} d^{\frac{1}{2}} n^{\frac{3}{4}} 
\end{align*}
and the claim is proven.
For the remainder we assume that $r < 1$.
The surrogate in round $t$ is
\begin{align*}
s_t(x) = \frac{1}{\vol(\ball_r)} \int_{\ball_r} f_t(x+u) \d{u}\,.
\end{align*}
Note, by \cref{lem:reg:ball} and the assumption that $\ball_1 \subset K$, it follows that $K_r \subset \dom(s_t)$ for all $1 \leq t \leq n$.
By \cref{prop:shrink},
\begin{align*}
\min_{x \in K_r} \sum_{t=1}^n f_t(x) 
\leq rn + \min_{x \in K} \sum_{t=1}^n f_t(x) \,. 
\end{align*}
Therefore, letting $x_\star = \argmin_{x \in K_r} \sum_{t=1}^n f_t(x)$, 
\begin{align*}
\E[\Reg_n]
&= \max_{x \in K} \E\left[\sum_{t=1}^n (f_t(X_t) - f_t(x))\right] \\
&\leq rn + \E\left[\sum_{t=1}^n (f_t(X_t) - f_t(x_\star))\right] \\
\tag*{By \eqref{eq:sgd:g-diff}}
&\leq 2rn + \E\left[\sum_{t=1}^n \ip{g_t, x_t - x_\star}\right]\,.
\end{align*}
By \cref{thm:sgd} and \cref{eq:sgd:norm},
\begin{align*}
\E\left[\sum_{t=1}^n \ip{g_t, x_t - x_\star}\right] 
&\leq \frac{\diam(K)^2}{2 \eta} + \frac{\eta}{2}\E\left[\sum_{t=1}^n \norm{g_t}^2\right] 
\leq \frac{\diam(K)^2}{2 \eta} + \frac{\eta n d^2}{2r^2} \,. 
\end{align*}
Combining shows that
\begin{align*}
\E[\Reg_n] \leq \frac{\diam(K)^2}{2\eta} + \frac{\eta n d^2}{2r^2} + 2rn \,.
\end{align*}
The claim follows by substituting the constants.
\end{proof}

\section[Smoothness and Strong Convexity]{Smoothness and Strong Convexity (\skippy)}\label{sec:gd:sm-sc}
As a prelude to the next chapter, let us explore what happens when the losses $(f_t)_{t=1}^n$ are assumed to both smooth and strongly convex.
For reasons explained after the analysis in \cref{rem:gd:improper}, the argument that follows only works in the improper 
setting (see \ref{note:improper}), 
which motivates the use of self-concordance that appears in the next chapter.
The algorithm here is based on those by \cite{ADX10} and \cite{akhavan2020exploiting} with the differences explained 
in Note~\ref{note:gd:smooth}.
The operating assumptions in this section are as follows:

\begin{assumption}\label{ass:gd:sc-sm}
The losses $(f_t)_{t=1}^n$ satisfy the following:
\begin{enumerate}
\item $\dom(f_t) = \R^d$ and the setting is improper so that the learner can play outside of $K$.
\item $f_t$ is strongly convex and smooth: $x \mapsto f_t(x) - \frac{\alpha}{2} \norm{x}^2$ is convex 
and $x \mapsto f_t(x) - \frac{\beta}{2} \norm{x}^2$ is concave for some known constants $0 < \alpha \leq \beta$.
\item There exists a known constant $r > 0$ such that $f_t(x) \in [0,1]$ for all $x \in K + \ball_r$.
\item There is no noise: $\eps_t = 0$.
\end{enumerate}
\end{assumption}

The algorithm is the same as \cref{alg:sgd} but projects onto $K$ and uses decreasing learning rates and smoothing radii.

\begin{algorithm}[h!]
\begin{algcontents}
\begin{lstlisting}
args: learning rates $(\eta_t)_{t=1}^n$ and $(r_t)_{t=1}^n$ 
initialise $x_1 \in K$
for $t = 1$ to $n$
  sample $U_t$ uniformly from $\sphere_{r_t}$ and play $X_t = x_t + U_t$ 
  observe $Y_t = f_t(X_t)$
  compute gradient estimate $g_t = \frac{d Y_t U_t}{r_t^2}$ 
  update $x_{t+1} = \Pi_{K}(x_t - \eta_t g_t)$.
\end{lstlisting}
\caption{Bandit gradient descent with varying learning rates and smoothing radii}\label{alg:sgd-sm-sc}
\end{algcontents}
\end{algorithm}

Note there is no guarantee that $X_t \in K$. This is why we assumed the improper setting and that the losses $f_t$ are bounded on $K + \ball_r$.

\begin{theorem}\label{thm:sgd:sm-sc}
Suppose that
\begin{align*}
\eta_t = \frac{1}{t \alpha} \quad \text{and} \quad
r_t^2 = \min\left(r^2, d  \sqrt{\frac{1}{2 \alpha \beta t}}\right)\,.
\end{align*}
Then, under \cref{ass:gd:sc-sm}, the regret of \cref{alg:sgd-sm-sc} is at most
\begin{align*}
\E[\Reg_n] = O\left(d  \sqrt{\frac{\beta n}{\alpha}} + \frac{d^2}{\alpha r^2} \log\left(1 + \frac{d^2}{\alpha \beta r^4}\right)\right)\,.
\end{align*}
\end{theorem}

\begin{proof}
As a good test of your skills, we present this argument as a series of exercises.
To begin, we need an analysis of gradient descent that accommodates the changing learning rates:

\begin{exer}
Suppose that $(g_t)_{t=1}^n$ is an arbitrary sequence of vectors in $\R^d$ and $x_1 \in K$ and $x_{t+1} = \Pi_K(x_t - \eta_t g_t)$.
Prove that for any $x \in K$,
\begin{align*}
\sum_{t=1}^n \ip{g_t, x_t - x} \leq \sum_{t=1}^n \frac{\eta_t}{2} \norm{g_t}^2 + \frac{1}{2} \sum_{t=1}^n \frac{\norm{x_t - x}^2}{2}\left(\frac{1}{\eta_t} - \frac{1}{\eta_{t-1}}\right) \,,
\end{align*}
where we adopt the convention that $\eta_0 \triangleq \infty$.
\end{exer}

\solution{
Following the standard proof:
\begin{align*}
\frac{1}{2} \norm{x_{t+1} - x}^2
&= \frac{1}{2} \norm{\Pi_K(x_t - \eta_t g_t) - x}^2 \\
&\leq \frac{1}{2} \norm{x_t - x - \eta_t g_t}^2 \\
&\leq \frac{1}{2} \norm{x_t - x}^2 + \frac{\eta_t^2}{2} \norm{g_t}^2 - \eta_t \ip{g_t, x_t - x} \,.
\end{align*}
Rearranging and summing shows that
\begin{align*}
\sum_{t=1}^n \ip{g_t, x_t - x}
&\leq \sum_{t=1}^n \frac{\eta_t^2}{2} \norm{g_t}^2 + \sum_{t=1}^n \frac{\norm{x_t - x}^2 - \norm{x_{t+1} - x}^2}{2 \eta_t} \\ 
&\leq \sum_{t=1}^n \frac{\eta_t^2}{2} \norm{g_t}^2 +\frac{1}{2} \sum_{t=1}^n \norm{x_t-x}^2 \left(\frac{1}{\eta_t} - \frac{1}{\eta_{t-1}}\right)
\end{align*}
}

Moving on, let 
\begin{align*}
s_t(x) = \frac{1}{\vol(\ball_{r_t})} \int_{\ball_{r_t}} f_t(x+u) \d{u}\,,
\end{align*}
which is the same surrogate loss used in the proof of \cref{thm:sgd} but now with a smoothing radius of $r_t$ that depends on the round.
By the considerations in \cref{sec:gd:smooth}, $\E_{t-1}[g_t] = s_t'(x_t)$.

\begin{exer}\label{ex:gd:sc-sm}
Prove that $s_t$ is smooth and strongly convex:
\begin{enumerate}
\item $x \mapsto s_t(x) - \frac{\alpha}{2} \norm{x}^2$ is convex; and
\item $x \mapsto s_t(x) - \frac{\beta}{2} \norm{x}^2$ is concave.
\end{enumerate}
\end{exer}

The next step is to use smoothness to refine the comparison between the surrogate loss $s_t$ and the true loss $f_t$.

\begin{exer}\label{ex:gd:compare}
Show that $\E_{t-1}[f_t(X_t) - f_t(x)] \leq \E_{t-1}[s_t(x_t) - s_t(x)] + \beta r_t^2$.
\end{exer}

\solution{
By definition,
\begin{align*}
\E_{t-1}[f_t(X_t)] 
&\leq \E_{t-1}\left[f_t(x_t) + \sip{f_t'(x_t), X_t - x_t} + \frac{\beta}{2} \norm{X_t - x_t}^2\right] \\
&= f_t(x_t) + \E_{t-1}\left[\frac{\beta}{2} \norm{X_t - x_t}^2\right] \\
&= f_t(x_t) + \beta r_t^2 / 2 \\
&\leq s_t(x_t) + \beta r_t^2 / 2 \,.
\end{align*}
Similarly, letting $V_t$ be uniformly distributed on $\ball_{r_t}$ under $\bbP_{t-1}$,
\begin{align*}
s_t(x)
\leq \E_{t-1}[f_t(x + V_t)] 
&\leq f_t(x) + \beta r_t^2 / 2 \,.
\end{align*}
Combining the previous displays completes the exercise.
}

The exercise shows how smoothness improves the error between loss and surrogate to have a quadratic dependence on the smoothing radius.
Let us now show how strong convexity helps. By \cref{ex:gd:sc-sm} and \cref{ex:gd:compare},
\begin{align*}
\E[\Reg_n]
&= \E\left[\sum_{t=1}^n (f_t(X_t) - f_t(x_\star))\right] \\
&\leq \E\left[\sum_{t=1}^n (s_t(x_t) - s_t(x_\star))\right] + \beta \sum_{t=1}^n r_t^2 \\
&\leq \E\left[\sum_{t=1}^n \ip{s_t'(x_t), x_t - x} - \frac{\alpha}{2} \norm{x_t - x}^2\right] + \beta \sum_{t=1}^n r_t^2 \\
&= \E\left[\sum_{t=1}^n \ip{g_t, x_t - x} - \frac{\alpha}{2} \norm{x_t - x}^2 \right] + \beta \sum_{t=1}^n r_t^2 \\
&\leq \E\left[\sum_{t=1}^n \frac{\eta_t}{2} \norm{g_t}^2 + \frac{1}{2} \sum_{t=1}^n \norm{x_t - x}^2\left(\frac{1}{\eta_t} - \frac{1}{\eta_{t-1}} - \alpha\right)\right] + \beta \sum_{t=1}^n r_t^2 \,.
\end{align*}
Now you will see why the learning rate has the form it does. The expression with the reciprocol learning rates vanishes and one obtains
\begin{align} 
\E[\Reg_n]
&\leq \E\left[\sum_{t=1}^n \frac{\norm{g_t}^2}{2\alpha t}\right] + \beta \sum_{t=1}^n r_t^2 \nonumber \\
&= \sum_{t=1}^n \frac{d^2 \E[Y_t^2]}{2\alpha t r_t^2} + \beta \sum_{t=1}^n r_t^2 \,. 
\label{eq:gd:r}
\end{align}
The result follows by substituting the definition of $r_t$, and using \cref{ass:gd:sc-sm} to bound $Y_t^2 \leq 1$ and naive bounding.
\end{proof}

\begin{remark}\label{rem:gd:improper}
\cref{alg:sgd-sm-sc} might play outside of the constraint set.
You may wonder if we can employ the idea in \cref{alg:sgd} of projecting onto a subset of $K$.
The first problem is that we need $x_t + \ball_{r_t} \subset K$ but the tuning of $r_t$ in \cref{thm:sgd:sm-sc} has $r_1 = \Omega(1)$,
which means that $x_t$ must be really quite deep inside $K$. Examining, \cref{eq:gd:r} you could instead let
\begin{align*}
r_t^2 = d \sqrt{\frac{\log(n)}{\alpha \beta n}} \triangleq r^2 \,,
\end{align*}
which gives the same bound as \cref{thm:sgd:sm-sc} up to a $\sqrt{\log(n)}$ factor.
So now the condition is that $x_t + \ball_r \subset K$. But even with smoothness and strong convexity it can happen that
\begin{align*}
\min_{x \in K : x + \ball_r \subset K} f(x) 
\geq \min_{x \in K} f(x) + \Omega(r) \,.
\end{align*}
Hence the increased regret suffered by restricting $K$ can be as large as $n r = \Omega(n^{3/4})$ and there is no improvement
relative to \cref{alg:sgd}.
\end{remark}

\section{Notes}

\begin{enumeratenotes}
\item
\cref{thm:sgd:abstract} is due to \cite{Zin03}. \cref{alg:sgd} essentially appears in the independent works by \cite{FK05} and \cite{Kle04}.
The algorithm continues to work without Lipschitzness but the regret increases to $O(d n^{5/6})$ as explained by \cite{FK05}. 

\item
By \cref{prop:conversion}, the regret bound in \cref{thm:sgd} implies a bound on the sample complexity of $\tilde O(\diam(K)^2 d^2/\eps^4)$.\index{sample complexity}
As far as we know, the spherical smoothing estimator was introduced by \cite{NY83} who used it to prove essentially the same sample complexity as above modulo some minor
technical assumptions about the boundary. \cite{NY83} also noticed that smoothness increases
the performance of the spherical estimator, which we explain in \cref{chap:ftrl}. 

\item 
We did not say much about computation. The only complicated part is computing the projections, the hardness of which
depends on how $K$ is represented. 

\item
\cite{garber2022new} show there are alternative ways to keep the iterates inside the constraint set.
They assume that $\ball_\delta \subset K$ for some $\delta > 0$ and design gradient-descent-based algorithms for which the regret more or less matches
\cref{thm:sgd} and that need either $O(n)$ queries to a linear optimisation oracle or $O(n)$ queries to a separation oracle.\index{separation oracle}

\item Another way to avoid projections is to run gradient descent on the extension defined in \cref{prop:reg:bandit-extension}. This is the approach \index{extension}
we will take in \cref{chap:ons}. Yet another is to use self-concordant barriers as explained in \cref{chap:ftrl}, though this also comes at a computational cost. 

\item \label{note:gd:smooth}
\cref{alg:sgd-sm-sc} is based on \cite{ADX10} who study the adversarial setting and \cite{akhavan2020exploiting} who work in the stochastic
setting. Let us consider the similarities and differences.
\begin{itemizeinner}
\item Because \cite{ADX10} work in the adversarial setting, they used same single-point gradient estimate employed by \cref{alg:sgd-sm-sc}. 
On the other hand, \cite{akhavan2020exploiting} focus on the stochastic setting and consequentially estimate the gradient by querying
the loss at both $x_t$ and $X_t$ to reduce the second moment of the gradient estimate. An example of this idea in action 
is in \cref{sec:sc:stoch}. 
\item \cite{ADX10} used a constant value for $r_t$ rather than the decreasing value used here and by \cite{akhavan2020exploiting}.
This leads to an additional logarithmic factor appearing in the main term. Moreover, \cite{ADX10} assumed that $K$ was a euclidean ball,
though this assumption was not really used in the analysis.
\item \cite{ADX10} assumed the losses were bounded on $K$, possibly forgetting 
that the losses actually need to be bounded on the expansion $K + \ball_r$ that appears in \cref{ass:gd:sc-sm}, although you can derive
one from the other using smoothness at the cost of larger lower-order terms.
Details of this are omitted in their article.
Meanwhile, \cite{akhavan2020exploiting} do not assume boundedness of the loss, but rather that it is Lipschitz on $K$.
\end{itemizeinner}
\end{enumeratenotes}

\chapter[Self-Concordant Regularisation]{Self-Concordant Regularisation\copynotice}\label{chap:ftrl}

The algorithm based on gradient descent\index{gradient descent} in the previous chapter is simple and computationally efficient, at least provided the projection can be computed.
There are two limitations, however.
\begin{itemize}
\item We needed to assume the losses were Lipschitz and the regret depended polynomially on the diameter of the constraint set.
\item Exploiting smoothness and/or strong convexity in the constrained setting is not straightforward due to boundary effects, 
as we explained in \cref{rem:gd:improper}.
\end{itemize}
Both limitations will be removed using `follow-the-regularised-leader' and the beautiful
machinery of self-concordant barriers. \index{follow-the-regularised-leader}
With the exception of \cref{sec:sc:stoch}, it is assumed throughout this chapter that there is no noise and the losses are bound:

\begin{assumption}\label{ass:ftrl}
The following hold:
\begin{enumerate}
\item There is no noise: $\eps_t = 0$ for all $t$.
\item The losses are bounded: $f_t \in \cF_\pb$ for all $t$.
\end{enumerate}
\end{assumption}

Four new regret bounds are given in this chapter, all improving on what was shown in \cref{chap:sgd} in various ways.
The first removes the requirement that the loss is Lipschitz and eliminates entirely the dependence on the diameter of the constraint set.
The second shows how smoothness of the losses improves the quality of the surrogate loss and leads to a dependence on the horizon of $\tilde O(n^{2/3})$.
The highlight is showing that $\tilde O(\sqrt{n})$ regret is attained by a simple algorithm when the losses are 
assumed to be smooth and strongly convex. 
Without this assumption it is still possible to obtain $\tilde O(\sqrt{n})$ regret but with a more complicated algorithm 
and a much more sophisticated analysis (Chapters~\ref{chap:ons} and \ref{chap:ons-adv}).
Lastly, a little time is devoted to investigating the stochastic setting where additionally the variance of the noise is assumed to be smaller than the range of 
the losses (\cref{sec:sc:stoch}).

\section{Self-Concordant Barriers}\label{sec:ftrl:sc} \index{self-concordant barrier|(}

Self-concordance was introduced by \cite{nesterov1988polynomial} as part of the machinery of interior point methods\index{interior point methods} for linear programming.\index{linear programming}
A three-times-differentiable\index{differentiable} convex function $R \colon \interior(K) \to \R$ is a self-concordant barrier on $K$ if
\begin{itemize}
\item $|D^3R(x)[h,h,h]| \leq 2(D^2R(x)[h,h])^{3/2}$ for all $x \in \interior(K)$ and $h \in \R^d$.
\item $R$ is a barrier: $R(x_t) \to \infty$ whenever $x_t \to \partial K$.
\end{itemize}
It is called a $\vartheta$-self-concordant barrier if additionally
\begin{itemize}
\item $DR(x)[h] \leq \sqrt{\vartheta D^2 R(x)[h,h]}$ for all $x \in \interior(K)$ and $h \in \R^d$ where $\vartheta$ is a (hopefully small) positive real value.
\end{itemize}

The local norm at $x \in \interior(K)$ associated with $R$ is $\norm{h}_x \triangleq \norm{h}_{R''(x)}$ and its dual is $\norm{h}_{x\star} = \norm{h}_{R''(x)^{-1}}$.\index{local norm}
The Dikin ellipsoid of radius $r$ at $x$ is 
\begin{align*}
E^x_r = \{y \colon \norm{y - x}_x \leq r\} \,.
\end{align*}
We collect the following facts about $\vartheta$-self-concordant barriers:\index{Dikin ellipsoid} 

\begin{lemma}\label{lem:sc:ftrl}
Suppose that $R$ is a self-concordant barrier on $K$. The following hold:
\begin{enumerate}
\item The Dikin ellipsoid is contained in $K$: $E^x_1 \subset K$ for all $x \in \interior(K)$. \label{lem:sc:ftrl:dikin}
\item For all $x, y \in \interior(K)$, 
\begin{align*} 
R(y) \geq R(x) + \ip{R'(x), y - x} + \rho(-\norm{x - y}_x)
\end{align*}
with $\rho(s) = -\log(1-s) - s$. \label{lem:sc:ftrl:taylor}
\item $\tr(R''(x)^{-1}) \leq \frac{d \diam(K)^2}{4}$ for all $x \in \interior(K)$. \label{lem:sc:ftrl:tr}
\end{enumerate}
Suppose additionally that $R$ is a $\vartheta$-self-concordant barrier and is minimised at $\zeros \in K$; then
with $\pi$ the Minkowski functional of $K$,\index{Minkowski functional}
\begin{enumerate}
\setcounter{enumi}{3}
\item $R(x) \leq R(\zeros) - \vartheta \log\left(1 - \pi(x)\right)$ for all $x \in \interior(K)$. \label{lem:sc:ftrl:mink}
\end{enumerate}
\end{lemma}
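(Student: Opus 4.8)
The plan is to reduce everything to one dimension by restricting $R$ to line segments. Fix $x\in\interior(K)$ and $h\neq 0$, and put $f(t)=R(x+th)$ on the open set of $t$ for which $x+th\in\interior(K)$, so that $f'(t)=DR(x+th)[h]$, $f''(t)=D^2R(x+th)[h,h]$, $f'''(t)=D^3R(x+th)[h,h,h]$, and self-concordance becomes $|f'''(t)|\leq 2(f''(t))^{3/2}$. Rearranged, this says $|(1/\sqrt{f''})'|\leq 1$ wherever $f''>0$, and a quick consequence is that $R''(x)\succ 0$ on $\interior(K)$: a degenerate direction would propagate $f''\equiv 0$ along a whole chord of $K$ (since $1/\sqrt{f''}$ cannot tend to $\infty$ at a finite point while having slope at most $1$), making $R$ affine on that chord and contradicting the barrier property at its endpoints. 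With nondegeneracy in hand, set $\phi(t)=(f''(t))^{-1/2}$; then $|\phi'(t)|=\tfrac12(f''(t))^{-3/2}|f'''(t)|\leq 1$, i.e.\ $\phi$ is $1$-Lipschitz, and this single estimate drives parts (a)--(c). (Parts (a), (b), (d) are the standard facts about self-concordant barriers; I would reprove them as follows.)

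\emph{Parts (a) and (c).} For (a), fix $h$ with $\norm{h}_x<1$, so $\phi(0)=\norm{h}_x^{-1}>1$, and suppose the segment $\{x+th:t\in[0,1]\}$ met $\partial K$, first at some $t^\star\in(0,1]$. On $[0,t^\star)$ we have $\phi(t)\geq\phi(0)-t>\phi(0)-1>0$, so $f''=\phi^{-2}$ is bounded there, hence so are $f'$ and then $f$ on $[0,t^\star)$; this contradicts $R(x+th)\to\infty$ as $t\to t^\star$. Thus the segment avoids $\partial K$ and, being connected and meeting $\interior(K)$, lies in $\interior(K)$; so the open Dikin ellipsoid lies in $\interior(K)$ and $E^x_1$, its closure, in $\overline{\interior(K)}=K$. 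For (c), let $\lambda_1,\dots,\lambda_d$ be the eigenvalues of $R''(x)$; the longest semi-axis of $E^x_1$ has length $\lambda_{\min}(R''(x))^{-1/2}$, so $E^x_1\subset K$ forces $2\lambda_{\min}(R''(x))^{-1/2}\leq\diam(K)$, whence $\tr(R''(x)^{-1})=\sum_k\lambda_k^{-1}\leq d\,\lambda_{\min}(R''(x))^{-1}\leq\tfrac14 d\diam(K)^2\leq d\diam(K)^2$.

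\emph{Part (b).} The case $y\in\partial K$ is trivial (left side $+\infty$) and $y=x$ is trivial, so take $h=y-x$ with $a\coloneqq\norm{h}_x\in(0,\infty)$, so $\phi(0)=1/a$. From $|\phi'|\leq 1$ we get $\phi(t)\leq\phi(0)+t$, hence $f''(t)\geq(\phi(0)+t)^{-2}=a^2/(1+at)^2$. Plugging this into the exact Taylor identity $f(1)-f(0)-f'(0)=\int_0^1(1-t)f''(t)\,dt$ and computing the resulting elementary integral,
\begin{align*}
\int_0^1(1-t)\frac{a^2}{(1+at)^2}\,dt=a-\log(1+a)=\rho(-a)\,,
\end{align*}
so $R(y)-R(x)-\ip{R'(x),y-x}\geq\rho(-\norm{x-y}_x)$ (the bound persists when $\norm{x-y}_x\geq 1$ since $\rho(-a)=a-\log(1+a)$ is defined for all $a\geq 0$).

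\emph{Part (d).} Assume $y\neq x$ and let $\pi=\pi^K_x(y)\in(0,1)$ (strict since $y\in\interior(K)$); the ray $z(t)=x+t(y-x)$ lies in $\interior(K)$ for $t\in[0,T)$ and meets $\partial K$ at $T=1/\pi$. With $h=y-x$ and $f(t)=R(z(t))$, property \texttt{(c)} of $\vartheta$-self-concordance applied at $z(t)$ reads $f'(t)\leq\sqrt{\vartheta f''(t)}$, i.e.\ $f'(t)^2\leq\vartheta f''(t)$. By convexity and the barrier property $f(t)\to\infty$ as $t\to T$, and since $f$ lives on a bounded interval this forces $f'(t)\to\infty$; choose $t_0\in[0,T)$ with $f'>0$ on $(t_0,T)$. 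There $(-1/f')'=f''/(f')^2\geq 1/\vartheta$ while $-1/f'(t)\to 0^-$ as $t\to T$, so integrating from $t$ to $T$ gives $1/f'(t)\geq(T-t)/\vartheta$, i.e.\ $f'(t)\leq\vartheta/(T-t)$ on $(t_0,T)$; on $[0,t_0]$ this holds trivially since there $f'\leq 0\leq\vartheta/(T-t)$. Hence
\begin{align*}
R(y)-R(x)=\int_0^1 f'(t)\,dt\leq\vartheta\int_0^1\frac{dt}{T-t}=\vartheta\log\frac{T}{T-1}=-\vartheta\log(1-\pi)=-\vartheta\log(1-\pi^K_x(y))\,,
\end{align*}
which is the stated bound. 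The scalar integrals and the eigenvalue step in (c) are routine; the step I expect to need the most care is converting the barrier property into the quantitative facts used in (a) and (d) — that $f''$ cannot stay bounded, respectively that $f'\to\infty$, as the segment approaches $\partial K$ — together with the preliminary nondegeneracy of $R''$, without which $\phi=(f'')^{-1/2}$ is not even defined.
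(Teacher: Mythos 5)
Your argument is correct in all four parts. The paper itself gives almost no proof: it cites Nemirovski's notes (Equations (2.2), (2.4), (3.7)) for parts \ref{lem:sc:ftrl:dikin}, \ref{lem:sc:ftrl:taylor}, and \ref{lem:sc:ftrl:mink}, and derives \ref{lem:sc:ftrl:tr} from \ref{lem:sc:ftrl:dikin} by observing that for any unit vector $\eta$ the point $y = x + R''(x)^{-1/2}\eta$ lies in $E^x_1 \subset K$, so $\snorm{\eta}_{R''(x)^{-1}} = \snorm{x-y} \leq \diam(K)$, and then summing $\tr(R''(x)^{-1}) = \sum_k \snorm{e_k}^2_{R''(x)^{-1}}$. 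You instead supply complete proofs from first principles via the standard one-dimensional restriction: self-concordance is the statement that $\phi = (f'')^{-1/2}$ has slope at most one, and you squeeze from that both the lower envelope $f''(t) \geq a^2/(1+at)^2$ driving part \ref{lem:sc:ftrl:taylor} and the nonexplosion estimate driving part \ref{lem:sc:ftrl:dikin}; for \ref{lem:sc:ftrl:mink} you integrate $(-1/f')' \geq 1/\vartheta$. This is in spirit how Nemirovski proves these facts, so you are not finding a genuinely new path to the results, but you are making the argument self-contained where the paper simply defers. A few things worth noting in your favour: the nondegeneracy of $R''$ is a real prerequisite that casual treatments elide, and your propagation argument for it is sound; your integral $\int_0^1(1-t)\frac{a^2}{(1+at)^2}\d{t} = a - \log(1+a) = \rho(-a)$ is correct; and your version of \ref{lem:sc:ftrl:tr} via eigenvalues actually yields the sharper constant $\frac{d}{4}\diam(K)^2$. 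The only subtlety I would flag for presentation is the passage from ``the segment never touches $\partial K$'' to ``the segment stays in $\interior(K)$'' in part \ref{lem:sc:ftrl:dikin} — this uses that a continuous path from $\interior(K)$ to $K^c$ must cross the boundary — and the convention $R \equiv +\infty$ on $\partial K$ used to dismiss the boundary case in part \ref{lem:sc:ftrl:taylor}; both are standard but deserve half a sentence each.
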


For some intuition, part~\ref{lem:sc:ftrl:dikin} is illustrated in \cref{fig:sc} and \ref{lem:sc:ftrl:tr} is a consequence of this (see proof below).
Part~\ref{lem:sc:ftrl:taylor} is a kind of local strong convexity with respect to the norm $\norm{\cdot}_x$. Alternatively, you can view it as an explicit bound on the Taylor series expansion
of $R$ at $x$ by noting that $\rho(s) \sim s^2/2$ for $|s| = o(1)$.
Because $R$ is a barrier it explodes near the boundary of $K$. Part~\ref{lem:sc:ftrl:mink} says that this explosion is quite slow, remembering from \cref{sec:regularity:minkowski} 
that $\pi(x) < 1$ is equivalent to $x \in \interior(K)$.
Note that in \ref{lem:sc:ftrl:mink} we are using \cref{ass:sc}. Otherwise the Minkowski functional would need to be defined relative to the minimiser of $R$.

\begin{proof}[\Proofskippy]
Part~\ref{lem:sc:ftrl:dikin} appears above Equation (2.2) in the notes by \cite{nemirovski96}.
Part~\ref{lem:sc:ftrl:taylor} is Equation (2.4) in the same notes.
Part~\ref{lem:sc:ftrl:tr} follows from Part~\ref{lem:sc:ftrl:dikin}. To see why, let $\xi \in \sphere_1$ and notice that
$x \pm R''(x)^{-1/2} \xi \in E^x_1 \subset K$. 
Therefore 
\begin{align*}
\snorm{\xi}_{R''(x)^{-1}} = \frac{1}{2} \norm{\left(x + R''(x)^{-1/2} \xi\right) - \left(x - R''(x)^{-1/2}\xi\right)} \leq \frac{\diam(K)}{2} \,.
\end{align*}
The result follows because
\begin{align*}
\tr(R''(x)^{-1}) = \sum_{k=1}^d \norm{e_k}^2_{R''(x)^{-1}} \leq \frac{d \diam(K)^2}{4} 
\end{align*}
with $(e_k)_{k=1}^d$ the standard basis vectors.
Part~\ref{lem:sc:ftrl:mink} appears as Equation (3.7) in the notes by \cite{nemirovski96}.
\end{proof}

We will always assume that the coordinate system has been chosen so that $\zeros \in K$ 
and $R$ is minimised at $\zeros$:

\begin{assumption}\label{ass:sc}
$R$ is a $\vartheta$-self-concordant barrier on $K$ and the coordinates are chosen so that
$\argmin_{x \in \interior(K)} R(x) = \zeros$.
\end{assumption}

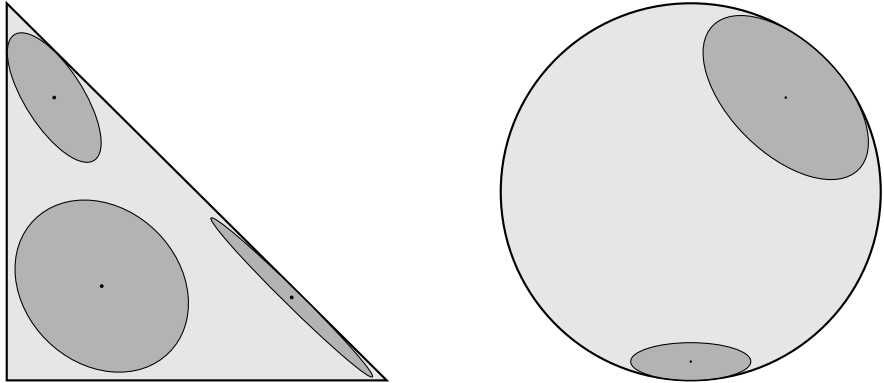
\begin{figure}[h!]
\begin{tikzpicture}[scale=4]
\draw[fill=grayone,thick] (0,0) -- (1cm,0) -- (0,1cm) -- cycle;

\begin{scope}
\pgftransformcm{0.227}{-0.0229}{-0.0229}{0.227}{\pgfpoint{0.25cm}{0.25cm}}
\draw[fill=graytwo] (0,0) circle (1);

\end{scope}

\begin{scope}
\pgftransformcm{0.111}{-0.054}{-0.054}{0.163}{\pgfpoint{0.125cm}{0.75cm}}
\draw[fill=graytwo] (0,0) circle (1);
\end{scope}

\begin{scope}
\pgftransformcm{0.161}{-0.139}{-0.139}{0.159}{\pgfpoint{0.75cm}{0.22cm}}
\draw[fill=graytwo] (0,0) circle (1);
\end{scope}

\draw[fill=black] (0.25cm,0.25cm) circle (0.1pt);
\draw[fill=black] (0.125cm,0.75cm) circle (0.1pt);
\draw[fill=black] (0.75cm,0.22cm) circle (0.1pt);

\begin{scope}
\pgftransformcm{0.5}{0}{0}{0.5}{\pgfpoint{1.8cm}{0.5cm}}
\draw[fill=grayone,thick] (0cm, 0cm) circle (1cm);
\begin{scope}
\pgftransformcm{0.417}{-0.124}{-0.124}{0.417}{\pgfpoint{0.5cm}{0.5cm}}
\draw[fill=graytwo] (0,0) circle (1);
\end{scope}
\begin{scope}
\pgftransformcm{0.316}{0.0}{0.0}{0.1}{\pgfpoint{0.0cm}{-0.9cm}}
\draw[fill=graytwo] (0,0) circle (1);
\end{scope}

\draw[fill=black] (0.5cm,0.5cm) circle (0.1pt);
\draw[fill=black] (0.0cm,-0.9cm) circle (0.1pt);

\end{scope}

\end{tikzpicture}
\caption{Dikin ellpsoids for a polytope and the ball using the barriers in Note~\ref{note:ftrl:logarithmic}.}
\label{fig:sc}
\commentAlt{Three Dikin ellipsoids for convex body K as a triangle and two when K is a unit ball. 
In all cases the Dikin ellipsoids nearly touch the boundary of K and are nearly the largest possible subject to symmetry constraints.}
\end{figure}

\begin{lemma}\label{lem:sc}
Suppose that $\Phi : \interior(K) \to \R$ is 
a self-concordant barrier on $K$, $x = \argmin_{z \in \interior(K)} \Phi(z)$ and $y \in \interior(K)$ is such that $\snorm{\Phi'(y)}_{y\star} \leq \frac{1}{2}$.
Then $\Phi(y) - \Phi(x) \leq \snorm{\Phi'(y)}^2_{y\star}$.
\end{lemma}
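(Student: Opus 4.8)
The statement to prove is: if $\Phi : K \to \R$ is self-concordant, $x = \argmin_{z \in K} \Phi(z)$, and $\norm{\Phi'(y)}_{y\star} \leq \frac{1}{2}$, then $\Phi(y) - \Phi(x) \leq \norm{\Phi'(y)}^2_{y\star}$. Write $\lambda = \norm{\Phi'(y)}_{y\star} \leq \frac12$. The plan is to use the self-concordance estimate in reverse: rather than lower-bounding $\Phi(x)$ in terms of $\Phi(y)$, I will \emph{upper}-bound $\Phi(y)$ in terms of $\Phi(x)$ by applying the Taylor-type inequality of Lemma~\ref{lem:sc:ftrl}\ref{lem:sc:ftrl:taylor} with the roles swapped, expanding $\Phi$ around $y$ and evaluating at $x$. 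That gives
\begin{align*}
\Phi(x) \geq \Phi(y) + \ip{\Phi'(y), x - y} + \rho\bigl(-\norm{x - y}_y\bigr)\,,
\end{align*}
with $\rho(s) = -\log(1-s) - s$. Rearranging,
\begin{align*}
\Phi(y) - \Phi(x) \leq \ip{\Phi'(y), y - x} - \rho\bigl(-\norm{x-y}_y\bigr) = \ip{\Phi'(y), y - x} - \rho\bigl(\norm{x-y}_y\bigr)\,,
\end{align*}
using that $\rho$ is even... no — $\rho$ is not even, so I must be careful: $\rho(-\norm{x-y}_y)$ has a negative argument, and $-\rho(s)$ for $s<0$ equals $\log(1-s)+s$ which can be positive, so I should instead bound $-\rho(-\norm{x-y}_y) \le \tfrac12\norm{x-y}_y^2$ directly (indeed $-\log(1+t)+ t \le \tfrac{t^2}{2}$ for $t \ge 0$, taking $t = \norm{x-y}_y$). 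Combined with Cauchy–Schwarz in the local norm, $\ip{\Phi'(y), y-x} \leq \norm{\Phi'(y)}_{y\star}\norm{y-x}_y = \lambda\,\norm{y-x}_y$, I get
\begin{align*}
\Phi(y) - \Phi(x) \leq \lambda\, s + \tfrac{1}{2} s^2 \quad \text{where } s = \norm{x-y}_y\,.
\end{align*}

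The next step is to control $s = \norm{x - y}_y$ itself. Since $x$ is the minimiser, $\Phi'(x) = 0$ (it lies in the interior because $\Phi$ is a barrier), so the standard self-concordance bound relating the Newton decrement to the distance to the minimiser applies: when $\norm{\Phi'(y)}_{y\star} < 1$ one has $\norm{y - x}_y \leq \frac{\norm{\Phi'(y)}_{y\star}}{1 - \norm{\Phi'(y)}_{y\star}}$. This is a consequence of Lemma~\ref{lem:sc:ftrl}\ref{lem:sc:ftrl:taylor} applied at $x$ and evaluated at $y$, together with $\Phi'(x)=0$: $0 = \Phi(x)-\Phi(x) $ is not quite it; rather one uses that $\Phi(y) \geq \Phi(x) + \rho(-\norm{x-y}_x)$ and a symmetric/dual version bounding $\norm{x-y}_y$ via $\norm{x-y}_x$. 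With $\lambda \le \tfrac12$ this yields $s \leq \frac{1/2}{1/2} = 1$, and more precisely $s \le \frac{\lambda}{1-\lambda} \le 2\lambda$.

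Plugging $s \le 2\lambda$ into $\Phi(y) - \Phi(x) \leq \lambda s + \tfrac12 s^2 \le 2\lambda^2 + 2\lambda^2 = 4\lambda^2$ gives the right shape but a worse constant than claimed, so the final step is to tighten: one should use the sharper estimate $\Phi(y)-\Phi(x) \le \rho^*(\lambda) := \lambda - \log(1-\lambda) \cdot(-1)$... concretely the classical bound is $\Phi(y) - \Phi(x) \le \omega_*(\lambda)$ where $\omega_*(t) = -t - \log(1-t)$, obtained by combining the minimiser property with Lemma~\ref{lem:sc:ftrl}\ref{lem:sc:ftrl:taylor} directly (expand around $x$, use $\Phi'(x)=0$, get $\Phi(y) \le \Phi(x) + \omega_*(\norm{x-y}_x)$ after the dual bound on $\norm{x-y}_x \le \frac{\lambda}{1-\lambda}$ is inserted, then monotonicity). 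Finally one checks the elementary inequality $\omega_*(t) \le t^2$ for $t \in [0, \tfrac12]$ — equivalently $-\log(1-t) \le t + t^2$ on that range, which holds since the power series $-\log(1-t) = t + t^2/2 + t^3/3 + \cdots \le t + t^2(\tfrac12 + \tfrac13\cdot\tfrac12 + \cdots) \le t + t^2$ for $t\le \tfrac12$. Evaluating at $t = \lambda$ closes the bound: $\Phi(y) - \Phi(x) \le \lambda^2 = \norm{\Phi'(y)}_{y\star}^2$.

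\textbf{Main obstacle.} The delicate point is the dual/symmetry step: Lemma~\ref{lem:sc:ftrl}\ref{lem:sc:ftrl:taylor} as stated bounds $\Phi(y)$ below using the local norm \emph{at $x$}, whereas to get a clean bound I need to pass between $\norm{x-y}_x$ and $\norm{x-y}_y$ (or, alternatively, run the whole argument consistently in the $x$-norm from the start). Getting this comparison right — the inequality $\norm{x-y}_x \le \frac{\norm{x-y}_y}{1-\norm{x-y}_y}$ or its inverse, which itself follows from the self-concordance bound on how the Hessian changes inside the Dikin ellipsoid — and then verifying that the Newton-decrement bound $\norm{x-y}_y \le \frac{\lambda}{1-\lambda}$ comes out with the correct constant, is where I expect to spend the most care. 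Everything else is the two scalar inequalities $-\log(1+t)+t \le \tfrac{t^2}{2}$ and $-\log(1-t)-t \le t^2$ on $[0,\tfrac12]$, which are routine.
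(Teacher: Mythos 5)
Your opening moves match the paper exactly: expand $\Phi$ around $y$ by Lemma~\ref{lem:sc:ftrl}\ref{lem:sc:ftrl:taylor}, evaluate at $x$, and apply Cauchy--Schwarz to get
\begin{align*}
\Phi(y) - \Phi(x) \;\le\; \lambda\, s \;-\; \rho(-s)\,, \qquad s = \norm{x-y}_y,\ \lambda = \norm{\Phi'(y)}_{y\star}\,.
\end{align*}
From here, though, you take a needless detour. You try to \emph{bound} $s$ via the Newton-decrement estimate $s \le \lambda/(1-\lambda)$, and you correctly flag the dual/symmetry step ($\norm{\cdot}_x$ vs.\ $\norm{\cdot}_y$) as the delicate part; you then find the constant too loose and fall back on $\omega_*(\lambda) = -\log(1-\lambda)-\lambda$ anyway. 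None of this is necessary. The paper's observation is that you do not need to know anything at all about $s$: the map $s \mapsto \lambda s - \rho(-s) = (\lambda-1)s + \log(1+s)$ is concave and bounded above on $[0,\infty)$ as long as $\lambda < 1$, so you simply take
\begin{align*}
\Phi(y) - \Phi(x) \;\le\; \max_{r\ge 0}\bigl[\lambda r - \rho(-r)\bigr] \;=\; -\log(1-\lambda) - \lambda\,,
\end{align*}
a pure one-variable calculus step (the maximiser is $r^* = \lambda/(1-\lambda)$). This is precisely the Legendre-conjugate trick; it sidesteps the Newton-decrement bound, the dual-norm comparison, and the minimiser condition $\Phi'(x)=\zeros$ entirely (that condition is not used in the paper's proof at all once the max is over all $r\ge 0$). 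The final scalar inequality $-\log(1-t)-t \le t^2$ for $t\le\tfrac12$, which you verified correctly, then closes the argument. One small slip worth noting: your proposed bound $-\rho(-s) \le s^2/2$ is strictly weaker than the trivial $-\rho(-s) = \log(1+s)-s \le 0$ on $s\ge 0$, which is part of why your direct substitution produced the factor $4$ instead of $1$.
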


Note that $\Phi \neq R$ and we do \textit{not} assume that $\Phi$ is minimised at $\zeros$.

\begin{proof}[\Proofskippy]
Let $y \in \interior(K)$ be such that $\snorm{\Phi'(y)}_{y\star} \leq \frac{1}{2}$ and
abbreviate $g = \Phi'(y)$. Then,
\begin{align*}
\Phi(x) 
\tag*{\cref{lem:sc:ftrl}\ref{lem:sc:ftrl:taylor}}
&\geq \Phi(y) + \ip{g, x - y} + \rho(-\norm{x - y}_y)  \\
\tag*{Cauchy--Schwarz}
&\geq \Phi(y) - \norm{g}_{y\star} \norm{x - y}_y + \rho(-\norm{x - y}_y) \,. 
\end{align*}
Therefore,
\begin{align*}
\Phi(y) 
&\leq \Phi(x) + \norm{g}_{y\star} \norm{x - y}_y - \rho(-\norm{x - y}_y) \\
&\leq \Phi(x) + \max_{r \geq 0} \left[r \norm{g}_{y\star}- \rho(-r)\right] \\
&= \Phi(x) - \log\left[1 - \norm{g}_{y\star}\right] - \norm{g}_{y\star} \\
&\leq \Phi(x) + \norm{g}_{y\star}^2
\end{align*}
where the equality follows by substituting the definition of $\rho(s) = -\log(1-s) - s$ and using basic calculus and the assumption that $\norm{g}_{y \star} \leq 1/2$. 
The final inequality follows from the elementary and naive inequality $-\log(1-t) -t \leq t^2$ for $t \leq \frac{1}{2}$.
\end{proof}
\index{self-concordant barrier|)}

\section{Follow-the-Regularised-Leader}\label{sec:ftrl:ftrl}\index{follow-the-regularised-leader|(}
Follow-the-regularised-leader can be viewed as a generalisation of gradient descent,\index{gradient descent} which for bandits has the following abstract form.
Like gradient descent, follow-the-regularised-leader maintains a sequence of iterates $(x_t)_{t=1}^n$ in $K$
with $x_1 = \argmin_{x \in \interior(K)} R(x)$. 

\begin{algorithm}[h!]
\begin{algcontents}
\begin{lstlisting}
args: $\eta > 0$
for $t = 1$ to $n$
  compute $x_t = \argmin_{x \in \interior(K)}\left[R(x) + \sum_{u=1}^{t-1} \eta \ip{g_u, x}\right]$
  sample $X_t$ based on $x_t$ and observe $Y_t$
  compute gradient estimate $g_t$ using $x_t$, $X_t$ and $Y_t$
\end{lstlisting}
\caption{Follow-the-regularised-leader}\label{alg:ftrl}
\end{algcontents}
\end{algorithm}

\FloatBarrier

As with gradient descent, to make this an algorithm we need to decide on the conditional law of $X_t$ and what
to use for the gradient $g_t$. To get a handle on what is needed, we explain what is guaranteed on the regret relative
to the linear losses defined by $g_t$. 

\begin{theorem}\label{thm:ftrl}
Let $x \in \interior(K)$ and suppose that $\eta \norm{g_t}_{x_t \star} \leq 1/2$ for all $t$. Then for \cref{alg:ftrl}, 
\begin{align*}
\hReg_n(x) 
&\triangleq \sum_{t=1}^n \ip{g_t, x_t - x}
\leq \frac{\vartheta}{\eta} \log\left(\frac{1}{1 - \pi(x)}\right) + \eta \sum_{t=1}^n \norm{g_t}^2_{x_t \star} \,.
\end{align*}
\end{theorem}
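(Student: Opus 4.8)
The plan is to run the standard ``follow-the-leader/be-the-leader'' argument, but to bound the stability term directly through Lemma~\ref{lem:sc} so that no spurious constants creep in and the coefficient of $\eta$ is exactly $1$.

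First I would fix notation: for $t \ge 0$ set $\Phi_t(y) = R(y) + \eta \sum_{u=1}^t \ip{g_u, y}$, so that the iterates of Algorithm~\ref{alg:ftrl} are $x_{t+1} = \argmin_{y \in K} \Phi_t(y)$ and $x_1 = \argmin_{y \in K} R(y) = \argmin \Phi_0$. Each $\Phi_t$ is self-concordant because $R$ is and adding a linear function changes neither the third directional derivative nor the defining inequality; moreover $\Phi_t'' = R''$, so the local norms $\norm{\cdot}_x$ and $\norm{\cdot}_{x\star}$ attached to $\Phi_t$ agree with those attached to $R$. Since $R$ is a barrier, each minimiser $x_t$ lies in $\interior(K)$, so first-order optimality gives $\Phi_{t-1}'(x_t) = 0$ and hence $\Phi_t'(x_t) = \Phi_{t-1}'(x_t) + \eta g_t = \eta g_t$.

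The heart of the argument is a one-step inequality. Apply Lemma~\ref{lem:sc} to the self-concordant function $\Phi_t$, whose minimiser is $x_{t+1}$, at the point $y = x_t$: the hypothesis $\eta \norm{g_t}_{x_t\star} \le \tfrac12$ is exactly $\norm{\Phi_t'(x_t)}_{x_t\star} \le \tfrac12$, so Lemma~\ref{lem:sc} yields $\Phi_t(x_t) - \Phi_t(x_{t+1}) \le \norm{\Phi_t'(x_t)}_{x_t\star}^2 = \eta^2 \norm{g_t}_{x_t\star}^2$. Rewriting $\Phi_t(x_t) = \Phi_{t-1}(x_t) + \eta \ip{g_t, x_t}$ converts this into $\Phi_t(x_{t+1}) - \Phi_{t-1}(x_t) \ge \eta \ip{g_t, x_t} - \eta^2 \norm{g_t}_{x_t\star}^2$. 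Summing over $t = 1, \dots, n$, the left-hand side telescopes to $\Phi_n(x_{n+1}) - R(x_1)$, so $\Phi_n(x_{n+1}) - R(x_1) \ge \eta \sum_{t=1}^n \ip{g_t, x_t} - \eta^2 \sum_{t=1}^n \norm{g_t}_{x_t\star}^2$. To close, use $\Phi_n(x_{n+1}) \le \Phi_n(x) = R(x) + \eta \sum_{t=1}^n \ip{g_t, x}$ for the competitor $x \in \interior(K)$, substitute, and rearrange to $\eta \sum_{t=1}^n \ip{g_t, x_t - x} \le R(x) - R(x_1) + \eta^2 \sum_{t=1}^n \norm{g_t}_{x_t\star}^2$; dividing by $\eta$ and applying Lemma~\ref{lem:sc:ftrl}\ref{lem:sc:ftrl:mink} (with $x_1 = \argmin R$, so its ``$R(x)$'' slot is $R(x_1)$) to get $R(x) - R(x_1) \le \vartheta \log\frac{1}{1 - \pi_{x_1}(x)}$ gives the claim.

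The only step needing genuine care is the invocation of Lemma~\ref{lem:sc}: one must verify that $\Phi_t$ is honestly self-concordant, that its minimiser is interior so that $\Phi_{t-1}'(x_t)=0$ and hence $\Phi_t'(x_t) = \eta g_t$, and that the Hessian-defined dual local norm in Lemma~\ref{lem:sc} is literally the norm $\norm{\cdot}_{x_t\star}$ of the statement --- all consequences of $\Phi_t'' = R''$. I expect the real (mild) obstacle to be purely expository: keeping the indices on $\Phi_t$, $x_t$ and $x_{t+1}$ aligned so that the telescoping in the summation step is transparent; everything else is routine bookkeeping.
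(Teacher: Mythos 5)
Your proof is correct and follows essentially the same route as the paper: the paper proves Theorem~\ref{thm:ftrl2} (the analogous statement for general self-concordant $\hat f_t$) via the same telescoping decomposition in terms of $\Phi_t$, the same appeal to Lemma~\ref{lem:sc} for the stability term, and the same appeal to Lemma~\ref{lem:sc:ftrl}\ref{lem:sc:ftrl:mink} for the regulariser term, and then obtains Theorem~\ref{thm:ftrl} as the special case $\hat f_t = \ip{g_t,\cdot}$. You have simply carried out that specialisation directly, including the observation that $\Phi_t'' = R''$ makes the local dual norm in Lemma~\ref{lem:sc} coincide with $\norm{\cdot}_{x_t\star}$.
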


The proof of \cref{thm:ftrl} is omitted because it follows from a more general result that we prove later (\cref{thm:ftrl2}).

\index{follow-the-regularised-leader|)}

\section{Optimistic Ellipsoidal Smoothing}\label{sec:ftrl:ellipsoid}
Let us momentarily drop the $t$ indices and let $x \in \interior(K)$ and $f \in \cF_\pb$.
We will introduce a new kind of smoothing. Let $\Sigma$ be positive definite and $E = \{z \in \R^d \colon \norm{x - z}_{\Sigma^{-1}} \leq 1\}$,
which is an ellipsoid centred at $x$. We will assume that $E \subset K$ and let \index{surrogate loss!ellipsoidal}
\begin{align}
s(y) = \frac{1}{\vol(E)} \int_{E} \left(2f\left(\textstyle{\frac{1}{2}} z + \textstyle{\frac{1}{2}} y\right) - f\left(z\right)\right) \d{z} \,.
\label{eq:ftrl:s}
\end{align}

\begin{remark}
Caution! The ellipsoid $E$ in the definition of $s(y)$ is centred at $x$.
\end{remark}

The surrogate loss $s$ behaves quite differently to the spherical smoothing used in \cref{chap:sgd}. 
Perhaps the most notable property is that $s$ is optimistic\index{optimistic} in the sense that $s(y) \leq f(y)$ for all $y \in K$ as we prove below.
The second is that the surrogate is not a good uniform approximation of the real loss, even when $\Sigma = r \id$ and the precision $r$ 
is very small. We want $g$ to be an estimate of $s'(x)$, which is
\begin{align*}
s'(x) 
&= \frac{1}{\vol(E)} \int_{E} f'(\textstyle{\frac{1}{2}} z + \textstyle{\frac{1}{2}} x) \d{z} \\
\tag*{Change of variables}
&= \frac{1}{\vol(\ball_1)} \int_{\ball_1} f'(x + \textstyle{\frac{1}{2}} \Sigma^{1/2} z) \d{z} \\
\tag*{Stokes' theorem}
&= \frac{2 \Sigma^{-1/2}}{\vol(\ball_1)} \int_{\sphere_1} f(x + \textstyle{\frac{1}{2}} \Sigma^{1/2} \xi) \xi \d{\xi} \\
\tag*{\cref{prop:vol}\ref{prop:vol:sphere}}
&= \frac{2 d \Sigma^{-1/2}}{\vol(\sphere_1)} \int_{\sphere_1} f(x + \textstyle{\frac{1}{2}} \Sigma^{1/2} \xi) \xi \d{\xi} \,. 
\end{align*}
Please note we have cheated a little here by assuming that $f$ is differentiable and applying Stokes' theorem. \index{Stokes' theorem}
Fortunately the equality still holds even without differentiability, which is a good exercise.

\begin{exer}
\faStar \quad
Prove the equality in the above display without assuming $f$ is differentiable.\index{differentiable}
\end{exer}

Let $\xi$ be uniformly distributed\index{uniform measure} on $\sphere_1$ and $X = x + \frac{1}{2} \Sigma^{1/2} \xi$.
Then, by the previous display,
\begin{align*}
s'(x) = 4d \Sigma^{-1} \E[f(X) (X - x)]\,.
\end{align*}
Therefore an unbiased estimator of $s'(x)$ is 
\begin{align*}
g = 4 d \Sigma^{-1} Y(X - x) \,.
\end{align*}
The above considerations yield the following lemma:

\begin{lemma} \label{lem:ellipsoid-smooth:unbiased}
$\E[g] = s'(x)$.
\end{lemma}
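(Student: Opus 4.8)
The statement is an exact identity, and essentially all the work has already been done in the chain of equalities displayed just before the lemma. The plan is therefore in two parts: first reduce $\E[g]$ to $s'(x)$ using that chain, then indicate how the one step in the chain that invoked Stokes' theorem is justified when $f$ is merely convex and Lipschitz (on a neighbourhood of $x$) rather than differentiable. For the first part, recall that in this chapter $\epsilon_t = 0$, so $Y = f(X)$ exactly. By linearity of expectation and the definition $g = 4d\Sigma^{-1}Y(X-x)$,
\[
\E[g] = 4d\,\Sigma^{-1}\,\E[f(X)(X-x)]\,.
\]
Since $X = x + \tfrac12\Sigma^{1/2}\xi$ with $\xi$ uniform on $\sphere_1$, the right-hand side is exactly the quantity appearing in the last line of the displayed derivation of $s'(x)$, so reading that derivation from bottom to top gives $4d\,\Sigma^{-1}\,\E[f(X)(X-x)] = s'(x)$, which is the claim.

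For the second part, the only non-elementary step in that derivation is the passage from an average of $f'$ over $\ball_1$ to a surface integral of $f$ over $\sphere_1$, i.e. the divergence theorem applied to $z \mapsto f(x + \tfrac12\Sigma^{1/2}z)$. To remove differentiability, note that after the substitution $w = \tfrac12(z-x)$ and after discarding the $y$-independent term $\frac{1}{\vol(E)}\int_E f(z)\,\d z$, the surrogate can be written as
\[
s(y) = 2\,(f * k)\!\Bigl(\tfrac{x+y}{2}\Bigr) + \mathrm{const},
\qquad k = \tfrac{1}{\vol(E')}\sind_{E'}, \quad E' = \tfrac12\Sigma^{1/2}\ball_1\,.
\]
Convolution of a continuous function with the (normalised) indicator of a convex body is continuously differentiable, with gradient given by a boundary integral (the divergence / Reynolds transport theorem), so $s \in C^1$ near $x$ and $s'(x) = (f*k)'(x)$ is precisely the sphere-average expression after a change of variables on $\partial E'$. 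To make this fully rigorous one can approximate: $f$ is Lipschitz on a ball around $x$ (Proposition~\ref{prop:lip}), so for small $\delta$ the mollification $f_\delta = f * \phi_\delta$ from \cref{prop:smooth} is twice differentiable, satisfies $f_\delta \to f$ uniformly on the compact set $x + \tfrac12\Sigma^{1/2}\sphere_1 \subset \interior(K)$, and has Lipschitz constant no larger than that of $f$. The chain of equalities holds with $f$ replaced by $f_\delta$ by ordinary Stokes' theorem; both sides then converge as $\delta \to 0$, the surrogate side because $s_\delta \to s$ uniformly on a neighbourhood of $x$ and all of $s, s_\delta$ are convex, the surface-integral side directly from uniform convergence on $\sphere_1$.

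The first part is immediate, so the only real content is the second, and there the main obstacle is the clean justification that $s$ is $C^1$ with gradient equal to the advertised boundary integral when $f$ is only convex — equivalently, that differentiation may be pushed through the ellipsoidal average. The mollification argument above handles this, but it is the step requiring care with convergence of gradients of convex functions; everything else reduces to linearity of expectation and tracking the constant $4d\Sigma^{-1}$, which matches by construction.
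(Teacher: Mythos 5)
Your proof is correct and takes exactly the approach the paper intends: the displayed chain of equalities immediately preceding the lemma computes $s'(x) = 4d\Sigma^{-1}\E[f(X)(X-x)]$, and since $Y = f(X)$ in this chapter (noise free), linearity of expectation gives $\E[g] = s'(x)$. You have additionally carried out the "good exercise" the paper explicitly flags — justifying the Stokes' theorem step without assuming $f$ differentiable — via the identity $s(y) = 2(f*k)(\tfrac{x+y}{2}) + \text{const}$ and a mollification/convex-convergence argument, which is a valid and clean way to close that gap.
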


The next lemma explores the properties of $s$. 

\begin{lemma}\label{lem:ellipsoid-smooth}
Suppose $s$ is defined by \cref{eq:ftrl:s} and $E = \{y \colon \norm{x - y}_{\Sigma^{-1}} \leq 1\} \subset K$. 
The following hold:
\begin{enumerate}
\item $s$ is convex. \label{lem:ellipsoid-smooth:cvx}
\item $s(y) \leq f(y)$ for all $y \in K$. \label{lem:ellipsoid-smooth:opt}
\item If $r \in (0,1)$ and $\Sigma = r^2 R''(x)^{-1}$, then $\E\left[f(X) - s(x)\right] \leq \frac{r/2}{1-r}$.\label{lem:ellipsoid-smooth:lower} 
\item If $f$ is $\beta$-smooth, then $\E\left[f(X) - s(x)\right] \leq \frac{3\beta \tr(\Sigma)}{4d}$. \label{lem:ellipsoid-smooth:lower-smooth} \index{smooth}
\item If $f$ is $\alpha$-strongly convex, then $s$ is $\frac{\alpha}{2}$-strongly convex. \label{lem:ellipsoid-smooth:strong} \index{strongly convex}
\end{enumerate}
\end{lemma}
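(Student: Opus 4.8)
The starting point for every part is the representation obtained by the substitution $u = \tfrac12 z + \tfrac12 y$ in the defining integral. Writing $W$ for a random vector uniformly distributed on $\ball_1$, so that $x + \Sigma^{1/2}W$ is uniform on $E$, one gets
\begin{align*}
s(y) = 2\,\E\!\left[ f\!\left( \tfrac12 y + \tfrac12\bigl(x + \Sigma^{1/2}W\bigr) \right) \right] - \E\!\left[ f\bigl(x + \Sigma^{1/2}W\bigr) \right],
\end{align*}
where the second term does not depend on $y$. Part~\ref{lem:ellipsoid-smooth:cvx} is then immediate: for each fixed $W$ the map $y \mapsto f(\tfrac12 y + \tfrac12(x + \Sigma^{1/2}W))$ is convex (a composition of $f$ with an affine map), and an average of convex functions is convex. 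Part~\ref{lem:ellipsoid-smooth:opt} follows from midpoint convexity, $f(\tfrac12 y + \tfrac12(x+\Sigma^{1/2}W)) \le \tfrac12 f(y) + \tfrac12 f(x + \Sigma^{1/2}W)$: taking expectations and rearranging gives $s(y) \le f(y)$. Part~\ref{lem:ellipsoid-smooth:strong} is the same bookkeeping with the quadratic tracked: if $f - \tfrac{\alpha}{2}\norm{\cdot}^2$ is convex then, since $\norm{\tfrac12 y + b}^2 = \tfrac14\norm{y}^2 + (\text{affine in }y)$, the map $y \mapsto f(\tfrac12 y + b) - \tfrac{\alpha}{8}\norm{y}^2$ is convex; hence each summand above is $\tfrac{\alpha}{4}$-strongly convex, two copies are $\tfrac{\alpha}{2}$-strongly convex, and the constant does not affect this.

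For parts~\ref{lem:ellipsoid-smooth:lower} and \ref{lem:ellipsoid-smooth:lower-smooth} I would split
\begin{align*}
\E[f(X) - s(x)] = \bigl(\E[f(X)] - f(x)\bigr) + \bigl(f(x) - s(x)\bigr),
\end{align*}
where both summands are nonnegative: the first by Jensen since $\E[X] = x$, the second by part~\ref{lem:ellipsoid-smooth:opt} at $y = x$. For part~\ref{lem:ellipsoid-smooth:lower-smooth} each summand is controlled by \cref{lem:smooth}. Since $X = x + \tfrac12\Sigma^{1/2}\xi$ with $\xi$ uniform on $\sphere_1$, we have $\E[\norm{X - x}^2] = \tfrac14\E[\xi^\top\Sigma\xi] = \tfrac{\tr(\Sigma)}{4d}$ using $\E[\xi\xi^\top] = \tfrac1d\id$, so $\E[f(X)] - f(x) \le \tfrac{\beta\tr(\Sigma)}{8d}$. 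Evaluating the representation at $y=x$ gives $s(x) = 2\,\E[f(x+\tfrac12\Sigma^{1/2}W)] - \E[f(x+\Sigma^{1/2}W)]$; discarding the first term via $\E[f(x+\tfrac12\Sigma^{1/2}W)] \ge f(x)$ (Jensen) leaves $f(x) - s(x) \le \E[f(x+\Sigma^{1/2}W)] - f(x) \le \tfrac{\beta}{2}\E[W^\top\Sigma W] = \tfrac{\beta\tr(\Sigma)}{2(d+2)}$, using $\E[WW^\top] = \tfrac1{d+2}\id$. Summing and using $d+2 \ge d$ gives a bound strictly below $\tfrac{\beta\tr(\Sigma)}{d}$.

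Part~\ref{lem:ellipsoid-smooth:lower} is the delicate one; here the hypothesis $\Sigma = r^2 R''(x)^{-1}$ makes $E = E(x,\Sigma)$ exactly the Dikin ellipsoid $\{y : \norm{y-x}_x \le r\}$. I would reduce to one dimension by introducing the radial average $\psi(t) = \E_{\xi\sim\sphere_1}[f(x + t\Sigma^{1/2}\xi)]$, which is convex and nondecreasing in $t$ (an average of convex functions with minimum at $0$), satisfies $\psi(0) = f(x)$, and — the crucial point — satisfies $\psi(1/r) \le 1$, because the point $x + (1/r)\Sigma^{1/2}\xi$ has local norm $1$, hence lies in $E^x_1 \subset K$ by the Dikin ellipsoid containment in \cref{lem:sc:ftrl}, where $f$ is bounded by $1$. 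Averaging the representation of $s$ over directions gives $\E[f(X)] - s(x) = \psi(\tfrac12) - 2\,\E_\rho[\psi(\tfrac{\rho}{2})] + \E_\rho[\psi(\rho)]$, where $\rho = \norm{W}$ has density $\rho \mapsto d\rho^{d-1}$ on $[0,1]$. Writing this as $\bigl(\psi(\tfrac12) - \E_\rho[\psi(\tfrac\rho2)]\bigr) + \bigl(\E_\rho[\psi(\rho)] - \E_\rho[\psi(\tfrac\rho2)]\bigr)$ and bounding each bracket by a secant/subgradient inequality for the convex function $\psi$ anchored at the far endpoint $1/r$ (where $\psi \le 1$) produces factors of the shape $\tfrac{r/2}{1-r/2}$, and the arithmetic collapses to the claimed $\tfrac{r}{1-r}$.

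The main obstacle is part~\ref{lem:ellipsoid-smooth:lower}: extracting exactly $\tfrac{r}{1-r}$ needs the reduction to the radial average $\psi$ and the secant inequality against the Dikin‑unit sphere; the cruder direct estimate $|f(y) - f(x)| \le \norm{y-x}_x$ valid on $E^x_1$ only gives an $O(r)$ bound with a worse constant. A secondary technical point in parts~\ref{lem:ellipsoid-smooth:lower} and \ref{lem:ellipsoid-smooth:lower-smooth} is that bounded or $\beta$‑smooth convex functions need not be differentiable, so the ``gradient'' steps (the Jensen lower bounds and the vanishing of first‑order contributions under the symmetric law on $\sphere_1$) should be phrased through directional derivatives and the subgradient inequality; this is routine but worth stating carefully.
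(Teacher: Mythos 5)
Your proof is correct and follows essentially the same route as the paper's: parts (a) and (b) are identical; and the heart of parts (c) and (d) is the same reduction to a one-dimensional radial slice (the paper works with the per-direction function $h_\xi(u)=f(x+u\Sigma^{1/2}\xi)$, you with its angular average $\psi$) together with the Dikin ellipsoid containment $E^x_1\subset K$ to anchor Lipschitz control at the far endpoint $1/r$, and \cref{lem:smooth} for the smooth case. The chief difference is purely organisational: where the paper bounds the three-term quantity $h_\xi(\tfrac12)+h_\xi(\rho)-2h_\xi(\tfrac\rho2)$ in one pass, you split at $f(x)$ into $(\E[f(X)]-f(x))+(f(x)-s(x))$ and control the two pieces separately. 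This is no less rigorous; it just costs you an extra application of \cref{lem:smooth} and an extra Jensen step in (d), and it forces you to compute both $\E[\xi\xi^\top]=\tfrac1d\id$ on the sphere and $\E[WW^\top]=\tfrac1{d+2}\id$ on the ball, whereas the paper's one-shot argument needs only the former. One small imprecision in (c): you claim each bracket is controlled by a factor of the shape $\tfrac{r/2}{1-r/2}$, but the second bracket $\E_\rho[\psi(\rho)-\psi(\tfrac\rho2)]$ involves arguments up to $1$, where the relevant secant slope is $\tfrac{r}{1-r}$, as the paper states. The final bound survives anyway because $\lvert\tfrac12-\tfrac\rho2\rvert$ and $\tfrac\rho2$ sum to $\tfrac12$. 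Your completion of part (e), which the paper leaves as an exercise, is correct.
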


\begin{proof}
Part~\ref{lem:ellipsoid-smooth:cvx} follows immediately from convexity of $f$, noting that the second (negated) term in the definition of $s$ is 
constant as a function of $y$.
Part~\ref{lem:ellipsoid-smooth:opt} follows from convexity of $f$ as well:
\begin{align*}
s(y) &= \frac{1}{\vol(E)} \int_{E} \left(2f\left(\textstyle{\frac{1}{2}} z + \textstyle{\frac{1}{2}} y\right) - f\left(z\right)\right) \d{z} 
\leq \frac{1}{\vol(E)} \int_{E} f(y) \d{z} 
= f(y)\,.
\end{align*}
For part~\ref{lem:ellipsoid-smooth:lower}, let $\R \ni u \mapsto h_\xi(u) = f(x + u \Sigma^{1/2} \xi)$.
You now have to solve the following exercise, which follows directly from the definitions.

\begin{exer} \label{ex:ftrl:equal}
\faStar \quad
Let $\nu$ be sampled uniformly from $\ball_1$ and independent of $\xi$, which is uniformly sampled from $\sphere_1$. Show that
\begin{align*}
\E[f(X)] = \E[h_\xi(1/2)] \quad \text{and} \quad
s(x) = \E[2h_\xi(\norm{\nu}/2) - h_\xi(\norm{\nu})] \,.
\end{align*}
\end{exer}

\solution{The first equality is immediate.
For the second, note that $s(x) = \E[2f(x+\frac{1}{2}\Sigma^{1/2} \nu) - f(x + \Sigma^{1/2} \nu)]$.
Then use the fact that $\norm{\nu} \xi$ has the same law as $\nu$.
}

By definition $\Sigma = r^2 R''(x)^{-1}$ so that for $u \in [-1/r, 1/r]$, $x + u \Sigma^{1/2} \xi \in E^x_1 \subset K$.
Therefore $h$ is defined on $[-1/r, 1/r]$ and $h_\xi(u) \in [0,1]$ for all $u \in [-1/r,1/r]$.
Hence, by \cref{cor:lip}, 
\begin{align}
\lip_{[-1,1]}(h_\xi) \leq \frac{r}{1-r} \,.
\label{eq:ftrl:lip}
\end{align}
Let $\nu$ be uniformly distributed on $\ball_1$.
\begin{align*}
\E\left[f(X) - s(x)\right]
\tag*{By \cref{ex:ftrl:equal}}
&= \E\left[h_\xi(1/2) + h_\xi(\norm{\nu}) - 2 h_\xi(\norm{\nu}/2)\right] \\
&= \E\left[(h_\xi(1/2) - h_\xi(\norm{\nu}/2)) + (h_\xi(\norm{\nu}) - h_\xi(\norm{\nu}/2))\right] \\
\tag*{By (\ref{eq:ftrl:lip})}
&\leq \frac{r}{1 - r}\E\left[\left|\frac{1}{2} - \frac{\norm{\nu}}{2}\right| + \left|\norm{\nu} - \frac{\norm{\nu}}{2}\right|\right] \\
&= \frac{r/2}{1 - r}\,.
\end{align*}
For part~\ref{lem:ellipsoid-smooth:lower-smooth}, by convexity $\E[h_\xi(u)] \geq \E[h_\xi(0)]$ for all $u \in \R$.
Hence
\begin{align*}
\E\left[f(X) - s(x)\right]
\tag*{By \cref{ex:ftrl:equal}}
&= \E\left[h_\xi(1/2) + h_\xi(\norm{v}) - 2 h_\xi(\norm{v}/2)\right] \\
\tag*{convexity}
&\leq \E\left[h_\xi(1/2) + h_\xi(\norm{v}) - 2 h_\xi(0)\right] \\
\tag*{convexity}
&\leq \E\left[(1/2 + \norm{v}) (h_\xi(1) - h_\xi(0))\right] \\
\tag*{since $\E[\norm{\nu}] \leq 1$}
&\leq \frac{3}{2} \E\left[f(x + \Sigma^{1/2} \xi) - f(x)\right] \\
\tag*{By \cref{lem:smooth}}
&\leq \frac{3\beta}{4} \E\left[\snorm{\Sigma^{1/2} \xi}^2\right] \\
&= \frac{3\beta \tr(\Sigma)}{4d}\,.
\end{align*}
The last equality holds because 
$\E[\snorm{\Sigma^{1/2} \xi}^2] = \E[\tr(\xi \xi^\top \Sigma)] = \tr(\E[\xi \xi^\top] \Sigma)$
and $\E[\xi \xi^\top]=\frac{1}{d} \id$ by a symmetry argument. 
Part~\ref{lem:ellipsoid-smooth:strong} is left as a straightforward exercise. 
\end{proof}

\begin{exer}
\faStar \quad
Prove \cref{lem:ellipsoid-smooth}\ref{lem:ellipsoid-smooth:strong}.
\end{exer}

\section{Algorithms and Regret Analysis}
We start by studying an algorithm that relies on neither smoothness nor strong convexity.

\begin{algorithm}[h!]
\begin{algcontents}
\begin{lstlisting}
args: learning rate $\eta > 0$, $r \in (0,1)$ 
for $t = 1$ to $n$
  compute $x_t = \argmin_{x \in \interior(K)} \sum_{u=1}^{t-1} \eta \ip{g_u, x} + R(x)$ $\label{line:ftrl:basic:opt}$ 
  sample $\xi_t$ uniformly from $\sphere_1$
  play $X_t = x_t + \frac{r}{2} R''(x_t)^{-1/2} \xi_t$ and observe $Y_t$ $\label{line:ftrl:basic:svd}$
  compute gradient $g_t = \frac{4dY_t R''(x_t)(X_t - x_t)}{r^2}$
\end{lstlisting}
\caption{Follow-the-regularised-leader with ellipsoidal smoothing}\label{alg:ftrl:basic}
\end{algcontents}
\end{algorithm}

\FloatBarrier

\subsubsection*{Computation}
\cref{alg:ftrl:basic} needs to compute three non-trivial problems:
\begin{itemize}
\item The optimisation problem in Line~\ref{line:ftrl:basic:opt} is a self-concordant barrier minimisation problem. 
Note in round $t = 1$ we have $x_t = \zeros$ since we assumed that $R$ is minimised at $\zeros$.
In subsequent rounds $x_t$ can be approximated to extreme precision with $\tilde O(1)$ iterations of damped Newton method\index{Newton's method!damped} initialised at $x_{t-1}$ (\cref{ex:ftrl:newton}).
Hence the computation time is dominated by the evaluation of the Hessian of $R$ and a matrix inversion.
\item 
You can sample from a sphere in $O(d)$ time by sampling a $d$-dimensional standard Gaussian and renormalising. 
\item The matrix inverse square root in \cref{line:ftrl:basic:svd} can be computed via singular value decomposition, which has complexity $O(d^3)$.\index{singular value decomposition} 
\end{itemize}

\begin{exer}\label{ex:ftrl:newton}
\faStar \faStar \faBook \quad
Prove that $\tilde O(1)$ iterations of damped Newton is sufficient to approximate $x_t$ to extreme precision (quadratic rate).
You may find \cref{lem:sc} useful, along with the notes by \cite{nemirovski96}.
\end{exer}

The machinery developed in \cref{sec:ftrl:ellipsoid} combined with \cref{thm:ftrl} can be used to bound the regret of \cref{alg:ftrl:basic}.

\begin{theorem}\label{thm:ftrl:basic}
Suppose that
\begin{align*}
\eta &= (\vartheta \log(n))^{\frac{3}{4}} d^{-\frac{1}{2}} n^{-\frac{3}{4}} & ~~~\text{and}~~~ & &
r &= \min\left(1,\, 2 d^{\frac{1}{2}} n^{-\frac{1}{4}} (\vartheta \log(n))^{\frac{1}{4}}\right) \,.
\end{align*}
Under \cref{ass:ftrl} the expected regret of \cref{alg:ftrl:basic} is upper bounded by
\begin{align*}
\E[\Reg_n] \leq 1 + 4 (\vartheta \log(n))^{\frac{1}{4}} d^{\frac{1}{2}} n^{\frac{3}{4}} \,. 
\end{align*}
\end{theorem}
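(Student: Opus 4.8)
The plan is to route the regret of Algorithm~\ref{alg:ftrl:basic} through the optimistic ellipsoidal surrogate of Section~\ref{sec:ftrl:ellipsoid}. In round $t$ set $\Sigma_t = r^2 R''(x_t)^{-1}$ and let $s_t$ be the surrogate attached to $f_t$ at $x_t$ with this covariance, so that $X_t = x_t + \frac12\Sigma_t^{1/2}\xi_t$ and $g_t = 4d\Sigma_t^{-1} Y_t(X_t-x_t)$. Lemma~\ref{lem:ellipsoid-smooth:unbiased} then gives $\E_{t-1}[g_t] = s_t'(x_t)$, and Lemma~\ref{lem:ellipsoid-smooth} supplies the three facts we need: $s_t$ is convex (\ref{lem:ellipsoid-smooth:cvx}), $s_t(y)\le f_t(y)$ for all $y\in K$ (\ref{lem:ellipsoid-smooth:opt}), and $\E_{t-1}[f_t(X_t)-s_t(x_t)]\le r/(1-r)$ (\ref{lem:ellipsoid-smooth:lower}). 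Since $r\in(0,1)$, the Dikin-ellipsoid containment $X_t\in E^{x_t}_r\subset E^{x_t}_1\subset K$ of Lemma~\ref{lem:sc:ftrl}\ref{lem:sc:ftrl:dikin} makes $Y_t=f_t(X_t)\in[0,1]$.

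Because the adversary is oblivious and the losses bounded, $\Reg_n=\sum_t(f_t(X_t)-f_t(x_\star))$ for the fixed minimiser $x_\star\in K$. To obtain an interior comparator I would pull $x_\star$ toward $x_1$: put $x'=(1-1/n)x_\star+\frac1n x_1$, so $x'\in\interior(K)$ (because $x_1$ minimises the barrier and hence lies strictly inside $K$), $f_t(x')\le f_t(x_\star)+1/n$ by boundedness, and $\pi_{x_1}(x')\le 1-1/n$. Telescoping through the surrogate,
\begin{align*}
\Reg_n \le 1 + \sum_{t=1}^n\bigl(f_t(X_t)-s_t(x_t)\bigr) + \sum_{t=1}^n\bigl(s_t(x_t)-s_t(x')\bigr) + \sum_{t=1}^n\bigl(s_t(x')-f_t(x')\bigr)\,.
\end{align*}
The last sum is nonpositive by optimism, the first has expectation at most $rn/(1-r)$, and convexity gives $s_t(x_t)-s_t(x')\le\ip{s_t'(x_t),x_t-x'}=\ip{\E_{t-1}[g_t],x_t-x'}$, so the middle sum has expectation at most $\E[\hReg_n(x')]$.

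For $\hReg_n(x')$ I would invoke Theorem~\ref{thm:ftrl}. A one-line computation using $X_t-x_t=\frac r2 R''(x_t)^{-1/2}\xi_t$ gives $\norm{g_t}_{x_t\star}^2=\frac{16d^2Y_t^2}{r^4}\norm{X_t-x_t}_{x_t}^2=\frac{4d^2Y_t^2}{r^2}\le\frac{4d^2}{r^2}$, hence $\eta\norm{g_t}_{x_t\star}\le 2\eta d/r$. Now a short case split: if $r\ge\frac12$ then the asserted bound already exceeds the trivial estimate $\Reg_n\le n$ (valid since $f_t$ takes values in $[0,1]$) and there is nothing to prove; otherwise $r<\frac12$, which (since $r$ is then un-clipped) forces $\vartheta\log(n)/n\le\frac1{16}$, and therefore $\eta\norm{g_t}_{x_t\star}=(\vartheta\log(n)/n)^{1/2}\le\frac14<\frac12$, so Theorem~\ref{thm:ftrl} applies with $\pi_{x_1}(x')\le 1-1/n$ to yield $\hReg_n(x')\le\frac{\vartheta\log(n)}{\eta}+\eta\sum_t\norm{g_t}_{x_t\star}^2\le\frac{\vartheta\log(n)}{\eta}+\frac{4\eta nd^2}{r^2}$. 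Collecting the pieces and using $r/(1-r)\le 2r$,
\begin{align*}
\E[\Reg_n] \le 1 + 2rn + \frac{\vartheta\log(n)}{\eta} + \frac{4\eta nd^2}{r^2}\,,
\end{align*}
and substituting the prescribed $\eta$ and $r$ balances the terms, with $\vartheta\log(n)/\eta = 4\eta nd^2/r^2 = \sqrt2\,(\vartheta\log(n))^{1/4}d^{1/2}n^{3/4}$ and $2rn = 2\sqrt2\,(\vartheta\log(n))^{1/4}d^{1/2}n^{3/4}$, for a total of $1+4\sqrt2\,(\vartheta\log(n))^{1/4}d^{1/2}n^{3/4}$.

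The genuinely delicate points are the regularity bookkeeping: checking $x'\in\interior(K)$, using the Dikin containment to keep $X_t\in K$ and hence $Y_t\in[0,1]$, and — the step that really drives the case analysis — verifying the hypothesis $\eta\norm{g_t}_{x_t\star}\le\frac12$ of Theorem~\ref{thm:ftrl} for every $n$. The remaining work (the telescoping identity and the constant optimisation, analogous to the one behind Theorem~\ref{thm:sgd}) is routine given Lemmas~\ref{lem:ellipsoid-smooth:unbiased} and \ref{lem:ellipsoid-smooth} and Theorem~\ref{thm:ftrl}.
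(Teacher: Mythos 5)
Your proof is correct and follows essentially the same route as the paper's: bound the regret through the optimistic ellipsoidal surrogate (optimism, convexity, and Lemma~\ref{lem:ellipsoid-smooth}\ref{lem:ellipsoid-smooth:lower}), keep iterates in $K$ via the Dikin ellipsoid, pass to an interior comparator with $\pi_{x_1}\leq 1-1/n$, verify $\eta\norm{g_t}_{x_t\star}\leq\frac12$ after a case split on $r$, then apply Theorem~\ref{thm:ftrl} and balance constants. The only cosmetic difference is that the paper takes the argmin over the shrunk set $K_{1/n}$ (via Proposition~\ref{prop:shrink}) rather than shrinking the unconstrained argmin, and it introduces the (redundant) assumption $n\geq 4\vartheta\log(n)$, whereas you derive the needed control $(\vartheta\log(n)/n)^{1/2}\leq\frac14$ directly from $r<\frac12$.
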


\begin{proof} 
By definition, $\norm{X_t - x_t}_{x_t} = \frac{r}{2} \leq \frac{1}{2}$ and therefore $X_t \in E^{x_t}_1 \subset K$ where the inclusion 
follows from \cref{lem:sc:ftrl}\ref{lem:sc:ftrl:dikin}. 
Hence, the algorithm always plays inside $K$.
Using the fact that the losses are in $\cF_\pb$ it holds automatically that $\Reg_n \leq n$ and when $r > \frac{1}{2}$ this already implies the
bound in the theorem. Suppose for the remainder that $r \leq \frac{1}{2}$.
Similarly, for the same reason we may suppose for the remainder that $n \geq 4 \vartheta \log(n)$.
Let
\begin{align*}
K_{1/n} = \{x \in K \colon \pi(x) \leq 1 - 1/n\}
\end{align*}
and $x_\star = \argmin_{x \in K_{1/n}} \sum_{t=1}^n f_t(x)$ with ties broken arbitrarily. 
Such a point is guaranteed to exist by 
\cref{prop:shrink}, which also shows that
\begin{align*}
\E[\Reg_n]
&\leq 1 + \E[\Reg_n(x_\star)]\,.
\end{align*}
Before using \cref{thm:ftrl} we need to confirm that $\eta \norm{g_t}_{x_t\star} \leq \frac{1}{2}$:
\begin{align*}
\eta \norm{g_t}_{x_t\star} 
&= \frac{4\eta d|Y_t|}{r^2} \norm{R''(x_t)(X_t - x_t)}_{x_t \star} 
= \frac{2\eta d|Y_t|}{r} \leq \frac{2 \eta d}{r} \leq \frac{1}{2}\,, 
\end{align*}
where in the final inequality we used the assumption that $n \geq 4 \vartheta \log(n)$.
Let $\Sigma_t = r^2 R''(x_t)^{-1}$ and $E_t = \{y \colon \norm{x_t - y}_{\Sigma_t^{-1}} \leq 1\} = E^{x_t}_r$.
The surrogate in round $t$ is
\begin{align*}
s_t(x) = \frac{1}{\vol(E_t)} \int_{E_t} \left(2f_t(\textstyle{\frac{1}{2}} y + \textstyle{\frac{1}{2}} x) - f_t(y)\right) \d{y}\,.
\end{align*}
Hence, by \cref{thm:ftrl} and the results in \cref{sec:ftrl:ellipsoid},
\begin{align*}
\E[\Reg_n]
&\leq 1 + \E\left[\sum_{t=1}^n f_t(X_t) - f_t(x_\star)\right] \\
\tag*{\cref{lem:ellipsoid-smooth}\ref{lem:ellipsoid-smooth:opt}\ref{lem:ellipsoid-smooth:lower}}
&\leq 1 + \frac{n r/2}{1-r} + \E\left[\sum_{t=1}^n s_t(x_t) - s_t(x_\star)\right] \\ 
\tag*{\cref{lem:ellipsoid-smooth}\ref{lem:ellipsoid-smooth:cvx}}
&\leq 1 + \frac{n r/2}{1-r} + \E\left[\sum_{t=1}^n \ip{s'_t(x_t), x_t - x_\star} \right] \\
\tag*{\cref{lem:ellipsoid-smooth:unbiased}}
&= 1 + \frac{nr/2}{1-r} + \E\left[\sum_{t=1}^n \ip{g_t, x_t - x_\star}\right] \\
\tag*{\cref{thm:ftrl}}
&\leq 1 + \frac{nr/2}{1-r} + \frac{\vartheta \log(n)}{\eta} + \E\left[\sum_{t=1}^n \eta \norm{g_t}^2_{x_t\star}\right] \\
&\leq 1 + nr + \frac{\vartheta \log(n)}{\eta} + \frac{4 \eta n d^2}{r^2}  \,,
\end{align*}
where the final inequality follows since $r \leq 1/2$ and $Y_t \in [0,1]$ and
\begin{align*}
\eta \norm{g_t}^2_{x_t\star}
&= \eta \norm{\frac{4d Y_t R''(x_t) (X_t - x_t)}{r^2}}^2_{x_t \star} 
\leq \frac{16\eta d^2}{r^4} \norm{X_t - x_t}_{R''(x_t)}^2 
= \frac{4 \eta d^2}{r^2}\,.
\end{align*}
The result follows by substituting the values of the constants.
\end{proof}

Notice how the dependence on the diameter that appeared in \cref{thm:sgd} has been replaced with a dependence on the self-concordance parameter $\vartheta$ and
logarithmic dependence on the horizon. This can be a significant improvement. For example, when $K$ is a ball, then the bound in \cref{thm:sgd} 
depends linearly on $\sqrt{\diam(K)}$ while with a suitable self-concordant barrier the regret in \cref{thm:ftrl:basic} replaces this with $\sqrt{\log(n)}$.
Essentially what is happening is that \cref{alg:ftrl:basic} moves faster deep in the interior where the losses are necessarily more Lipschitz,
whereas \cref{alg:sgd} does not adapt the amount of regularisation to the location of $x_t$.
For smooth functions the rate can be improved by using
\cref{lem:ellipsoid-smooth}\ref{lem:ellipsoid-smooth:lower-smooth}
instead of \cref{lem:ellipsoid-smooth}\ref{lem:ellipsoid-smooth:lower}.

\begin{theorem}\label{thm:ftrl:smooth}
Suppose the losses are in $\cF_{\pb,\psm}$, there is no noise and \index{smooth}
\begin{align*}
r^2 &= \min\left(4,\, 8 \cdot 2^{1/3} \cdot 3^{-2/3}  d^{\frac{2}{3}} (\vartheta \log(n))^{\frac{1}{3}} \beta^{-\frac{2}{3}} \diam(K)^{-\frac{4}{3}} n^{-\frac{1}{3}}\right) \quad \text{and} \\
\eta &= \frac{r}{2d} \sqrt{\frac{\vartheta \log(n)}{n}}\,. 
\end{align*}
Then the expected regret of \cref{alg:ftrl:basic} is upper bounded by
\begin{align*}
\E[\Reg_n] \leq 1 + 3d \sqrt{\vartheta n \log(n)} + \left(\frac{9}{2}\right)^{2/3} (\vartheta \beta \diam(K)^2 \log(n))^{\frac{1}{3}} d^{\frac{2}{3}} n^{\frac{2}{3}}\,.
\end{align*}
\end{theorem}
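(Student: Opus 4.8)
The plan is to mirror the proof of Theorem~\ref{thm:ftrl:basic} almost verbatim, changing only the step that controls the bias of the surrogate: in place of the generic estimate $\E[f_t(X_t) - s_t(x_t)] \leq r/(1-r)$ from Lemma~\ref{lem:ellipsoid-smooth}\ref{lem:ellipsoid-smooth:lower}, I would invoke the smoothness-sensitive bound Lemma~\ref{lem:ellipsoid-smooth}\ref{lem:ellipsoid-smooth:lower-smooth} together with the trace estimate Lemma~\ref{lem:sc:ftrl}\ref{lem:sc:ftrl:tr}, and then re-optimise the two free parameters $\eta$ and $r$ against the resulting bound. As before, the first observations are that $\norm{X_t - x_t}_{x_t} = r/2$, so $X_t \in E^{x_t}_1 \subset K$ by Lemma~\ref{lem:sc:ftrl}\ref{lem:sc:ftrl:dikin} (this is what makes the cap $r^2 \leq 4$ admissible), and that the degenerate cases — when the cap on $r^2$ binds, or when $n$ is small relative to $\vartheta\log n$ — are absorbed by the trivial bound $\Reg_n \leq n$ coming from $f_t \in \cF_{\pb}$. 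Assuming we are in the main regime, we introduce $K_{1/n} = \{x : \pi^K_{x_1}(x) \leq 1 - 1/n\}$ and $x_\star = \argmin_{x \in K_{1/n}} \sum_t f_t(x)$, so that Proposition~\ref{prop:shrink} gives $\E[\Reg_n] \leq 1 + \E[\Reg_n(x_\star)]$, and we check the precondition of Theorem~\ref{thm:ftrl}: $\eta\norm{g_t}_{x_t\star} = \frac{2\eta d|Y_t|}{r} \leq \frac{2\eta d}{r} \leq \frac12$, which holds for the stated $\eta$ once $n \geq 4\vartheta\log n$.

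The heart of the argument is the decomposition $f_t(X_t) - f_t(x_\star) = [f_t(X_t) - s_t(x_t)] + [s_t(x_t) - s_t(x_\star)] + [s_t(x_\star) - f_t(x_\star)]$. Taking $\E_{t-1}$, the third bracket is $\leq 0$ by optimism (Lemma~\ref{lem:ellipsoid-smooth}\ref{lem:ellipsoid-smooth:opt}); the first bracket, with $\Sigma_t = r^2 R''(x_t)^{-1}$, is bounded by
\begin{align*}
\frac{\beta\,\tr(\Sigma_t)}{d} = \frac{\beta r^2 \tr(R''(x_t)^{-1})}{d} \leq \beta\,\diam(K)^2 r^2
\end{align*}
using Lemma~\ref{lem:ellipsoid-smooth}\ref{lem:ellipsoid-smooth:lower-smooth} and Lemma~\ref{lem:sc:ftrl}\ref{lem:sc:ftrl:tr}; and the middle bracket is handled exactly as in the basic theorem by convexity of $s_t$ (Lemma~\ref{lem:ellipsoid-smooth}\ref{lem:ellipsoid-smooth:cvx}) and unbiasedness of $g_t$ (Lemma~\ref{lem:ellipsoid-smooth:unbiased}), giving $s_t(x_t) - s_t(x_\star) \leq \ip{s_t'(x_t), x_t - x_\star} = \ip{\E_{t-1}[g_t], x_t - x_\star}$. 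Summing over $t$, applying Theorem~\ref{thm:ftrl} (with $\log\frac{1}{1-\pi_{x_1}(x_\star)} \leq \log n$ because $x_\star \in K_{1/n}$) and the bound $\eta\norm{g_t}^2_{x_t\star} \leq 4\eta d^2/r^2$, one arrives at
\begin{align*}
\E[\Reg_n] \leq 1 + \beta\,\diam(K)^2 r^2 n + \frac{\vartheta\log n}{\eta} + \frac{4\eta d^2 n}{r^2}\,.
\end{align*}

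The remaining step is purely to substitute the prescribed $\eta$ and $r$ and read off the stated estimate. For fixed $r$ the last two terms are essentially minimised at $\eta \asymp \frac{r}{d}\sqrt{\vartheta\log n / n}$, where they together contribute $O\!\big(\tfrac{d}{r}\sqrt{\vartheta n\log n}\big)$; balancing this against $\beta\diam(K)^2 r^2 n$ fixes $r^2$ to the order given in the statement and produces the two displayed terms $O(d\sqrt{\vartheta n\log n})$ and $O\!\big((\vartheta\beta\diam(K)^2\log n)^{1/3} d^{2/3} n^{2/3}\big)$. I do not anticipate a genuine conceptual obstacle: the only real work is the constant-chasing in this final two-parameter optimisation and the careful handling of the two boundary regimes (cap on $r^2$ active; $n < 4\vartheta\log n$), where one simply falls back on $\Reg_n \leq n$ as in Theorem~\ref{thm:ftrl:basic}. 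The one place requiring slight care is making sure the stability condition $\eta\norm{g_t}_{x_t\star}\leq\frac12$ and the Dikin-ellipsoid feasibility $r \leq 2$ are both compatible with the chosen constants across these regimes.
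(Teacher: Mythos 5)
Your proposal reproduces the paper's proof essentially verbatim: lift everything from Theorem~\ref{thm:ftrl:basic}, replace the Lipschitz bias bound with Lemma~\ref{lem:ellipsoid-smooth}\ref{lem:ellipsoid-smooth:lower-smooth} plus the trace bound Lemma~\ref{lem:sc:ftrl}\ref{lem:sc:ftrl:tr}, and re-tune $\eta$ and $r$ against the resulting three-term display. Your intermediate inequality $\E[\Reg_n]\leq 1+\beta\diam(K)^2 r^2 n+\vartheta\log n/\eta+4\eta d^2 n/r^2$ is in fact the one that carries the constants of the cited lemmas correctly, so no concern there.
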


\begin{proof} 
Note the condition that $r^2 \leq 4$ is needed to ensure that $X_t \in K$. 
Repeat the argument in the proof of \cref{thm:ftrl:basic} but replace \cref{lem:ellipsoid-smooth}\ref{lem:ellipsoid-smooth:lower}
with \cref{lem:ellipsoid-smooth}\ref{lem:ellipsoid-smooth:lower-smooth}, which yields
\begin{align*}
\E[\Reg_n] 
&\leq 1 + \frac{\vartheta \log(n)}{\eta} + \frac{4 \eta nd^2}{r^2} + \frac{3\beta r^2}{4d} \sum_{t=1}^n \tr(R''(x_t)^{-1}) \\
&\leq 1 + \frac{\vartheta \log(n)}{\eta} + \frac{4 \eta nd^2}{r^2} + \frac{3\beta n r^2 \diam(K)^2}{16} \\ 
&= 1 + \frac{4 d}{r} \sqrt{n \vartheta \log(n)} + \frac{3 \beta n r^2 \diam(K)^2}{16}
\end{align*}
where in the second inequality we used \cref{lem:sc:ftrl}\ref{lem:sc:ftrl:tr} and in the equality the definition of $\eta$.
The result follows by substituting the definition of $r$ and using the fact that if $r^2 = 4$, then
\begin{align*}
8 \cdot 2^{1/3} \cdot 3^{-2/3} d^{2/3} (\vartheta \log(n))^{1/3} \beta^{-2/3} \diam(K)^{-4/3} n^{-1/3} \geq 4\,,
\end{align*}
which implies that $\frac{3 \beta n r^2 \diam(K)^2}{16} \leq d\sqrt{\vartheta n \log(n)}$.
\end{proof}

The diameter now appears in the bound, as it must. Otherwise you could scale the coordinates and make the regret vanish (\cref{sec:reg:scaling}).
There is no hope of removing the $\sqrt{n}$ term from \cref{thm:ftrl:smooth}, since when $\beta = 0$ the losses are linear and the lower bound\index{lower bound} 
for linear bandits\index{bandit!linear}
says the regret should be at least $\Omega(d \sqrt{n})$ \citep{DHK08}.

\section{Smoothness and Strong Convexity}\label{sec:ftrl:sm-sc}
With both strong convexity and smoothness
a version of follow-the-regularised-leader can achieve $O(\sqrt{n})$ regret.
The main modification of the algorithm is that the linear surrogate loss functions are replaced by quadratics. \index{surrogate loss}
For this a generalisation of \cref{thm:ftrl} is required. \index{strongly convex}\index{smooth}

\begin{theorem}\label{thm:ftrl2}
Suppose that $(\hat f_t)_{t=1}^n$ is a sequence of self-concordant functions from $K$ to $\R$ and let
\begin{align*}
x_t = \argmin_{x \in \interior(K)} \underbracket{\left(R(x) + \eta \sum_{u=1}^{t-1} \hat f_u(x)\right)}_{\Phi_{t-1}(x)} 
\quad \text{ and } \quad
\norm{\cdot}_{x_t\star} = \norm{\cdot}_{\Phi_t''(x_t)^{-1}} \,.
\end{align*}
Then, provided that $\eta \Vert \hat f'_t(x_t)\Vert_{x_t\star} \leq \frac{1}{2}$ for all $t$, for any $x \in \interior(K)$,
\begin{align*}
\hReg_n(x) = \sum_{t=1}^n \left(\hat f_t(x_t) - \hat f_t(x)\right) \leq
\frac{\vartheta}{\eta} \log\left(\frac{1}{1-\pi(x)}\right) +  \eta \sum_{t=1}^n \Vert \hat f_t'(x_t) \Vert^2_{x_t\star}\,,
\end{align*}
\end{theorem}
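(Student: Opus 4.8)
The plan is to run the classical follow-the-regularised-leader argument: a ``be the leader'' comparison that splits $\hReg_n(x)$ into a comparator penalty and a sum of per-round stability terms, each of which is then controlled by the self-concordance facts of Section~\ref{sec:ftrl:sc}.

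First I would set up the decomposition. Write $\Phi_t = R + \eta\sum_{u=1}^{t}\hat f_u$, so that $x_1 = \argmin_{x\in K} R$ and $x_{t+1} = \argmin_{x\in K}\Phi_t$. Since $R$ is a barrier and the $\hat f_u$ are finite and convex on $K$, each $\Phi_t$ is a strictly convex barrier, hence every $x_t$ exists, is unique, lies in $\interior(K)$, and satisfies $\Phi_{t-1}'(x_t) = \zeros$. A one-line induction on $n$ (apply the inductive hypothesis at $x_{n+1}$, then use that $x_{n+1}$ minimises $\Phi_n$) gives the ``be the leader'' inequality $R(x_1) + \eta\sum_{t=1}^n \hat f_t(x_{t+1}) \leq R(x) + \eta\sum_{t=1}^n \hat f_t(x)$ for every $x$. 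Rearranging and inserting $\pm\hat f_t(x_{t+1})$,
\begin{align*}
\hReg_n(x) \leq \sum_{t=1}^n\bigl(\hat f_t(x_t) - \hat f_t(x_{t+1})\bigr) + \frac{1}{\eta}\bigl(R(x) - R(x_1)\bigr)\,.
\end{align*}
For the comparator term, since $x_1 = \argmin_{x\in K} R$ and $R$ is $\vartheta$-self-concordant, Lemma~\ref{lem:sc:ftrl}\ref{lem:sc:ftrl:mink} gives $R(x) - R(x_1) \leq -\vartheta\log(1 - \pi^K_{x_1}(x))$, which is exactly the first term of the claimed bound after dividing by $\eta$.

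The main work is the per-round stability term $\hat f_t(x_t) - \hat f_t(x_{t+1})$. Because $\Phi_{t-1}'(x_t) = \zeros$ we have $\Phi_t'(x_t) = \eta\hat f_t'(x_t)$, so the hypothesis $\eta\norm{\hat f_t'(x_t)}_{t\star} \leq \tfrac12$ says precisely that the Newton decrement of the self-concordant function $\Phi_t$ at $x_t$ is at most $\tfrac12$. Self-concordance of $\Phi_t$ (a sum of self-concordant functions, which for the linear and quadratic surrogates used in this chapter is immediate) then controls the displacement to its minimiser, $\norm{x_t - x_{t+1}}_{\Phi_t''(x_t)} \lesssim \eta\norm{\hat f_t'(x_t)}_{t\star}$ — for instance via the distance-to-minimiser estimate $\norm{x_{t+1}-x_t}_{\Phi_t''(x_t)} \leq \lambda_t/(1-\lambda_t)$ with $\lambda_t = \eta\norm{\hat f_t'(x_t)}_{t\star}$, or via the exact identity $x_t - x_{t+1} = \eta\,\bar H^{-1}\hat f_t'(x_t)$ with $\bar H = \int_0^1 \Phi_t''(x_{t+1}+s(x_t-x_{t+1}))\,\d{s}$. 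Then convexity of $\hat f_t$ and Cauchy-Schwarz in the local norm at $x_t$ give
\begin{align*}
\hat f_t(x_t) - \hat f_t(x_{t+1}) \leq \ip{\hat f_t'(x_t),\, x_t - x_{t+1}} \leq \norm{\hat f_t'(x_t)}_{t\star}\,\norm{x_t - x_{t+1}}_{\Phi_t''(x_t)} \leq \eta\,\norm{\hat f_t'(x_t)}^2_{t\star}\,.
\end{align*}
Summing over $t$ and combining with the comparator bound completes the proof.

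The step I expect to be the real obstacle is this stability estimate, and in particular getting the clean constant $1$: one must confirm that $\Phi_t$ is genuinely self-concordant (immediate for quadratic surrogates but delicate in general, since multiplying a self-concordant function by $\eta < 1$ does not preserve the self-concordance constant), and then sharpen the displacement bound so that the Cauchy-Schwarz step costs no more than a single factor of $\eta\norm{\hat f_t'(x_t)}_{t\star}$ — the integrated-Hessian route seems the most promising here, provided the variation of $\Phi_t''$ along the short segment $[x_{t+1},x_t]$ can be absorbed using $\norm{\hat f_t'(x_t)}_{t\star}\le 1/(2\eta)$. By comparison the be-the-leader decomposition and the comparator bound are routine.
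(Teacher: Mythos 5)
Your route is genuinely different from the paper's, and the step you flagged as the ``real obstacle'' is where it breaks: the inequality $\hat f_t(x_t) - \hat f_t(x_{t+1}) \leq \eta\norm{\hat f_t'(x_t)}^2_{t\star}$ that your be-the-leader decomposition requires is false in general. The reason is structural. The paper's telescope lands on the stability term $\frac{1}{\eta}\bigl(\Phi_t(x_t) - \Phi_t(x_{t+1})\bigr)$, and since $x_t$ minimises $\Phi_{t-1}$,
\begin{align*}
\Phi_t(x_t) - \Phi_t(x_{t+1}) \;=\; \eta\bigl(\hat f_t(x_t) - \hat f_t(x_{t+1})\bigr) \;+\; \bigl(\Phi_{t-1}(x_t) - \Phi_{t-1}(x_{t+1})\bigr)\,,
\end{align*}
where the last bracket is non-positive, so your stability term strictly dominates the paper's whenever $x_{t+1} \neq x_t$. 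The paper bounds the smaller quantity directly via Lemma~\ref{lem:sc}: with $\Phi_t'(x_t) = \eta \hat f_t'(x_t)$ and Newton decrement at most $\frac12$, the self-concordant Taylor lower bound of Lemma~\ref{lem:sc:ftrl}\ref{lem:sc:ftrl:taylor} optimised over the step radius, plus $-\log(1-t)-t \leq t^2$ for $t\leq\frac12$, yields $\Phi_t(x_t) - \Phi_t(x_{t+1}) \leq \norm{\Phi_t'(x_t)}^2_{t\star} = \eta^2\norm{\hat f_t'(x_t)}^2_{t\star}$. That gap estimate, not a displacement estimate, is what gives the clean constant.

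Your proposed substitutes do not rescue the constant at the larger stability term. The distance-to-minimiser bound $\norm{x_t - x_{t+1}}_{x_t} \leq \lambda_t/(1-\lambda_t) \leq 2\lambda_t$ delivers $\hat f_t(x_t) - \hat f_t(x_{t+1}) \leq 2\eta\norm{\hat f_t'(x_t)}^2_{t\star}$, a factor two off. The integrated-Hessian route fails because $\bar H \succeq \Phi_t''(x_t)$ need not hold: when the iterate retreats from a boundary of $K$ the Hessian of $\Phi_t$ decreases along the segment towards $x_{t+1}$, and self-concordance only gives $\bar H \succeq (1 - \rho + \rho^2/3)\,\Phi_t''(x_t)$, which is strictly below $\Phi_t''(x_t)$. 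A concrete failure: take $d=1$, $K = (0,1)$, $R(x) = -\log x - \log(1-x)$, $\eta = 1$, $\hat f_1 = \hat f_2 = -\sqrt2\,x$, $\hat f_3 = 2.16\,x$; all Newton decrements are $\leq \frac12$, but $x_3 \approx 0.758$, $x_4 \approx 0.582$, $\hat f_3(x_3) - \hat f_3(x_4) \approx 0.38$, while $\eta\norm{\hat f_3'(x_3)}^2_{3\star} \approx 0.25$. With the factor two your plan gives a valid but weaker bound; to recover the stated theorem, keep the telescope at the level of $\Phi_t$ and invoke Lemma~\ref{lem:sc}. Your comparator term and the use of Lemma~\ref{lem:sc:ftrl}\ref{lem:sc:ftrl:mink} are handled exactly as the paper does.
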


\cref{thm:ftrl} is recovered by choosing $\hat f_t(x) = \ip{g_t, x}$.

\begin{proof} 
By the definition of $\Phi_t$,
\begin{align*}
&\hReg_n(x) 
= \sum_{t=1}^n \left(\hat f_t(x_t) - \hat f_t(x)\right) \\
&= \frac{1}{\eta} \sum_{t=1}^n \left(\Phi_{t}(x_t) - \Phi_{t-1}(x_t)\right) - \frac{\Phi_{n}(x)}{\eta} + \frac{R(x)}{\eta} \\
&= \frac{1}{\eta}\sum_{t=1}^n \left(\Phi_{t}(x_t) - \Phi_{t}(x_{t+1})\right) + \frac{\Phi_{n}(x_{n+1})}{\eta} - \frac{\Phi_{n}(x)}{\eta} + \frac{R(x) - R(x_1)}{\eta} \\
\tag*{$\Phi_n(x_{n+1}) \leq \Phi_n(x)$}
&\leq \frac{1}{\eta} \sum_{t=1}^n \left(\Phi_{t}(x_t) - \Phi_{t}(x_{t+1})\right) + \frac{R(x) - R(x_1)}{\eta} \\
\tag*{\cref{lem:sc}}
&\leq \eta \sum_{t=1}^n \Vert \hat f'_t(x_t)\Vert^2_{x_t\star} + \frac{R(x) - R(x_1)}{\eta} \\
\tag*{\cref{lem:sc:ftrl}\ref{lem:sc:ftrl:mink}}
&\leq \frac{\vartheta}{\eta} \log\left(\frac{1}{1 - \pi(x)}\right) + \eta \sum_{t=1}^n \Vert \hat f_t(x_t)\Vert^2_{t \star}\,,
\end{align*}
where the application of \cref{lem:sc} relied on the assumption that $\eta \snorm{\hat f_t'(x_t)}_{t \star} \leq \frac{1}{2}$ and the 
fact that $x_t$ minimises $\Phi_{t-1}$ on $\interior(K)$, which implies that
$\Phi'_t(x_t) = \Phi'_{t-1}(x_t) + \hat f_t'(x_t) = \hat f_t'(x_t)$.
\end{proof}

The algorithm for smooth and strongly convex losses uses follow-the-regularised-leader with a self-concordant barrier and quadratic loss estimates.

\begin{algorithm}[h!]
\begin{algcontents}
\begin{lstlisting}
args: learning rate $\eta > 0$
for $t = 1$ to $n$
  let $x_t = \argmin_{x \in \interior(K)}\left[R(x) + \eta \sum_{u=1}^{t-1} \left(\ip{g_u, x} + \frac{\alpha}{4} \norm{x - x_u}^2\right)\right]$ 
  let $\Sigma_t^{-1} = R''(x_t) + \frac{\eta \alpha t}{2} \id$
  sample $\xi_t$ uniformly from $\sphere_1$ 
  play $X_t = x_t + \frac{1}{2} \Sigma_t^{1/2} \xi_t$ and observe $Y_t$
  compute gradient $g_t = 4dY_t\Sigma_t^{-1} (X_t - x_t)$
\end{lstlisting}
\caption{Follow-the-regularised-leader with ellipsoidal smoothing}
\label{alg:ftrl:basic-sc}
\end{algcontents}
\end{algorithm}

\FloatBarrier

Let us think a little about why Algorithm~\ref{alg:ftrl:basic-sc} makes sense. As usual, let $s_t$ be the surrogate as defined in Section~\ref{sec:ftrl:ellipsoid}, which
by \cref{lem:ellipsoid-smooth} is $\frac{\alpha}{4}$-strongly convex. The quantity $g_t$ is an unbiased estimator of $s_t'(x_t)$. So Algorithm~\ref{alg:ftrl:basic-sc} is playing
follow-the-regularised-leader with quadratic approximations of $s_t$.
The inverse covariance $\Sigma_t^{-1}$ is chosen to be the Hessian of the optimisation objective to find $x_t$.
From a technical perspective this makes sense because the covariance of the gradient estimator plays well with the dual norm in the last term in Theorem~\ref{thm:ftrl2}.
More intuitively, when the losses have high curvature,\index{curvature} then the algorithm needs to smooth on a smaller region, which corresponds to a larger inverse covariance.
This introduces additional variance\index{variance} in the gradient estimators, which is offset by the regularisation arising from strong convexity.

\begin{theorem}\label{thm:ftrl:sc-smooth}
Suppose the losses are in $\cF_{\pb,\psm,\psc}$, there is no noise and
\begin{align*}
\eta = \frac{1}{2d} \sqrt{\frac{\vartheta \log(n) + \frac{3\beta}{2\alpha}[1+\log(n)]}{n}} \,.
\end{align*}
Then the expected regret of \cref{alg:ftrl:basic-sc} is upper bounded by
\begin{align*}
\E[\Reg_n] \leq 1 + 4d\sqrt{n \left(\vartheta \log(n) + \frac{3\beta}{2\alpha}(1+\log(n))\right)}\,.
\end{align*}
\end{theorem}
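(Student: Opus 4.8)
The plan is to mirror the proof of Theorem~\ref{thm:ftrl:basic}, but with two changes forced by strong convexity: the linear surrogate losses get replaced by convex \emph{quadratic} pseudo-losses (so that the $\frac{\alpha}{2}$-strong convexity of the smoothed loss is used), and Theorem~\ref{thm:ftrl} gets replaced by its generalisation Theorem~\ref{thm:ftrl2}. First I would handle the trivial regime: if $n$ is not at least a constant multiple of $(\vartheta+\beta/\alpha)[1+\log n]$, then the choice of $\eta$ violates nothing and $\Reg_n \le n$ already beats the claimed bound (here one uses $\beta/\alpha \ge 1$), so for the rest assume $n$ is large enough that $2\eta d \le \tfrac12$. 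Next I would check feasibility of the plays: since $\Sigma_t^{-1} = R''(x_t) + \tfrac{\eta\alpha t}{4}\id \succeq R''(x_t)$ we have $\Sigma_t \preceq R''(x_t)^{-1}$, hence $\norm{X_t - x_t}_{x_t} = \tfrac12\norm{\Sigma_t^{1/2}\xi_t}_{R''(x_t)} \le \tfrac12$, so $X_t \in E^{x_t}_1 \subset K$ by Lemma~\ref{lem:sc:ftrl}\ref{lem:sc:ftrl:dikin}.

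Then I would set up the objects. Let $s_t$ be the optimistic ellipsoidal smoothing of Section~\ref{sec:ftrl:ellipsoid} with ellipsoid metric $\Sigma_t$, so that $\E_t[g_t] = s_t'(x_t)$ (Lemma~\ref{lem:ellipsoid-smooth:unbiased}), and define the random convex quadratic $\hat f_t(x) = \ip{g_t, x - x_t} + \tfrac{\alpha}{4}\norm{x - x_t}^2$, noting that $x_t$ in Algorithm~\ref{alg:ftrl:basic-sc} is exactly $\argmin_{x \in K}\bigl[R(x) + \eta\sum_{u<t}\hat f_u(x)\bigr]$ up to additive constants. Restrict the comparator to $K_{1/n} = \{x : \pi^K_{x_1}(x) \le 1 - 1/n\}$ at an additive cost of $1$ (Proposition~\ref{prop:shrink}), and let $x_\star$ minimise $\sum_t f_t$ over $K_{1/n}$. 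Conditioning on $\sF_{t-1}$: (i) $\E_t[f_t(X_t)] \le s_t(x_t) + \tfrac{\beta\tr(\Sigma_t)}{d}$ by Lemma~\ref{lem:ellipsoid-smooth}\ref{lem:ellipsoid-smooth:lower-smooth}; (ii) $-f_t(x_\star) \le -s_t(x_\star)$ by optimism, Lemma~\ref{lem:ellipsoid-smooth}\ref{lem:ellipsoid-smooth:opt}; (iii) since $s_t$ is $\tfrac{\alpha}{2}$-strongly convex (Lemma~\ref{lem:ellipsoid-smooth}\ref{lem:ellipsoid-smooth:strong}), $s_t(x_t) - s_t(x_\star) \le \ip{s_t'(x_t), x_t - x_\star} - \tfrac{\alpha}{4}\norm{x_t - x_\star}^2 = \E_t[\hat f_t(x_t) - \hat f_t(x_\star)]$. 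Summing and taking expectations yields $\E[\Reg_n] \le 1 + \E\bigl[\sum_{t=1}^n(\hat f_t(x_t) - \hat f_t(x_\star))\bigr] + \tfrac{\beta}{d}\sum_{t=1}^n\E[\tr(\Sigma_t)]$.

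It remains to control the two surviving terms. For the first, apply Theorem~\ref{thm:ftrl2} to the self-concordant losses $\hat f_t$ (a convex quadratic added to a $\vartheta$-self-concordant barrier is again self-concordant, so $\Phi_t$ is self-concordant and the theorem applies). The required condition $\eta\norm{\hat f_t'(x_t)}_{t\star} \le \tfrac12$ follows because $\hat f_t'(x_t) = g_t$, $\Phi_t''(x_t) = R''(x_t) + \tfrac{\eta\alpha t}{2}\id \succeq \Sigma_t^{-1}$ so $\norm{g_t}_{t\star} \le \norm{g_t}_{\Sigma_t}$, and the identity $\norm{g_t}^2_{\Sigma_t} = 16 d^2 Y_t^2 \norm{X_t - x_t}^2_{\Sigma_t^{-1}} = 4 d^2 Y_t^2 \le 4 d^2$ (using $\norm{X_t - x_t}_{\Sigma_t^{-1}} = \tfrac12$ and $Y_t \in [0,1]$) together with $2\eta d \le \tfrac12$. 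The same identity gives $\eta\sum_t\norm{g_t}^2_{t\star} \le 4\eta d^2 n$, and since $x_\star \in K_{1/n}$, the barrier term satisfies $-\tfrac{\vartheta}{\eta}\log(1 - \pi^K_{x_1}(x_\star)) \le \tfrac{\vartheta(1+\log n)}{\eta}$. For the bias term, $R''(x_t) \succeq 0$ gives $\Sigma_t \preceq \tfrac{4}{\eta\alpha t}\id$, so $\tr(\Sigma_t) \le \tfrac{4d}{\eta\alpha t}$ and $\sum_{t=1}^n \tr(\Sigma_t) \le \tfrac{4d}{\eta\alpha}(1 + \log n)$, whence $\tfrac{\beta}{d}\sum_t\E[\tr(\Sigma_t)] \le \tfrac{4\beta(1+\log n)}{\eta\alpha}$. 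Combining, $\E[\Reg_n] \le 1 + \tfrac{(\vartheta + 4\beta/\alpha)(1+\log n)}{\eta} + 4\eta d^2 n$, and substituting the value of $\eta$ (and absorbing absolute constants using $\beta/\alpha \ge 1$) finishes the proof.

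\textbf{Main obstacle.} The delicate point is the design and handling of the ellipsoid metric $\Sigma_t$: it must be small enough that $X_t \in E^{x_t}_1 \subset K$ and that the smoothing bias $\tfrac{\beta\tr(\Sigma_t)}{d}$ sums to $O(\log n/(\eta\alpha))$, yet large enough that $\norm{g_t}_{\Sigma_t}$ stays $O(d)$, and simultaneously it must satisfy $\Sigma_t^{-1} \preceq \Phi_t''(x_t)$ so that the bound of Theorem~\ref{thm:ftrl2}, which is stated in the local norm $\norm{\cdot}_{\Phi_t''(x_t)^{-1}}$, can be discharged via the more tractable $\norm{\cdot}_{\Sigma_t}$. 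Verifying these semidefinite inequalities and that the quadratic $\hat f_t$ slots correctly into Theorem~\ref{thm:ftrl2} is where the real work lies; everything else is bookkeeping essentially identical to the proof of Theorem~\ref{thm:ftrl:basic}.
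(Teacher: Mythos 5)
Your proof is correct and follows essentially the same route as the paper's: restrict the comparator via Proposition~\ref{prop:shrink}, use the smoothing bias from Lemma~\ref{lem:ellipsoid-smooth}\ref{lem:ellipsoid-smooth:opt}\ref{lem:ellipsoid-smooth:lower-smooth}, replace $s_t$ by the quadratic pseudo-loss $\hat f_t$ via $\tfrac{\alpha}{2}$-strong convexity, invoke Theorem~\ref{thm:ftrl2}, and bound $\sum_t \tr(\Sigma_t) = O\bigl((\eta\alpha)^{-1}\log n\bigr)$. The extra verifications you include --- the trivial small-$n$ regime, self-concordance of $\Phi_t$, the ordering $\Phi_t''(x_t) \succeq \Sigma_t^{-1}$ letting you pass from $\norm{\cdot}_{t\star}$ to $\norm{\cdot}_{\Sigma_t}$, and the stability condition $\eta\norm{g_t}_{t\star}\le\tfrac12$ --- are all correct and make explicit steps the paper leaves implicit.
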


\begin{proof} 
Assume that $n \geq 4(\vartheta \log(n) + \frac{2\beta}{\alpha}(1 + \log(n))$, since otherwise the regret bound holds trivially using the
assumption that the losses are bounded so that $\E[\Reg_n] \leq n$.
The same argument as in the proof of \cref{thm:ftrl:basic} shows that $X_t$ is in the Dikin ellipsoid associated with $R$ at $x_t$ and therefore\index{Dikin ellipsoid}
is in $K$.
Let $E_t = E(x_t, \Sigma_t)$ and
\begin{align*}
s_t = \frac{1}{\vol(E_t)} \int_{E_t} \left(2 f_t(\textstyle{\frac{1}{2}} x_t+\textstyle{\frac{1}{2}} z) - f_t(z)\right) \d{z}\,,
\end{align*}
which is the optimistic surrogate from \cref{sec:ftrl:ellipsoid}.
Like in \cref{thm:ftrl:basic}, let $K_{1/n} = \{x \in K : \pi(x) \leq 1 - 1/n\}$
and $x_\star = \argmin_{x \in K_{1/n}} \sum_{t=1}^n f_t(x)$, which by \cref{prop:shrink} and
\cref{lem:ellipsoid-smooth}\ref{lem:ellipsoid-smooth:opt}\ref{lem:ellipsoid-smooth:lower-smooth} means that
\begin{align*}
\E[\Reg_n] 
&\leq 1 + \E[\Reg_n(x_\star)] \\
&= 1 + \E\left[\sum_{t=1}^n (f_t(X_t) - f_t(x_\star))\right] \\
&\leq 1 + \E\left[\sum_{t=1}^n \left(s_t(x_t) - s_t(x_\star) + \frac{3\beta}{4d} \tr(\Sigma_t)\right)\right] \,.
\end{align*}
Next, let $\hat f_t(x) = \ip{g_t, x - x_t} + \frac{\alpha}{4} \norm{x - x_t}^2$.
By \cref{lem:ellipsoid-smooth}\ref{lem:ellipsoid-smooth:strong}, $s_t$ is $\frac{\alpha}{2}$-strongly convex
and therefore
\begin{align*}
\E\left[\sum_{t=1}^n (s_t(x_t) - s_t(x_\star))\right]
&\leq \E\left[\sum_{t=1}^n \left(\ip{\E_{t-1}[g_t], x_t - x_\star} - \frac{\alpha}{4} \norm{x_t - x_\star}^2\right)\right] \\ 
&= \E\left[\sum_{t=1}^n (\hat f_t(x_t) - \hat f_t(x_\star))\right] \\
\tag*{\cref{thm:ftrl2}}
&\leq \frac{\vartheta \log(n)}{\eta} + \eta \E\left[\sum_{t=1}^n \norm{g_t}^2_{\Sigma_t} \right] \\
&\leq \frac{\vartheta \log(n)}{\eta} + 4 \eta n d^2 \,. 
\end{align*}
The application of \cref{thm:ftrl2} relies on $\eta \norm{g_t}_{\Sigma_t} \leq \frac{1}{2}$, which follows from
the definition of $\eta$ and our assumption that $n$ is large enough.
Using the definition of $\Sigma_t$,
\begin{align*}
\frac{3\beta}{4d} \sum_{t=1}^n \tr(\Sigma_t)
&\leq \frac{3\beta}{2\alpha \eta} \sum_{t=1}^n \frac{1}{t} 
\leq \frac{3\beta}{2\alpha \eta} (1 + \log(n))\,.
\end{align*}
Combining everything shows that
\begin{align*}
\E[\Reg_n] 
&\leq 1 + 4 \eta n d^2 + \frac{1}{\eta} \left(\vartheta \log(n) + \frac{3\beta}{2\alpha} (1 + \log(n))\right) \,.
\end{align*}
The result follows by substituting the definition of $\eta$.
\end{proof}

\section[Stochastic Setting and Variance]{Stochastic Setting and Variance (\skippy)}\label{sec:sc:stoch}\index{variance} 
In most of this book it as assumed that the losses are bounded in $[0,1]$ and the noise is subgaussian.\index{subgaussian}
You may wonder what happens in the stochastic setting\index{setting!stochastic} if the variance of the noise is much smaller than the range of the loss function. Note, there is nothing substantive to be gained in the adversarial setting since the loss functions themselves can be noisy.
In this section we explain one way to handle this situation by slightly modifying \cref{alg:ftrl:basic} and showing how its regret depends on the variance
of the noise. The modification and analysis used here generalises to all the other results in this chapter and many beyond.
The operating assumption in this section is the following:

\begin{assumption}\label{ass:sgd-stoch}
The setting is stochastic: $f_t = f$ for all rounds with $f \in \sF_{\pb}$. The observed loss is $Y_t = f(X_t) + \eps_t$ where the noise $\eps_t$ satisfies
\begin{enumerate}
\item \textit{(zero mean):} $\E_{t-1}[\eps_t|X_t] = 0$;  
\item \textit{(boundedness):} $|\eps_t| \leq 1$ almost surely; and \label{ass:sgd-stoch:bounded}
\item \textit{(variance):} $\E_{t-1}[\eps_t^2|X_t] \leq \sigma^2$ for some known $\sigma > 0$.
\end{enumerate}
\end{assumption}

\begin{remark}
The boundedness assumption could be relaxed with minor modifications to the analysis if we instead assumed that $\eps_t$ 
was conditionally $\sigma$-subgaussian: $\E_{t-1}[\exp(\eps_t^2/\sigma^2)|X_t] \leq 2$.
Concretely, boundedness is only used in \cref{eq:ftrl:g-bound}. When the noise is subgaussian you need to bound the difference
between losses with high probability. The regret should be the same except possibly a logarithmic factor that vanishes as $n \to \infty$.
\end{remark}

\newcommand{\odd}{\operatorname{\textsc{odd}}}

Let $\odd(t)$ be the set of odd natural numbers less than or equal to $t$.
We assume for simplicity that the horizon $n$ is even so that $\odd(n) = \{1,3,\ldots,n-1\}$.

\begin{algorithm}[h!]
\begin{algcontents}
\begin{lstlisting}
args: learning rate $\eta > 0$, $r \in (0,1)$
for $t \in \odd(n)$:
  compute $x_t = \argmin_{x \in \interior(K)} \sum_{u \in \odd(t-1)} \eta \ip{g_u, x} + R(x)$ 
  sample $\xi_t$ uniformly from $\sphere_1$
  play $X_t = x_t$ and observe $Y_t$
  play $X_{t+1} = x_t + \frac{r}{2} R''(x_t)^{-1/2} \xi_t$ and observe $Y_{t+1}$ 
  compute gradient $g_t = \frac{4d(Y_{t+1} - Y_t) R''(x_t)(X_{t+1} - x_t)}{r^2}$
\end{lstlisting}
\caption{Follow-the-regularised-leader with ellipsoidal smoothing}\label{alg:ftrl:basic-stoch}
\end{algcontents}
\end{algorithm}

\begin{theorem}\label{thm:ftrl:basic-stoch}
Suppose that \cref{alg:ftrl:basic-stoch} is run with parameters
\begin{align*}
r &= \max\left(d \sqrt{\frac{\vartheta \log(n)}{n}},  d^{1/2} \sigma^{1/2} n^{-1/4} (\vartheta \log(n))^{1/4}\right) \quad \text{and}  \\
\eta &= \frac{1}{12d} \sqrt{\frac{\vartheta \log(n)}{n}} \min\left(\frac{r}{\sigma }, 1\right)  \,.
\end{align*}
Then, under \cref{ass:sgd-stoch}, the regret is bounded by
\begin{align*}
\E[\Reg_n] = O\left(d \sqrt{n \vartheta \log(n)} + \sigma^{1/2} d^{1/2} (\vartheta \log(n))^{1/4} n^{3/4} \right) \,.
\end{align*}
\end{theorem}

When $n$ is large, then the bound in \cref{thm:ftrl:basic-stoch} improves on the bound in \cref{thm:ftrl:basic} by a factor of $\sigma^{1/2}$.
Alternatively, if the noise vanishes, the rate improves to $\tilde O(n^{1/2})$.
The noise-free setting is quite special, since in this case there exist algorithms with much smaller sample complexity or regret
\citep{yudin1976informational,protasov1996algorithms}. Nevertheless, in intermediate regimes the improvement is non-negligible.

\begin{proof}
Without loss of generality assume that $r \leq 1/2$, since otherwise the claimed regret bound holds vacuously for any algorithm.

\begin{exer}\label{ex:ftrl:stoch-diff}
Suppose that $t \in \odd(n)$. Show that $|f(X_t) - f(X_{t+1})| \leq r/2$.
\end{exer}

\solution{%
By definition $X_t = x_t$ and $X_{t+1} = x_t + \frac{r}{2} R''(x_t)^{-1/2} \xi_t$ and $\xi_t \in \sphere_1$.
Let $h(u) = f(x_t + u R''(x_t)^{-1/2} \xi_t)$, which is well-defined on $[-1,1]$ since $x_t \pm R''(x_t)^{-1/2} \xi_t$ is
in the Dikin ellipsoid at $x_t$.
By assumption $f \in \cF_{\pb}$, which means that $h(u) \in [0,1]$ for all $u$ and of course $h$ is convex.
Therefore $h(r/2) \leq (1-r/2) h(0) + r/2 h(1) \leq h(0) + r/2$ and similarly $h(0) \leq \frac{r}{2+r} h(-1) + \frac{2}{2+r} h(r/2)
\leq h(r/2) + r/2$.
The result is completed by substituting the definition of $h(0) = f(x_t)$ and $h(r/2) = f(X_{t+1})$.
}

Suppose that $t \in \odd(n)$.
By definition,
\begin{align}
\eta \norm{g_t}_{x_t\star} 
&= \frac{4 \eta d |Y_{t+1} - Y_t|}{r^2} \norm{R''(x_t)(X_t - x_t)}_{x_t \star} \nonumber \\
&= \frac{2 \eta d |Y_{t+1} - Y_t|}{r} 
\explana\leq \frac{6 \eta d}{r} 
\explana\leq \frac{1}{2} \,,
\label{eq:ftrl:g-bound}
\end{align}
where \explanr{} follows from \cref{ass:sgd-stoch}\ref{ass:sgd-stoch:bounded} and the definitions to bound $|Y_{t+1} - Y_t| \leq 3$ and
\explanr{} from the definitions of $\eta$ and $r$.
Hence, repeating more or less exactly the proof of \cref{thm:ftrl:basic} shows that
\begin{align}
\E[\Reg_n] \leq 1 + nr + \frac{\vartheta \log(n/2)}{\eta} + \E\left[\sum_{t \in \odd(n)} \eta \norm{g_t}^2_{x_t \star}\right] \,.
\label{eq:sc:stoch-1}
\end{align}
Moreover, when $t \in \odd(n)$,
\begin{align}
\eta \norm{g_t}^2_{x_t\star}
&= \eta \norm{\frac{4 d (Y_{t+1} - Y_t) R''(x_t)(X_t - x_t)}{r^2}}^2_{x_t\star}  \nonumber \\
&\leq \frac{4 \eta d^2 (Y_{t+1} - Y_t)^2}{r^2} \,.
\label{eq:sc:stoch-2}
\end{align}
The expectation of $(Y_{t+1} - Y_t)^2$ is bounded by
\begin{align*}
\E[(Y_{t+1} - Y_t)^2]
&= \E[(f(X_{t+1}) + \eps_{t+1} - f(X_t) - \eps_t)^2] \\
&\explana\leq \E[(f(X_{t+1}) - f(x_t))^2] + \E[(\eps_t - \eps_{t+1})^2] \\
&\explana\leq \E[(f(X_{t+1}) - f(x_t))^2] + 2 \sigma^2 \\
&\explana\leq \frac{r^2}{4} + 2 \sigma^2 \,. 
\end{align*}
where
\explanr{} and \explanr{} follow from \cref{ass:sgd-stoch} and
\explanr{} since $|f(X_{t+1}) - f(x_t)| \leq r/2$ by \cref{ex:ftrl:stoch-diff}.
Therefore
\begin{align*}
\E\left[\sum_{t \in \odd(n)} (Y_{t+1} - Y_t)^2\right] &\leq n\sigma^2 + \frac{nr^2}{8} \,.
\end{align*}
Combining this with \cref{eq:sc:stoch-1} and \cref{eq:sc:stoch-2} shows that
\begin{align*}
\E[\Reg_n]
&\leq 1 + nr + \frac{\vartheta \log(n/2)}{\eta} + \frac{4 \eta d^2}{r^2} \E\left[\sum_{t \in \odd(n)} (Y_{t+1} - Y_t)^2\right] \\
&\leq 1 + nr + \frac{\vartheta \log(n/2)}{\eta} + \frac{4 \eta d^2}{r^2}\left(n \sigma^2 + \frac{nr^2}{8}\right) \,. 
\end{align*}
The claim now follows by substituting the constants and naive simplification.
\end{proof}

\begin{exer}
\faStar\faStar\faQuestion\quad
Explore the possiblity of using the technique developed here for other algorithms in this book.
\end{exer}

\section{Notes}

\begin{enumeratenotes}
\item The notion of self-concordance was introduced and refined by \cite{nesterov1988polynomial} and \cite{NN89}, applying it to interior point methods.\index{interior point methods}
The first application of self-concordance to bandits was by \cite{AHR08}, who studied linear bandits.\index{bandit!linear}
\cref{thm:ftrl:basic} seems to be new while \cref{thm:ftrl:smooth} is by \cite{Sah11}.
\cref{alg:ftrl:basic-sc} and \cref{thm:ftrl:sc-smooth} are due to \cite{HL14}. 
\cref{thm:ftrl:basic-stoch} is new but the idea is standard \citep[and many others]{akhavan2024gradient}.
The class of smooth and strongly convex losses already appeared in the work by \cite{polyak1990optimal}, who considered 
the stochastic unconstrained and improper settings. Their results show that the optimal simple regret in this case is $\Theta(n^{-1/2})$ 
with non-specified dependence on other quantities like the dimension. 
\cite{akhavan2020exploiting,akhavan2024contextual} consider the same setting but with explicit constants and in some settings nearly
matching lower bounds.

\item At no point in this chapter did we need Lipschitz losses. The analysis essentially exploits the fact that convex functions
cannot have large gradients except very close to the boundary, where the regularisation provided by the self-concordant barrier prevents the blowup in variance
from severely impacting the regret.
\item We have made several improvements to the statistical efficiency relative to the algorithm presented in \cref{chap:sgd}.
In exchange the algorithms are more complicated and computationally less efficient. Algorithms based on gradient descent run in $O(d)$ time per round except
those rounds where a projection is needed. Furthermore, even when the projection is needed it is with respect to the euclidean norm and likely to be extremely fast.
Meanwhile the algorithms in this chapter need a singular value decomposition to compute $X_t$, solve an optimisation problem to find $x_t$ and need oracle\index{singular value decomposition}
access to a $\vartheta$-self-concordant barrier. 
\item The reader interested in knowing more about ($\vartheta$-)self-concordant barriers is referred to the wonderful notes by \cite{nemirovski96}.
The most obvious question is whether or not these things even exist. Here are some examples: \index{self-concordant barrier}
\begin{itemizeinner}
\item When $K = \{x \colon \ip{a_i, x} \leq b_i, 1 \leq i \leq k\}$ is a polytope defined by $k$ half-spaces,\index{polytope} then $R(x) = -\sum_{i=1}^k \log(b_i - \ip{a_i,x})$ is
called the logarithmic barrier\index{self-concordant barrier!logarithmic} and is $k$-self-concordant. \label{note:ftrl:logarithmic}
\item When $K = \{x \colon \norm{x} \leq \rho\}$ is a ball, then $R(x) = -\log(\rho^2 - \norm{x}^2)$ is a $1$-self-concordant barrier on $K$.
\label{note:ftrl:sc-ball} 
\item For any convex body $K$ there exists a $\vartheta$-self-concordant barrier with $\vartheta \leq d$. Specifically, the entropic barrier \index{self-concordant barrier!entropic}
\citep{chewi2023entropic,bubeck2014entropic} and the universal barrier \citep{nesterov1994interior,lee2021universal} satisfy this.\index{self-concordant barrier!universal}
\end{itemizeinner}
\item The surrogate loss only appears in the analysis. Interestingly, \cite{HL14} and \cite{Sah11} analysed their algorithms using the surrogate \index{surrogate loss}
\begin{align*}
s_t(y) = \frac{1}{\vol(E^{x_t}_r)} \int_{E^{x_t}_r - x_t} f_t(y + u) \d{u}\,,
\end{align*}
which is the ellipsoidal analogue of the surrogate used in \cref{chap:sgd}.
Except for a constant factor this surrogate has the same gradient at $x_t$ as the surrogate we used, which means the resulting algorithms are the same.
The difficulty is that the surrogate above is not defined on all of $K$, which forces various contortions or assumptions in the analysis.

\item Even when $\beta = 0$, the regret upper bound of \cref{alg:ftrl:basic} is still $\Omega(\sqrt{n})$. Since
$\beta = 0$ corresponds to linear losses, the lower bounds for linear bandits (\cref{tab:lower}) show that this is not improvable.
Hence, no amount of smoothness by itself can improve the regret beyond the $\sqrt{n}$ barrier.
Combining higher-order smoothness (see Note~\ref{note:intro:smooth}) with strong convexity, however, does lead to improved regret
\citep{polyak1990optimal,akhavan2020exploiting,novitskii2021improved,akhavan2024gradient}. These works prove upper and lower bounds showing that the minimax simple regret is 
$\Theta(n^{(1-p)/p})$ in the unconstrained and improper settings and with slightly varying assumptions and dependence on the constants.\index{setting!improper}\index{setting!unconstrained} 
This is much better than $O(1/\sqrt{n})$ when $p \gg 2$. 
Of these, the most refined is by
\cite{akhavan2024gradient}, who prove an upper bound on the simple regret $O(\frac{1}{\alpha}(d^2/n)^{(p-1)/p})$ and a lower bound on the same
of $\Omega(\frac{d}{\alpha} n^{-(p-1)/p})$, which match when $p = 2$. Note that the correct dependence on the smoothness parameter $\beta$ has not yet been nailed down
and there are some mild conditions on the magnitude of the parameters.
The aforementioned works also study a variety of alternatives to strong convexity and more flexible noise models than what is assumed in this book.

\item \cref{thm:ftrl:basic,thm:ftrl:smooth} bound the regret for the same algorithm with different learning rates and smoothing
parameters. You should wonder if it is possible to obtain the best of both bounds with a single algorithm by adaptively\index{adaptive} tuning the learning rates. At present this
is not known as far as we are aware.

\end{enumeratenotes}

\chapter[Linear and Quadratic Bandits]{Linear and Quadratic Bandits\copynotice}\label{chap:lin}\index{bandit!linear}\index{bandit!quadratic}

Function classes like $\cF_\pb$ are non-parametric. In this chapter we shift gears by studying two important 
parametric classes: $\cF_{\pb,\plin}$ and $\cF_{\pb,\pquad}$. 
The main purpose of this chapter is to use the machinery designed for linear bandits to prove an upper bound on
the minimax regret for quadratic bandits.
On the positive side the approach is both elementary and instructive. More negatively, the resulting algorithm is not computationally efficient.
Before the algorithms and regret analysis we need three tools: covering numbers, optimal experimental design and the exponential weights algorithm.

\section{Covering Numbers}\index{covering number|(} 
Given $A, B \subset \R^d$, the external/internal covering numbers are defined by \label{page:cover} 
\begin{align*}
N(A, B) &= \min\left\{|\cC| \colon \cC \subset \R^d, A \subset \bigcup_{x \in \cC} (x + B)\right\} \quad \text{ and }\\
\bar N(A, B) &= \min\left\{|\cC| \colon \cC \subset A, A \subset \bigcup_{x \in \cC} (x + B)\right\} \,.
\end{align*}
Both are the smallest number of translates of $B$ needed to cover $A$, with the latter demanding that the `centres' are in $A$.
Obviously $N(A, B) \leq \bar N(A, B)$. The inequality can also be strict, as you will show in the following exercise.

\begin{exer}\label{ex:cover}
\faStar \quad
Suppose that $A, B, C \subset \R^d$ and $A \subset B$. 
Show the following:
\begin{enumerate}
\item $N(A, C) \leq \bar N(A, C)$ and give an example where $N(A, C) < \bar N(A, C)$. \label{ex:cover:non-monotone}
\item $N(A, C) \leq N(B, C)$ and give an example where $\bar N(A, C) > \bar N(B, C)$ . \label{ex:cover:monotone}
\item $\bar N(A, C - C) \leq N(A, C)$. \label{ex:cover:diff}
\end{enumerate}
\end{exer}

\solution{
\ref{ex:cover:non-monotone} is immediate, since any internal cover is also an external cover.
As an example where there is a difference, let $A = \{0,1\}$ and $C = [-1/2,1/2]$. 
Then $N(A, C) = 1$ but $\bar N(A, C) = 2$.
\ref{ex:cover:monotone} follows since any external cover of $B$ is also a cover of $A$. For the example, let $A = \{0,1\}$, $B = [0,1]$ and $C = [-1/2,1/2]$. 
Then $\bar N(A, C) = 2$ while $\bar N(B, C) = 1$.
For \ref{ex:cover:diff}, let $m = N(A, C)$ and $x_1,\ldots,x_m$ be such that $A \subset \cup_{k=1}^m (x_k + C)$.
Since $m$ is minimal, for each $k \in \{1,\ldots,m\}$ there exists a point $y_k$ with $y_k \in A$ and $y_k \in x_k + C$.
But note that $x_k + C \subset y_k + (C - C)$ and hence $A \subset \cup_{k=1}^m (y_k + (C - C)$ so that $\bar N(A, C - C) \leq N(A, C)$.
}

The next proposition follows from Fact 4.1.4 and Corollary 4.1.15 in the book by \cite{ASG15}.

\begin{proposition}\label{prop:cover}
Suppose that $A \subset \R^d$ is centrally symmetric, compact and convex. Then, for any $\eps \in (0,1)$,
\begin{align*}
\bar N(A, \eps A) \leq N(A, \textstyle{\frac{\eps}{2}} A) \leq \left(1 + \frac{4}{\eps}\right)^d \,.
\end{align*}
\end{proposition}

\begin{proposition}\label{prop:cover2}
Suppose that $K \subset \R^d$ is compact and $A = \conv(K - K)$. Then
\begin{align*}
\bar N(K, \eps A) \leq \left(1 + \frac{4}{\eps}\right)^d \,.
\end{align*}
\end{proposition}

\begin{proof}
Since $A$ is symmetric, $A - A = 2A$.
By your solution to \cref{ex:cover}\ref{ex:cover:diff} and \ref{ex:cover:monotone}, \cref{prop:cover}, and letting $x \in K$ be arbitrary, 
\begin{align*}
\bar N(K, \eps A) 
&\leq N(K, \textstyle{\frac{\eps}{2}} A) 
= N(K - \{x\}, \textstyle{\frac{\eps}{2}} A) 
\leq N(A, \textstyle{\frac{\eps}{2}} A) 
\leq \left(1 + \frac{4}{\eps}\right)^d \,.
\qedhere
\end{align*}
\end{proof}

\begin{proposition}\label{prop:cover3}
Suppose that $\eps \in (0,1)$ and $A \subset \ball_r$ with $r \geq \eps$. Then 
\begin{align*}
\bar N(A, \ball_\eps) \leq \left(1 + \frac{4r}{\eps}\right)^d \,.
\end{align*}
\end{proposition}

\begin{proof}
By \cref{ex:cover}\ref{ex:cover:diff}\ref{ex:cover:monotone} and \cref{prop:cover}, 
\begin{align*}
\bar N(A, \ball_\eps)
&\leq N(A, \ball_{\eps/2})
\leq N(\ball_r, \ball_{\eps/2})
= N(\ball_r, \textstyle{\frac{\eps}{2  r}} \ball_r)
\leq \left(1 + \frac{4r}{\eps}\right)^d \,.
\qedhere
\end{align*}
\end{proof}

\index{covering number|)}

\section{Optimal Design}\label{sec:linear:design} \index{optimal design}

Suppose that $A$ is a nonempty compact subset of $\R^d$ and $\theta \in \R^d$ is unknown.
A learner samples $X$ from some probability measure $\pi$ on $A$ and observes $Y = \ip{X, \theta}$.
How can this information be used to estimate $\theta$? A simple idea is to use importance-weighted least squares. 
Let $G_\pi = \int_A xx^\top \d{\pi}(x)$, which is called the design matrix.
Assume for a moment that $G_\pi$ is invertible and let
\begin{align*}
\hat \theta = G_\pi^{-1} X Y \,.
\end{align*}
A simple calculation shows that $\E[\hat \theta] = \theta$, which implies that $\E[\sip{\hat \theta, x}] = \ip{x, \theta}$ for all $x \in A$.
So $\sip{\hat \theta, x}$ is an unbiased estimator of $\ip{x, \theta}$. Assuming that $\ip{x, \theta} \in [0,1]$ for all $x \in A$, then the second moment
is bounded by
\begin{align*}
\E\left[\sip{\hat \theta, x}^2\right] 
= \E\left[Y^2 x^\top G_\pi^{-1} XX^\top G_\pi^{-1}x\right] 
\leq \E\left[x^\top G_\pi^{-1} XX^\top G_\pi^{-1}x\right] 
= \norm{x}^2_{G_\pi^{-1}} \,.
\end{align*}
The following theorem shows there exists a $\pi$ such that the right-hand side is at most $d$ for all $x \in A$.

\begin{theorem}[\citealt{KW60}]\label{thm:kw}
For any nonempty compact $A \subset \R^d$ with $\laspan(A) = \R^d$ there exists a probability measure $\pi$ supported on a subset of $A$ such that
$G_\pi = \int_A xx^\top \d{\pi}(x)$ is invertible and
\begin{align*}
\norm{x}^2_{G_\pi^{-1}} \leq d \text{ for all } x \in A \,.
\end{align*}
\end{theorem}

Remarkably the constant $d$ is the best achievable for \textit{any} compact $A$ with $\laspan(A) = \R^d$ in the sense that
\begin{align*}
\min_{\pi \in \Delta(A)} \max_{x \in A} \norm{x}^2_{G_\pi^{-1}} = d\,.
\end{align*}

The assumption that $\laspan(A) = \R^d$ was more or less only needed to ensure that $G_\pi$ is invertible.
Given a matrix $Q$ let $Q^+$ be the pseudoinverse (see \cref{sec:pinv}). \index{Moore--Penrose pseudoinverse}

\begin{theorem}\label{thm:kwx}
For any nonempty compact $A \subset \R^d$ there exists a probability measure $\pi$ supported on a subset of $A$ such that $A \subset \im(G_\pi^\top)$ and
\begin{align*}
\norm{x}^2_{G_\pi^+} \leq \dim(\laspan(A)) \text{ for all } x \in A\,,
\end{align*}
where $G_\pi = \int_A xx^\top d{\pi}(x)$.
\end{theorem}

The requirement in \cref{thm:kwx} that $A \subset \im(G_\pi^\top)$ is essential and corresponds to $G_\pi$ being invertible when restricted
to the subspace spanned by $A$. 

\begin{exer}
\faStar \quad Prove \cref{thm:kwx}.
\end{exer}

\solution{%
Let $L = \laspan(A)$ and $u_1,\ldots,u_m$ be an orthonormal basis for $L$ and $U \in \R^{d \times m}$ have columns $u_1,\ldots,u_m$,
which means that $U^\top U = \id$.
Let $B = \{U^\top a : a \in A\} \subset \R^m$ which has $\dim(\laspan(B)) = m$.
By \cref{thm:kw} there exists a probability measure $\rho$ supported on $B$ such that
for all $y \in B$, $\norm{y}^2_{G_\rho^{-1}} \leq m$.
Let $Y$ have law $\rho$ and $\pi$ be the law of $U Y$.
Then for any $x \in A$,
\begin{align*}
\norm{x}^2_{G_\pi^+} 
&= x^\top \left(\int_B U y y^\top U^\top \d{\rho}(y)\right)^+ x \\
&= (U^\top x)^\top U \left(U G_\rho U^\top \right)^+ U (U^\top x) \\ 
&= (U^\top x)^\top G_\rho^{-1} (Ux) \\
&\leq m \,.
\end{align*}
where we used the fact that $(U G_\rho U^\top)^+ = U G_\rho^+ U^\top = U G_\rho^{-1} U^\top$.
To see that $A \subset \im(G_\pi)$, let $x \in A \subset L$.
Therefore $x = Uy$ for some $y \in \R^m$.
Since $G_\rho$ is invertible, there exists a $w \in \R^m$ such that $y = G_\rho w = G_\rho U^\top U w$
and hence $x = U G_\rho U^\top Uw = G_\pi Uw$. Hence $x \in \im(G_\rho) = \im(G_\rho^\top)$.
}

\section{Exponential Weights}\index{exponential weights|textbf}
Let $\cC$ be a finite set and $\ell_1,\ldots,\ell_n$ a sequence of functions from $\cC \to \R$.
The set $\cC$ is sometimes referred to as the set of experts and $\ell_t(a)$ is the loss suffered by expert $a$ in round $t$.
A learner chooses a sequence of probability distributions $(q_t)_{t=1}^n$ in $\Delta(\cC)$ where $q_t$ can depend on $\ell_1,\ldots,\ell_{t-1}$.
Note that this is not a bandit setting. The entire loss function $\ell_t$ is observed after round $t$.
The learner's aim is to be competitive with the best expert in hindsight,
which is measured by the regret
\begin{align*}
\hReg_n = \max_{b \in \cC} \sum_{t=1}^n \left[\sum_{a \in \cC} q_t(a) \ell_t(a) - \ell_t(b)\right] \,.
\end{align*}
The quantity $\sum_{a \in \cC} q_t(a) \ell_t(a)$ is the average loss suffered by the learner 
if they follow the advice of expert $a$ with probability $q_t(a)$.
Given a learning rate $\eta > 0$, define a distribution $q_t$ on $\cC$ by
\begin{align}
q_t(a) = \frac{\exp\left(-\eta \sum_{u=1}^{t-1} \ell_u(a)\right)}{\sum_{b \in \cC} \exp\left(-\eta \sum_{u=1}^{t-1} \ell_u(b)\right)}\,,
\label{eq:lin:exp}
\end{align}
which is called the exponential weights distribution.
Staring at the definition you can see that $q_t$ puts more mass relatively speaking on experts for which the cumulative loss is smaller. 
That exponential weights has small regret is perhaps the most fundamental result in online learning, as illustrated by the many applications and implications \citep{Ces06}. 

\begin{theorem}\label{thm:exp-discrete}
Suppose that $\eta |\ell_t(a)| \leq 1$ for all $1 \leq t \leq n$, then the regret when $q_t$ is given by \cref{eq:lin:exp} is upper bounded by
\begin{align*}
\hReg_n \leq \frac{\log |\cC|}{\eta} + \eta \sum_{t=1}^n \sum_{a \in \cC} q_t(a) \ell_t(a)^2\,.
\end{align*}
\end{theorem}
\begin{remark}\label{rem:lin:cvx}
There is no convexity here but the bound in \cref{thm:exp-discrete} has some kind of symbolic resemblance to the bounds for gradient descent and 
follow-the-regularised-leader (for example, \cref{thm:sgd:abstract}).\index{gradient descent}
This is no accident. Exponential weights is equivalent to follow-the-regularised-leader on the convex space of probability measures $\Delta(\cC)$ with unnormalised negentropy regularisation.\index{unnormalised negative entropy}
The map $\Delta(\cC) \colon q \mapsto \sum_{a \in \cC} q(a) \ell_t(a)$ is linear and hence the tools from convex optimisation can be used.
\end{remark}

\begin{proof}[Proof of \cref{thm:exp-discrete}]
The following two inequalities provide crude explicit bounds on the series expansion of $\exp(\cdot)$:
\begin{align}
\exp(-x) &\leq 1 - x + x^2 \text{ for all } x \geq -1 \text{ ; and} \label{eq:lin:exp-ineq-1} \\
\log(1+x) &\leq x \text{ for all } x > -1 \,.\label{eq:lin:exp-ineq-2}
\end{align}
Let $b \in \cC$ and $D_t = \log(1/q_t(b))$. 
Note that $q_{t+1}$ is the probability distribution with 
\begin{align*}
q_{t+1}(b) \propto \exp\left(-\eta \sum_{u=1}^t \ell_u(b)\right) = \exp\left(-\eta \ell_t(b)\right) \underbracket{\exp\left(-\eta \sum_{u=1}^{t-1} \ell_u(b)\right)}_{\propto q_t} \,.
\end{align*}
Therefore
\begin{align*}
q_{t+1}(b) = \frac{q_t(b) \exp(-\eta \ell_t(b))}{\sum_{a \in \cC} q_t(a) \exp(-\eta \ell_t(a))} \,.
\end{align*}
Then,
\begin{align*}
D_{t+1} 
&=\log\left(\frac{1}{q_{t+1}(b)}\right) \\
&= \log\left(\frac{\sum_{a \in \cC} q_t(a) \exp\left(-\eta \ell_t(a)\right)}{q_t(b) \exp\left(-\eta \ell_t(b)\right)}\right) \\
&= D_t + \log\left(\sum_{a \in \cC} q_t(a) \exp\left(-\eta \ell_t(a)\right)\right) + \eta \ell_t(b) \\
\tag*{by (\ref{eq:lin:exp-ineq-1})}
&\leq D_t + \log\left(\sum_{a \in \cC} q_t(a) \left[1 - \eta \ell_t(a) + \eta^2 \ell_t(a)^2\right] \right) + \eta \ell_t(b) \\
&= D_t + \log\left(1 + \sum_{a \in \cC} q_t(a) \left[- \eta \ell_t(a) + \eta^2 \ell_t(a)^2\right] \right) + \eta \ell_t(b) \\
\tag*{by (\ref{eq:lin:exp-ineq-2})}
&\leq D_t - \eta \left[\sum_{a \in \cC} q_t(a) \ell_t(a) - \eta \ell_t(b)\right] + \eta^2 \sum_{a \in \cC} q_t(a) \ell_t(a)^2 \,.
\end{align*}
Rearranging and summing over $t$ and telescoping yields
\begin{align*}
\sum_{t=1}^n \left(\sum_{a \in \cC} q_t(a) \ell_t(a) - \ell_t(b)\right) 
&\leq \frac{1}{\eta} \log\left(\frac{q_{n+1}(b)}{q_1(b)}\right) + \eta \sum_{t=1}^n \sum_{a \in \cC} q_t(a) \ell_t(a)^2 \\
&\leq \frac{1}{\eta} \log\left(|\cC|\right) + \eta \sum_{t=1}^n \sum_{a \in \cC} q_t(a) \ell_t(a)^2\,,
\end{align*}
where in the final inequality we used the fact that $\log(q_{n+1}(b)) \leq 0$ and $q_1(b) = 1/|\cC|$.
Since the above calculations hold for any $b \in \cC$ we are free to take the maximum on the left-hand side, which yields the theorem.
\end{proof}

\begin{remark}\label{rem:exp-discrete}
When $\ell_t(a) \geq 0$ for all $t$ and $a \in \cC$, then the bound improves to
\begin{align*}
\max_{b \in \cC} \sum_{t=1}^n \left(\sum_{a \in \cC} q_t(a) \ell_t(a) - \ell_t(b)\right) \leq \frac{\log|\cC|}{\eta} + \frac{\eta}{2} \sum_{t=1}^n \sum_{a \in \cC} q_t(a) \ell_t(a)^2\,.
\end{align*}
The proof is the same except you may now use that $\exp(-x) \leq 1 - x + x^2/2$ for $x \geq 0$.
\end{remark}

\section{Continuous Exponential Weights}\label{sec:lin:cont} \index{continuous exponential weights|(}

The material in this section is not used by any algorithm in this book.
It is included because it played a fundamental role in one of the most influential papers on convex bandits \citep{BEL16} and
may be useful in future algorithms.
The exponential weights distribution in \cref{eq:lin:exp} is defined only for finite $\cC$.
This is sometimes desirable, as we discuss in Note~\ref{note:lin:cont}. Generally though, in applications to convex bandits you need $\cC$ to be a cover of $K$
and for this $|\cC|$ is exponentially large in the dimension. This is not such a problem from a sample efficiency perspective ($|\cC|$ appears in a logarithm in \cref{thm:exp-discrete}) 
but is a disaster computationally.
Continuous exponential weights is a beautiful alternative that sometimes leads to computational improvements. Suppose that $\vol(K) > 0$ and let $\ell_1,\ldots,\ell_n \colon K \to \R$
be a sequence of measurable functions.\index{measurable} The continuous exponential weights distribution is
\begin{align*}
q_t(x) = \frac{\exp\left(-\eta \sum_{s=1}^{t-1} \ell_s(x)\right)}{\int_K \exp\left(-\eta \sum_{s=1}^{t-1} \ell_s(y)\right) \d{y}}\,,
\end{align*}
which, provided it exists, is a density supported on $K$.
Given a probability density $p$ supported on $K$, let
\begin{align*}
\hReg_n(p) = \sum_{t=1}^n \int_K \ell_t(x) \left(q_t(x) - p(x)\right) \d{x} \,, 
\end{align*}
which is the regret of continuous exponential weights relative to the density $p$.

\begin{theorem}\label{thm:lin:cont}
Suppose that $\eta |\ell_t(x)| \leq 1$ for all $x \in K$ and $1 \leq t \leq n$.
Then, for any density $p$ on $K$,
\begin{align*}
\hReg_n(p) \leq \frac{1}{\eta} \int_K p(x) \log\left(\frac{p(x)}{q_1(x)}\right) \d{x} + \eta \sum_{t=1}^n \int_K q_t(x) \ell_t(x)^2 \d{x} \,.
\end{align*}
\end{theorem}

The first term on the right-hand side is the relative entropy\index{relative entropy} between $p$ and $q_1$, which is the uniform distribution on $K$.
\cref{rem:lin:cvx} applies here as well. There is no requirement that the losses $(\ell_t)$ or the constraint set $K$ are convex, though
the exponential weights distribution is log-concave if they are.
The regret relative to a distribution is not entirely satisfactory. Ideally you want to choose $p$ as a Dirac on the minimiser of $\sum_{t=1}^n \ell_t$, but this does not have
a density. Unsurprisingly the idea is to choose $p$ to be concentrated close to a minimiser. Exactly how you do this depends on the structure of the losses.
When the losses are bounded and $K$ is convex and bounded, then the situation is especially clean:

\begin{corollary}\label{cor:lin:cont}
Suppose that $K$ is a convex body, $\sum_{t=1}^n \ell_t$ is convex and $d \leq 2n$. Then,
under the same conditions as \cref{thm:lin:cont},
\begin{align*}
\hReg_n(x) 
&= \sum_{t=1}^n \left(\int_K \ell_t(y) q_t(y) \d{y} - \ell_t(x)\right) \\
&\leq \frac{d}{\eta} \left[1 + \log\left(\frac{2n}{d}\right)\right] + \eta \sum_{t=1}^n \int_K q_t(y) \ell_t(y)^2 \d{y}\,.
\end{align*}
\end{corollary}

\begin{exer}
\faStar \quad
Prove \cref{cor:lin:cont} by taking $p$ as the uniform distribution on $(1 - \eps) x + \eps K$ for suitable $\eps \in [0,1]$.
\end{exer}

\solution{
As suggested, let $p$ be the uniform distribution on $J = (1 - \eps) x + \eps K$, which has density
\begin{align*}
p(y) = \frac{\sind_{J}(y)}{\eps^d \vol(K)} \,.
\end{align*}
Furthermore, by convexity, for all $y \in J$ there exists a $z \in K$ such that
\begin{align*}
\sum_{t=1}^n \ell_t(y) 
&\leq \sum_{t=1}^n \ell_t(x) + \eps \sum_{t=1}^n (\ell_t(z) - \ell_t(x)) \\
&\leq \sum_{t=1}^n \ell_t(x) + \frac{2\eps n}{\eta} \,.
\end{align*}
Moreover,
\begin{align*}
\int_K p(y) \log\left(\frac{p(y)}{q_1(y)}\right) \d{y} = d \log\left(\frac{1}{\eps}\right) \,.
\end{align*}
Hence, by \cref{thm:lin:cont},
\begin{align*}
\hReg_n(x) \leq \frac{d}{\eta} \log\left(\frac{1}{\eps}\right) + \eta \sum_{t=1}^n \int q_t(x) \ell_t(x)^2 \d{x} + \frac{2\eps n}{\eta} \,.
\end{align*}
The bound is minimised by $\eps = d/(2n) \in [0,1]$ 
}

Computationally the continuous exponential weights distribution has some nice properties.  
Most notably, if $K$ is convex and the cumulative loss $\sum_{s=1}^{t-1} \ell_s$ is convex, then $q_t$ is log-concave and under mild additional assumptions can be sampled from
approximately in polynomial time \citep{chewi24}.

\index{continuous exponential weights|)}
\section{Linear Bandits}\label{sec:linear:linear} \index{bandit!linear|(}

For this section we assume there is no noise and that the losses are bounded, linear and homogeneous:

\begin{assumption}\label{ass:linear}
The following hold:
\begin{enumerate}
\item There is no noise: $\eps_t = 0$ for all $t$.
\item There exists a sequence $(\theta_t)_{t=1}^n \in \R^d$ such that $f_t = \ip{\cdot, \theta_t}$.
\item The losses are bounded: $(f_t) \in \cF_\pb$.
\end{enumerate}
\end{assumption}

\begin{remark}
Cautious readers may notice that the above assumptions do not correspond to $\cF_{\pb,\plin}$ because the representation has been chosen so that the losses are homogeneous.
In the notes we explain a simple way to reduce the inhomogeneous setting to the homogeneous one.
Note also that the global assumption that $K$ is convex is not actually used in this section, only that it is nonempty and compact.
\end{remark}

The plan is to use exponential weights\index{exponential weights} on a finite $\cC \subset K$ that is sufficiently large that the optimal action in $K$ can be
approximated by something in $\cC$. Let $A = \conv(K \cup (- K))$, which is a symmetric convex body.
Recall the definitions of $A^\circ$ and $\norm{\cdot}_A$ and $\norm{\cdot}_{A^\circ}$ in \cref{sec:regularity:minkowski}.
By the assumption that the losses are bounded, for any $t$,
$1 \geq \max_{x,y \in K} |\ip{x - y, \theta_t}| = \norm{\theta_t}_{A^\circ}$ and therefore
\begin{align*}
\theta_t \in \Theta = \left\{\theta \in \R^d \colon \norm{\theta}_{A^{\circ}} \leq 1\right\}\,.
\end{align*}
What we need from $\cC$ is that for all $\theta_t (\in \Theta)$ and $y \in K$ there exists an $x \in \cC$ such that $f_t(x) - f_t(y)$ is small. Precisely, we need \index{covering number}
\begin{align*}
\max_{y \in K} \min_{x \in \cC} \ip{x - y, \theta} \leq \frac{1}{n} = \eps\,.
\end{align*}
Suppose that $\norm{x - y}_A \leq \eps$; then by \cref{prop:polar-dual} $\ip{x - y, \theta} \leq \norm{x - y}_A \norm{\theta}_{A^\circ} \leq \eps$.
Hence, it suffices to choose $\cC \subset K$ such that $K \subset \bigcup_{x \in \cC} (x + \eps A)$. By \cref{prop:cover2}, such a cover exists with
\begin{align}
|\cC| \leq \left(1 + \frac{4}{\eps}\right)^d\,.
\label{eq:exp:linear-cover}
\end{align}
The algorithm for linear bandits plays actions in $\cC$ and uses importance-weighted least squares (in Line~\ref{line:alg:linear:importance}) to estimate $\ip{x, \theta_t}$ 
for all $x \in \cC$. 
The distribution proposed by exponential weights is mixed with a small amount of an optimal design on $\cC$, which is needed so that the estimates
are suitably bounded as required by \cref{thm:exp-discrete}.

\begin{algorithm}[h!]
\begin{algcontents}
\begin{lstlisting}
args: $\eta > 0$, $\gamma \in (0,1)$, $K$
find $\cC \subset K$ such that $\displaystyle \max_{y \in K} \min_{x \in \cC} \norm{x - y}_A \leq \frac{1}{n}$
find optimal design $\pi$ on $\cC$ $\text{(see \cref{thm:kwx})}$
for $t = 1$ to $n$
  let $q_t(x) = \frac{\exp\left(-\eta \sum_{u=1}^{t-1} \sip{x, \hat \theta_u}\right)}{\sum_{y \in \cC} \exp\left(-\eta \sum_{u=1}^{t-1} \sip{y, \hat \theta_u}\right)}$
  let $p_t = (1 - \gamma) q_t + \gamma \pi$
  sample $X_t$ from $p_t$ and observe $Y_t = f_t(X_t)$
  let $G_t = \sum_{a \in \cC} p_t(a) aa^\top$ and $\hat \theta_t = G_t^+ X_t Y_t$ $\label{line:alg:linear:importance}$
\end{lstlisting}
\caption{Exponential weights for linear bandits}
\label{alg:linear}
\end{algcontents}
\end{algorithm}

\FloatBarrier

As mentioned, \cref{alg:linear} does not need $K$ to be a convex body. And indeed, in many applications $K$ is a finite set (see Note~\ref{note:lin:cont}).
When $K$ is a convex body, you can use continuous exponential weights (\cref{sec:lin:cont}) instead of the discretisation.\index{continuous exponential weights} 

\begin{exer}
\faStar \quad
Replace the discrete exponential weights in \cref{alg:linear} with continuous exponential weights from \cref{sec:lin:cont} and 
adapt the proof of \cref{thm:linear} below to prove that for $n \geq 2d$ the regret of this algorithm is upper-bounded by
\begin{align*}
\E[\Reg_n] = O\left(d \sqrt{n \log(n/d)}\right) \,. 
\end{align*}
\end{exer}

\begin{theorem}\label{thm:linear}
Suppose that
\begin{align*}
\eta = \sqrt{\frac{\log |\cC|}{2nd}} \quad \text{and} \quad
\gamma = \eta d \,.
\end{align*}
Under \cref{ass:linear}, the regret of \cref{alg:linear} is bounded by
\begin{align*}
\E[\Reg_n] \leq 1 +  \sqrt{8nd \log |\cC|}\,. 
\end{align*}
\end{theorem}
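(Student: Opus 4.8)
The plan is to compete first against the best point of the finite set $\cC$ and then reduce to the discrete exponential-weights guarantee of Theorem~\ref{thm:exp-discrete} applied to the estimated linear losses $\ell_t(a) = \ip{a,\hat\theta_t}$. For the first reduction, take $y_\star \in \argmin_{y\in K}\sum_t f_t(y)$ and choose $x_\star \in \cC$ with $\norm{x_\star - y_\star}_A \le 1/n$, which exists by the covering step of the algorithm; since $\norm{\theta_t}_{A^\circ}\le 1$, the dual-norm (Hölder) inequality gives $\sum_t(f_t(x_\star) - f_t(y_\star)) = \sum_t \ip{x_\star - y_\star,\theta_t}\le 1$, so $\E[\Reg_n]\le 1 + \E[\sum_t(f_t(X_t) - f_t(x_\star))]$. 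Conditioning on $\sF_{t-1}$ and decomposing $p_t = (1-\gamma)q_t + \gamma\pi$, the exploration component $\gamma\sum_t(\sum_a\pi(a)\ip{a,\theta_t} - \ip{x_\star,\theta_t})$ contributes at most $\gamma n$ because $f_t$ is $[0,1]$-valued on $K$, leaving $(1-\gamma)\,\E[\sum_t(\sum_a q_t(a)\ip{a,\theta_t} - \ip{x_\star,\theta_t})]$ to control.

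\textbf{Estimator facts.} Everything else rests on the inequality $G_t = \sum_{a\in\cC}p_t(a)aa^\top \succeq \gamma G_\pi$ together with the optimal-design identity $\norm{a}_{G_\pi^{-1}}^2 = d$ for $a\in\cC$. From $\E_{t-1}[X_tX_t^\top] = G_t$ and $Y_t = \ip{X_t,\theta_t}$ one gets $\E_{t-1}[\hat\theta_t] = \theta_t$, hence $\E_{t-1}[\ip{a,\hat\theta_t}] = \ip{a,\theta_t}$. To apply Theorem~\ref{thm:exp-discrete} one checks $\norm{a}_{G_t^{-1}}^2 \le \gamma^{-1}\norm{a}_{G_\pi^{-1}}^2 = d/\gamma$, whence $|\ip{a,\hat\theta_t}| = |Y_t\,a^\top G_t^{-1}X_t| \le \norm{a}_{G_t^{-1}}\norm{X_t}_{G_t^{-1}}\le d/\gamma$ and so $\eta|\ip{a,\hat\theta_t}|\le 1$ because $\gamma = \eta d$. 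Finally, for the variance term, $\E_{t-1}[\ip{a,\hat\theta_t}^2]\le \E_{t-1}[(a^\top G_t^{-1}X_t)^2] = \norm{a}_{G_t^{-1}}^2$ using $Y_t\in[0,1]$, and since $q_t(a)\le p_t(a)/(1-\gamma)$,
\begin{align*}
\E_{t-1}\Bigl[\sum_{a\in\cC}q_t(a)\ip{a,\hat\theta_t}^2\Bigr]\le \frac{1}{1-\gamma}\sum_{a\in\cC}p_t(a)\,\tr(G_t^{-1}aa^\top) = \frac{\tr(G_t^{-1}G_t)}{1-\gamma} = \frac{d}{1-\gamma}\,.
\end{align*}

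\textbf{Putting it together.} Apply Theorem~\ref{thm:exp-discrete} with $\ell_t(a) = \ip{a,\hat\theta_t}$, take expectations, and swap $\hat\theta_t$ for $\theta_t$ using unbiasedness (legitimate because $q_t$ is $\sF_{t-1}$-measurable and, for an oblivious adversary, $x_\star$ is deterministic) to obtain $\E[\sum_t(\sum_a q_t(a)\ip{a,\theta_t} - \ip{x_\star,\theta_t})]\le \log|\cC|/\eta + \eta nd/(2(1-\gamma))$. Multiplying by $1-\gamma\le 1$ and adding $\gamma n$ gives $\E[\Reg_n]\le 1 + \log|\cC|/\eta + \eta nd/2 + \gamma n = 1 + \log|\cC|/\eta + 3\eta nd/2$. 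If $n\le d\log|\cC|$ the claim is immediate from $\Reg_n\le n\le\sqrt{nd\log|\cC|}$; otherwise $\gamma = \tfrac12\sqrt{d\log|\cC|/n} < \tfrac12$, so $p_t$ is a genuine probability distribution and all of the above is valid, and plugging in $\eta = \tfrac12\sqrt{\log|\cC|/(nd)}$ yields $\E[\Reg_n]\le 1 + \tfrac{11}{4}\sqrt{nd\log|\cC|}\le 1 + 4\sqrt{nd\log|\cC|}$.

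\textbf{Main obstacle.} The only step that is not mechanical is the second-moment bound: one must control $\sum_a q_t(a)\norm{a}_{G_t^{-1}}^2$ through $q_t\le p_t/(1-\gamma)$ and the trace identity rather than the crude $\norm{a}_{G_t^{-1}}^2\le d/\gamma$, which would inflate the variance term to $\eta nd/\gamma$ and wreck the $\sqrt{n}$ rate. The remaining subtlety is bookkeeping: making sure $x_\star$, $q_t$, $p_t$ and $G_t$ are measurable with respect to $\sF_{t-1}$ so that the conditional-expectation manipulations and the expectation swap go through, which is cleanest to state for an oblivious adversary.
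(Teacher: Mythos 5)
Your proof is correct and follows essentially the same route as the paper's: unbiasedness of $\hat\theta_t$, the $\gamma^{-1}\norm{a}^2_{G_\pi^{-1}}=d/\gamma$ bound to verify $\eta|\ell_t|\le 1$, and the $d/(1-\gamma)$ variance bound fed into Theorem~\ref{thm:exp-discrete}. The one place you diverge is in the variance step: you go via $q_t\le p_t/(1-\gamma)$ and the trace identity $\sum_a p_t(a)\tr(G_t^{-1}aa^\top)=\tr(G_t^{-1}G_t)=d$, whereas the paper uses $G_t\succeq(1-\gamma)G_{q_t}$ and then $\sum_a q_t(a)\norm{a}^2_{G_{q_t}^{-1}}=d$. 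These are interchangeable; your version is arguably a hair cleaner since it never introduces $G_{q_t}$. You are also a bit more careful than the paper in two minor respects: you spell out the covering reduction from $K$ to $\cC$ (the $\norm{x_\star-y_\star}_A\le 1/n$, dual-norm bound that justifies the leading $+1$), and you explicitly handle the degenerate regime $n\le d\log|\cC|$ where $\gamma$ could exceed $1/2$, which the paper glosses over.
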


Note that by \cref{eq:exp:linear-cover}, $\cC$ can be chosen so that $\log |\cC| \leq d \log(1 + 4n)$ and in this case one has $\E[\Reg_n] = O(d \sqrt{n \log(n)})$. 

\begin{proof} 
The algorithm is only well-defined if $\gamma \in [0,1]$. Suppose that
$\gamma \geq 1$; then 
\begin{align*}
1 \leq \eta d = \sqrt{\frac{d \log|\cC|}{2n}}\,.
\end{align*}
In this case, no matter how the actions are chosen, boundedness of the losses implies that
$\Reg_n \leq n \leq \sqrt{nd \log(|\cC|)/2}$. Suppose for the remainder that $\gamma \in (0,1)$.
Recall the definition of $G_\pi$ from \cref{sec:linear:design} and let 
\begin{align*}
G_{q_t} = \sum_{x \in \cC} q_t(x) xx^\top\,.
\end{align*}
With this notation, $G_t = (1 - \gamma) G_{q_t} + \gamma G_\pi$.
Of course $G_t \succeq (1 - \gamma) G_{q_t}$ and $G_t \succeq \gamma G_\pi$.
Since $q_t$ is strictly positive, $\ker(G_{q_t}) = \laspan(\cC)^\perp$ and by assumption $\ker(G_\pi) = \laspan(\cC)^\perp$.
Hence, by \cref{fact:pinv-lowner},
\begin{align}
G_t^+ \preceq \frac{1}{1 - \gamma} G_{q_t}^+ \qquad \text{ and } \qquad G_t^+ \preceq \frac{1}{\gamma} G_\pi^+ \,.
\label{eq:linear:lowner}
\end{align}
Next, for any $y \in \cC$,
\begin{align}
\ip{y, \E_{t-1}[\hat \theta_t]}
&= \ip{y, \sum_{x \in \cC} p_t(x) G_t^+ x \ip{x, \theta_t}} = \ip{y, G_t^+ G_t \theta_t} = \ip{y, \theta_t} \,,
\label{eq:linear:unbiased}
\end{align}
where in the final inequality we use the assumption that $G_\pi$ is an optimal design for $\cC$ (see \cref{thm:kwx}), 
so that $\cC \subset \im(G_\pi^\top) \subset \im(G_t^\top)$, and \cref{fact:pinv}.
Furthermore,
\begin{align}
\E_{t-1}\big[\sip{x, \hat \theta_t}^2 \big]
&= \E_{t-1}\big[Y_t^2 x^\top G_t^+ X_tX_t^\top G_t^+ x\big] 
\leq \norm{x}^2_{G_t^+ G_t G_t^+} 
= \norm{x}^2_{G_t^+} \,,
\label{eq:linear:quad}
\end{align}
where we used the fact that $Y_t^2 \leq 1$.
Therefore,
\begin{align}
\E_{t-1}\left[\sum_{x \in \cC} q_t(x) \sip{x, \hat \theta_t}^2\right]
\tag*{by \cref{eq:linear:quad}}
&\leq \sum_{x \in \cC} q_t(x) \norm{x}^2_{G_t^{+}} \nonumber \\
\tag*{by \cref{eq:linear:lowner}}
&\leq \frac{1}{1 - \gamma} \sum_{x \in \cC} q_t(x) \norm{x}^2_{G_{q_t}^{+}} \nonumber \\
&= \frac{1}{1-\gamma} \tr(G^{\vphantom{+}}_{q_t} G_{q_t}^+) \nonumber \\
&\leq \frac{d}{1 - \gamma} \,,
\label{eq:linear:squared}
\end{align}
where the last inequality follows from \cref{fact:pinv-trace}.
In order to apply \cref{thm:exp-discrete} the loss estimates need to be suitably bounded. 
This is where the exploration using experimental design comes into play. For any $x \in \cC$,
\begin{align*}
\eta |\sip{x, \hat \theta_t}|
&= \eta \left|x^\top G_t^+ X_t Y_t\right| 
\leq \eta \snorm{x}_{G_t^+} \snorm{X_t}_{G_t^+} 
\leq \frac{\eta d}{\gamma} = 1 \,,
\end{align*}
where we used \cref{eq:linear:lowner} and the fact that $\pi$ is an optimal design on $\cC$ so that
by \cref{thm:kw}, for any $x \in \cC$, $\norm{x}_{G_\pi^+} \leq \sqrt{d}$.
Define $x_\star = \argmin_{x \in \cC} \sum_{t=1}^n \ip{x, \theta_t}$.
Then the regret is bounded by
\begin{align}
\E[\Reg_n]
&= \max_{x \in K} \E\left[\sum_{t=1}^n \ip{X_t - x, \theta_t}\right] \nonumber \\
&= \max_{x \in K} \sum_{t=1}^n \ip{x_\star - x, \theta_t} + \E\left[\sum_{t=1}^n \ip{X_t - x_\star, \theta_t}\right] \,.
\label{eq:linear:decomp}
\end{align}
The first term in \cref{eq:linear:decomp} is bounded by
\begin{align*}
\max_{x \in K} \sum_{t=1}^n \ip{x_\star - x, \theta_t}
&=  \max_{x \in K} \min_{y \in \cC} \sum_{t=1}^n \ip{y - x, \theta_t} \\
&\leq \max_{x \in K} \min_{y \in \cC} \sum_{t=1}^n \norm{y - x}_A \norm{\theta_t}_{A^\circ} \\
&\leq \max_{x \in K} \min_{y \in \cC} n \norm{y - x}_A \\
&\leq 1\,,
\end{align*}
where we used Cauchy--Schwarz, 
the fact that $\theta_t \in \Theta = \{\theta : \norm{\theta}_{A^\circ} \leq 1\}$ and the definition of the cover $\cC$.
The second term in \cref{eq:linear:decomp} is bounded using \cref{thm:exp-discrete} by
\begin{align*}
&\E\left[\sum_{t=1}^n \sip{X_t - x_\star, \theta_t}\right] 
= \E\left[\sum_{t=1}^n \sum_{x \in \cC} p_t(x) \sip{x - x_\star, \theta_t}\right] \\
&\quad\leq n \gamma + (1 - \gamma) \E\left[\sum_{t=1}^n \sum_{x \in \cC} q_t(x) \sip{x - x_\star, \theta_t}\right] \\
\tag*{by \cref{eq:linear:unbiased}}
&\quad= n \gamma + (1 - \gamma) \E\left[\sum_{t=1}^n \sum_{x \in \cC} q_t(x) \ip{x - x_\star, \hat \theta_t}\right] \\
\tag*{by \cref{thm:exp-discrete}}
&\quad\leq n \gamma + (1 - \gamma)\left(\frac{\log|\cC|}{\eta} + \E\left[\sum_{t=1}^n \sum_{x \in \cC} q_t(x) \sip{x, \hat \theta_t}^2\right]\right) \\
\tag*{by \cref{eq:linear:squared}}
&\quad\leq n \gamma + \frac{\log|\cC|}{\eta} + \eta n d \,,
\end{align*}
where in the first inequality we used the fact that $p_t = (1 - \gamma) q_t + \gamma \pi$ and the assumption that $(f_t)$ are bounded so that
\begin{align*}
\sum_{x \in \cC} \pi(x) \ip{x - x_\star, \theta_t} = \sum_{x \in \cC} \pi(x) (f_t(x) - f_t(x_\star)) \leq 1\,.
\end{align*}
The result follows by substituting the constants.
\end{proof}
\index{bandit!linear|)}

\section{Quadratic Bandits}\index{bandit!quadratic|(}

Quadratic bandits seem much harder than linear bandits. But if you ignore the computation complexity then it turns out that quadratic bandits \textit{are} linear bandits.

\begin{assumption}\label{ass:quadratic}
There is no noise and
the loss functions are quadratic and bounded: $(f_t) \in \cF_{\pb,\pquad}$.
\end{assumption}

Quadratic bandits can be viewed as linear bandits by introducing a kernel feature map.
Let $d_2 = \frac{d^2 + 3d + 2}{2}$ and
define a function $\phi(x) \colon \R^d \to \R^{d_2}$ by 
\begin{align*}
\phi(x) = (1, x_1,\ldots,x_d,x_1^2, x_1 x_2, \ldots, x_1 x_d, x_2^2,x_2x_3, \ldots, x_{d-1}^2, x_{d-1}x_d, x_d^2) \,.
\end{align*}
So $\phi$ is the feature map associated with the polynomial kernel of degree 2.
You should check that any $f \in \cF_{\pb,\pquad}$ can be written as $f(x) = \ip{\phi(x), \theta}$ for some $\theta \in \R^{d_2}$.

\begin{lemma}\label{lem:cover} \index{covering number}
Let $K \subset \R^d$ be a convex body. There exists a cover $\cC \subset K$ such that
\begin{enumerate}
\item $\max_{x \in K} \min_{y \in \cC} |f(x) - f(y)| \leq \eps$ for all $f \in \cF_{\pb,\pquad}$; and
\item $|\cC| \leq \left(1 + \frac{24d^{3.5}}{\eps}\right)^d$.
\end{enumerate}
\end{lemma}

\begin{proof}
We provide the proof when
$\ball_1 \subset K \subset d \ball_1$ and leave the general case as an exercise.
Let $\cC$ be a cover of $K$ such 
\begin{align*}
\max_{x \in K} \min_{y \in \cC} \norm{x - y} \leq \frac{\eps}{6d^{2.5}}\,.
\end{align*}
By \cref{prop:cover3}, $\cC$ can be chosen so that
\begin{align*}
|\cC| \leq \left(1 + \frac{24d^{3.5}}{\eps}\right)^d \,.
\end{align*}
Let $f \in \cF_{\pb,\pquad}$ and let $\theta$ be such that $f(x) = \ip{\phi(x), \theta}$. Then, since $\ball_1 \subset K$ by assumption,
\begin{align*}
1 \geq \max_{x \in K} |f(x)|
\geq \max_{x \in \ball_1} |f(x)| 
\geq \frac{1}{2} \norm{\theta}_\infty\,,
\end{align*}
where the last inequality is left as an exercise:

\begin{exer}\label{ex:lin:theta-norm}
\faStar\quad
Show that $\max_{x \in \ball_1} |f(x)| \geq \frac{1}{2} \norm{\theta}_\infty$.
\end{exer}

\solution{Let $M = \max_{x \in \ball_1} |f(x)|$ and $N = \norm{\theta}_\infty$ and
let $f(x) = a + \ip{b,x} + x^\top C x$ where $C$ is symmetric and note that
\begin{align*}
N = \max\left(|a|, \norm{b}_\infty, C_{1,1},\ldots,C_{d,d},2C_{1,2},2C_{1,3},\ldots,2C_{d-1,d}\right) \,.
\end{align*}
By definition 
\begin{align*}
a = |f(\zeros)| \leq M \,.
\end{align*}
Let $i \in \{1,\ldots,d\}$. Then
\begin{align*}
2 |b_i| = |f(e_i) - f(-e_i)| \leq 2 M \,.
\end{align*}
Moreover, $2|C_{i,i}| = |f(e_i) + f(-e_i) - 2f(\zeros)| \leq 4 M$.
Next, let $i \neq j \in \{1,\ldots,d\}$ and $v_{\pm\pm} = \frac{1}{\sqrt{2}}(\pm e_i \pm e_j)$.
Then
\begin{align*}
f(v_{++}) &= a + \frac{b_i + b_j}{\sqrt{2}} + \frac{C_{ii}}{2} + \frac{C_{jj}}{2} + C_{ij} \\ 
f(v_{+-}) &= a + \frac{b_i - b_j}{\sqrt{2}} + \frac{C_{ii}}{2} + \frac{C_{jj}}{2} - C_{ij} \\  
f(v_{-+}) &= a + \frac{-b_i + b_j}{\sqrt{2}} + \frac{C_{ii}}{2} + \frac{C_{jj}}{2} - C_{ij} \\ 
f(v_{--}) &= a + \frac{-b_i - b_j}{\sqrt{2}} + \frac{C_{ii}}{2} + \frac{C_{jj}}{2} + C_{ij} \,. 
\end{align*}
Therefore
\begin{align*}
4 |C_{i,j}| &= |f(v_{++}) - f(v_{+-}) - f(v_{-+}) + f(v_{--})| \leq 4 M
\end{align*}
Therefore $2 |C_{i,j}| \leq 2 M$.
Combining all the parts shows that $M \geq N/2$ as required.
}

Let $x \in K$ be arbitrary and $y \in \cC$ be such that $\norm{x -y} \leq \frac{\eps}{6d^{2.5}}$. Then, 
\begin{align*}
|f(x) - f(y)|
= \left|\ip{\phi(x) - \phi(y), \theta}\right|
\leq 2\snorm{\phi(x) - \phi(y)}_1
\leq 6d^{2.5} \norm{x - y} 
\leq \eps
\end{align*}
where the first inequality follows from Cauchy--Schwarz and the fact that $\norm{\theta}_\infty \leq 4$ by Exercise~\ref{ex:lin:theta-norm}. 
The second follows because $x \in \ball_d$ and by using \cref{ex:lin:phi} below.
\end{proof}

\begin{exer}\label{ex:lin:phi}
\faStar \quad
Suppose that $x, y \in \ball_d$.
Show that 
\begin{align*}
\norm{\phi(x) - \phi(y)}_1 \leq 3d^{2.5} \norm{x - y}_\infty \,.
\end{align*}
\end{exer}

\solution{
Abbreviate $\Delta = \norm{x - y}_\infty$.
By definition,
\begin{align*}
\norm{\phi(x) - \phi(y)}_1
&= \sum_{i=1}^d |x_i - y_i| + \sum_{i=1}^d \sum_{j=1}^i |x_i x_j - y_i y_j| \\
&\leq d \Delta + \sum_{i=1}^d \sum_{j=1}^i |x_i x_j - y_i y_j| \\
&= d \Delta + \frac{1}{2} \sum_{i=1}^d \sum_{j=1}^i \left|(x_i - y_i)(x_j + y_j) + (x_i+y_i)(x_j - y_j)\right| \\
&\leq d \Delta + \frac{\Delta}{2} \sum_{i=1}^d \sum_{j=1}^i \left(|x_j + y_j| + |x_i + y_i|\right) \\
&\leq d \Delta + d \Delta \sum_{i=1}^d (|x_i| + |y_i|) \\
&\leq d \Delta + d^{1.5} \Delta (\norm{x} + \norm{y}) \\
&\leq 3 \Delta d^{2.5} \,.
\end{align*}
}

\begin{exer}
\faStar \quad
Prove \cref{lem:cover} for arbitrary convex bodies $K$. You may use the fact that for any convex body $K$ there exists an affine map\index{affine!map} $T$ such that $\ball_1 \subset TK \subset d \ball_1$, which follows from \cref{thm:john}. 
\end{exer}

We can now simply write the kernelised version of \cref{alg:linear}.

\begin{algorithm}[h!]
\begin{algcontents}
\begin{lstlisting}
args: $\eta > 0$, $\gamma  \in (0,1)$ and $K$
find $\cC \subset K$ satisfying conds. in Lemma $\ref{lem:cover}$, $\eps=\frac{1}{n}$
find optimal design $\pi$ on $\{\phi(a) : a \in \cC\}$
for $t = 1$ to $n$
  let $q_t(x) = \frac{\exp\left(-\eta \sum_{u=1}^{t-1} \ip{\phi(x), \hat \theta_u}\right)}{\sum_{x \in \cC} \exp\left(-\eta \sum_{u=1}^{t-1} \ip{\phi(x), \hat \theta_u}\right)}$
  let $p_t = (1 - \gamma) q_t + \gamma \pi$
  sample $X_t$ from $p_t$ and observe $Y_t = f_t(X_t)$
  let $G_t = \sum_{a \in \cC} p_t(a) \phi(a) \phi(a)^\top$ and $\hat \theta_t = G_t^+ X_t Y_t$
\end{lstlisting}
\caption{Exponential weights for quadratic bandits}\label{alg:quadratic}
\end{algcontents}
\end{algorithm}

\FloatBarrier

An immediate corollary of \cref{thm:linear} is the following
bound on the regret of \cref{alg:quadratic}:

\begin{theorem}\label{thm:quadratic}
Suppose that
\begin{align*}
\eta =  \sqrt{\frac{\log |\cC|}{2nd_2}} \quad \text{and} \quad
\gamma = d_2 \eta \,.
\end{align*}
Under \cref{ass:quadratic} the expected regret of \cref{alg:quadratic} is bounded by
\begin{align*}
\E[\Reg_n] \leq 1 + \sqrt{8 nd_2 \log |\cC|} = O\left(d^{1.5} \sqrt{n \log(n d)}\right) \,.
\end{align*}
\end{theorem}

Note that we did not use anywhere that $\cF_{\pb,\pquad}$ only included convex quadratics. Everything works for more general quadratic losses.
Even if we restrict our attention to convex quadratics, none of the algorithms we have presented so far can match this bound.
Actually no efficient algorithm is known matching this bound except for special $K$.

\index{bandit!quadratic|)}

\section{Notes}

\begin{enumeratenotes}

\item We promised to explain how to handle inhomogeneous linear losses.
The simple solution is to let $\phi(x) = (1, x)$ and run the algorithm for linear bandits on $\phi(K)$.

\item Even when $K$ is convex, (non-convex)\index{non-convex} quadratic programming\index{quadratic programming} is computationally hard. For example, 
when $K$ is a simplex\index{simplex} and $A$ is the adjacency matrix of an undirected graph $G$, then \index{adjacency matrix}
a theorem by \cite{motzkin1965maxima} says that
\begin{align*}
\frac{1}{2} \min_{x \in \Delta_d} (-x^\top A x) = \frac{1}{2}\left(\frac{1}{\omega(G)} - 1\right)\,,
\end{align*}
where $\omega(G)$ is the size of the largest clique in $G$. Since the clique decision problem is NP-complete \citep{karp1972reducibility}, there (probably) 
does not exist an efficient algorithm for minimising non-convex quadratic functions over the simplex.
\item By the previous note, \cref{alg:quadratic} cannot be implemented efficiently, since its analysis did not make use of convexity of the losses. 
Sadly, even if we restrict our
attention to convex quadratic loss functions and convex $K$, the kernel method does not seem amenable to efficient calculations. Why not? 
The problem is that if $K$ is a convex body, then the set $J = \{\phi(a) \colon a \in K\}$ is not convex. 

\item \cref{alg:linear} is by \cite{BCK12} and
\cref{alg:quadratic} is essentially due to \cite{chatterji2019online}. 
Note that the latter consider kernelised bandits\index{bandit!kernelised} more generally. You could replace the quadratic kernel with a higher-degree polynomial (or even infinite-dimensional feature map) 
and obtain regret bounds for a larger class of losses.

\item Note that $\cF_{\pb,\plin} = \cF_{\pb,\psm}$ when $\beta = 0$. This means that \cref{thm:ftrl:smooth} shows that \cref{alg:ftrl:basic} 
has regret $\E[\Reg_n] \leq 1 + 3d \sqrt{\vartheta n \log(n)}$. 
The algorithm is efficient when the learner has access to a $\vartheta$-self-concordant barrier.\index{self-concordant barrier}
This special case was analysed in the first application of self-concordance to bandits by \cite{AHR08}.

\item \label{note:lin:cont}
Continuous exponential weights is generally preferable computationally when the losses and constraint set\index{constraint set} are convex, at least when the dimension is not tiny.
In many applications of linear bandits $K$ is a moderately sized finite set. For example, $K$ might be a set of features associated with books to be recommended. 
In these cases \cref{alg:linear} is a good choice with $|\cC| = K$.
We will see an example of exponential weights for convex bandits in the next chapter and with $d = 1$ where either the continuous or discrete versions of exponential weights
could be employed, with the latter moderately more computationally efficient.

\item 
For some time it was open whether or not $d \sqrt{n \polylog(n, d)}$ regret is possible for linear bandits with a computationally efficient algorithm when $K$ 
is a convex body represented by a membership or separation oracle.\index{separation oracle}
This was resolved by \cite{HKM16}, who used continuous exponential weights but replaced the Kiefer--Wolfowitz distribution with a spanner that can be computed efficiently.\index{continuous exponential weights} 
The only reason not to include that algorithm here is that we are primarily focused on convex bandits and because the analysis is more sophisticated.
\end{enumeratenotes}

\chapter[Exponential Weights]{Exponential Weights\copynotice}\label{chap:exp}

We already saw an application of exponential weights\index{exponential weights} to linear and quadratic bandits in \cref{chap:lin}. The same abstract algorithm can also be used
for convex bandits but the situation is more complicated. 
Throughout this chapter we assume the losses are bounded and there is no noise:

\begin{assumption}\label{ass:exp}
The following hold:
\begin{enumerate}
\item The losses are in $\cF_\pb$. 
\item There is no noise, so that $Y_t = f_t(X_t)$.
\end{enumerate}
\end{assumption}

Two main topics will be covered here:
\begin{itemize}
\item a relatively practical algorithm for adversarial bandits when $d = 1$ with $O(\sqrt{n} \log(n))$ regret;
\item the connection via minimax duality\index{duality!minimax} between mirror descent and the information ratio \citep{RV14} 
and its application to convex bandits. 
\end{itemize}
The relation between the two topics is that both make use of an abstract version of exponential weights. 

\section{Exponential Weights for Convex Bandits}

The version of exponential weights\index{exponential weights} introduced in \cref{chap:lin} is designed for a finite action set.
As in that chapter, we will apply this algorithm to a discretisation of $K$, which is given in an abstract form in \cref{alg:exp:abstract} below.

\begin{algorithm}[h!]
\begin{algcontents}
\begin{lstlisting}
args: learning rate $\eta > 0$
let $\cC \subset K$ be finite
for $t = 1$ to $n$
  compute $q_t(x) = \frac{\exp\left(-\eta \sum_{u=1}^{t-1} \hat s_u(x)\right)}{\sum_{y \in \cC} \exp\left(-\eta \sum_{u=1}^{t-1} \hat s_u(y)\right)}$ for all $x \in \cC$
  find distribution $p_t$ as a function of $q_t$
  sample $X_t$ from $p_t$ and observe $Y_t = f_t(X_t)$
  compute $\hat s_t(x)~ \forall x \in \cC$ using $p_t, q_t, X_t$ and $Y_t$
\end{lstlisting}
\caption{Exponential weights for bandits}\label{alg:exp:abstract}
\end{algcontents}
\end{algorithm}

\FloatBarrier

As with gradient descent, to make this concrete we need a sampling distribution $p_t$ and a mechanism for estimating the loss function.
The extra complication here is that while for gradient descent we only needed to estimate a gradient, here
the algorithm needs to estimate an entire function from a single observation.
From a computational perspective there is a serious problem in that $\cC$ will need to be exponentially large in $d$.
A potential way to mitigate this is to use continuous exponential weights (\cref{sec:lin:cont}), which was the approach taken by \cite{BEL16} \index{continuous exponential weights} and (non-obviously) by the algorithm in \cref{chap:ons}.
The reason we avoid this here is twofold:
\begin{itemize}
\item For the one-dimensional algorithm it turns out that the discrete version is more computationally efficient
and simpler.
\item The minimax duality arguments are already computationally inefficient and using continuous exponential weights only serves to introduce
measure-theoretic challenges.
\end{itemize}

\section{Exponential Weights in One Dimension}\label{sec:exp:1d}
For this section assume that $d = 1$ and $K = [-1,1]$ is the interval, and let $\eps = 1/\sqrt{n}$ and 
\begin{align*}
\cC = \{k \eps \colon k \in \mathbb Z, k \eps \in K\}\,.
\end{align*}
Let $x_\star = \argmin_{x \in \cC} \sum_{t=1}^n f_t(x)$.
The plan is to apply \cref{thm:exp-discrete} to \cref{alg:exp:abstract} by carefully choosing the exploration distributions $(p_t)$ and
a mechanism for constructing the estimated surrogate losses $(\hat s_t)$.
Provided the estimated surrogates are non-negative and letting $s_t(x) = \E_{t-1}[\hat s_t(x)]$, then
\cref{thm:exp-discrete} shows that
\begin{align}
&\E\left[\sum_{t=1}^n \left(\sum_{x \in \cC} q_t(x) s_t(x) - s_t(x_\star)\right)\right]
=\E\left[\sum_{t=1}^n \left(\sum_{x \in \cC} q_t(x) \hat s_t(x) - \hat s_t(x_\star)\right)\right] \nonumber \\
&\qquad\qquad\qquad\qquad\leq \frac{\log |\cC|}{\eta} + \frac{\eta}{2} \E\left[\sum_{t=1}^n \sum_{x \in \cC} q_t(x) \hat s_t(x)^2\right] \,.
\label{eq:exp-1d-1}
\end{align}
\cref{alg:exp:abstract} samples $X_t$ from distribution $p_t$, which means the expected regret relative to $x_\star$ is
\begin{align}
\E[\Reg_n(x_\star)] = \sum_{t=1}^n \E\left[\sum_{x \in \cC} p_t(x) f_t(x) - f_t(x_\star)\right] \,.
\label{eq:exp-1d-2}
\end{align}
The question is how to choose loss estimates $\hat s_t$ and exploration distribution $p_t$ so that \cref{eq:exp-1d-2} can be connected to the left-hand side of \cref{eq:exp-1d-1}
and at the same time the right-hand side of \cref{eq:exp-1d-1} is well controlled.

\subsubsection*{Kernel-Based Estimation}
Let us focus on a single round $t$. Let $f \colon K \to [0,1]$ be convex and
$q \in \Delta(\cC)$ and assume that $q(x) > 0$ for all $x \in \cC$. The learner samples $X$ from $p \in \Delta(\cC)$, observes $Y = f(X)$ and uses $p, q, X$ and $Y$ to construct a surrogate estimate 
$\hat s \colon \cC \to \R$ with expectation $s(x) = \E[\hat s(x)]$.
Staring at \cref{eq:exp-1d-1} and \cref{eq:exp-1d-2}, we will be in business 
if we choose $p$ and $\hat s$ in such a way that all of the following hold:
\begin{enumerate}
\item There exist (preferably) small constants $A, B > 0$ such that
\begin{align}
\sum_{x \in \cC} p(x) f(x) - f(x_\star) \leq A\left[ \sum_{x \in \cC} q(x) s(x) - s(x_\star)\right] + B \,.
\label{eq:exp:want}
\end{align}
\item $\E\left[\sum_{x \in \cC} q(x) \hat s(x)^2\right] = \tilde O(1)$.
\item $\hat s(x) \geq 0$ for all $x \in \cC$.
\end{enumerate}

\begin{remark}
The requirement that $\hat s$ is non-negative can be relaxed to $\eta |\hat s(x)| \leq 1$ for all $x$ where $\eta = O(1/\sqrt{n})$ is the learning rate.
See Remark~\ref{rem:exp-discrete}.
\end{remark}

Because $x_\star$ is not known, the most obvious idea is to show that \cref{eq:exp:want} holds for all points in $\cC$, not just $x_\star$. 
An elegant way to construct a surrogate satisfying these properties is by using a kernel. \index{surrogate loss!kernel}\index{probability kernel}
Let $T \colon \cC \times \cC \to \R$ be a function with $x \mapsto T(x | y)$ a probability distribution for all $y$ and define 
\begin{align*}
p(x) &= (Tq)(x) \triangleq \sum_{y \in \cC} T(x | y) q(y) \qquad \text{and} \\
s(y) &= (T^* f)(y) \triangleq \sum_{x \in \cC} T(x | y) f(x) \,.
\end{align*}
It may be helpful to think of $T$ as a $|\cC| \times |\cC|$ matrix and $T^*$ as its transpose (or being fancy, its adjoint). 
Viewing $p, q, f, s$ as vectors in $\R^{|\cC|}$ we have $p = Tq$ and $s = T^*f$.
Because the map $x \mapsto T(x | y)$ is a probability distribution, the surrogate $s$ is some kind of smoothing of $f$. And in fact the surrogates used in earlier chapters
also have this form, though in continuous spaces.
Notice that
\begin{align}
\sum_{y \in \cC} s(y) q(y) 
&= \sum_{y \in \cC} \left(\sum_{x \in K} T(x | y) f(x) \right) q(y) \nonumber \\
&= \sum_{x \in \cC} f(x) \left(\sum_{y\in K} T(x | y) q(y) \right) \nonumber \\
&= \sum_{x \in \cC} f(x) p(x) \,.
\label{eq:exp-kernel}
\end{align}
This is the motivation for choosing $p = Tq$.
Equivalently, using linear algebra notation: $\ip{q, s} = \ip{q, T^*f} = \ip{Tq, f} = \ip{p, f}$.
Given a kernel $T$ we now have a surrogate $s = T^* f$. Next we need a way to estimate this surrogate when the learner samples $X$ from $p$ and observes
$Y = f(X)$. 
Note that $p(x) = 0$ implies that $T(x | y) = 0$ for all $y$, since $q(y) > 0$ for all $y$ by assumption. Then, 
\begin{align*}
s(y) = \sum_{x \in \cC} T(x | y) f(x) 
= \sum_{x \in \cC : p(x) > 0} \frac{T(x | y)}{p(x)} f(x) p(x) \,,
\end{align*}
which shows that when $X$ is sampled from $p$ and $Y = f(X)$, then the surrogate $s$ can be be estimated by
\begin{align*}
\hat s(y) = \frac{T(X | y) Y}{p(X)} \,.
\end{align*}
An important point is that we are allowed to choose $T$ to depend on the exponential weights distribution $q$.
Given $x \in \cC$, let
$\Pi_\cC(x) = \argmin_{y \in \cC, |y| \leq |x|} |x - y|$, which is the $y \in \cC$ closest to $x$ in the direction of the origin.
We also let $\mu = \sum_{y \in \cC} q(y) y$ and $\mu_\pi = \Pi_\cC(\mu)$.
Given $x, y \in \cC$, let $I(x, y) = \{z \in \cC  \colon \min(x, y) \leq z \leq \max(x, y)\}$ 
and
\begin{align}
T(x | y) = \frac{\sind(x \in I(y, \mu_\pi))}{|I(y, \mu_\pi)|}\,. 
\label{eq:exp:T}
\end{align}
That is, $x \mapsto T(x | y)$ is the uniform distribution on $I(y, \mu_\pi) \subset \cC$.\index{uniform measure}
What this means is that the distribution $p$ is obtained from $q$ by spreading the mass $q$ assigns to any point $y$ uniformly between $y$ and the projected mean $\mu_\pi$.
The actions of this kernel are illustrated in \cref{fig:kernel}.
We can think about why this kernel might be useful.
\begin{itemize}
\item  
By \cref{eq:exp-kernel}, $\ip{q, s} = \ip{p, f}$ no matter how the kernel is chosen. If we could have $s(x_\star) = f(x_\star)$, then \cref{eq:exp:want} 
would hold with $A = 1$ and $B = 0$. 
You can actually achieve this by choosing $T(x | y) = \sind(x = y)$. But with this choice the surrogate estimate $\hat s$ generally has an enormous second moment,
which is reduced by smoothing more broadly. 
\item The reason for smoothing towards the mean is that this automatically respects the concentration properties of $q$. That is, if $q$ is concentrated about its mean then 
the loss is also smoothed over a small region about the mean.
\end{itemize}
The following lemma establishes the two essential properties of the kernel, which is that \cref{eq:exp:want} holds with $A = 2$ and $B = \frac{\eps}{2}$ (part~\ref{lem:exp:1-d:cvx}) and
that the second moment of the surrogate loss is well controlled (part~\ref{lem:exp:1-d:stab}). \index{surrogate loss}

\begin{figure}
\includegraphics[width=0.49\textwidth]{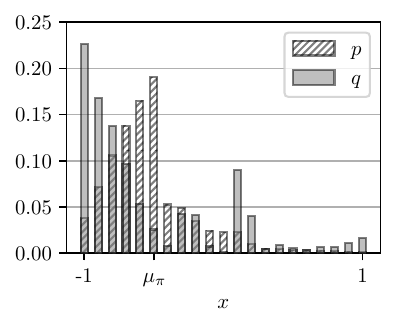}
\includegraphics[width=0.49\textwidth]{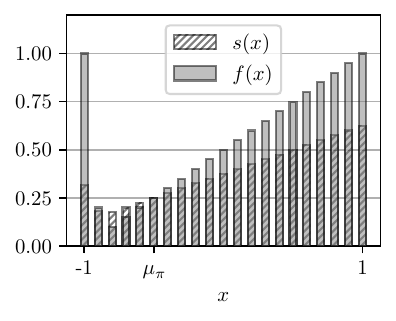}
\caption{The kernel in \cref{eq:exp:T} acting on a loss $s = T^\star f$ and distribution $p = T q$. Notice that $q$ is not particularly smooth while $p$ is unimodal. The surrogate $s$ is neither optimistic nor pessimistic,\index{pessimistic}\index{optimistic} 
in contrast with the surrogates used in \cref{chap:sgd,chap:ftrl}, which are pessimistic and optimistic respectively.}
\label{fig:kernel}
\commentAlt{On the left is a bar chart showing an irregular distribution q while p is unimodal about the mean.
On the right is a bar chart showing the loss and surrogate, which is neither optimistic nor pessimistic.}
\end{figure}

\begin{lemma}\label{lem:exp:1-d}
Let $T$ be defined as in \cref{eq:exp:T} and
$q \in \Delta(\cC)$ be such that $q(x) > 0$ for all $x \in \cC$, $p = Tq$ and $s = T^\star f$. Then
\begin{enumerate}
\item $\sum_{x \in \cC} p(x) f(x) - f(y) \leq 2 \left(\sum_{x \in \cC} q(x) s(x) - s(y)\right) + \frac{\eps}{2}$ for all $y \in \cC$,
\label{lem:exp:1-d:cvx}
\item $\sum_{x \in \cC} \sum_{y \in \cC} p(x) q(y) \left(\frac{T(x | y) f(x)}{p(x)}\right)^2 \leq 2 + \log(n)$.
\label{lem:exp:1-d:stab}
\end{enumerate}
\end{lemma}

\begin{proof} 
By definition, 
\begin{align}
s(y) 
&= (T^* f)(y) \nonumber \\
&= \sum_{x \in \cC} T(x | y) f(x) \nonumber \\
&= \sum_{x \in \cC} \frac{\sind(x \in I(y, \mu_\pi))}{|I(y, \mu_\pi)|} f(x) \nonumber \\
&= \sum_{x \in \cC} \frac{\sind(x \in I(y, \mu_\pi))}{|I(y, \mu_\pi)|} f\left(\left|\frac{x - \mu_\pi}{y - \mu_\pi}\right| y + \left|\frac{y - x}{y - \mu_\pi}\right| \mu_\pi\right) \nonumber \\
\tag{$f$ convex}
&\leq \sum_{x \in \cC} \frac{\sind(x \in I(y, \mu_\pi))}{|I(y, \mu_\pi)|} \left[\left|\frac{x - \mu_\pi}{y - \mu_\pi}\right| f(y) + \left|\frac{y - x}{y - \mu_\pi}\right| f(\mu_\pi)\right] \nonumber \\
&= \frac{1}{2} f(y) + \frac{1}{2} f(\mu_\pi) \,. 
\label{eq:exp:1d-key}
\end{align}
The mean of $x \mapsto T(x | y)$ is $y/2 + \mu_\pi/2$ and therefore
\begin{align}
\sum_{x \in \cC} p(x) x
= \sum_{y \in \cC} q(y) \sum_{x \in \cC} x T(x | y)
= \frac{\mu_\pi}{2} + \frac{1}{2} \sum_{y \in \cC} q(y) y = \frac{\mu_\pi + \mu}{2} \,.
\label{eq:exp:mean}
\end{align}
Because $f$ is convex and bounded in $[0,1]$, 
\begin{align*}
f(\mu_\pi) 
\leq f\left(\frac{\mu_\pi + \mu}{2} \right) + \frac{\eps}{2} 
\explanw{\cref{eq:exp:mean}}
= f\left(\sum_{x \in \cC} p(x) x\right) + \frac{\eps}{2} 
\explanw{$f$ cvx}
\leq \sum_{x \in \cC} p(x) f(x) + \frac{\eps}{2}\,.
\end{align*}
Combining the above display with \cref{eq:exp:1d-key} yields
\begin{align*}
s(y) \leq \frac{1}{2} f(y) + \frac{1}{2} \sum_{x \in \cC} p(x) f(x) + \frac{\eps}{4} \,.
\end{align*}
Lastly, by \cref{eq:exp-kernel}, $\sum_{x \in \cC} p(x) f(x) = \sum_{x \in \cC} q(x) s(x)$ and hence
\begin{align*}
\sum_{x \in \cC} p(x) f(x) - f(y) \leq 2 \left(\sum_{x \in \cC} q(x) s(x) - s(y)\right) + \frac{\eps}{2}\,.
\end{align*}
This establishes \ref{lem:exp:1-d:cvx}. 
Part~\ref{lem:exp:1-d:stab} follows because
\begin{align*}
\sum_{x \in \cC} \sum_{y \in \cC} \frac{q(y) T(x | y)^2 f(x)^2}{p(x)} 
&\leq \sum_{x \in \cC} \sum_{y \in \cC} \frac{\sind(x \in I(y, \mu_\pi))}{|I(y, \mu_\pi)|} \frac{q(y) T(x | y)}{p(x)} \\
&\leq  \sum_{x \in \cC} \frac{1}{|I(x, \mu_\pi)|} \sum_{y \in \cC} \frac{q(y) T(x | y)}{p(x)} \\ 
&= \sum_{x \in \cC} \frac{1}{|I(x, \mu_\pi)|} \\
&\leq 2 + \log(n)\,,
\end{align*}
where in the first inequality we used the fact that $|I(y, \mu_\pi)| \geq |I(x, \mu_\pi)|$ for $x \in I(y, \mu_\pi)$.
The second inequality follows via a standard harmonic sum comparison.
\end{proof}

We are now in a position to use exponential weights for convex bandits using the kernel-based surrogate loss estimators analysed above.
A snapshot of the loss estimators and exponential weights distribution is given in \cref{fig:cont:1-dim}.

\begin{algorithm}[h!]
\begin{algcontents}
\begin{lstlisting}
args: learning rate $\eta > 0$, $\eps = 1/\sqrt{n}$
let $\cC = \left\{\eps k  \colon k \in \mathbb Z, k \eps \in K\right\}$
for $t = 1$ to $n$
  compute $q_t(x) = \frac{\exp\left(-\eta \sum_{u=1}^{t-1} \hat s_u(x)\right)}{\sum_{y \in \cC} \exp\left(-\eta \sum_{u=1}^{t-1} \hat s_u(y)\right)}\,\, \forall x \in \cC$
  let $\mu_t = \Pi_\cC\left(\sum_{x \in \cC} q_t(x) x\right)$ and 
  $T_t(x | y) = \frac{\sind(x \in I(y, \mu_t))}{|I(y, \mu_t)|}$ $\forall x, y \in \cC$
  sample $X_t \sim p_t = T_t q_t$ and observe $Y_t = f_t(X_t)$
  compute $\hat s_t(y) = \frac{T_t(X_t | y) Y_t}{p_t(X_t)}$ $\forall y \in \cC$
\end{lstlisting}
\caption{Exponential weights for convex bandits: $d = 1$}
\label{alg:exp-1-disc}
\end{algcontents}
\end{algorithm}

\FloatBarrier

\subsubsection*{Computation}
\cref{alg:exp-1-disc} is not written in the most efficient way. Naively computing $T_t$ on all inputs would require $|\cC|^2 = \Theta(n)$ time.
This can be improved. Given $\mu_t$ the kernel function $T_t(x | y)$ can be computed in $O(1)$ for any $x, y \in \cC$.
Now $X_t$ can be sampled by first sampling $Z_t$ from $q_t$ and then $X_t$ from $T_t(\cdot | Z_t)$. 
Hence only $T_t(\cdot | Z_t)$ and $T_t(X_t | \cdot)$ need to be computed for all inputs. 
This way the computation per round of \cref{alg:exp-1-disc} is linear in $|\cC| = \Theta(\sqrt{n})$.
Therefore in the worst case the running time over the entire interaction is at most $O(n^{3/2})$.
This is worse than the gradient-based algorithms in \cref{chap:sgd,chap:ftrl} but the regret is smaller.

\begin{theorem}\label{thm:exp-1d}
Suppose that $\eta = n^{-1/2}$.
Under \cref{ass:exp} and with $d = 1$ the regret of \cref{alg:exp-1-disc} is upper-bounded by
\begin{align*}
\E[\Reg_n] \leq 2 \sqrt{n} \log(n) + 7 \sqrt{n}\,. 
\end{align*}
\end{theorem}

\begin{proof} 
Let $x_\star = \argmin_{x \in \cC} \sum_{t=1}^n f_t(x)$. By the definition of $\cC$ and the fact that $(f_t) \in \cF_\pb$ and $\eps = \frac{1}{\sqrt{n}}$,
\begin{align}
\Reg_n &= \sup_{x \in K} \sum_{t=1}^n (f_t(X_t) - f_t(x)) 
\leq \sqrt{n} + \sum_{t=1}^n (f_t(X_t) - f_t(x_\star)) \,.
\label{eq:exp:d1-1}
\end{align}
By \cref{thm:exp-discrete} and \cref{rem:exp-discrete},
\begin{align*}
\E\left[\hReg_n(x_\star)\right]
&\triangleq\E\left[\sum_{t=1}^n \left(\sum_{y \in \cC} q_t(y) \hat s_t(y) - \hat s_t(x_\star)\right)\right] \\
&\leq \frac{\log |\cC|}{\eta} + \frac{\eta}{2} \E\left[\sum_{t=1}^n \sum_{y \in \cC} q_t(y) \hat s_t(y)^2\right] \\
&= \frac{\log |\cC|}{\eta} + \frac{\eta}{2} \E\left[\sum_{t=1}^n \E_{t-1}\left[\sum_{y \in \cC} \frac{q_t(y) T_t(X_t, y)^2 f_t(X_t)^2}{p_t(X_t)^2}\right]\right] \,.
\end{align*}
The inner conditional expectation is bounded using \cref{lem:exp:1-d} by 
\begin{align*}
\E_{t-1}\left[\sum_{y \in \cC} \frac{q_t(y) T_t(X_t, y)^2 f_t(X_t)^2}{p_t(X_t)^2}\right] 
&\leq \log(n) + 2\,.
\end{align*}
Therefore the regret of exponential weights relative to the estimated loss function is bounded by
\begin{align*}
\E\left[\hReg_n(x_\star)\right]
&\leq \frac{\log |\cC|}{\eta} + \frac{\eta n}{2}\left[\log(n) + 2\right]\,.
\end{align*}
The next step is to compare $\E[\hReg_n(x_\star)]$ and $\E[\Reg_n(x_\star)]$.
By \cref{lem:exp:1-d},
\begin{align*}
\E_{t-1}[f_t(X_t)] - f_t(x_\star)
&= \sum_{x \in \cC} p_t(x) f_t(x) - f_t(x_\star) \\
&\leq 2 \left[ \sum_{x \in \cC} q_t(x) s_t(x) - s_t(x_\star)\right] + \frac{1}{2\sqrt{n}}\\
&= 2 \E_{t-1}\left[\sum_{x \in \cC} q_t(x) \hat s_t(x) - \hat s_t(x_\star)\right] + \frac{1}{2\sqrt{n}} \,. 
\end{align*}
Hence,
\begin{align*}
\E\left[\Reg_n(x_\star)\right] \leq 2 \E\big[\hReg_n(x_\star)\big] + \frac{1}{2} \sqrt{n} \leq \frac{2 \log |\cC|}{\eta} + \eta n \left[\log(n) + 2\right] + \frac{1}{2}\sqrt{n} \,.
\end{align*}
The claim follows from the choice of $\eta$, \cref{eq:exp:d1-1}, and the fact that $\log|\cC| \leq \log(1 + 2 \sqrt{n})$ and naive simplification.
\end{proof}

\begin{figure}
\includegraphics[width=10cm]{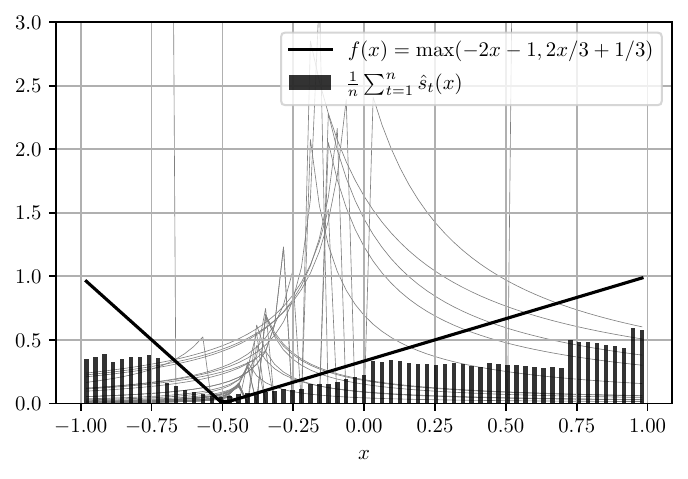}
\caption{The plot shows the estimated losses during the execution of \cref{alg:exp-1-disc}. 
The thin grey lines are the individual loss estimation functions $(\hat s_t)$ while the bar chart is the average. 
The individual loss estimates peak at the point played by the algorithm and decrease away from the mean. 
This is strange -- the loss estimators are quasiconcave while their expectations are convex. 
Mathematically this form is easy to see by studying the kernel. You should think about why it is reasonable intuitively.
}\label{fig:cont:1-dim}
\commentAlt{A bar plot showing a variety of individual estimated losses that are very different from the true loss, while cumulatively they are at least a mediocre approximation.}
\end{figure}

\FloatBarrier

\section[The Bregman Divergence]{The Bregman Divergence} 
\index{Bregman divergence|(}

This section and the next introduce some tools needed in \cref{sec:exp:exp-by-opt};
specifically, the Bregman divergence, projections for Legendre functions and the modern view of exponential weights. 

Bregman divergences are well-defined for any differentiable\index{differentiable} convex function but have particularly nice properties for the class of Legendre convex functions, which we now define.
Our exposition here is probably too brief to really impart the right intuition. There is a beautiful article by \cite{bauschke1997legendre} that explains all the nuances.
Let $R$ be a closed proper convex function.
$R$ is called essentially smooth if it is differentiable on $\interior(\dom(R)) \neq \emptyset$ and $\norm{R'(x_k)} \to \infty$ \index{essentially smooth|textbf} 
whenever $(x_k)$ converges to $\partial \dom(R)$.\index{essentially strictly convex|textbf}
$R$ is called essentially strictly convex if it is strictly convex on all convex $C \subset \dom(\partial R) = \{x \colon \partial R(x) \neq \emptyset\}$. 
$R$ is called Legendre if it is both essentially smooth and essentially strictly convex. \index{Legendre|textbf}
Lastly, the Legendre transform of $R$ is
\begin{align*}
R^\star(u) = \sup_{x \in \R^d} \ip{u, x} - R(x) \,.
\end{align*}
Legendre functions have well-behaved Legendre transforms. In particular, when $R$ is Legendre, than $R'$ is a bijection on suitable domains:

\begin{proposition}[Theorem 26.5, \citealt{Roc70}]\label{prop:breg:legendre}
Suppose that $R$ is Legendre and $T = R^\star$. Then $R' \colon \interior(\dom(R)) \to \interior(\dom(T))$ is a bijection and $(R')^{-1} = T'$.
\end{proposition}

Given a convex function $R \colon \R^d \to \R \cup \{\infty\}$ that is differentiable on $\interior(\dom(R))$, the Bregman divergence with respect to $R$ is 
$\Breg_R \colon \R^d \times \interior(\dom(R)) \to \R \cup \{\infty\}$ defined by 
$\Breg_R(x, y) = R(x) - R(y) - \ip{R'(y), x-y}$.

\begin{proposition}[Generalised Pythagorean Theorem]\label{prop:breg:triangle}
Suppose that $R \colon \R^d \to \R \cup \{\infty\}$ is Legendre and $C \subset \R^d$ is convex and closed with $C \cap \interior(\dom(R)) \neq \emptyset$.
For $x \in \interior(\dom(R))$ the `projection' $\Pi(x) = \argmin_{y \in C} \Breg_R(y, x)$ is unique and for any $z \in C$, \index{projection!Bregman divergence}
\begin{align*}
\Breg_R(z, x) \geq \Breg_R(z, \Pi(x)) + \Breg_R(\Pi(x), x) \,.
\end{align*}
\end{proposition}

\begin{proof}[\Proofskippy]
Let $y = \Pi(x)$.
Under the conditions of the theorem, $y$ exists, is unique and $y \in \interior(\dom(R))$ \citep[Theorem 3.12]{bauschke1997legendre}.
Hence $R$ is differentiable at $y$ and
by the first-order optimality conditions for $\Pi(x)$,
$\ip{R'(y), z - y} - \ip{R'(x), z - y} \geq 0$.
Therefore,
\begin{align*}
\Breg_R(z, y) + \Breg_R(y, x)
&= R(z) - R(x) - \ip{R'(y), z - y} - \ip{R'(x), y - x} \\
&\leq R(z) - R(x) - \ip{R'(x), z - x} \\
&= \Breg_R(z, x)\,.
\qedhere
\end{align*}
\end{proof}

The Bregman divergence with respect to $R$ is related to the Bregman divergence with respect to $R^\star$ as follows:

\begin{proposition}\label{prop:breg:dual}
Suppose that $R$ is Legendre; then for all $x, y \in \interior(\dom(R))$, 
\begin{align*}
\Breg_R(x, y) = \Breg_{R^\star}(R'(y), R'(x)) \,.
\end{align*}
\end{proposition}

\begin{proof}[\Proofskippy]
Let $v = R'(y)$, $u = R'(x)$ and $T = R^\star$ and convince yourself that
$T(v) = \ip{v, y} - R(y)$ and 
$T(u) = \ip{u, x} - R(x)$. Then
\begin{align*}
\Breg_R(x, y) 
&= R(x) - R(y) - \ip{R'(y), x - y} \\
&= [\ip{u, x} - T(u)] - [\ip{v, y} - T(v)] - \ip{v, x - y} \\
&= T(v) - T(u) - \ip{x, v - u} \\
&\stackrel{\star}= T(v) - T(u) - \ip{T'(u), v - u} \\
&= \Breg_{R^\star}(v, u) \\
&= \Breg_{R^\star}(R'(y), R'(x)) \,,
\end{align*}
where $(\star)$ follows from 
\cref{prop:breg:legendre} and the rest by substituting the definitions.
\end{proof}
\index{Bregman divergence|)}

\section[Exponential Weights and Regularisation]{Exponential Weights and Regularisation}
In \cref{chap:lin} we presented the old-school analysis of exponential weights.\index{exponential weights}
Later in this chapter we need the modern viewpoint that exponential weights is the same as follow-the-regularised-leader \index{follow-the-regularised-leader}
with unnormalised negative entropy (negentropy) regularisation.
Unsurprisingly, the analysis is not \textit{really} different. The modern view actually stops before the approximations made in the proof of \cref{thm:exp-discrete}.
Most importantly, by using the language of convex analysis we are able to clarify the duality argument used later.
The only reason not to introduce this approach in \cref{chap:lin} is that it is somewhat less elementary.
\index{unnormalised negative entropy}
Let $R \colon \R^m \to \R \cup \{\infty\}$ be the unnormalised negative entropy function defined by 
\begin{align*}
R(p) = \begin{cases}
\sum_{k=1}^m \left(p_k \log(p_k) - p_k\right) & \text{if } p \in \R^m_+ \\
\infty & \text{otherwise}\,,
\end{cases}
\end{align*}
where we adopt the convention that $0 \log(0) = 0$.
When $p, q \in \Delta_m$, then $\Breg_R(p, q) = \sum_{k=1}^m p_k \log(p_k / q_k)$ is the relative entropy\index{relative entropy} between distributions $p$ and $q$. In the next exercise you will show that $R$ is Legendre, calculate its dual and show that follow-the-regularised-leader
with $R$ is equivalent to exponential weights:

\begin{exer}\label{ex:exp:R}
\faStar \quad
Suppose that $x \in \R^m$ and $q \in (0,\infty)^m$. Show that
\begin{enumerate}
\item $R$ is Legendre and \index{Legendre} $R^\star(x) = \sum_{k=1}^m \exp(x_k)$,
\item $\displaystyle \argmin_{p \in \Delta_m} \Breg_R(p, q) = \frac{q}{\norm{q}_1}$ 
and $\displaystyle \argmin_{p \in \Delta_m} \ip{p, x} + R(p) = \frac{\exp(-x)}{\norm{\exp(-x)}_1}$. 
\end{enumerate}
\end{exer}

\solution{By definition, $\dom(R) = \R_+^m$, which is closed and $R$ is continuous on $\dom(R)$. Hence $R$ is closed.
$R$ is differentiable on $\interior(\dom(R)) = \R_{++}^m$ and has derivative $R'(p) = \log(p)$.
Since $\partial \dom(R)$ is the set of $p$ for which $p_k = 0$ for some $k$, it follows that $\norm{R'(p)} \to \infty$ 
as $p \to \partial(\dom(R))$. Therefore $R$ is essentially smooth.  
$\dom(\partial R) = \R_{++}^m$. Since on this set $R''(p)$ is a diagonal matrix with $1/p$ on the diagonal, which is in $\pd$.
Hence $R$ is strictly convex on $\dom(\partial R)$. Combining the above claims shows that $R$ is Legendre.
By definition
\begin{align*}
R^\star(x) = \sup_{p \in \R^m} \left(\ip{x, p} - R(p) \right) \,.
\end{align*}
Since $R'(p) = \log(p)$, the point $p = \exp(x)$ is a stationary point of $p \mapsto \ip{x, p} - R(p)$.
Therefore $R^\star(x) = \ip{x, \exp(x)} - \sum_{k=1}^m \left(x_k \exp(x_k) - \exp(x_k)\right) = \sum_{k=1}^m \exp(x_k)$.
For the second part, let $q \in \R_{++}^m$ and abbreviate $h(p) = \Breg_R(p, q)$.
Then $h'(p) = R'(p) - R'(q)$.
Note that strict convexity of $R$ ensures that $p$ is unique, if it exists.
Suppose that $r \in \Delta_m$.
Then
\begin{align*}
\ip{h'(p), r - p}
&= \ip{\log(p) - \log(q), r - p} \\
&= \ip{\log(q/\norm{q}_1) - \log(q), r - q / \norm{q}_1} \\
&= \log(1/\norm{q}_1) \ip{\ones, r - q / \norm{q}_1} \\
&= 0 \,.
\end{align*}
Hence $p = q /\norm{q}_1$ satisfies the first-order optimality conditions and is unique by strict convexity of $R$. 
The second follows similarly. Indeed, let $h(p) = \ip{p, x} + R(p)$. Then with $p = \exp(-x) / \norm{\exp(-x)}_1$ and $r \in \Delta_m$,
\begin{align*}
\ip{h'(p), r - p}
&= \ip{x + \log(p), r - p} \\
&= \log(\norm{\exp(-x)}_1) \ip{\ones, r - p} \\
&= 0 \,.
\end{align*}
And by the first-order optimality conditions and strict convexity of $R$ the claim follows.
}

Let $\cC$ be a finite set and $m = |\cC|$.
We identify distributions and functions on $\cC$ by vectors in $\R^m$. Concretely,
for $p \in \Delta(\cC) \equiv \Delta_m$ and $f \colon \cC \to \R$ let $\ip{p, f} = \sum_{x \in \cC} p(x) f(x)$.
Given a learning rate $\eta > 0$ and a sequence of functions $(\hat s_t)_{t=1}^n \colon \cC \to \R$, let 
\begin{align*}
q_t = \argmin_{q \in \Delta_m} \sum_{u=1}^{t-1} \eta \ip{q, \hat s_u} + R(q) \,.
\end{align*}
By \cref{ex:exp:R}, the distribution $q_t$ can be written in its familiar form:
\begin{align}
q_t(x) = \frac{\exp\left(-\eta \sum_{u=1}^{t-1} \hat s_u(x)\right)}{\sum_{y \in \cC} \exp\left(-\eta \sum_{u=1}^{t-1} \hat s_u(y)\right)}\,.
\label{eq:exp:exp}
\end{align}
The following refinement of \cref{thm:exp-discrete} bounds the regret of exponential weights relative to the loss functions $(\hat s_t)$.

\begin{theorem}\label{thm:exp-discrete-2}
Let $(\hat s_t)_{t=1}^n \colon \cC \to \R$ be a sequence of functions, $q_t$ be defined as in \cref{eq:exp:exp} and $\eta > 0$. Then
\begin{align*}
\max_{q_\star \in \Delta_m} \sum_{t=1}^n \ip{q_t - q_\star, \hat s_t}  \leq \frac{\log m}{\eta} + \frac{1}{\eta} \sum_{t=1}^n \cS_{q_t}(\eta \hat s_t)\,,
\end{align*}
where $\cS_q(u) = \Breg_{R^\star}(R'(q) - u, R'(q))$ is called the `stability' function.
\end{theorem}

\begin{proof}[\Proofskippy]
Let $q_\star \in \Delta_m$, $\Phi_t(p) = \sum_{u=1}^t \eta \ip{p, \hat s_u} + R(p)$ and
\begin{align}
\tilde q_{t+1} \triangleq \argmin_{q \in \R^m} \eta \ip{q, \hat s_t} + \Breg_R(q, q_t) = q_t \exp(-\eta \hat s_t) \,,
\label{eq:exp:tq}
\end{align}
where the second equality follows by solving the optimisation problem analytically. 
Note that $q_t \in \Delta_m^+$ holds by \cref{eq:exp:exp}, which means that $\tilde q_{t+1} \in \Delta_m^+$ and 
by \cref{ex:exp:R}, 
\begin{align}
q_{t+1} = \argmin_{q \in \Delta_m} \Breg_R(q, \tilde q_{t+1}) \,.
\label{eq:exp:q}
\end{align}
Repeating the proof of \cref{thm:ftrl},
\begin{align*}
\sum_{t=1}^n \ip{q_t - q_\star, \hat s_t} 
&\leq \frac{R(q_\star) - R(q_1)}{\eta} + \frac{1}{\eta} \sum_{t=1}^n (\Phi_t(q_t) - \Phi_t(q_{t+1})) \\
&\explana= \frac{R(q_\star) - R(q_1)}{\eta} + \frac{1}{\eta} \sum_{t=1}^n \Breg_{\Phi_t}(q_t, q_{t+1}) \\
&\explana= \frac{R(q_\star) - R(q_1)}{\eta} + \frac{1}{\eta} \sum_{t=1}^n \Breg_{R}(q_t, q_{t+1}) \\
&\explana\leq \frac{R(q_\star) - R(q_1)}{\eta} + \frac{1}{\eta} \sum_{t=1}^n \Breg_{R}(q_t, \tilde q_{t+1}) \\
&\explana= \frac{R(q_\star) - R(q_1)}{\eta} + \frac{1}{\eta} \sum_{t=1}^n \Breg_{R^\star}(R'(\tilde q_{t+1}), R'(q_t)) \\
&\explana= \frac{R(q_\star) - R(q_1)}{\eta} + \frac{1}{\eta} \sum_{t=1}^n \Breg_{R^\star}(R'(q_t) - \eta \hat s_t), R'(q_t)) \\ 
&\leq \frac{\log(m)}{\eta} + \frac{1}{\eta} \sum_{t=1}^n \cS_{q_t}(\eta \hat s_t) \,.
\end{align*}
where \explanr{} follows from the definition of the Bregman divergence\index{Bregman divergence} and because by the first-order optimality conditions
$\ip{\Phi_t'(q_{t+1}), q_t - q_{t+1}} = 0$, which holds with equality because $q_t, q_{t+1} \in \Delta_m^+$.
\explanr{} is because $\Breg_{\Phi_t} = \Breg_R$ by virtue of the fact that the Bregman divergence of a linear function vanishes.
\explanr{} follows from \cref{eq:exp:q} and \cref{prop:breg:triangle}, which yields
\begin{align*}
\Breg_R(q_t, \tilde q_{t+1}) &\geq \Breg_R(q_t, q_{t+1}) + \Breg_R(q_{t+1}, \tilde q_{t+1}) \geq \Breg_R(q_t, q_{t+1})\,.
\end{align*}
Finally, \explanr{} follows from \cref{prop:breg:dual}
and \explanr{} from \cref{eq:exp:tq} and because $R'(q) = \log(q)$.
\end{proof}

\begin{remark}

To see why \cref{thm:exp-discrete-2} refines \cref{thm:exp-discrete} (and \cref{rem:exp-discrete}), when $\hat s_t$ is nonnegative, then
\begin{align*}
\cS_{q_t}(\eta \hat s_t)
&=\Breg_{R^\star}(R'(q_t) - \eta \hat s_t, R'(q_t)) \\
&= \sum_{x \in \cC} q_t(x) \exp(-\eta \hat s_t(x)) - 1 + \eta \ip{q_t, \hat s_t} \\
&\leq \sum_{x \in \cC} q_t(x) \left[1 - \eta \hat s_t(x) + \frac{\eta^2 \hat s_t(x)^2}{2}\right] - 1 + \eta \ip{q_t, \hat s_t} \\
&= \frac{\eta^2}{2} \sum_{x \in \cC} q_t(x) \hat s_t(x)^2 \,,
\end{align*}
where we used the inequality $\exp(x) \leq 1 - x + x^2/2$ for $x \geq 0$.
Combining with \cref{thm:exp-discrete-2} recovers the bound in \cref{rem:exp-discrete}. \cref{thm:exp-discrete} is recovered by assuming that
$|\eta \hat s_t(x)| \leq 1$ for all $x$ and using the inequality $\exp(-x) \leq 1 - x + x^2$ for $|x| \leq 1$ instead.
\end{remark}

\section{Exploration by Optimisation}\label{sec:exp:exp-by-opt}\index{exploration by optimisation|(}

In \cref{sec:exp:1d} we showed how to use exponential weights for one-dimensional bandits.
While the algorithm there is efficient and straightforward to implement, much human ingenuity was needed to define the surrogate loss 
and exploration distribution.
There is a principled way to construct both of these objects that eliminates the need for imagination at the cost of computational efficiency.
Note that we are not assuming that $d = 1$ in this section.
The results and concepts in this section and the next are connected in a complicated way. You may find \cref{fig:duality} useful to join the dots.

\begin{assumption}\label{ass:exp:cover}
Throughout this section as well as \cref{sec:exp:bayes,sec:exp:dual} we assume that $\cC \subset K$ is a finite set with $m \triangleq |\cC|$ such that
\begin{enumerate}
\item $\log m \leq d \log(1 + 16 d n^2)$; and
\item for all $f \in \cF_\pb$ there exists an $x \in \cC$ such that $f(x) \leq \inf_{y \in K} f(y) + 1/n$.
\end{enumerate}
\end{assumption}

\begin{exer}
\faStar \quad
Prove a cover satisfying the conditions in \cref{ass:exp:cover} exists.
You may want to combine \cref{prop:cover3}, \cref{prop:lip}, \cref{prop:shrink} and \cref{thm:john}.
\end{exer}

\solution{
Let $f = \frac{1}{n} \sum_{t=1}^n f_t \in \cF_\pb$.
Suppose that $K$ is in John's position so that $\ball_1 \subset K \subset \ball_d$.
Let $K_\eps = (1 - \eps) K$. By \cref{prop:shrink},
$\min_{x \in K_\eps} f(x) \leq \inf_{x \in K} f(x) + \eps$.
Since $\ball_1 \subset K$, $x + \ball_\eps \subset K$ for all $x \in K_\eps$ and hence $\lip_{K_\eps}(f) \leq 1/\eps$.
Let $\cC \subset K_\eps$ be an $\eps^2$-cover of $K_\eps$, which by \cref{prop:cover3} and the fact that $K \subset \ball_d$ can be chosen so that
\begin{align*}
m \leq \left(1 + \frac{4d}{\eps^2}\right)^d
\end{align*}
Given $x \in K$, let $y \in K_\eps$ be such that $f(y) \leq f(x) + \eps$ and $z \in \cC$ be such that $\norm{x - y} \leq \eps^2$.
Then $f(z) \leq f(y) + \eps \leq f(x) + 2\eps$.
The result follows by choosing $\eps = 1/(2n)$.
When $K$ is not in John's position, let $T = \JOHN_K$. Find a cover of $TK$ using the above argument and pullback via $T^{-1}$.
}

Let us start by giving the regret bound for the (abstract) \cref{alg:exp:abstract}.

\begin{theorem}\label{thm:exp:abstract}
Let $x_\star = \argmin_{x \in \cC} \sum_{t=1}^n f_t(x)$ and $p_\star \in \Delta(\cC)$ be a Dirac on $x_\star$.
The expected regret of \cref{alg:exp:abstract} relative to $x_\star$ is bounded by
\begin{align*}
\E[\Reg_n(x_\star)]
&\leq \frac{\log m}{\eta} + \sum_{t=1}^n \E\left[\ip{p_t-p_\star,f_t} + \sip{p_\star - q_t, \hat s_t} + \frac{1}{\eta} \cS_{q_t}(\eta \hat s_t)\right]\,.
\end{align*}
\end{theorem}

\begin{proof}
The proof follows immediately from \cref{thm:exp-discrete-2}:
\begin{align*}
\E[\Reg_n(x_\star)]
&= \sum_{t=1}^n \E\left[ \ip{p_t - p_\star, f_t}\right] \\
&= \sum_{t=1}^n \E\left[ \ip{p_t - p_\star, f_t} + \sip{p_\star - q_t, \hat s_t} + \sip{q_t - p_\star, \hat s_t} \right] \\
&\leq \frac{\log m}{\eta} + \sum_{t=1}^n \E\left[ \ip{p_t - p_\star, f_t} + \sip{p_\star - q_t, \hat s_t} + \frac{1}{\eta}\cS_{q_t}(\eta \hat s_t) \right]  \,,
\end{align*}
where the second equality holds by adding and subtracting $\sip{q_t - p_\star, \hat s_t}$ and the inequality by \cref{thm:exp-discrete-2}.
\end{proof}

Standard methods for analysing concrete instantiations of \cref{alg:exp:abstract} essentially always bound the term inside the expectation
uniformly for all $t$, independently of $f_t$ and $q_t$ and $p_\star$. Note that $f_t$ and $p_\star$ are unknown, while $q_t$ is the exponential weights\index{exponential weights} distribution, 
which is known at the start of round $t$.
In light of this, a natural idea is to choose the distribution $p_t$ and loss estimation function $\hat s_t$ that minimise the upper bound.
This is the idea we execute now.

\subsubsection*{Exploration by Optimisation}
To simplify the notation, let us momentarily drop the time indices and let $q \in \Delta^+_m$.
The learner samples $X$ from some distribution $p \in \Delta_m^+$ and observes $Y = f(X)$.
The estimated surrogate loss $\hat s$ is a vector in $\R^m$ (a function from $\cC$ to $\R$) 
but the learner chooses it based on the observations $X$ and $Y$. So let $\cE$ be the set of all
functions $e \colon \cC \times \R \to \R^m$, with the idea that the estimated surrogate loss will be the function $\hat s = e(X, Y) / p(X)$. \index{surrogate loss!optimisation}
The division by $p(X)$ is a normalisation that makes a certain function defined below convex (see \cref{ex:exp:cvx}) and is also the reason why we insist that $p \in \Delta_m^+$ rather than $\Delta_m$.
The decision for the learner is to choose the exploration distribution $p \in \Delta_m^+$ and exploration function $e \in \cE$. 
The adversary chooses $f \in \cF_\pb$ and $p_\star \in \Delta_m$.
Looking at \cref{thm:exp:abstract}, we hope you agree that the following function may be important:
\newcommand{\LD}{|\!|}
\begin{align*}
\Lambda_{\eta, q}(p, e \LD p_\star, f) =
\frac{1}{\eta} \E\left[\ip{p - p_\star, f} + \ip{p_\star - q, \frac{e(X, Y)}{p(X)}} + \frac{1}{\eta}\cS_q\left(\frac{\eta e(X, Y)}{p(X)}\right)\right] \,,
\end{align*}
where the expectation is over $X \sim p$ and $Y = f(X)$.
You might view $\Lambda_{\eta,q}$ as a function from two pairs of tuples: $(p, e)$ selected by the learner and $(p_\star, f)$ selected by the adversary.
A fundamental quantity that appears throughout the following sections is the minimax value of this game, which is defined in stages 
by the following quantities:
\begin{align*}
\Lambda^\star_{\eta,q}(p, e) &= \sup_{p_\star \in \Delta_m} \sup_{f \in \cF_\pb}  \Lambda_{\eta, q}(p, e \LD p_\star, f) \qquad \text{and}\\ 
\Lambda^\star_{\eta,q} &= \inf_{p \in \Delta_m^+} \inf_{e \in \cE} \Lambda^\star_{\eta,q}(p, e) \qquad \text{and}\\
\Lambda^\star &= \sup_{\eta > 0} \sup_{q \in \Delta_m^+} \Lambda^\star_{\eta,q} \,.
\end{align*}

\begin{exer}\label{ex:exp:cvx}
\faStar \quad
Prove that $(p, e) \mapsto \Lambda_{\eta,q}(p, e \LD p_\star, f)$ is convex for any $p_\star \in \Delta_m$ and $f \in \cF_\pb$.
You may find it useful to expand the expectation and use the fact that $u \mapsto \cS_q(u)$ is convex, which by the perspective construction 
\citep[\S2.3.3]{BV04}
shows that $(u, v) \mapsto v \cS(u/v)$ is convex on $\R^d \times (0,\infty)$. 
\end{exer}

\solution{
Fix $p_\star \in \Delta_m$ and $f \in \cF_\pb$. Expanding the expectation shows that
\begin{align*}
\Lambda_{\eta,q}(p, e \LD p_\star, f) = A(p, e) + B(p, e) + C(p,e)\,,
\end{align*}
where $A(p, e) = \ip{p - p_\star, f} / \eta$ and $B(p, e) = \sum_{x \in \cC} \ip{p_\star - q, e(x, f(x))} / \eta$ and
\begin{align*}
C(p, e) = \frac{1}{\eta^2} \sum_{x \in \cC} p(x) \cS_q\left(\frac{\eta e(x, f(x))}{p(x)}\right) \,.
\end{align*}
Both $A$ and $B$ are linear and hence convex while $C$ is convex by the aforementioned perspective construction.
}

Since the supremum of convex functions is convex it follows from \cref{ex:exp:cvx} that
$(p, e) \mapsto \Lambda^\star_{\eta,q}(p, e)$ is convex.
Note, however, that $\cE$ is infinite-dimensional. So in general it may be non-trivial to efficiently minimise this function.
Despite this, these functions are surprisingly easy to handle mathematically, as we shall see later.
The exploration-by-optimisation algorithm is quite simple:

\begin{algorithm}[h!]
\begin{algcontents}
\begin{lstlisting}[escapeinside={&}{&}]
args: learning rate $\eta > 0$, precision $\eps > 0$, $\cC \subset K$
for $t = 1$ to $n$
  compute distribution $q_t(x) = \frac{\exp\left(-\eta \sum_{u=1}^{t-1} \hat s_u(x)\right)}{\sum_{y \in \cC} \exp\left(-\eta \sum_{u=1}^{t-1} \hat s_u(y)\right)}$
  find distribution $p_t \in \Delta_m^+$ and $e_t \in \cE$ such that &\newline&  $\qquad\qquad\Lambda^\star_{\eta,q_t}(p_t, e_t) \leq \Lambda^\star_{\eta,q_t} + \eps$
  sample $X_t$ from $p_t$ and observe $Y_t = f_t(X_t)$ 
  compute $\hat s_t = e_t(X_t, Y_t) / p_t(X_t)$
\end{lstlisting}
\caption{Exploration by optimisation}\label{alg:exp-by-opt}
\end{algcontents}
\end{algorithm}

\FloatBarrier

The following theorem is almost immediate:

\begin{theorem}\label{thm:exp-by-opt}
Under \cref{ass:exp,ass:exp:cover}, the expected regret of \cref{alg:exp-by-opt} is bounded by 
\begin{align*}
\E[\Reg_n] \leq 1 + \frac{\log m}{\eta} + n \eta \Lambda^\star + n\eta \eps\,.
\end{align*}
\end{theorem}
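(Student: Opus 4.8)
The result is, as the preamble to the statement promises, almost immediate from the regret bound for Algorithm~\ref{alg:abstract-exp} proved just above together with the definition of $\Lambda_\eta$; the only point requiring thought is that $\Reg_n$ is measured against the best \emph{point} of $K$, whereas that bound is stated for a comparator \emph{distribution} $p_\star\in\Delta_K$, with $\Breg(p_\star,q_1)$ entering the bound. So the plan is: (i) construct a good $p_\star$; (ii) substitute it into the preceding bound; (iii) identify each per-round term with $\Lambda(q_t,p_t,E_t)$; (iv) use the $\epsilon$-optimality of the per-round saddle-point solve and sum.

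\emph{Step (i), regularising the comparator.} Fix $x_0\in\interior(K)$ with $x_0+\ball_\varrho\subset K$. Proposition~\ref{prop:shrink} with $\epsilon=1/n$ gives $x^\star\in K_{x_0,1/n}$ with $\sum_{t=1}^n f_t(x^\star)\le\min_{x\in K}\sum_{t=1}^n f_t(x)+1$, and the lemma of Section~\ref{sec:regularity:minkowski} then gives $x^\star+\ball_{\varrho/n}\subset K$. Hence by Proposition~\ref{prop:lip} each $f_t$ is $(2n/\varrho)$-Lipschitz on $x^\star+\ball_{\varrho/(2n)}$. Take $p_\star\in\Delta_K$ to be the uniform law on $x^\star+\ball_{\varrho/n^2}$. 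Then $\ip{p_\star,f_t}\le f_t(x^\star)+2/n$ for every $t$, so $\E[\Reg_n]\le\E\big[\sum_{t=1}^n\ip{p_t-p_\star,f_t}\big]+3$, while $\Breg(p_\star,q_1)=\KL(p_\star\Vert q_1)=\log\!\big(\vol(K)/\vol(\ball_{\varrho/n^2})\big)\le d\log n$ after absorbing the lower-order $O(d\log\cond(K))$ term that the stated bound suppresses.

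\emph{Steps (ii)--(iv).} The proof of the regret bound for Algorithm~\ref{alg:abstract-exp} actually establishes, for any fixed $p_\star\in\Delta_K$,
\begin{align*}
\E[\Reg_n]\le\frac{\Breg(p_\star,q_1)}{\eta}+\sum_{t=1}^n\E\!\left[\ip{p_t-p_\star,f_t}+\sip{p_\star-q_t,\hat f_t}+\cS_{q_t}(\eta\hat f_t)\right]+3\,.
\end{align*}
Now fix $t$ and condition on $\sF_{t-1}$, so that $q_t$, $p_t$, $E_t$ are determined and $X_t\sim p_t$, $Y_t=f_t(X_t)$, $\hat f_t=E_t(X_t,Y_t)$. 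The conditional expectation of the bracketed term is at most its supremum over the unknown loss $f_t\in\cF(K)$ and over comparators in $\Delta_K$, which is by definition $\Lambda(q_t,p_t,E_t)$. Since Algorithm~\ref{alg:exp-by-opt} selects $(p_t,E_t)$ with $\Lambda(q_t,p_t,E_t)\le\inf_{p,E}\Lambda(q_t,p,E)+\epsilon=\Lambda_\eta(q_t)+\epsilon\le\sup_{q\in\Delta_K}\Lambda_\eta(q)+\epsilon$, summing over $t=1,\dots,n$ and combining with Step (i) gives $\E[\Reg_n]\le\tfrac{d\log n}{\eta}+n\sup_{q}\Lambda_\eta(q)+n\epsilon$, which is the claim.

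\emph{Main obstacle.} The only step with genuine content is (i): $p_\star$ must be concentrated enough that $\sum_t\ip{p_\star,f_t}$ barely exceeds $\min_{x\in K}\sum_t f_t(x)$, yet spread out enough that $\Breg(p_\star,q_1)$ is only $O(d\log n)$, and these competing demands are reconciled precisely because bounded convex functions become Lipschitz deep in the interior of $K$ (Propositions~\ref{prop:shrink} and~\ref{prop:lip}); it is also here that the factor $d$ in $\tfrac{d\log n}{\eta}$ is produced. Everything else is bookkeeping — recognising the conditional per-round term as the object whose supremum over $(f_t,p_\star)$ defines $\Lambda$, and quoting that the per-round saddle-point problem is solved to accuracy $\epsilon$.
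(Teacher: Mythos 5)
Your proof is correct and supplies exactly what the paper hides behind ``almost immediately'': the explicit construction of $p_\star$ as a uniform law on a small ball about a near-optimal interior point (furnished by Proposition~\ref{prop:shrink}), the bound $\Breg(p_\star,q_1)=O(d\log n)$, and the identification of the per-round bracket with $\Lambda(q_t,p_t,E_t)$. One caveat worth flagging: the paper's display defining $\Lambda_\eta(q)$ writes the middle term as $\sip{p-q,E(X,Y)}$, whereas the per-round term produced by the proof of the Algorithm~\ref{alg:abstract-exp} bound is $\sip{p_\star-q_t,\hat f_t}$; you silently read $p$ as $p_\star$ there, repairing what is evidently a typo, and without this the identification of the conditional bracket with $\Lambda(q_t,p_t,E_t)$ would not go through.
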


\begin{proof}
Let $x_\star = \argmin_{x \in \cC} \sum_{t=1}^n f_t(x)$ and
$p_\star \in \Delta(\cC)$ be a Dirac on $x_\star$.
By \cref{ass:exp:cover}, \cref{thm:exp:abstract} and the definition of $\hat s_t$ in \cref{alg:exp-by-opt}, 
\begin{align*}
\E[\Reg_n]
&\leq 1 + \E[\Reg_n(x_\star)] \\
&\leq 1 + \frac{\log m}{\eta} + \sum_{t=1}^n \E\left[\ip{p_t - p_\star, f_t} + \ip{p_\star - q_t, \hat s_t} + \frac{1}{\eta} \cS_{q_t}\left(\eta \hat s_t\right)\right] \\
&\leq 1 + \frac{\log m}{\eta} + n \eta \Lambda^\star + n \eta \eps\,.
\qedhere
\end{align*}
\end{proof}

Combining the definition with \cref{thm:exp-by-opt} immediately yields the following corollary:

\begin{corollary}\label{cor:exp-by-opt}
Under the same conditions as \cref{thm:exp-by-opt} and with
\begin{align*}
\eta = \sqrt{\frac{\log m}{n(\eps + \Lambda^\star)}}\,,
\end{align*}
the regret of \cref{alg:exp-by-opt} is upper-bounded by
\begin{align*}
\E[\Reg_n] \leq 1 + 2 \sqrt{n \Lambda^\star \log m} + 2 \sqrt{n \eps \log m} \,\,\stackrel{\eps \to 0}{\to}\,\, 1 + 2 \sqrt{n \Lambda^\star \log m}\,.
\end{align*}
\end{corollary}

In the next section we explain a connection between $\Lambda^\star$ and a concept used for analysing Bayesian bandit problems called the information 
ratio. This method will eventually show that
\begin{align*}
\Lambda^\star = O\left(d^4 \log(nd)\right) \,.
\end{align*}
Combining this with \cref{ass:exp:cover} to bound $\log m = O(d \log(nd))$ and with \cref{cor:exp-by-opt} shows that the regret of \cref{alg:exp-by-opt}
is bounded by
\begin{align*}
\E[\Reg_n] = O\left( d^{2.5} \sqrt{n} \log(dn)\right) \,.
\end{align*}
Let us emphasise again that this is not much of an algorithm because there is no known computationally efficient method for solving the optimisation problem
that defines $p_t$ and $e_t$.

\index{exploration by optimisation|)}

\section{Bayesian Convex Bandits} \index{Bayesian!convex bandits} \label{sec:exp:bayes}
In the Bayesian version of the convex bandit problem the learner is given a distribution $\xi$ on $\cF_\pb^n$.
The loss functions $(f_t)_{t=1}^n$ are sampled from $\xi$ and the Bayesian regret\index{regret!Bayesian|textbf} of a learning algorithm $\sA$ is
\begin{align*}
\bReg_n(\sA, \xi) = \E\left[\sup_{x \in K} \sum_{t=1}^n (f_t(X_t) - f_t(x))\right]  \,,
\end{align*}
where by `learning algorithm' we simply mean a function that (measurably) maps history sequences to distributions over actions in $\cC$.
Note that here the expectation integrates over the randomness in the loss functions and hence the supremum over $x \in K$ appears inside the expectation.
As in the rest of the book we will not say much about constructing probability spaces and measurability. The measurable space on which $\xi$ is defined sometimes plays
an important technical role. Generally speaking, in what follows it is assumed that the discrete $\sigma$-algebra is used on $\cF_\pb^n$ and that $\xi$ really is a distribution 
in the sense that it is supported on countably many atoms.
$\bReg_n(\sA, \xi)$ is nothing more than the expectation of the standard regret, integrating over the loss functions with respect to the prior $\xi$.
The minimax Bayesian regret is \index{Bayesian!minimax regret}\index{prior}
\begin{align*}
\bReg_n^\star = \sup_{\xi}\,\inf_{\sA}\,\bReg_n(\sA, \xi)\,.
\end{align*}
Compare this to the minimax adversarial regret, which is \index{regret!minimax}
\begin{align*}
\Reg_n^\star = \inf_{\sA} \sup_{(f_t)_{t=1}^n} \Reg_n(\sA, (f_t)_{t=1}^n)\,.
\end{align*}
Whenever you see an expression like this, a minimax theorem should come to mind. Indeed, the minimax adversarial regret can be rewritten as \index{minimax theorem}
\begin{align*}
\Reg_n^\star = \inf_{\sA}\,\sup_\xi\,\bReg_n(\sA, \xi) \,.
\end{align*}
By interpreting an algorithm as a probability measure over deterministic algorithms, both $\sA \mapsto \bReg_n(\sA, \xi)$ and $\xi \mapsto \bReg_n(\sA, \xi)$ are
linear functions and from this one should guess the following theorem as a consequence of some kind of minimax theorem.

\begin{theorem}[\citealt{LS19pminfo}]\label{thm:minimax}
$\Reg_n^\star = \bReg_n^\star$.
\end{theorem}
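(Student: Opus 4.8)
The plan is to recognise the statement as a saddle-point identity for the bilinear map $(\sA,\xi)\mapsto\bReg_n(\sA,\xi)$ and deduce it from a minimax theorem. First I would collect the two reformulations gestured at in the text. Interpreting a randomised algorithm as a probability measure over deterministic ones makes $\sA\mapsto\bReg_n(\sA,\xi)$ affine, and $\xi\mapsto\bReg_n(\sA,\xi)$ is affine by construction. In the noise-free setting $\Reg_n=\sum_{t=1}^n f_t(X_t)-\sum_{t=1}^n f_t(X_\star)$ holds pointwise, so $\bReg_n(\sA,\xi)$ is exactly $\E_{(f_t)\sim\xi}\bigl[\E_{\mathrm{alg}}[\Reg_n(\sA,(f_t))]\bigr]$; since Dirac priors are admissible, taking the supremum over $\xi$ of this quantity equals the supremum over loss sequences. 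Hence
\begin{align*}
\Reg_n^\star = \inf_{\sA}\sup_{(f_t)_{t=1}^n}\E[\Reg_n] = \inf_{\sA}\sup_{\xi}\bReg_n(\sA,\xi),
\qquad
\bReg_n^\star = \sup_{\xi}\inf_{\sA}\bReg_n(\sA,\xi).
\end{align*}

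The inequality $\bReg_n^\star\le\Reg_n^\star$ is just weak duality: for all $\sA,\xi$ we have $\inf_{\sA'}\bReg_n(\sA',\xi)\le\bReg_n(\sA,\xi)\le\sup_{\xi'}\bReg_n(\sA,\xi')$, and taking $\sup_\xi$ then $\inf_\sA$ yields it. So the whole content is the reverse inequality $\Reg_n^\star\le\bReg_n^\star$, which I would obtain from a minimax theorem (Sion's, or Fan's version for bilinear payoffs) applied with the \emph{non-compact} convex set being the set of randomised algorithms, on which the payoff is affine, and the \emph{compact} convex set being the set of priors, on which the payoff is affine and hence quasiconcave. The one substantive ingredient is compactness of the set of priors: equip $\cF_{\pb}$ with the topology of pointwise convergence on $K$, so that it is a closed subset of the compact cube $[0,1]^K$ and hence compact; then $\cF_{\pb}^n$ is compact and, by Prokhorov together with Banach--Alaoglu, the set of Borel probability measures on it is weak-$\star$ compact. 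Feeding this into the minimax theorem gives $\inf_{\sA}\sup_{\xi}\bReg_n(\sA,\xi)=\sup_{\xi}\inf_{\sA}\bReg_n(\sA,\xi)$, i.e. $\Reg_n^\star=\bReg_n^\star$.

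The step I expect to be the real obstacle is not invoking the minimax theorem but verifying the semicontinuity hypothesis on the prior side: $\xi\mapsto\bReg_n(\sA,\xi)=\int\E_{\mathrm{alg}}[\Reg_n(\sA,\cdot)]\,\d{\xi}$ is weak-$\star$ upper semicontinuous only if the integrand is a sufficiently regular (bounded and upper semicontinuous, or at least measurable) function of the loss sequence, whereas for a general deterministic algorithm the trajectory $(X_t,Y_t)_t$ can depend discontinuously on $(f_t)_t$. There are two standard ways around this, either of which I would adopt: (i) prove measurability of $(f_t)_t\mapsto X_\star$ and of the induced regret by a measurable-selection argument and then use a minimax theorem in the Wald/Le~Cam style of statistical decision theory, which requires only boundedness and measurability of the risk plus compactness of the parameter space; or (ii) establish the identity first on finite discretisations of $K$ and of the loss class, where everything is finite-dimensional and von Neumann's theorem applies directly, and pass to the limit using that $\Reg_n$ and $\bReg_n$ are uniformly bounded. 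Either route also needs the harmless remark that restricting comparators to $\interior(K)$ (as in \cref{prop:shrink}) costs nothing. A reader content to take this on faith may simply cite the minimax arguments of \cite{BDKP15} and \cite{Lat20-cvx}.
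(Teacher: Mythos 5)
The paper does not actually prove this theorem; it only remarks, just above the statement, that it should follow from a minimax theorem applied to the bilinear payoff $(\sA,\xi)\mapsto\bReg_n(\sA,\xi)$, and that is exactly the route you take, with the genuine obstacles (compactness of the prior set, semicontinuity/measurability of the risk in $\xi$) correctly isolated and standard workarounds offered. One small technical quibble: for the compact but generally non-metrizable space $\cF_{\pb}^n\subset([0,1]^K)^n$ the clean way to get weak-$\star$ compactness of the set of priors is Riesz representation plus Banach--Alaoglu directly rather than Prokhorov (which wants a Polish space), though since bounded convex functions converge locally uniformly on $\interior(K)$ once they converge on a countable dense subset, you can also restrict to a metrizable topology and then Prokhorov applies.
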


\cref{thm:minimax} means that one way to bound the adversarial regret is via the Bayesian regret.
One positive aspect of this idea is that the existence of a prior makes the Bayesian setting more approachable.
On the other hand, constructing a prior-dependent algorithm showing that the Bayesian regret is small for any prior does not
give you an algorithm for the adversarial setting.\index{setting!adversarial} The approach is non-constructive.\index{non-constructive}

\begin{remark}
The above setup is the Bayesian version of the adversarial convex bandit problem. 
In the Bayesian version of the stochastic convex bandit problem the prior is on $\cF_{\pb}$ rather than $\cF_{\pb}^n$ and the observation is $f(X_t) + \eps_t$ where $f$ is
sampled at the beginning of the interaction from the prior. Sometimes the noise follows some known distribution. Alternatively, the prior could be over both the loss function
and the noise distribution.
\index{Bayesian!stochastic convex bandits}
\index{noise}\index{prior}
\end{remark}

\section{Duality and the Information Ratio}\label{sec:exp:dual}
\index{information ratio}
We now briefly explain the main tool for bounding the Bayesian regret.\index{regret!Bayesian}
Recall that in the Bayesian setting $f_1,\ldots,f_n$ are sampled from a prior $\xi$ on $\cF_\pb^n$ 
and let $X_\star = \argmin_{x \in \cC} \sum_{t=1}^n f_t(x)$ be the optimal action in $\cC$ in hindsight, which is a random element.
Let $\nu$ be a probability measure on $\Delta_m \times \cF_\pb$ and $p \in \Delta_m$.
Later $\nu$ will be the law of $(p_\star, f_t)$ under the posterior associated with prior $\xi$ and data $X_1,Y_1,\ldots,X_{t-1},Y_{t-1}$
where $p_\star$ is a Dirac on $X_\star$.  \index{posterior}
Define
\begin{align*}
\Delta(p, \nu) = \E[\ip{p - p_\star, f}] \quad \! \text{and} \quad \!
I(p, \nu) = \E\left[\KL(\E[p_\star | X, f(X)], \E[p_\star])\right] \,,
\end{align*}
where $(X, p_\star, f)$ has law $p \otimes \nu$ and $\KL$ is the relative entropy (or Kullback–-Leibler divergence). 
As we remarked already, $\KL = \Breg_R$ where $R$ is the unnormalised negentropy
potential. Intuitively, $\Delta(p, \nu)$ is the expected regret suffered when sampling $X$ from $p$ relative to $p_\star$ on loss $f$ 
sampled from $\nu$, while $I(p, \nu)$ is the information gained about the optimal action when observing $X$ and $f(X)$.\index{information gain}
The information ratio captures the exploration/exploitation trade-off made by a learner and is defined by
\begin{align*}
\Psi(p, \nu) = \frac{\Delta(p, \nu)^2}{I(p, \nu)}\,.
\end{align*}
The information ratio will be small when the regret under $p$ is small relative to the information gained about the optimal action.
The minimax information ratio is 
\begin{align*}
\Psi^\star = \sup_{\nu \in \Delta(\Delta_m \times \cF_\pb)} \min_{p \in \Delta(\cC)} \Psi(p, \nu)\,.
\end{align*}
We can now introduce the information-directed sampling algorithm, which is designed for minimising the Bayesian regret.
In every round the algorithm computes the posterior based on information observed so far and then samples its action $X_t$ from the 
distribution $p_t$ minimising the information ratio.

\begin{algorithm}[h!]
\begin{algcontents}
\begin{lstlisting}
args: prior $\xi$ on $\cF_{\pb}^n$ 
for $t = 1$ to $n$
  compute the posterior $\nu_t = \bbP_{t-1}((p_\star, f_t) \in \cdot)$
  find $p_t = \argmin_{p \in \Delta(\cC)} \Psi(p, \nu_t)$
  sample $X_t$ from $p_t$ and observe $Y_t = f_t(X_t)$
\end{lstlisting}
\caption{Information-directed sampling. The random variable $p_\star$ is a Dirac on $X^\star = \argmin_{x \in \cC} \sum_{t=1}^n f_t(x)$.}
\label{alg:ids}
\end{algcontents}
\end{algorithm}

\FloatBarrier

The next theorem bounds the regret of \cref{alg:ids} in terms of the minimax information ratio.

\begin{theorem}\label{thm:bayes-reg-bound}
Under \cref{ass:exp:cover}, the Bayesian minimax regret is bounded by \index{Bayesian!minimax regret}
\begin{align*}
\bReg_n^\star \leq 1 +  \sqrt{n \Psi^\star \log m} 
\leq 1 +  \sqrt{d n \Psi^\star \log\left(1 + 16dn^2\right)} \,.
\end{align*}
\end{theorem}

\begin{proof}
Let $\xi \in \Delta(\cF_\pb^n)$ be any prior distribution and $\sA$ be information-directed sampling (\cref{alg:ids}).
By \cref{ass:exp:cover}, for all $f \in \cF_\pb$ there exists an $x \in \cC$ with
\begin{align*}
f(x) \leq \inf_{y \in K} f(y) + \frac{1}{n} \,.
\end{align*}
Therefore, with $p_\star$ a Dirac on $X_\star = \argmin_{x \in \cC} \sum_{t=1}^n f_t$, \cref{ass:exp:cover} implies that
\begin{align*}
\bReg_n(\xi, \sA) 
&= \E\left[\sup_{x \in K} \sum_{t=1}^n (f_t(X_t) - f_t(x))\right] 
\leq 1 + \E\left[\sum_{t=1}^n (f_t(X_t) - f_t(X_\star))\right]  \,.
\end{align*}
Let $\nu_t = \bbP_{t-1}((p_\star, f_t) \in \cdot)$, which is a probability measure on $\Delta_m \times \cF_\pb$.
The learner samples $X_t$ from a distribution $p_t \in \Delta(\cC)$ such that $\Psi(p_t, \nu_t) \leq \Psi^\star$, which means that
\begin{align}
\Delta(p_t, \nu_t)^2 \leq I(p_t, \nu_t) \Psi^\star \,.
\label{eq:exp:ids1}
\end{align}
By the tower rule for conditional expectation and the definition of $\Delta(p_t, \nu_t)$,
\begin{align*}
\E\left[\sum_{t=1}^n \left(f_t(X_t) - f_t(X_\star)\right)\right] = \E\left[\sum_{t=1}^n \Delta(p_t, \nu_t)\right] \,.
\end{align*}
By the definition of the information ratio,
\begin{align*}
\E\left[\sum_{t=1}^n \Delta(p_t, \nu_t)\right] 
&\explana\leq \E\left[\sum_{t=1}^n \sqrt{I(p_t, \nu_t) \Psi^\star}\right] \\
&\explana\leq \sqrt{n \Psi^\star \E\left[\sum_{t=1}^n I_t(p_t, \nu_t)\right]} \\
&\explana= \sqrt{n \Psi^\star \E\left[\KL(\bbP_{X_\star|X_1,Y_1,\ldots,X_n,Y_n}, \bbP_{X_\star})\right]} \\
&\explana\leq \sqrt{n \Psi^\star \log(m)} \,,
\end{align*}
where \explanr{} follows from \cref{eq:exp:ids1},
\explanr{} from Cauchy--Schwarz and
\explanr{} from the chain rule for information gain \citep[\S2.5]{CT12}. \explanr{} follows because $X_\star \in \cC$ and the information in any $\cC$-valued random variable
is at most $\log |\cC| = \log m$.
Finally, bound $\log m$ using \cref{ass:exp:cover}.
\end{proof}

There is no obvious reason why $\Psi^\star$ should be well controlled.
However, the information ratio is bounded by the following theorem:

\begin{theorem}[\citealt{Lat20-cvx}]\label{thm:inf-ratio}
$\Psi^\star \leq C d^4 \log(nd)$ where $C > 0$ is an absolute constant.
\end{theorem}

The proof of \cref{thm:inf-ratio} is quite involved and is not included here. 
Even when you can sample from $\nu$, the distribution $p$ witnessing the upper bound 
on $p \mapsto \Psi(p, \nu)$ involves positioning certain convex bodies in minimal surface area position and is not practical to compute.
Combining \cref{thm:inf-ratio} with \cref{thm:bayes-reg-bound} and \cref{thm:minimax} yields the following theorem:

\begin{theorem}
$\Reg_n^\star = \bReg_n^\star \leq C d^{2.5} \sqrt{n} \log(dn)$.
\end{theorem}

The above theorem shows that the adversarial minimax regret and Bayesian minimax regret can be bounded in terms of the minimax information ratio.
The next theorem shows that the adversarial regret of \cref{alg:exp-by-opt} can also be bounded in terms of the minimax information ratio.

\begin{theorem}[\citealt{lattimore2021mirror}]\label{thm:bayes-ig-bound}
$\Lambda^\star \leq \frac{1}{4} \Psi^\star$.
\end{theorem}

Except for the one-dimensional setting (\cref{sec:exp:1d}), the only bounds we have on $\Lambda^\star$ are via \cref{thm:bayes-ig-bound} and bounds on $\Psi^\star$.
Remarkably the constant $\frac{1}{4}$ means the upper bounds obtained by the mirror descent analysis of \cref{alg:exp-by-opt} and information-directed sampling exactly match:
\begin{align*}
\tag*{by \cref{thm:bayes-reg-bound}}
\bReg_n^\star &\leq 1 + \sqrt{n \Psi^\star \log m} \\ 
\tag*{by \cref{cor:exp-by-opt}}
\Reg_n^\star &\leq 1 + 2 \sqrt{n \Lambda^\star \log m} \\
\tag*{by \cref{thm:bayes-ig-bound}}
&\leq 1 + \sqrt{n \Psi^\star \log m} \,. 
\end{align*}

\newcommand{\post}{\text{\textrm\tiny{\scalebox{0.8}{\!po}}}}
\newcommand{\prior}{\text{\textrm\tiny\scalebox{0.8}{\!pr}}}

\begin{proof}[Proof sketch of \cref{thm:bayes-ig-bound}] 
We outline the key ingredients of the argument, ignoring measure-theoretic and topological challenges associated with applying the minimax theory.
Let $A = \Delta_m \times \cF_\pb$ and $\Delta(A)$ be the space of probability distributions on $A$ with the discrete $\sigma$-algebra. 
Given $p \in \Delta_m^+$, $e \in \cE$ and
$\nu \in \Delta(A)$, let
\begin{align}
&\Lambda_{\eta,q}(p, e \LD \nu) 
= \int_A \Lambda_{\eta,q}(p, e \LD p_\star, f) \d{\nu}(p_\star, f) \nonumber \\ 
\label{eq:exp:dual-1}
&\quad= \frac{1}{\eta} \E\left[\ip{p - p_\star, f} + \ip{p_\star - q, \frac{e(X, Y)}{p(X)}} + \frac{1}{\eta} \cS_q\left(\frac{\eta e(X, Y)}{p(X)}\right) \right]\,,
\end{align}
where expectation integrates over $(p_\star, f, X)$ with law $\nu \otimes p$ and $Y = f(X)$.
The expectation $\nu \mapsto \Lambda_{\eta,q}(p, e \LD \nu)$ is linear and \cref{ex:exp:cvx} shows that $(p, e) \mapsto \Lambda_{\eta,q}(p, e \LD \nu)$ is convex.
This hints at the possibility of applying a minimax theorem. Technically you should use Sion's theorem,\index{Sion's minimax theorem} which besides 
some kind of convex/concave structure
also needs compactness. Let us just say now that the following holds:
\begin{align*}
\Lambda_{\eta,q}^\star
&= \inf_{p \in \Delta_m^+} \inf_{e \in \cE} \sup_{\nu \in \Delta(A)} \Lambda_{\eta,q}(p, e \LD \nu) 
= \sup_{\nu \in \Delta(A)} \inf_{p \in \Delta_m^+} \inf_{e \in \cE} \Lambda_{\eta,q}(p, e \LD \nu)\,.
\end{align*}
Hence, the proof will be completed if we can show that for all $\nu \in \Delta(A)$,
\begin{align}
\inf_{p \in \Delta_m^+} \inf_{e \in \cE} \Lambda_{\eta,q}(p, e \LD \nu) \leq \Psi^\star \,.
\label{eq:exp:dual-2}
\end{align}
When $\nu$ is given, it turns out that the estimation function $e$ minimising \cref{eq:exp:dual-1} can be computed by differentiation 
and is
\begin{align*}
e(x, y) = \frac{p(x)}{\eta} \left(R'(q) - R'(\E[p_\star | f(x) = y])\right) \,.
\end{align*}
Two observations: (1) the above is not obvious -- you need to confirm it yourself; (2) the conditional expectation is only well-defined if $x$ is in the support of $p$ and $y$ is
in the support of $f(x)$. When this is not the case you may define $e(x,y)$ in any way you please.
Let $p \in \Delta_m^+$ and
let $p_{\post} = \E[p_\star | X, Y]$ and $p_{\prior} = \E[p_\star]$, which are the posterior and prior distributions, respectively.
Then
\begin{align*}
&\Lambda_{\eta,q}(p, e \LD \nu) 
= \frac{1}{\eta} \E\left[\ip{p - p_\star, f} + \bip{p_\star - q, \frac{e(X, Y)}{p(X)}} + \frac{1}{\eta} \cS_q\left(\frac{\eta e(X, Y)}{p(X)}\right)\right] \\
&\explana= \frac{\Delta(p, \nu)}{\eta} + \frac{1}{\eta^2} \E\left[\ip{p_\star, R'(q) - R'(p_{\post})} - R^\star(R'(q)) + R^\star(R'(p_{\post}))\right] \\
&\explana= \frac{\Delta(p, \nu)}{\eta} + \frac{1}{\eta^2} \E\left[\ip{p_\prior, R'(q)} - \ip{p_\post, R'(p_{\post})} - R^\star(R'(q)) + R^\star(R'(p_{\post}))\right] \\
&\explana= \frac{\Delta(p, \nu)}{\eta} - \frac{1}{\eta^2}\E\left[\Breg_{R^\star}(R'(q), R'(p_{\prior})) + \Breg_{R^\star}(R'(p_{\prior}), R'(p_{\post}))\right] \\
&\explana\leq \frac{\Delta(p, \nu)}{\eta} -\frac{1}{\eta^2}\E\left[\Breg_{R^\star}(R'(p_{\prior}), R'(p_{\post}))\right] \\
&\explana= \frac{\Delta(p, \nu)}{\eta} - \frac{1}{\eta^2}\E\left[\Breg_R(p_{\post}, p_{\prior})\right] \,,
\end{align*}
where \explanr{} follows from the definition of $\Lambda_{\eta,q}(p, e \LD \nu)$ and by substituting the definition of $e$ and using $\cS_q(u) = \Breg_{R^\star}(R'(q) - u, R'(q))$.
\explanr{} follows from the definitions of $p_{\post}$ and $p_{\prior}$;
\explanr{} by the definition of the dual Bregman divergences.\index{Bregman divergence}
\explanr{} follows since Bregman divergences are always non-negative.
\explanr{} follows from duality (\cref{prop:breg:dual}).\index{duality!Bregman divergence}
Ideally we would now choose $p$ to be the minimiser of the information ratio $\Psi(\cdot, \nu)$, but this is generally only supported on two coordinates and hence
not in $\Delta_m^+$ for $m > 2$. Fortunately it is straightforward to show that when $p$ minimises $\Psi(\cdot, \nu)$, then $[0,1) \ni \delta \mapsto \Psi((1-\delta)p + \delta \ones/m, \nu)$
is continuous and hence there exists a $p \in \Delta_m^+$ such that $\Psi(p, \nu) \leq \Psi^\star + \eps$ for any $\eps > 0$.
Let $\eps > 0$ and $p \in \Delta_m^+$ be such that $\Psi(p, \nu) \leq \Psi^\star + \eps$. Then
\begin{align*}
\Lambda_{\eta,q}(p, e \LD \nu)
&\leq \frac{\Delta(p, \nu)}{\eta} - \frac{1}{\eta^2} \E\left[\Breg_R(p_{\post}, p_{\prior})\right] \\
&\leq \frac{1}{\eta} \sqrt{(\Psi^\star + \eps) \E[\Breg_R(p_{\post}, p_{\prior})]} - \frac{1}{\eta^2} \E[\Breg_R(p_{\post}, p_{\prior})] \\
&\leq \sup_{x \geq 0} \left(x \sqrt{\Psi^\star+\eps}- x^2 \right) \\
&= \frac{\Psi^\star+\eps}{4} \,,
\end{align*}
where in the second inequality we used the definition of $p$ and the information ratio.
Since $\eps > 0$ was arbitrary, \cref{eq:exp:dual-2} holds and the proof is complete.
\end{proof}

\begin{exer}
\faStar \faStar \faBook \quad
Make the proof of \cref{thm:bayes-ig-bound} fully rigorous. 
\end{exer}

\newcommand{\TV}{\operatorname{\textsc{tv}}}

\solution{
The main problem working out how to apply Sion's theorem.
To simplify the notation let $q \in \Delta_m^+$ and $\eta > 0$ be fixed and drop them from the subindex notation for the remainder.
Let $u \in \ones/m$ be the uniform distribution in $\Delta_m$ and for $\eps \in (0,1)$ let $\Delta_m^\eps = \eps u + (1 - \eps) \Delta_m \subset
\Delta_m^+$.

\stepsection{Step 1: Bounds on $\Lambda$}
We start by lower bounding $\Lambda(p, e \LD p_\star, f)$.
Abbreviate $g(x) = \eta e(x, f(x)) / p(x)$
\begin{align*}
\Lambda(p, e \LD p_\star, f)
&= \frac{1}{\eta} \left[\ip{p - p_\star, f} + \sum_{x \in \cC} \frac{p(x)}{\eta} \left(\ip{p_\star - q, g(x)} + \cS\left(g(x)\right)\right)\right] \,.
\end{align*}
By definition,
\begin{align*}
&\ip{p_\star - q, g(x)} + \cS(g(x))
= \ip{p_\star - q, g(x)} + \Breg_{R^\star}(R'(q) - g(x), R'(q)) \\
&= \ip{p_\star, g(x)} + R^\star(R'(q) - g(x)) - R^\star(R'(q)) \\
&= -\ip{p_\star, R'(q) - g(x)} + \ip{p_\star, R'(q)} + R^\star(R'(q) - g(x)) - R^\star(R'(q)) \\
&\explana\geq \ip{p_\star, R'(q)} - R^\star(R'(q)) - R(p_\star) \\
&\explana\geq -\norm{R'(q)}_\infty - R^\star(R'(q)) - R(p_\star) \,. 
\end{align*} 
where \explanr{} follows from Fenchel-Young and \explanr{} since $p_\star \in \Delta_m$.
Since $R$ is bounded on $\Delta_m$, it follows that the above quantity can be lower bounded by a constant depending only on $q$ and $\eta$.
And since $f \in \cF_{\pb}$ and $p, p_\star \in \Delta_m$ it follows that the same holds for $\Lambda(p, e \LD p_\star, f)$.
That is, there exists a real-valued $C$ depending only on $q$ and $\eta$ such that
\begin{align*}
\Lambda(p, e \LD p_\star, f) \geq -C \,.
\end{align*}
Moreover, since $\cS(\zeros) = 0$ and $f \in \cF_{\pb}$,
\begin{align*}
\Lambda(p, \zeros \LD p_\star, f) \leq \frac{1}{\eta} \,.
\end{align*}

\stepsection{Step 2: Domain restrictions}
Since $p_\star \mapsto \Lambda(p, e \LD p_\star, f)$ is linear,
\begin{align*}
\Lambda(p, e \LD (1-\eps)p_\star + \eps u, f) 
&= (1-\eps) \Lambda(p, e \LD p_\star, f) + \eps \Lambda(p, e \LD u, f) \\
&\geq (1-\eps) \Lambda(p, e \LD p_\star, f) - \eps C \,.
\end{align*}
Rearranging shows that
\begin{align*}
\Lambda(p, e \LD p_\star, f) \leq \frac{\eps C}{1-\eps} + \frac{\Lambda(p, e \LD (1-\eps)p_\star + \eps u, f)}{1 - \eps}\,.
\end{align*}
Taking the supremum on both sides shows that
\begin{align*}
\Lambda^\star(p, e) \leq \frac{\eps C}{1-\eps} + \frac{\Lambda^\star_\eps(p, e)}{1-\eps}\,,
\end{align*}
where $\Lambda^\star_\eps(p, e) = \sup_{p_\star \in \Delta_m^\eps} \sup_{f \in \cF_\pb} \Lambda(p, e \LD p_\star, f)$.
Hence it suffices to show that $\Lambda^\star_\eps(p, e) \leq \frac{1}{4} \Psi^\star$.
Without loss of generality, let $\eps$ be small enough that $q \in \Delta_m^\eps$ and let $E > 0$ be a constant such that
\begin{align*}
\sup_{p \in \Delta_m, r \in \Delta_m^\eps} \norm{\frac{p(x)}{\eta}(R'(q) - R'(r))}_\infty < E \,.
\end{align*}
The constant $E$ depends on $\eps$ and $q$ and $\eta$ only.

\stepsection{Step 3: Topologies and Sion's theorem}
Let $\cE_\eps$ be the set of functions from $\cC \times \R$ to $[-E, E]^m$.
We endow $\cE_\eps$ with the product topology, which is the coarsest topology such that $(x, y) \mapsto e(x, y)$ is continuous
for all $x \in \cC$ and $y \in \R$.
Tychonoff's theorem says that $\cE_\eps$ with this topology is compact.
The topology on $\Delta_m^\eps$ will be the usual one, under which it is also compact. 
Hence $A = \Delta_m^\eps \times \cE_\eps$ with the product topology is compact.
Next, let $B = \Delta_m^\eps \times \cF_\pb$ with the discrete topology, which means that all functions on $B$ are continuous.
Let $C_b(B)$ be the space of bounded continuous functions from $B \to \R$ with the supremum norm, which is a Banach space.
The topological dual of this space is $C_b(B)^*$, which is the topological vector space consisting of continuous linear functionals on $C_b(B)$.
As usual, when $f \in C_b(B)$ and $\nu \in C_b(B)^*$ we write $\ip{f, \nu} = \nu(f)$.
Let $\Delta(B)$ be the space of discrete probability measures on $B$. That is, distributions in $\Delta(B)$ are supported on countably many
atoms.
Distributions $\nu \in \Delta(B)$ can be viewed as linear functionals on $C_b(B)$ via integration: $\ip{f, \nu} = \int_B f \d{\nu}$.
Since functions $f \in C_b(B)$ are bounded, then $|\ip{f, \nu}| \leq \norm{f}_\infty$, which means that $\ip{\cdot, \nu}$ 
is bounded and hence continuous and in $C_b(B)^*$. 
We take as the topology on $C_b(B)^*$ the weak$^*$ topology, which is the coarsest topology such that $\ip{f, \cdot}$ is continuous
for all $f \in C_b(B)$. By definition, as a function on $A \times B$, $\Lambda$ is bounded: $\norm{\Lambda}_\infty < \infty$.
Therefore $\Lambda(p, e \LD \cdot) \in C_b(B)$ whenever $p, e \in A$.
Hence $\nu \mapsto \ip{\Lambda(p, e \LD \cdot), \nu}$ is continuous
Moreover $(p, e) \mapsto \Lambda(p,e \LD \nu) \triangleq \ip{\Lambda(p, e \LD \cdot), \nu}$ is continuous by the definition of the product topology and convex by \cref{ex:exp:cvx}.
Hence, by Sion's minimax theorem,
\begin{align*}
\min_{p, e \in A} \sup_{\nu \in \Delta(B)} \Lambda(p, e \LD \nu) 
&= \sup_{\nu \in \Delta(B)} \min_{p \in \Delta_m^\eps, e \in \cE_\eps} \Lambda(p, e \LD \nu) \\
&\leq \sup_{\nu \in \Delta(b)} \inf_{p \in \Delta_m^+} \min_{e \in \cE_\eps} \Lambda((1-\eps) p + \eps u, (1 - \eps) e \LD \nu) \\
&\leq (1 - \eps) \sup_{\nu \in \Delta(B)} \inf_{p \in \Delta_m^+} \min_{e \in \cE_\eps} \Lambda(p, e \LD \nu) + \frac{\eps}{\eta} \,.
\end{align*}
Of course the supremum on the left-hand side can be restricted to the Dirac measures so that
\begin{align*}
\min_{p \in \Delta_m^\eps} \min_{e \in \cE_\eps} \Lambda_\eps^\star(p, e) 
\leq (1 - \eps) \sup_{\nu \in \Delta(B)} \inf_{p \in \Delta_m^+} \min_{e \in \cE^\eps} \Lambda(p, e \LD \nu) + \frac{\eps}{\eta} \,.
\end{align*}
\stepsection{Step 4: Finishing}
Suppose that $\nu \in \Delta(B)$ and $p \in \Delta_m^+$ and suppose that $(p_\star, f)$ has law $\nu$.
Let
\begin{align*}
e(x, y) = \frac{p(x)}{\eta} (R'(q) - R'(\E[p_\star|f(x) = y]))\,,
\end{align*}
whenever the conditional expectation is defined and otherwise let $e(x, y) = 0$.
By assumption $p_\star \in \Delta_m^\eps$, which by convexity means that $\E[p_\star|f(x) = y] \in \Delta_m^\eps$ for any $x, y$ such
that $f(x) = y$ has nonzero probability. 
Hence, by definition, $e \in \cE_\eps$.
Repearting now the proof that $p$ can be chosen so that
\begin{align*}
\Lambda(p, e \LD \nu) \leq \frac{\Psi^\star + \eps}{4} \,. 
\end{align*}
Since this holds for any $\nu \in B_\eps$,
\begin{align*}
\inf_{p \in \Delta_m^+} \inf_{e \in \cE} \Lambda^\star(p, e)
&\leq \frac{\eps C}{1-\eps} + \inf_{p \in \Delta_m^+} \inf_{e \in \cE} \frac{\Lambda^\star_\eps(p, e)}{1 - \eps} \\
&= \frac{\eps C}{1-\eps} + \inf_{p \in \Delta_m^+} \inf_{e \in \cE} \sup_{\nu \in \Delta(B)} \frac{\Lambda^\star_\eps(p, e)}{1 - \eps} \\
&\leq \frac{\eps C}{1-\eps} + \inf_{p \in \Delta_m^\eps} \min_{e \in \cE_\eps} \sup_{\nu \in \Delta(B)} \frac{\Lambda^\star_\eps(p, e)}{1 - \eps} \\
&\leq \frac{\eps (C+1/\eta)}{1-\eps} + \sup_{\nu \in \Delta(B)} \inf_{p \in \Delta_m} \min_{e \in \cE_\eps} \Lambda^\star_\eps(p, e) \\
&\leq \frac{\eps (C+1/\eta)}{1-\eps} + \frac{\Psi^\star + \eps}{4} \,.
\end{align*}
Taking the limit as $\eps \to 0$ completes the proof.
}

\begin{figure}[h!]
\caption{The relationship between the results in this chapter, showing 
two ways to bound $\Reg_n^\star$. The first (Bayesian) is completely non-constructive via minimax duality\index{duality!minimax} (\cref{thm:minimax}). 
The second (adversarial) is by bounding the 
regret of \cref{alg:exp-by-opt}. The latter is only to be preferred slightly since \cref{alg:exp-by-opt} has no obvious efficient implementation.
}
\commentAlt{
Two paths for bounding the adversarial regret are shown. One by non-constructively equating the Bayesian and adversarial minimax regrets and then
using the information-theoretic machinery to bound the Bayesian regret. The other using exploration-by-optimisation and the equivalent
between the stability of mirror descent and the minimax information ratio.
}

\label{fig:duality}
\begin{center}
\renewcommand{\arraystretch}{2}
\begin{tabular}{|lccc|}
\hline
& & & $O\left(d^{2.5} \sqrt{n} \log(nd)\right)$ \\
& & & $\stackrel{\text{\cref{thm:inf-ratio}}}{\rotatebox{90}{$\leq$}}$ \\ 
\textsc{bayesian} & $\bReg_n^\star$ & $\stackrel[\text{\cref{alg:ids}}]{\text{\cref{thm:bayes-reg-bound}}}\leq$ & $O\left(\sqrt{nd \Psi^\star \log(nd)}\right)$ \\ 
& $\stackrel{\text{\cref{thm:minimax}}}{\rotatebox{90}{=}}$ & & $\stackrel{\text{\cref{thm:bayes-ig-bound}}}{\rotatebox{90}{$\leq$}}$  \\
\textsc{adversarial} & $\Reg_n^\star$ & $\stackrel[\text{\cref{alg:exp-by-opt}}]{\text{\cref{thm:exp-by-opt}}}\leq$ & $O\left(\sqrt{nd \Lambda^\star \log(nd)}\right)$ \\ \hline 
\end{tabular}
\end{center}
\end{figure}

\FloatBarrier

\section{Notes}

\begin{enumeratenotes}
\item Aside from inconsequential simplifications, the kernel-based method in one dimension was designed by \cite{BEL16}. They extended the general idea to the
higher dimensions to design a polynomial time algorithm with regret $d^{10.5} \sqrt{n}$, which was the first polynomial time algorithm with $\poly(d) \sqrt{n}$ regret in
the adversarial setting. Sadly there are many challenges to generalising \cref{alg:exp-1-disc} and ultimately the higher-dimensional version is not realistically
implementable.
\item Information-directed sampling and the core analysis was introduced by \cite{RV14}. 
The idea has been generalised to frequentist settings, which are explained in depth by
\cite{kirschner2021information} who also details many properties of the information ratio. 
The application to convex bandits to prove bounds non-constructively for \index{non-constructive}\index{information-directed sampling}
adversarial bandit problems is by \cite{BDKP15}, who were the first to show that $\tilde O(\sqrt{n})$ regret is possible for adversarial convex bandits for losses in $\cF_\pb$. 
The extension to higher dimensions is by \cite{BE18} and \cite{Lat20-cvx}. The latter shows that the minimax regret for adversarial bandits is at most $\tilde O(d^{2.5} \sqrt{n})$.
This remains the best bound known, though it is matched in all but logarithmic terms by online Newton step (\cref{chap:ons-adv}).
\item The oldest and most well-known algorithm for Bayesian bandits is Thompson sampling \citep{Tho33},\index{Thompson sampling} 
which in every round samples a loss function from the posterior\index{posterior} and plays
the action that minimises the sampled loss. This algorithm has near-optimal Bayesian regret for some models, including finite-armed bandits and linear bandits \citep{RV16}.\index{bandit!finite-armed}\index{bandit!linear} 
For convex bandits \cite{BLS25} showed that Thompson sampling has $\tilde O(\sqrt{n})$ Bayesian regret in the stochastic setting when $d = 1$.\index{regret!Bayesian}
They also show that for large $d$ there exist priors for which the Bayesian regret of Thompson sampling is exponential in the dimension. 
In the Bayesian adversarial setting with $d = 1$ it is not known
if Thompson sampling has $\tilde O(\sqrt{n})$ regret. \cite{BDKP15} showed that it does not have a bounded information ratio, which means that new proof techniques would be needed
to prove $\tilde O(\sqrt{n})$ regret.\index{prior}
\item The duality between mirror descent and the information ratio was established by \cite{ZL19} and \cite{lattimore2021mirror} with the latter proving the more difficult direction. \index{duality!information ratio}
These connections have led to a beautiful theory on the complexity of sequential decision making in great 
generality \citep{foster2021statistical,foster2022complexity}. In brief, algorithms like exploration-by-optimisation are provably near-optimal in a minimax sense. There
are many subtleties and you should read the aforementioned works. \index{exploration by optimisation}
\item \cite{BLS25} prove that the minimax information ratio satisfies $\Psi^\star = \tilde \Omega(d^2)$, which shows that the best possible bound obtainable via a naive
application of the information-theoretic machinery is $\tilde O(d^{1.5} \sqrt{n})$. 

\begin{exer}
\faStar \faStar \faBook \faQuestion \quad 
Use the arguments by \cite{ZL19} and the lower bound on $\Psi^\star$ by \cite{BLS25} to prove that $\Lambda^\star = \tilde \Omega(d^2)$.
\end{exer}

The exercise shows that no matter how you explore or estimate losses, the classical analysis of exponential weights cannot yield a bound on the regret better than $\tilde O(d^{1.5} \sqrt{n})$.
Importantly, however, lower bounds on the complexity measures do not imply lower bounds\index{lower bound} on the minimax regret. There exist other settings where the minimax regret is better than the upper bound 
in \cref{thm:bayes-reg-bound}. For example, \cite{lattimore2021bandit}
show that
in bandit phase retrieval\index{bandit!phase retrieval} the information-theoretic machinery 
suggests a bound of $\tilde O(d^{1.5}\sqrt{n})$ while the minimax regret is $\tilde \Theta(d \sqrt{n})$. 

\end{enumeratenotes}

\chapter[Cutting Plane Methods]{Cutting Plane Methods\copynotice}\label{chap:ellipsoid}\index{cutting plane methods} 

Like the bisection method\index{bisection method} (\cref{chap:bisection}), cutting plane methods are most naturally suited to the stochastic setting.\index{setting!stochastic}
For the remainder of the chapter we assume the setting is stochastic and the loss function is bounded:

\begin{assumption}\label{ass:cut}
The following hold:
\begin{enumerate}
\item The setting is stochastic, meaning that $f_t = f$ for all $t$; and
\item the loss $f$ is in $\cF_\pb$.
\end{enumerate}
\end{assumption}

The bounds established in this chapter are worse than what we will show for online Newton step\index{online Newton step} in \cref{chap:ons}, but the analysis is considerably more
straightforward and the algorithms are easy to tune.
To keep things simple we study the sample complexity rather than the regret, though most likely the algorithms and analysis can be adapted to the regret setting
without too much difficulty as we discuss briefly in the notes.\index{sample complexity}
This chapter also introduces an algorithm for infinite-armed bandits\index{bandit!infinite-armed} that may be of independent interest. 
The highlight of the chapter is a mechanism for finding
a suitable cutting plane with only noisy zeroth-order access to the loss function (\cref{sec:cut:cut}). 
This is then applied to bound the sample complexity of the centre of gravity method (\cref{sec:cut:cog}) and 
the method of the inscribed ellipsoid (\cref{sec:cut:inscribed-alg}).
In both cases the sample complexity is $\tilde O(d^4 / \eps^2)$ with different computational properties.
The ellipsoid method is discussed in \cref{sec:cut:ellipsoid} and has a moderately worse sample complexity.

\begin{remark}
This chapter is full of half-spaces and ellipsoids, so let us remind you that
$E(x, A) = \{y \in \R^d : \norm{x - y}_{A^{-1}} \leq 1\}$ is an ellipsoid centred at $x$ and $H(x, \eta) = \{y \in \R^d : \ip{y - x, \eta} \leq 0\}$
is a half-space.
\end{remark}

\subsubsection*{Stochastic Oracle Notation}\index{stochastic oracle}
Because of the modular nature of the algorithms in this chapter, it is not practical to keep track of the round of interaction. This necessitates 
a new notation for the interaction protocol.
When we write $y \sim f(x)$ in an algorithm it means that $y = f(x) + \eps$ where $\eps$ is $1$-subgaussian
conditioned on the history (all previous queries).\index{subgaussian}

\section{High-Level Idea}
The basic idea is to let $S$ be a subset of $K$ with non-negligible volume on which the loss $f$ is nearly minimised.
Then initialise $K_1 = K$ and recursively compute a decreasing sequence $(K_k)$ of subsets such that at least one of the following holds:
\begin{itemize}
\item The `centre' $x_k$ of $K_k$ is a near-minimiser of $f$.
\item $K_{k+1} \subset K_k$ contains $S$.
\end{itemize}
Note there are many definitions of the centre of a convex body, as we will soon see.
The largest $k_{\max}$ such that $K_{k_{\max}}$ contains $S$ can be bounded as a function of how fast $k \mapsto \vol(K_k)$ decreases. Combining this with the above requirements on $(K_k)$ and $(x_k)$ yields a bound on how many iterations $k_{\max}$ are needed
before there exists a near-minimiser among $x_1,\ldots,x_{k_{\max}}$. Generally speaking $k_{\max} = \poly(d)$ and the final step is to query the loss on $x_1,\ldots,x_{k_{\max}}$
to identify a near-minimiser among them (\cref{sec:cut:bai}).
In the one-dimensional problem there is limited scope for imagination but in high dimensions there are several intersecting complexities, namely, what geometric
procedure will reduce the volume sufficiently fast? Can it be computed efficiently and how does it interact with convexity? Standard methods are:
\begin{itemize}
\item the ellipsoid method \citep{shor1977cut,judin1977evaluation,yudin1976informational};
\item the centre of gravity method \citep{New65,levin1965algorithm};
\item Vaidya's method \citep{vaidya1996new} and its refinement by \cite{lee2015faster};\index{Vaidya's method}
\item the analytic centre method \citep{nesterov1995cutting,atkinson1995cutting};
\item the method of the inscribed ellipsoid \citep{tarasov1988method}.
\end{itemize}
We focus on the centre of gravity method, ellipsoid method and method of the inscribed ellipsoid. Let us make the considerations above a little more concrete.
By \cref{lem:cut:vol} below there exists a (usually non-regular) 
simplex\index{simplex} $S \subset K$ such that the loss $f$ is near-optimal for all $x\in S$ and $\log(\vol(K)/\vol(S)) = \tilde O(d)$.
Remember a half-space is a set $H = \{y \colon \ip{y - x, \eta} \leq 0\} \triangleq H(x, \eta)$ for nonzero direction $\eta \in \R^d$ and point $x \in \R^d$.
We will study three methods, which classically operate as follows (see also \cref{fig:cut}):
\begin{itemize}
\item The centre of gravity method\index{centre of gravity method} starts with $K_1 = K$ and iteratively updates $K_{k+1} = K_k \cap H_k$ where $H_k$ is a half-space with boundary $\partial H_k$ passing close to the 
centre of mass $x_k$ of $K_k$.
We additionally insist that either $x_k$ is near-optimal or $H_k$ contains $S$. 
A generalisation of Gr\"unbaum's inequality\index{Gr\"unbaum's inequality} shows that $\log \vol(K_{k+1}) \leq \log\vol(K) - ck$ for 
some universal constant $c>0$. Suppose now that $x_1,\ldots,x_k$ are not near-optimal. Then by induction $S \subset K_{k+1}$.
But this implies that $\vol(K_{k+1}) \geq \vol(S)$ and this is only possible for $k = \tilde O(d)$. Consequentially, if $k = \tilde \Theta(d)$, then one of $x_1,\ldots,x_k$ is near-optimal.
With deterministic zeroth-order access to the loss function the learner can simply return the $\argmin \{f(x) \colon x \in x_1,\ldots,x_k\}$ while with noise it can treat the $k$ candidates as a finite-armed bandit\index{bandit!finite-armed} and use an elementary pure exploration bandit algorithm to approximate the $\argmin$ as explained in Section~\ref{sec:cut:bai}. In \cref{rem:cut:simplex} we explain
why $S$ is chosen to be a simplex.

\item The method of the inscribed ellipsoid is the same as the centre of gravity method, but rather than using centre of mass it uses the centre of the largest ellipsoid contained in $K_k$. 
It can be shown that $\log \vol(K_{k+1}) \leq \log\vol(K) + \tilde O(d) - c k$ for some universal constant $c > 0$. Hence, like the centre of gravity method, the number of iterations where $S \subset K_{k+1}$ is at most $\tilde O(d)$.
\item The ellipsoid method \index{ellipsoid method} starts with an ellipsoid $E_1$ such that $K \subset E_1$. The ellipsoid is updated by finding a half-space $H_k$ with boundary passing close to the centre of
$E_k$ and such that $S \subset H_k$ and $E_{k+1}$ is calculated as the smallest ellipsoid containing $E_k \cap H_k$. 
The classical theory of the shallow cut ellipsoid method shows that $\log\vol(E_{k+1}) \leq \log\vol(E_1) - \frac{ck}{d}$ for some universal constant $c > 0$.
Hence, provided that $\log\vol(E_1) \leq \log \vol(K) + \tilde O(d)$, the number of iterations is at most $\tilde O(d^2)$.
\end{itemize}
The big question is how to find the half-spaces passing close to the relevant centre that contains the near-optimal simplex with high probability.
Besides this there are many details to be sorted out. Most notably, how close to the centre of mass or centre of ellipsoid do we need the half-space to be?

\begin{remark}
The centre of gravity and inscribed ellipsoid methods require only $\tilde O(d)$ iterations while the ellipsoid method needs $\tilde O(d^2)$.
The advantage of the latter is the remarkable fact that the smallest ellipsoid containing $E \cap H$ for ellipsoid $E$ and half-space $H$ has a closed-formed expression,
while estimating the centre of mass or finding the maximum-volume inscribed ellipsoid is less elementary. 
\end{remark}

\begin{figure}[h!]
\centering
\begin{subfigure}{0.3\textwidth}
\includegraphics[width=3cm]{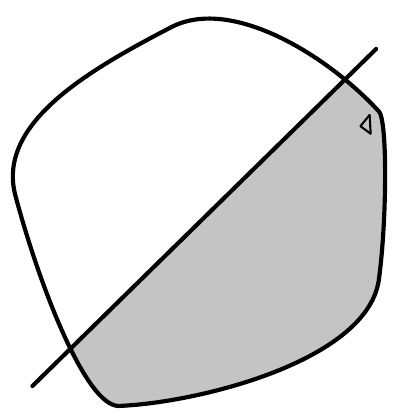}
\caption{Centre of gravity}
\end{subfigure}
\begin{subfigure}{0.3\textwidth}
\includegraphics[width=3cm]{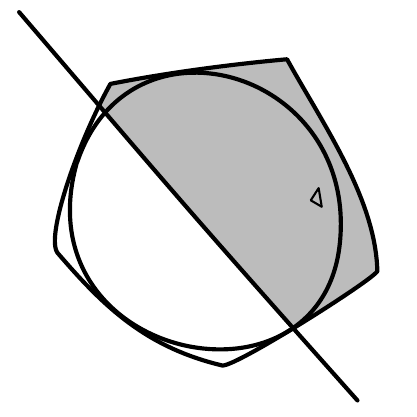}
\caption{Inscribed ellipsoid}
\end{subfigure}
\begin{subfigure}{0.3\textwidth}
\includegraphics[width=3cm]{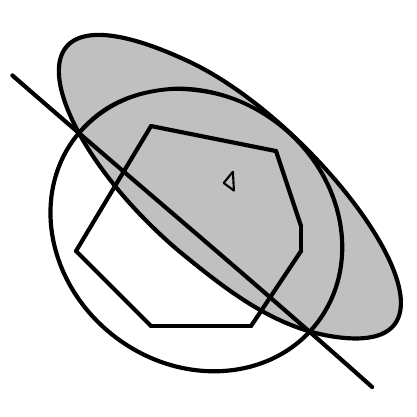}
\caption{Ellipsoid method}
\end{subfigure}
\caption{An illustration of one iteration of the centre of gravity, inscribed ellipsoid and ellipsoid methods.
}
\label{fig:cut}
\commentAlt{Three two-dimensional convex bodies are shown. The left-most has a half-space through its centre of mass illustrating that about half the volume
is in one side relative to the other. The second has a half-space through the centre of its maximum volume enclosed ellipsoid. The third has a half-space
through the centre of an ellipsoid containing it and a new ellipsoid formed as the minimum volume ellipsoid of the intersection of the half-space and original ellipsoid.}
\end{figure}

We finish with the promised lemma establishing the existence of a near-optimal simplex $S$.

\begin{lemma}\label{lem:cut:vol}
Suppose that $K$ is a convex body and $f \in \sF_{\pb}$. Then, for any $\eps \in (0,1)$, 
there exists a simplex\index{simplex} $S = \conv(x_1,\ldots,x_{d+1}) \subset K$ such that
$f(x) \leq \inf_{y \in K} f(y) + \eps$ for all $x \in S$ and 
\begin{align*}
\vol(S) \geq \left(\frac{\eps}{2}\right)^d \frac{\vol(K)}{d! \vol(\ball_{d})} \,.
\end{align*}
In particular, by \cref{prop:vol}\ref{prop:vol:ball}, $\log(\vol(S)) = \log(\vol(K)) - O\left(d \log(d/\eps)\right)$.
\end{lemma}

\begin{proof}
The construction is illustrated in \cref{fig:cut:vol}.
Let $f_\star = \inf_{x \in K} f(x)$.
Suppose that $K$ is in John's position so that by \cref{thm:john}, $\ball_1 \subset K \subset \ball_{d}$.
Letting $e_1,\ldots,e_d$ be the standard basis vectors,
then obviously $T = \conv(\zeros, e_1,\ldots,e_d) \subset K$. 
Let $x$ be some point such that $f(x) \leq f_\star + \frac{\eps}{2}$ and
$S = \{(1-\frac{\eps}{2}) x + \frac{\eps}{2} y \colon y \in T\}$. 
Then, for any $z \in S$ there exists a $y  \in T$ such that $z = (1 - \frac{\eps}{2}) x + \frac{\eps}{2} y$ and since $f \in \cF_{\pb}$ is bounded and convex, $f(z) \leq (1 - \frac{\eps}{2}) f(x) + \frac{\eps}{2} f(y) \leq f(x) + \eps/2 \leq f_\star + \eps$.
Furthermore, $\vol(T) = 1/d!$ and 
\begin{align*}
\vol(S) 
&= \left(\frac{\eps}{2}\right)^d \vol(T) 
= \left(\frac{\eps}{2}\right)^d \frac{1}{d!} \,. 
\end{align*}
Meanwhile, $\vol(K) \leq \vol(\ball_{d})$, which implies that
\begin{align*}
\vol(S) \geq \left(\frac{\eps}{2}\right)^d \frac{\vol(K)}{d! \vol(\ball_d)} \,. 
\end{align*}
The result for general $K$ follows via an affine map\index{affine!map} into John's position.
\end{proof}

\begin{remark}\label{rem:cut:ratio}
Lemma~\ref{lem:cut:vol} is moderately crude. For example, you can improve the result by taking $T$ to be the regular simplex inside $\ball_1$.
Alternatively one may try to avoid using John's theorem\index{John's theorem}, letting $T$ be the simplex of largest volume contained in $K$. For this simplex it is known that for all
convex bodies $K$, $(\vol(T)/\vol(K))^{1/d} \geq c/\sqrt{d}$ for some absolute constant $c > 0$ and this is not improvable \citep{galicer2019minimal}.
For our purpose, however, these refinements make only negligible differences to the constants in our regret bounds.
\end{remark}

\begin{remark}\label{rem:cut:simplex}
You might wonder why $S$ is a simplex rather than just the level set $\{x \colon f(x) \leq f_\star + \eps\}$, which contains $S$.
The reason is that later we will want to prove that $S$ is in some randomised half-space with high probability and for this it suffices to show that the vertices of $S$ are contained
in the half-space with high probability, which involves a union bound over the $d+1$ vertices. Any convex shape with $\poly(d)$ vertices would be sufficient.
But there is no hope to improve the bounds in this chapter by modifying this construction except for miniscule constants.
\end{remark}

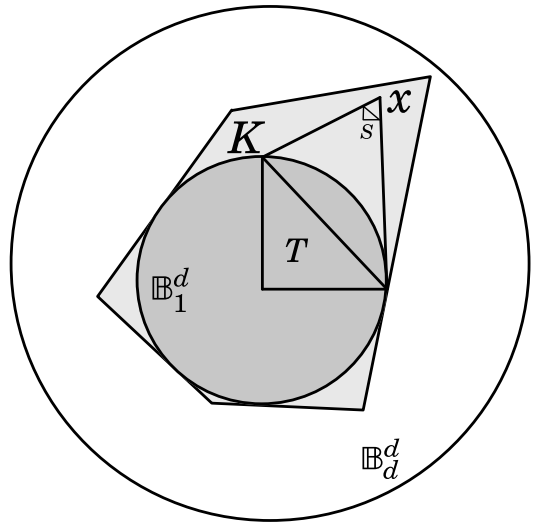
\begin{figure}[h!]
\begin{tikzpicture}[thick]
\draw (0,0) circle (4);
\draw[fill=grayone] (-2.3,0) -- (-1.45,-1.5) -- (-0.3, -2.4) -- (1.9,-1.3) -- (2.3,3) -- (-0.6,3) -- (-2.3,0);
\draw[fill=graytwo] (0,0) circle (2);
\draw[fill=graythree] (0,0) -- (2,0) -- (0,1.98) -- (0,0);
\node at (0.66, 0.66) {$T$};
\node at (1,-3) {$\ball_d$};
\node at (-1,0) {$\ball_1$};
\node at (0,2.5) {$K$};
\draw (2,0) -- (1.8,2.8) -- (0,2);
\node[anchor=west] at (1.8,2.8) {$x$};
\draw[fill=graythree] (1.82, 2.52) -- (1.62, 2.705) -- (1.62, 2.52) -- (1.82, 2.52);
\node[anchor=north] at (1.62, 2.52) {\small $S$};
\end{tikzpicture}
\caption{The construction used in the proof of \cref{lem:cut:vol}.}\label{fig:cut:vol}
\commentAlt{A two-dimensional convex body K in John's position with the simplex T spanned by the basis vectors and the origin. A point x is near the corner of
K and S is constructed as the convex combination of T and x.}
\end{figure}
\FloatBarrier

\section{Infinite-Armed Bandits}\label{sec:cut:inf} \index{bandit!infinite-armed|(}

We make a brief aside to study a kind of infinite-armed bandit problem.
Suppose that $\rho$ is a probability measure on $K$ and $h \colon K \to (-\infty,1]$ is an unknown function such that $\E[h(X)] \geq 0$ when $X$ has law $\rho$.
In contrast to the rest of the book, we are looking for a procedure for (crudely) maximising $h$.
We suppose a learner can sample from $\rho$ and for any $x \in K$ the learner can obtain an unbiased estimate of $h(x)$ with subgaussian\index{subgaussian} tails (just as in the standard convex bandit problem).
We formalise this with the following assumption:

\begin{assumption}\label{ass:cut:inf}
The learner can sample from a probability measure $\rho$ on $K$ and $h \colon K \to (-\infty, 1]$ is a function such that
\begin{align*}
\int_K h(x) \d{\rho}(x) \geq 0 \,.
\end{align*}
For any $x \in K$ the learner can sample from $\varkappa(\cdot | x)$ 
where $\varkappa$ is a probability kernel from $K$ to $\R$ such that for all $x \in K$ \index{probability kernel}
\begin{enumerate}
\item the mean of $\varkappa(\cdot|x)$ is $h(x)$; and
\item the probability measure $\varkappa(\cdot|x)$ is $\sigma$-subgaussian: 
\begin{align*}
\int_{\R} \exp((y/\sigma)^2) \varkappa(\d{y}|x) \leq 2 \,.
\end{align*}
\end{enumerate}
\end{assumption}

There is no geometry in this problem. The function $h$ needs to be measurable\index{measurable} but besides this there is no requirement for 
continuity or any other structural properties beyond the semi-boundedness. 
Of course, in such circumstances there is no hope whatsoever to actually maximise $h$: for example, when it is the indicator function of some singleton and $\rho$ is continuous.
What can be achieved depends on the law of $h(X)$ when $X$ has law $\rho$. Since we have assumed that $\E[h(X)] \geq 0$, it seems reasonable that we can find a point $x$ such that $h(x)$ is close to $0$.
Given an $\eps \in (0,1)$ and $\delta \in (0,1)$, 
we construct an algorithm that with probability at least $1 - \delta$ returns an $x$ such that $h(x) \geq -\eps$
and in expectation takes at most
\begin{align*}
O\left(\frac{\sigma^2 \log(1/\delta)}{\eps^2}\right)
\end{align*}
samples from the probability kernel $\varkappa$.

\begin{remark}
The expected number of times \cref{alg:cut:inf}, presented below, samples from $\rho$ is $O(1/\eps)$.
In our application, samples from $\varkappa(\cdot|x)$ correspond to querying the loss function
while $\rho$ is an explicit distribution on $K$ that is easy to sample from.
\end{remark}

The basic idea is to sequentially sample points $x$ from $\rho$ and then take samples from $\varkappa(\cdot | x)$ to test whether or not $h(x)$ is suitably large.
The big question is how many samples to take from $\varkappa(\cdot | x)$. And in fact the algorithm will vary the number of samples it takes, for reasons we now explain.
Remember we assumed that $\E[h(X)] \geq 0$. There are multiple ways this can happen. Here are two extreme examples:
\begin{itemize}
\item $h(X) = 2\eps$ with probability $1/2$ and $h(X) = -2\eps$ otherwise
\item $h(X) = 1$ with probability $2\eps/(1 + 2\eps)$ and $h(X) = -2\eps$ otherwise
\end{itemize}
And of course there are many intermediate options and mixtures. Suppose we want to design an algorithm that finds an $x$ with $h(x) \geq -\eps$ for distributions of the first kind.
Then we should repeatedly sample points $x$ from $\rho$ and then query $\varkappa(\cdot|x)$ roughly $\tilde O(\sigma^2/\eps^2)$ times 
until we can statistically prove that $h(x)$ is large enough.
Since by assumption $h(X) \geq -\eps$ with constant probability, in expectation this requires only $\tilde O(\sigma^2/\eps^2)$ queries.
On the other hand, for distributions of the second kind the algorithm needs in expectation $O(1/\eps)$ queries to $\rho$ to find an $x$ with $h(x) = 1$.
But to identify when this has happened only requires $\tilde O(\sigma^2)$ queries to $\varkappa(\cdot | x)$ because the margin is large.
Algorithm~\ref{alg:cut:inf} essentially dovetails these two algorithms and all intermediaries on a carefully chosen grid.
Concretely, the inner loop samples $O(1/\eps)$ points from $\rho$ and queries the kernel with sample sizes ranging from $\tilde O(\sigma^2/\eps^2)$ to $\tilde O(\sigma^2)$.
We will prove that with constant probability this inner loop succeeds in identifying an $x$ such that $h(x) \geq -\eps$.
The outer loop simply repeats the inner loop to boost the probability of success. The quantity $b_{m,k}$ is chosen so that $\sum_{k=1}^\infty \sum_{m=1}^\infty b_{m,k}^{-1} = 1$ and
scales the confidence level to permit a union bound over all $m, k$.

\begin{algorithm}[h!]
\begin{algcontents}
\begin{lstlisting}
def $\INF$($\eps \in (0,1)$, $\delta \in (0,1)$, $\rho$, $\varkappa$)
  for $m = 1$ to $\infty$:
    for $k = 1$ to $k_{\max} \triangleq 2 + \floor{\frac{2}{\eps}}$:
      $b_{m,k} = [k(k+1)][m(m+1)]$
      $n_{m,k} = \ceil{\frac{64 \sigma^2 \log\left(2b_{m,k}/\delta\right)}{\eps^2 k^2}}$ and $C_{m,k} = 2 \sigma \sqrt{\frac{\log\left(2b_{m,k}/\delta\right)}{n_{m,k}}}$
      sample $x_{m,k}$ from $\rho$
      sample $H_1,\ldots,H_{n_{m,k}}$ from $\varkappa(\cdot|x_{m,k})$
      compute $\widehat h(x_{m,k}) = \frac{1}{n_{m,k}} \sum_{t=1}^{n_{m,k}} H_t$
      if $\widehat h(x_{m,k}) - C_{m,k} \geq -\eps$: return $x_{m,k}$
\end{lstlisting}
\caption{Infinite-armed bandit algorithm}
\label{alg:cut:inf}
\end{algcontents}
\end{algorithm}
\FloatBarrier

\begin{theorem}\label{thm:cut:inf}
Suppose that \cref{alg:cut:inf} is run with inputs $\eps \in (0,1)$, $\delta \in (0,1)$ and probability measure $\rho$ and $\varkappa$
satisfying \cref{ass:cut:inf}. Then the following hold:
\begin{enumerate}
\item In expectation \cref{alg:cut:inf} takes at most $O\left(\frac{\sigma^2 \log(1/\delta)}{\eps^2}\right)$ samples from the probability kernel $\varkappa$. \label{thm:cut:inf:sc}
\item With probability at least $1-\delta$ \cref{alg:cut:inf} returns an $x$ such that $h(x) \geq -\eps$. \label{thm:cut:inf:correct}
\end{enumerate}
\end{theorem}

\begin{proof}[\Proofskippy] 
We proceed in two steps.

\stepsection{Step 1: Setup, concentration and correctness}\index{concentration}
Let $\bbP_m$ be the probability measure obtained by conditioning on all data obtained during the first $m$ outer loops, let
$G_m$ be the event defined by
\begin{align*}
G_m = \bigcap_{k=1}^{k_{\max}} \left\{\left|\widehat h(x_{m,k}) - h(x_{m,k})\right| \leq C_{m,k} \right\}
\end{align*}
and let $G = \cap_{m=1}^\infty G_m$, which are events that certain estimates lie close to the truth.
We also define an event $V_m$ that holds in outer iteration $m$ when the algorithm samples a point $x_{m,k}$ with suitably large $h(x_{m,k})$, which
is defined in terms of its complement by
\begin{align*}
V_m^c = \bigcap_{k=1}^{k_{\max}} \left\{h(x_{m,k}) < \frac{(k-2) \eps}{2}\right\} \,.
\end{align*}
Note that $(x_{m,k})_{m,k}$ are independent and identically distributed samples from $\rho$.
A union bound combined with \cref{thm:hoeffding} shows that $\bbP(G) \geq 1 - \delta$.
The same argument along with naive simplification shows that 
\begin{align}
\bbP_{m-1}(G_m) \geq 1/2 \,.
\label{eq:cut:inf:12}
\end{align}
Suppose the algorithm halts and $G$ holds.
Then
\begin{align*}
h(x_{k,m}) \geq \widehat h(x_{m,k}) - C_{m,k} \geq -\eps \,,
\end{align*}
which, since $\bbP(G) \geq 1 - \delta$, establishes correctness (part~\ref{thm:cut:inf:correct}).

\stepsection{Step 2: Bounding stopping time}
Let $M$ be the smallest $m$ such that the algorithm halts:
\begin{align*}
M = \min\left\{m \colon \max_{1 \leq k \leq k_{\max}} \left( \widehat h(x_{m,k}) - C_{m,k} \right) \geq -\eps\right\} \,.
\end{align*}
Suppose that $V_m$ and $G_m$ both hold; then there exists a $k \in \{1,\ldots,k_{\max}\}$ such that
\begin{align*}
\widehat h(x_{m,k}) - C_{m,k} \geq h(x_{m,k}) - 2C_{m,k} \geq \frac{(k-2) \eps}{2} - 2C_{m,k} \geq -\eps \,,
\end{align*}
which by construction means the algorithm halts and consequently $M \leq m$.
Suppose $X$ has law $\rho$. Then
\begin{align*}
\bbP_{m-1}(V_m^c)
&= \bbP\left(\bigcap_{k=1}^{k_{\max}} \left\{h(x_{m,k}) < \frac{(k-2) \eps}{2}\right\}\right) \\ 
&= \prod_{k=1}^\infty \bbP\left(h(X) < \frac{(k-2) \eps}{2} \right) \\
&= \exp\left(\sum_{k=1}^\infty \log \left[1 - \bbP\left(h(X) \geq \frac{(k-2) \eps}{2}\right)\right]\right) \\
&\leq \exp\left(-\sum_{k=1}^\infty \bbP\left(h(X) \geq \frac{(k-2) \eps}{2}\right)\right) \\
&\leq \exp\left(-1\right) 
\end{align*}
where for the second last inequality we used the fact that $\log(1+x) \leq x$ for all $x$. The final inequality follows because
\begin{align*}
\eps 
&\leq \E[h(X) + \eps] \\
&\leq \int_0^\infty \bbP(h(X) \geq t - \eps) \d{t} \\
&\leq \frac{\eps}{2} + \frac{\eps}{2} \sum_{k=1}^\infty \bbP\left(h(X) \geq \frac{(k-2)\eps}{2}\right) \,,
\end{align*}
where the first inequality holds because $\E[h(X)] \geq 0$ and the last 
by comparing the integral to the sum (\cref{fig:integral}).
Hence, $\bbP_{m-1}(V_m^c) \leq \exp(-1)$ and by the definitions, a union bound and \cref{eq:cut:inf:12},
\begin{align*}
\sind(M \geq m) \bbP_{m-1}(M > m)
&\leq \bbP_{m-1}(V_m^c) + \bbP_{m-1}(G_m^c) 
\leq \exp\left(-1\right) + \frac{1}{2} \leq \frac{9}{10} \,.
\end{align*}
Therefore $\bbP(M > m) \leq (9/10)^m$ by induction.
Part~\ref{thm:cut:inf:sc} follows because
\begin{align*}
\sum_{k=1}^{k_{\max}} n_{m,k} = O\left(\frac{\sigma^2 \log(m/\delta)}{\eps^2}\right) 
\end{align*}
and $\sum_{m=1}^\infty (9/10)^{m-1} \log(m/\delta) = O(\log(1/\delta))$.
\end{proof}

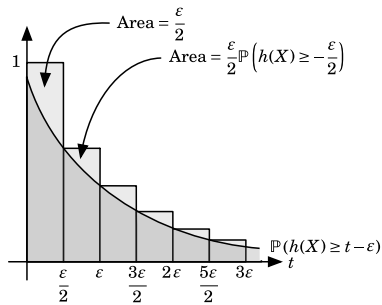
\begin{figure}[h!]
\begin{tikzpicture}[thick]
\begin{axis}[clip=false,xmin=0,xmax=5,width=7cm,height=5cm,ymax=1.2,ymin=0,xtick=\empty,ytick={0,1},legend style={at={(1.1,0.2)}, anchor=west}]
\addplot[
    draw=none, 
    fill=graytwo, 
    domain={0:5}, 
] {1/(1.1+x)} \closedcycle;

\draw[fill=graytwo,opacity=0.5] (axis cs:0,1) -- (axis cs:1,1) -- (axis cs:1,0) -- (axis cs:0,0) -- cycle;
\draw[fill=graytwo,opacity=0.5] (axis cs:1,0.47) -- (axis cs:2,0.47) -- (axis cs:2,0) -- (axis cs:1,0) -- cycle;
\draw[fill=graytwo,opacity=0.5] (axis cs:2,0.322) -- (axis cs:3,0.322) -- (axis cs:3,0) -- (axis cs:2,0) -- cycle;
\draw[fill=graytwo,opacity=0.5] (axis cs:3,0.243) -- (axis cs:4,0.243) -- (axis cs:4,0) -- (axis cs:3,0) -- cycle;
\draw[fill=graytwo,opacity=0.5] (axis cs:4,0.196) -- (axis cs:5,0.196) -- (axis cs:5,0) -- (axis cs:4,0) -- cycle;
\addplot+[mark=none,black,thick,samples=50,domain={0:5}] {1/(1.1+x)};
\addlegendentry{$\bbP(h(X) \geq t - \eps)$};
\node[anchor=north] at (axis cs:1,0) {$\eps/2$};
\node[anchor=north] at (axis cs:2,0) {$\vphantom{3/2}\eps$};
\node[anchor=north] at (axis cs:3,0) {$3\eps/2$};
\node[anchor=north] at (axis cs:4,0) {$2\eps$};
\node[anchor=north] at (axis cs:5,0) {$5\eps/2$};
\node[anchor=west] (a) at (axis cs:1.3,1.1) {Area $= \frac{\eps}{2}$};
\node[anchor=west] (b) at (axis cs:1.6,0.78) {Area $= \frac{\eps}{2} \bbP\left(h(X) \geq -\frac{\eps}{2}\right)$};
\draw[thick] (a.west) edge[-latex,out=180,in=90] (axis cs:0.5,0.8);
\draw[thick] (b.west) edge[-latex,out=180,in=90] (axis cs:1.5,0.42);
\end{axis}
\end{tikzpicture}
\caption{Integral approximation in the proof of \cref{thm:cut:inf}.}\label{fig:integral}
\commentAlt{The Riemann approximation of the integral is shown as a bar chart.}
\end{figure}

\index{bandit!infinite-armed|)}
\section{Best Arm Identification}\label{sec:cut:bai} \index{best arm identification|(}

The methods proposed in \cref{sec:cut:cog,sec:cut:inscribed-alg} effectively return a short list of candidates for near-minimisers.
We need a simple subroutine to identify a near-minimiser from this list.

\begin{algorithm}[h!]
\begin{algcontents}
\begin{lstlisting}
def $\BAI$($\eps, \delta, x_1,\ldots,x_m$)
  $n = \ceil{\frac{16\log(2m/\delta)}{\eps^2}}$
  for $k = 1$ to $m$:
    sample $Y_1 \sim f(x_k),\ldots,Y_n \sim f(x_k)$
    compute $\widehat f(x_k) = \frac{1}{n} \sum_{t=1}^n Y_t$
  return $\argmin \{\widehat f(x) \colon x \in x_1,\ldots,x_m\}$
\end{lstlisting}
\caption{Best arm identification}
\label{alg:cut:bai}
\end{algcontents}
\end{algorithm}

\begin{theorem}\label{thm:cut:bai}
With probability at least $1 - \delta$, \cref{alg:cut:bai} returns an $x \in \{x_1,\ldots,x_m\}$ such that 
\begin{align*}
f(x) \leq \min_{y \in \{x_1,\ldots,x_m\}} f(y) + \eps \,.
\end{align*}
Furthermore, it queries the loss function $f$ at most $m\ceil{16 \log(2m/\delta)/\eps^2}$ times.
\end{theorem}

\begin{proof}
By \cref{thm:hoeffding}\index{Hoeffding's inequality} and a union bound, with probability at least $1-\delta$,
\begin{align*}
|\widehat f(x_k) - f(x_k)| \leq \frac{\eps}{2} \text{ for all } 1 \leq k \leq m\,.
\end{align*}
Therefore, for the $x$ returned by the algorithm and any $y \in \{x_1,\ldots,x_m\}$,
\begin{align*}
f(x) &\leq \widehat f(x) + \frac{\eps}{2} \leq \widehat f(y) + \frac{\eps}{2} \leq f(y) + \eps\,.
\qedhere
\end{align*}
\end{proof}

\index{best arm identification|)}

\section{Finding a Cutting Plane}\label{sec:cut:cut}

Let $f \in \sF_{\pb}$ with $f_\star = \inf_{x \in K} f(x)$.
All cutting plane methods abstract away the construction of the half-space. 
The methods assume a process that given a `centre' $x$ of a convex set $K$ return a half-space $H$ such that at least one of the following holds:
\begin{itemize}
\item $x$ is close to $\partial H$ and $H$ contains all near-minimisers of the loss function.
\item $x$ is a near-minimiser of the loss function.
\end{itemize}
When the learner has gradient access to the loss function $f$, then the half-space $H = H(x, f'(x)) \triangleq \{y \in \R^d \colon \sip{f'(x), y - x} \leq 0\}$ satisfies the conditions.
To see why, suppose that $f(y) \leq f(x)$. Then by convexity
$f(y) \geq f(x) + \sip{f'(x), y - x} \geq f(y) + \sip{f'(x), y - x}$, which implies that $y \in H$.
Hence, either $f(x) \leq f_\star + \eps$ and $x$ is a near-minimiser or $f(x) \geq f_\star + \eps$ and any $y$ with $f(y) \leq f_\star + \eps$ is in $H$.

In the bandit setting we do not have access to the gradient, so another procedure is needed to find $H$. Furthermore, the condition that $H$ contains all near-minimisers of the
loss function will be relaxed to require that any specific near-minimiser is in $H$ with high probability.
We have been a bit vague about how close $x$ needs to be to the half-space. This depends on which cutting plane method is used but fortunately can be abstracted away
in the analysis of each method by using a change of coordinates. Concretely, it suffices to make the following assumption:

\begin{assumption}\label{ass:cut-local}
$K$ is a convex body such that $\ball_1 \subset K \subset \ball_r$ for some $r > 1$. 
\end{assumption}

The `centre' is now taken to be $\zeros$. 
The mission in this section is to design a randomised algorithm that queries the loss function as few times as possible and returns a half-space $H$ such that
\begin{flalign}
&
\begin{minipage}{9cm}
\begin{itemize}
\item the boundary of the half-space intersects $\ball_1$: $\partial H \cap \ball_1 \neq \emptyset$, and
\item $\bbP(y \in H) \geq 1 - \delta$ for any $y \in K$ with $f(y) \leq f(\zeros) - \eps$.
\end{itemize}
\end{minipage}
&&
\label{eq:cut:conds}
\end{flalign}

\begin{remark}
The second item above implies that either $f(\zeros) \leq f_\star + 2\eps$ or for any $y$ with $f(y) \leq f_\star + \eps$, $\bbP(y \in H) \geq 1 - \delta$.
\end{remark}

\subsubsection*{Approach}
A natural idea is to try and use the gradient of a smoothed surrogate. \index{surrogate loss!ellipsoidal}
Let $U$ have law $\cU(\ball_1)$ and
\begin{align}
s(x) = \E[2 f(U/4+x/2) - f(U/2)]\,,
\label{eq:cut:s}
\end{align}
which is the same surrogate that appeared in \cref{chap:ftrl}.
We saw there that $s$ is differentiable,\index{differentiable} convex and optimistic:\index{optimistic} $s(x) \leq f(x)$ for all $x$.
Generally speaking the half-space $H = \{y \colon \sip{s'(x), y - x} \leq 0 \}$ need not contain the minimisers of $f$.
But when $s(x)$ is sufficiently large, then $H$ does contain near-minimisers of $f$.

\begin{lemma}\label{lem:cut:opt}
Let $H = H(x, s'(x))$. Then $y \in H$ for all $y$ with $f(y) \leq s(x)$.
\end{lemma}

\begin{proof}
Suppose that $f(y) \leq s(x)$. Then, by convexity of $s$,
\begin{align*}
f(y) \geq s(y) \geq s(x) + \ip{s'(x), y - x} \geq f(y) + \ip{s'(x), y - x}\,,
\end{align*}
where in the first inequality we used the fact that $s \leq f$ is optimistic (\cref{lem:ellipsoid-smooth}). The second follows from convexity of $s$ and the third by the assumption that $s(x) \geq f(y)$.
Rearranging shows that $\sip{s'(x), y - x} \leq 0$ and therefore $y \in H$.
\end{proof}

\cref{lem:cut:opt} suggests a simple plan:
\begin{itemize}
\item Find an $x \in \ball_1$ such that $s(x)$ is nearly as large as $f(\zeros)$.
\item Find an estimate $\hat s'(x)$ of the gradient of $s$ at $x$.
\item Propose the half-space $H(x, \hat s'(x))$.
\end{itemize}
But why should there exist a point $x \in \ball_1$ where $s(x)$ is large?

\begin{lemma}\label{lem:cut:large}
Suppose that $U,V$ are independent random vectors with law $\cU(\ball_1)$ and $h(x) = s(x) - f(\zeros)$.
Then $\E[h(V)] \geq 0$.
\end{lemma}

\begin{proof}
By convexity of $f$,
\begin{align*}
\E[s(V)]
&= \E[2 f(V/2 + U/4) - f(U/2)] \\
\tag*{Jensen's inequality}\index{Jensen's inequality}
&\geq \E[2 f(V/2) - f(U/2)] \\
\tag*{$V \stackrel{d}= U$}
&= \E[f(U/2)] \\
\tag*{Jensen's inequality}
&\geq f(\zeros)\,.
\end{align*}
Rearranging completes the proof.
\end{proof}

Since the maximum is larger than the expectation, \cref{lem:cut:large} shows
there exists an $x \in \ball_1$ such that $s(x) \geq f(\zeros)$.
Moreover, a point nearly satisfying this can be found using Algorithm~\ref{alg:cut:inf}, as we now explain. 
To implement this plan we need an oracle that can provide estimates of $h(x) = s(x) - f(\zeros)$ for any $x \in \ball_1$. And at the same time we provide an estimator for
the gradients of $s$, namely
\begin{align}
s'(x) = \frac{4d}{\vol(\sphere_1)} \int_{\sphere_1} f(x/2 + u/4) u \d{u} \,.
\label{eq:cut:grad}
\end{align}

\begin{algorithm}[h!]
\begin{algcontents}
\begin{lstlisting}
def $\SAMPLE$($x$)
  sample $U_1, U_2$ from $\cU(\ball_1)$
  let $X_1 = U_1/4+x/2$ and observe $Y_1 \sim f(X_1)$
  let $X_2 = U_2/2$ and observe $Y_2 \sim f(X_2)$
  let $X_3 = \zeros$ and observe $Y_3 \sim f(X_3)$
  return $2 Y_1 - Y_2 - Y_3$
\end{lstlisting}
\caption{Returns an unbiased estimate of $h(x) = s(x) - f(\zeros)$}
\label{alg:cut:oracle}
\end{algcontents}
\end{algorithm}

The following simple lemma shows that \cref{alg:cut:oracle} returns an unbiased estimate of $s(x) - f(\zeros)$ that is also subgaussian:\index{subgaussian}

\begin{lemma}\label{lem:cut:h}
Suppose that \cref{alg:cut:oracle} is run with input $x \in \ball_1$ and has output $Y$.
Then $\E[Y] = s(x) - f(\zeros)$ and $\norm{Y - \E[Y]}_{\psi_2} \leq 4$.
\end{lemma}

\begin{proof}
That $\E[Y] = s(x) - f(\zeros)$ is immediate from the definition of the surrogate \cref{eq:cut:s} and the construction of the algorithm.
The bound on the Orlicz norm follows from the triangle inequality for Orlicz norms (\cref{fact:orlicz}) 
and because the noise is subgaussian (\cref{ass:noise}). 
\end{proof}

\begin{algorithm}[h!]
\begin{algcontents}
\begin{lstlisting}
def $\ESTIMATEG$($\eps, \delta, x$):
  $n = \frac{484 d \log(2/\delta)}{\eps^2}$
  for $t = 1$ to $n$:
    sample $U_t$ from $\cU(\sphere_1)$ and $X_t = \frac{x}{2} + \frac{U_t}{4}$
    let $Y_t \sim f(X_t)$
    $\hat s'(x) \triangleq \frac{4d}{n} \sum_{t=1}^n U_t Y_t$
  return $\hat s'(x)$
\end{lstlisting}
\caption{Returns an unbiased estimate of $s'(x)$}
\label{alg:cut:estimate-g}
\end{algcontents}
\end{algorithm}

\FloatBarrier

\cref{alg:cut:estimate-g} returns an estimate of the gradient of $s'(x)$.
The following lemma provides a high-probability bound on the quality of this estimate.

\begin{lemma}\label{lem:cut:gradient-simple}
Suppose that $\hat s'(x)$ is the output of \cref{alg:cut:estimate-g} given input $x \in \ball_1$.
Then, for any $\eta \in \sphere_1$, with probability at least $1 - \delta$,
\begin{align*}
\left|\ip{\hat s'(x) - s'(x), \eta}\right| \leq \eps \,.
\end{align*}
\end{lemma}

\begin{proof}
Since $Y_t = f(X_t) + \eps_t$ where $\norm{\eps_t}_{\psi_2} \leq 1$, it follows from the assumption that $f \in \sF_{\pb}$, the triangle inequality 
for the norm $\norm{\cdot}_{\psi_2}$ (\cref{fact:orlicz}) and
\cref{lem:orlicz-bound} that
\begin{align*}
\norm{Y_t}_{\psi_2} \leq \norm{\eps_t}_{\psi_2} + \norm{f(X_t)}_{\psi_2} \leq 1 + \frac{1}{\sqrt{\log(2)}} \,.
\end{align*}
Combining this with \cref{prop:orlicz:product,prop:orlicz:sphere} shows that
\begin{align*}
\snorm{\sip{U_t, \eta} Y_t}_{\psi_1}
&\leq \snorm{\sip{U_t, \eta}}_{\psi_2} \snorm{Y_t}_{\psi_2} \\
&\leq \snorm{\eta}\sqrt{\frac{4}{3(d+1)}} \left(1 + \frac{1}{\sqrt{\log(2)}}\right) 
\leq \frac{2.6}{\sqrt{d+1}}\,.
\end{align*}
Let $Z_t = \ip{4 d U_t Y_t - s'(x), \eta}$. The random variables $Z_1,\ldots,Z_n$ are independent and $\E[Z_t] = 0$ 
for all $1 \leq t \leq n$ because by \cref{eq:cut:grad}, $4dU_t Y_t$ is 
an unbiased estimator of $s'(x)$. By the above display and the fact that $\E[4d U_t Y_t] = s_t'(x)$, 
\begin{align*}
\snorm{Z_t}_{\psi_1} 
&= \norm{\sip{4d U_t Y_t, \eta} - \E[\sip{4d U_t Y_t, \eta}]}_{\psi_1} \\
\tag*{\cref{lem:orlicz-center}}
&\leq \left(1 + \frac{1}{\log(2)}\right) \snorm{4d\sip{U_t, \eta} Y_t}_{\psi_1}  \\
\tag*{\cref{fact:orlicz}}
&= 4d\left(1 + \frac{1}{\log(2)}\right) \snorm{\sip{U_t, \eta} Y_t}_{\psi_1} \\
&\leq 11 \sqrt{d}\,.
\end{align*}
Therefore, by Bernstein's inequality\index{Bernstein's inequality} (\cref{thm:bernstein}) with probability at least $1 - \delta$,
\begin{align*}
\left|\ip{\hat s'(x) - s'(x), \eta}\right| 
&= \left|\frac{1}{n} \sum_{t=1}^n Z_t\right| \\
&\leq 11\sqrt{d} \max\left(\sqrt{\frac{4 \log(2/\delta)}{n}}, \frac{2 \log(2/\delta)}{n}\right) \\
&\leq \eps \,,
\end{align*}
where the last inequality follows from the definition of $n$.
\end{proof}

At last we are in a position to use \cref{alg:cut:inf} to find a suitable point to cut and \cref{alg:cut:estimate-g} to 
estimate the gradient of the surrogate loss.

\begin{algorithm}[h!]
\begin{algcontents}
\begin{lstlisting}
def $\CUT$($f, \eps, \delta, r$):
  let $\rho$ be the uniform measure on $\ball_1\index{uniform measure}$ 
  $x = \INF(\frac{\eps}{2}, \frac{\delta}{2}, \rho, \SAMPLE)$      &\Comment{\cref{alg:cut:inf}}&
  $g = \ESTIMATEG(\frac{\eps}{4r}, \frac{\delta}{2}, x)$           &\Comment{\cref{alg:cut:estimate-g}}&
  return $H(x,g)$
\end{lstlisting}
\caption{Returns a half-space satisfying \cref{eq:cut:conds} under \cref{ass:cut-local}}
\label{alg:cut:cut}
\end{algcontents}
\end{algorithm}
\FloatBarrier

\begin{theorem}\label{thm:cut:cut}
Suppose that $f(y) \leq f(\zeros) - \eps$. 
The following hold:
\begin{enumerate}
\item With probability at least $1 - \delta$ \cref{alg:cut:cut} returns a half-space $H$ with $y \in H$ and $\partial H \cap \ball_1 \neq \emptyset$.
\item In expectation \cref{alg:cut:cut} makes at most $O\left(\frac{d r^2 \log(1/\delta)}{\eps^2}\right)$ queries to the loss function.
\end{enumerate}
\end{theorem}

\begin{proof}
Recall that $h(x) = s(x) - f(\zeros)$. By \cref{lem:cut:large},
\begin{align*}
\int_{\ball_1} h(x) \d{\rho}(x) \geq 0 \,.
\end{align*}
Moreover, \cref{lem:cut:h} shows that \cref{alg:cut:oracle} supplies the kind of stochastic oracle for $h$ needed by
\cref{alg:cut:inf}.
Hence, by \cref{thm:cut:inf} ,with probability at least $1 - \delta/2$, the call to \cref{alg:cut:inf} returns an $x \in \ball_1$ such that
$h(x) \geq -\frac{\eps}{2}$, which implies that $s(x) - f(\zeros) \geq -\frac{\eps}{2}$.
By assumption $f(\zeros) \geq f(y) + \eps$.
Combining this with a union bound and \cref{lem:cut:gradient-simple} shows that with probability at least $1 - \delta$,
\begin{align*}
f(y) 
&\geq s(y) \\
&\geq s(x) + \ip{s'(x), y - x} \\
&\geq f(\zeros) - \frac{\eps}{2} + \ip{s'(x), y - x} \\
&\geq f(y) + \frac{\eps}{2} + \ip{s'(x), y - x} \\
&\geq f(y) + \ip{\hat s'(x), y - x} \,.
\end{align*}
Therefore $\ip{\hat s(x), y - x} \leq 0$, which shows that $y \in H$ as required.
Since $x \in \ball_1$ and $x \in \partial H(x, \hat s'(x))$ it follows trivially that $\partial H \cap \ball_1 \neq \emptyset$.
The bound on the number of queries follows from \cref{thm:cut:inf} and the construction of \cref{alg:cut:estimate-g}.
\end{proof}

\section{Centre of Gravity Method}\label{sec:cut:cog} \index{centre of gravity method|(}

Let $K \subset \R^d$ be a convex body and $H$ be a half-space. 
We are interested in the conditions on $H$ and $K$ such that $\vol(K \cap H) \leq \gamma \vol(K)$ for some constant $\gamma$.
Gr\"unbaum's inequality provides such a result for half-spaces $H$ passing through the centre of mass: \index{Gr\"unbaum's inequality}

\begin{theorem}[\citealt{Gru60}]
Let $K$ be a convex body, $y = \frac{1}{\vol(K)} \int_K x \d{x}$ and $H = H(y, \eta)$ for any direction $\eta \neq \zeros$. Then
\begin{align*}
\frac{\vol(K\cap H)}{\vol(K)} \leq \left(\frac{d}{d+1}\right)^d \leq 1 - \frac{1}{e} \,.
\end{align*}
\end{theorem}

The half-space need not pass through exactly the centre:

\begin{theorem}[\citealt{bertsimas2004solving}]\label{thm:Gr-approx}
Let $K$ be a convex body in isotropic position and $H = \{x \colon \sip{x - y, \eta}\}$ for unit vector $\eta$.
Then
\begin{align*}
\frac{\vol(K \cap H)}{\vol(K)} \leq 1 + \norm{y} - \frac{1}{e} \,.
\end{align*}
\end{theorem}

By \cref{thm:isotropic}, when $K$ is isotropic, then $\ball_1 \subset K \subset \ball_{1+d}$ and this is more or less tight 
for the simplex\index{simplex} in isotropic position. \cref{thm:Gr-approx} shows that cutting $K$ anywhere inside $\smash{\ball_{1/(2e)}} \subset K$ will
divide the set into two nearly equal pieces. 
Given a convex body $K$, recall from the discussion prior to \cref{thm:isotropic} that $\ISO_K \colon \R^d \to \R^d$ is the affine map\index{affine!map} such that $\ISO_K(K)$ is isotropic.
Note that $\ISO_K$ is generally non-trivial to compute exactly, but can be approximated with reasonable precision using sampling
as explained in \cref{sec:reg:rounding}. 
\cref{thm:Gr-approx} can be combined with \cref{alg:cut:cut} to obtain a simple algorithm for bandit convex optimisation with near-optimal sample
complexity guarantees. Keeping things simple, we assume exact computation and ask you handle the approximation errors in \cref{ex:cut:approx}.

\begin{algorithm}[h!]
\begin{algcontents}
\begin{lstlisting}
args: $\eps \in (0,1)$, $\delta \in (0,1)$
$k_{\max} = 1 + \ceil{\log\left(\frac{(\eps/4)^d}{d! \vol(\ball_d)}\right) \bigg/ \log(1-1/(2e))}$ and $K_1 = K$
for $k = 1$ to $k_{\max}$:
  $T_k = (2e) \ISO_{K_k}$ and $f_k = f \circ T_k^{-1}$ and $x_k = T_k^{-1}(\zeros)$
  $H_k = \CUT(f_k, \frac{\eps}{4}, \frac{\delta}{2(d+1) k_{\max}}, 4ed)$ &\Comment{\cref{alg:cut:cut}}&
  update $K_{k+1} = K_k \cap T_k^{-1}(H_k)$
return $\BAI(\frac{\eps}{2}, \frac{\delta}{2}, x_1,\ldots,x_{k_{\max}}) \index{best arm identification}$ &\Comment{\cref{alg:cut:bai}}&
\end{lstlisting}
\caption{Centre of gravity for convex bandits}
\label{alg:cut:cog}
\end{algcontents}
\end{algorithm}

\FloatBarrier

\begin{remark}
You might wonder if the final call to $\BAI$ is necessary. Maybe the algorithm should simply return $x_{k_{\max}}$. 
This does not work. 
There is no particular reason to expect that $k \mapsto f(x_k)$ should be decreasing. Indeed, $x_1$ could by chance be the minimiser of $f$.
\end{remark}

\begin{theorem}\label{thm:cut:cog}
Under \cref{ass:cut} the following hold:
\begin{enumerate}
\item With probability at least $1 - \delta$ Algorithm~\ref{alg:cut:cog} outputs an $x$ such that $f(x) \leq \inf_{y \in K} f(y) + \eps$. 
\item The expected number of queries to the loss function made by Algorithm~\ref{alg:cut:cog} is at most
\begin{align*}
O\left(\frac{d^4}{\eps^2} \log\left(\frac{d}{\eps}\right) \log\left(\frac{d \log(1/\eps)}{\delta}\right)\right) \,.
\end{align*}
\end{enumerate}
\end{theorem}

\begin{proof}
Let $f_\star = \inf_{x \in K} f(x)$.
We establish the claim in three steps, beginning with a proof that with high probability the algorithm indeed returns a point that is near-optimal.
The second step proves the key lemma used in the first. The last step bounds the sample complexity.\index{sample complexity}

\stepsection{Step 1: Correctness}
Suppose that
\begin{align}
\bbP\left(\min \{f(x_k) \colon 1 \leq k \leq k_{\max}\} \leq f_\star + \frac{\eps}{2}\right) \geq 1 - \frac{\delta}{2} \,.
\label{eq:cut:correct}
\end{align}
By construction the algorithm returns $\BAI(\frac{\eps}{2}, \frac{\delta}{2}, x_1,\ldots,x_{k_{\max}})$ and by \cref{thm:cut:bai},
this subroutine returns an $x \in \{x_1,\ldots,x_k\}$ such that
\begin{align*}
\bbP\left(f(x) \leq \min_{1 \leq k \leq k_{\max}} f(x_k) + \frac{\eps}{2}\right) \geq 1 - \frac{\delta}{2} \,.
\end{align*}
A union bound combining the above display and \cref{eq:cut:correct} shows that
with probability at least $1 - \delta$ the algorithm returns an $x$ such that $f(x) \leq f_\star + \eps$ as required.
The remainder of this step is devoted to establishing \cref{eq:cut:correct}.
Let $S \subset K$ be a simplex such that $f(x) \leq f_\star + \eps/2$ for all $x \in S$ and
\begin{align}
\vol(S) \geq \left(\frac{\eps}{4}\right)^d \frac{\vol(K)}{d! \vol(\ball_{d})} \,,
\label{eq:cut:S}
\end{align}
which exists by \cref{lem:cut:vol}. We also let $S_k = T_k(S)$, which is a simplex in $J_k = T_k(K_k)$.
We prove the following lemma in the next step:

\begin{lemma}\label{lem:cut:induction}
Suppose that $S \subset K_k$ and $f(x_k) > f_\star + \eps/2$. Then
\begin{align*}
\bbP_{k-1}\left(S \subset K_{k+1} \right) \geq 1 - \frac{\delta}{2k_{\max}}
\end{align*}
where $\bbP_{k-1}$ is the probability measured conditioned on all information available at the end of iteration $k-1$.
\end{lemma}

By induction, a union bound over $1 \leq k \leq k_{\max}$ and \cref{lem:cut:induction}, with probability at least $1 - \frac{\delta}{2}$ at least one of the following holds:
\begin{enumerate}
\item There exists a $1 \leq k \leq k_{\max}$ such that $f(x_k) \leq f_\star + \eps/2$; or \label{eq:cut:cog:opt}
\item $S \subset K_{k_{\max} + 1}$. \label{eq:cut:cog:vol}
\end{enumerate}
In a moment we show that
\begin{align*}
\vol(K_{k_{\max}+1}) \leq \left(1 - \frac{1}{2e}\right)^{k_{\max}} \vol(K) < \vol(S)\,,
\end{align*}
which contradicts \ref{eq:cut:cog:vol} and therefore \ref{eq:cut:cog:opt} occurs with probability at least $1 - \delta$. 
The second inequality in the above display follows from the definition of $k_{\max}$ and \cref{eq:cut:S}.
For the first, by definition $T_k = (2e) \ISO_{K_k}$, which means that $\frac{1}{2e} J_k$ is in isotropic position. Furthermore, by the definition of the algorithm 
$\partial H_k \cap \ball_1 \neq \emptyset$ and hence $\frac{1}{2e} \partial H_k \cap \ball_{1/2e} \neq \emptyset$. Then
\begin{align*}
\frac{\vol(K_{k+1})}{\vol(K_k)}
= \frac{\vol(\textstyle{\frac{1}{2e}} J_{k+1})}{\vol(\textstyle{\frac{1}{2e}}J_k)} 
= \frac{\vol(\textstyle{\frac{1}{2e}} J_k \cap \textstyle{\frac{1}{2e}} H_k)}{\vol(\textstyle{\frac{1}{2e}}J_k)} 
\leq 1 - \frac{1}{2e} \,,
\end{align*}
where the last inequality follows from \cref{thm:Gr-approx}.

\stepsection{Step 2: Proof of \cref{lem:cut:induction}}
Suppose that $y$ is a vertex of $S_k = T_k(S)$. Then $f_k(y) \leq f_\star + \frac{\eps}{4}$ and $f_k(\zeros) > f_\star + \frac{\eps}{2}$, which means that
\begin{align*}
f_k(y) < f_k(\zeros) - \frac{\eps}{4}\,.
\end{align*}
Hence, by \cref{thm:cut:cut},
\begin{align*}
\bbP_{k-1}\left(y \in H_k\right) \geq 1 - \frac{\delta}{2(d+1) k_{\max}}\,.
\end{align*}
Since $H_k$ is convex, if all vertices of $S_k$ are in $H_k$ it follows that $S_k \subset H_k$.
Hence, a union bound over the $d+1$ vertices of $S_k$ combined with the above display shows that
\begin{align*}
\bbP_{k-1}\left(S_k \subset H_k\right) \geq 1 - \frac{\delta}{2 k_{\max}} \,.
\end{align*}
Therefore
\begin{align*}
\bbP_{k-1}\left(S \subset K_{k+1}\right) 
\tag*{Definition of $K_{k+1}$}
&= \bbP_{k-1}\left(S \subset K_k \cap T_k^{-1}(H_k)\right) \\
\tag*{Since $S \subset K_k$}
&= \bbP_{k-1}\left(S \subset T_k^{-1}(H_k) \right) \\
\tag*{Since $T_k$ is invertible}
&= \bbP_{k-1}\left(S_k \subset H_k\right) \\
&\geq 1- \frac{\delta}{2 k_{\max}}
\end{align*}
as required.

\stepsection{Step 3: Sample complexity}
There are $k_{\max}$ iterations and 
\begin{align*}
k_{\max} = \Theta\left(d \log(d/\eps)\right) \,.
\end{align*}
The algorithm makes $k_{\max}$ calls to $\CUT$ with radius bound $r = 4ed$, precision $\frac{\eps}{2}$ and confidence $\bar \delta$ with
\begin{align*}
\bar \delta = \frac{\delta}{2 (d+1) k_{\max}} \,.
\end{align*}
By \cref{thm:cut:cut}, the expected number of queries of the loss used by each call to $\CUT$ is
\begin{align*}
O\left(\frac{dr^2 \log(1 / \bar \delta)}{\eps^2}\right) = O\left(\frac{d^3}{\eps^2} \log\left(\frac{d k_{\max}}{\delta}\right)\right)\,.
\end{align*}
Therefore the total number of queries to the loss function used by all calls to $\CUT$ is bounded in expectation by
\begin{align*}
O\left(\frac{d^3 k_{\max}}{\eps^2} \log\left(\frac{d k_{\max}}{\delta}\right)\right) 
= O\left(\frac{d^4}{\eps^2} \log\left(\frac{d \log(d/\eps)}{\delta}\right) \log\left(\frac{d}{\eps}\right)\right) \,.
\end{align*}
Finally, the call to $\BAI$ uses just
\begin{align*}
O\left(\frac{k_{\max}}{\eps^2} \log\left(\frac{1}{\delta}\right)\right)
\end{align*}
queries.
The result follows by combining the previous two displays.
\end{proof}

The main problem with the centre of gravity method is the need to find the affine maps\index{affine!map} $T_k$.
As we discussed in \cref{sec:reg:rounding}, when $K$ has a reasonable representation this can be done approximately by sampling.
But this is a heavy procedure that one would like to avoid.

\section{Method of the Inscribed Ellipsoid}\label{sec:cut:inscribed-alg}\index{method of inscribed ellipsoid}

When $K$ is a polytope,\index{polytope} the method of the inscribed ellipsoid provides a more computationally efficient cutting plane mechanism than the centre of gravity method.
Given a convex body $K \subset \R^d$, let $\MVIE(K)$ be the ellipsoid of largest volume contained in $K$, which is unique.
Furthermore, we have the following analogue of Gr\"unbaum's inequality:

\begin{proposition}[\citealt{khachiyan1990inequality}]\label{prop:cut:inscribed}
Let $K$ be a convex body, $E = \MVIE(K)$ and $H$ be a half-space with $\partial H$ intersecting the centre of $E$.
Then $\vol(\MVIE(K \cap H)) \leq 0.85 \vol(E)$.
\end{proposition}

The standard method of inscribed ellipsoids initialises $K_1 = K$ and subsequently computes $K_{k+1}$ from $K_k$ as follows:
\begin{itemize}
\item Find $E_k = \MVIE(K_k)$.
\item Let $x_k$ be the centre of $E_k$ and $H_k = H(x_k, g_k)$ where $g_k \in \partial f(x_k)$. 
\item Update $K_{k+1} = K_k \cap H_k$.
\end{itemize}
Of course, this is only feasible with access to subgradients\index{subgradient} of the loss $f$. We will use \cref{alg:cut:cut} to find $H_k$ instead,
but for this we can only guarantee that $\partial H_k$ passes through a point close to $x_k$ and therefore need a refinement of \cref{prop:cut:inscribed}.

\begin{proposition}\label{prop:cut:inscribed-gen}
Let $K$ be a convex body, $E = \MVIE(K)$ and $H$ be a half-space such that $\partial H \cap E(\frac{1}{2}) \neq \emptyset$, where $E(\frac{1}{2})$ is the same ellipsoid with half the radius. 
Then $\vol(\MVIE(K \cap H)) \leq 0.97 \vol(E)$.
\end{proposition}

As far as we know this result is new, though our proof -- deferred to \cref{sec:cut:inscribed} -- follows that by \cite{khachiyan1990inequality} 
in almost every detail. 

\begin{algorithm}[h!]
\begin{algcontents}
\begin{lstlisting}
args: $\eps \in (0,1)$, $\delta \in (0,1)$
$k_{\max} = 1 + \ceil{\log\left(\frac{(\eps/4)^d}{d! \vol(\ball_d)}\right) \bigg/ \log(0.97)}$ and $K_1 = K$
for $k = 1$ to $k_{\max}$:
  $E_k = \MVIE(K_k) = E(x_k, A_k)$
  $T_k(x) = 2 A_k^{-1/2}(x - x_k)$ and $f_k = f \circ T_k^{-1}$ 
  $H_k = \CUT(f_k, \frac{\eps}{4}, \frac{\delta}{2(d+1) k_{\max}}, 2d)$ &\Comment{\cref{alg:cut:cut}}&
  update $K_{k+1} = K_k \cap T_k^{-1}(H_k)$
return $\BAI(\frac{\eps}{2}, \frac{\delta}{2}, x_1,\ldots,x_{k_{\max}}) \index{best arm identification}$ &\Comment{\cref{alg:cut:bai}}&
\end{lstlisting}
\caption{Method of the inscribed ellipsoid for convex bandits}
\label{alg:cut:inscribed}
\end{algcontents}
\end{algorithm}

\FloatBarrier

\subsubsection*{Computation}
Finding the maximum-volume enclosed ellipsoid is the most computationally heavy part of \cref{alg:cut:inscribed}, but this only needs to be done
$\tilde O(d)$ cumulatively.
Suppose that $P = \{x \colon Cx \leq b\}$ is a convex body with $C \in \R^{m \times d}$. Since $P$ is a convex body, $m \geq d + 1$.
\cite{KT93} show that $\MVIE(P)$ can be computed to extreme precision in $\tilde O(m^{3.5})$ time.
Their algorithm is based on interior point methods.\index{interior point methods}
More generally, the problem is a semidefinite program and is practically solvable using modern solvers \citep[\S8.4.2]{BV04}.
The complexity per round is dominated by evaluating the affine map\index{affine!map} in the definition of $f_k$, which is $O(d^2)$.

\begin{theorem}\label{thm:cut:inscribed}
Under \cref{ass:cut} the following hold:
\begin{enumerate}
\item With probability at least $1 - \delta$ \cref{alg:cut:inscribed} outputs an $x$ such that $f(x) \leq \inf_{y \in K} f(y) + \eps$.
\item The expected number of queries to the loss function made by \cref{alg:cut:inscribed} is at most
\begin{align*}
O\left(\frac{d^4}{\eps^2} \log\left(\frac{d}{\eps}\right) \log\left(\frac{d \log(1/\eps)}{\delta}\right)\right) \,.
\end{align*}
\end{enumerate}
\end{theorem}

\begin{proof}
The argument is the same as the proof of \cref{thm:cut:cog}.
The only difference is volume calculation.
Recall that $S \subset K$ is a simplex such that
\begin{align*}
\vol(S) \geq \left(\frac{\eps}{4}\right)^d \frac{\vol(K)}{d! \vol(\ball_d)} \,,
\end{align*}
which means that $\log(\vol(S)) = \Omega\left(d \log(d/\eps)\right)$.
Then
\begin{align*}
\log(\vol(K_{k+1})) 
&\leq \log(\vol(E_{k+1}(d)))  \\
&=d \log(d) + \log(\vol(E_{k+1}) ) \\
&\leq d \log(d) + \log(\vol(E_1)) + k \log(0.97) \\
&\leq d \log(d) + \log(\vol(K)) + k \log(0.97)\,, 
\end{align*}
where in the first inequality we used the fact that $K_{k+1} \subset E_{k+1}(d)$, which follows from John's theorem (\cref{thm:john}).\index{John's theorem}
The second inequality follows from \cref{prop:cut:inscribed-gen} and the last since $E_1 \subset K$ by definition. 
Hence $\vol(K_{k+1}) \leq \vol(S)$ once $k \geq k_{\max} = \Theta(d \log(d/\eps))$ iterations, just as for the centre of gravity method.
\end{proof}

\index{centre of gravity method|)}

\section{Ellipsoid Method}\label{sec:cut:ellipsoid}\index{ellipsoid method|(}

The ellipsoid method is an alternative cutting plane method that is statistically less efficient than both the method of the inscribed
ellipsoid and the centre of gravity method. The advantage is that it can be implemented using a separation oracle \index{separation oracle}
only and relatively efficiently.
Given a convex body $K$ let $\MVEE(K)$ be the ellipsoid of minimum volume containing $K$.
The classical ellipsoid method starts with an ellipsoid $E_1$ containing $K$.
\begin{itemize}
\item Let $x_k$ be the centre of $E_k$.
\item If $x_k \in K$, then let $H_k = \{x \colon \ip{x - x_k, g_k} \leq 0\}$ with $g_k \in \partial f(x_k)$. Otherwise let $H_k = H(x_k, \SEP_K(x_k))$.
\item Let $E_{k+1} = \MVEE(E_k \cap H_k)$.
\end{itemize}
We give the formula for $E_{k+1}$ as well as references for the following claims in Note~\ref{note:ellipsoid}.
The beauty of the ellipsoid method is that there is a closed-form expression for $\MVEE(E \cap H)$ when $E$ is an ellipsoid and $H$ is a half-space.
Furthermore, $\vol(E_{k+1}) = O(1-1/d) \vol(E_k)$.
By construction we have $K_k \subset E_k$. 
There is no need for $x_k$ to be the centre of $E_k$. In fact, when $x_k \in E_k(\frac{1}{2d})$ it holds 
that $\vol(E_{k+1}) \leq (1 - \frac{1}{20d}) \vol(E_k)$.
Based on this, one might try to implement the ellipsoid method by letting $E_k = E(x_k, A_k)$, $T_k = 2d A_k^{-1/2}(x - x_k)$ and $f_k = f \circ T_k^{-1}$, which are
chosen so that
\begin{align*}
T_k(E_k) = \ball_{2d} \quad\text{and}\quad
T_k(E_k(1/(2d))) = \ball_1 \,.
\end{align*}
We want to run \cref{alg:cut:cut} on $f_k$. The problem is that there is no reason why $E_k(1/(2d)) \subset K$ should hold. Equivalently, it can happen that
$\ball_1 \not\subset \dom(f_k)$.
There are two ways to remedy this.
\begin{enumerate}
\item There exists a modification of the ellipsoid method that guarantees $E_k(1/r) \subset K_k \subset E_k$ with $r = O(d^{3/2})$.
Then you can let $T_k = r A_k^{-1/2}(x - x_k)$ which means that
\begin{align*}
T_k(E_k) = \ball_r \quad \text{and} \quad 
T_k(E_k(1/r)) = \ball_1  \,.
\end{align*}
This ensures that $\ball_1 \subset \dom(f_k)$ but the price is that $r = O(d^{3/2})$ and this leads to a final sample complexity of 
\begin{align*}
O\left(\frac{d^6 \polylog(d, 1/\delta, 1/\eps)}{\eps^2} \right)\,.
\end{align*}
\item Use the extension defined in \cref{prop:reg:bandit-extension-eps} in place of $f$ so that $\dom(f_k) = \R^d$. \index{extension}
The challenge is that $f_k$ may not be bounded anymore, so the concentration analysis in the proof of \cref{lem:cut:gradient-simple} is no longer valid. 
To handle this, note that if the algorithm ever queries a point $f_k$ at some $y$ for which $T_k^{-1}(y) \notin K$, then you can use the separation oracle\index{separation oracle} to define
the cutting plane. On the other hand, if the probability of querying $f_k$ at such a point is very low, then the additional variance introduced by using the surrogate
loss is minimal and some version of \cref{lem:cut:gradient-simple} continues to hold. After bashing out all the details you will eventually arrive at a sample
complexity bound of
\begin{align*}
O\left(\frac{d^5 \polylog(d, 1/\delta, 1/\eps)}{\eps^2}\right)\,.
\end{align*}
\end{enumerate}

\index{ellipsoid method|)}

\section[Proof of Proposition~\ref{prop:cut:inscribed-gen}]{Proof of Proposition~\ref{prop:cut:inscribed-gen} ($\skippy$)}\label{sec:cut:inscribed} 

We start with a lemma:

\begin{lemma}[Lemma 1, \citealt{khachiyan1990inequality}]\label{lem:cut:inscribed-lem}
Let $E_\star = E(x_\star, A_\star) = \MVIE(K)$ and $E = E(x, A) \subset K$.
Then
\begin{align*}
\frac{\vol(E)}{\vol(E_\star)} 
\leq \min_{\chi \in I} \chi \exp(1 - \chi) 
\,,\qquad I = \left[\min_{\eta \in \sphere_1} \frac{\norm{A^{-1/2} \eta}}{\norm{A_\star^{-1/2} \eta}}, \max_{\eta \in \sphere_1} \frac{\norm{A^{-1/2} \eta}}{\norm{A_\star^{-1/2} \eta}}\right]\,.
\end{align*}
\end{lemma}

\begin{proof}[Proof of \cref{prop:cut:inscribed-gen}]
Assume by means of a coordinate change that $E_\star = \MVIE(K) = E(x, D^2)$ and $G_\star = \MVIE(K \cap H) = E(-x, D^{-2})$ for some diagonal matrix $D$ 
with eigenvalues $\lambda_1 \geq \cdots \geq\lambda_d$ and $x \in \R^d$. This has been chosen so that $E_\star = x + D(\ball_1)$, 
which means that $\vol(E_\star) = \det(D) \vol(\ball_1)$.
We claim that $E = E(\zeros, \id+xx^\top) \subset K$.
Recall that the support function of a convex set $A \subset \R^d$ is $h_A(u) = \sup_{x \in A} \ip{u, x}$.\index{support function}
Suppose that $u \in \R^d$. Then
\begin{align*}
h_{E_\star}(u) &= \ip{u, x} + \snorm{D u} \leq h_K(u) \\
h_{G_\star}(u) &= -\ip{u, x} + \snorm{D^{-1} u} \leq h_K(u) \,.
\end{align*}
Multiplying these inequalities shows that 
\begin{align*}
\norm{u}^2 
&\leq \snorm{Du} \snorm{D^{-1} u} \leq (h_K(u) - \ip{u, x}) (h_K(u) + \ip{u, x}) \\
&= h_K(u)^2 - \ip{u, x}^2 \,,
\end{align*}
where the first inequality follows from Cauchy--Schwarz.
Rearranging shows that for any $u \in \R^d$,
\begin{align*}
h_E(u) = \norm{\sqrt{\id + xx^\top} u} = \sqrt{\norm{u}^2 + \ip{u, x}^2} \leq h_K(u)\,.
\end{align*}
Therefore $E \subset K$.
Next, with $A^{-1/2} = \sqrt{\id + xx^\top}$,
\begin{align*}
\min_{\eta \in \sphere_1} \frac{\norm{A^{-1/2} \eta}}{\norm{D \eta}} \leq \frac{\sqrt{1 + \norm{x}^2}}{\lambda_1}
\leq \max_{\eta \in \sphere_1} \frac{\norm{A^{-1/2} \eta}}{\norm{D \eta}}\,.
\end{align*}
Hence, by \cref{lem:cut:inscribed-lem},
\begin{align*}
\frac{\sqrt{1 + \norm{x}^2}}{\lambda_1 \cdots \lambda_d} = \frac{\vol(E)}{\vol(E_\star)} \leq \frac{\sqrt{1+ \norm{x}^2}}{\lambda_1} \exp\left(1 - \frac{\sqrt{1 + \norm{x}^2}}{\lambda_1}\right)
\end{align*}
and therefore
\begin{align*}
\frac{\vol(G_\star)}{\vol(E_\star)} &= \frac{1}{\lambda_1^2 \cdots \lambda_d^2} \leq \frac{1}{\lambda_1^2} \exp\left(2 - \frac{2 \sqrt{1 + \norm{x}^2}}{\lambda_1}\right)\,.
\end{align*}
By assumption there exists a point $y \in \partial H \cap E_\star(\frac{1}{2})$.
By definition $y \notin \interior(G_\star)$. Since $y \in E_\star(\frac{1}{2})$ there exists an $\eta \in \ball_{1/2}$ such that
$y = x + D \eta$. Therefore
\begin{align*}
1 \leq \norm{y - (-x)}_{D^2} = \norm{2 x + D\eta }_{D^2} \leq 2\lambda_1 \norm{x} + \frac{\lambda_1^2}{2} \,, 
\end{align*}
using in the first inequality the fact that $y \notin \interior(G_\star) = \{z \colon \norm{z + x}_{D^2} < 1\}$.
Rearranging shows that
\begin{align*}
\norm{x}^2 \geq \frac{\max(0, 1 - \lambda_1^2/2)^2}{4\lambda_1^2} \,.
\end{align*}
Therefore
\begin{align*}
\frac{\vol(G_\star)}{\vol(E_\star)} 
&\leq \sup_{\lambda_1 > 0} \frac{1}{\lambda_1^2} \exp\left(2 - \frac{2 \sqrt{1 + \frac{\max(0, 1 - \lambda_1^2/2)^2}{4\lambda_1^2}}}{\lambda_1}\right) \leq 0.97 \,.
\qedhere
\end{align*}
\end{proof}

\section{Notes}

\begin{enumeratenotes}
\item \cref{alg:cut:bai} is due to \cite{EMM06}. For our application it makes no difference, but you may be interested to 
know this algorithm is not quite optimal in terms of the logarithmic factors. The optimal algorithm is called median elimination and is 
also due to \cite{EMM06}.
\item Infinite-armed bandits of the kind studied in \cref{sec:cut:inf} are still a little niche and go back to \cite{berry1997bandit}.\index{bandit!infinite-armed}
Generally speaking the objective is to find a near-optimal arm and various assumptions are made on $h$ and $\rho$.
\cite{carpentier2024simple} essentially introduced the much weaker objective of finding an action that is close to the mean of $h$ under $\rho$ and 
provided a slightly more complicated algorithm than analysed here but based on the same principles.

\item The algorithm for finding a suitable cutting plane is inspired by \cite{LG21a} and \cite{carpentier2024simple}. The former paper uses an optimistic\index{optimistic} surrogate
combined with an overly complicated method for finding the cutting point. The latter uses the pessimistic\index{pessimistic} surrogate from \cref{chap:sgd} along with a beautiful
argument about when its gradient is nevertheless useful to define a cutting plane at certain points. 
She uses an infinite-armed bandit to find suitable points, but
this is combined with a more complicated recursive argument. The rates we obtain here are the same up to logarithmic factors 
as those obtained by \cite{carpentier2024simple}.

\item
The algorithms presented in this chapter do not have well-controlled regret.
What is missing is a degree of adaptivity within the mechanism for finding a cutting plane that stops early when there is a large margin.
This idea was used by \cite{LG21a} and can probably be adapted to the more refined algorithms in this chapter.

\item 
Vaidya's method \citep{vaidya1996new} and its refinement by \cite{lee2015faster} provide even faster cutting plane methods for polytopes. \index{Vaidya's method}\index{polytope}
What is needed to use these methods in combination with \cref{alg:cut:cut} is to prove that an inexact centre suffices to drive the algorithm, just
as we did for the method of inscribed ellipsoids with \cref{prop:cut:inscribed-gen}.

\item 
\label{note:ellipsoid} \index{ellipsoid method}
All of the assertions about the ellipsoid method in \cref{sec:cut:ellipsoid} are explained and proven in Chapters 3 and 4
of the wonderful book by \cite{MLA12}. When the half-space is not at the centre, the method is referred to as the shallow-cut ellipsoid method. 

\begin{theorem}\label{thm:ellipsoid}
Suppose the dimension $d \geq 2$.
Given an ellipsoid $E = E(a, A)$ and half-space $H = H(x, g)$ let $E(b, B) = \MVEE(E \cap H)$. 
Let $\lambda = \frac{\ip{g,a-x}}{\norm{g}_A}$. Then
\begin{enumerate}
\item $H$ intersects $E$ if and only if $\lambda \in [-1,1]$.
Moreover, if $\lambda \in [-1,-1/d]$, then $E(b, B) = E(a, A)$.
\item If $\lambda \in [-1/d,1)$, then with $\eta = \frac{A g}{\norm{g}_A}$,
\begin{align*}
b = a - \frac{1+d\lambda}{d+1} \eta \quad \text{ and } \quad 
B = \frac{d^2(1 - \lambda^2)}{d^2-1}\left[A - \frac{2(1 + d\lambda)}{(d+1)(1+\lambda)} \eta \eta^\top\right] \,.
\end{align*}
\item If $\lambda \in (-1/d,1/d)$, then 
$\displaystyle \vol(\MVEE(E \cap H)) \leq \vol(E) \exp\left(-\frac{(1 - d \lambda)^2}{5d}\right)$.
\end{enumerate}
\end{theorem}

The proof of \cref{thm:ellipsoid} is given by \citet[Chapter 3]{MLA12}.
The advantage of the ellipsoid method is that only a separation oracle is needed.\index{separation oracle} 
The downside is that the sample complexity is a factor of $d$ worse than what we obtained using the centre of gravity or inscribed ellipsoid methods.\index{sample complexity}

\begin{exer}
\faStar\faStar\faStar \faQuestionCircle \quad
Suppose $K$ is represented by a separation oracle.
Use the separation oracle to construct a polytope $P$ such that $K \subset P$. Use the extension in \cref{prop:reg:bandit-extension} to extend the loss from a shrunk subset of $K$ to $P$.
Use the ideas in \cref{sec:cut:ellipsoid} to implement the method of the inscribed ellipsoid and obtain a polynomial time algorithm with $\tilde O(d^4/\eps^2)$ sample complexity.
\end{exer}

\item 
In our analysis of \cref{alg:cut:cog} and \cref{alg:cut:inscribed} we assumed exact computation of isotropic position and the maximum-volume inscribed ellipsoid.
The following exercise asks you to prove these algorithms are robust to approximations:

\begin{exer}\label{ex:cut:approx}
\faStar\faStar\faStar\faBook \quad 
Suppose that $K$ is represented by a separation oracle. Show
that \cref{alg:cut:cog} is robust to approximation of isotropic position and give a complexity bound on all of the following:
\begin{itemizeinner}
\item Number of queries to the loss function
\item Number of calls to the separation oracle
\item Number of arithmetic operations 
\end{itemizeinner}
\end{exer}

\begin{exer}\label{ex:cut:approx2}
\faStar\faStar\faStar\faBook \quad 
Suppose that $K$ is represented by an intersection of half-spaces. Show that \cref{alg:cut:inscribed} is robust to approximation of the maximum-volume inscribed ellipsoid and
give a complexity bound on the number of queries to the loss function and the number of arithmetic operations.
\end{exer}

In both cases all complexities should be $\poly(d, m, 1/\eps, \log(R/r))$ where $m = 1$ for \cref{ex:cut:approx} and the number of constraints defining $K$ in \cref{ex:cut:approx2}, and
it is assumed that $\ball_r \subset K \subset \ball_R$ for known $0 < r \leq R$.
You will need to combine results from many sources. A good place to start would be the references in \cref{sec:reg:compute}.
\end{enumeratenotes}

\chapter[Online Newton Step]{Online Newton Step\copynotice}\label{chap:ons}

\newcommand{\pipm}{v}
\newcommand{\pipmr}{v_\varrho}

We can now present a simple method for obtaining $\tilde O(d^{1.5}\sqrt{n})$ regret for losses in $\cF_\pb$ with the limitation that 
the analysis only works in the stochastic setting where $f_t = f$ for all rounds.\index{setting!stochastic}

\begin{assumption}\label{ass:ons}
The following hold:
\begin{enumerate}
\item The setting is stochastic: $f_t = f$ for all $t$.
\item The loss is bounded: $f \in \cF_\pb$.
\item The constraint set is rounded: $\ball_1 \subset K \subset \ball_{2d}$.
\end{enumerate}
\end{assumption}

The assumption that $K$ is rounded is not restrictive, since the constraint set can be repositioned as explained in \cref{sec:reg:rounding}.
The bandit algorithm presented here is based on online Newton step, which is a second-order online learning algorithm.\index{online Newton step}
Compared to cutting plane methods in \cref{chap:ellipsoid}, the method here has an improved dimension-dependence and can be generalised to the adversarial setting (\cref{chap:ons-adv}).
On the negative side, the analysis is quite involved and the algorithm is hard to tune.
We start the chapter with an intuitive argument about the role of curvature in bandit convex optimisation.
There follows an introduction to online Newton step in the full information setting\index{setting!full information} and a brief explanation of some concepts in convex geometry.
The algorithm and its analysis are presented at the end. 
To ease the presentation and analysis we let $\delta \in (0,1)$ be a small positive constant and
\begin{align*}
L = C \log(1/\delta)\,,
\end{align*}
where $C > 0$ is a sufficiently large universal constant.
We will prove a bound on the regret that holds with probability at least $1 - \delta$ but at various points we implicitly
assume that $\delta \leq \poly(1/n, 1/d)$.

\section{The Blessing and Curse of Curvature}\index{curvature}

\begin{wrapfigure}[5]{r}{4cm}
\vspace{-0.6cm}
\includegraphics[width=4cm]{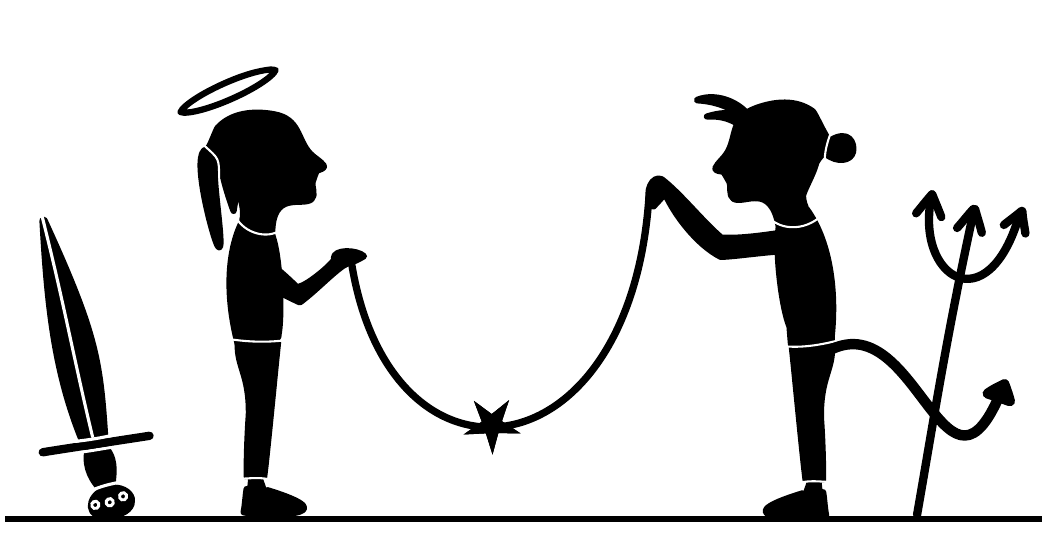}
\end{wrapfigure}
The presence of curvature in bandit convex optimisation is both a blessing and a curse.
The key to obtaining optimal regret is to make sure you exploit the positive aspects while taking care to control the negative ones.
It helps to think about the case where the loss is nearly quadratic in the sense that it has a nearly constant Hessian.
The main implications of high curvature are the following:
\begin{itemize}
\item Smoothing should be done on a smaller radius to maintain a suitably small approximation error. This increases the variance of the gradient estimator, which makes gradient-based
online learning algorithms unstable.
\item The variance of the gradient estimator is modulated by the regularisation of the algorithm, which suggests that the amount of regularisation should increase with the curvature.
But adding more regularisation means the algorithm moves more slowly, which normally increases its regret.
The saving grace is that in the presence of curvature the regret decreases quadratically as the iterate approaches the minimiser.
\end{itemize}
We saw this behaviour already in \cref{sec:ftrl:sm-sc} where the loss was assumed to be in $\cF_{\pb,\psm,\psc}$. 
Provided that $\beta/\alpha$ is not too large, such losses are nearly quadratic. 
\cref{alg:ftrl:basic-sc} uses a self-concordant barrier\index{self-concordant barrier} for regularisation with an additional quadratic that depends on $\alpha$ and uses a gradient estimate that integrates
over a region that is small for large $\alpha$. That is, more curvature implies more regularisation and less smoothing. 
There are multiple challenges when generalising this approach to the setting where the loss is only assumed to be in $\cF_{\pb}$:
\begin{itemize}
\item The amount of curvature is not known.
\item Even if the loss is approximately quadratic, the curvature can be large in some directions and small in others.
\item The loss may not even be differentiable. For example it could be piecewise linear. How should we understand the role of curvature in these situations?
\end{itemize}
The plan is to use a surrogate loss function that does so much smoothing that it is nearly quadratic on a region containing both the current iterate $\mu_t$ and the minimiser of the loss.
The curvature of this surrogate can then be estimated and used in online Newton step, which we explain next.

\section{Introducing Online Newton Step}\label{sec:ons:ons}
\index{online Newton step}
Let $\hat q_1,\ldots,\hat q_n \colon \R^d \to \R$ be a sequence of (possibly non-convex)\index{non-convex} quadratic functions
and consider the full information setting\index{setting!full information}
where in round $t$ the learner proposes $\mu_t \in K$ and observes the entire function $\hat q_t$, and the regret relative to $x \in K$ is 
\begin{align*}
\hqReg_n(x) = \sum_{t=1}^n \left(\hat q_t(\mu_t) - \hat q_t(x)\right) \,. 
\end{align*}
Online Newton step is a second-order method summarised in \cref{alg:ons:abstract}.

\begin{algorithm}[h!]
\begin{algcontents}
\begin{lstlisting}
args: $\eta > 0$, $\Sigma_1^{-1} \in \pd$ and $\mu_1 \in K$
for $t = 1$ to $n$
  let $g_t = \hat q_t'(\mu_t)$ and $H_t = \hat q_t''(\mu_t)$
  update $\Sigma_{t+1}^{-1} = \Sigma_t^{-1} + \eta H_t$
  update $\mu_{t+1} = \argmin_{x \in K} \norm{x - \left[\mu_t - \eta \Sigma_{t+1} g_t\right]}_{\Sigma_{t+1}^{-1}}^2$
\end{lstlisting}
\caption{Online Newton step for quadratic losses}
\label{alg:ons:abstract}
\end{algcontents}
\end{algorithm}

\FloatBarrier

Before the analysis, let us make some connections between online Newton method and other techniques in online learning.

\subsubsection*{Connection to Exponential Weights}
A little notation is needed. Let $\Theta = \R^d \times \pd$ and $\Theta_K = K \times \pd$. 
Let $\mu_\theta$ and $\Sigma_\theta$ be the obvious projections from $\Theta$ into $\R^d$ and $\pd$ respectively
and abbreviate $\cN(\theta) = \cN(\mu_\theta, \Sigma_\theta)$. 
Lastly, let $\KL(\theta, \vartheta)$ be the relative entropy between Gaussian distributions $\cN(\theta)$ and $\cN(\vartheta)$, which has an explicit form:
\begin{align*}
\KL(\theta, \vartheta) = \frac{1}{2} \left[\log \det\left(\Sigma^{\vphantom{-1}}_\vartheta \Sigma_\theta^{-1}\right) + \tr(\Sigma^{\vphantom{-1}}_\theta \Sigma_\vartheta^{-1}) 
+ \norm{\mu_\theta - \mu_\vartheta}^2_{\Sigma_\vartheta^{-1}} - d\right] \,.
\end{align*}
Assume that $\frac{1}{2} \norm{\cdot}^2_{\Sigma_1} + \sum_{s=1}^t \hat q_s$ is convex for all $t$.
Suppose that $p_1$ is the density of $\cN(\mu_1, \Sigma_1)$ and $(p_t)_{t=1}^n$ are Gaussians with parameters $(\theta_t)_{t=1}^n \in \Theta_K$ 
defined inductively as follows. Given $p_t$, define $\tilde \theta_{t+1}$ as the parameters of the Gaussian with density
$\tilde p_{t+1}$ given by
\begin{align*}
\tilde p_{t+1}(x) &= \frac{p_t(x) \exp\left(-\eta \hat q_t(x)\right)}{\int_{\R^d} \exp\left(-\eta \hat q_t(y)\right) p_t(y) \d{y}} \,.
\end{align*}
Then let $\theta_{t+1} = \argmin_{\theta \in \Theta_K} \KL(\theta, \tilde \theta_{t+1})$ and $p_{t+1} = \cN(\theta_{t+1})$.
\index{exponential weights}\index{projection!relative entropy}\index{relative entropy}
The mean $\mu_t$ and covariance $\Sigma_t$ that define $\theta_t$ are exactly the iterates produced by online Newton step \citep{HEK18}.

\subsubsection*{Classical Newton}
The classical Newton method for unconstrained minimisation of a loss function $f \colon \R^d \to \R$ starts with $x_1 \in \R^d$ and uses the update
rule $x_{t+1} = x_t - f''(x_t)^{-1} f'(x_t)$, which corresponds to minimising the quadratic approximation of $f$ at $x_t$.
Online Newton step looks superficially similar, but the preconditioning matrix is based on the accumulated curvature\index{curvature} rather than the local curvature; and
there is the learning rate, which further slows the algorithm.
\index{Newton's method}

\subsubsection*{High-Level Behaviour}
Let us suppose for a moment that $d = 1$ and $f \colon [-1,1] \to [0,1]$ is convex and minimised at $0$. 
In our application, the $\hat q_t$ will be estimates of a quadratic approximation of an extension of $f$. But to simplify our thinking let us suppose that $f$ is quadratic
and $\hat q_t = f$, which means that $g_t = f'(\mu_t)$ and $H_t = f''(\mu_t) \triangleq H$. 
Suppose that $\Sigma_1^{-1} = 1$. By construction, $\Sigma_t^{-1} = 1 + \eta t H$, which means that 
\begin{align*}
\mu_{t+1} = \mu_t - \frac{\eta}{1 + \eta t H} g_t  \,. 
\end{align*}
In our application $\eta = \Theta(\sqrt{1/n})$, which means that online Newton step moves very slowly as $t$ grows unless there is very little curvature.
The corresponding flow in continuous time is \index{continuous time}
\begin{align*}
\d {\mu}(t) = -\frac{\eta}{1 + \eta t H} f'(\mu(t)) \d{t} = -\frac{\eta H \mu(t)}{1 + \eta t H} \d{t} \,, 
\end{align*}
which has a closed-form solution $\mu(t) = \frac{\mu(0)}{1+\eta tH}$, and
the regret is
\begin{align*}
\int_0^n (f(\mu(t)) - f(0)) \d{t} = \frac{H}{2} \int_0^n \mu(t)^2 \d{t} = \frac{H}{2} \int_0^n \left(\frac{\mu(0)}{1 + \eta t H}\right)^2 \d{t} = O(\sqrt{n}) \,.
\end{align*}
Of course, the regret of the gradient flow would be greatly reduced by increasing $\eta$. But in the bandit setting $g_t$ and $H_t$ need to be estimated 
and the increased regularisation is needed to control the variance. What the argument above shows is that despite the slow progress of the algorithm when $\eta = \Theta(n^{-1/2})$,
a regret of $O(\sqrt{n})$ is nevertheless achievable.

\subsubsection*{Analysis}
Moving now to the analysis of online Newton step, which mirrors that of other gradient-based algorithms, 
we have the following theorem:

\begin{theorem}\label{thm:ons}
Suppose that $\Sigma_t^{-1} \in \pd$ for all $1 \leq t \leq n+1$; then for any $x \in K$,
\begin{align}
\frac{1}{2} \norm{\mu_{n+1} - x}^2_{\Sigma_{n+1}^{-1}} \leq \frac{1}{2} \norm{\mu_1 - x}^2_{\Sigma_1^{-1}} + \frac{\eta^2}{2} \sum_{t=1}^n \norm{g_t}^2_{\Sigma_{t+1}} - \eta \hqReg_n(x) \,.
\label{eq:ons:ons-bound}
\end{align}
\end{theorem}

\begin{remark}
The condition that the inverse covariance matrices are positive definite corresponds to assuming that 
\begin{align*}
\frac{1}{2} \norm{\cdot}^2_{\Sigma_1^{-1}} + \sum_{s=1}^{t-1} \hat q_s
\end{align*}
is convex for all $1 \leq t \leq n+1$.
In most applications the first term in \cref{eq:ons:ons-bound} is dropped and the regret is moved to the left-hand side. An interesting feature of our application of
this result is that we use it to simultaneously bound the regret and $\norm{\mu_{n+1} - x}_{\Sigma_{n+1}^{-1}}$.
\end{remark}

\begin{proof}[Proof of \cref{thm:ons}]
By definition, for any $x \in K$,
\begin{align}
\frac{1}{2} \norm{\mu_{t+1} - x}^2_{\Sigma_{t+1}^{-1}}
&\explana\leq \frac{1}{2} \norm{\mu_t - x - \eta \Sigma_{t+1} g_t}^2_{\Sigma_{t+1}^{-1}} \nonumber \\
&\explana= \frac{1}{2} \norm{\mu_t - x}^2_{\Sigma_{t+1}^{-1}} - \eta \ip{g_t, \mu_t - x} + \frac{\eta^2}{2} \norm{g_t}^2_{\Sigma_{t+1}} \nonumber \\
&\explana= \frac{1}{2} \norm{\mu_t - x}^2_{\Sigma_{t}^{-1}} - \eta (\hat q_t(\mu_t) - \hat q_t(x)) + \frac{\eta^2}{2} \norm{g_t}^2_{\Sigma_{t+1}}, 
\label{eq:ons:ons}
\end{align}
where in \explanr{} we used the assumption that $\Sigma_{t+1}^{-1}$ is positive definite so that $\norm{\cdot}_{\Sigma_{t+1}^{-1}}$ is a norm, the fact that $x \in K$ and the definition
\begin{align*}
\mu_{t+1} = \argmin_{\mu \in K} \norm{\mu - [\mu_t - \eta \Sigma_{t+1} g_t]}_{\Sigma_{t+1}^{-1}} \,.
\end{align*}
For the equalities, \explanr{} is obtained by expanding the square, and \explanr{} since 
\begin{align*}
\frac{1}{2} \norm{\mu_t - x}_{\Sigma_{t+1}^{-1}}^2 
&= \frac{1}{2} \norm{\mu_t - x}_{\Sigma_t^{-1}}^2 + \frac{\eta}{2} (\mu_t - x)^\top H_t (\mu_t - x) \\
&= \frac{1}{2}\norm{\mu_t - x}_{\Sigma_t^{-1}}^2 - \eta (\hat q_t(\mu_t) - \hat q_t(x)) + \eta \ip{g_t, \mu_t - x} \,.
\end{align*}
The proof is completed by summing the inequality in \cref{eq:ons:ons} over $t$ from $1$ to $n$.
\end{proof}

\section{Regularity}\label{sec:ons:reg}
In our application of online Newton step to bandits it will be important that the constraint set $K$ is suitably rounded.\index{rounded}
We explained the basics already in \cref{sec:reg:rounding} but here we introduce a more subtle concept based on the mean width of the polar body.
Under Assumption~\ref{ass:ons},
\begin{align}
\ball_1 \subset K \subset \ball_{2d} \,.
\label{eq:ons:rounded}
\end{align}
Note, any improvements in the constant $2$ would only lead to minor constant-factor improvements in the regret.
Let $\pi$ be the Minkowski functional of $K$ (\cref{sec:regularity:minkowski}) and
\begin{align*}
K_\eps = \{x \in K \colon \pi(x) \leq 1 - \eps\} = (1 - \eps) K \,.
\end{align*}
Let $X$ be uniformly distributed on $\sphere_1$ and define \index{uniform measure} $M(K) = \E[\pi(X)]$.
The Minkowski functional is the support function of the polar\index{polar}\index{support function} of $K$ (\cref{lem:reg:mink}\ref{lem:reg:mink:support}), 
which means that for $x \in \sphere_1$, $\pi(x) + \pi(-x)$ is
the width of the polar $K^\circ$ in direction $x$. Hence $M(K)$ is half the mean width of $K^\circ$ as illustrated in Figure~\ref{fig:width}.\index{mean width}
For our application it is best if $K$ is positioned so that $M(K)$ is small.
Thanks to our assumption that $\ball_1 \subset K \subset \ball_{2d}$, $\frac{1}{2d} \norm{x} \leq \pi(x) \leq \norm{x}$ 
and hence $M(K) \in [\frac{1}{2d}, 1]$.
This estimate is often loose, however.
\cref{tab:widths} gives bounds on the inner and outer radii and $M(K)$ for $K$ in various classical positions.
While L\"owner's position yields the strongest bound, all rows are relevant when computation is important, with the best
position depending on how $K$ is represented and what computational resources are available.

\begin{remark}
It is not obvious that there exists an affine transformation $T$ such that $(TK)^\circ$ is in isotropic position, much less that $T$ can be approximately computed
efficiently. More details can be found in Note~\ref{note:ons:iso}.
\end{remark}

\begin{figure}
\centering
\begin{tikzpicture}[thick]
\draw[fill=grayone] (0,0) -- (1,2) -- (3,3) -- (5,2.5) -- (6,1.5) -- (5.5, 0.5) -- (4,-1) -- (1,-1.5) -- (0, 0);
\node at (3,0.5) {$K^\circ$};
\draw[-latex] (0,0) -- (-1,0);
\draw[-latex] (6,1.5) -- (7,1.5);
\draw[dotted] (0,0) -- (1.3,0);
\draw[dotted] (6,1.5) -- (1.3,1.5);
\draw (1.3,-2) -- (1.3,3);
\node[anchor=west] at (1.3,-0.5) {$\zeros$};
\draw[fill=black] (1.3,-0.5) circle (1pt);
\node[anchor=west] at (7,1.5) {$x$};
\node[anchor=east] at (-1,0) {$-x$};
\node[anchor=south] at (3.65,1.5) {$\pi(x)$};
\node[anchor=south] at (0.65,0) {$\pi(-x)$};
\end{tikzpicture}

\caption{The width of $K^\circ$ in direction $x$ is $\pi(x) + \pi(-x)$. The quantity $M(K)$ is obtained by integrating the width 
uniformly over all directions in $x \in \sphere_1$ and dividing by two.}
\label{fig:width}
\commentAlt{A two-dimensional depiction of the polar of K and the supporting hyperplanes with normals x and minus x,
which are parallel and separated by the sum of the Minkowski functionals of x and minus x.}
\end{figure}
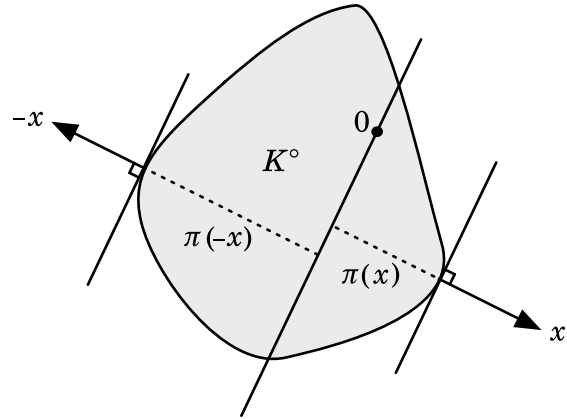

\begin{table}[h!]
\renewcommand{\arraystretch}{1.6}
\centering
\small
\caption{Classical positions and bounds on the inner and outer radii and $M(K)$.
More discussion and references appear in the notes of this chapter.
}\label{tab:widths}

\begin{tabular}{|Nlll|}
\hline 
\multicolumn{2}{|c}{\textsc{conditions}} & \textsc{bounds} & $M(K)$ \\ \hline
\label{tab:widths:low} & $\frac{1}{d} K$ in L\"owner's position & $\ball_1 \subset K \subset \ball_d$ & $\tilde O(d^{-1/2})$ \index{L\"owner's position}\\
\label{tab:widths:iso} & $K$ isotropic & $\ball_1 \subset K \subset \ball_{d+1}$ & $\leq 1$ \index{isotropic position} \\
\label{tab:widths:iso-sym} & $K$ symmetric, isotropic & $\ball_1 \subset K \subset \ball_{d+1}$ & $\tilde O(d^{-1/10})$ \\
\label{tab:widths:john} & $K$ in John's position & $\ball_1 \subset K \subset \ball_d$ & $\leq 1$ \index{John's position} \\
\label{tab:widths:polar-iso} & $(d+1)K^\circ$ isotropic & $\ball_1 \subset K \subset \ball_{d+1}$ & $\tilde O(d^{-1/4})$ \\ 
\label{tab:widths:john-sym} & $K$ symmetric, $\frac{1}{\sqrt{d}}K$ in John's position & $\ball_{\sqrt{d}} \subset K \subset \ball_d$ & $\leq 1/\sqrt{d}$ \\
\label{tab:widths:polar-iso-sym} & $K$ symmetric, $(d+1)K^\circ$ isotropic & $\ball_1 \subset K \subset \ball_{d+1}$ & $\tilde O(d^{-1/2})$ \\ \hline
\end{tabular}
\end{table}

\begin{proof}[Proof of claims in \cref{tab:widths}]
\begin{enumerate}
\item[(\ref{tab:widths:low})]
Let $w(K)$ be the half mean width of a convex body $K$, which is defined by $w(K) = \E[h_K(\theta)]$ where $\theta$ has law $\cU(\sphere_1)$ and $h_K$ is the
support function of $K$.\index{support function}\index{mean width}
Suppose that $\frac{1}{d} K$ is in L\"owner's position. By definition $(K/d)^\circ = d K^\circ$ is in John's position. 
Combining John's theorem and the fact that polarity reverses inclusion again shows that $\ball_1 \subset K \subset \ball_d$.\index{John's theorem}
Let $S$ be a regular simplex in John's position.
\cite{barthe1998extremal} proved that the mean width of a convex body in John's position is maximised by $S$ and 
a result of \cite{finch2011mean} shows that $w(S) = O(\sqrt{d \log(d)})$. 
Therefore $M(K) = w(K^\circ) = \frac{1}{d} w(d K^\circ) \leq \frac{1}{d} w(S) = O(\sqrt{\log(d)/d})$.
\item[(\ref{tab:widths:iso},\ref{tab:widths:john})]
Well-roundedness follows from \cref{thm:isotropic} and \cref{thm:john}, respectively.
Since in both positions $\ball_1 \subset K$, it follows that $K^\circ \subset \ball_1$ and hence $M(K) \leq 1$.
Note, this claim is not improvable for John's position as explained in Note~\ref{note:ons:johns}.
\item[(\ref{tab:widths:iso-sym})] 
Well-roundedness follows as in \ref{tab:widths:iso}. The bound on $M(K)$ is supplied by \cite{giannopoulos2014m} and is most likely conservative.
\item[(\ref{tab:widths:polar-iso})]
That $\ball_1 \subset K \subset \ball_{d+1}$ follows because polarity reverses inclusion and by \cref{thm:isotropic}.
The bound on $M(K)$ is due to \cite{pivovarov2010volume}.
\item[(\ref{tab:widths:john-sym})] Suppose that $K$ is symmetric and in John's position. Then by John's theorem \citep[Theorem 2.1.3]{ASG15}, $\ball_1 \subset K \subset \ball_{\sqrt{d}}$. Then use the scaling and repeat the argument for \ref{tab:widths:john}.
\item[(\ref{tab:widths:polar-iso-sym})]
As for (\ref{tab:widths:polar-iso}) but use the result of \cite{milman2015mean} to bound $M(K)$.
\end{enumerate}
\end{proof}

An essential ingredient in many previous regret analyses is a bound on the magnitude of the observed losses. 
The algorithm in this chapter replaces the real loss with the extended loss function defined in \cref{sec:reg:extension} and
the magnitude of this loss depends on the Minkowski functional.
For this reason it is essential to have a good understanding of the law of $\pi(X)$ when $X$ is Gaussian, which the next two lemmas provide.

\begin{lemma}\label{lem:ons:mink}
Suppose that $X$ has law $\cN(\mu, \Sigma)$ with $\mu \in K$ and $\Sigma \preceq \sigma^2 \id$. Then
\begin{enumerate}
\item $\E[\pi(X)] \leq 1 + \sigma M(K) \sqrt{d}$; and \label{lem:ons:mink:E}
\item $\bbP\left(\left|\pi(X) - \E[\pi(X)]\right| \geq \sqrt{2 \sigma^2 \log(2/\delta)}\right) \leq \delta$ for all $\delta \in (0,1)$.
\label{lem:ons:mink:psi}
\end{enumerate}
\end{lemma}

\begin{proof}
Since $\pi$ is sub-additive (\cref{lem:reg:mink}),
\begin{align}
\E[\pi(X)] 
&= \E[\pi(X - \mu + \mu)] \nonumber \\
&\leq \E[\pi(X - \mu)] + \pi(\mu) \nonumber \\ 
&\leq 1 + \E[\pi(X - \mu)] \,. 
\label{eq:ons:mink:1}
\end{align}
By assumption $\sigma^2 \id - \Sigma$ is positive semidefinite.
Let $W$ and $U$ be independent of $X$ and have laws $\cN(\zeros, \sigma^2 \id)$ and $\cN(\zeros, \sigma^2 \id - \Sigma)$ respectively.
Note that $U + X - \mu$ has the same law as $W$ and by Jensen's inequality\index{Jensen's inequality} $\E[\pi(X - \mu)] \leq \E[\pi(X - \mu + U)] = \E[\pi(W)]$.
Furthermore, since $W/\norm{W}$ and $\norm{W}$ are independent,
\begin{align*}
\E[\pi(X - \mu)] \leq \E[\pi(W)] 
= \E\left[\pi\left(\frac{W}{\norm{W}}\right)\right] \E[\norm{W}]
&\leq \sigma M(K) \sqrt{d}\,,
\end{align*}
where we used the facts that $\E[\norm{W}] \leq \E[\norm{W}^2]^{1/2} = \sigma \sqrt{d}$ and $W/\norm{W}$ is uniformly distributed on $\sphere_1$.
Combining this with \cref{eq:ons:mink:1} completes the proof of part~\ref{lem:ons:mink:E}.
For part~\ref{lem:ons:mink:psi}, by \cref{lem:reg:mink}\ref{lem:reg:mink:lip} and the assumption that $\ball_1 \subset K$
we have $\lip(\pi) \leq 1$. The result follows from \cref{thm:orlicz-lip}.
\end{proof}

As mentioned, we are planning to use the extension introduced in \cref{sec:reg:extension} where the functions $\pip(x) = \max(1, \pi(x)/(1-\eps))$ and $\pipm(x) = \pip(x) - 1$ 
appear with $\eps \in (0, 1/2)$. The following bound will be useful:

\begin{lemma}\label{lem:ons:v}
Let $\eps \in (0,1/2)$, $\pip(x) = \max(1, \pi(x)/(1-\eps))$ and $\pipm(x) = \pip(x) - 1$.
Suppose that $X$ has law $\cN(\mu, \Sigma)$ with $\mu \in K_\eps$ and $\norm{\Sigma} \leq \frac{1}{\delta}$. Then 
\begin{align*}
\E[\pipm(X)^2] \leq L \left(1 + \max\left(1, M(K) \sqrt{d}\right) \sqrt{\norm{\Sigma}}\right) \left[\tr\left(\Sigma \lim_{\varrho \to 0} \E[\pipmr''(X)]\right) + \delta\right]\,,
\end{align*}
where $\pipmr = \pipm * \phi_\varrho$ with $\phi_\varrho$ the smoothing kernel defined in \cref{sec:reg:smooth}.
\end{lemma}

\begin{proof}[\Proofskippy] 
By \cref{lem:orlicz-var},
\begin{align}
\E[\pipm(X)^2] &\leq \norm{\pipm(X)}_{\psi_1}\left( \E[\pipm(X)] \left(1 + \log\left(\frac{\E[\pipm(X)^2]}{\delta^2}\right)\right) + \delta\right) \,.
\label{eq:ons:lem:v}
\end{align}
By convexity and the fact that $\lip(v) < \infty$,
\begin{align*}
\E[\pipm(X)] 
&= \lim_{\varrho \to 0} \E[\pipmr(X)] \\
\tag*{Convexity}
&\leq \lim_{\varrho \to 0} \left[\pipmr(\mu) + \E\left[\ip{\pipmr'(X), X - \mu}\right]\right] \\
&= \lim_{\varrho \to 0} \E\left[\ip{\pipmr'(X), X - \mu}\right] \\
&= \lim_{\varrho \to 0} \E\left[\ip{\Sigma \pipmr'(X), \Sigma^{-1}(X - \mu)}\right] \\
\tag*{Integration by parts}
&= \tr\left(\Sigma \lim_{\varrho \to 0} \E[\pipmr''(X)]\right)\,.
\end{align*}
By the assumption that $\eps < 1/2$ and because $\lip(\pi) \leq 1$, it follows that $\lip(v) \leq 2$.
Hence, using \cref{thm:orlicz-lip} and \cref{lem:ons:mink}\ref{lem:ons:mink:E},
\begin{align*}
\norm{\pipm(X)}_{\psi_1}
&\leq \norm{\pipm(X) - \E[\pipm(X)]}_{\psi_1} + \norm{\E[\pipm(X)]}_{\psi_1} \\ 
&\leq 2\sqrt{6\norm{\Sigma}} + \frac{\E[\pipm(X)]}{\log(2)}  \\
&\leq 2\sqrt{6\norm{\Sigma}} +  \frac{2(1 + M(K) \sqrt{d \norm{\Sigma}})}{\log(2)} \,.
\end{align*}
The second moment $\E[\pipm(X)^2]$ that appears in \cref{eq:ons:lem:v} can now be bounded using \cref{lem:orlicz-moment} and the result follows by naive simplification.
\end{proof}

\section{Extension and Surrogate Losses} \index{surrogate loss!quadratic}\label{sec:ons:surrogate}
As in \cref{chap:ftrl} we use a quadratic surrogate. Unlike that chapter, however, the curvature\index{curvature} of the surrogate now depends on the loss function
and needs to be estimated. The other distinction is that in \cref{chap:ftrl} the actions were sampled 
from a scaled Dikin ellipsoid,\index{Dikin ellipsoid} which is guaranteed to be contained
in $K$. By contrast, the algorithm presented in this chapter will sample from a Gaussian, which means its actions may lie outside of $K$, possibly
with high probability. This problem is handled by making use of the extension in \cref{sec:reg:extension}. \index{extension}
Let $\pi$ be the Minkowski functional of $K$ and for $\eps \in (0,1/2)$ let $K_\eps = (1-\eps)K$, 
$\pip(x) = \max(1, \pi(x)/(1-\eps))$ and
\begin{align}
\ext(x) = \pip(x) f\left(\frac{x}{\pip(x)}\right) + \frac{2 (\pip(x)-1)}{\eps} \,. 
\label{eq:ons:ext}
\end{align}
Note that $\pip(x) - 1 = 0$ and $\ext(x) = f(x)$ for $x \in K_\eps$ 
and \cref{prop:reg:bandit-extension-eps} ensures that $x \mapsto \pip(x) f(x/\pip(x)) + \frac{1-\eps}{\eps}(\pip(x)-1)$ is convex.
The additional factor in the second term of the above display ensures that $\ext$ has slightly more curvature, which gives the algorithm an additional nudge to play inside $K_\eps$.
The extension in \cref{eq:ons:ext} is defined on all of $\R^d$ and can be queried at any $x \in \R^d$ by evaluating the real loss $f$ at $x / \pip(x)$. 
To simplify the notation it is convenient to abstract away this reduction by redefining the meaning of the actions $X_t$ and observed losses $Y_t$.
In round $t$ the algorithm samples $X_t$ from a Gaussian $\cN(\mu_t, \Sigma_t)$ but actually plays $X_t / \pip(X_t) \in K_\eps$,
observes $f(X_t / \pip(X_t)) + \eps_t$ and computes the (noisy) loss relative to the extension as 
\begin{align*}
Y_t = \ext(X_t) + \pip(X_t) \eps_t = \pip(X_t)\left[f\left(\frac{X_t}{\pip(X_t)}\right) + \eps_t\right] + \frac{2 (\pip(X_t)-1)}{\eps}\,.
\end{align*}
Note that the noise term is now effectively $\pip(X_t) \eps_t$, which conditioned on $X_t$ can have variance as large as $\pip(X_t)^2$.
With the new meaning of $X_t$, the regret is
\begin{align*}
\Reg_n = \sup_{x \in K} \sum_{t=1}^n \left(f\left(\frac{X_t}{\pip(X_t)}\right) - f(x)\right) \,.
\end{align*}
By \cref{prop:reg:bandit-extension-eps}, $f(x) = e(x/\pip(x)) \leq e(x)$ for all $x \in \R^d$ and $e(x) = f(x)$ for all $x \in K_\eps$, which when
combined with \cref{prop:shrink} shows that
\begin{align*}
\Reg_n 
&\leq n\eps + \max_{x \in K_\eps} \sum_{t=1}^n \left(f\left(\frac{X_t}{\pip(X_t)}\right) - f(x)\right) \\
&\leq n\eps + \max_{x \in K_\eps} \sum_{t=1}^n \left(e(X_t) - e(x)\right) \,.
\end{align*} 
Therefore the true regret is upper bounded in terms of the regret relative to the extension.

\subsubsection*{Surrogate Losses}
The algorithm makes use of a quadratic surrogate loss
\begin{align}
q_t(x) = \sip{s_t'(\mu), x - \mu_t} + \frac{1}{4} \norm{x - \mu_t}^2_{s''(\mu_t)}\,,
\label{eq:ons:q}
\end{align}
where $s_t \colon \R^d \to \R$ is the convex surrogate defined by \index{surrogate loss!Gaussian}
\begin{align*}
s_t(x) = \E_{t-1}\left[\left(1 - \frac{1}{\lambda}\right) \ext(X_t) + \frac{1}{\lambda} \ext((1 - \lambda)X_t + \lambda x)\right]
\end{align*}
with $\lambda \in (0,\frac{1}{d+1})$ a tuning parameter that determines the amount of smoothing.
We spend all of \cref{chap:opt} on the intuitions and analysis of this surrogate loss. You can skip ahead to that chapter now or accept the following
properties as gospel.   

\begin{proposition}\label{prop:ons:q}
Let $q_t$ be the function defined in \cref{eq:ons:q} and suppose that $x \in \R^d$ satisfies $\lambda \norm{x - \mu_t}_{\Sigma_t^{-1}} \leq \frac{1}{\sqrt{L}}$
and $\lambda \leq \frac{1}{d L^2}$. Then
\begin{align*}
\E_{t-1}[\ext(X_t)] - \ext(x) \leq q_t(\mu_t) - q_t(x) + \frac{4}{\lambda} \tr\left(q_t''(\mu_t) \Sigma_t\right) + \frac{1}{n} \,.
\end{align*}
\end{proposition}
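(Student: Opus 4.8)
The plan is to pass through the surrogate $s$, which is optimistic, and then through its local quadratic model $q$. Concretely I would establish three facts — only the third is substantial — and chain them: (i) $s(y)\le f(y)$ for all $y$; (ii) $s(x)\ge s(\mu)+q(x)$ whenever $\lambda\norm{x-\mu}_{\Sigma^{-1}}\le \tfrac1L$; (iii) $\E[f(X)]-s(\mu)\le \tfrac{24}{\lambda}\tr(q''(\mu)\Sigma)$. Granting these and using $q(\mu)=0$,
\begin{align*}
\E[f(X)]-f(x)&\le \E[f(X)]-s(x) \le \big(\E[f(X)]-s(\mu)\big)-q(x)\\
&=\big(\E[f(X)]-s(\mu)\big)+q(\mu)-q(x)\le q(\mu)-q(x)+\frac{24}{\lambda}\tr\big(q''(\mu)\Sigma\big)\,,
\end{align*}
which is the claim.

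Fact (i) is elementary. Since, by Property~\texttt{(e)}, we may take $f$ convex on all of $\R^d$, convexity gives $f((1-\lambda)X+\lambda y)\le(1-\lambda)f(X)+\lambda f(y)$, hence $(1-\tfrac1\lambda)f(X)+\tfrac1\lambda f((1-\lambda)X+\lambda y)\le\big(1-\tfrac1\lambda+\tfrac{1-\lambda}\lambda\big)f(X)+f(y)=f(y)$; taking expectations yields $s(y)\le f(y)$, and in particular $s(x)\le f(x)$, so it suffices to bound $\E[f(X)]-s(x)$.

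Fact (ii) is Taylor's theorem plus a Hessian-stability input. Note that $s$ is $C^\infty$ (a Gaussian convolution of $f$), that $q'(\mu)=s'(\mu)$ by construction, and that $q''(\mu)=\tfrac\lambda3 s''(\mu)$, so
\[
s(x)-s(\mu)-\ip{s'(\mu),x-\mu}=\int_0^1(1-t)\,\norm{x-\mu}^2_{s''(\mu+t(x-\mu))}\,\d{t}\,.
\]
The segment $[\mu,x]$ lies in the ball $\{z:\lambda\norm{z-\mu}_{\Sigma^{-1}}\le\tfrac1L\}$, and on that ball one of the Hessian-control lemmas of Chapter~\ref{chap:opt} gives $s''(z)\succeq\tfrac\lambda3 s''(\mu)$ (morally because $s''$ is a Gaussian average of $f''$ at covariance scale $\Sigma$ and $z$ lies within a $\tfrac1L$-fraction of that scale from $\mu$, so the Hessian drops by far less than the demanded factor $\tfrac\lambda3$). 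Plugging this into the remainder bounds it below by $\tfrac12\cdot\tfrac\lambda3\norm{x-\mu}^2_{s''(\mu)}=\tfrac\lambda6\norm{x-\mu}^2_{s''(\mu)}=q(x)$, giving (ii).

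Fact (iii) is the crux and is precisely what Chapter~\ref{chap:opt} is for; I would only sketch it. Start from $\E[f(X)]-s(\mu)=\tfrac1\lambda\E[f(X)-f((1-\lambda)X+\lambda\mu)]$ and write $(1-\lambda)X+\lambda\mu=X-\lambda(X-\mu)$. Integrating $f$ along the segment from $X$ to $X-\lambda(X-\mu)$ and applying Gaussian integration by parts (Stein's identity for the contracting Gaussians $\cN(\mu,(1-s\lambda)^2\Sigma)$, $s\in[0,1]$) turns this gap into an average of $\tr(\Sigma f''(\cdot))$; boundedness of $f$ together with the comparability of those Gaussians (ensured by $\lambda<\tfrac1{d+1}$) bounds it by a constant multiple of $\tr(\Sigma s''(\mu))$, and since $q''(\mu)=\tfrac\lambda3 s''(\mu)$ this is of the stated form, with the precise constant pinned down in Chapter~\ref{chap:opt}. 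This Stein computation, and the comparison of Gaussian averages of $f''$ at nearby scales, is the main obstacle; everything else above is convexity and Taylor bookkeeping.
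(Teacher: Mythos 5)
Your chaining---optimism $s\le f$, the local Taylor bound $s(x)\ge s(\mu)+q(x)$, and the gap bound $\E[f(X)]-s(\mu)\le\tfrac4\lambda\tr(s''(\mu)\Sigma)$---is exactly the intended assembly, which the paper leaves to the reader: the three facts are, respectively, Lemma~\ref{lem:basic}\ref{lem:basic:opt}, the unnumbered proposition following Proposition~\ref{prop:s-sc}, and Proposition~\ref{prop:lower} in Chapter~\ref{chap:opt}. Your arguments for facts (i) and (ii) match the paper's; a small point on (ii) is that Proposition~\ref{prop:s-sc} actually gives the stronger $s''(z)\succeq\tfrac13 s''(\mu)$ on the focus region, not merely $\succeq\tfrac\lambda3 s''(\mu)$, and it is the $\tfrac13$ (together with $q''(\mu)=\tfrac13 s''(\mu)$) that keeps the constants in the displayed statement consistent with Proposition~\ref{prop:lower}. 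Where you genuinely diverge is in the sketch of (iii): Proposition~\ref{prop:lower} does \emph{not} integrate $f$ along a contracting segment and apply Stein's identity. Instead it introduces an independent $Z\sim\cN(\mu,\rho^2\Sigma)$ with $\rho^2=(2-\lambda)/\lambda$, chosen so that $(1-\lambda)X+\lambda Z$ has the same law as $X$, from which $\E[s(Z)]=\E[f(X)]$ is immediate by the definition of $s$; it then Taylor-expands $s$ (not $f$) about $\mu$, so that the very same Hessian-stability result used in (ii) controls the remainder on a high-probability event and a crude moment bound kills the tail. Your Stein route would leave you needing a comparison between Gaussian averages of $f''$ at the covariance scales $(1-s\lambda)^2\Sigma$ for $s\in[0,1]$, a lemma the paper never establishes; the auxiliary-$Z$ device sidesteps this by making the only comparison needed one about $s''$ at nearby points, which Proposition~\ref{prop:s-sc} already supplies. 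Both routes lead to the stated form, but the paper's is slicker because it reuses existing machinery rather than requiring a new regularity estimate on $f''$.
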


\begin{proof}[\Proofskippy] 
By \cref{cor:gauss:lower},
\begin{align*}
\E_{t-1}[\ext(X_t)] - \ext(x) 
&\leq q_t(\mu_t) - q_t(x) + \frac{2}{\lambda} \tr(s_t''(\mu_t)\Sigma_t) + \delta\left[\frac{2d}{\lambda} + \frac{1}{\lambda^2}\right] \\
&\leq q_t(\mu_t) - q_t(x) + \frac{4}{\lambda} \tr(q_t''(\mu_t)\Sigma_t) + \frac{1}{n} \,,
\end{align*}
where the second inequality follows because 
$q_t''(\mu_t) = s_t''(\mu_t)/2$ and by naively bounding the constants. 
\end{proof}

\subsubsection*{Estimation}
The function $q_t$ cannot be reconstructed from $X_t$ and $Y_t$ alone.
But it can be estimated by
\begin{align*}
\hat q_t(x) = \ip{g_t, x - \mu_t} + \frac{1}{2} (x - \mu_t)^\top H_t (x - \mu_t) \,,
\end{align*}
where $g_t$ and $H_t$ are defined by
\begin{align*}
g_t &= \frac{R_t Y_t\Sigma_t^{-1} (X_t - \mu_t)}{(1-\lambda)^2} \quad \text{ and }\\
H_t &= \frac{\lambda R_t Y_t}{2(1-\lambda)^2} \left[\frac{\Sigma_t^{-1}(X_t - \mu_t)(X_t - \mu_t)^\top \Sigma_t^{-1}}{(1 - \lambda)^2} - \Sigma_t^{-1}\right] \,,
\end{align*}
with $p_t$ the density of $\cN(\mu_t, \Sigma_t)$ and
\begin{align*}
R_t = \frac{p_t\left(\frac{X_t -\lambda \mu}{1-\lambda}\right)}{(1 - \lambda)^d p_t(X_t)} \,.
\end{align*}
By \cref{prop:opt:bias}, which follows from a change of measure and Stein's lemma,
\begin{align*}
\E_{t-1}[g_t] = s_t'(\mu_t) = q_t'(\mu_t) \quad \text{ and } \quad \E_{t-1}[H_t] = \frac{1}{2} s_t''(\mu_t) = q_t''(\mu_t) \,.
\end{align*}

\section{Algorithm and Analysis}\label{sec:ons:analysis}

The algorithm combines online Newton step with the quadratic surrogate estimates from \cref{sec:ons:surrogate}.

\begin{algorithm}[h!]
\begin{algcontents}
\begin{lstlisting}
args: $\eta$, $\lambda$, $\sigma^2$, $\eps$
$\mu_1 = \zeros$, $\Sigma_1 = \sigma^2 \id$
for $t = 1$ to $n$
  sample $X_t$ from $\cN(\mu_t, \Sigma_t)$ with density $p_t$
  observe $Y_t = \pip(X_t) \left(f\left(\frac{X_t}{\pip(X_t)}\right) + \eps_t\right) + \frac{2 (\pip(X_t)-1)}{\eps}$ $\label{line:ons:mink}$
  let $R_t = \frac{p_t\left(\frac{X_t - \lambda \mu_t}{1 - \lambda}\right)}{(1 - \lambda)^d p_t(X_t)}$
  compute $g_t = \frac{R_t Y_t \Sigma_t^{-1} (X_t - \mu_t)}{(1-\lambda)^2}$
  compute $H_t = \frac{\lambda R_t Y_t}{2(1 -\lambda)^2} \left[\frac{\Sigma_t^{-1}(X_t - \mu_t)(X_t - \mu_t)^\top \Sigma_t^{-1}}{(1-\lambda)^2} - \Sigma_t^{-1}\right]$
  compute $\Sigma_{t+1}^{-1} = \Sigma_t^{-1} + \eta H_t$ 
  compute $\mu_{t+1} = \argmin_{\mu \in K_\eps} \norm{\mu - [\mu_t - \eta \Sigma_{t+1} g_t]}_{\Sigma_{t+1}^{-1}}$ $\label{line:ons:update}$
\end{lstlisting}
\caption{Online Newton step for convex bandits}
\label{alg:ons:bandit}
\end{algcontents}
\end{algorithm}

\FloatBarrier

\subsubsection*{Computation}
\cref{alg:ons:bandit} is straightforward to implement and relatively efficient. 
The main computational bottlenecks are as follows:
\begin{itemize}
\item \textit{Gaussian sampling}:\,\, The algorithm samples from a Gaussian with mean $\mu_t$ and covariance $\Sigma_t$. Given access to standard Gaussian noise, this more or less corresponds to computing
an eigenvalue decomposition of the covariance matrix $\Sigma_t$, which at least naively requires $O(d^3)$ operations per round.
Probably this can be improved with a careful incremental implementation. See Note~\ref{note:ons:inc}.
\item \textit{Minkowski functional}:\,\, Remember that 
\begin{align*}
\pip(X_t) = \max(1, \pi(X_t)/(1-\eps)) \,.
\end{align*}
This can be approximated to accuracy $1/n^2$ using 
bisection search and only logarithmically many queries to a membership oracle for $K$.
The increase in regret due to the approximation is negligible.
\item \textit{Projections}:\,\, The projection in Line~\ref{line:ons:update} is a convex optimisation problem and the hardness depends 
on how $K$ is represented (\cref{tab:comp}).
Note that the projection is only needed in rounds $t$ where $\mu_t - \eta \Sigma_{t+1} g_t \notin K_\eps$.\index{projection}
\end{itemize}
The algorithm needs $\tilde O(d^2)$ memory\index{memory} to store the covariance matrix. There is also the initial rounding procedure to put $K$ into a good position, which only
needs to be done once and is discussed in
\cref{sec:reg:rounding} and in the notes for this chapter.

\begin{theorem}\label{thm:ons:bandit}
Let $M = \max(d^{-1/2}, M(K))$ and
\begin{align*}
\sigma &= \frac{1}{M \sqrt{2d}} &
\lambda &= \frac{1}{4d^{1.5} M L^2} &
\eta &= \frac{dM}{3} \sqrt{\frac{1}{n}} &
\eps &= \frac{240 d^2 ML^4}{\sqrt{n}} \,.
\end{align*}
Under \cref{ass:ons}, with probability at least $1 - \delta$ 
the regret of \cref{alg:ons:bandit} is bounded by
\begin{align*}
\Reg_n \leq 480 d^2 M L^4 \sqrt{n} \,.
\end{align*}
\end{theorem}

According to \cref{tab:widths}, if $K/d$ is in L\"owner's position, then $M = \tilde O(d^{-1/2})$ and the regret is $\tilde O(d^{1.5} \sqrt{n})$.
The detailed proof of \cref{thm:ons:bandit} is deferred to \cref{sec:thm:ons:bandit}, but we give an outline below.

\begin{proof}[Proof outline]
The rigorous proof depends on a relatively intricate concentration analysis, which we brush over for now. 

\stepsection{Step 1: Regret comparison}
To begin, let $x_\star = \argmin_{x \in K_\eps} f(x)$ and
\begin{align*}
\eReg_n(x_\star) = \sum_{t=1}^n \left(\E_{t-1}[e(X_t)] - e(x_\star)\right) \,.
\end{align*}
The regret with respect to the quadratic surrogates and its estimates are
\begin{align*}
\qReg_n(x_\star) &= \sum_{t=1}^n \left(q_t(\mu_t) - q_t(x_\star)\right) \quad\text{and} \\
\hqReg_n(x_\star) &= \sum_{t=1}^n \left(\hat q_t(\mu_t) - \hat q_t(x_\star)\right) \,. 
\end{align*}
We start by comparing the true regret to the regret relative to the extended losses:
\begin{align}
\Reg_n 
\tag*{\cref{prop:shrink}}
&\leq n\eps + \sum_{t=1}^n \left(f(X_t/\pip(X_t)) - f(x_\star)\right) \nonumber \\
\tag*{Concentration}
&\explan{whp}\lesssim n\eps + \sum_{t=1}^n \left(\E_{t-1}[f(X_t/\pip(X_t))] - f(x_\star)\right) \nonumber \\
\tag*{\cref{prop:reg:bandit-extension-eps}\ref{prop:reg:bandit-extension-eps:leq}}
&\leq n\eps + \sum_{t=1}^n \left(\E_{t-1}[e(X_t)] - f(x_\star)\right) \nonumber \\
\tag*{\cref{prop:reg:bandit-extension-eps}\ref{prop:reg:bandit-extension-eps:equal}}
&= n\eps + \sum_{t=1}^n \left(\E_{t-1}[e(X_t)] - e(x_\star)\right) \nonumber \\
&= \tilde O\left(M d^2 \sqrt{n}\right) + \eReg_n(x_\star)\,,
\label{eq:ons:sketch:ereg}
\end{align}
where the final line follows from the definition of $\eps$.
Therefore it suffices to bound $\eReg_n(x_\star)$.
The plan is to use \cref{prop:ons:q} to compare the regrets relative to the extension and the quadratic surrogates.
A serious issue is that the quadratic surrogate is only well-behaved on an ellipsoid about $\mu_t$. Concretely, we will need to show that with high probability
\begin{align*}
F_t \triangleq \frac{1}{2} \norm{\mu_t - x_\star}^2_{\Sigma_t^{-1}} = \tilde O(1/\lambda^2) \text{ for all } 1 \leq t \leq n \,.
\end{align*}
Let $\tau$ be the first round where $F_{\tau+1}$ is not $\tilde O(1/\lambda^2)$, with $\tau$ defined to be $n$ if no such round exists.
By \cref{prop:ons:q}, for any $t \leq \tau$
\begin{align*}
\E_{t-1}[e(X_t)] - e(x_\star) \lesssim q_t(\mu_t) - q_t(x_\star) + \frac{4}{\lambda} \tr(q''_t(\mu_t) \Sigma_t)\,,
\end{align*}
where we ignored the final miniscule term in \cref{prop:ons:q}.
Hence,
\begin{align}
\eReg_\tau(x_\star) 
&\lesssim \qReg_\tau(x_\star) + \frac{4}{\lambda} \sum_{t=1}^n \tr(q''_t(\mu_t) \Sigma_t) \nonumber \\
&\explan{whp}\lesssim \hqReg_\tau(x_\star) + \frac{4}{\lambda} \sum_{t=1}^n \tr(q''_t(\mu_t) \Sigma_t) \,, \label{eq:ons:e-q-reg}
\end{align}
where in the second inequality we used the fact that with high probability the estimated quadratic losses are close cumulatively to the unobserved quadratic losses.
By \cref{thm:ons},
\begin{align}
F_{\tau+1}
&\leq F_1 + \frac{\eta^2}{2} \sum_{t=1}^\tau \norm{g_t}^2_{\Sigma_t} - \eta \hqReg_\tau(x_\star) \nonumber \\
&\leq \frac{2d^2}{\sigma^2} + \frac{\eta^2}{2} \sum_{t=1}^\tau \norm{g_t}^2_{\Sigma_t} - \eta \hqReg_\tau(x_\star) \nonumber \\
&\explan{whp}\lesssim \frac{2d^2}{\sigma^2} + \frac{\eta^2}{2} \sum_{t=1}^\tau \norm{g_t}^2_{\Sigma_t} + \frac{4\eta}{\lambda} \sum_{t=1}^\tau \tr(q_t''(\mu_t) \Sigma_t) - \eta \eReg_\tau(x_\star) \,,
\label{eq:ons:sketch}
\end{align}
where in the second inequality we used the assumption that $K \subset \ball_{2d}$ and $\mu_1 = \zeros$ to bound $F_1 = \frac{1}{2\sigma^2} \norm{x - \mu_1}^2\leq 2d^2/\sigma^2$.
The third inequality follows from \cref{eq:ons:e-q-reg}.
There are two terms in the right-hand side that need a little more manipulation, which we break out into two additional steps.

\stepsection{Step 2: Bounding gradient norms}
By definition
\begin{align*}
\norm{g_t}^2_{\Sigma_t} 
= \frac{R_t^2 Y_t^2}{(1 - \lambda)^4} \norm{X_t - \mu_t}^2_{\Sigma_t^{-1}}
\explan{whp}= \tilde O\left(d Y_t^2\right)\,, 
\end{align*}
where we used the facts that $0 \leq R_t \leq 3$ (\cref{lem:measure}) and $\lambda \in (0,1/2)$ and that $\Sigma_t^{-1/2}(X_t - \mu_t)$ is
a standard Gaussian.
Summing shows that
\begin{align*}
\frac{\eta^2}{2} \sum_{t=1}^\tau \norm{g_t}^2_{\Sigma_t} \explan{whp}= \tilde O\left(d \eta^2 \sum_{t=1}^\tau Y_t^2\right) \,.
\end{align*}
We saw expressions like this in many other analyses and simply bounded $Y_t^2 \leq 1$. But because we have used the extended loss this is not possible anymore.
Fortunately, one can prove that the algorithm mostly plays close to $K_\eps$ where the extended loss and true losses are equal.
A complex argument eventually shows that with high probability $\sum_{t=1}^\tau Y_t^2 = \tilde O(n)$.

\stepsection{Step 3: Bounding the trace term}
We claim the sum of traces in \cref{eq:ons:sketch} is bounded by
\begin{align}
\frac{4}{\lambda} \sum_{t=1}^\tau \tr(q_t''(\mu_t) \Sigma_t) = \tilde O\left(\frac{d}{\lambda \eta}\right)\,.
\label{eq:ons:trace}
\end{align}
By definition $\E_{s-1}[H_s] = q''_s(\mu_s)$ and therefore it is plausible (and true) that with high probability 
\begin{align*}
\Sigma_t^{-1} = \Sigma_1^{-1} + \eta \sum_{s=1}^{t-1} H_s \approx \Sigma_1^{-1} + \eta \sum_{s=1}^{t-1} q_s''(\mu_s)\,.
\end{align*}
Let us make a continuous-time\index{continuous time} approximation, which often provides a good ansatz for such problems. 
Let $\bar \Sigma(t)^{-1} = \Sigma_1^{-1} + \eta \int_1^t \bar H_{\floor{s}} \d{s}$ where $\bar H_s = q''_s(\mu_s)$.
Then
\begin{align*}
\sum_{t=1}^\tau \tr(\bar H_t \Sigma_t) 
&\approx \frac{1}{\eta} \int_{1}^\tau \tr\left(\frac{\d{}}{\d{t}}(\bar \Sigma(t)^{-1}) \bar \Sigma(t)\right) \d{t} \\
&= \frac{1}{\eta} \int_{1}^\tau \frac{\d{}}{\d{t}} [-\log \det(\bar \Sigma(t))] \d{t} \\
&= \frac{1}{\eta} \log \det(\bar \Sigma(1) \bar \Sigma(\tau)^{-1}) \\
&\explan{$(\star)$} \leq \frac{d}{\eta} \log\left(\frac{\tr(\bar \Sigma(1) \bar \Sigma(\tau)^{-1})}{d}\right) \\
&\explan{whp}= \tilde O\left(\frac{d}{\eta}\right) \,,
\end{align*}
where $(\star)$ follows from the arithmetic--geometric mean inequality and
the final inequality follows by proving that $\snorm{\bar \Sigma(\tau)^{-1}} \leq \poly(n, d)$ with high probability.
This justifies \cref{eq:ons:trace}.

\stepsection{Step 4: Combining}
By the previous two steps and \cref{eq:ons:sketch},
\begin{align}
F_{\tau+1} 
&\explan{whp}\lesssim \frac{2d^2}{\sigma^2} + \tilde O(\eta^2 n d) + \tilde O\left(\frac{d}{\lambda}\right) - \eta \eReg_\tau(x_\star) \nonumber \\
&= \tilde O(1/\lambda^2) - \eta \eReg_\tau(x_\star) \,.
\label{eq:ons:sketch:final}
\end{align}
Since $\eReg_\tau(x_\star) \geq 0$, this shows that $F_{\tau+1} = \tilde O(1/\lambda^2)$ and by the definition of $\tau$ this means that $\tau = n$. 
\cref{eq:ons:sketch:final} also shows that 
\begin{align*}
\eReg_n(x_\star) = \tilde O(1/(\eta \lambda^2)) = \tilde O(M d^2 \sqrt{n}) \,,
\end{align*}
which when combined with \cref{eq:ons:sketch:ereg} completes the argument.
\end{proof}

\begin{remark}
Looking at \cref{eq:ons:sketch:final}, you may wonder: why not choose $\sigma$ to be very large?
The reason is hidden in the calculations needed in the second step. Consider the case that $d = 1$ and $K = [-1,1]$.
In the very first round the algorithm samples $X_t \sim \cN(\zeros, \sigma^2 \id)$ and therefore
$\E[|X_1|] \approx \sigma$.
Hence, if $\sigma \gg 1$, then by the definition of the extended loss
\begin{align*}
\E[\ext(X_1)] = \Omega\left(\frac{\sigma}{\eps}\right) \approx \sigma \sqrt{n} \,.
\end{align*}
So the regret in even a single round is $\Omega(\sigma \sqrt{n})$.
\end{remark}

\section{Proof of Theorem~\ref{thm:ons:bandit}}\label{sec:thm:ons:bandit}

At various points we need that $\eps \in (0,1/2)$, which only holds for sufficiently large $n$.
Note, however, that if $\eps \geq 1/2$, then the regret bound in the theorem is implied by the assumption that $f \in \cF_{\pb}$ so that $\Reg_n \leq n$.
Hence, for the remainder we assume that $\eps \in (0,1/2)$.
The main complication is that the conclusion of \cref{prop:ons:q} only holds for some $x$ and the losses of the extension of $f$ are very much
not bounded in $[0,1]$.
At various points in the analysis we refer to various relations between the constants. These are collected in \cref{sec:ons:constraints}.
Let 
\begin{align*}
F_t = \frac{1}{2} \norm{\mu_t - x_\star}^2_{\Sigma_t^{-1}} \,.
\end{align*}
In order to make our analysis go through we need to argue that $F_t \leq \frac{1}{2 L \lambda^2}$ for all $t$ with high probability.
There are a few other complications. Most notably, the algorithm is not properly defined if $\Sigma_t$ fails to be positive definite. Hence we need
to prove that this occurs with low probability. 
Note that $\E_{t-1}[H_t]$ is the Hessian of a convex function and hence positive semidefinite. Thus we will use concentration-of-measure to show that $\Sigma_t$
indeed stays positive definite with high probability. Define the following quantities:
\begin{align*}
S_t = \sum_{u=1}^t H_u  \quad\text{and}\quad
\bar S_t = \sum_{u=1}^t \E_{u-1}[H_u] \quad\text{and}\quad
\bar \Sigma_t^{-1} = \Sigma_1^{-1} + \eta \bar S_{t-1} \,.
\end{align*}
We also let
\begin{align*}
\qReg_\tau(x) = \sum_{t=1}^\tau (q_t(\mu_t) - q_t(x)) \quad\text{and}\quad
\hqReg_\tau(x) = \sum_{t=1}^\tau (\hat q_t(\mu_t) - \hat q_t(x)) \,.
\end{align*}

\begin{definition}\label{def:ons:tau}
Let $\tau$ be the first round when one of the following does \textit{not} hold:
\begin{enumerate}
\item $F_{\tau+1} \leq \frac{1}{2L \lambda^2}$. \label{def:ons:tau:F} 
\item $\Sigma_{\tau+1}$ is positive definite. \label{def:ons:tau:P}
\item $\delta \id \preceq \frac{1}{2} \bar \Sigma_{\tau+1}^{-1} \preceq \Sigma_{\tau+1}^{-1} \preceq \frac{3}{2} \bar \Sigma_{\tau+1}^{-1} \preceq \delta^{-1} \id$. \label{def:ons:tau:S}
\end{enumerate}
In case no such round exists, then $\tau$ is defined to be $n$.
\end{definition}

Note that $F_{t+1}$ and $\Sigma_{t+1}$ are measurable\index{measurable} with respect to $\sF_t$, which means that $\tau$ is a stopping time\index{stopping time} with respect to the filtration $(\sF_t)_{t=1}^n$.\index{filtration}
A simple consequence of the definition of $\tau$ is that for any $t \leq \tau$,
\begin{align}
\snorm{\Sigma_t} \leq 2 \snorm{\bar \Sigma_t} = 2 \norm{\left(\frac{1}{\sigma^2} \id + \eta \bar S_t\right)^{-1}}
\leq 2\sigma^2 \,,
\label{eq:ons:sigma}
\end{align}
where the last step follows because $\bar S_t \in \psd$.

\stepsection{Step 1: Regret relative to the extension}
Most of our analysis bounds the regret with respect to the extension of $f$, which is only meaningful because the regret relative to the extension is nearly an upper
bound on the regret relative to the real loss.
Let $x_\star = \argmin_{x \in K_\eps} e(x)$ and
\begin{align*}
\beReg_n(x_\star) &= \sum_{t=1}^n \E_{t-1}[\ext(X_t) - \ext(x_\star)] \,. 
\end{align*}
Let \eventAzuma{} be the event that
\begin{align*}
 \sum_{t=1}^n f(X_t / \pip(X_t))
&\leq \sum_{t=1}^n \E_{t-1}[f(X_t / \pip(X_t))] + \sqrt{2n \log(7/\delta)} \,.
\end{align*}
Since $X_t / \pip(X_t) \in K_\eps \subset K$ and $f \in \cF_\pb$, by Azuma's inequality (\cref{thm:azuma}) $\bbP(\eventAzuma) \geq 1 - \delta/7$.
\index{Azuma's inequality}
Suppose the above high-probability event occurs; then
by \cref{prop:shrink} and \cref{prop:reg:bandit-extension-eps},
\begin{align}
\Reg_n 
&\leq n \eps + \max_{x \in K_\eps} \Reg_n(x) \nonumber \\
&\leq n \eps + \sqrt{2n \log(1/\delta)} + \max_{x \in K_\eps} \sum_{t=1}^n \E_{t-1}[f(X_t / \pip(X_t)) - f(x)] \nonumber \\
&\leq n \eps + \sqrt{2n \log(1/\delta)} + \max_{x \in K_\eps} \sum_{t=1}^n \E_{t-1}[\ext(X_t) - \ext(x)] \nonumber \\
&= n \eps + \sqrt{2n \log(1/\delta)} + \beReg_n(x_\star) \,.
\label{eq:ons:ereg}
\end{align}
Therefore it suffices to bound $\beReg_n(x_\star)$.

\stepsection{Step 2: Concentration}\index{concentration}
We need to show that the behaviour of the various estimators used by \cref{alg:ons:bandit} is suitably regular with
high probability.
Define an event \eventNoise{} by
\begin{align*}
\tag{\eventNoise}
\eventNoise = \left\{\max_{1 \leq t \leq \tau} |\eps_t| \leq \sqrt{\log(14n/\delta)}\right\} \,.
\end{align*}
By \cref{lem:orlicz-tail} and a union bound, $\bbP(\eventNoise) \geq 1 - \delta / 7$.
The magnitude of the observed losses depends heavily on $\pi(X_t)$.
Define an event \eventPi{} by
\begin{align*}
\tag{\eventPi}
\eventPi = \left\{\max_{1 \leq t \leq \tau} \pi(X_t) \leq \sqrt{L} \right\} \,.
\end{align*}
\begin{lemma}
$\bbP(\eventPi) \geq 1 - \delta / 7$.
\end{lemma}

\begin{proof}
By \cref{lem:ons:mink} and a union bound, with probability at least $1 - \delta/7$ for all $t \leq \tau$,
\begin{align*}
\pi(X_t) 
\tag*{by \cref{lem:ons:mink}\ref{lem:ons:mink:psi}}
&\leq \E_{t-1}[\pi(X_t)] + \sqrt{2 \norm{\Sigma_t} \log(14n/\delta)} \\
\tag*{by \cref{lem:ons:mink}\ref{lem:ons:mink:E}}
&\leq 1 + M \sqrt{d\norm{\Sigma_t}} + \sqrt{2 \norm{\Sigma_t} \log(14n/\delta)} \\
\tag*{by \cref{eq:ons:sigma}}
&\leq 1 +  \sigma M \sqrt{2d} + 2 \sigma \sqrt{\log(14n/\delta)} \\
&\leq \sqrt{L} \,.
\end{align*}
where the final inequality holds by the definition of the constants, which satisfy $\sigma \leq \sigma M \sqrt{2d} \leq 1$ (\cref{tab:ons:sigma} in \cref{sec:ons:constraints}).
\end{proof}

We also need to control the magnitude of $\snorm{X_t - \mu_t}_{\Sigma_t^{-1}}$. Define an event $\eventGauss{}$ by
\begin{align*}
\tag{\eventGauss}
\eventGauss = \left\{\max_{1 \leq t \leq \tau} \snorm{X_t - \mu_t}_{\Sigma_t^{-1}} \leq \sqrt{\frac{8d}{3} \log(14n/\delta)}\right\} \,. 
\end{align*}
By \cref{lem:orlicz-tail,prop:orlicz}, $\bbP(\eventGauss) \geq 1 - \delta / 7$.
The next lemma bounds the sum $\sum_{t=1}^\tau \E_{t-1}[Y_t^2]$. Because the extended loss matches the true loss on $K_\eps$ and the latter is bounded in $[0,1]$, we should 
expect that when $X_t \in K_\eps$, then $Y_t = \tilde O(1)$ with high probability. In other words, provided the algorithm is playing mostly in $K_\eps$, then 
we should hope that $\sum_{t=1}^\tau \E_{t-1}[Y_t^2] = \tilde O(n)$.
This is exactly what the following lemma says. The proof is deferred to \cref{sec:lem:ons:bounds} but should not be skipped.

\begin{lemma}\label{lem:ons:bounds}
Let $\Ymax = \max_{1 \leq t \leq \tau} \left(|Y_t| + \E_{t-1}[|Y_t|]\right)$.
On $\eventNoise \cap \eventPi \cap \eventGauss$ the following hold:
\begin{enumerate}
\item $\Ymax \leq \frac{L}{\eps}$. \label{lem:ons:bounds:Ymax}
\item $\sum_{t=1}^\tau \E_{t-1}[Y_t^2] \leq 10n$. \label{lem:ons:bounds:Y}
\end{enumerate}
\end{lemma}

We also need to control $\sum_{t=1}^\tau Y_t^2$.
Let $\eventY{}$ be the event defined by
\begin{align*}
\tag{\eventY}
\eventY = \left\{\sum_{t=1}^\tau Y_t^2 \leq 21n \right\} \,.
\end{align*}

\begin{lemma}\label{lem:ons:eventY}
$\bbP(\eventY \cup (\eventNoise \cap \eventPi \cap \eventGauss)^c) \geq 1 - \delta / 7$.
\end{lemma}

\begin{proof}
Let $E_t = \{|Y_t| \leq L / \eps\}$. 
By \cref{thm:conc:unnormalised2}, with probability at least $1 - \delta / 7$,
\begin{align*}
\sum_{t=1}^\tau \sind_{E_t} Y_t^2 
&\explana\leq 2\sum_{t=1}^\tau \E_{t-1}[\sind_{E_t} Y_t^2] + \frac{L^2 \log(7/\delta)}{\eps^2} \\
&\explana\leq 2\sum_{t=1}^\tau \E_{t-1}[Y_t^2] + \frac{L^3}{\eps^2} \\
&\explana\leq n + 2 \sum_{t=1}^\tau \E_{t-1}[Y_t^2] \,. 
\end{align*}
where \explanr{} follows from \cref{thm:conc:unnormalised2}, \explanr{} by the definition of $L$ and \explanr{} by \cref{tab:ons:eps}.
By \cref{lem:ons:bounds}, on $\eventNoise \cap \eventPi \cap \eventGauss$, $E_t$ holds for all $t \leq \tau$ and in this case
$\sum_{t=1}^\tau \sind_{E_t} Y_t^2 = \sum_{t=1}^\tau Y_t^2$ and $\sum_{t=1}^\tau \E_{t-1}[Y_t^2] \leq 10n$. 
Hence, $\bbP(\eventY \cup (\eventNoise \cap \eventPi \cap \eventGauss)^c) \geq 1 - \delta / 7$.
\end{proof}

The last two events control the concentration of the estimated quadratic surrogate about its mean 
at the optimal point and the concentration of the Hessian estimates.
Let $\eventQ$ be the event that
\begin{align}
\tag{\eventQ}
\eventQ = \left\{
\qReg_\tau(x_\star) \leq \hqReg_\tau(x_\star) + \frac{4 \sqrt{nL}}{\lambda} \right\} \,.
\label{eq:ons:conc:q}
\end{align}

\begin{lemma}
$\bbP(\eventQ \cup (\eventNoise \cap \eventPi \cap \eventGauss \cap \eventY)^c) \geq 1 - \delta / 7$.
\end{lemma}

\begin{proof}
By the definition of $\tau$, for all $t \leq \tau$,
\begin{align*}
\lambda \snorm{\mu_t - x_\star}_{\Sigma_t^{-1}} \leq \frac{1}{\sqrt{L}}\,.
\end{align*}
Hence, by \cref{prop:conc-q}\ref{prop:conc-q:non-uniform}, with probability at least $1 - \delta / 7$,
\begin{align}
\sum_{t=1}^\tau (\hat q_t(x_\star) - q_t(x_\star)) 
&\leq 1 + \frac{1}{\lambda}\left[\sqrt{\sum_{t=1}^\tau \E_{t-1}[Y_t^2] L} + \Ymax L\right]  \,.
\label{eq:ons:conc:q-2}
\end{align}
On this event and $\eventNoise \cap \eventPi \cap \eventGauss$, 
\begin{align*}
\qReg_\tau(x_\star) - \hqReg_\tau(x_\star)
&= \sum_{t=1}^\tau (q_t(\mu_t) - q_t(x_\star)) - \sum_{t=1}^\tau (\hat q_t(\mu_t) - \hat q_t(x_\star)) \\
&\explana= \sum_{t=1}^\tau (\hat q_t(x_\star) - q_t(x_\star)) \\
&\explana\leq 1 + \frac{1}{\lambda} \left[\sqrt{\sum_{t=1}^\tau \E_{t-1}[Y_t^2] L} + \Ymax L\right] \\
&\explana\leq 1 + \frac{1}{\lambda} \left[\sqrt{10n L} + \frac{L^2}{\eps}\right] \\
&\explana\leq \frac{4}{\lambda} \sqrt{n L} \,,
\end{align*}
where \explanr{} holds because $q_t(\mu_t) = \hat q_t(\mu_t) = 0$,
\explanr{} from \cref{eq:ons:conc:q-2},
\explanr{} from \cref{lem:ons:bounds}\ref{lem:ons:bounds:Ymax}\ref{lem:ons:bounds:Y} and
\explanr{} by naive simplification.
\end{proof}

Finally, let $\eventS{}$ be the event that
\begin{align*}
\tag{\eventS}
\eventS = \left\{-6 \lambda L^2 \sqrt{dn} \bar \Sigma_\tau^{-1} \preceq S_\tau - \bar S_\tau \preceq 6 \lambda L^2 \sqrt{dn} \bar \Sigma_\tau^{-1}\right\} \,.
\end{align*}

\begin{lemma}
$\bbP(\eventS \cup (\eventNoise \cap \eventPi \cap \eventGauss \cap \eventY)^c) \geq 1 - \delta / 7$.
\end{lemma}

\begin{proof}
By \cref{prop:conc-hessian-path} with $\Sigma^{-1} = \frac{3}{2} \bar \Sigma_\tau^{-1}$, with probability at least $1 - \delta / 7$,
\begin{align*}
\bar S_\tau - S_\tau &\preceq \lambda L^2 \left[1 + \sqrt{d \sum_{t=1}^\tau \E_{t-1}[Y_t^2]} + d^2 \Ymax \right] \frac{3}{2} \bar \Sigma_\tau^{-1} \quad \text{and}\\ 
S_\tau - \bar S_\tau &\preceq \lambda L^2 \left[1 + \sqrt{d \sum_{t=1}^\tau \E_{t-1}[Y_t^2]} + d^2 \Ymax \right] \frac{3}{2}\bar \Sigma_\tau^{-1} \,.
\end{align*}
As before, the claim follows from \cref{lem:ons:bounds}\ref{lem:ons:bounds:Ymax}\ref{lem:ons:bounds:Y} and 
\cref{tab:ons:eps2}, which says that
$\frac{d^2 L}{\eps} \leq \frac{1}{L}\sqrt{d n}$.
\end{proof}

Let $E = \eventAzuma \cap \eventNoise \cap \eventPi \cap \eventGauss \cap \eventY \cap \eventQ \cap \eventS$ be the intersection of all these high-probability events.
A union bound over all the calculations above shows that $\bbP(E) \geq 1 - \delta$.
For the remainder of the proof we bound the regret on $E$.

\stepsection{Step 3: Simple bounds}
We can now make some elementary conclusions that hold on the intersection of all the high probability events outlined
in the previous step.
To begin, by \cref{lem:ons:bounds}\ref{lem:ons:bounds:Y},
\begin{align}
\sum_{t=1}^\tau \norm{g_t}^2_{\Sigma_t}
&= \sum_{t=1}^\tau \norm{\frac{R_t Y_t \Sigma_t^{-1}(X_t - \mu_t)}{(1-\lambda)^2}}^2_{\Sigma_t} \nonumber \\
&\leq \left(\sum_{t=1}^\tau Y_t^2\right) \max_{1 \leq t \leq \tau} \left(\frac{R_t}{(1 - \lambda)^2}\right)^2 \norm{X_t - \mu_t}^2_{\Sigma_t^{-1}} \nonumber \\
&\leq d n L\,,
\label{eq:ons:conc:g}
\end{align}
where in the last inequality we combined \eventGauss{} to bound the norm and \eventY{} to bound the sum of squared losses 
with \cref{lem:measure} to bound $0 \leq R_t \leq 3$ and used the fact that $\lambda \leq \frac{1}{2}$.
By the definition of \eventS{},
\begin{align*}
\Sigma_{\tau+1}^{-1} 
&= \Sigma_1^{-1} + \eta S_\tau 
\preceq \Sigma_1^{-1} + \eta \bar S_\tau + 6\eta \lambda L^2 \sqrt{dn}\bar \Sigma_{\tau}^{-1}
\preceq \frac{3}{2} \bar \Sigma_{\tau+1}^{-1}\,,
\end{align*}
where the final inequality follows from the definitions of $\eta$ and $\lambda$ (\cref{tab:ons:Sigma-conf}) and because $\bar \Sigma_{\tau}^{-1} \preceq \bar \Sigma_{\tau+1}^{-1}$.
Similarly,
\begin{align*}
\Sigma_{\tau+1}^{-1} 
&\succeq \Sigma_1^{-1} + \eta \bar S_\tau - \frac{1}{2} \bar \Sigma_{\tau}^{-1}
= \bar \Sigma_{\tau+1}^{-1} - \frac{1}{2} \bar \Sigma_\tau^{-1} 
\succeq \frac{1}{2} \bar \Sigma_{\tau+1}^{-1}\,.
\end{align*}
Combining shows that
\begin{align}
\frac{1}{2} \bar \Sigma_{\tau+1}^{-1} \preceq \Sigma_{\tau+1}^{-1} \preceq \frac{3}{2} \bar \Sigma_{\tau+1}^{-1} \,.
\label{eq:ons:S}
\end{align}
We also want to show that $2\delta \id \preceq \bar \Sigma_{\tau+1}^{-1} \preceq \frac{2}{3 \delta}\id$.
The left-hand inequality is immediate because $\bar \Sigma_{\tau+1} \preceq \bar \Sigma_1 = \sigma^2 \id \preceq \frac{1}{2 \delta} \id$. 
By \cref{prop:opt:H-upper}, for any $t \leq \tau$,
\begin{align*}
\norm{\E_{t-1}[H_t]} 
&= \frac{1}{2} \norm{s''_t(\mu_t)} \\ 
&\leq \frac{\lambda \lip(e)}{2(1 - \lambda)} \sqrt{d \norm{\Sigma_t^{-1}}} \\
&\leq \frac{\lambda \lip(e)}{2(1-\lambda)} \sqrt{\frac{3d}{2} \norm{\bar \Sigma_t^{-1}}} \\
&\leq \frac{\lambda \lip(e)}{2(1 - \lambda)} \sqrt{\frac{d}{\delta}}\,,
\end{align*}
where the final inequality follows from \cref{def:ons:tau} and because $t \leq \tau$.
Therefore, by ensuring that $\delta = O(1/\poly(n, d))$ is small enough and bounding $\lip(e) = O(1/\eps)$ using \cref{prop:reg:bandit-extension-eps},
\begin{align}
\norm{\bar \Sigma_{\tau+1}} 
= \norm{\bar \Sigma_1^{-1} + \eta \sum_{u=1}^\tau \bar H_u} 
\leq \norm{\bar \Sigma_1^{-1}} + \frac{\eta n \lambda \lip(e)}{2(1-\lambda)} \sqrt{\frac{d}{\delta}}
\leq \frac{1}{\delta}\,.
\label{eq:ons:S-bound}
\end{align}
Therefore both \cref{def:ons:tau}\ref{def:ons:tau:P} and \cref{def:ons:tau}\ref{def:ons:tau:S} hold and so the only way that 
$\tau \neq n$ is if $F_{\tau+1} > \frac{1}{2L \lambda^2}$.

\stepsection{Step 4: Trace/Logdet inequalities}
Online Newton step bounds the regret relative to the estimated quadratic surrogate losses, which are close to the true quadratic losses. The regret relative to the extension
can be bounded in terms of the regret relative to quadratic surrogates:

\begin{lemma}\label{lem:ons:trace-log}
On event $E$, $\displaystyle \beReg_\tau(x_\star) \leq \qReg_\tau(x_\star) + \frac{d L}{\lambda \eta}$.
\end{lemma}

\begin{proof}
By the definition of $\tau$, for $t \leq \tau$ it holds that $\lambda \norm{\mu_t - x_\star}_{\Sigma_t^{-1}} \leq \frac{1}{\sqrt{L}}$. Hence,
by \cref{prop:ons:q},
\begin{align}
\beReg_\tau(x_\star) \leq \qReg_\tau(x_\star) + \frac{4}{\lambda} \sum_{t=1}^\tau \tr(q''_t(\mu_t) \Sigma_t) + 1 \,.
\label{eq:ons:trace-log}
\end{align}
By \cref{prop:opt:H-upper} and \cref{prop:reg:bandit-extension-eps}, for any $t \leq \tau$,
\begin{align}
\eta \norm{\Sigma_t^{1/2} q_t''(\mu_t) \Sigma_t^{1/2}}
\leq \frac{\eta \lambda \lip(e)}{2(1 - \lambda)} \sqrt{d \norm{\Sigma_t}} 
\leq \frac{2\eta \lambda \sigma \sqrt{2d}}{\eps (1 - \lambda)} 
\leq 1 \,,
\label{eq:ons:trace-log-2}
\end{align}
where in the second last inequality we used \cref{eq:ons:sigma} to bound $\snorm{\Sigma_t} \leq 2\sigma^2$ and \cref{prop:reg:bandit-extension-eps} and the assumption 
that $\ball_1 \subset K$ to bound $\lip(e) \leq \frac{2}{\eps(1-\eps)} \leq \frac{4}{\eps}$.
The final inequality holds because of the choice of $\lambda$, $\eta$ and $\eps$ (\cref{tab:ons:logdet}).
Hence, by \cref{lem:tr-logdet},
\begin{align}
\frac{4}{\lambda} \sum_{t=1}^\tau \tr(q''_t(\mu_t) \Sigma_t) 
&\explana\leq \frac{16}{\lambda \eta} \sum_{t=1}^\tau \log\det\left(\id + \frac{\eta q''_t(\mu_t) \Sigma_t}{2}\right) \nonumber \\ 
&\explana\leq \frac{16}{\lambda \eta} \sum_{t=1}^\tau \log \det\left(\id + \eta q''_t(\mu_t) \bar \Sigma_t\right) \nonumber \\
&= \frac{16}{\lambda \eta} \sum_{t=1}^\tau \log \det\left(\bar \Sigma_t^{\vphantom{-1}} \bar \Sigma_{t+1}^{-1}\right) \nonumber \\
&= \frac{16}{\lambda \eta} \log \det \left(\bar \Sigma_1^{\vphantom{-1}} \bar \Sigma_{\tau+1}^{-1}\right) \nonumber \\
&\explana\leq \frac{16}{\lambda \eta} \log \det \left(\frac{\sigma^2}{\delta} \id\right) \nonumber \\
&\explana\leq \frac{dL}{\lambda \eta} - 1\,, 
\label{eq:ons:logdet}
\end{align}
where \explanr{} follows from \cref{lem:tr-logdet} and \cref{eq:ons:trace-log-2},
\explanr{} because for $t \leq \tau$, $\Sigma_t \leq 2 \bar \Sigma_t$, and
\explanr{} follows from \cref{eq:ons:S-bound}.
Lastly, \explanr{} holds by the definition of $L$.
The claim of the lemma now follows from \cref{eq:ons:trace-log}.
\end{proof}

\stepsection{Step 5: Regret}
By \cref{eq:ons:S}, for any $t \leq \tau$,
\begin{align}
\norm{g_t}^2_{\Sigma_{t+1}}
&\leq 2 \norm{g_t}^2_{\bar \Sigma_{t+1}}
\leq 2 \norm{g_t}^2_{\bar \Sigma_t}
\leq 3 \norm{g_t}^2_{\Sigma_t}\,.
\label{eq:ons:g}
\end{align}
By \cref{thm:ons} and the bounds in \cref{eq:ons:conc:g,eq:ons:conc:q},
\begin{align*}
F_{\tau+1}
&=\frac{1}{2} \norm{x_\star - \mu_{\tau+1}}^2_{\Sigma_{\tau+1}^{-1}} \\
&\explana\leq \frac{\norm{x_\star}^2}{2 \sigma^2} + \frac{\eta^2}{2} \sum_{t=1}^\tau \norm{g_t}^2_{\Sigma_{t+1}} - \eta \hqReg_\tau(x_\star) \\
&\explana\leq \frac{\norm{x_\star}^2}{2\sigma^2} + \frac{3\eta^2 n d L}{2} - \eta \hqReg_\tau(x_\star) \\
&\explana\leq \frac{\norm{x_\star}^2}{2\sigma^2} + \frac{3\eta^2 n d L}{2} + \frac{4\eta  \sqrt{nL}}{\lambda}  - \eta \qReg_\tau(x_\star) \\
&\explana\leq \frac{\norm{x_\star}^2}{2\sigma^2} + \frac{3\eta^2 n d L}{2} + \frac{d L}{\lambda} + \frac{4\eta \sqrt{nL}}{\lambda} - \eta \beReg_\tau(x_\star) \\
&\explana\leq \frac{2d^2}{\sigma^2} + \frac{3\eta^2 n d L}{2} + \frac{d L}{\lambda} + \frac{4\eta \sqrt{nL}}{\lambda} - \eta \beReg_\tau(x_\star) \\
&\explana\leq \frac{1}{2L\lambda^2} - \eta \beReg_\tau(x_\star)\,,
\end{align*}
where \explanr{} follows from \cref{thm:ons}, \explanr{} from \cref{eq:ons:conc:g} and \cref{eq:ons:g}, \explanr{} holds on event \ref{eq:ons:conc:q}, and
\explanr{} follows from \cref{lem:ons:trace-log}.
\explanr{} follows because we assumed that $K \subset \ball_{2d}$. 
Lastly, \explanr{} follows by substituting the definition of the constants \cref{tab:ons:main}.
Therefore all of the following hold:
\begin{enumerate}
\item $F_{\tau+1} \leq \frac{1}{2 L\lambda^2}$.
\item $\beReg_\tau(x_\star) \leq \frac{1}{2 L \eta \lambda^2}$.
\item $\Sigma_{\tau+1}$ is positive definite and $\delta \id \preceq \frac{1}{2} \bar \Sigma_{\tau+1}^{-1} \preceq \Sigma_{\tau+1}^{-1} \preceq \frac{3}{2} \bar \Sigma_{\tau+1}^{-1} \preceq \delta^{-1} \id$.
\end{enumerate}
By \cref{def:ons:tau}, on this event we have $\tau = n$ and hence $\beReg_n(x_\star) \leq \frac{1}{2L \eta \lambda^2}$.
At long last, \cref{thm:ons:bandit} now follows from the definitions of $\eta$ and $\lambda$ and \cref{eq:ons:ereg}.

\section{Proof of Lemma~\ref{lem:ons:bounds}}\label{sec:lem:ons:bounds} 
Staring with part~\ref{lem:ons:bounds:Ymax}, let $t \leq \tau$. Then, on \eventNoise{} and \eventPi{},
\begin{align}
|Y_t| 
&= \left|\ext(X_t) + \pip(X_t) \eps_t\right| \nonumber \\
&= \left|\pip(X_t) f\left(\frac{X_t}{\pip(X_t)}\right) + \frac{2(\pip(X_t) - 1)}{\eps} + \pip(X_t) \eps_t\right| \nonumber \\
&\leq \frac{L}{2\eps}\,,
\label{eq:lem:ons:bounds:1}
\end{align}
where in the final inequality we used the definitions of $\eventNoise$ and $\eventPi$ and the definition $\pip(X_t) = \max(1, \pi(X_t)/(1+\eps))$.
The expectation is bounded by
\begin{align}
\E_{t-1}[|Y_t|] 
&= \E_{t-1}\left[\left|\pip(X_t) f\left(\frac{X_t}{\pip(X_t)}\right) + \frac{2(\pip(X_t) - 1)}{\eps} + \pip(X_t) \eps_t\right|\right] \nonumber \\
&\explana\leq \frac{2}{\eps} \E_{t-1}\left[\pip(X_t)\right] + \E_{t-1}[|\pip(X_t) \eps_t|]  \nonumber \\
&\explana\leq \frac{3}{\eps} \E_{t-1}\left[\pip(X_t)\right] \nonumber \\
&\explana\leq \frac{3}{\eps}\left(1 + \sigma M \sqrt{2d}\right) \nonumber \\
&\explana\leq \frac{L}{2\eps}\,,
\label{eq:lem:ons:bounds:2}
\end{align}
where \explanr{} follows because $f \in \cF_{\pb}$ is bounded;
\explanr{} holds because $\E_{t-1}[|\eps_t||X_t] \leq 1$ and by naively bounding $1 \leq 1/\eps$; and
\explanr{} follows from \cref{lem:ons:mink} and because $\norm{\Sigma_t} \leq 2\sigma^2$ by \cref{eq:ons:sigma}.
Lastly, \explanr{} is true because $\sigma M \sqrt{2d} \leq 1$ (\cref{tab:ons:sigma}).
Combining \cref{eq:lem:ons:bounds:1} and \cref{eq:lem:ons:bounds:2} completes the proof of part~\ref{lem:ons:bounds:Ymax}.
Part~\ref{lem:ons:bounds:Y} is more interesting.
The main point is that under $\eventNoise, \eventPi, \eventGauss$ the only way that $Y_t$ can be large is if $X_t$ is far outside of $K_\eps$.
By the definition of the algorithm $\mu_t \in K_\eps$, which means that for $X_t$ to be large the covariance $\Sigma_t$ must also be relatively large.
But because $\Sigma_t^{-1}$ increases with curvature\index{curvature} and the extended loss function has considerable curvature near $\partial K_\eps$, we should expect that
as the algorithm plays outside $K_\eps$ the covariance $\Sigma_t$ will decrease and hence $X_t$ gets closer to $K_\eps$ and $Y_t = \tilde O(1)$.
Recall that $v(x) = \pip(x) - 1$ and $v_\varrho = v * \phi_\varrho$ where $\phi_\varrho$ is the smoothing kernel from \cref{sec:reg:smooth}. Then
\begin{align*}
&\sum_{t=1}^\tau \E_{t-1}[Y_t^2] 
= \sum_{t=1}^\tau \E_{t-1}\left[\left(\pip(X_t) \left(f(X_t/\pip(X_t)) + \eps_t\right) + \frac{2 \pipm(X_t)}{\eps}\right)^2\right] \\
&\quad\explana\leq \sum_{t=1}^\tau \E_{t-1}\left[4 \pip(X_t)^2 + \frac{8 \pipm(X_t)^2}{\eps^2}\right] \\
&\quad\explana\leq \sum_{t=1}^\tau \E_{t-1}\left[8 + \frac{10 \pipm(X_t)^2}{\eps^2} \right] \\
&\quad\explana\leq 8n + \frac{10}{\eps^2} \sum_{t=1}^\tau L\left(1 + M \sqrt{d \snorm{\Sigma_t}}\right) \left[\tr\left(\Sigma_t \lim_{\varrho \to 0} \E_{t-1}[\pipmr''(X_t)]\right) + \delta\right] \\
&\quad\explana\leq 8n + \frac{20 L}{\eps^2} \sum_{t=1}^\tau \left[\tr\left(\Sigma_t \lim_{\varrho \to 0} \E_{t-1}[\pipmr''(X_t)]\right) + \delta\right] \\
&\quad\explana\leq 9n + \frac{80 L}{\lambda \eps} \sum_{t=1}^\tau \tr\left(\Sigma_t q_t''(\mu_t)\right) 
\explana\leq 9n + \frac{20 dL^2}{\lambda \eta \eps} 
\explana\leq 10n 
\end{align*}
where \explanr{} uses that $(a+b)^2 \leq 2a^2 + 2b^2$ 
and the assumption that $\eps_t$ subgaussian\index{subgaussian} under $\bbP_{t-1}(\cdot|X_t)$ so that $\E_{t-1}[(f(X_t/\pip(X_t)) + \eps_t)^2|X_t] \leq 2$.
\explanr{} uses that $\eps \leq 1/2$ and again that $(a+b)^2 \leq 2a^2 + 2b^2$;
\explanr{} follows from \cref{lem:ons:v} and because $M = \max(M(K), 1/\sqrt{d})$;
\explanr{} follows from \cref{eq:ons:sigma} to bound $\norm{\Sigma_t} \leq 2 \sigma^2$
and the definition of the constraints (\cref{tab:ons:sigma});
\explanr{6} follows from \cref{eq:ons:logdet}; and \explanr{7} uses the definition of
the constants again (\cref{tab:ons:bounds}).
It remains to justify \explanr{5}.
Recall that
\begin{align*}
\ext(x) 
&= \left[\pip(x) f\left(\frac{x}{\pip(x)}\right) + \frac{\pipm(x)}{\eps}\right] + \frac{\pipm(x)}{\eps} 
\triangleq h(x) + \frac{\pipm(x)}{\eps}\,.
\end{align*}
Now $h$ is convex by \cref{prop:reg:bandit-extension-eps} 
and $\lip(h) < \infty$, and hence by \cref{prop:reg:smooth}, Proposition \ref{prop:gauss:mean-hess} and \cref{tab:ons:lambda},
\begin{align*}
\tr(\Sigma_t q_t''(\mu_t))
&= \frac{1}{2} \tr(\Sigma_t s_t''(\mu_t)) \\
&\geq \frac{\lambda}{4} \tr\left(\Sigma_t \lim_{\varrho \to 0} \E_{t-1}\left[e''_\varrho(X_t)\right]\right) - \frac{\delta d}{2} \\
&\geq \frac{\lambda}{4 \eps} \tr\left(\Sigma_t \lim_{\varrho \to 0} \E_{t-1}\left[\pipmr''(X_t)\right]\right) - \frac{\delta d}{2} \,,
\end{align*}
where the last inequality holds because $e''_\varrho(x) = h''_\varrho(x) + \frac{1}{\eps} \pipmr''(x) \succeq \frac{1}{\eps} v_\varrho''(x)$.
Rearranging shows that
\begin{align*}
\tr\left(\Sigma_t \lim_{\varrho \to 0} \E_{t-1}\left[\pipmr''(X_t)\right]\right) + \delta 
\leq \frac{4 \eps}{\lambda} \tr\left(\Sigma_t q_t''(\mu_t)\right) + \delta\left[1 + \frac{2 \eps d}{\lambda}\right]\,,
\end{align*}
which by naively bounding the terms involving $\delta$ suffices to establish \explanr{5}.

\section{Constraints}\label{sec:ons:constraints} 

Most bandit algorithms are fairly straightforward to tune as we saw in \cref{chap:sgd,chap:ftrl,chap:lin,chap:exp}.
Regrettably the parameters of \cref{alg:ons:bandit} interact in a more complicated way. 
The constraints needed for the analysis are listed in \cref{tab:ons:constraints}.

\begin{table}[h!]
\renewcommand{\arraystretch}{1.7}
\caption{Constraints on the constants needed in the analysis of \cref{alg:ons:bandit}.}
\label{tab:ons:constraints}
\begin{tabular}{|Np{6cm}|}
\hline
\multicolumn{1}{|c}{} & \multicolumn{1}{p{5cm}|}{\textsc{constraint}} \\ \hline
\label{tab:ons:eps} & $\frac{L^3}{\eps^2} \leq n$ \\ 
\label{tab:ons:eps2} & $\frac{d^2 L}{\eps} \leq \frac{1}{L}\sqrt{dn}$ \\
\label{tab:ons:Sigma-conf} & $6 \eta \lambda L^2 \sqrt{dn} \leq \frac{1}{2}$ \\
\label{tab:ons:logdet} & $\frac{2 \sigma \eta \lambda}{\eps(1 - \lambda)} \sqrt{2d} \leq 1$ \\
\label{tab:ons:main} & $\frac{2d^2}{\sigma^2} + \frac{3 \eta^2 n d L}{2} + \frac{d L}{\lambda} + \frac{4\eta \sqrt{nL}}{\lambda} \leq \frac{1}{2 L \lambda^2}$ \\
\label{tab:ons:sigma} & $\sigma M \sqrt{2d} \leq 1$ \\
\label{tab:ons:bounds} & $\frac{20dL^2}{\lambda \eta \eps} \leq n$ \\
\label{tab:ons:lambda} & $\lambda \leq \frac{1}{d L^2}$ \\
\hline
\end{tabular}
\end{table}

You can check (laboriously) that the constants defined by
\begin{align*}
\sigma &= \frac{1}{M \sqrt{2d}} &
\lambda &= \frac{1}{4d^{1.5} M L^2} &
\eta &= \frac{dM}{3} \sqrt{\frac{1}{n}} &
\eps &= \frac{240 d^2 ML^4}{\sqrt{n}}
\end{align*}
satisfy all of the above constraints.

\section{Notes}

\begin{enumeratenotes}

\item Let us collect some notes on the trade-offs between the various positions.
\begin{itemizeinner}
\item L\"owner's position yields a regret of $\tilde O(d^{1.5} \sqrt{n})$ for any $K$. But in general it can only be computed efficiently when $K$
is the convex hull of a small number of vertices.
\item Isotropic position yields $\tilde O(d^2 \sqrt{n})$ regret for any $K$ and can be computed relatively efficiently when $K$ is given by a separation or membership oracle.
The bound improves to $\tilde O(d^{1.9} \sqrt{n})$ when $K$ is symmetric.
\item John's position yields $\tilde O(d^2 \sqrt{n})$ regret for any $K$ and can be computed efficiently when $K$ is a polytope represented by the intersection of half-spaces.
When $K$ is symmetric, the regret improves to $\tilde O(d^{1.5} \sqrt{n})$.
\item When the polar of $K$ is isotropic, then the regret is $\tilde O(d^{1.75} \sqrt{n})$ in general and $\tilde O(d^{1.5} \sqrt{n})$ when $K$ is symmetric. 
Positioning $K$ such that the polar is isotropic is quite delicate and is discussed in detail in \ref{note:ons:iso} below.
\end{itemizeinner}

\item \label{note:ons:johns} No scaling of John's position yields the same uniform bound on $M(K)$ as L\"owner's position.\index{convex body}
Let $K$ be such that $K^\circ = \ball_1 \cap \{u \colon u_1 \geq -1/d\}$, which is the convex body formed as the intersection of the unit ball and a half-space.
\cref{thm:ellipsoid} shows that $K^\circ$ is in L\"owner's position and therefore $K$ is in John's position.
But $M(K) \geq \frac{1}{2}$ is obvious and yet $-d e_1 \in K$ holds since $\sup_{u \in K^\circ} \ip{-d e_1, u} = 1$.
Therefore $\max_{x \in K} \norm{x} \geq d$.
Since for $\gamma > 0$, $M(\gamma K) = \frac{1}{\gamma} M(K)$,
\begin{align*}
M(\gamma K) \max_{x \in \gamma K} \norm{x} \geq \frac{d}{2} \text{ for all } \gamma \in (0, \infty) \,.
\end{align*}
Therefore every scaling of $K$ such that $\diam(K) = O(d)$ has $M(K) = \Omega(1)$.
This suggests the following problem:

\begin{exer}
\faStar \faStar \faStar \faBook \faQuestion \quad
Does there exist a polynomial-time algorithm for positioning the constraint set such that $\ball_1 \subset K \subset \ball_{2d}$
and $M(K) = \tilde O(d^{-1/2})$ when
\begin{itemizeinner}
\item $K = \{x \colon Ax \leq b\}$ is a polytope? \index{polytope}
\item $K$ is given by a separation or membership oracle?\index{separation oracle}
\end{itemizeinner}
\end{exer}

\newcommand{\cen}{\operatorname{cen}}
\newcommand{\cov}{\operatorname{cov}}
\item \label{note:ons:iso}
The existence and computation of an affine map $T$ such that $(TK)^\circ$ is isotropic is quite interesting.
Let 
\begin{align*}
\cen(K) &= \frac{1}{\vol(K)} \int_K x \d{x} \qquad \text{ and }  \\
\cov(K) &= \frac{1}{\vol(K)} \int_K (x - \cen(K))(x - \cen(K))^\top \d{x} \,.
\end{align*}
We want to find an affine map $T$ such that $\cen((TK)^\circ) = \zeros$ and $\cov((TK)^\circ) = \id$.
An elementary calculation shows that $(AK)^\circ = A^{-1} K^\circ$ for positive definite $A$.
The behaviour of $x \mapsto (K-x)^\circ$, however, is more complicated.
Nevertheless, there exists an $s \in \interior(K)$ called the Sant\'alo point such that
$\cen((K-s)^\circ) = \zeros$, as explained by \cite{schneider2013convex}. 
\cite{meyer1998santalo} show that $x \mapsto \vol((K-x)^\circ)$ is strictly convex on the interior of $K$ and the minimiser of this function is
the Sant\'alo point \citep{santalo1949invariante}.
Hence, letting $s$ be the Sant\'alo point of $K$, $A = \cov((K - s)^\circ)^{1/2}$ and $Tx = Ax - As$, it follows that $(TK)^\circ$ is isotropic.
Regarding computation, given $x \in \interior(K)$ it is possible in principle to estimate $\vol((K-x)^\circ)$ by sampling and hence use zeroth-order convex
optimisation to find the Sant\'alo point. Once $s$ has been found, the matrix $A$ can be estimated by sampling from $(K-s)^\circ$. 
A reasonable guess is that a suitable approximation of $T$ can be
found in polynomial time using this procedure, but the devil is in the details of the approximation errors:

\begin{exer}
\faStar \faStar \faStar \faBook \faQuestion \quad
Suppose that $K$ is a polytope or represented by a separation oracle. Does there exist a polynomial time algorithm to position $K$ so that $K^\circ$ is approximately isotropic
with errors small enough that the relevant results in \cref{tab:widths} hold (approximately)?
The problem is studied when $d = 2$ by \cite{kaiser1993santalo}.
\end{exer}

\item Online Newton step was originally designed for full-information\index{setting!full information} online convex optimisation \citep{HAK07}. 
Its use as an algorithm for driving bandit convex optimisation methods has been developed by a number of authors. 
\cite{SRN21,suggala2024second} studied the quadratic and near-quadratic settings, while \cite{LG23} considered Lipschitz convex functions in the unconstrained setting. 
The algorithm and analysis in this chapter follows the work by \cite{LFMV24}. 

\item You should be a little unhappy about the non-specific constants in \cref{thm:ons:bandit}.
How can you run the algorithm if the constants depend on universal constants and unspecified logarithmic factors?
The problem is that the theory is overly conservative. In practice both $\eta$ and $\lambda$ should be much larger than the theory would suggest, even if you
tracked the constants through the analysis quite carefully. This is quite a standard phenomenon, and usually not a problem. Here there is a little twist, however.
If you choose $\eta$ or $\lambda$ too large, then the algorithm can explode with non-negligible probability. For example, the covariance matrix might
stop being positive definite at some point, or $F_t$ could grow too large and the algorithm may move slowly relative to the regret suffered.
Hopefully this issue can be resolved in the future but for now you should be cautious.

\item \label{note:ons:noise}
An interesting question is whether or not \cref{alg:ons:bandit} can be adapted to exploit low-variance noise, possibly using the idea in \cref{sec:sc:stoch}.
The principal challenge is the huge range of the extension, which you could mitigate by assuming the loss is Lipschitz and using \cref{prop:reg:bandit-extension}.
Or, even better, by developing some new techniques:

\begin{exer}
\faStar \faStar \faStar \faQuestion \quad
Analyse the regret of \cref{alg:ons:bandit} or some modification thereof under \cref{ass:sgd-stoch}.
\end{exer}

\item  \label{note:ons:inc}
There are several ways to improve the computational complexity of \cref{alg:ons:bandit}. 
At the moment the complexity when $K$ is a polytope is $O(d^3 + m d^2)$
with the former term due to computing the eigenvalue decomposition of the covariance matrix and the latter from the projection (\cref{tab:comp}).
Note that $m \geq d+1$ is needed for $K$ to be a convex body, so the second term always dominates.

\begin{exer}
\label{ex:ons:inc}
\faStar \faStar \faBook \faQuestion \quad
Suppose that $\Sigma^{1/2}$ is stored in memory\index{memory} and $u \in \R^d$. 
\begin{enumerate}[labelindent=0pt]
\item Find a high-quality approximation of $(\Sigma^{-1} + uu^\top)^{-1/2}$ that can be computed using $\tilde O(d^2)$ arithmetic operations. \label{ex:ons:inc:inc} 
\item Show how to use \ref{ex:ons:inc:inc} to reduce the complexity of the Gaussian sampling in \cref{alg:ons:bandit} to $\tilde O(d^2)$ arithmetic operations per round.
\end{enumerate}
\end{exer}

For \ref{ex:ons:inc:inc} you may find the results by \cite{hale2008computing} useful.
The next exercise is more speculative:

\begin{exer}
\faStar \faStar \faStar \faQuestion \quad 
Modify \cref{alg:ons:bandit} by removing the projection onto $K_\eps$ and prove whether or not the regret bound in \cref{thm:ons:bandit} still holds.
We only used that $\mu_t \in K_\eps$ in the proof of \cref{lem:ons:bounds}, which uses \cref{lem:ons:mink}.
Intuitively, even without the projection the algorithm should keep $\mu_t$ close to $K_\eps$ since the extended loss grows rapidly outside $K_\eps$.
Alternatively, you may argue that the projections happen rarely, possibly after modifying the algorithm in some way.
\end{exer}

\end{enumeratenotes}

\chapter[Online Newton Step for Adversarial Losses]{Online Newton Step for Adversarial Losses\copynotice}\label{chap:ons-adv}

Because the optimistic\index{optimistic} Gaussian surrogate \index{surrogate loss!Gaussian} is only well-behaved on a shrinking ellipsoidal focus region,\index{focus region} algorithms that use it 
are most naturally analysed in the stochastic setting, where it is already a challenge to prove that the optimal
action remains in the focus region. In the adversarial setting\index{setting!adversarial} there is limited hope to ensure the optimal action in hindsight
remains in the focus region. The plan is to use a mechanism that detects when the minimiser leaves the focus region and restarts the algorithm.\index{restart}
This is combined with an argument that the regret is negative whenever a restart occurs.
The formal setting studied in this chapter is characterised by the following assumption:

\begin{assumption}\label{ass:ons-adv}
The following hold:
\begin{enumerate}
\item The losses are bounded: $f_t \in \cF_{\pb}$ for all $t$.
\item The constraint set is rounded: $\ball_1 \subset K \subset \ball_{2d}$. \index{rounded}
\end{enumerate}
\end{assumption}

This is the same assumption as in \cref{chap:ons} except that the setting is now adversarial.
The highlight is a computationally efficient algorithm and regret bound of $\tilde O(d^{2.5} \sqrt{n})$ under \cref{ass:ons-adv}.
As in \cref{chap:ons} we assume that $\delta \in (0,1)$ is a small user-defined constant that satisfies $\delta \leq \poly(1/d, 1/n)$ and 
\begin{align*}
L = C \log(1/\delta)
\end{align*}
where $C > 0$ is a universal constant.
The analysis in this chapter builds on and refers to the arguments in \cref{chap:ons}, which should be read first.

\section[Approximate Convex Minimisation]{Approximate Convex Minimisation ($\skippy$)}\index{gradient descent!for non-convex optimisation|(}
The version of online Newton step for adversarial convex bandits makes use of a subroutine for approximately minimising a nearly convex function.

\begin{assumption}\label{ass:ons-adv:sgd}
$K \subset \R^d$ is a convex body and $h \colon K \to \R$, $\hat h \colon K \to \R$ and $\hat h' \colon K \to \R^d$ are functions such that
\begin{enumerate}
\item $h$ is convex and differentiable; 
\item $|h(x) - \hat h(x)| \leq \eps_0$ for all $x \in K$; and
\item $\sip{h'(x) - \hat h'(x), x-y} \leq \eps_1$ for all $x, y \in K$.
\end{enumerate}
\end{assumption}

Note, in spite of the notation, there is no need for $\hat h'$ to be the gradient of $\hat h$.
The objective is to find a procedure that finds a near-minimiser of $\hat h$, which is an approximately convex function.
This can be accomplished in many ways, but the most straightforward idea is to use gradient descent with the `gradients' provided by $\hat h'$.

\begin{algorithm}[h!]
\begin{algcontents}
\begin{lstlisting}
args: $\eta > 0$, $A \in \pd$
let $x_1 \in K$
for $t = 1$ to $n$
  $x_{t+1} = \argmin_{x \in K} \snorm{x_t - \eta A \hat h'(x_t) - x}_{A^{-1}}$
return $\frac{1}{n} \sum_{t=1}^n x_t$
\end{lstlisting}
\caption{Approximate gradient descent}
\label{alg:ons-adv:sgd}
\end{algcontents}
\end{algorithm}

\begin{remark}
The matrix $A$ accepted as input by \cref{alg:ons-adv:sgd} corresponds to a change of coordinates, which you will discover is needed in our application because gradient descent is not
equivariant under coordinate changes.
\end{remark}

\begin{theorem}\label{thm:ons-adv:sgd}
Suppose that $\snorm{x - y}^2_{A^{-1}} \leq 1$ for all $x, y \in K$ and that $\snorm{\hat h'(x)}_A \leq G$ for all $x \in K$.
Then, under \cref{ass:ons-adv:sgd},
the output $y$ of \cref{alg:ons-adv:sgd} with $\eta = G / \sqrt{n}$ satisfies
\begin{align*}
\hat h(y) \leq \inf_{x \in K} \hat h(x) + \eps_1 + 2\eps_0 + \frac{G}{\sqrt{n}}\,.
\end{align*}
\end{theorem}

\begin{proof}
We follow the standard analysis of gradient descent. Let $x \in K$ be arbitrary. Then
\begin{align*}
\frac{1}{2} \snorm{x_{t+1} - x}^2_{A^{-1}} 
&\leq \frac{1}{2}\snorm{x_t - \eta A \hat h'(x_t) - x}^2_{A^{-1}} \\
&= \frac{1}{2} \snorm{x_t - x}^2_{A^{-1}} - \eta \sip{\hat h'(x_t), x_t - x} + \frac{\eta^2}{2} \snorm{\hat h'(x_t)}^2_A \\
&\leq \frac{1}{2} \snorm{x_t - x}^2_{A^{-1}} - \eta \sip{h'(x_t), x_t - x} + \eta \eps_1 + \frac{\eta^2}{2} \snorm{\hat h'(x_t)}^2_A \\ 
&\leq \frac{1}{2} \snorm{x_t - x}^2_{A^{-1}} - \eta \sip{h'(x_t), x_t - x} + \eta \eps_1 + \frac{G^2 \eta^2}{2}   \,.
\end{align*}
Summing and telescoping shows that
\begin{align}
\sum_{t=1}^n \left(h(x_t) - h(x)\right)
\tag*{$h$ convex}
&\leq \sum_{t=1}^n \ip{h'(x_t), x_t - x} \nonumber \\
&\leq n \eps_1 + \frac{n \eta G^2}{2} + \frac{1}{2 \eta} \nonumber \\
&= n \eps_1 + G \sqrt{n} \,. \label{eq:ons-adv:cvx-approx}
\end{align}
By the definition of the algorithm $y = \frac{1}{n} \sum_{t=1}^n x_t$ is the average of the iterates. 
Then, by convexity,
\begin{align*}
\hat h(y) - \hat h(x) 
\tag*{By assumption}
&\leq 2\eps_0 + h\left(y\right) - h(x) \\
\tag*{$h$ convex}
&\leq 2\eps_0 + \frac{1}{n} \sum_{t=1}^n \left(h(x_t) - h(x)\right) \\
\tag*{By \cref{eq:ons-adv:cvx-approx}}
&\leq 2\eps_0 + \eps_1 + \frac{G}{\sqrt{n}}\,.
\end{align*}
The result follows since $x \in K$ was arbitrary.
\end{proof}

\begin{corollary}\label{cor:ons-adv:sgd}
Under the same conditions as \cref{thm:ons-adv:sgd}, running \cref{alg:ons-adv:sgd} for $n = \ceil{\frac{G^2}{(2\eps_0 + \eps_1)^2}}$ iterations yields
a point $y$ such that
\begin{align*}
\hat h(y) \leq \inf_{x \in K} \hat h(x) + 4\eps_0 + 2 \eps_1\,.
\end{align*}
\end{corollary}

A theoretically faster but less practical solution is to use the ellipsoid method, as we explain in Note~\ref{note:ons-adv:ellipsoid}.

\index{gradient descent!for non-convex optimisation|)}

\section{Decaying Online Newton Step}\index{decaying online Newton step}
We introduce a modification of online Newton step that decays the covariance matrix.
As in \cref{sec:ons:ons}, let $(\hat q_t)_{t=1}^n$ be a sequence of quadratic functions and $(K_t)_{t=0}^n$ be a sequence of nonempty compact convex
sets such that $K_{t+1} \subset K_t$ for all $t$.
Decaying online Newton step produces a sequence of iterates $(\mu_t)$ and covariances $(\Sigma_t)$ such that $\mu_{t+1} \in K_t$.

\begin{algorithm}[h!]
\begin{algcontents}
\begin{lstlisting}
args: $\eta > 0$, $\mu_1 \in K_0$ and $\Sigma_1 \in \psd$
for $t = 1$ to $n$
  compute $\gamma_t \in (0,1]$ and $K_t \subset K_{t-1}$ in some way
  let $g_t = \hat q_t(\mu_t)$ and $H_t = \hat q_t''(\mu_t)$
  update $\Sigma_{t+1}^{-1} = \gamma_t \Sigma_t^{-1} + \eta H_t$
  update $\mu_{t+1} = \argmin_{\mu \in K_t} \norm{\mu - (\mu_t - \eta \Sigma_{t+1} g_t)}^2_{\Sigma_{t+1}^{-1}}$
\end{lstlisting}
\caption{Decaying online Newton step}
\label{alg:ons-adv:ons-decay}
\end{algcontents}
\end{algorithm}

\FloatBarrier

\begin{theorem}\label{thm:ons-adv:ons}
Suppose that \cref{alg:ons-adv:ons-decay} is run on a sequence of quadratics $(\hat q_t)_{t=1}^n$ and produces iterates $(\mu_t)_{t=1}^{n+1}$ and covariances $(\Sigma_t)_{t=1}^{n+1}$.
Then, provided that $\Sigma_{1}^{-1},\ldots,\Sigma_{n+1}^{-1} \in \pd$, for any $x \in K_n$
\begin{align*}
\frac{1}{2\eta} \norm{\mu_{n+1} - x}^2_{\Sigma_{n+1}^{-1}}
&\leq \frac{1}{2\eta} \norm{\mu_1 - x}^2_{\Sigma_1^{-1}} + \frac{\eta}{2} \sum_{t=1}^n \norm{g_t}^2_{\Sigma_{t+1}} - \frac{\Gamma_n(x)}{\eta} - \hqReg_n(x) \,,
\end{align*}
where $\hqReg_n(x) = \sum_{t=1}^n (\hat q_t(\mu_t) - \hat q_t(x))$ and $\Gamma_n(x) = \frac{1}{2} \sum_{t=1}^n (1 - \gamma_t) \snorm{x - \mu_t}_{\Sigma_t^{-1}}$.
\end{theorem}

\begin{remark}
The sets $(K_t)_{t=0}^n$ and decay factors $(\gamma_t)_{t=1}^n$ can be data-dependent. In our application $(K_t)$ will be defined as the intersection of ellipsoidal focus regions\index{focus region} on which the surrogate loss is well-behaved.
\end{remark}

Before the proof, let us say something about why the regret bound is useful.
What is new compared to the standard version of online Newton step (\cref{thm:ons}) is
the negative terms appearing on the right-hand side. These can be considerable when $x$ is far from the centre of
the ellipsoid $E(\mu_t, \Sigma_t)$. 
This is precisely the focus region where our surrogate is well-behaved, which means that once a comparator leaves the focus region the 
regret with respect to the estimated quadratic surrogates will be negative, at least when the parameters are tuned correctly.

\begin{proof}[Proof of \cref{thm:ons-adv:ons}]
Let $x \in K_n$.
By definition,
\begin{align*}
&\frac{1}{2} \norm{\mu_{t+1} - x}^2_{\Sigma_{t+1}^{-1}} 
\leq \frac{1}{2} \norm{\mu_t - \eta \Sigma_{t+1} g_t - x}^2_{\Sigma_{t+1}^{-1}} \\
&\quad= \frac{1}{2} \norm{\mu_t - x}^2_{\Sigma_{t+1}^{-1}} + \frac{\eta^2}{2} \norm{g_t}^2_{\Sigma_{t+1}} - \eta \ip{g_t, \mu_t - x} \\
&\quad= \frac{1}{2} \norm{\mu_t - x}^2_{\gamma_t \Sigma_t^{-1}} + \frac{\eta^2}{2} \norm{g_t}^2_{\Sigma_{t+1}} - \eta (\hat q_t(\mu_t) - \hat q_t(x)) \\
&\quad= \frac{1}{2} \norm{\mu_t - x}^2_{\Sigma_t^{-1}} + \frac{\eta^2}{2} \norm{g_t}^2_{\Sigma_{t+1}} - \eta (\hat q_t(\mu_t) - \hat q_t(x)) - \frac{1-\gamma_t}{2} \norm{\mu_t - x}^2_{\Sigma_t^{-1}}\,,
\end{align*}
where in the inequality we used the fact that $x \in K_n \subset K_t$ and the assumption that $\Sigma_{t+1}^{-1}$ is positive definite.
Summing, telescoping and rearranging completes the claim.
\end{proof}

\section{Regularity and Extensions}

Remember that $M(K) = \E[\pi(X)]$,
where $\pi$ is the Minkowski functional associated with $K$ and $X$ is uniformly distributed on $\sphere_1$.\index{mean width}
Under \cref{ass:ons-adv}, $\ball_1 \subset K$ and therefore $M(K) \leq 1$.
In contrast to \cref{chap:ons}, there is nothing to be gained here by improving $M(K)$ from $O(1)$ to $\tilde O(d^{-1/2})$.
Given an $\eps \in (0,1)$ to be tuned later,
the extension of the loss function $f_t$ is
\begin{align*}
e_t(x) = \pip(x) f_t\left(\frac{x}{\pip(x)}\right) + \frac{2 (\pip(x)-1)}{\eps}\,,
\end{align*}
where $\pip(x) = \max(1, \pi(x)/(1-\eps))$.
Remember that $K_\eps = (1 - \eps) K$.
As in \cref{chap:ons} we abuse notation by saying that the algorithm samples $X_t$ from $\cN(\mu_t, \Sigma_t)$, plays
$X_t / \pip(X_t)$ and observes 
\begin{align*}
Y_t = \pip(X_t)\left(f_t\left(\frac{X_t}{\pip(X_t)}\right) + \eps_t\right) + \frac{2(\pip(X_t)-1)}{\eps}\,.
\end{align*}
The surrogate loss in round $t$ is defined by
\begin{align*}
s_t(x) = \E_{t-1}\left[\left(1 - \frac{1}{\lambda}\right) e_t(X_t) + \frac{1}{\lambda} e_t((1 - \lambda)X_t + \lambda x)\right]
\end{align*}
and its quadratic approximation is $q_t(x) = \sip{s_t'(\mu_t), x - \mu_t} + \frac{1}{4} \norm{x - \mu_t}^2_{s''_t(\mu_t)}$.
Let $p_t$ be the density of $\cN(\mu_t, \Sigma_t)$. 
The surrogate and its gradient and Hessian are estimated by
\begin{align}
\hat s_t(x) &= \left(1 - \frac{1}{\lambda} + \frac{\bar r_t(x)}{\lambda}\right) Y_t \quad \text{and} \nonumber \\ 
\label{eq:ons-adv:grad-est}
\hat s_t'(x) &= \frac{\bar r_t(x) Y_t}{1-\lambda} \Sigma_t^{-1} \left(\frac{X_t - \lambda x}{1 - \lambda} - \mu_t\right) \quad \text{and} \\
\label{eq:ons-adv:hess-est}
\hat s_t''(x) &= \frac{\lambda \bar r_t(x) Y_t}{(1 - \lambda)^2} \left[\Sigma_t^{-1}\left(\frac{X_t - \lambda x}{1-\lambda} - \mu_t\right)\left(\frac{X_t - \lambda x}{1 - \lambda} - \mu_t\right)^\top \Sigma_t^{-1} - \Sigma_t^{-1}\right] 
\end{align}
with
\begin{align*}
\bar r_t(x) = \min\left(\frac{p_t\left(\frac{X_t - \lambda x}{1 - \lambda}\right)}{(1 - \lambda)^d p_t(X_t)},\, \exp(2)\right)\,.
\end{align*}
Finally,
$\hat q_t(x) = \sip{\hat s_t'(\mu_t), x - \mu_t} + \frac{1}{4} \norm{x - \mu_t}^2_{\hat s''_t(\mu_t)}$ is the estimate of the quadratic
surrogate. These are the same estimators that appear in \cref{chap:ons,chap:opt}.

\section{Algorithm}
The algorithm is a modification of \cref{alg:ons:bandit} for the stochastic setting.
The main differences are
that decaying online Newton step replaces the classical version and
a gadget is introduced to restart the algorithm if negative regret is detected.\index{restart}
The estimated regret with respect to the estimated cumulative surrogate losses is
$\hsReg_t(x) = \sum_{u=1}^t \left(\hat s_u(\mu_u) - \hat s_u(x)\right)$.

\begin{algorithm}[h!]
\begin{algcontents}
\begin{lstlisting}
args: $\eta$, $\lambda$, $\sigma^2$, $\eps$, $\gamma$, $\rho$
$\mu_1 = \zeros$, $\Sigma_1 = \sigma^2 \id$ and $K_0 = K_\eps$
for $t = 1$ to $n$
  compute $K_t = \left\{x \in K_{t-1} \colon \lambda \snorm{x - \mu_t}_{\Sigma_t^{-1}} \leq \frac{1}{\sqrt{2L}}\right\}$ 
  sample $X_t$ from $\cN(\mu_t, \Sigma_t)$ 
  observe $Y_t = \pip(X_t) \left(f_t\left(\frac{X_t}{\pip(X_t)}\right) + \eps_t\right) + \frac{2 (\pip(X_t)-1)}{\eps}$ 
  compute $g_t = \hat s_t'(\mu_t)$ and $H_t = \frac{1}{2} \hat s_t''(\mu_t)$ using $\cref{eq:ons-adv:grad-est}, \cref{eq:ons-adv:hess-est}$
  compute $\displaystyle z_{t-1} = \argmin_{z \in \R^d} \left[\Gamma_{t-1}(z) \triangleq \sum_{s=1}^{t-1} (1-\gamma_s) \snorm{z - \mu_s}^2_{\Sigma_s^{-1}}\right] \label{line:ons-adv:Gamma}$
  $\displaystyle \gamma_t = \begin{cases}
  1 & \text{if } \Gamma_{t-1}(z_{t-1}) \geq 3 \rho \\
  \gamma & \text{if }  \Sigma_t^{-1} \not\preceq \sum_{s=1}^{t-1} \sind(\gamma_s \neq 1) \Sigma_s^{-1} \\
  \gamma & \text{if } \snorm{\mu_t - z_{t-1}}^2_{\Sigma_t^{-1}} \geq \frac{1}{8 L \lambda^2} \\
  1 & \text{otherwise}
  \end{cases}$
  compute $\Sigma_{t+1}^{-1} = \gamma_t \Sigma_t^{-1} + \eta H_t$ 
  compute $\mu_{t+1} = \argmin_{\mu \in K_t} \norm{\mu - [\mu_t - \eta \Sigma_{t+1} g_t]}_{\Sigma_{t+1}^{-1}}$  $\label{line:ons-adv:mu}$
  find $\!y_t \in K_t$ such that $\eta \hsReg_t(y_t) \geq \max_{y \in K_t} \eta \hsReg_t(y) - \rho$ $\label{line:ons-adv:y}$
  if $\hsReg_t(y_t) \leq -2\rho$ then: restart
\end{lstlisting}
\caption{Online Newton step for adversarial convex bandits}
\label{alg:ons-adv:bandit}
\end{algcontents}
\end{algorithm}

\FloatBarrier

\subsubsection*{Computation}
\cref{alg:ons-adv:bandit} can be implemented in polynomial time provided that $K$ is suitably represented. The difficult steps are:
\begin{itemize}
\item Sampling from the Gaussian, which naively requires an eigenvalue decomposition of the covariance matrix.
\item The computation of $z_{t-1}$ in Line~\ref{line:ons-adv:Gamma}, which is an unconstrained convex quadratic minimisation problem and therefore has a closed-form solution:
\begin{align*}
z_{t-1} = \left(\sum_{s=1}^{t-1} (1 - \gamma_s) \Sigma_s^{-1}\right)^{-1} \sum_{s=1}^{t-1} (1 - \gamma_s) \Sigma_s^{-1} \mu_s \,,
\end{align*}
with the convention that $z_{t-1} = \zeros$ when $t = 1$.
\item The projection in Line~\ref{line:ons-adv:mu} depends on the representation of $K$. Note that the projection is onto $K_t$, which is the intersection of $K$ and the ellipsoidal
focus regions.\index{focus region} When $K$ is a polytope,\index{polytope} then this is a quadratic program\index{quadratic programming} and can be solved efficiently using interior point methods.\index{interior point methods} Note, however, there are $O(t)$ quadratic constraints due
to the intersecting ellipsoidal focus regions.\index{projection}
\item The most challenging problem is the non-convex\index{non-convex} problem in Line~\ref{line:ons-adv:y}, for which you can use \cref{alg:ons-adv:sgd} or the ellipsoid method
as explained in \cref{sec:ons-adv:approx}.
\end{itemize}

\subsubsection*{Explanation of the Parameters}
\cref{alg:ons-adv:bandit} has many tuning parameters, including two new ones compared to the stochastic setting.
The role of the parameters and their values up to logarithmic factors are given in \cref{tab:ons-adv:tuning}.

\begin{table}[h!]
\caption{Table of tuning constants. Constants and logarithmic factors are omitted, but are given in \cref{thm:ons-adv:bandit}.}
\label{tab:ons-adv:tuning}
\centering
\renewcommand{\arraystretch}{1.6}
\begin{tabular}{|lll|}
\hline
        & \textsc{parameter} & \textsc{approx.\ value} \\ \hline
$\eta$  & learning rate & $\sqrt{d/n}$ \\
$\lambda$ & smoothing parameter for surrogate & $\frac{1}{d^2}$ \\
$\sigma^2$ & defines the initial covariance & $\frac{1}{d}$ \\
$\eps$ & defines the set on which the extension is defined & $\sqrt{d^5/n}$ \\
$\gamma$ & determines the decay of the covariance & $1 - \frac{1}{d}$ \\
$\rho$ & margin that triggers restart condition & $d^3$ \\ \hline
\end{tabular}
\end{table}

\section{Analysis}

\newcommand{\xs}{x^{\scriptsize s}}
\newcommand{\yhs}{y^{\scriptsize \hat s}}
\newcommand{\xe}{x^{\scriptsize e}}
\newcommand{\ys}{y^{\scriptsize s}}
\newcommand{\xq}{x^{\scriptsize q}}
\newcommand{\yq}{y^{\scriptsize q}}

\begin{theorem}\label{thm:ons-adv:bandit}
Suppose \cref{alg:ons-adv:bandit} is run with parameters
\begin{align*}
\lambda &= \frac{1}{C d^2 L^3 } &
\eta &= \frac{\sqrt{d/n}}{CL} &
\gamma &= 2^{-\frac{1}{1+dL}} \\
\sigma^2 &= \frac{1}{dL^4} &
\rho &= 2C d^3 L^4 &
\eps &= \frac{d^{2.5} L^5}{\sqrt{n}} \,.
\end{align*}
Then, under \cref{ass:ons-adv}, with probability at least $1 - \delta$, the regret of \cref{alg:ons-adv:bandit} is bounded by
\begin{align*}
\Reg_n = O\left(d^{2.5} L^5 \sqrt{n}\right) \,.
\end{align*}
\end{theorem}

\begin{proof}
The argument largely follows the analysis in \cref{sec:ons:analysis}, but with several additional steps.
Most importantly, we need to show the following hold with high probability:
\begin{enumerate}
\item If the algorithm restarts, then the regret is negative.\index{restart}
\item If the minimiser of the surrogate moves outside the focus region,\index{focus region} then the algorithm restarts.
\end{enumerate}
Let $\bar H_t = \E_{t-1}[H_t]$ and define $\bar \Sigma_t^{-1}$ inductively by $\bar \Sigma_1^{-1} = \Sigma_1^{-1}$ and
\begin{align*}
\bar \Sigma_{t+1}^{-1} = \gamma_t \bar \Sigma_t^{-1} + \eta \bar H_t \,.
\end{align*}
Define
\begin{align*}
\xe_\tau &= \argmin_{x \in K_\eps} \sum_{t=1}^\tau e_t(x) &
\xs_\tau &= \argmin_{x \in K_\eps} \sum_{t=1}^\tau s_t(x) \\ 
\ys_\tau &= \argmin_{y \in K_\tau} \sum_{t=1}^\tau s_t(y) & 
\yhs_\tau &= \argmin_{y \in K_\tau} \sum_{t=1}^\tau \hat s_t(x) \,.
\end{align*}
The superscript indicates which functions are being minimised. Note also the different domains, with $\xe_\tau$ and $\xs_\tau$ in $K_\eps$ and $\ys_\tau$ and $\yhs_\tau$ in $K_\tau$.
As in \cref{sec:ons:analysis}, we define a stopping time.\index{stopping time}

\begin{definition}\label{def:ons-adv:tau}
Let $\tau$ be the first round when one of the following does \textit{not} hold:
\begin{enumerate}
\item $\xs_\tau \in K_{\tau+1}$.
\item $\Sigma_{\tau+1}$ is positive definite.\label{def:ons-adv:tau:P}
\item $\delta \id \preceq \frac{1}{2} \bar \Sigma_{\tau+1}^{-1} \preceq \Sigma_{\tau+1}^{-1} \preceq 2 \bar \Sigma_{\tau+1}^{-1} \preceq \frac{1}{\delta} \id$. \label{def:ons-adv:tau:S}
\item The algorithm does not restart at the end of round $\tau$.
\end{enumerate}
If the conditions hold for all rounds, then $\tau$ is defined to be $n$.
\end{definition}

Note that $\tau$ is a stopping time with respect to $(\sF_t)_{t=1}^n$ because $\xs_t$, $K_{t+1}$ and $\Sigma_{t+1}$ are $\sF_t$-measurable.\index{measurable}

\stepsection{Step 1: Regret relative to extension}
Let
\begin{align*}
\eReg_n(\xe_n) = \sum_{t=1}^n \left(\E_{t-1}[e_t(X_t)] - e_t(\xe_n)\right) \,.
\end{align*}
Repeating the analysis in the proof of \cref{thm:ons:bandit} shows that with probability at least $1 - \delta / 7$,
\begin{align}
\tag{\eventAzuma}
\Reg_n \leq n \eps + \sqrt{2n \log(7/\delta)} + \eReg_n(\xe_n) \,.
\label{eq:adv-ons:r-e}
\end{align}
Hence for the remainder we focus on bounding $\eReg_n(\xe_n)$ with high probability.

\stepsection{Step 2: Concentration}\index{concentration}

Define events
\begin{align*}
\tag{\eventNoise}
\eventNoise &= \left\{\max_{1 \leq t \leq \tau} |\eps_t| \leq \sqrt{\log(14 n/\delta)}\right\} \\ 
\tag{\eventPi}
\eventPi &= \left\{\max_{1 \leq t \leq \tau} \pi(X_t) \leq \sqrt{L} \right\} \\
\tag{\eventGauss}
\eventGauss &= \left\{\max_{1 \leq t \leq \tau} \norm{X_t - \mu_t}_{\Sigma_t^{-1}} \leq \sqrt{\frac{8d}{3} \log(14 n /\delta)}\right\} \,. 
\end{align*}
Repeating the arguments in the proof of \cref{thm:ons:bandit} and using \cref{tab:ons-adv:sigma} (\cref{sec:ons-adv:constraints}) 
shows that $\bbP(\eventNoise \cap \eventPi \cap \eventGauss) \geq 1 - 3\delta/7$.
The next lemma bounds the sum $\sum_{t=1}^\tau \E_{t-1}[Y_t^2]$. 

\begin{lemma}\label{lem:ons-adv:bounds}
Let $\Ymax = \max_{1 \leq t \leq \tau} \left(|Y_t| + \E_{t-1}[|Y_t|]\right)$.
On $\eventNoise \cap \eventPi \cap \eventGauss$ the following hold:
\begin{enumerate}
\item $\Ymax \leq \frac{L}{\eps}$. \label{lem:ons-adv:bounds:Ymax}
\item $\sum_{t=1}^\tau \E_{t-1}[Y_t^2] \leq 10n$. \label{lem:ons-adv:bounds:Y}
\end{enumerate}
\end{lemma}

\begin{proof}
See the proof of \cref{lem:ons:bounds} except that \cref{eq:ons:sigma} is replaced by \cref{eq:ons-adv:sigma} and
use \cref{tab:ons-adv:bounds}.
\end{proof}

We also need to bound $\sum_{t=1}^\tau Y_t^2$ with high probability.
Let $\eventY{}$ be the event defined by
\begin{align*}
\tag{\eventY}
\eventY = \left\{\sum_{t=1}^\tau Y_t^2 \leq 21n \right\} \,.
\end{align*}

\begin{lemma}\label{lem:ons-adv:eventY}
$\bbP(\eventY \cup (\eventNoise \cap \eventPi \cap \eventGauss)^c) \geq 1 - \delta/7$.
\end{lemma}

\begin{proof}
See the proof of \cref{lem:ons:eventY}.
\end{proof}

The last two events control the concentration of the estimated quadratic surrogate about its mean 
at the optimal point and the concentration of the Hessian estimates.
Let $\cnf = \frac{3 L^2 \sqrt{dn}}{\lambda}$ and $\eventQ$ be the event that all of the following hold for all $t \leq \tau$: 
\begin{enumerate}
\item $\left|\sum_{u=1}^t (\hat q_u(x) - q_u(x))\right| \leq \cnf$  for all $x \in K_t$.
\item $\left|\sum_{u=1}^t (\hat s_u(x) - s_u(x))\right| \leq \cnf$ for all $x \in K_t$.
\item $\left|\sum_{u=1}^t \sip{\hat s'_u(x) - s'_u(x), x - y}\right| \leq \cnf$ for all $x,y \in K_t$.
\end{enumerate}

\begin{lemma}
$\bbP(\eventQ \cup (\eventNoise \cap \eventPi \cap \eventGauss \cap \eventY)^c) \geq 1 - \delta/7$.
\end{lemma}

Note, the dimension-dependence in $\cnf$ appears because the concentration bound needs to hold uniformly for all $x \in K_\tau$, which is accomplished by a covering
argument and union bound. By contrast, in the stochastic setting\index{setting!stochastic} bounds of this kind were only needed at the minimiser of the loss.

\begin{proof}
Use \cref{prop:conc-q}\ref{prop:conc-q:uniform}, \cref{prop:conc-s}\ref{prop:conc-s:uniform} and
\cref{prop:opt:conc-gradient}\ref{prop:opt:conc-gradient:uniform} in combination with \cref{lem:ons-adv:bounds}.
\end{proof}

Comparing $\bar \Sigma_t$ and $\Sigma_t$ is slightly more delicate thanks to the decay.
Let $w_0 = 1$ and $w_t = \prod_{s=1}^t \gamma_s$.
A simple induction shows that
\begin{align*}
\Sigma_{t+1}^{-1} &= w_t \left[ \frac{1}{\sigma^2} \id + \eta \sum_{s=1}^t \frac{H_s}{w_s}\right] \triangleq w_t \left[\frac{1}{\sigma^2} \id + \eta S_t\right] 
\quad \text{and} \\
\bar \Sigma_{t+1}^{-1} &= w_t \left[ \frac{1}{\sigma^2} \id + \eta \sum_{s=1}^t \frac{\bar H_s}{w_s}\right] \triangleq w_t \left[\frac{1}{\sigma^2} \id + \eta \bar S_t\right]\,.
\end{align*}
The next lemma characterises the important properties of the weights $(w_t)_{t=1}^\tau$.
The proof is deferred to \cref{sec:ons-adv:decay}.

\begin{lemma}\label{lem:ons-adv:weights}
The following hold:
\begin{enumerate}
\item $\sum_{t=1}^\tau \sind(\gamma_t \neq 1) \leq 1 + dL$. \label{lem:ons-adv:weights:sum}
\item $w_t \in [1/2, 1]$ for all $t \leq \tau$. \label{lem:ons-adv:weights:bounded}
\item $w_t$ is $\sF_{t-1}$-measurable for all $t$. \label{lem:ons-adv:weights:measurable}\index{measurable}
\end{enumerate}
\end{lemma}

Finally, let $\eventS{}$ be the event that 
\begin{align*}
\tag{\eventS}
\eventS = \left\{-3\lambda L^2 \sqrt{dn} \bar \Sigma_\tau^{-1} \preceq \bar S_\tau - S_\tau \preceq 3 \lambda L^2 \sqrt{dn} \bar \Sigma_\tau^{-1} \right\}\,.
\end{align*}

\begin{lemma}
$\bbP(\eventS \cup (\eventNoise \cap \eventPi \cap \eventGauss \cap \eventY)^c) \geq 1 - \delta/7$.
\end{lemma}

\begin{proof}
By \cref{prop:conc-hessian-path} (and \cref{rem:conc-hessian-path} and \cref{lem:ons-adv:weights}) with $\Sigma^{-1} = \frac{3}{2} \bar \Sigma_\tau^{-1}$, with probability at least $1 - \delta$, 
\begin{align*}
&- \lambda L^2 \left[1 + \sqrt{d \sum_{t=1}^\tau \E_{t-1}[Y_t^2]} + d^2 \Ymax \right] \frac{3}{2} \bar \Sigma_\tau^{-1} 
\preceq \bar S_\tau - S_\tau \\
&\qquad\qquad\preceq \lambda L^2 \left[1 + \sqrt{d \sum_{t=1}^\tau \E_{t-1}[Y_t^2]} + d^2 \Ymax \right] \frac{3}{2} \bar \Sigma_\tau^{-1} \,.
\end{align*}
The claim now follows from \cref{lem:ons-adv:bounds}.
\end{proof}

Let $E = \eventAzuma \cap \eventNoise \cap \eventPi \cap \eventGauss \cap \eventY \cap \eventQ \cap \eventS$ 
be the intersection of all these high-probability events.
A union bound over the preceding lemmas shows that $\bbP(E) \geq 1 - \delta$.
For the remainder of the proof we bound the regret on $E$.

\stepsection{Step 3: Simple bounds}
We can now make some elementary conclusions that hold on the intersection of all the high-probability events outlined
in the previous step.
Repeating the calculation used to derive \cref{eq:ons:conc:g} but using
\cref{lem:ons-adv:bounds}\ref{lem:ons-adv:bounds:Y},
\begin{align}
\sum_{t=1}^\tau \norm{g_t}^2_{\Sigma_t}
&\leq d n L\,.
\label{eq:ons-adv:conc:g}
\end{align}
By the definition of \eventS{},
\begin{align*}
\Sigma_{\tau+1}^{-1} 
&= w_\tau\left[\Sigma_1^{-1} + \eta S_\tau\right] \\
&\preceq w_\tau\left[\Sigma_1^{-1} + \eta \bar S_\tau + 3\eta \lambda L^2 \sqrt{dn}\bar \Sigma_{\tau}^{-1}\right] \\
\tag*{by \cref{tab:ons-adv:Sigma-conf}}
&\preceq w_\tau\left[\Sigma_1^{-1} + \eta \bar S_\tau + \frac{1}{2} \bar \Sigma_{\tau}^{-1}\right] \\
&= \bar \Sigma_{\tau+1}^{-1} + \frac{w_\tau}{2} \bar \Sigma_\tau^{-1} \\
&\preceq \frac{3}{2} \bar \Sigma_{\tau+1}^{-1}\,,
\end{align*}
where the final inequality holds because
$\bar \Sigma_{\tau+1}^{-1} = \gamma_t \bar \Sigma_\tau^{-1} + \eta \bar H_\tau \succeq \gamma_\tau \bar \Sigma_\tau^{-1} \succeq w_\tau \bar \Sigma_\tau^{-1}$.
Similarly,
\begin{align*}
\Sigma_{\tau+1}^{-1} 
&\succeq w_\tau[\Sigma_1^{-1} + \eta \bar S_\tau] - \frac{w_\tau}{2} \bar \Sigma_{\tau}^{-1}
= \bar \Sigma_{\tau+1}^{-1} - \frac{w_\tau}{2} \bar \Sigma_\tau^{-1} 
\succeq \frac{1}{2} \bar \Sigma_{\tau+1}^{-1}\,.
\end{align*}
Combining shows that
\begin{align*}
\frac{1}{2} \bar \Sigma_{\tau+1}^{-1} \preceq \Sigma_{\tau+1}^{-1} \preceq \frac{3}{2} \bar \Sigma_{\tau+1}^{-1} \,.
\end{align*}
We also need to show that 
\begin{align}
2\delta \id \preceq \bar \Sigma_{\tau+1}^{-1} \preceq \frac{1}{2 \delta}\id\,,
\label{eq:ons-adv:Sigma-delta}
\end{align}
which follows from exactly the same argument as in the proof of \cref{thm:ons:bandit}.
Therefore both \cref{def:ons-adv:tau}\ref{def:ons-adv:tau:P} and \cref{def:ons-adv:tau}\ref{def:ons-adv:tau:S} hold and so the only way that 
$\tau \neq n$ is if $\xs_\tau \notin K_{\tau+1}$ or the algorithm restarts at the end of round $\tau$.
The map $t \mapsto \Sigma_t$ is nearly non-increasing in the following sense. Given $s \leq t \leq \tau$,
\begin{align}
\Sigma_t 
\preceq 2 \bar \Sigma_t 
&= \frac{2}{w_{t-1}} \left(\Sigma_1^{-1} + \eta \bar S_t\right)^{-1} \nonumber \\
&\preceq \frac{2}{w_{t-1}} \left(\Sigma_1^{-1} + \eta \bar S_s\right)^{-1} \nonumber \\
&= \frac{2 w_{s-1}}{w_{t-1}} \bar \Sigma_s \nonumber \\
&\preceq 4 \bar \Sigma_s 
\preceq 8 \Sigma_s \,.
\label{eq:ons-adv:sigma}
\end{align}
The decay of the inverse covariance matrix has an important implication.
Recall the definition of $(\Gamma_t)$ in Line~\ref{line:ons-adv:Gamma} of \cref{alg:ons-adv:bandit}.

\begin{lemma}\label{lem:ons-adv:bonus}
Suppose that $x \in \partial K_\tau$. Then $\Gamma_\tau(x) \geq 3\rho$.
\end{lemma}

The proof is deferred to \cref{sec:ons-adv:decay}.
We also need an elementary bound on the magnitude of the surrogate losses.

\begin{lemma}\label{lem:ons-adv:s-bound}
Suppose that $x \in K_\eps$. Then
$-\frac{10}{\lambda\eps} \leq s_t(x) \leq 1$.
\end{lemma}

\begin{exer}
\faStar \quad
Prove \cref{lem:ons-adv:s-bound} using the following steps:
\begin{enumerate}
\item For the upper bound, combine the properties of the extension (\cref{prop:reg:bandit-extension-eps}) and \cref{lem:basic}\ref{lem:basic:opt}.
\item For the lower bound, use the definition of $s_t$, non-negativity of $e_t$ and its definition, 
\cref{lem:ons:mink}\ref{lem:ons:mink:E} and the fact that $M(K) \sqrt{d \sigma^2} \leq 1$.
\end{enumerate}
\end{exer}

\solution{
The upper bound follows because $e_t = f_t$ on $K_\eps$ (\cref{prop:reg:bandit-extension-eps}) and since $s_t \leq e_t$ everywhere (\cref{lem:basic}\ref{lem:basic:opt}). Combining shows that
$s_t(x) \leq e_t(x) = f_t(x) \leq 1$ with the final inequality following from \cref{ass:ons-adv} that $f_t \in \cF_\pb$.
For the lower bound,
\begin{align*}
s_t(x) 
\tag*{Definition}
&= \E_{t-1}\left[\left(1 - \frac{1}{\lambda}\right) e_t(X_t) + \frac{1}{\lambda} e_t((1 - \lambda) X_t + \lambda x)\right] \\
&\geq -\frac{1}{\lambda} \E_{t-1}[e_t(X_t)] \\
\tag*{Definition}
&= -\frac{1}{\lambda} \E_{t-1}\left[\pip(X_t) \left(f_t\left(\frac{X_t}{\pip(X_t)}\right) + \eps_t\right) + \frac{2(\pip(X_t) - 1)}{\eps}\right] \\
\tag*{$\E_{t-1}[\eps_t|X_t] = 0$}
&= -\frac{1}{\lambda} \E_{t-1}\left[\pip(X_t) f_t\left(\frac{X_t}{\pip(X_t)}\right) + \frac{2(\pip(X_t) - 1)}{\eps}\right] \\
&\geq -\frac{1}{\lambda} \E_{t-1}\left[1 + \pi(X_t) + \frac{2\pi(X_t)}{\eps}\right] \\
&\geq -\frac{1}{\lambda \eps} \left(4 + 3M \sqrt{d \norm{\Sigma_t}}\right) \\
\tag*{by \cref{eq:ons-adv:sigma}}
&\geq -\frac{1}{\lambda \eps} \left(4 + 6M \sqrt{d \sigma^2}\right) \\
&\geq -\frac{10}{\lambda \eps} \,.
\end{align*}
where in the inequalities we used the fact that $e_t(y) \geq f_t(y/\pip(y)) \geq 0$ for all $y \in \R^d$, 
that $\pip(y) \leq 1 + \pi(y)$ and \cref{ass:ons-adv} that $f_t \in \cF_\pb$, 
\cref{lem:ons:mink}\ref{lem:ons:mink:E} and finally that $M \sqrt{d \sigma^2} \leq 1$.
}

\stepsection{Step 4: Trace/logdet inequalities}
Again, we repeat the corresponding argument in the proof of \cref{thm:ons:bandit}.
The argument is made slightly more complicated by the decaying covariance matrices.

\begin{lemma}\label{lem:ons-adv:logdet}
The following holds:
\begin{align*}
\frac{4}{\lambda} \sum_{t=1}^\tau\tr(\bar H_t \Sigma_t) \leq \frac{d L}{\lambda \eta} \,.
\end{align*}
\end{lemma}

\begin{proof}
By \cref{prop:opt:H-upper} and \cref{prop:reg:bandit-extension-eps}, for any $t \leq \tau$,
\begin{align}
\eta \norm{\Sigma_t^{1/2} q_t''(\mu_t) \Sigma_t^{1/2}}
\leq \frac{\eta \lambda \lip(e_t)}{4(1 - \lambda)} \sqrt{d \norm{\Sigma_t}} 
\leq \frac{\eta \lambda \sigma \sqrt{d}}{\eps (1 - \lambda)} 
\leq 1 \,,
\label{eq:ons-adv:spec}
\end{align}
where we used \cref{eq:ons-adv:sigma} to bound $\norm{\Sigma_t} \leq 4 \sigma^2$. 
The final inequality follows from \cref{tab:ons-adv:logdet}.
Note that
\begin{align*}
\bar \Sigma_{t+1}^{-1} = \gamma_t \bar \Sigma_t^{-1} + \eta \bar H_t \succeq \gamma_t \bar \Sigma_t^{-1} \,.
\end{align*}
By \cref{lem:tech:log-sub},
\begin{align}
\log \det(\id + \eta \bar H_t \bar \Sigma_t)
&= \log \det(\gamma_t \id + \eta \bar H_t \bar \Sigma_t + (1 - \gamma_t) \id) \nonumber \\
&\leq \log \det(\gamma_t \id + \eta \bar H_t \bar \Sigma_t) + \frac{d(1 - \gamma_t)}{\gamma_t} \nonumber \\
&= \log \det(\bar \Sigma_t \bar \Sigma_{t+1}^{-1}) + \frac{d(1 - \gamma_t)}{\gamma_t} \nonumber \\
&\leq \log \det(\bar \Sigma_t \bar \Sigma_{t+1}^{-1}) + \frac{\sind(\gamma_t \neq 1)}{\sqrt{L}}\,,
\label{eq:ons-adv:tr1}
\end{align}
where the final inequality follows because either $\gamma_t = 1$ or $\gamma = 2^{-\frac{1}{1 + dL}} \approx 1 - \frac{\log(2)}{1 + dL}$.
Combining \cref{eq:ons-adv:spec}, \cref{eq:ons-adv:tr1} and \cref{lem:tr-logdet},
\begin{align*}
\frac{4}{\lambda} \sum_{t=1}^\tau \tr(\bar H_t \Sigma_t) 
&\explana\leq \frac{16}{\lambda \eta} \sum_{t=1}^\tau \log\det\left(\id + \frac{\eta \bar H_t \Sigma_t}{2}\right)  \\ 
&\explana\leq \frac{16}{\lambda \eta} \sum_{t=1}^\tau \log \det\left(\id + \eta \bar H_t \bar \Sigma_t\right) \\
&\explana\leq \frac{16}{\lambda \eta \sqrt{L}} \sum_{t=1}^\tau \sind(\gamma_t \neq 1) + \frac{16}{\lambda \eta} \sum_{t=1}^\tau \log \det\left(\bar \Sigma_t^{\vphantom{-1}} \bar \Sigma_{t+1}^{-1}\right) 
\\
&\explana\leq \frac{dL}{2\lambda \eta} + \frac{16}{\lambda \eta} \log \det \left(\id + \eta \sigma^2 \bar S_\tau\right)  \\
&\explana\leq \frac{dL}{2\lambda \eta} + \frac{16}{\lambda \eta} \log \det \left(\id + \frac{\sigma^2 \id}{\delta}\right) \\
&\explana\leq \frac{dL}{\lambda \eta} \,, 
\end{align*}
where \explanr{} follows from \cref{lem:tr-logdet},
\explanr{} since $\Sigma_t \preceq 2 \bar \Sigma_t$,
\explanr{} by \cref{eq:ons-adv:tr1},
\explanr{} from \cref{lem:ons-adv:weights}\ref{lem:ons-adv:weights:sum} and by telescoping the sum of log-determinants, and
\explanr{} by bounding $\eta \bar S_\tau \preceq \frac{1}{w_\tau} \bar \Sigma_{\tau+1}^{-1} \preceq \frac{1}{\delta} \id$ using \cref{eq:ons-adv:Sigma-delta}
and \cref{lem:ons-adv:weights} to bound $w_\tau \geq 1/2$.
\explanr{} follows by naive simplification.
\end{proof}

A simple consequence is a bound on the regret relative to the extension in terms of the regret relative to the quadratic surrogates.

\begin{lemma}\label{lem:ons-adv:e-q}
The following hold:
\begin{enumerate}
\item $\eReg(\xe_\tau) \leq \sReg(\xs_\tau) + 1 + \frac{d L}{\eta\lambda}$. 
\item $\eReg(\xe_\tau) \leq \qReg(\xs_\tau) + 2 + \frac{d L}{\eta \lambda}$ whenever $\xs_\tau \in K_\tau$. 
\end{enumerate}
\end{lemma}

\begin{proof}
We have
\begin{align*}
\eReg(\xe_\tau)
\tag*{by \cref{prop:lower}}
&\leq \sReg(\xe_\tau) + \frac{2}{\lambda} \sum_{t=1}^\tau \tr(s_t''(\mu_t) \Sigma_t) + 1 \\
\tag*{by def.\ $\xs_\tau$}
&\leq \sReg(\xs_\tau) + \frac{2}{\lambda} \sum_{t=1}^\tau \tr(s_t''(\mu_t) \Sigma_t) + 1 \\
\tag*{by def.\ $\bar H_t$}
&= \sReg(\xs_\tau) + \frac{4}{\lambda} \sum_{t=1}^\tau \tr(\bar H_t \Sigma_t) + 1 \\
\tag*{by \cref{lem:ons-adv:logdet}}
&\leq \sReg(\xs_\tau) + \frac{dL}{\eta \lambda} + 1 \,. 
\end{align*}
The second part follows from \cref{prop:ons:q-lower}, the definition of $K_\tau$ and naively bounding constants.
\end{proof}

\stepsection{Next steps}
So far the analysis has largely followed that in the stochastic setting, but now there is a serious deviation.
Based on the arguments so far, it has been established that with high probability the only way $\tau \neq n$ is if $\xs_\tau \notin K_{\tau+1}$. 
While in the stochastic setting it was possible to prove that the minimiser of the loss stays in the focus region,\index{focus region} this is no longer the case.
Instead, it is necessary to consider a case-by-case analysis and handle the restarts:
\begin{itemize}
\item When $\xs_\tau \in K_\tau$ it can be shown that $\eReg_\tau(\xe_\tau)$ is small and simultaneously that $\xs_\tau \in K_{\tau+1}$ with the latter
showing that $\tau = n$.
\item When $\xs_\tau \notin K_\tau$, then the algorithm restarts at the end of round $\tau$.
\item Whenever the algorithm restarts, then the regret relative to the extension is negative.
\end{itemize}

\stepsection{Step 5: Regret}
Suppose that $\xs_\tau \in K_\tau$ and the algorithm has not restarted at the end of round $\tau$; then by \cref{thm:ons-adv:ons}
\begin{align*}
\frac{1}{2} \snorm{\xs_\tau - \mu_{\tau+1}}^2_{\Sigma_{\tau+1}^{-1}} 
&\explana\leq \frac{\snorm{\xs_\tau}^2}{2\sigma^2} + \frac{\eta^2}{2} \sum_{t=1}^\tau \snorm{g_t}^2_{\Sigma_{t+1}}  - \eta \hqReg(\xs_\tau) - \Gamma_\tau(\xs_\tau) \\
&\explana\leq \frac{2 d^2}{\sigma^2} + 4\eta^2 dnL - \eta \hqReg(\xs_\tau) - \Gamma_\tau(\xs_\tau) \\
&\explana\leq \frac{2 d^2}{\sigma^2} + 4\eta^2 dnL + \eta \cnf - \eta \qReg(\xs_\tau) - \Gamma_\tau(\xs_\tau) \\
&\explana\leq \frac{2 d^2}{\sigma^2} + 4\eta^2 dnL + \eta \cnf + 2 \eta + \frac{dL}{\lambda} - \eta \eReg(\xe_\tau)  \\
&\explana\leq \rho - \eta \eReg(\xe_\tau) \,,
\end{align*}
where \explanr{} follows from \cref{thm:ons-adv:ons} and the assumption that $\xs_\tau \in K_\tau$, and
\explanr{} by the assumption that $K \subset \ball_{2d}$ and by \cref{eq:ons-adv:conc:g} and \cref{eq:ons-adv:sigma} to bound 
$\Sigma_{t+1} \preceq 8 \Sigma_t$.
\explanr{} holds on \eventQ{}.
\explanr{} follows from \cref{lem:ons-adv:e-q} and because $\xs_\tau \in K_\tau$; and also
because $\Gamma_\tau(\xs_\tau) \geq 0$.
\explanr{} follows from the definition of the constants (\cref{tab:ons-adv:main}).
Rearranging shows that
\begin{align*}
\eReg(\xe_\tau) \leq \frac{\rho}{\eta} \,.
\end{align*}
Furthermore, since the algorithm has not restarted in round $\tau$ it holds that
\begin{align*}
&\frac{1}{2} \snorm{\xs_\tau - \mu_{\tau+1}}^2_{\Sigma_{\tau+1}^{-1}} 
\leq \frac{2d^2}{\sigma^2} + 4\eta^2 dnL + \eta \cnf - \eta \qReg_\tau(\xs_\tau) \\
\tag*{by \cref{prop:ons:q-lower}}
&\qquad\leq \frac{2d^2}{\sigma^2} + 4\eta^2 dnL + \eta \cnf + 1 - \eta \sReg_\tau(\xs_\tau) \\
&\qquad\leq \frac{2d^2}{\sigma^2} + 4\eta^2 dnL + \eta \cnf + 1 - \eta \sReg_\tau(y_\tau) \\
\tag*{on \eventQ{}}
&\qquad\leq \frac{2d^2}{\sigma^2} + 4\eta^2 dnL + 2 \eta \cnf + 1 - \eta \hsReg_\tau(y_\tau) \\
&\qquad\leq 3 \rho \leq \frac{1}{4L \lambda^2}\,, 
\end{align*}
where the second-last inequality follows because the algorithm did not restart, so that $\eta \hsReg_\tau(y_\tau) \geq -2\rho$.
The last inequality follows from \cref{tab:ons-adv:decay}.
Rearranging shows that
$\lambda \snorm{\xs_\tau - \mu_{\tau+1}}_{\Sigma_{\tau+1}^{-1}} \leq 1/\sqrt{2L}$,
which when combined with the assumption that $\xs_\tau \in K_\tau$ shows that $\xs_\tau \in K_{\tau+1}$.
Hence by \cref{def:ons-adv:tau}, $\tau = n$ and 
we have successfully bounded $\eReg(\xe_n) \leq \frac{\rho}{\eta}$.

\stepsection{Step 6: Restart analysis}\index{restart}
Suppose at the end of round $\tau$ that the algorithm restarts, which means that
\begin{align}
\eta \hsReg_\tau(\yhs_\tau) = \max_{y \in K_\tau} \eta \hsReg_\tau(y) \leq \eta \hsReg_\tau(y_\tau) + \rho \leq -\rho\,,
\label{eq:ons-adv:negative}
\end{align}
where in the first inequality we used the definition of $y_\tau$ in Line~\ref{line:ons-adv:y} of \cref{alg:ons-adv:bandit} 
and in the second we used the fact that a restart is triggered when $\eta \hsReg_\tau(y_\tau) \leq -2\rho$.
Then
\begin{align*}
\eta \eReg_\tau(\xe_\tau)
&\explana\leq 1 + \frac{d L}{\lambda} + \eta\sReg_\tau(\xe_\tau) \\
&\explana\leq 1 + \frac{d L}{\lambda} + \eta\sReg_\tau(\xs_\tau) \\
&\explana\leq 1 + \frac{d L}{\lambda} + \eta\sReg_\tau(\xs_{\tau-1}) + \eta\left(s_\tau(\xs_{\tau-1}) - s_\tau(\xs_\tau)\right) \\
&\explana\leq 1 + \frac{d L}{\lambda} + \eta \cnf + \eta\left(1 + \frac{10}{\eps\lambda}\right) + \eta \hsReg_\tau(\xs_{\tau-1}) \\
&\explana\leq 1 + \frac{d L}{\lambda} + \eta \cnf + \eta\left(1 + \frac{10}{\eps\lambda}\right) + \eta \hsReg_\tau(\yhs_\tau) \\
&\explana\leq \rho + \eta \hsReg_\tau(\yhs_\tau) \\
&\explana\leq 0 \,,
\end{align*}
where \explanr{} follows from \cref{lem:ons-adv:e-q},
\explanr{} by the definition of $\xs_\tau$; and
\explanr{} by the definition of $\xs_{\tau-1}$.
\explanr{} holds on \eventQ{} and by \cref{lem:ons-adv:s-bound}.
\explanr{} follows from the definition of $\tau$ so that $x^s_{\tau-1} \in K_\tau$ and
the definition of $\yhs_\tau$ as the maximiser of $\hsReg_\tau$ over $K_\tau$.
\explanr{} follows from the definition of the constants (\cref{tab:ons-adv:negative}) and 
\explanr{} from \cref{eq:ons-adv:negative}.
Hence, whenever the algorithm restarts the regret with respect to the extension is negative.
On the other hand, if $\xs_\tau$ leaves $K_\tau$, then $\ys_\tau \in \partial K_\tau$ and 
\begin{align*}
\norm{\ys_\tau - \mu_{\tau+1}}^2_{\Sigma_{\tau+1}^{-1}}
&\explana\leq \frac{2d^2}{\sigma^2} + 4\eta^2dnL+ \eta \cnf - \eta \qReg_\tau(\ys_\tau) - \Gamma_\tau(\ys_\tau) \\
&\explana\leq 1 + \frac{2d^2}{\sigma^2} + 4\eta^2 dnL + \eta \cnf - \eta \sReg_\tau(\ys_\tau) - \Gamma_\tau(\ys_\tau) \\
&\explana\leq 1 + \frac{2d^2}{\sigma^2} + 4\eta^2 dnL + \eta \cnf - \eta \sReg_\tau(\yhs_\tau) - \Gamma_\tau(\ys_\tau) \\
&\explana\leq 1 + \frac{2d^2}{\sigma^2} + 4\eta^2 dnL + 2 \eta \cnf - \eta \hsReg_\tau(\yhs_\tau) - \Gamma_\tau(\ys_\tau) \\
&\explana\leq 1 + \frac{2d^2}{\sigma^2} + 4\eta^2 dnL + 2 \eta \cnf - \eta \hsReg_\tau(\yhs_\tau) - 3 \rho \\
&\explana\leq -2\rho - \eta \hsReg_\tau(\yhs_\tau) \,,
\end{align*}
where \explanr{} follows from the same calculations as in step 4,
\explanr{} from \cref{prop:ons:q-lower}, and
\explanr{} by the definition of $\ys_\tau$ and $\yhs_\tau$.
\explanr{} holds on \eventQ{},
\explanr{} from \cref{lem:ons-adv:bonus} and
\explanr{} by the definition of the constants (\cref{tab:ons-adv:restart}).
Therefore $\eta \hsReg_\tau(\yhs_\tau) \leq -2\rho$. Hence, by Line~\ref{line:ons-adv:y} of \cref{alg:ons-adv:bandit},
\begin{align*}
\eta \hsReg_\tau(y_\tau) \leq \eta \hsReg_\tau(\yhs_\tau) \leq -2\rho
\end{align*}
and a restart is triggered.
\end{proof}

\section{Decay Analysis}\label{sec:ons-adv:decay}

The purpose of this section is to prove \cref{lem:ons-adv:weights,lem:ons-adv:bonus}, both of which are related to the decaying covariance matrix.
Recall that
\begin{align*}
z_{t-1} = \argmin_{z \in \R^d} \left(\Gamma_{t-1}(z) \triangleq \sum_{s=1}^{t-1} (1-\gamma_s) \snorm{z - \mu_s}^2_{\Sigma_s^{-1}}\right\} 
\end{align*}
and with $D_t = \sum_{s=1}^t \sind(\gamma_s \neq 1) \Sigma_s^{-1}$,
\begin{align*}
\gamma_t = \begin{cases}
  1 & \text{if } \Gamma_{t-1}(z_{t-1}) \geq 3 \rho \\
  \gamma & \text{if }  \Sigma_t^{-1} \not\preceq D_{t-1} \\ 
  \gamma & \text{if } \snorm{\mu_t - z_{t-1}}^2_{\Sigma_t^{-1}} \geq \frac{1}{8 L \lambda^2} \\
  1 & \text{otherwise}\,.
  \end{cases}
\end{align*}
Remember also that $w_t = \prod_{s=1}^t \gamma_s$.

\begin{proof}[Proof of \cref{lem:ons-adv:weights}]
We start with part~\ref{lem:ons-adv:weights:sum}, which is the only difficult part.
By definition, $\sind(\gamma_t \neq 1) = A_t + B_t$ where
\begin{align*}
A_t &= \sind\left(\Sigma_t^{-1} \not\preceq D_{t-1} \text{ and } \Gamma_{t-1}(z_{t-1}) < 3 \rho\right) \qquad \text{and}\\ 
B_t &= \sind\left(A_t = 0 \text{ and } \snorm{\mu_t - z_{t-1}}^2_{\Sigma_t^{-1}} \geq \frac{1}{8 L \lambda^2}\text{ and } \Gamma_{t-1}(z_{t-1}) < 3\rho\right) \,.
\end{align*}
By \cref{def:ons-adv:tau}, for $s \leq \tau$, $\Sigma_s^{-1}$ is positive definite
and therefore $D_{t-1} \preceq D_t$ for all $t \leq \tau$.
Furthermore, in rounds $t$ where $A_t = 1$ it holds that $D_t = D_{t-1} + \Sigma_t^{-1}$ and $\Sigma_t^{-1} \not \preceq D_{t-1}$, which means that
$\id \preceq D_{t-1}^{-1/2} D_t D_{t-1}^{-1/2} = \id + D_{t-1}^{-1/2} \Sigma_t^{-1} D_{t-1}^{-1/2} \not \preceq 2 \id$ and therefore
\begin{align*}
\log \det D_{t-1}^{-1} D_t = \log \det D_{t-1}^{-1/2} D_t D_{t-1}^{-1/2} \geq \log(2)\,.
\end{align*}
Hence,
\begin{align*}
\sum_{t=1}^\tau A_t \log(2) 
&\leq\sum_{t=1}^\tau \log \det D_{t-1}^{-1} D_t \\
&=\log \det D_1^{-1} D_\tau \\
&= \log \det\left(\Sigma_1 \sum_{t=1}^\tau \sind(\gamma_t \neq 1) \Sigma_t^{-1}\right) \\
&\leq d \log \left(\frac{n \sigma^2}{\delta}\right) \leq d L \log(2)\,,
\end{align*}
where the last inequality follows from \cref{def:ons-adv:tau}.
Rearranging shows that
\begin{align*}
\sum_{t=1}^\tau A_t \leq d L\,.
\end{align*}
Moving now to bound $\sum_{t=1}^\tau B_t$, recall that $\Gamma_t$ is a convex quadratic minimised at $z_t \in \R^d$.
Let $\Gamma_t^\star = \Gamma_t(z_t)$.
Note that $t \mapsto \Gamma_t^\star$ is non-decreasing by the definition of $\Gamma_t$.
A simple calculation shows that when $\gamma_t \neq 1$, then in rounds $t$ where $B_t = 1$, 
\begin{align*}
\Gamma_t^\star 
&\explana= \Gamma^\star_{t-1} + (1 - \gamma) \snorm{z_{t-1} - \mu_t}^2_{\Sigma_t^{-1} - \Sigma_t^{-1} D_t^{-1} \Sigma_t^{-1}} \\
&\explana\geq \Gamma^\star_{t-1} + \frac{1 - \gamma}{2} \snorm{z_{t-1} - \mu_t}^2_{\Sigma_t^{-1}} \\
&\explana\geq \Gamma^\star_{t-1} + \frac{1 - \gamma}{16 L \lambda^2} \\
&\explana\geq \Gamma^\star_{t-1} + 3 \rho \,,
\end{align*}
where in \explanr{} we used \cref{lem:tech:quadratic} with $A = \Sigma_t^{-1}$ and $B = D_{t-1}$.
\explanr{} follows because $B_t = 1$ implies that $D_t = D_{t-1} + \Sigma_t^{-1} \succeq 2 \Sigma_t^{-1}$,
which shows that
$\Sigma_t^{-1} D_t^{-1} \Sigma_t^{-1} \preceq \frac{1}{2} \Sigma_t^{-1}$.
\explanr{} follows from the definition of $B_t$ and \explanr{} from the definition of the constants (\cref{tab:ons-adv:decay}).
Since $B_t = 1$ can only happen if $\Gamma^\star_{t-1} < 3 \rho$, it follows that $\sum_{t=1}^\tau B_t \leq 1$.
Combining the two parts shows that $\sum_{t=1}^\tau \sind(\gamma_t \neq 1) = \sum_{t=1}^\tau A_t + \sum_{t=1}^\tau B_t \leq 1 + d L$, 
which establishes part~\ref{lem:ons-adv:weights:sum}.
Part~\ref{lem:ons-adv:weights:bounded} follows from part~\ref{lem:ons-adv:weights:sum} since
\begin{align*}
w_t &= \prod_{s=1}^t \gamma_s \geq \gamma^{1 + dL} = \frac{1}{2} \,.
\end{align*}
Measurability of the weights (part~\ref{lem:ons-adv:weights:measurable}) follows from the construction of the algorithm.
\end{proof}

\begin{proof}[Proof of \cref{lem:ons-adv:bonus}]
Let $x \notin K_\tau$. By the definition of $K_\tau$ there exists a $t \leq \tau$ such that
\begin{align}
\snorm{x - \mu_t}^2_{\Sigma_t^{-1}} \geq \frac{1}{2 L \lambda^2} \,.
\label{eq:ons-adv:decay:ellipsoid} 
\end{align}
Since $\Gamma_\tau(x) \geq \Gamma_t(x)$ it suffices to show that $\Gamma_t(x) \geq 3 \rho$.
Suppose that $\gamma_t = \gamma$; then 
\begin{align*}
\Gamma_t(x) = \sum_{s=1}^t (1 - \gamma_s) \snorm{x - \mu_s}^2_{\Sigma_s^{-1}}
\geq (1 - \gamma) \snorm{x - \mu_t}^2_{\Sigma_t^{-1}} 
\explana\geq \frac{1 - \gamma}{2 L \lambda^2} 
\explana\geq 3\rho\,,
\end{align*}
where \explanr{} follows from \cref{eq:ons-adv:decay:ellipsoid} and \explanr{} from the definition of the constants (\cref{tab:ons-adv:decay}).
For the remainder assume that $\gamma_t = 1$. According to the definition of $\gamma_t$ there are two ways this can happen.
On the one hand, if $\Gamma_{t-1}(z_{t-1}) \geq 3\rho$, then trivially $\Gamma_t(x) = \Gamma_{t-1}(x) \geq \Gamma_{t-1}(z_{t-1}) \geq 3\rho$.
On the other hand, if $\Gamma_{t-1}(z_{t-1}) < 3\rho$ and $\gamma_t = 1$, then
\begin{align}
\Sigma_t^{-1} &\preceq \sum_{s=1}^{t-1} \sind(\gamma_s \neq 1) \Sigma_s^{-1} \quad \text{ and } \label{eq:ons-adv:bonus:1} \\
\snorm{\mu_t - z_{t-1}}^2_{\Sigma_t^{-1}} &\leq \frac{1}{8 L \lambda^2} \,. \label{eq:ons-adv:bonus:2}
\end{align}
Therefore,
\begin{align*}
\Gamma_t(x) 
&= \Gamma_{t-1}(x) \\
&= \sum_{s=1}^{t-1} (1 - \gamma_s) \snorm{x - \mu_s}^2_{\Sigma_s^{-1}} \\
&= (1 - \gamma) \sum_{s=1}^{t-1} \sind(\gamma_s \neq 1) \snorm{x - \mu_s}^2_{\Sigma_s^{-1}} \\
&\explana\geq (1 - \gamma) \sum_{s=1}^{t-1} \sind(\gamma_s \neq 1) \left[\frac{1}{2} \snorm{x - z_{t-1}}^2_{\Sigma_s^{-1}} - \snorm{z_{t-1} - \mu_s}^2_{\Sigma_s^{-1}}\right] \\
&= \frac{1 - \gamma}{2} \sum_{s=1}^{t-1} \sind(\gamma_s \neq 1) \snorm{x - z_{t-1}}^2_{\Sigma_s^{-1}} - \Gamma_{t-1}(z_{t-1}) \\
&\explana\geq \frac{1 - \gamma}{2} \snorm{x - z_{t-1}}^2_{\Sigma_t^{-1}} - 3\rho \\
&\explana\geq \frac{1 - \gamma}{2} \left[\frac{1}{2} \snorm{x - \mu_t}^2_{\Sigma_t^{-1}} - \snorm{\mu_t - z_{t-1}}^2_{\Sigma_t^{-1}}\right] - 3\rho \\
&\explana\geq \frac{1 - \gamma}{16\lambda^2 L} - 3\rho \\
&\explana\geq 3 \rho \,,
\end{align*}
where \explanr{} follows from the inequality $\snorm{a + b}^2 \leq 2 \snorm{a}^2 + 2 \snorm{b}^2$,
\explanr{} by the assumption that $\Gamma_{t-1}(z_{t_1}) \leq 3 \rho$ and \cref{eq:ons-adv:bonus:1},
\explanr{} by the same inequality as \explanr{1}.
\explanr{} by \cref{eq:ons-adv:decay:ellipsoid} and \cref{eq:ons-adv:bonus:2} and 
\explanr{} by the definition of the constants (\cref{tab:ons-adv:decay}).
\end{proof}

\section{Approximate Optimisation}\label{sec:ons-adv:approx}

We need to explain how \cref{alg:ons-adv:bandit} might implement the optimisation problem in Line~\ref{line:ons-adv:y} to find
a point $x \in K_t$ such that
\begin{align}
\eta \hsReg_t(x) \geq \max_{x \in K_t} \eta \hsReg_t(x) - \rho\,,
\label{eq:ons-adv:approx}
\end{align}
which is equivalent to finding an $x \in K_t$ such that
\begin{align*}
\eta \sum_{u=1}^t \hat s_u(x) \leq \min_{y \in K_t} \eta \sum_{u=1}^t \hat s_u(y) + \rho\,.
\end{align*}
The plan is to use gradient descent (\cref{alg:ons-adv:sgd}). To this end, let $t \leq \tau$ be fixed for the remainder of the section and
\begin{align*}
\hat h(x) = \eta \sum_{u=1}^t \hat s_u(x) \quad \text{and} \quad 
h(x) = \eta \sum_{u=1}^t s_u(x) \quad \text{and} \quad
\hat h'(x) = \eta \sum_{u=1}^t \hat s'_u(x) \,.
\end{align*}
The following is needed in order to apply \cref{cor:ons-adv:sgd}:

\begin{lemma}\label{lem:ons-adv:approx}
Given any $t \leq \tau$, the following hold:
\begin{enumerate}
\item \textit{Bounded gradients:} $\max_{x \in K_t} \snorm{\hat h'(x)}_{\smash{\Sigma_t^{-1}}} = O(\poly(d, n))$. \label{lem:ons-adv:approx:bg}
\item \textit{Approximate values:} $\max_{x \in K_t} |h(x) - \hat h(x)| \leq \frac{\rho}{6}$. \label{lem:ons-adv:approx:av}
\item \textit{Approximate gradients:} $\max_{x, y \in K_t} \ip{h'(x) - \hat h'(x), x - y} \leq \frac{\rho}{6}$. \label{lem:ons-adv:approx:ag}
\end{enumerate}
\end{lemma}

\cref{lem:ons-adv:approx} when combined with \cref{cor:ons-adv:sgd} shows that 
\cref{alg:ons-adv:sgd} when run with $A = \Sigma_t^{-1}$, $K = K_t$, gradient function $\hat h'(x) = \eta \sum_{u=1}^t \hat s_u(x)$
and $n = O(\poly(n, d))$
returns a point $x$ satisfying \cref{eq:ons-adv:approx}.

\begin{exer}
\faStar \faStar \quad
Prove Lemma~\ref{lem:ons-adv:approx}.
You may find the entries in \cref{tab:ons-adv:opt} and \ref{tab:ons-adv:cnf} useful. 
\end{exer}

\solution{
Starting with part~\ref{lem:ons-adv:approx:bg},
\begin{align*}
G 
&= \max_{x \in K_t} \snorm{\hat h'(x)}_{\Sigma_t} \\
&= \eta \max_{x \in K_t} \norm{\sum_{u=1}^t \hat s'_u(x)}_{\Sigma_t}  \\
&\leq \eta  \max_{x \in K_t} \sum_{u=1}^t \snorm{\hat s'_u(x)}_{\Sigma_t} \,.
\end{align*}
For any $x \in K_t$ and $1 \leq u \leq t$,
\begin{align*}
\snorm{\hat s'_u(x)}_{\Sigma_t} 
&\leq 4 \snorm{\hat s'_u(x)}_{\Sigma_u} \\
&= 4 \left|\frac{\bar r_u(x) Y_u}{1-\lambda}\right| \norm{\frac{X_u - \lambda x}{1-\lambda} - \mu_u}_{\Sigma_u^{-1}} \\
&\leq 16 \exp(2) Y_u \left[\snorm{X_u - \mu_u}_{\Sigma_u^{-1}} + \lambda \snorm{x - \mu_u}_{\Sigma_u^{-1}}\right] \\ 
&\leq Y_u \sqrt{d L} \,.
\end{align*}
where the first inequality follows from \cref{eq:ons-adv:sigma},
the second follows from the definition of $\hat s'_u(x)$, 
the third since $\lambda \leq 1/2$ and $\bar r_u(x) \leq \exp(2)$ by definition.
The last inequality follows on \eventGauss{} and the assumption that $x \in K_t \subset K_u$.
Hence, on \eventY{},
\begin{align*}
G \leq \eta \sum_{u=1}^t Y_u \sqrt{d L} \leq \eta \sqrt{nd L \sum_{u=1}^t Y_t^2} \leq \eta n \sqrt{3 d L^3} = \tilde O(\poly(n, d)) \,.
\end{align*}
Moving now to part~\ref{lem:ons-adv:approx:av}, on \eventQ{}
\begin{align*}
\eta \left|\sum_{u=1}^t \left(s_u(x) - \hat s_u(x)\right)\right| \leq \eta \cnf = \frac{3 \eta L^2 \sqrt{dn}}{\lambda} \leq \frac{\rho}{6} \,.
\end{align*}
Lastly, for part~\ref{lem:ons-adv:approx:ag}, also on \eventQ{},
\begin{align*}
\eta \left|\sum_{u=1}^t \ip{s'_u(x) - \hat s'_u(x), y - x}\right| \leq \eta \cnf \leq \frac{\rho}{6} \,. 
\end{align*}
}

\section{Constraints}\label{sec:ons-adv:constraints}

As in \cref{chap:ons}, the analysis in this chapter depends on a complicated set of constraints on the parameters,
which are given in \cref{tab:ons-adv}.

\renewcommand{\arraystretch}{1.6}
\begin{table}[h!]
\caption{Constraints on the parameters used in the analysis of \cref{alg:ons-adv:bandit}}\label{tab:ons-adv}
\begin{tabular}{|Np{5cm}|}
\hline
\multicolumn{1}{|c}{} & \multicolumn{1}{p{5cm}|}{\textsc{constraint}} \\ \hline
\label{tab:ons-adv:sigma} & $\sigma \sqrt{d} \leq 1$ \\ 
\label{tab:ons-adv:bounds} & $\frac{40 d L^2}{\eps \eta \lambda} \leq 2n$ \\
\label{tab:ons-adv:main} & $\frac{2d^2}{\sigma^2} + 4\eta^2 dnL + \eta \cnf + 2 \eta + \frac{d L}{\lambda} \leq \rho$ \\
\label{tab:ons-adv:Sigma-conf} & $3 \eta \lambda L^2 \sqrt{dn} \leq \frac{1}{2}$ \\ 
\label{tab:ons-adv:negative} & $\frac{d L}{\lambda} + \eta \cnf + \eta\left(1 + \frac{10}{\eps \lambda}\right) \leq \rho$ \\
\label{tab:ons-adv:logdet} & $\frac{\eta \lambda \sigma \sqrt{d}}{\eps(1 - \lambda)} \leq 1$ \\
\label{tab:ons-adv:restart} & $1 + \frac{2d^2}{\sigma^2} + 4 \eta^2 dnL + 2 \eta \cnf \leq \rho$ \\
\label{tab:ons-adv:decay} & $\rho \leq \frac{1-\gamma}{96 L \lambda^2}$ \\
\label{tab:ons-adv:opt} & $\eta \cnf \leq \frac{\rho}{6}$ \\
\label{tab:ons-adv:cnf} & $\cnf = \frac{3 L^2 \sqrt{dn}}{\lambda}$ \\
\hline
\end{tabular}
\end{table}

You can check that the constraints are satisfied when 
\begin{align*}
\lambda &= \frac{2}{C d^2 L^3 } &
\eta &= \frac{\sqrt{d/n}}{CL} &
\gamma &= 2^{-\frac{1}{1+dL}} \\ 
\sigma^2 &= \frac{1}{d L^4} &
\rho &= C d^3 L^4 &
\eps &= \frac{d^{2.5} C^3 L^6}{\sqrt{n}} \,,
\end{align*}
where $C > 0$ is a suitably large absolute constant.

\begin{remark}
Let us comment on where there might be room for improvement and on the tightness of the choices of the parameters.
First, it seems nearly essential that $1 - \gamma = \tilde O(1/d)$. But
satisfying \cref{tab:ons-adv:main} ensures that $\frac{d}{\lambda} = \tilde O(\rho)$ and satisfying 
\cref{tab:ons-adv:decay} ensures that $(1 - \gamma) / \lambda^2 = \tilde \Omega(\rho)$,
which means that $\lambda = \tilde O(\frac{1 - \gamma}{d}) = \tilde O(1/d^2)$.
But $\cnf = \tilde \Omega(\frac{1}{\lambda} \sqrt{dn})$, so the regret with this choice of $\lambda$ is at least $\tilde \Omega(\cnf) = \tilde \Omega(d^{2.5} \sqrt{n})$, which is the rate achieved.
The $\sqrt{d}$ in $\cnf$ arises from a union bound that may be loose. If this could be improved to $\cnf = \frac{1}{\lambda} \sqrt{n}$, then the regret would become $d^2 \sqrt{n}$.
\end{remark}

\section{Notes}

\begin{enumeratenotes}
\item The algorithm and analysis here are refined versions of those proposed by \cite{LFMV24}.
The restarting has been used to handle adversarial losses by a number of authors \citep{HL16,BEL16,SRN21}.\index{restart} 
The gadget used here most closely resembles that by \cite{SRN21}. The main difference is the mechanisms for deciding
when to decay the inverse covariance. 
At a high level both decay the inverse covariance when the focus region\index{focus region} changes too much. They use an argument based on
reduction of volume, which is less computationally efficient than the more algebraic calculations used in \cref{alg:ons-adv:bandit}.

\item We mentioned that the ellipsoid method can replace gradient descent for approximately minimising a near-convex function. \index{ellipsoid method!for non-convex optimisation}
We adopt the notation and assumptions in \cref{thm:ons-adv:sgd}. Let us additionally assume access to a separation oracle\index{separation oracle} for $K$ and
extend $\hat h' \colon K \to \R^d$ to $\hat h' \colon \R^d \to \R^d$ by defining $\hat h'(x)$ to be the output of the separation oracle for $x \notin K$.
Let $E_1$ be an ellipsoid such that $K \subset E_1$ and define $(E_k)$ centred at $(x_k)$ inductively by
$E_{k+1} = \MVEE(E_k \cap \{x \colon \sip{\hat h'(x_k), x - x_k} \leq 0\})$.
Let $x \in K$ be arbitrary and suppose that $\hat h(x) \leq \hat h(x_k) - 2\eps_0 - \eps_1$. 
Then
\begin{align*}
\sip{\hat h'(x_k), x - x_k} 
&\leq \sip{h'(x_k), x - x_k} + \eps_1 \\
&\leq h(x) - h(x_k) + \eps_1 \\
&\leq \hat h(x) - \hat h(x_k) + 2\eps_0 + \eps_1 \\
&\leq 0\,,
\end{align*}
which means that $x \in K_{k+1}$.
Following the standard argument of the ellipsoid method shows that with $m = O(d^2 \log(G/\max(\eps_0, \eps_1))$ it holds that
\begin{align*}
\min_{k \leq m} \hat h(x_k) \leq \inf_{x \in K} \hat h(x) + 4 \eps_0 + 2 \eps_1 \,.
\end{align*}
\label{note:ons-adv:ellipsoid}
\item The running time per round of \cref{alg:ons-adv:bandit} depends polynomially on $t$. The reason is two-fold:
\texttt{(1)} the focus region $K_t$ has at least $t$ quadratic constraints, which means the projection in Line~\ref{line:ons-adv:mu} involves a large number
of constraints;
\texttt{(2)} approximately maximising the empirical regret in Line~\ref{line:ons-adv:y} requires storing and accumulating all previous data during the
approximate convex optimisation procedure.
The following exercise is quite speculative:

\begin{exer}
\faStar \faStar \faStar \faQuestion \quad
Suppose that $K$ is represented as a polytope\index{polytope} or via a separation oracle.
Modify \cref{alg:ons-adv:bandit} to have $O(\poly(d, \log(n)))$ running time per round. The following is a suggestion only: 
\begin{enumerate}
\item Show that the focus region can be updated only $\tilde O(d)$ times when the ellipsoid $E(\mu_t, \Sigma_t)$ changes dramatically.
\item Show that the optimisation procedure in Line~\ref{line:ons-adv:y} can be warm-started or implemented in a streaming fashion to reduce
the complexity per round.
\end{enumerate}
\end{exer}

\item As with \cref{alg:ons:bandit} from the previous chapter, the analysis of \cref{alg:ons-adv:bandit} relies on complex and moderately non-explicit parameters.
In principle you can calculate the constants explicitly, but in practice the resulting choices will be overly conservative. And the problem that poor approximations of the optimal
constants may lead to linear regret is even worse here than in \cref{chap:ons}, thanks to the additional constants that define the decay of the inverse covariance and the restart condition.
\end{enumeratenotes}

\chapter[Gaussian Optimistic Smoothing]{Gaussian Optimistic Smoothing\copynotice}\label{chap:opt}\index{optimistic}

The purpose of this chapter is to introduce and analyse the surrogate loss functions used in \cref{chap:ons,chap:ons-adv}. \index{surrogate loss!Gaussian}
The results are stated in as much generality as possible to facilitate their use in future applications.
In case you want a quick summary of the results, read this introductory section
for the basic definitions and then head directly to \cref{sec:summary}.

Suppose that $f \colon \R^d \to \R$ is convex and $X$ is a random vector in $\R^d$. 
We are interested in the problem of estimating the entire function $f$ from a single 
observation $Y = f(X) + \eps$ where $\E[\eps|X] = 0$ and $\E[\exp(\eps^2)|X] \leq 2$.
Given a parameter $\lambda \in (0,1)$, define the surrogate by
\begin{align}
s(x) = \E\left[\left(1 - \frac{1}{\lambda}\right) f(X) + \frac{1}{\lambda} f((1 - \lambda)X + \lambda x)\right]\,.
\label{eq:s}
\end{align}
A geometric intuition for this surrogate is shown in \cref{fig:gauss:int} while the surrogate loss itself is
plotted in \cref{fig:optimistic,fig:optimistic-lambda}.
We saw this surrogate in \cref{chap:ftrl,chap:ellipsoid} with $\lambda = 1/2$ and where $X$ was supported on an ellipsoid.
For the remainder we assume that the law of $X$ is Gaussian with mean $\mu$ and covariance $\Sigma$.
The density of $X$ with respect to the Lebesgue
measure is 
\begin{align*}
p(x) = \left(\frac{1}{2\pi}\right)^{d/2} \sqrt{\det \Sigma^{-1}} \exp\left(-\frac{1}{2} \snorm{x - \mu}^2_{\Sigma^{-1}}\right)\,. 
\end{align*}
We also make use of a quadratic approximation of the surrogate defined by
\begin{align*}
q(x) = \ip{s'(\mu), x - \mu} + \frac{1}{4} \snorm{x - \mu}^2_{s''(\mu)}\,,
\end{align*}
which is related to the second-order expansion of $s$ at $\mu$ except that the zeroth-order term is dropped 
and the leading constant of the quadratic term is $\frac{1}{4}$ rather than $\frac{1}{2}$.
Since we dropped the zeroth-order term you should not expect that $q(x) \approx s(x)$. Rather we will see that $q(x) - q(\mu)$ is comparable to $s(x) - s(\mu)$ for suitable
$x$.

\begin{figure}[h!]
\begin{tikzpicture}
\begin{axis}[xmin=-1.2,xmax=3.5,ymin=-1,ymax=13,clip=false,xtick=\empty,ytick=\empty,width=10cm,height=6cm]
\addplot+[domain={-1.2:3.5},mark=none,black] {x^2};
\addplot+[domain={-1.2:3.5},mark=none,black] {1 + (x + 1)};
\node at (axis cs:-1,-2.5) {$X$};
\node at (axis cs:2,-2.5) {$(1 - \lambda) X + \lambda x$};
\node at (axis cs:3,-2.5) {$x$};
\draw (axis cs:-1,-1) -- (axis cs:-1,1);
\draw (axis cs:2,-1) -- (axis cs:2,4);
\draw (axis cs:3,-1) -- (axis cs:3,5);
\node[anchor=west] (a) at (axis cs:-1.2,11) {$(1 - \frac{1}{\lambda}) f(X) + \frac{1}{\lambda} f((1 - \lambda) X + \lambda x)$};
\node[anchor=west] at (axis cs:3,12) {$f$};
\draw[thick] (a.east) edge[-latex,out=0,in=90,shorten >= 0.1cm] (axis cs:3,5);
\end{axis}
\end{tikzpicture}
\caption{The plot shows a lower bound on $f(x)$ obtained by the linear approximation of $f$ using points $X$ and $(1 - \lambda)X + \lambda x$.
The optimistic surrogate is obtained by averaging over all such approximations according to the law of $X$.}
\label{fig:gauss:int}
\end{figure}

\FloatBarrier

\subsubsection*{Assumptions and Logarithmic Factors}
Because the analysis is quite intricate and we are not so concerned about constants and logarithmic factors, we make the following
assumption:

\begin{assumption}\label{ass:gauss}
The following hold:
\begin{enumerate}
\item \textit{Convexity:} $f \colon \R^d \to \R$ is convex and $K$ is a convex body.
\item \textit{Gaussian iterates:} $X$ has law $\cN(\mu, \Sigma)$ and $\mu \in K$. 
\item \textit{Subgaussian responses:} $Y = f(X) + \eps$ with 
\begin{align*}
\E[\eps|X] = 0 \quad\text{ and }\quad \E[\exp(\eps^2)|X] \leq 2 \,.
\end{align*}
\item \textit{Boundedness:} 
$\delta \in (0,1)$ is a constant such that 
\begin{align*}
\max\left(d, \lip(f), \sup_{x \in K} |f(x)|, \snorm{\Sigma}, \snorm{\Sigma^{-1}}, 1/\lambda\right) \leq \frac{1}{\delta}\,;
\end{align*}
moreover, $\lambda \leq \frac{1}{d+1}$.
\end{enumerate}
\end{assumption}

We let $L$ be a logarithmic constant:
\begin{align*}
L = C \log(1/\delta) 
\end{align*}
where $C > 0$ is a large non-specified universal positive constant. 
We also let $(C_k)$ be a collection of $k$-dependent universal positive constants.

\begin{figure}[h!]
\centering
\includegraphics[width=0.75\textwidth]{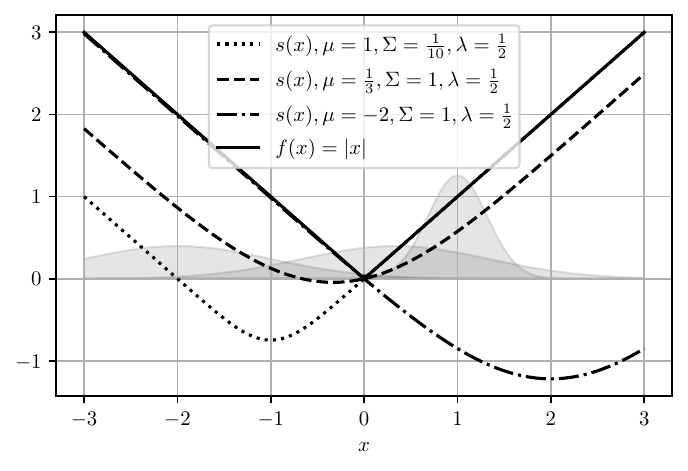}
\caption{
The surrogate for different choices of $\mu$ and $\Sigma$ with $\lambda = \frac{1}{2}$ in all cases.
Notice that the surrogate is always optimistic in the sense that $s(x) \leq f(x)$ for all $x$.
Moreover, the quality of the approximation depends on whether or not $x$ is in the region where the relevant Gaussian is well-concentrated
and the amount of curvature of $f$ in that region.\index{curvature}
}\label{fig:optimistic}
\commentAlt{A plot showing the absolute value loss function alongside the surrogate for different choices of lambda and sigma.}
\end{figure}

\begin{figure}[h!]
\centering
\includegraphics[width=0.75\textwidth]{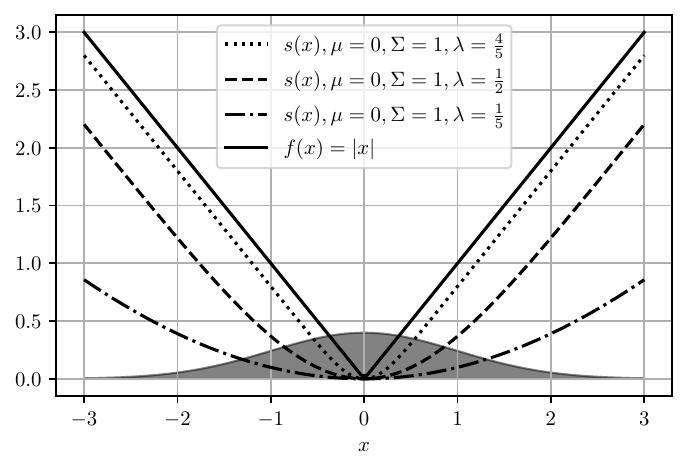}
\caption{
The surrogate for $\mu$ and $\Sigma$ constant and different choices of $\lambda$.
You can see that smaller $\lambda$ yields a smoother surrogate, but also one that has more approximation error.
}
\label{fig:optimistic-lambda}
\commentAlt{A plot showing the absolute value loss function alongside the surrogate with fixed mean and sigma and varying choices of lambda.
For smaller values of lambda the approximation is less good but the surrogate is smoother.}
\end{figure}

\FloatBarrier

\section{Smoothing}\label{sec:ons:smooth}
Our analysis would often be made considerably easier if $f \in \cF_\psm$.
Let $\varrho = \exp(-L/200)$, which is a miniscule constant, and let 
\begin{align*}
f_\varrho = f \star \phi_{\varrho}  
\end{align*}
where $\phi_{\varrho}$ is the smoothing kernel defined in \cref{sec:reg:smooth}.
The following lemma is nothing but a rewriting of \cref{prop:reg:smooth}.

\begin{lemma}\label{lem:opt:smooth}
The following hold:
\begin{enumerate}
\item $f_\varrho$ is twice differentiable and $\beta$-smooth with $\beta = (d+1)(d+6) \lip(f) / \varrho$.
\item $\snorm{f_\varrho - f}_\infty \leq \varrho \lip(f)$.
\end{enumerate}
\end{lemma}

The surrogate loss associated with the smoothed loss $f_\varrho$ is
\begin{align*}
s_{\varrho}(x) = \E\left[\left(1 - \frac{1}{\lambda}\right) f_\varrho(X) + \frac{1}{\lambda} f_\varrho((1 - \lambda)X + \lambda x)\right] \,.
\end{align*}
By definition, $\varrho$ is tiny, which means that $s_{\varrho}$ may not be \textit{that} smooth, but it is just enough for our purposes. 

\section{Elementary Properties}
An immediate consequence of the definitions is that $s$ is convex, Lipschitz and a lower bound on $f$.

\begin{lemma}\label{lem:basic}
The function $s$ in \cref{eq:s} is well-defined, infinitely differentiable and 
\begin{enumerate}
\item $s$ is convex; \label{lem:basic:cvx}
\item $s(x) \leq f(x)$ for all $x \in \R^d$; and \label{lem:basic:opt}
\item $\lip(s) \leq \lip(f)$. \label{lem:basic:lip}
\end{enumerate}
\end{lemma}

\begin{proof}
That $s$ is well-defined, infinitely differentiable and Lipschitz is left as an exercise. 
Part \ref{lem:basic:cvx} is immediate from the convexity of $f$. Part~\ref{lem:basic:opt} also uses convexity of $f$ and Jensen's inequality:\index{Jensen's inequality}
\begin{align*}
s(x) 
&= \E\left[\left(1 - \frac{1}{\lambda}\right) f(X) + \frac{1}{\lambda} f((1-\lambda)X + \lambda x)\right]  \\
&\leq \E\left[\left(1 - \frac{1}{\lambda}\right) f(X) + \frac{1}{\lambda} \left[(1-\lambda) f(X) + \lambda f(x)\right]\right] \\
&= f(x) \,.
\qedhere
\end{align*}
\end{proof}

\begin{exer}
\faStar \quad
Prove the omitted parts of \cref{lem:basic}.
\end{exer}

Perhaps the most important property of $s$ is that it is not too far below $f$ on an ellipsoidal region about $\mu$. Establishing this is quite involved, however,
and relies on a better understanding of the Hessian of $s$.

\section{Properties of the Hessian}

The next important property is a kind of continuity of the Hessian.

\begin{proposition}\label{prop:s-sc}
If $\lambda \norm{x - y}_{\Sigma^{-1}} \leq L^{-1/2}$, then $s''(x) \preceq 2 s''(y) + \delta \Sigma^{-1}$.
\end{proposition}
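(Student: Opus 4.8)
The plan is to compute the Hessian of $s$ explicitly and relate $s''(x)$ to $s''(z)$ through a change-of-variables in the Gaussian expectation. Starting from the definition \eqref{eq:s}, since $f$ is twice differentiable we have
\begin{align*}
s''(x) = \lambda \, \E\left[f''((1-\lambda) X + \lambda x)\right] \,,
\end{align*}
where $X \sim \cN(\mu, \Sigma)$. The random point $(1-\lambda) X + \lambda x$ has law $\cN((1-\lambda)\mu + \lambda x, (1-\lambda)^2 \Sigma)$. So if I write $\nu_x = (1-\lambda)\mu + \lambda x$ and let $Z_x \sim \cN(\nu_x, (1-\lambda)^2 \Sigma)$, then $s''(x) = \lambda \E[f''(Z_x)]$, and likewise $s''(z) = \lambda \E[f''(Z_z)]$. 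The two Gaussians differ only in their means, by the vector $\nu_x - \nu_z = \lambda(x - z)$.

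First I would write $s''(x)$ as an integral against the Gaussian density and perform the shift of variables that turns the $Z_x$-integral into a $Z_z$-integral times a likelihood ratio: for any $u$,
\begin{align*}
s''(x) = \lambda \int_{\R^d} f''(u) \, p_{Z_x}(u) \, \d{u} = \lambda \int_{\R^d} f''(u) \, \frac{p_{Z_x}(u)}{p_{Z_z}(u)} \, p_{Z_z}(u) \, \d{u} \,.
\end{align*}
Since $f'' \succeq 0$ (as $f$ is convex), I can bound the integrand pointwise: if the likelihood ratio $p_{Z_x}(u)/p_{Z_z}(u) \leq 3$ for \emph{all} $u$, then $s''(x) \preceq 3 s''(z)$ immediately. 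Unfortunately the Gaussian likelihood ratio between two shifted Gaussians is unbounded in $u$, so a pointwise bound fails. The fix is the standard trick: split the integral into the region where the ratio is controlled and its complement, and on the complement use that $f''$ is bounded by $\beta \id$ together with a small-probability estimate. More precisely, the likelihood ratio is $\exp\left(\frac{\langle u - \nu_z, \nu_x - \nu_z\rangle_{(1-\lambda)^2\Sigma^{-1}}}{1} - \frac{1}{2}\norm{\nu_x - \nu_z}^2_{(1-\lambda)^2\Sigma^{-1}}\right)$, which exceeds $3$ only when $\langle u - \nu_z, \nu_x - \nu_z \rangle$ is large relative to $\norm{\nu_x - \nu_z}$, i.e.\ on a half-space far from $\nu_z$ whose $\cN(\nu_z, (1-\lambda)^2\Sigma)$-probability is exponentially small in $(L^{1/2}/\lambda \cdot \lambda)^2 = $ something like $\exp(-c/(\text{ratio bound}))$ — here the hypothesis $\lambda \norm{x-z}_{\Sigma^{-1}} \leq L^{-1/2}$ makes $\norm{\nu_x - \nu_z}_{(1-\lambda)^2 \Sigma^{-1}} = \frac{\lambda}{1-\lambda}\norm{x-z}_{\Sigma^{-1}} \leq \frac{L^{-1/2}}{1-\lambda}$ tiny.

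Concretely I would argue: let $\Omega = \{u : p_{Z_x}(u) \leq 3 p_{Z_z}(u)\}$. On $\Omega$, $\lambda \int_\Omega f''(u) p_{Z_x}(u)\,\d u \preceq 3\lambda \int_\Omega f''(u) p_{Z_z}(u)\,\d u \preceq 3 s''(z)$. On $\Omega^c$, bound $f''(u) \preceq \beta \id$, so $\lambda \int_{\Omega^c} f''(u) p_{Z_x}(u)\,\d u \preceq \lambda\beta\, p_{Z_x}(\Omega^c) \id$, and then show $\lambda \beta \, p_{Z_x}(\Omega^c)$ is dominated by $\lambda \alpha \leq \norm{s''(z)}$, using Lemma~\ref{lem:basic}\ref{lem:basic:hess} that $s''(z) \succeq \lambda\alpha\id$ — this is where the logarithmic factor $L$ in the hypothesis is spent, since we need $p_{Z_x}(\Omega^c) \leq \frac{\alpha}{\beta}$ roughly, and $p_{Z_x}(\Omega^c)$ is exponentially small in $1/(\lambda\norm{x-z}_{\Sigma^{-1}})^2 \geq L$ while $\beta/\alpha$ is only polynomial in the quantities $L$ bounds. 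Adjusting the constant $3$ to something like $2$ on the good event to leave room for the $\Omega^c$ contribution gives the clean bound $s''(x) \preceq 3 s''(z)$.

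The main obstacle is the bookkeeping in the tail estimate: making sure the exponentially small Gaussian tail probability $p_{Z_x}(\Omega^c)$ genuinely beats $\beta/\alpha$ given only the hypothesis $\lambda\norm{x-z}_{\Sigma^{-1}} \leq L^{-1/2}$, and choosing the threshold (here I used $3$ but one should really use a slightly smaller threshold on $\Omega$ and reserve slack) so the two pieces add up to at most $3 s''(z)$. I would also double-check the direction of the Gaussian shift (which mean is larger) does not matter by symmetry — it doesn't, since we only need the one-sided inequality and the roles of $x$ and $z$ in $\Omega$ are fixed. None of this requires heavy computation beyond the standard Gaussian likelihood-ratio formula and a Chernoff/Gaussian-tail bound for a linear functional of a Gaussian.
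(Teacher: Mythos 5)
Your proposal is correct and takes essentially the same approach as the paper: both argue by expressing $s''(x)$ as a Gaussian integral that differs from the one giving $s''(z)$ only by a shift of mean, splitting the integral into the region where the likelihood ratio is bounded (where you get $\preceq 2\,s''(z)$ or similar) and its complement (where you bound $f'' \preceq \beta\id$ and use a Gaussian tail estimate, then absorb this into $s''(z) \succeq \lambda\alpha\id$ via the size of $L$). The paper uses a ratio threshold of $2$ on the good region explicitly, which is exactly the slack-reserving adjustment you anticipate.
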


\begin{proof}
The interesting part is to establish a version of the claim for the smoothed surrogate loss, which is followed by a mundane comparison.

\stepsection{Step 1: Smoothed analysis}
Let $\eps = \frac{\lambda(y-x)}{1-\lambda}$ and assume by changing coordinates that $\mu = \zeros$.
By definition,
\begin{align*}
s_\varrho''(x) 
&= \lambda \E\left[f_\varrho''((1 - \lambda) X + \lambda x)\right] \\
&= \lambda \int_{\R^d} f_\varrho''((1 - \lambda) z + \lambda x) p(z) \d{z} \\ 
&= \lambda \int_{\R^d} f_\varrho''((1 - \lambda) w + \lambda y) p\left(w + \eps\right) \d{w} \,, 
\end{align*}
where the exchange of integral and derivatives is justified by the assumption that $\lip(f) < \infty$.
The last equality holds by a change of coordinates.
Given a set $B \subset \R^d$ let
\begin{align*}
I(B) &= \lambda \int_{B} f_\varrho''((1 - \lambda) w + \lambda y) p\left(w + \eps\right) \d{w} \,.
\end{align*}
The plan is to construct a set $A$ for which $\bbP(X \notin A)$ is negligible and $I(A) \preceq 2s''_\varrho(y)$ and
then argue that $I(A^c)$ is negligible.
Consider the density ratio
\begin{align*}
\frac{p(w + \eps)}{p(w)}
&= \exp\left(-\frac{1}{2} \norm{w + \eps}^2_{\Sigma^{-1}} + \frac{1}{2} \norm{w}^2_{\Sigma^{-1}}\right) \\
&= \exp\left(-\frac{1}{2} \norm{\eps}^2_{\Sigma^{-1}} - \ip{w, \eps}_{\Sigma^{-1}}\right) \,.
\end{align*}
Next, let $A = \left\{w \colon -\frac{1}{2} \norm{\eps}^2_{\Sigma^{-1}} - \ip{w, \eps}_{\Sigma^{-1}} \leq \log(2) \right\}$,
which is chosen so that
\begin{align*}
I(A) 
&=\lambda \int_A f_\varrho''((1 - \lambda) w + \lambda y) p(w + \eps) \d{w} \\ 
&\preceq 2 \lambda \int_A f_\varrho''((1 - \lambda)w + \lambda y) p(w) \d{w} \\
&\preceq 2 \lambda \int_{\R^d} f_\varrho''((1 -\lambda)w + \lambda y) p(w) \d{w} \\
&= 2 s_\varrho''(y)\,,
\end{align*}
where the first inequality uses the fact that $f_\varrho$ is convex so that $f_\varrho'' \succeq \zeros$ and the definition of $A$.
The second follows from convexity of $f_\varrho$.
Moving now to bound the integral over $A^c$, recall the definition of $\beta$ in \cref{lem:opt:smooth}. By convexity of the spectral norm,
\begin{align*}
\norm{I(A^c)}
&=\norm{\lambda \int_{A^c} f_\varrho''((1 - \lambda) w + \lambda y) p(w+\eps) \d{w}} \\
&\leq \lambda \beta \int_{A^c} p(w+\eps) \d{w} \\
&= \lambda \beta \bbP(X - \eps \in A^c) \\
&= \lambda \beta \bbP\left(-\frac{1}{2} \norm{\eps}^2_{\Sigma^{-1}} - \ip{X - \eps, \Sigma^{-1} \eps} > \log(2)\right) \\
&\leq \lambda \beta \exp\left(-\frac{\left(\log(2) - \frac{1}{2} \norm{\eps}^2_{\Sigma^{-1}}\right)^2}{2\norm{\eps}^2_{\Sigma^{-1}}}\right) \,,
\end{align*}
where in the final inequality we used \cref{thm:conc:gaussian} and the fact that $\ip{X, \Sigma^{-1} \eps}$ has law $\cN(0, \snorm{\eps}^2_{\Sigma^{-1}})$,
as well as the fact that $\frac{1}{2} \snorm{\eps}^2_{\Sigma^{-1}} \leq \log(2)$, which holds for suitably large $L$ by the assumptions in the proposition statement.
Therefore,
\begin{align}
s_\varrho''(x)
&\preceq 2 s_\varrho''(y) + \lambda \beta \exp\left(-\frac{\left(\log(2) - \frac{1}{2} \norm{\eps}^2_{\Sigma^{-1}}\right)^2}{2\norm{\eps}^2_{\Sigma^{-1}}}\right) \id \nonumber \\
&\preceq 2 s_\varrho''(y) + \lambda \beta \exp\left(-\frac{L}{100}\right) \id \,, \label{eq:gauss:hess-1}
\end{align}
where the last inequality follows because $\lambda \leq 1/2$ by \cref{ass:gauss} and the conditions in the statement that $\lambda \norm{x - y}_{\Sigma^{-1}} \leq 1/ \sqrt{L}$, so that
\begin{align*}
\norm{\eps}^2_{\Sigma^{-1}} = \left(\frac{\lambda}{1-\lambda}\right)^2 \norm{x - y}^2_{\Sigma^{-1}} \leq \frac{1}{L(1-\lambda)^2} \leq \frac{4}{L} \leq \log(2) \,.
\end{align*}

\stepsection{Step 2: Comparison}
We now compare $s''$ and $s''_\varrho$. Let
\begin{align*}
M(z) = \Sigma^{-1/2} zz^\top \Sigma^{-1/2} - \id \,.
\end{align*}
Then, using convexity of the spectral norm and \cref{lem:opt:smooth},
\begin{align}
\norm{\Sigma^{1/2} (s''(x) - s''_\varrho(x) ) \Sigma^{1/2}}
&\explana= \frac{\lambda}{(1 - \lambda)^2} \norm{\int_{\R^d} (f - f_\varrho) ((1 - \lambda) z + \lambda x) M(z) p(z) \d{z}} \nonumber \\
&\explana\leq \frac{\lambda \varrho \lip(f)}{(1 - \lambda)^2} \int_{\R^d} \norm{M(z)} p(z) \d{z} \nonumber \\
&\explana= \frac{\lambda \varrho \lip(f)}{(1 - \lambda)^2} \int_{\R^d} \norm{\Sigma^{-1/2} zz^\top \Sigma^{-1/2} - \id} p(z) \d{z} \nonumber \\
&\explana\leq \frac{(d+1) \lambda \varrho \lip(f)}{(1 - \lambda)^2} \,,  \label{eq:gauss:hess-2}
\end{align}
where \explanr{} follows by (twice) integrating by parts, \explanr{} by \cref{lem:opt:smooth}, \explanr{} by substituting the definition of $M$
and \explanr{} since $\Sigma^{-1/2} z$ under $p(z)$ is a standard Gaussian and for $W \sim \cN(\zeros, \id)$, $\E[\snorm{WW^\top - \id}] \leq \E[\snorm{WW^\top}] + 1 = \E[\snorm{W}^2] + 1 = d+1$.
Therefore,
\begin{align*}
s''(x)
&\explana\preceq s''_\varrho(x) + \frac{(d+1) \lambda \varrho \lip(f)}{(1 - \lambda)^2} \Sigma^{-1} \\
&\explana\preceq 2 s''_\varrho(y) + \lambda \beta \exp\left(-\frac{L}{100}\right) \id + \frac{(d+1) \lambda \varrho \lip(f)}{(1 - \lambda)^2} \Sigma^{-1} \\
&\explana\preceq 2 s''(y) + \lambda \beta \exp\left(-\frac{L}{100}\right) \id + \frac{2(d+1) \lambda \varrho \lip(f)}{(1 - \lambda)^2} \Sigma^{-1} \\
&\explana\preceq 2 s''(y) + \delta \Sigma^{-1} \,,
\end{align*}
where \explanr{} follows from \cref{eq:gauss:hess-2},
\explanr{} from \cref{eq:gauss:hess-1} and
\explanr{} from \cref{eq:gauss:hess-2} again. Lastly,
\explanr{}
follows from the definitions of $\varrho = \exp(-L/200)$ and $\beta = (d+1)(d+6) \lip(f) / \varrho$ in \cref{sec:ons:smooth} 
and \cref{ass:gauss} that $\delta \id \preceq \Sigma^{-1}$.
\end{proof}

Since $f$ is Lipschitz and $s$ is a smoothing of $f$, the Hessian of $s$ cannot be too large relative to $\snorm{\Sigma^{-1}}$,
as the next proposition shows.

\begin{proposition}\label{prop:opt:H-upper}
For any $z \in \R^d$:
\begin{enumerate}
\item $\norm{s''(z)} \leq \frac{\lambda \lip(f)}{1 - \lambda} \sqrt{d \snorm{\Sigma^{-1}}}$;
\item $\norm{\Sigma^{1/2} s''(z) \Sigma^{1/2}} \leq \frac{\lambda \lip(f)}{1 - \lambda} \sqrt{d \norm{\Sigma}}$.
\end{enumerate}
\end{proposition}

\begin{proof}
Assume for a moment that $f$ is twice differentiable. Then, exchanging derivatives and the expectation and integrating by parts shows that
for any $\eta \in \sphere_1$,
\begin{align*}
\eta^\top s''(z)\eta 
&= \lambda \E[\eta^\top f''((1 - \lambda)X + \lambda z) \eta] \\
&= \frac{\lambda}{1 - \lambda} \E\left[\sip{\eta, f'((1 - \lambda)X + \lambda z)} \sip{\eta, \Sigma^{-1}(X -\mu)}\right] \\
&\leq \frac{\lambda \lip(f)}{1-\lambda} \E\left[\snorm{\Sigma^{-1}(X - \mu)}\right] \\
&\leq \frac{\lambda \lip(f)}{1-\lambda} \snorm{\Sigma^{-1}}^{1/2} \E\left[\snorm{X - \mu}_{\Sigma^{-1}}\right] \\
&\leq \frac{\lambda \lip(f)}{1 - \lambda} \sqrt{d \snorm{\Sigma^{-1}}} \,.
\end{align*}
Then use the fact that $\norm{s''(z)} = \max_{\eta \in \sphere_1} \eta^\top s''(z) \eta$.
The second part follows from the same argument.
In case $f$ is not twice-differentiable, apply the above argument to $s_\varrho$ and $f_\varrho$ and pass to the limit as $\varrho \to 0$. Alternatively, use
direct means to justify the second equality above with $\ip{\cdot, f'(\cdot)}$ replaced with the directional derivative $Df(\cdot)[\cdot]$.
The second part follows from the same argument and is left as an exercise.
\end{proof}

\begin{exer}
\faStar \quad
Prove the second part of \cref{prop:opt:H-upper}.
\end{exer}

Lastly, we compare the Hessian of the surrogate to the mean Hessian of the loss $f$.

\begin{proposition}\label{prop:gauss:mean-hess}
Suppose that $\lambda \leq \frac{1}{dL^2}$. Then 
\begin{align*}
\lim_{\varrho \to 0} \E[f''_\varrho(X)] \preceq \frac{1}{\lambda} \left[2 s''(\mu) + 2 \delta \Sigma^{-1}\right] \,.
\end{align*}
\end{proposition}

Note, the limit of the smoothing is used because $f$ may not be twice differentiable.
This corresponds to viewing the Hessian of $f$ as an operator on suitable distributions.

\begin{proof}
Let $Z$ have law $\cN(\mu, \frac{2-\lambda}{\lambda} \Sigma)$, which is chosen so that
$(1 - \lambda) X + \lambda Z$ has the same law as $X$. 
Therefore,
\begin{align*}
\E[s_\varrho''(Z)]
= \lambda \E[f_\varrho''((1 - \lambda)X + \lambda Z)]
= \lambda \E[f_\varrho''(X)]\,.
\end{align*}
Passing to the limit shows that
\begin{align*}
\E[s''(Z)] = \lambda \lim_{\varrho \to 0} \E[f_\varrho''(X)] \,.
\end{align*}
Define event $A = \{\lambda \snorm{Z - \mu}_{\Sigma^{-1}} \leq L^{-1/2}\}$.
By \cref{prop:s-sc}, 
\begin{align}
\sind_A s''(Z) \preceq 2 s''(\mu) + \delta \Sigma^{-1}\,.
\label{eq:gauss:dist-1}
\end{align}
By the definition of $Z$, $\sqrt{\frac{\lambda}{2-\lambda}} \Sigma^{-1/2}(Z - \mu)$ has law $\cN(\zeros, \id)$.
Therefore, by \cref{prop:orlicz} and \cref{lem:orlicz-tail},
\begin{align*}
\bbP\left(A^c\right)
&= \bbP\left(\lambda \norm{Z - \mu}_{\Sigma^{-1}} > L^{-1/2}\right) \\ 
&= \bbP\left(\norm{\sqrt{\frac{\lambda}{2-\lambda}} \Sigma^{-1/2}(Z - \mu)}^2 > \frac{1}{\lambda(2 - \lambda)L}\right) \\
&\leq 2\exp\left(-\frac{3}{8d  \lambda(2 - \lambda)L}\right) \\
\tag*{since $\lambda \leq \frac{1}{dL^2}$}
&\leq 2\exp\left(-\frac{3L}{16}\right)\,. 
\end{align*}
Combining the above display with \cref{eq:gauss:dist-1} and \cref{prop:opt:H-upper} shows that
\begin{align*}
\E[s''(Z)] 
&= \E[\sind_A s''(Z) + \sind_{A^c}s''(Z)] \\
&\preceq 2 s''(\mu) + \delta \Sigma^{-1} + \bbP(A^c) \frac{\lambda \lip(f)}{1 - \lambda} \sqrt{d \snorm{\Sigma^{-1}}} \id \\
&\preceq 2 s''(\mu) + \delta \Sigma^{-1} + 2 \exp\left(-\frac{3L}{16}\right) \frac{\lambda \lip(f)}{1 - \lambda} \sqrt{d \snorm{\Sigma^{-1}}} \id \\
&\preceq 2 s''(\mu) + 2\delta \Sigma^{-1} \,,
\end{align*}
where the final inequality follows from the definition of $L$ and by \cref{ass:gauss}.
\end{proof}

\section{Properties of the Quadratic Surrogate}
Recall that the quadratic surrogate is
\begin{align*}
q(x) = \ip{s'(\mu), x - \mu} + \frac{1}{4} \norm{x - \mu}^2_{s''(\mu)} \,.
\end{align*}
Obviously $q$ inherits convexity from $s$. 
By \cref{prop:s-sc}, $s$ has a nearly constant Hessian in a region about $\mu$ from which it follows
that $q(x) - q(\mu) \lesssim s(x) - s(\mu)$ on a region about $\mu$, as the following proposition shows:

\begin{proposition}\label{prop:ons:q-lower}
Suppose that $\lambda \snorm{x - \mu}_{\Sigma^{-1}} \leq \frac{1}{\sqrt{L}}$; then 
\begin{align*}
s(\mu) - s(x) \leq q(\mu) - q(x) + \frac{\delta}{\lambda^2} \,.
\end{align*}
\end{proposition}

\begin{proof}
By \cref{prop:s-sc}, for any $y \in [\mu, x]$, 
\begin{align*}
s''(y) \succeq \frac{1}{2} \left[s''(\mu) - \delta \Sigma^{-1}\right]\,.
\end{align*}
By Taylor's theorem there exists a $y \in [\mu, x]$ such that
\begin{align*}
s(x) &= s(\mu) + \ip{s'(\mu), x - \mu} + \frac{1}{2} \norm{x - \mu}^2_{s''(y)}  \\
&\geq s(\mu) + \ip{s'(\mu), x - \mu} + \frac{1}{4} \norm{x - \mu}^2_{s''(\mu)} - \frac{\delta}{4} \norm{x - \mu}^2_{\Sigma^{-1}} \\
&\geq s(\mu) + \ip{s'(\mu), x - \mu} + \frac{1}{4} \norm{x - \mu}^2_{s''(\mu)} - \frac{\delta}{4\lambda^2 L} \\
&\geq s(\mu) + q(x) - \frac{\delta}{\lambda^2} \,.
\end{align*}
The result follows by rearranging and because $q(\mu) = 0$.
\end{proof}

\section{Lower Bound on the Surrogate}
We have shown that $s \leq f$ holds everywhere. 
In general there is no uniform upper bound on the entire function $f - s$, but
$f(\mu) - s(\mu)$ can be upper-bounded in terms of the curvature of $s$ as $\mu$.\index{curvature}

\begin{proposition}\label{prop:lower}
Provided that $\lambda \leq \frac{1}{d L^2}$,
\begin{align*}
f(\mu) \leq \E[f(X)] \leq s(\mu) + \frac{2}{\lambda} \tr(s''(\mu) \Sigma) + \frac{2 \delta d}{\lambda} \,.
\end{align*}
\end{proposition}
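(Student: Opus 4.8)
The plan is to treat the two inequalities separately. The left one, $f(\mu)\le\E[f(X)]$, is just Jensen's inequality, using that $f$ is convex and $\E[X]=\mu$. For the right one I would reduce everything to a comparison of $f''$ averaged against two nearby Gaussians, and then control that comparison by a truncation argument driven by the strong convexity/smoothness bounds $\alpha\id\preceq f''\preceq\beta\id$.

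First I would unfold the definition: in \cref{eq:s} the term $\left(1-\tfrac1\lambda\right)f(X)$ is constant in the argument, so
\[
\E[f(X)]-s(\mu)=\frac1\lambda\,\E\!\left[f(X)-f\big((1-\lambda)X+\lambda\mu\big)\right].
\]
Since $(1-\lambda)X+\lambda\mu=X-\lambda(X-\mu)$ and $f$ is convex on $\R^d$, first–order convexity gives $f(X)-f((1-\lambda)X+\lambda\mu)\le\lambda\ip{f'(X),X-\mu}$, hence $\E[f(X)]-s(\mu)\le\E[\ip{f'(X),X-\mu}]$. By Gaussian integration by parts (Stein's identity), which is legitimate because $f''$ is bounded, $\E[\ip{f'(X),X-\mu}]=\tr\!\big(\Sigma\,\E[f''(X)]\big)$. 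On the other side, differentiating \cref{eq:s} twice yields $s''(\mu)=\lambda\,\E[f''((1-\lambda)X+\lambda\mu)]$, and $(1-\lambda)X+\lambda\mu\sim\cN(\mu,(1-\lambda)^2\Sigma)$, so $\tfrac4\lambda\tr(s''(\mu)\Sigma)=4\,\tr\!\big(\Sigma\,\E_{Z\sim\cN(\mu,(1-\lambda)^2\Sigma)}[f''(Z)]\big)$. Thus it remains to prove the purely Gaussian statement
\[
\tr\!\big(\Sigma\,\E_{\cN(\mu,\Sigma)}[f'']\big)\ \le\ 4\,\tr\!\big(\Sigma\,\E_{\cN(\mu,(1-\lambda)^2\Sigma)}[f'']\big).
\]

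For this last step, set $\psi(x)=\tr(\Sigma f''(x))$; since $f$ is $\alpha$-strongly convex and $\beta$-smooth on $\R^d$ we have $\alpha\tr(\Sigma)\le\psi(x)\le\beta\tr(\Sigma)$ for all $x$. Let $p,\tilde p$ be the densities of $\cN(\mu,\Sigma)$ and $\cN(\mu,(1-\lambda)^2\Sigma)$. A direct computation gives $p(x)/\tilde p(x)=(1-\lambda)^{d}\exp\!\big(\tfrac c2\norm{x-\mu}_{\Sigma^{-1}}^2\big)$ with $c=(1-\lambda)^{-2}-1\ge\lambda$, so the ``bad set'' $B=\{x:p(x)>3\tilde p(x)\}$ is contained in $\{\norm{x-\mu}_{\Sigma^{-1}}^2>t_0\}$ with $t_0\ge\text{(universal const)}/\lambda\ge\text{(universal const)}\,dL^2$ by the assumption $\lambda\le 1/(dL^2)$. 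Since $\norm{X-\mu}_{\Sigma^{-1}}^2\sim\chi^2_d$ when $X\sim p$, the Laurent–Massart $\chi^2$ tail bound gives $\bbP_{X\sim p}(X\in B)\le e^{-\text{(const)}\,dL^2}\le\alpha/\beta$, where the last inequality uses that $L$ dominates $\log(\beta/\alpha)$ (and we may assume $\beta/\alpha>4$, as otherwise the displayed comparison is trivial because $\int\psi\,p\le\beta\tr(\Sigma)\le 4\alpha\tr(\Sigma)\le4\int\psi\,\tilde p$). Finally, on $B^{c}$ we have $p\le 3\tilde p$, so
\[
\int\psi\,p=\int_{B^{c}}\!\psi\,p+\int_{B}\!\psi\,p\ \le\ 3\!\int\psi\,\tilde p+\beta\tr(\Sigma)\,\bbP_{X\sim p}(X\in B)\ \le\ 3\!\int\psi\,\tilde p+\alpha\tr(\Sigma)\ \le\ 4\!\int\psi\,\tilde p,
\]
using $\int\psi\,\tilde p\ge\alpha\tr(\Sigma)$ in the last step. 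Combining with the first paragraph completes the proof.

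The step I expect to demand the most care is the Gaussian comparison: one must check that the threshold $t_0$ defining the bad set is genuinely of order $dL^2$ even after absorbing the $(1-\lambda)^{-d}$ prefactor in the likelihood ratio, and that the resulting $\chi^2$ tail is then small enough to dominate $\beta/\alpha$. This is exactly where the definition of $L$ (and the freedom to take the hidden universal constant large) is used — mirroring the mechanism in the proof of Proposition~\ref{prop:s-sc}. Everything else (Jensen, the one-line convexity estimate, Stein's identity, and the algebra turning $s''(\mu)$ into a Gaussian average of $f''$) is routine.
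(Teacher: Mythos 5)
Your proof is correct but takes a genuinely different route from the paper's. The paper introduces an independent auxiliary Gaussian $Z\sim\cN(\mu,\rho^2\Sigma)$ with $\rho^2=(2-\lambda)/\lambda$ so that $(1-\lambda)X+\lambda Z$ has the same law as $X$; this gives the identity $\E[s(Z)]=\E[f(X)]$, and the proof then Taylor-expands $s$ at $\mu$ and invokes the Hessian-continuity estimate (Proposition~\ref{prop:s-sc}) together with a tail-truncation argument to control the second-order remainder. You instead keep everything at the level of $f$: a single application of first-order convexity bounds $\E[f(X)]-s(\mu)$ by $\E[\ip{f'(X),X-\mu}]$, Stein's identity converts this to $\tr(\Sigma\,\E_{\cN(\mu,\Sigma)}[f''])$, the identity $s''(\mu)=\lambda\,\E[f''((1-\lambda)X+\lambda\mu)]$ lets you rewrite the target as $4\tr(\Sigma\,\E_{\cN(\mu,(1-\lambda)^2\Sigma)}[f''])$, and the remaining comparison of two Gaussian averages of $f''$ is handled by a direct truncation using $\alpha\id\preceq f''\preceq\beta\id$, the $\chi^2$ tail, and the assumption $\lambda\le 1/(dL^2)$. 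Both proofs ultimately rest on the same truncation mechanism and the same use of $L$ to dominate $\log(\beta/\alpha)$, but your route is more elementary (no auxiliary $Z$, no Taylor remainder on $s$, no appeal to Proposition~\ref{prop:s-sc}) and makes the quantity actually being controlled — a change-of-measure comparison between $\cN(\mu,\Sigma)$ and $\cN(\mu,(1-\lambda)^2\Sigma)$ for the bounded function $\tr(\Sigma f'')$ — completely explicit; the paper's route has the advantage of reusing the chapter's already-built machinery, which keeps the arguments structurally uniform. One small point worth making explicit in a final write-up: the application of Stein's identity requires $f'$ to have at most linear growth, which holds here precisely because $f\in\cF_{\pe}$ forces $f''\preceq\beta\id$ on all of $\R^d$, so you should cite that membership where you invoke the identity.
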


\begin{proof}
The first inequality is immediate from Jensen's inequality\index{Jensen's inequality} and because $\E[X] = \mu$.
Let $Z$ be a random variable that is independent of $X$ and has law $\cN(\mu, \rho^2 \Sigma)$
where $\rho^2 = \frac{2 - \lambda}{\lambda}$ is chosen so that $(1-\lambda)^2 + \lambda^2 \rho^2 = 1$.
Then
\begin{align*}
\E[s(Z)]
&= \E\left[\left(1 - \frac{1}{\lambda}\right) f(X) + \frac{1}{\lambda} f((1 - \lambda)X+\lambda Z)\right] \\
&= \E\left[\left(1 - \frac{1}{\lambda}\right) f(X) + \frac{1}{\lambda} f(X)\right] \\
&= \E\left[f(X)\right]\,,
\end{align*}
where we used the fact that $(1 - \lambda) X + \lambda Z$ has the same law as $X$.
Let us now compare $\E[s(Z)]$ to $s(\mu)$. By Taylor's theorem, for every $z \in \R^d$ there exists a $\xi_z \in [\mu, z]$ such that
$s(z) = s(\mu) + s'(\mu)^\top (z - \mu) + \frac{1}{2} \snorm{z - \mu}^2_{s''(\xi_z)}$.
By \cref{prop:s-sc}, if $z$ is close enough to $\mu$, then $s''(z)$ is close to $s''(\mu)$.
Define
\begin{align*}
A &=\left\{z \in \R^d \colon \lambda \norm{z - \mu}_{\Sigma^{-1}} \leq L^{-1/2} \right\} \,.
\end{align*}
Note that $\mu \in A$ and that $A$ is convex. 
Hence, if $z \in A$, then $\xi_z \in A$ and by \cref{prop:s-sc}, $s''(\xi_z) \leq 2 s''(\mu) + \delta \Sigma^{-1}$.
Then
\begin{align*}
\E[s(Z)] 
&= \E\left[s(\mu) + s'(\mu)^\top (Z - \mu) + \frac{1}{2} \norm{Z - \mu}^2_{s''(\xi_Z)}\right] \\
&= s(\mu) + \underbracket{\E\left[\frac{\sind_A(Z)}{2} \norm{Z - \mu}^2_{s''(\xi_Z)}\right]}_{D} + \underbracket{\E\left[\frac{\sind_{A^c}(Z)}{2}\norm{Z - \mu}^2_{s''(\xi_Z)}\right]}_{E} \,.
\end{align*}
The dominant term $D$ is bounded using \cref{prop:s-sc} and the definition of $A$:
\begin{align*}
D 
&= \E\left[\frac{\sind_A(Z)}{2} \norm{Z - \mu}^2_{s''(\xi_Z)}\right] \\
\tag*{by \cref{prop:s-sc}}
&\leq \E\left[\norm{Z - \mu}^2_{s''(\mu)} + \frac{\delta}{2} \norm{Z - \mu}^2_{\Sigma^{-1}}\right] \\
&= \tr\left(s''(\mu) \E\left[(Z - \mu)(Z - \mu)^\top\right]\right) + \frac{\delta}{2} \E\left[\norm{Z - \mu}^2_{\Sigma^{-1}}\right] \\
&= \rho^2 \tr\left(s''(\mu) \Sigma\right) + \frac{\delta d \rho^2}{2} \\
\tag*{since $\rho^2 \leq \frac{2}{\lambda}$}
&\leq \frac{2}{\lambda} \tr\left(s''(\mu) \Sigma\right) + \frac{\delta d}{\lambda} \,.
\end{align*}
Collecting the results shows that
\begin{align*}
\E[f(X)] = \E[s(Z)] \leq s(\mu) + \frac{2}{\lambda} \tr\left(s''(\mu) \Sigma\right) + \frac{\delta d}{\lambda} + E \,.
\end{align*}
All that remains is to bound the error term, which follows by showing that $Z \in A^c$ holds with vanishingly small probability.

\stepsection{Bounding the error term}
Let
\begin{align}
M = \sup_{z \in \R^d} \norm{\Sigma^{1/2} s''(z) \Sigma^{1/2}} 
\leq \frac{\lambda \lip(f)}{1 - \lambda} \sqrt{\frac{d}{\delta}} \,,
\label{eq:gauss:lower:1}
\end{align}
where the inequality follows from \cref{prop:opt:H-upper} and the assumption that $\norm{\Sigma} \leq \frac{1}{\delta}$.
Let $W$ have law $\cN(\zeros, \id)$ and note that $\frac{1}{\rho} \Sigma^{-1/2}(Z - \mu)$ also has law $\cN(\zeros, \id)$.
Then, 
\begin{align}
E
&=\frac{1}{2} \E\left[\norm{Z - \mu}^2_{s''(\xi_Z)} \sind_{A^c}(Z)\right] \nonumber \\
&\leq \frac{M}{2} \E\left[\norm{Z - \mu}^2_{\Sigma^{-1}} \sind_{A^c}(Z)\right] \nonumber \\
&= \frac{M \rho^2}{2} \E\left[\norm{W}^2 \sind_{A^c}(Z)\right] \nonumber \\
&\leq \frac{M \rho^2}{2} \sqrt{\E\left[\norm{W}^4_{\Sigma^{-1}}\right] \bbP(Z \notin A)} \nonumber \\
&= \frac{M \rho^2}{2} \sqrt{(d^2+2d) \bbP\left(\norm{W}^2 \geq \frac{1}{\lambda^2 \rho^2 L} \right)} \nonumber \\
&\leq M d \rho^2 \sqrt{\bbP\left(\norm{W}^2 \geq \frac{1}{\lambda^2 \rho^2 L}\right)} \,,
\label{eq:gauss:lower:2}
\end{align}
where we used the definition of $M$, Cauchy--Schwarz and \cref{prop:gaussian}.
By \cref{prop:orlicz}, $\norm{\norm{W}^2}_{\psi_1} \leq 3d$ and by \cref{lem:orlicz-tail} and the assumption that $\lambda \leq \frac{1}{dL^2}$,
\begin{align*}
\bbP\left(\norm{W}^2 \geq \frac{1}{\lambda^2 \rho^2 L}\right)
&\leq 2\exp\left(-\frac{1}{3d \lambda^2 \rho^2 \logs}\right)
\leq 2\exp\left(-\frac{1}{6d \lambda L}\right) 
\leq 2\exp\left(-\frac{L}{6}\right)\,.
\end{align*}
Combining the above with \cref{eq:gauss:lower:2} and \cref{eq:gauss:lower:1} and naively simplifying the constants by ensuring $L$ is large enough
shows that $E \leq \frac{\delta d}{\lambda}$.
\end{proof}

\begin{corollary}\label{cor:gauss:lower}
Suppose that $\lambda \leq \frac{1}{d L^2}$ and $\lambda \norm{x - \mu}_{\Sigma^{-1}} \leq \frac{1}{\sqrt{L}}$. Then
\begin{align*}
\E[f(X)] - f(x) \leq q(\mu) - q(x) + \frac{2}{\lambda} \tr(s''(\mu) \Sigma) + \delta\left[\frac{2d}{\lambda} + \frac{1}{\lambda^2}\right] \,.
\end{align*}
\end{corollary}

\begin{proof}
By \cref{lem:basic}\ref{lem:basic:opt} and \cref{prop:lower} and \cref{prop:ons:q-lower},
\begin{align*}
\E[f(X)] - f(x) 
&\leq s(\mu) - s(x) + \frac{2}{\lambda} \tr(s''(\mu) \Sigma) + \frac{2 \delta d}{\lambda} \\ 
&\leq q(\mu) - q(x) + \frac{2}{\lambda} \tr(s''(\mu) \Sigma) + \delta\left[\frac{2d}{\lambda} + \frac{1}{\lambda^2}\right] \,.
\qedhere
\end{align*}
\end{proof}

\section{Estimation}
The surrogate loss function can be estimated from $X$ and $Y$ using a change of measure.
Precisely,
\begin{align}
s(z) 
&= \int_{\R^d} \left(\left(1 - \frac{1}{\lambda}\right) f(x) + \frac{1}{\lambda} f((1 - \lambda) x + \lambda z)\right) p(x) \d{x} \nonumber \\ 
&= \int_{\R^d} \left(1 - \frac{1}{\lambda} + \frac{p\left(\frac{x - \lambda z}{1-\lambda}\right)}{\lambda (1 - \lambda)^d p(x)}\right) f(x) p(x) \d{x} \nonumber \\
&= \int_{\R^d} \left(1 - \frac{1}{\lambda} + \frac{r(x, z)}{\lambda}\right) f(x) p(x) \d{x}\,,
\label{eq:s:imp}
\end{align}
where $r(x, z)$ is the change of measure defined by
\begin{align}
r(x, z) = \left(\frac{1}{1-\lambda}\right)^d \frac{p\left(\frac{x - \lambda z}{1-\lambda}\right)}{p(x)} \,,
\label{eq:opt:measure}
\end{align}
which satisfies
\begin{align*}
\frac{\partial r(x, z)}{\partial z}  &= \frac{\lambda r(x, z)}{1-\lambda} \Sigma^{-1} \left(\frac{x - \lambda z}{1-\lambda} - \mu\right) \\
\frac{\partial^2 r(x,z)}{\partial z^2}  &= \frac{\lambda^2 r(x, z)}{(1 - \lambda)^2} \left[\Sigma^{-1}\left(\frac{X - \lambda z}{1 - \lambda} - \mu\right)\left(\frac{X - \lambda z}{1-\lambda} - \mu\right)^\top \Sigma^{-1} - \Sigma^{-1}\right] \,.
\end{align*}
Looking at \cref{eq:s:imp} and exchanging derivatives and expectations, we might estimate $s$ and its derivatives by
\begin{align*}
\tilde s(z) &= \left(1 - \frac{1}{\lambda} + \frac{r(X, z)}{\lambda}\right) Y \\
\tilde s'(z) &= \frac{r(X, z) Y}{1-\lambda} \Sigma^{-1} \left(\frac{X - \lambda z}{1-\lambda} - \mu\right) \\
\tilde s''(z) &= \frac{\lambda r(X, z) Y}{(1-\lambda)^2} \left[\Sigma^{-1}\left(\frac{X - \lambda z}{1-\lambda} - \mu\right)\left(\frac{X - \lambda z}{1 - \lambda} - \mu\right)^\top \Sigma^{-1} - \Sigma^{-1}\right]\,.
\end{align*}
And indeed, these are unbiased estimators of $s(z)$, $s'(z)$ and $s''(z)$, respectively.

\begin{exer}\label{ex:gauss:unbiased}
\faStar \quad
Show that $\E[\tilde s(z)] = s(z)$, $\E[\tilde s'(z)] = s'(z)$ and $\E[\tilde s''(z)] = s''(z)$ for all $z \in \R^d$.
\end{exer}

The quantity $r(X, z)$, however, is not especially well behaved and for this reason we let
$\bar r(x, z) = \min(\exp(2), r(x, z))$ and define estimators
\begin{align*}
\hat s(z) &= \left(1 - \frac{1}{\lambda} + \frac{\bar r(X, z)}{\lambda}\right) Y \\
\hat s'(z) &= \frac{\bar r(X, z) Y}{1-\lambda} \Sigma^{-1} \left(\frac{X - \lambda z}{1-\lambda} - \mu\right) \\
\hat s''(z) &= \frac{\lambda \bar r(X, z) Y}{(1-\lambda)^2} \left[\Sigma^{-1}\left(\frac{X - \lambda z}{1-\lambda} - \mu\right)\left(\frac{X - \lambda z}{1 - \lambda} - \mu\right)^\top \Sigma^{-1} - \Sigma^{-1}\right]\,.
\end{align*}

\begin{remark}
Our notation for these estimators is a little clumsy because $\hat s'(z)$ and $\hat s''(z)$ are not the derivatives of $\hat s(z)$.
\end{remark}

Note that while $s$ is convex, in general neither $x \mapsto \hat s(x)$ nor $x \mapsto \tilde s(x)$ are (see \cref{fig:s-est} in \cref{sec:conc-seq}).
Mostly we are interested in estimating gradients and Hessians of the surrogate at $\mu$, which satisfy
\begin{align*}
\hat s'(\mu) &= \frac{r(X, \mu) Y \Sigma^{-1}(X - \mu)}{(1 - \lambda)^2}  \,.\\
\hat s''(\mu) &= \frac{\lambda r(X, \mu) Y}{(1 - \lambda)^2}\left[\frac{\Sigma^{-1}(X - \mu)(X - \mu)^\top \Sigma^{-1}}{(1 - \lambda)^2} - \Sigma^{-1}\right] \,.
\end{align*}
Note that $r(x, \mu) = \bar r(x, \mu)$ for all $x$ thanks to \cref{lem:measure} in the next section.

\begin{proposition}\label{prop:opt:bias}
Provided that $\lambda \snorm{z - \mu}_{\Sigma^{-1}} \leq \sqrt{\frac{1}{L}}$,
the following hold:
\begin{enumerate}
\item $|\E[\hat s(z)] - s(z)| \leq \delta$; \label{prop:opt:bias:0}
\item $\norm{\E[\hat s'(z)] - s'(z)} \leq \delta$; \label{prop:opt:bias:1}
\item $\norm{\E[\hat s''(z)] - s''(z)} \leq \delta$. \label{prop:opt:bias:2}
\end{enumerate}
Moreover, $\E[\hat s(\mu)] = s(\mu)$, $\E[\hat s'(\mu)] = s'(\mu)$ and $\E[\hat s''(\mu)] = s''(\mu)$.
\end{proposition}

\begin{proof}
Let $E$ be the event that $r(X, z) > \exp(2)$. 
Then
\begin{align*}
|r(X, z) - \bar r(X, z)|
&\leq \sind_E r(X, z) \,.
\end{align*}
By \cref{lem:measure},
\begin{align*}
r(X, z) 
&\leq \exp\left(1 + \frac{\lambda}{(1-\lambda)^2} \ip{X - \mu, z - \mu}_{\Sigma^{-1}}\right) \,.
\end{align*}
Therefore, using the definition of $E$ and the fact that $\lambda \ip{X - \mu, z - \mu}_{\Sigma^{-1}}$ has law $\cN(\zeros, \lambda^2 \snorm{z - \mu}^2_{\Sigma^{-1}})$,
\begin{align*}
\bbP(E) 
&= \bbP(r(X, z) > \exp(2)) \\
&\leq \bbP\left(\lambda \ip{X - \mu, z - \mu}_{\Sigma^{-1}} > (1 - \lambda)^2\right) \\
\tag*{by \cref{thm:conc:gaussian}}
&\leq \exp\left(-\frac{(1 - \lambda)^4}{2 \lambda^2 \snorm{z - \mu}^2_{\Sigma^{-1}}}\right) \\
&\leq \exp\left(-\frac{(1 - \lambda)^4 L}{2}\right) \,.
\end{align*}
You showed in \cref{ex:gauss:unbiased} that $\tilde s(z)$ is an unbiased estimator of $s(z)$ and therefore
\begin{align*}
|\E[\hat s(z)] - s(z)|
&= |\E[\hat s(z) - \tilde s(z)]| \\
&= \left|\E\left[\frac{Y(\bar r(X, z) - r(X, z))}{\lambda}\right]\right| \\
&\leq \E\left[\frac{\sind_E |Y| r(X, z)}{\lambda}\right] \\
&\leq \frac{1}{\lambda} \E[Y^4]^{\frac{1}{4}} \E[r(X, z)^4]^{\frac{1}{4}} \sqrt{\bbP(E)} \\
&\leq \delta\,,
\end{align*}
where we used \cref{lem:opt:r-moment} and \cref{lem:opt:f-moment} below.
The proofs of parts~\ref{prop:opt:bias:1} and \ref{prop:opt:bias:2} follow the same argument.
That the estimators are unbiased when $z = \mu$ follows from the observation that $\bar r(x, \mu) = r(x, \mu)$ for all $x$
and \cref{ex:gauss:unbiased}.
\end{proof}

\begin{exer}
\faStar \quad
Prove \cref{prop:opt:bias}\ref{prop:opt:bias:1} and \ref{prop:opt:bias:2}.
\end{exer}

\section[Concentration]{Concentration (\skippy)}\index{concentration!of surrogate|(}
In this section we explore the tail behaviour of the estimators in the previous section.
Almost all of the results here are only used as technical lemmas in the previous and next sections.
Recall that $r$ is the change of measure function defined by
\begin{align*}
r(x, z) = \frac{p\left(\frac{x - \lambda z}{1 - \lambda}\right)}{(1 - \lambda)^d p(x)}
\quad \text{and}\quad
\bar r(x, z) = \min(\exp(2), r(x, z)) \,, 
\end{align*}
where $p$ is the density of the $\cN(\mu, \Sigma)$.
The next few lemmas bound the magnitude, gradients and moments of $r$.

\begin{lemma}\label{lem:measure}
For all $x, z \in \R^d$,
\begin{align*}
r(x, z) 
&\leq \exp\left(1 + \frac{\lambda}{(1-\lambda)^2} \ip{x - \mu, z - \mu}_{\Sigma^{-1}}\right) \,. 
\end{align*}
\end{lemma}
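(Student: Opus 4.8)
The plan is to unwind the definition of $r(x,z)$ directly. Writing $p$ for the $\cN(\mu,\Sigma)$ density, the normalising constants $(2\pi)^{-d/2}\sqrt{\det\Sigma^{-1}}$ cancel in the ratio $p\big(\tfrac{x-\lambda z}{1-\lambda}\big)/p(x)$, leaving
\begin{align*}
r(x,z) = \left(\frac{1}{1-\lambda}\right)^d \exp\left(\frac12 \norm{x - \mu}^2_{\Sigma^{-1}} - \frac12 \norm{\tfrac{x - \lambda z}{1-\lambda} - \mu}^2_{\Sigma^{-1}}\right)\,.
\end{align*}
Setting $a = x - \mu$ and $b = z - \mu$, note that $\tfrac{x-\lambda z}{1-\lambda} - \mu = \tfrac{a - \lambda b}{1-\lambda}$, so the exponent equals
\begin{align*}
\frac12 \norm{a}^2_{\Sigma^{-1}} - \frac{1}{2(1-\lambda)^2}\left(\norm{a}^2_{\Sigma^{-1}} - 2\lambda \ip{a,b}_{\Sigma^{-1}} + \lambda^2 \norm{b}^2_{\Sigma^{-1}}\right)\,.
\end{align*}

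Next I would collect terms. The coefficient of $\norm{a}^2_{\Sigma^{-1}}$ is $\tfrac12\big(1 - (1-\lambda)^{-2}\big) = -\tfrac{\lambda(2-\lambda)}{2(1-\lambda)^2} \le 0$, and the coefficient of $\norm{b}^2_{\Sigma^{-1}}$ is $-\tfrac{\lambda^2}{2(1-\lambda)^2} \le 0$; discarding both nonpositive contributions bounds the exponent by $\tfrac{\lambda}{(1-\lambda)^2}\ip{a,b}_{\Sigma^{-1}}$. Hence
\begin{align*}
r(x,z) \le \left(\frac{1}{1-\lambda}\right)^d \exp\left(\frac{\lambda}{(1-\lambda)^2}\ip{x-\mu, z-\mu}_{\Sigma^{-1}}\right)\,.
\end{align*}

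It then remains to absorb the prefactor into the constant $3$. Since $\lambda \in (0,\tfrac{1}{d+1})$ we have $-\log(1-\lambda) < -\log\tfrac{d}{d+1} = \log\!\big(1+\tfrac1d\big)$, so $d\big(-\log(1-\lambda)\big) < d\log\!\big(1+\tfrac1d\big) < 1$ using $\log(1+t) < t$; therefore $(1-\lambda)^{-d} < e < 3$, which gives the claim. The computation is entirely routine — the only points requiring care are the sign of the $\norm{a}^2_{\Sigma^{-1}}$ coefficient (so that it may be dropped) and the elementary estimate on the $(1-\lambda)^{-d}$ factor; I do not anticipate any genuine obstacle.
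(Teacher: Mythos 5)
Your proof is correct and follows essentially the same route as the paper: expand the Gaussian density ratio, observe that the squared-norm terms come with nonpositive coefficients and may be discarded, and absorb the prefactor $(1-\lambda)^{-d} < e < 3$ using $\lambda < \tfrac{1}{d+1}$. The paper merely writes the calculation more tersely (assuming $\mu = \zeros$ without loss of generality and skipping the explicit coefficient check); your version fills in those details but makes no new argument.
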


\begin{proof}
Let us assume without loss of generality that $\mu = \zeros$. By definition,
\begin{align*}
r(x, z) 
&= \frac{p\left(\frac{x - \lambda z}{1 - \lambda}\right)}{(1 - \lambda)^d p(x)} \\
&= \frac{1}{(1 - \lambda)^d}\exp\left(-\frac{1}{2} \norm{\frac{x - \lambda z}{1-\lambda}}^2_{\Sigma^{-1}} + \frac{1}{2} \norm{x}^2_{\Sigma^{-1}}\right) \\ 
&\leq \frac{1}{(1 - \lambda)^d} \exp\left(\frac{\lambda \ip{x, z}_{\Sigma^{-1}}}{(1 - \lambda)^2}\right) \\ 
&\leq \exp\left(1 + \frac{\lambda \ip{x, z}_{\Sigma^{-1}}}{(1 - \lambda)^2}\right) \,, 
\end{align*}
where in the final inequality we used \cref{ass:gauss} that $\lambda \leq \frac{1}{d+1}$ so that
$(1 - \lambda)^{-d} \leq (1+\frac{1}{d})^d \leq \exp(1)$.
\end{proof}

The next lemma loosely bounds the Lipschitz constant of $z \mapsto \bar r(x, z)$.

\begin{lemma}\label{lem:opt:measure-lip}
Suppose that $A = \{z \colon \lambda \norm{z - \mu}_{\Sigma^{-1}} \leq 1\}$ and $x \in \R^d$. Then 
\begin{align*}
\lip_A(\bar r(x, \cdot)) \leq \frac{1}{\delta} \,. 
\end{align*}
\end{lemma}

\begin{exer}
\faStar \quad
Prove \cref{lem:opt:measure-lip}.
\end{exer}

\solution{
Abbreviate $\bar r(z) = \bar r(x, z)$ and $r(z) = \bar r(x, z)$.
By the definition of $\bar r$ it suffices to bound $\norm{r'(z)}$ for $z$ with $r(z) < \exp(2)$.
Suppose that $r(z) < \exp(2)$ and consider two cases.

\begin{enumerate}
\item[\texttt{\uline{Case 1}:}] $\norm{x}_{\Sigma^{-1}} \leq \frac{4}{\lambda}$.
Then we have
\begin{align*}
\norm{r'(z)}
&= \frac{\lambda r(z)}{(1 - \lambda)^2} \norm{\Sigma^{-1}(x - \lambda z)} \\
&\leq \frac{\lambda \exp(2)}{(1 - \lambda)^2} \snorm{\Sigma^{-1/2}} \norm{x - \lambda z}_{\Sigma^{-1}} \\
&\leq \frac{\lambda \exp(2)}{(1 - \lambda)^2} \snorm{\Sigma^{-1/2}} \left[\frac{4}{\lambda} + 1\right] \\
&\leq \frac{1}{\delta} \,.
\end{align*}
\item[\texttt{\uline{Case 2}:}] $\norm{x}_{\Sigma^{-1}} > \frac{4}{\lambda}$.
Then we have
\begin{align*}
\norm{r'(z)}
&= \frac{\lambda r(z)}{\sqrt{\delta}(1 - \lambda)^2} \norm{x - \lambda z}_{\Sigma^{-1}} 
\leq \frac{2\lambda r(z)}{\sqrt{\delta}(1 - \lambda)^2} \norm{x}_{\Sigma^{-1}} \,.
\end{align*}
Moreover,
\begin{align*}
r(z) 
&\leq \exp\left(1 - \lambda \norm{x}^2_{\Sigma^{-1}} + 2 \norm{x}_{\Sigma^{-1}}\right)
\leq \exp\left(1 - \frac{\lambda}{2} \norm{x}^2_{\Sigma^{-1}}\right)\,.
\end{align*}
Therefore 
\begin{align*}
\norm{r'(z)} 
&\leq \frac{2 \lambda}{\sqrt{\delta}(1 - \lambda)^2} \norm{x}_{\Sigma^{-1}} \exp\left(1 - \frac{\lambda}{2} \norm{x}^2_{\Sigma^{-1}}\right) 
\leq \frac{1}{\delta}\,.
\end{align*}
\end{enumerate}
}

Later we need some conservative upper bounds on the moments of the various estimators.
The easiest way to obtain these is to bound the moments of the constituent parts and combine
them using H\"older's inequality.
Remember in what follows that $X$ has law $\cN(\mu, \Sigma)$.

\begin{lemma}\label{lem:opt:r-moment}
For any $k \geq 1$,
$\E[r(X, x)^k] \leq C_k \exp\left(C_k \lambda^2 \norm{x - \mu}^2_{\Sigma^{-1}}\right)$.
\end{lemma}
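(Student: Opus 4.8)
The plan is to combine the pointwise bound on the change of measure from Lemma~\ref{lem:measure} with the moment generating function of a Gaussian; the argument is short and essentially computational. First I would apply Lemma~\ref{lem:measure} with the random vector $X$ occupying the first slot, which gives
\[
r(X, x) \leq 3 \exp\left(\frac{\lambda}{(1-\lambda)^2} \ip{X - \mu, x - \mu}_{\Sigma^{-1}}\right),
\]
and hence, raising to the $k$-th power, $r(X,x)^k \leq 3^k \exp\left(\frac{k\lambda}{(1-\lambda)^2} \ip{X - \mu, x - \mu}_{\Sigma^{-1}}\right)$.

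Next I would observe that $Z \triangleq \ip{X - \mu, x - \mu}_{\Sigma^{-1}} = (X-\mu)^\top \Sigma^{-1}(x - \mu)$ is a linear functional of the Gaussian vector $X - \mu \sim \cN(\zeros, \Sigma)$, so $Z$ is a centered Gaussian with variance $(x-\mu)^\top \Sigma^{-1} \Sigma \Sigma^{-1}(x-\mu) = \norm{x-\mu}^2_{\Sigma^{-1}}$. Taking expectations and using $\E[e^{tZ}] = \exp\left(\tfrac{t^2}{2}\norm{x-\mu}^2_{\Sigma^{-1}}\right)$ with $t = k\lambda/(1-\lambda)^2$ yields
\[
\E[r(X,x)^k] \leq 3^k \exp\left(\frac{k^2 \lambda^2}{2(1-\lambda)^4} \norm{x - \mu}^2_{\Sigma^{-1}}\right).
\]

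Finally I would invoke the standing assumption $\lambda \in (0, \tfrac{1}{d+1})$, which (since $d \geq 1$) forces $1-\lambda > \tfrac12$ and hence $(1-\lambda)^{-4} < 16$, so the exponent is bounded by $8 k^2 \lambda^2 \norm{x-\mu}^2_{\Sigma^{-1}}$. Setting $C_k = \max(3^k,\, 8k^2)$ then gives the stated bound. I do not anticipate any real obstacle: the only points requiring care are keeping track of which argument of $r(\cdot,\cdot)$ carries the randomness and absorbing the harmless powers of $(1-\lambda)^{-1}$ into the $k$-dependent constant.
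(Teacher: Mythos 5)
Your proof is correct and is essentially the paper's argument: apply Lemma~\ref{lem:measure}, use the Gaussian moment generating function (Proposition~\ref{prop:gaussian}), and absorb the resulting $k$- and $\lambda$-dependent prefactors into $C_k$. The only cosmetic difference is that the paper carries around a spurious extra $(1/(1-\lambda))^{dk}$ factor in the first step (which Lemma~\ref{lem:measure} has already absorbed into the $3$), while you correctly drop it; this has no effect on the conclusion.
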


\begin{proof}
By \cref{lem:measure} and \cref{prop:gaussian},
\begin{align*}
\E[r(X, x)^k]
&\leq \exp(k)  \E\left[\exp\left(\frac{\lambda k}{(1 - \lambda)^2} \ip{X - \mu, x - \mu}_{\Sigma^{-1}}\right)\right] \\
&= \exp(k) \exp\left(\frac{\lambda^2 k^2}{2(1 - \lambda)^4} \norm{x - \mu}^2_{\Sigma^{-1}}\right) \\
&\leq C_k \exp\left(C_k \lambda^2 \norm{x - \mu}^2_{\Sigma^{-1}}\right)\,.
\qedhere
\end{align*}
\end{proof}

\begin{lemma}\label{lem:opt:f-moment}
Suppose that $\mu \in K$. Then, for any $k \geq 1$,
\begin{align*}
\E[|f(X)|^k] \leq C_k\delta^{-2k} \qquad \text{and} \qquad
\E[|Y|^k] \leq C_k \delta^{-2k}\,.
\end{align*}
\end{lemma}

\begin{proof}
Since $\mu \in K$, by \cref{ass:gauss}, $|f(\mu)| \leq 1/\delta$, so
\begin{align*}
\E[|f(X)|^k]
&= \E\left[|f(X) - f(\mu) + f(\mu)|^k\right] \\
&\leq 2^{k-1} \delta^{-k} + 2^{k-1} \lip(f)^k \E\left[\norm{X - \mu}^k\right] \\
&\leq 2^{k-1} \delta^{-k} + 2^{k-1} \lip(f)^k \norm{\Sigma}^{\frac{k}{2}} \E\left[\norm{X - \mu}^k_{\Sigma^{-1}}\right] \\
&\leq C_k \delta^{-2k} \,,
\end{align*}
where we used the fact that $(a+b)^k \leq 2^{k-1} [a^k + b^k]$, \cref{prop:orlicz,lem:orlicz-moment} to bound the moments of $\norm{X - \mu}_{\Sigma^{-1}}$ and \cref{ass:gauss}.
And of course we made sure to choose $C_k$ as a suitably large $k$-dependent constant.
The second part follows from the first using the fact that $Y = f(X) + \eps$ where $\eps$ is conditionally subgaussian (\cref{ass:gauss}).
\end{proof}

\section{Sequential Concentration}\label{sec:conc-seq}
We now focus on the sequential aspects of concentration. Let $f_1,\ldots,f_n \colon \R^d \to \R$ be a sequence of convex functions.
Assume that $X_1,Y_1,\ldots,X_n,Y_n$ is the sequence of actions and losses generated by an algorithm interacting with a convex bandit,
which is adapted as usual to the filtration
$(\sF_t)_{t=1}^n$. Let $\tau$ be a stopping time with respect to the filtration $(\sF_t)$.\index{filtration}\index{stopping time}
As usual, we let $\bbP_t = \bbP(\cdot | \sF_t)$ and $\E_t$ be the corresponding expectation operator.
We let $\norm{\cdot}_{t,\psi_k}$ be the $k$th Orlicz norm with respect to $\bbP_t$.
The next assumption generalises \cref{ass:gauss} to the sequential setting:

\begin{assumption}\label{ass:opt:conc}
The following hold almost surely for all $1 \leq t \leq \tau$:
\begin{enumerate}
\item \textit{Convexity:} $K$ is a convex body and $f_t$ is convex.
\item \textit{Gaussian iterates:} $X_t$ has law $\cN(\mu_t, \Sigma_t)$ under $\bbP_{t-1}$ and $\mu_t \in K$.
\item \textit{Subgaussian responses:} $Y_t = f_t(X_t) + \eps_t$ where 
\begin{align*}
\E_{t-1}[\eps_t|X_t] = 0 \quad \text{ and } \quad \E_{t-1}[\exp(\eps_t^2)|X_t] \leq 2\,.
\end{align*}
\item \textit{Boundedness:} $\lambda \leq \frac{1}{d+1}$ and 
\begin{align*}
\max\left(n, d, \lip(f_t), \sup_{x \in K} |f_t(x)|, 1/\lambda, \snorm{\Sigma_t}, \snorm{\Sigma_t^{-1}}\right) \leq \frac{1}{\delta} \,.
\end{align*}
\end{enumerate}
\end{assumption}

As before, we let $L$ be a logarithmic factor:
\begin{align*}
L = C \log (1/\delta)\,,
\end{align*}
where $C > 0$ is a universal constant.
The surrogate function and its quadratic approximation now change from round to round and are given by
\begin{align*}
s_t(z) &= \E_{t-1}\left[\left(1 - \frac{1}{\lambda}\right) f_t(X_t) + \frac{1}{\lambda} f_t((1 - \lambda) X_t + \lambda z)\right] \qquad \text{and} \\
q_t(z) &= \ip{s'_t(\mu_t), z - \mu_t} + \frac{1}{4} \norm{z - \mu_t}^2_{s''_t(\mu_t)} \,.
\end{align*}
Note that even when $f_t = f$ is unchanging, the surrogate depends on $\mu_t$ and $\Sigma_t$ and may still change from round to round.
Let $p_t$ be the density of $\cN(\mu_t, \Sigma_t)$ and
\begin{align*}
\bar r_t(x, z) = \min\left(\exp(2),\, \frac{p_t\left(\frac{x - \lambda z}{1 - \lambda}\right)}{(1 - \lambda)^d p_t(x)}\right) \,.
\end{align*}
We abbreviate $\bar r_t(z) = \bar r_t(X_t, z)$.
The estimators of $s_t$ and its derivatives are
\begin{align*}
\hat s_t(z) &= \left(1 - \frac{1}{\lambda} + \frac{\bar r_t(z)}{\lambda}\right) Y_t \\
\hat s_t'(z) &= \frac{\bar r_t(z) Y_t}{1-\lambda} \Sigma_t^{-1} \left(\frac{X_t - \lambda z}{1-\lambda} - \mu_t\right) \\
\hat s_t''(z) &= \frac{\lambda \bar r_t(z) Y_t}{(1-\lambda)^2} \left[\Sigma_t^{-1}\left(\frac{X_t - \lambda z}{1-\lambda} - \mu_t\right)\left(\frac{X_t - \lambda z}{1 - \lambda} - \mu_t\right)^\top \Sigma_t^{-1} - \Sigma_t^{-1}\right]\,.
\end{align*}
Throughout we let $g_t = \hat s'_t(\mu_t)$, $H_t = \frac{1}{2} \hat s''_t(\mu_t)$, $\bar g_t = s_t'(\mu_t)$ and $\bar H_t = \frac{1}{2} s_t''(\mu_t)$,
which means an estimator of the quadratic surrogate is
\begin{align*}
\hat q_t(x) = \ip{g_t, x - \mu_t} + \frac{1}{2} (x - \mu_t)^\top H_t (x - \mu_t) 
\end{align*}
and the actual quadratic surrogate is
\begin{align*}
q_t(x) = \ip{\bar g_t, x - \mu_t} + \frac{1}{2} (x - \mu_t)^\top \bar H_t (x - \mu_t) \,.
\end{align*}

\begin{figure}
\includegraphics[width=0.9\textwidth]{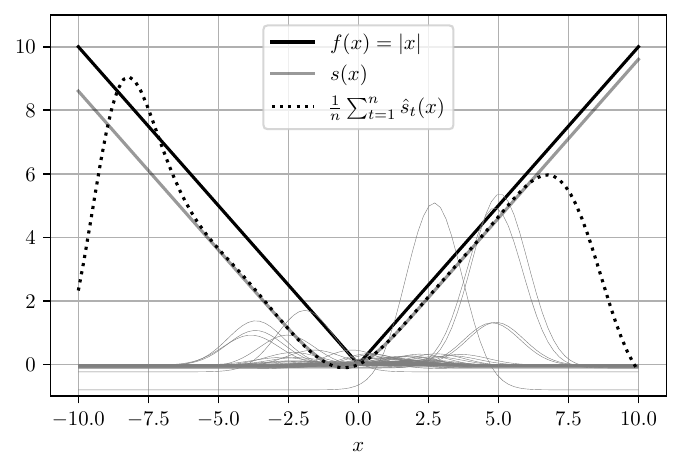}
\caption{The concentration of $\sum_{t=1}^n \hat s_t(x) / n$ with $f_t = f = |\cdot|$, $n = 10^5$, $\mu = 1/2$, $\Sigma = 1$ and $\lambda = 1/2$. The thin lines correspond
to the first one hundred estimated surrogates.
The estimate is very close to the real surrogate on an interval around $\mu$ but can be extremely poorly behaved far away, even with $n$ so large.
Note also that the estimated surrogate is convex near $\mu$ but not everywhere.
}\label{fig:s-est}
\commentAlt{A plot showing the absolute loss along with the surrogate and the estimated surrogate, which is a near-perfect match to the surrogate close to the mean and completely
wrong far away. Small grey lines show the individual loss estimates that are neither convex nor concave and look like shifted Gaussians.}
\end{figure}

\subsubsection*{Objectives and Plan}
The questions in this section concern concentration of quantities like $\sum_{t=1}^\tau (\hat s_t - s_t)$.
This is an entire function, so we need to be precise about what is meant by concentration.
Typical results show that functions like this are small at a specific $x$ or for all $x$ in some set.
The magnitude of the errors generally depends on some kind of cumulative predictable variation and our bounds reflect that.
The change of measure $\bar r_t(x)$ that appears in the definition of the estimators is well-behaved when $x$ is close enough to $\mu$.
Because of this most of the concentration bounds that follow only hold on a subset of $\R^d$.
An illustrative experiment is given in \cref{fig:s-est}.
Given $r > 0$, let 
\begin{align*}
K_\tau(r) = \left\{x \in \R^d \colon \max_{1 \leq t \leq \tau} \lambda \norm{x - \mu_t}_{\Sigma_t^{-1}} \leq r\right\}\,,
\end{align*}
which is an intersection of ellipsoids and hence convex. The set $K_\tau(r)$ is often referred to as the focus region.\index{focus region}
The general flavour of the results is as follows:
\begin{itemize}
\item Given a deterministic $x$, $\sum_{t=1}^\tau (\hat s_t(x) - s_t(x))$ is well-concentrated about zero provided that $x \in K_\tau(1/\sqrt{2L})$ almost surely.
\item The function $\sum_{t=1}^\tau (\hat s_t(x)-s_t(x))$ is well-concentrated about zero for all $x \in K_\tau(1/\sqrt{2L})$, with a slightly
wider confidence interval\index{confidence interval} than the case above.
\end{itemize}
The predictable variation of the estimators is mostly caused by the variance in the losses.
Let $V_\tau = \sum_{t=1}^\tau \E_{t-1}[Y_t^2]$ and $\Ymax = \max_{1 \leq t \leq \tau} |Y_t| + \E_{t-1}[|Y_t|]$.
Generally speaking, in applications the losses $(Y_t)$ will be bounded in $O(1)$ with high probability and in this case $V_\tau = O(n)$.
Our concentration bounds will be established using a martingale version\index{martingale} of Bernstein's inequality (\cref{thm:freedman}),\index{Freedman's inequality}\index{Bernstein's inequality} 
which is a variance-aware concentration
inequality.

\subsubsection*{Concentration Bounds}
We start with a naive bound on $V_\tau$ and $\Ymax$, which is only used to bound these quantities when they appear in logarithmic terms.

\begin{proposition}\label{prop:gauss:conc:naive}
With probability at least $1 - \delta / 2$,
\begin{align*}
\max(\Ymax, V_\tau) \leq \poly(1/\delta)\,.
\end{align*}
\end{proposition}

\begin{proof}
Combine Markov's inequality, a union bound and \cref{lem:opt:f-moment}.
\end{proof}

The first significant result is a
Bernstein-like\index{Bernstein's inequality} concentration bound for the sum of the surrogate loss estimators:

\begin{proposition}\label{prop:conc-s}
Under \cref{ass:opt:conc},
the following hold:
\begin{enumerate}
\item Let $x \in \R^d$ be a non-random vector such that $x \in K_\tau(1/\sqrt{L})$ almost surely. 
Then, with probability at least $1 - \delta$, \label{prop:conc-s:single}
\begin{align*}
\left|\sum_{t=1}^\tau (\hat s_t(x) - s_t(x))\right| \leq 1 + \frac{1}{\lambda} \left[\sqrt{L V_\tau} + L \Ymax\right] \,.
\end{align*}
\item 
With probability at least $1 - \delta$, \label{prop:conc-s:uniform}
\begin{align*}
\max_{x \in K_\tau(1/\sqrt{2L})} \left|\sum_{t=1}^\tau (\hat s_t(x) - s_t(x))\right| \leq 2 + \frac{1}{\lambda} \sqrt{d L V_\tau} + \frac{d L \Ymax}{\lambda} \,.
\end{align*}
\end{enumerate}
\end{proposition}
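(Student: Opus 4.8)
The plan is to establish both parts via Freedman's martingale concentration inequality applied to a truncated version of the summands, following the general strategy outlined in the text. First I would set up the martingale difference sequence $D_t = \hat s_t(x) - s_t(x)$ for a fixed (deterministic) $x$; by the unbiasedness result $\E[\hat s(z)] = s(z)$ (applied round-by-round with the conditional law $\cN(\mu_t, \Sigma_t)$) we have $\E_{t-1}[D_t] = 0$, so $(D_t)$ is a martingale difference sequence adapted to $(\sF_t)$. The difficulty is that $D_t$ is unbounded (the change-of-measure factor $R_t(x)$ has Gaussian-type tails and $Y_t$ is only subgaussian), so a direct application of Freedman is impossible. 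The standard fix is a truncation argument: define $\tilde D_t = D_t \sind(\mathcal{E}_t)$ where $\mathcal{E}_t$ is a high-probability ``good'' event on which $R_t(x) \leq 3$ (via Lemma~\ref{lem:measure}, using $x \in K_\tau(1/\sqrt{2L})$ so that $\frac{\lambda}{(1-\lambda)^2}\ip{X_t - \mu_t, x - \mu_t}_{\Sigma_t^{-1}}$ is controlled with overwhelming probability since $\norm{X_t-\mu_t}_{\Sigma_t^{-1}}$ concentrates around $\sqrt d$), $|Y_t| \leq \Ymax$ (Assumption~\ref{ass:opt:conc}(c)), and $\norm{X_t - \mu_t}_{\Sigma_t^{-1}} \leq C\sqrt{dL}$ (Lemmas~\ref{lem:orlicz}, \ref{lem:orlicz-tail}). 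On $\mathcal{E}_t$ one gets $|\tilde D_t| \leq \frac{C}{\lambda} \Ymax$ and $\E_{t-1}[\tilde D_t^2] \leq \frac{C}{\lambda^2} \E_{t-1}[Y_t^2]$, the latter because $\hat s_t(x) = (1 - \tfrac{1}{\lambda} + \tfrac{R_t(x)}{\lambda})Y_t$ and $R_t(x)$ is bounded on the event. The probability that some $\mathcal{E}_t$ fails for $t \leq \tau$ is at most $\delta$ by a union bound combined with the optional-stopping/maximal structure (this is where the stopping time $\tau$ enters — we only need the bounds for $t \leq \tau$).

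Next I would apply Freedman's inequality to $\sum_{t=1}^\tau \tilde D_t$ with predictable quadratic variation bounded by $\frac{C}{\lambda^2} V_\tau$ and increments bounded by $\frac{C}{\lambda}\Ymax$, yielding
\begin{align*}
\bbP\left(\left|\sum_{t=1}^\tau \tilde D_t\right| \geq \frac{C}{\lambda}\left(\sqrt{L V_\tau} + L \Ymax\right)\right) \leq \delta\,.
\end{align*}
Since $\tilde D_t = D_t$ on the good event, which holds with probability at least $1 - \delta$, absorbing constants and the ``$+1$'' slack gives part~\ref{prop:conc-s:single}. (One minor subtlety is that Freedman's inequality needs a stopping time or a fixed horizon; here $\tau \leq n$ is a bounded stopping time so the standard martingale version applies, and the $\sqrt{L}$ factor comes from choosing the confidence level $\delta' = \delta/\polylog$ inside the logarithmic slack absorbed into $L$.)

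For part~\ref{prop:conc-s:uniform}, the extra factor of $\sqrt{d}$ (and the tightened radius $1/\sqrt{2dL}$) comes from a covering/chaining argument over $K_\tau(1/\sqrt{2dL})$. The plan is: (i) note $K_\tau(1/\sqrt{2dL})$ is an intersection of ellipsoids, hence convex and compact, with ``radius'' of order $\frac{1}{\lambda\sqrt{dL}}$ in each local norm $\norm{\cdot}_{\Sigma_t^{-1}}$; (ii) use Lemma~\ref{lem:opt:measure-lip}, which says $z \mapsto R_t(z)$ has gradient bounded by $C\lambda\sqrt{\norm{\Sigma_t^{-1}}dL}$ on exactly this region, so $\hat s_t(\cdot)$ is Lipschitz there with a controlled constant; (iii) build a finite $\epsilon$-net $\mathcal{C}$ of $K_\tau(1/\sqrt{2dL})$ with $\log|\mathcal{C}| = O(dL)$ by Proposition~\ref{prop:cover} (taking $\epsilon$ polynomially small in $n$, so the discretization error is negligible), apply part~\ref{prop:conc-s:single} at each net point with $\delta$ replaced by $\delta/|\mathcal{C}|$ via a union bound, and finally interpolate using the Lipschitz bound. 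Note that to apply part~\ref{prop:conc-s:single} at a net point we need that point to lie in $K_\tau(1/\sqrt{2L})$, which holds because $K_\tau(1/\sqrt{2dL}) \subset K_\tau(1/\sqrt{2L})$; the shrinkage from radius $1/\sqrt{2L}$ to $1/\sqrt{2dL}$ is precisely what makes the union bound over $\exp(O(dL))$ points affordable, since replacing $\delta$ by $\delta \exp(-O(dL))$ only inflates the $\sqrt{L}$ in the single-point bound to $\sqrt{dL}$.

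The main obstacle I expect is the truncation bookkeeping: one must verify carefully that the truncated sequence $(\tilde D_t)$ is still a martingale difference sequence (it is not automatically — $\E_{t-1}[D_t \sind(\mathcal{E}_t)] \neq 0$ in general), so one actually truncates at a deterministic threshold on $R_t$, $|Y_t|$ and $\norm{X_t-\mu_t}_{\Sigma_t^{-1}}$ and then re-centers: $\tilde D_t = D_t \sind(\mathcal{E}_t) - \E_{t-1}[D_t \sind(\mathcal{E}_t)]$, controlling the re-centering bias $\sum_t |\E_{t-1}[D_t \sind(\mathcal{E}_t^c)]|$ using the moment bounds of Lemma~\ref{lem:moment}\ref{lem:moment:s} together with the tiny failure probability $\exp(-L)$ of $\mathcal{E}_t^c$ (Cauchy--Schwarz turns a moment bound times an $\exp(-L)$ probability into something negligible, absorbed into the ``$+1$''). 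This is routine but fiddly; everything else is a mechanical application of the cited lemmas.
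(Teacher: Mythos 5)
Your proof of part \textit{(a)} matches the paper's almost line for line: truncate on a good event $E_t$ controlling both $R_t(x)$ (via Lemma~\ref{lem:measure} and the assumption $\lambda\norm{x-\mu_t}_{\Sigma_t^{-1}}\leq 1/\sqrt{2L}$) and $|Y_t|$, apply Freedman to the re-centered truncated sequence, and absorb the re-centering bias $|s_t(x)-\E_{t-1}[\sind_{E_t}\hat s_t(x)]|$ into the ``$+1$'' via Cauchy--Schwarz and the moment bound of Lemma~\ref{lem:moment}\ref{lem:moment:s}. You correctly flag the re-centering subtlety, and the paper handles it exactly as you describe. (One cosmetic point: on the good event $R_t(x)\leq 3\exp(4)$, not $3$; this constant is absorbed into $L$.)

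For part \textit{(b)}, however, there is a genuine gap in the covering step. You propose to ``build a finite $\epsilon$-net $\mathcal{C}$ of $K_\tau(1/\sqrt{2dL})$'' and then union-bound part \textit{(a)} over the net. But $K_\tau(1/\sqrt{2dL})$ is a \emph{random} set (it depends on $\tau$, $\mu_t$ and $\Sigma_t$), so a net of it cannot be fixed in advance and you cannot union-bound over it. Furthermore, even if you instead take a deterministic $\epsilon$-net of all of $K$ (which is what the paper does), an arbitrary net point $x\in\mathcal{C}$ will generally \emph{not} satisfy the hypothesis of part \textit{(a)} that $x\in K_\tau(1/\sqrt{2L})$ almost surely, so you cannot simply apply part \textit{(a)} at each net point. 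The paper's fix is to introduce, for each net point $x\in\mathcal{C}$, a point-dependent stopping time
\begin{align*}
\tau_x = \min\{t \leq \tau : d(x, K_t) > \epsilon\}\,,
\end{align*}
so that the Freedman/truncation argument is run only up to the (random) time at which $x$ leaves an $\epsilon$-neighbourhood of the focus region $K_t$. The good event $E_t$ must also be modified to control $\norm{X_t-\mu_t}_{\Sigma_t^{-1}}\leq\sqrt{2dL}$ rather than $R_t(x)$ directly, because for points $x$ that are only $\epsilon$-close to the focus region the change-of-measure bound needs the inner product $\ip{X_t-\mu_t,x-\mu_t}_{\Sigma_t^{-1}}$ to be controlled, which requires this extra event. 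Finally one argues that for any $y\in K_\tau(1/\sqrt{2dL})$ the nearest net point $x$ satisfies $\tau_x=\tau$, and then Lipschitz-interpolates from $x$ to $y$ via Lemma~\ref{lem:opt:measure-lip}, as you say. So the overall shape of the argument is the one you describe, but the stopping time $\tau_x$ is a non-trivial device you are missing, and without it the union bound as proposed does not go through.
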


You should view these bounds as a kind of Bernstein inequality\index{Bernstein's inequality} with the term involving $\Ymax$ the lower-order term and $V_\tau = O(n)$ 
with high probability. 

\begin{proof}
Starting with part~\ref{prop:conc-s:single},
let $x \in \R^d$ be such that $x \in K_\tau(1/\sqrt{L})$ almost surely and recall that
\begin{align*}
\hat s_t(x) = \left(1 + \frac{\bar r_t(x) - 1}{\lambda}\right) Y_t \,.
\end{align*}
Let $\Delta_t = \hat s_t(x) - \E_{t-1}[\hat s_t(x)]$.
A martingale\index{martingale} version of Bernstein's inequality (\cref{thm:freedman}) \index{Freedman's inequality} 
applied to the sequence $(\Delta_t)_{t=1}^\tau$ says that with probability at least $1 - \delta/2$,
\begin{align}
\left|\sum_{t=1}^\tau \Delta_t\right| \leq 3 \sqrt{M \log\left(\frac{4 \max(B, \sqrt{M})}{\delta}\right)} + 2 B \log\left(\frac{4 \max (B, \sqrt{M})}{\delta}\right)\,,
\label{eq:os:conc-s:bern}
\end{align}
where $M = \sum_{t=1}^\tau \E_{t-1}[\Delta_t^2]$ and $B = \max(1, \max_{1 \leq t \leq \tau} |\Delta_t|)$.
We now bound the random variables $M$ and $B$. 
By definition,
\begin{align*}
M 
\leq \sum_{t=1}^\tau \E_{t-1}[\hat s_t(x)^2] 
\leq \frac{\exp(4)}{\lambda^2} \sum_{t=1}^\tau \E_{t-1}[Y_t^2] 
= \frac{\exp(4)}{\lambda^2} V_\tau \,,
\end{align*}
where we used the fact that $\bar r_t(x) \leq \exp(2)$.
Additionally,
\begin{align*}
B &= \max\left(1, \max_{1 \leq t \leq \tau} |\Delta_t|\right) \\
&\leq 1 + \max_{1 \leq t \leq \tau} \left|\hat s_t(x) - \E_{t-1}[\hat s_t(x)]\right| \\
&\leq 1 + \max_{1 \leq t \leq \tau} \left(\left|\hat s_t(x)\right| + \left|\E_{t-1}[\hat s_t(x)]\right| \right) \\
&\leq 1 + \frac{\exp(2)}{\lambda} \max_{1 \leq t \leq \tau} \left(|Y_t| + \E_{t-1}[|Y_t|]\right) \\
&= 1 + \frac{\exp(2) \Ymax}{\lambda}\,.
\end{align*}
By \cref{prop:gauss:conc:naive}, with probability at least $1 - \delta/2$,
\begin{align*}
\max(B, \sqrt{M}) \leq \poly(1/\delta)\,.
\end{align*}
Combining with \cref{eq:os:conc-s:bern} shows that with probability at least $1 - \delta$,
\begin{align*}
\left|\sum_{t=1}^\tau \Delta_t\right| \leq \frac{1}{\lambda} \left[\sqrt{V_\tau L} + L \Ymax\right] \,.
\end{align*}
Note that so far we have not used the fact that $x \in K_\tau(1/\sqrt{L})$.
The argument above shows that $\sum_{t=1}^\tau (\hat s_t(x) - \E_{t-1}[\hat s_t(x)])$ concentrates well for any $x \in \R^d$.
All that remains is to argue that $\E_{t-1}[\hat s_t(x)]$ is close to $s_t(x)$.
Since $x \in K_\tau(1/\sqrt{L})$, by \cref{prop:opt:bias},
\begin{align*}
\left|\sum_{t=1}^\tau (\hat s_t(x) - s_t(x))\right|
&\leq n \delta + \left|\sum_{t=1}^\tau (\hat s_t(x) - \E_{t-1}[\hat s_t(x)])\right| \\
&\leq 1 + \left|\sum_{t=1}^\tau \Delta_t\right| \\
&\leq 1 + \frac{1}{\lambda} \left[\sqrt{V_\tau L} + L \Ymax\right] \,.
\end{align*}
This completes the proof of Part~\ref{prop:conc-s:single}.
Moving now to part~\ref{prop:conc-s:uniform},
abbreviate $K_\tau = K_\tau(1/\sqrt{2L})$. By \cref{ass:opt:conc}, $\Sigma \preceq \frac{1}{\delta} \id$ and $\lambda \geq \delta$,
and by the definition of $K_\tau$,
\begin{align}
K_\tau 
\subset \left\{x \colon \lambda \norm{x - \mu_1}_{\Sigma_1^{-1}} \leq 1/\sqrt{2L}\right\} 
\subset \left\{x \colon \norm{x - \mu_1} \leq \delta^{-3/2} \right\} \triangleq J\,.
\label{eq:prop:conc-s:J}
\end{align}
The argument follows along the same lines as part~\ref{prop:conc-s:single} but now we need an additional covering and Lipschitz argument.
Let $\cC$ be a finite cover of $J$ such that for all $y \in J$ there exists an $x \in \cC$ such that $\norm{x - y} \leq \eps$ with 
\begin{align}
\eps = \poly(\delta) \,.
\label{eq:opt:epsilon-s}
\end{align}
\citet[Corollary 4.2.13]{Ver18} shows that $\cC$ can be chosen so that 
\begin{align*}
|\cC| \leq \left(\frac{2 \diam(J)}{\eps} + 1\right)^d\,.
\end{align*}
By \cref{eq:prop:conc-s:J}, $\diam(J) \leq 2 \delta^{-3/2}$. Hence, by the definition of $L = C \log(1/\delta)$ for suitably large universal
constant $C$, it follows that $\log |\cC| \leq d L$. 
Repeating the argument in Part~\ref{prop:conc-s:single} along with a union bound over $\cC$ shows that with probability at least $1 - \delta$ the
following both hold:
\begin{enumeraterom}
\item $\max_{x \in \cC} \left|\sum_{t=1}^{\tau} (\hat s_t(x) - \E_{t-1}[\hat s_t(x)])\right| \leq \frac{1}{\lambda}\left[\sqrt{d V_\tau L} + d L \Ymax\right]$;
\item $\max(\Ymax, V_\tau) \leq \poly(1/\delta)$.
\end{enumeraterom}
For the remainder we assume these events occur.
Let $y \in K_\tau$.
By the construction of $\cC$ there exists an $x \in \cC$ such that $\norm{x - y} \leq \eps$.
Since $y \in K_\tau$ for any $t \leq \tau$, 
\begin{align*}
\lambda \norm{x - \mu_t}_{\Sigma_t^{-1}} 
&\leq \lambda \norm{x - y}_{\Sigma_t^{-1}} + \lambda \norm{y - \mu_t}_{\Sigma_t^{-1}}  
\leq \frac{\lambda \eps}{\sqrt{\delta}} + \frac{1}{\sqrt{2L}} 
\leq \frac{1}{\sqrt{L}}\,,
\end{align*}
where we used the definition of $K_\tau$, the triangle inequality, the definition of $\eps$ in \cref{eq:opt:epsilon-s} and naive bounding.
Therefore $x \in K_\tau(1/\sqrt{L})$ and for any $t \leq \tau$, 
\begin{align*}
\left|\hat s_t(x) - \hat s_t(y)\right|
= \frac{\left|\bar r_t(x) - \bar r_t(y)\right| |Y_t| }{\lambda} 
\leq \frac{|Y_t|}{\lambda \delta} \norm{x - y} 
\leq \frac{\eps \Ymax}{\lambda \delta} 
\leq \frac{1}{2n} \,.
\end{align*}
where we used \cref{lem:opt:measure-lip} and the assumption that $\norm{x - y} \leq \eps$.
By \cref{lem:basic}\ref{lem:basic:lip}, 
\begin{align*}
\lip(s_t) \leq \lip(f_t) \leq \frac{1}{\delta} \,.
\end{align*}
Hence $|s_t(x) - s_t(y)| \leq \frac{\eps}{\delta} \leq \frac{1}{2n}$.
Therefore
\begin{align*}
\left|\sum_{t=1}^\tau (\hat s_t(y) - s_t(y))\right| 
&\leq 1 + \left|\sum_{t=1}^\tau (\hat s_t(x) - s_t(x))\right| \\ 
\tag*{by \cref{prop:opt:bias}}
&\leq 2 + \left|\sum_{t=1}^\tau (\hat s_t(x) - \E_{t-1}[\hat s_t(x)])\right| \\
&\leq 2 + \frac{1}{\lambda}\left[\sqrt{d V_\tau L} + d L \Ymax\right] \,,
\end{align*}
which completes the proof.
\end{proof}

More or less the same result holds for the quadratic surrogates estimates.

\begin{proposition}\label{prop:conc-q}
Under \cref{ass:opt:conc},
the following hold: 
\begin{enumerate}
\item Let $x \in \R^d$ be a non-random vector such that $x \in K_\tau(1/\sqrt{L})$ almost surely. 
Then, with probability at least $1 - \delta$,  \label{prop:conc-q:non-uniform}
\begin{align*}
\left|\sum_{t=1}^\tau (q_t(x) - \hat q_t(x))\right| \leq 1 + \frac{1}{\lambda} \left[\sqrt{V_\tau L} + \Ymax L\right] \,.
\end{align*}
\item With probability at least $1 - \delta$, \label{prop:conc-q:uniform}
\begin{align*}
\max_{x \in K_\tau(1/\sqrt{2L})} \left|\sum_{t=1}^\tau (q_t(x) - \hat q_t(x))\right| \leq 1 + \frac{1}{\lambda} \left[\sqrt{d V_\tau L} + d\Ymax L\right] \,. 
\end{align*}
\end{enumerate}
\end{proposition}

\begin{proof}
Let $x \in \R^d$ be a non-random vector such that $x \in K_\tau(1/\sqrt{L})$ almost surely and $\Delta_t = \hat q_t(x) - q_t(x)$, which
satisfies $\E_{t-1}[\Delta_t] = 0$ by \cref{prop:opt:bias} and the definition of $\hat q_t$.
By \cref{thm:freedman},\index{Freedman's inequality} with probability at least $1 - \delta/4$,
\begin{align}
\left|\sum_{t=1}^\tau \Delta_t\right| \leq 3 \sqrt{M \log\left(\frac{8 \max(B, \sqrt{M})}{\delta}\right)} + 2 B \log\left(\frac{8 \max(B, \sqrt{M})}{\delta}\right) \,,
\label{eq:opt:q:main}
\end{align}
where $M = \sum_{t=1}^\tau \E_{t-1}[\Delta_t^2]$ and $B = \max_{1 \leq t \leq \tau} |\Delta_t|$.
Moreover, by \cref{prop:gauss:conc:naive}, with probability at least $1 - \delta/2$,
\begin{align}
\max(\Ymax, V_\tau) \leq \poly(1/\delta) \,.
\label{eq:opt:q:naive}
\end{align}
Since $\E_{t-1}[\hat q_t(x)] = q_t(x)$ we have
\begin{align*}
\E_{t-1}[\Delta_t^2] \leq \E_{t-1}[\hat q_t(x)^2] 
\quad \text{ and } \quad
|\Delta_t|  \leq |\hat q_t(x)| + \E_{t-1}[|\hat q_t(x)|] \,.
\end{align*}
Let $U_t = \ip{x - \mu_t, X_t - \mu_t}_{\Sigma_t^{-1}}$. Substituting the definitions of $g_t$ and $H_t$ yields
\begin{align*}
\hat q_t(x) 
&= \ip{g_t, x - \mu_t} + \frac{1}{2} (x - \mu_t)^\top H_t (x - \mu_t) \\
&= \bar r_t(\mu_t) Y_t \left[\frac{U_t}{1-\lambda} + \frac{\lambda U_t^2}{4(1 - \lambda)^4} 
- \frac{\lambda \norm{x - \mu_t}^2_{\Sigma_t^{-1}}}{4(1-\lambda)^2}\right] \,.
\end{align*}
Using the fact from \cref{lem:measure} that $\bar r_t(\mu_t) \in [0,\exp(1)]$, $\lambda \leq 1/2$ and the condition that $x \in K_\tau(1/\sqrt{L})$, it follows that
\begin{align}
|\hat q_t(x)| \leq |Y_t| \exp(1) \left[2|U_t| + 4\lambda |U_t|^2 + \frac{1}{\lambda L}\right] \,.
\label{eq:opt:q:1}
\end{align}
The law of $U_t$ under $\bbP_{t-1}$ is $\cN(0, \snorm{x - \mu_t}_{\Sigma_t^{-1}})$. 

\begin{exer}\label{ex:opt:q}
\faStar \quad
Complete the following steps:
\begin{enumeraterom}
\item Show that $\snorm{U_t^2}_{t-1,\psi_1} \leq \frac{8 }{3L\lambda^2}$. \label{ex:opt:q:norm}
\item Use \cref{eq:opt:q:1}, part~\ref{ex:opt:q:norm}, \cref{ass:opt:conc} and \cref{lem:orlicz-var} to show that 
\begin{align*}
\E_{t-1}[\hat q_t(x)^2] \leq \delta + \frac{C \E_{t-1}[Y_t^2]}{\lambda^2} \qquad \! 
\E_{t-1}[|\hat q_t(x)|] \leq \delta + \frac{C \E_{t-1}[|Y_t|]}{\lambda} \,.
\end{align*}
\item Use \cref{lem:orlicz-tail} to show that with probability at least $1 - \delta/4$ 
\begin{align}
|\hat q_t(x)| \leq \frac{C |Y_t|}{\lambda} \text{ for all } 1 \leq t \leq \tau\,.
\label{eq:opt:q:hp}
\end{align}
\end{enumeraterom}
\end{exer}

By a union bound, with probability at least $1 - \delta$ the high-probability events in \cref{eq:opt:q:main}, \cref{eq:opt:q:naive} and 
\cref{eq:opt:q:hp} all hold. For the remainder assume these events hold. 
Your solution to \cref{ex:opt:q} shows that 
\begin{align*}
M 
&= \sum_{t=1}^\tau \E_{t-1}[\Delta_t^2]
\leq n \delta + \frac{C V_\tau}{\lambda^2} \,. 
\end{align*}
Moreover,
\begin{align*}
B &= \max_{1 \leq t \leq \tau} |\Delta_t| 
\leq \max_{1 \leq t \leq \tau} \left(|\hat q_t(x)| + \E_{t-1}[|\hat q_t(x)|]\right)
\leq \delta + \frac{C \Ymax}{\lambda} \,.
\end{align*}
Combining these with \cref{eq:opt:q:main} shows that 
\begin{align*}
\left|\sum_{t=1}^\tau (q_t(x) - \hat q_t(x))\right| \leq 1 + \frac{1}{\lambda}\left[\sqrt{V_\tau L} + L \Ymax\right]\,.
\end{align*}
Part~\ref{prop:conc-q:uniform} follows by 
repeating more or less the same argument as the covering argument in \cref{prop:conc-s}.
\end{proof}

\begin{exer}
\faStar \quad
Prove \cref{prop:conc-q}\ref{prop:conc-q:uniform}.
\end{exer}

The next proposition shows that the accumulation of second derivatives along the sequence $\mu_1,\ldots,\mu_\tau$ is well-concentrated.

\begin{proposition}\label{prop:conc-hessian-path}
Let $\sS \subset \psd$ be the (random) set of positive definite matrices such that $\Sigma_t^{-1} \preceq \Sigma^{-1}$ for all $t \leq \tau$
and $S_\tau = \sum_{t=1}^\tau \hat s_t''(\mu_t)$ and $\bar S_\tau = \sum_{t=1}^\tau s_t''(\mu_t)$.
With probability at least $1 - \delta$, for all $\Sigma^{-1} \in \sS$,
\begin{align*}
 - \lambda L^2 \left[1 + \sqrt{d V_\tau} + d^2 \Ymax \right] \Sigma^{-1} 
&\preceq S_\tau - \bar S_\tau  
\preceq \lambda L^2 \left[1 + \sqrt{d V_\tau} + d^2 \Ymax\right] \Sigma^{-1} \,.
\end{align*}
\end{proposition}

\begin{proof}
By definition of the L\"owner order, the claim is equivalent to showing that with probability at least $1 - \delta$ for all $u \in \sphere_1$ and
$\Sigma \in \cS$,
\begin{align}
\left|u^\top (S_\tau - \bar S_\tau) u\right| \leq \lambda L^2 \left[1 + \sqrt{d V_\tau} + d^2 \Ymax\right] \norm{u}^2_{\Sigma^{-1}} \,.
\label{eq:prop:conc-h:1}
\end{align}
The approach followed will be the usual one:
\begin{itemize}
\item Prove that \cref{eq:prop:conc-h:1} holds for all $u$ in a finite cover of $\sphere_1$ with slightly smaller constants.
\item Extend to all $u \in \sphere_1$ by a Lipschitz argument.
\end{itemize}
Let $\cCS$ be a cover of $\sphere_1$ such that for all $u \in \sphere_1$ there exists a $v \in \cCS$ such that $\norm{u - v} \leq \eps$
where $\eps = \poly(\delta)$ is a small constant. \citet[Corollary 4.2.13]{Ver18} shows that $\cCS$ can be chosen so that
\begin{align*}
\log |\cCS| \leq d \log \left(\frac{2}{\eps} + 1\right) \leq d L \,.
\end{align*}
Let $W_t = \Sigma_t^{-1/2}(X_t - \mu_t)$, which is a standard Gaussian under $\bbP_{t-1}$. 
By definition,
\begin{align*}
\hat s''_t(\mu_t) 
&= \frac{\lambda \bar r_t(\mu_t) Y_t}{(1 - \lambda)^2} \left[ \frac{\Sigma_t^{-1} (X_t - \mu_t)(X_t - \mu_t)^\top \Sigma_t^{-1}}{(1-\lambda)^2} - \Sigma_t^{-1}\right] \\
&= \frac{\lambda \bar r_t(\mu_t) Y_t}{(1 - \lambda)^2} \left[\frac{\Sigma_t^{-1/2} W_t W_t^\top \Sigma_t^{-1/2}}{(1-\lambda)^2} - \Sigma_t^{-1}\right]  \,.
\end{align*}
Let $u \in \cCS$ and define
\begin{align*}
Q_{t,u} = \frac{\lambda \bar r_t(\mu_t)}{(1 - \lambda)^2} \left[\frac{\sip{\Sigma_t^{-1/2} u, W_t}^2}{(1-\lambda)^2} - \norm{u}^2_{\Sigma_t^{-1}}\right]\,, 
\end{align*}
which is chosen so that $u^\top \hat s_t''(\mu_t) u = Y_t Q_{t,u}$.
By \cref{prop:opt:bias} $\E_{t-1}[\hat s_t''(\mu_t)] = s_t''(\mu_t)$. Hence 
\begin{align*}
\Delta_{t,u} = Y_t Q_{t,u} - \E_{t-1}[Y_t Q_{t,u}] = u^\top \hat s_t''(\mu_t) u - u^\top s_t''(\mu_t) u \,. 
\end{align*}
By \cref{lem:measure}, $\bar r_t(\mu_t) \leq \exp(1)$ and hence by \cref{eq:orlicz-1} and \cref{prop:orlicz:product,prop:orlicz} and naively simplifying 
constants,
\begin{align}
\snorm{Q_{t,u}}_{t-1,\psi_1} \leq 150 \lambda \norm{u}^2_{\Sigma_t^{-1}} \,.
\label{eq:conc:hessian-path}
\end{align}
Therefore, since the variance of a random variable is upper-bounded by its second moment and by \cref{lem:orlicz-var} with $k = 2$,
\begin{align*}
\E_{t-1}[\Delta_{t,u}^2]
&\leq \E_{t-1}[Y_t^2 Q_{t,u}^2] \\
&\leq \lambda^2 \left(\delta + L^2 \E_{t-1}[Y_t^2]\right) \norm{u}^4_{\Sigma_t^{-1}} \\
&\leq \poly(1/\delta)\left(1 + \E_{t-1}[Y_t^2]\right)\,,
\end{align*}
where the final inequality crudely uses the fact that $u \in \sphere_1$ and the assumption that $\snorm{\Sigma_t^{-1}} \leq 1/\delta$.
A union bound combined with \cref{eq:conc:hessian-path} and \cref{lem:orlicz-tail} and \cref{lem:orlicz-var} with $k = 1$ shows that with probability at least $1 - \delta/4$ for all $u \in \cCS$ and $1 \leq t \leq \tau$,
\begin{align*}
\left|\Delta_{t,u}\right|
\leq \lambda d (\Ymax+\delta) L \norm{u}^2_{\Sigma_t^{-1}} 
\leq \poly(1/\delta)\left(1 + \Ymax\right) \,.
\end{align*}
By \cref{prop:gauss:conc:naive}, with probability at least $1 - \delta/2$, $\max (\Ymax, V_\tau) \leq \poly(1/\delta)$.
Combining these high-probability bounds and another union bound over $\cCS$ with \cref{thm:freedman}\index{Freedman's inequality} 
shows that with probability at least $1 - \delta$ for all $u \in \cCS$ and any $\Sigma \in \cS$,
\begin{align*}
\left|\sum_{t=1}^\tau \Delta_{t,u} \right| 
&\leq \sqrt{d L \sum_{t=1}^\tau \E_{t-1}[\Delta_{t,u}^2]} + dL \max_{1 \leq t \leq \tau} |\Delta_{t,u}| \\
&\leq \lambda \norm{u}^2_{\Sigma^{-1}} L^2 \left[1 + \sqrt{d \sum_{t=1}^\tau \E_{t-1}[Y_t^2]} + d^2 \Ymax\right]\,, 
\end{align*}
where we used the assumption that $\Sigma_t^{-1} \preceq \Sigma^{-1}$ for all $t \leq \tau$.
The claim is finished by a Lipschitz argument:

\begin{exer}
\faStar \quad
Complete the proof by showing that with high-probability $u \mapsto \Delta_{t,u}$ is suitably Lipschitz and using the properties of the cover
$\cC$.
\end{exer}
\end{proof}

\begin{remark}\label{rem:conc-hessian-path}
Suppose that $(a_t)_{t=1}^n$ is a sequence such that $a_t \in [0,C]$ almost surely and $a_t$ is $\sF_{t-1}$-measurable for all $t$.
Redefine 
\begin{align*}
S_\tau = \sum_{t=1}^\tau a_t \hat s''_t(\mu_t) \quad \text{ and } \quad 
\bar S_\tau = \sum_{t=1}^\tau a_t s''_t(\mu_t) \,.
\end{align*}
Then \cref{prop:conc-hessian-path} continues to hold with essentially the same proof.
\end{remark}

Finally, the gradient estimates of the surrogate loss also concentrate.

\begin{proposition}\label{prop:opt:conc-gradient}
Let $\sS \subset \psd$ be the (random) set of positive definite matrices such that $\Sigma_t^{-1} \preceq \Sigma^{-1}$ for all $t \leq \tau$.
The following hold:
\begin{enumerate}
\item Suppose that $x \in K_\tau(1/\sqrt{2L})$ almost surely. Then, for any $u \in \R^d$,
with probability at least $1 - \delta$, for any $\Sigma \in \cS$, \label{prop:opt:conc-gradient:single}
\begin{align*}
\left|\sum_{t=1}^\tau \ip{\hat s_t'(x) - s_t'(x), u}\right| \leq \norm{u}_{\Sigma^{-1}} L \left[1 + \sqrt{V_\tau} + \Ymax\right] \,.
\end{align*}
\item With probability at least $1 - \delta$ for all $x \in K_\tau(1/\sqrt{2L})$, $u \in \R^d$ and $\Sigma \in \cS$,
\label{prop:opt:conc-gradient:uniform}
\begin{align*}
\left|\sum_{t=1}^\tau \ip{\hat s_t'(x) - s_t'(x), u}\right| \leq \norm{u}_{\Sigma^{-1}} L \left[1 + \sqrt{d V_\tau} + d^2 \Ymax\right] \,.
\end{align*}
\end{enumerate}
\end{proposition}

\begin{proof}
Let $u \in \R^d$ and $x$ be such that $x \in K_\tau(1/\sqrt{2L})$ almost surely.
By definition,
\begin{align*}
\ip{\hat s_t(x), u} = \frac{\bar r_t(x) Y_t}{1-\lambda} \ip{u, \Sigma_t^{-1} \left(\frac{X_t - \lambda x}{1-\lambda} - \mu_t\right)} \,.
\end{align*}
Let $\Delta_t = \ip{\hat s_t(x), u} - \E_{t-1}[\ip{\hat s_t(x), u}]$. As usual, we need to bound $\E_{t-1}[\Delta_t^2]$ and $\max_{1 \leq t \leq \tau} |\Delta_t|$. 
Let
\begin{align*}
Q_t = \frac{\bar r_t(x)}{1-\lambda} \ip{u, \Sigma_t^{-1} \left(\frac{X_t - \lambda x}{1-\lambda} - \mu_t\right)}\,,
\end{align*}
which is chosen so that $\Delta_t = Y_t Q_t$.
By definition $\bar r_t(x) \leq \exp(2)$ and therefore
\begin{align*}
\norm{Q_t}_{t-1,\psi_2} \leq C \norm{u}_{\Sigma^{-1}} \left[1 + \lambda \norm{x - \mu_t}_{\Sigma_t^{-1}} \right] 
\leq 2 C \norm{u}_{\Sigma_t^{-1}} \,.
\end{align*}
Hence, by \cref{lem:orlicz-var},
\begin{align*}
\E_{t-1}[\Delta_t^2] \leq \norm{u}_{\Sigma_t^{-1}}^2 \left(\delta + \E_{t-1}[Y_t^2]\right) L \,.
\end{align*}
Also by \cref{lem:orlicz-var} in combination with \cref{lem:orlicz-tail} and a union bound, with probability at least $1 - \delta/2$, for all $t \leq \tau$
\begin{align*}
|\Delta_t| \leq \norm{u}_{\Sigma_t^{-1}} \left(\delta + \Ymax\right) \sqrt{L} \,.
\end{align*}
Part~\ref{prop:opt:conc-gradient:single} now follows from \cref{thm:freedman}
and \cref{prop:opt:bias}. \index{Freedman's inequality}
For part~\ref{prop:opt:conc-gradient:uniform}, combine the above with the covering argument in \cref{prop:conc-s}, covering 
both $K_1(1/\sqrt{2L})$ and $\sphere_1$. 
\end{proof}

\begin{exer}
\faStar \quad
Prove \cref{prop:opt:conc-gradient}\ref{prop:opt:conc-gradient:uniform}.
\end{exer}

\index{concentration!of surrogate|)}

\section{Summary}\label{sec:summary}

Let us summarise what has been shown.
The surrogate loss function is convex (\cref{lem:basic}\ref{lem:basic:cvx}) and optimistic:
\begin{align*}
\tag*{\cref{lem:basic}\ref{lem:basic:opt}}
s(x) \leq f(x) \text{ for all } x \in \R^d\,.
\end{align*}
On the other hand, the surrogate evaluated at $\mu$ is relatively close to $f(\mu)$:
\begin{align*}
\tag*{\cref{prop:lower}}
\E[f(X)] \leq s(\mu) + \frac{2}{\lambda} \tr(s''(\mu) \Sigma) + \frac{2 \delta d}{\lambda} \,.
\end{align*}
Furthermore, the quadratic surrogate offers the same benefits on the focus region.\index{focus region} Specifically, 
\cref{cor:gauss:lower} shows that for any $x$ such that $\lambda \norm{x - \mu}_{\Sigma^{-1}} \leq \frac{1}{L}$,
\begin{align*}
\E[f(X)] - f(x) \leq q(\mu) - q(x) + \frac{2}{\lambda} \tr(s''(\mu) \Sigma) + \delta\left[\frac{2d}{\lambda} + \frac{1}{\lambda^2}\right] \,.
\end{align*}
The effectiveness of the quadratic surrogate arises from the fact that $s$ is nearly quadratic on the focus region. 
\cref{prop:s-sc} shows that provided $\lambda \norm{x - y}_{\Sigma^{-1}} \leq L^{-1/2}$, then $s''(x) \preceq 2 s''(y) + \delta \Sigma^{-1}$.

\subsubsection*{Sequential Concentration}
Recall the notation of the sequential setting explained in \cref{sec:conc-seq}; particularly, that
\begin{align*}
K_\tau(r) = \left\{x \in K \colon \max_{t \leq \tau} \lambda \norm{x - \mu_t}_{\Sigma_t^{-1}} \leq r\right\}\,.
\end{align*}
Remember also that $V_\tau = \sum_{t=1}^\tau \E_{t-1}[Y_t^2]$ and $\Ymax = \max_{1 \leq t \leq \tau} (|Y_t| + \E_{t-1}[|Y_t|])$.
The following results hold under \cref{ass:opt:conc}.
The surrogate is well-concentrated in the sense that by \cref{prop:conc-s},
\begin{enumerate}
\item for $x \in \R^d$ such that $x \in K_\tau(1/\sqrt{L})$ almost surely, with probability at least $1 - \delta$,
\begin{align*}
\left|\sum_{t=1}^\tau (\hat s_t(x) - s_t(x))\right| \geq 1 + \frac{1}{\lambda} \left[\sqrt{L V_\tau} + L \Ymax\right] \,;
\end{align*}
\item with probability at least $1 - \delta$,
\begin{align*}
\sup_{x \in K_\tau(1/\sqrt{2L})} \left|\sum_{t=1}^\tau (\hat s_t(x) - s_t(x))\right| \leq 1 + \frac{1}{\lambda}\left[\sqrt{d L V_\tau} + d L \Ymax\right] \,.
\end{align*}
\end{enumerate}
Similar results hold for the quadratic surrogate. Precisely, by \cref{prop:conc-q}, 
\begin{enumerate}
\item given any $x \in \R^d$ such that $x \in K_\tau(1/\sqrt{L})$ almost surely, with probability at least $1 - \delta$,
\begin{align*}
\left|\sum_{t=1}^\tau \hat q_t(x) - q_t(x)\right| \leq 1 + \frac{1}{\lambda}\left[\sqrt{V_\tau L} + \Ymax L\right] \,;
\end{align*}
\item with probability at least $1 - \delta$,
\begin{align*}
\sup_{x \in K_\tau(1/\sqrt{2L})} \left|\sum_{t=1}^\tau \hat q_t(x) - q_t(x)\right| \leq 1 + \frac{1}{\lambda}\left[\sqrt{d V_\tau L} + d \Ymax L\right] \,.
\end{align*}
\end{enumerate}
The Hessian estimates are also reasonably well-behaved. Recall that $\cS$ is the random set of matrices $\Sigma^{-1}$ such that $\Sigma_t^{-1} \preceq \Sigma^{-1}$
for all $t \leq \tau$. 
Then, with probability at least $1 - \delta$, for all $\Sigma \in \cS$,
\begin{align*}
- \lambda L^2 \left[1 + \sqrt{d V_\tau} + d^2 \Ymax\right] \Sigma^{-1} 
\preceq S_\tau - \bar S_\tau 
\preceq \lambda L^2 \left[1 + \sqrt{d V_\tau} + d^2 \Ymax\right] \Sigma^{-1}\,,
\end{align*}
where $S_\tau = \sum_{t=1}^\tau \hat s''_t(\mu_t)$ and $\bar S_\tau = \sum_{t=1}^\tau s''_t(\mu_t)$.
Lastly, the gradient estimates concentrate.
Let $\cS$ be as above. The following hold:
\begin{enumerate}
\item For $x \in \R^d$ with $x \in K_\tau(1/\sqrt{2L})$ almost surely and $u \in \R^d$, with probability at least $1 - \delta$, for any $\Sigma \in \cS$,
\begin{align*}
\left|\sum_{t=1}^\tau \ip{\hat s_t'(x) - s_t'(x), u}\right| \leq \norm{u}_{\Sigma^{-1}} L \left[1 + \sqrt{V_\tau} + \Ymax\right] \,.
\end{align*}
\item With probability at least $1 - \delta$ for all $x \in K_\tau(1/\sqrt{2L})$ and all $u \in \R^d$ and any $\Sigma \in \cS$,
\begin{align*}
\left|\sum_{t=1}^\tau \ip{\hat s_t'(x) - s_t'(x), u}\right| \leq \norm{u}_{\Sigma^{-1}} L \left[1 + \sqrt{d V_\tau} + d^2 \Ymax\right] \,.
\end{align*}
\end{enumerate}

\section{Notes}

\begin{enumeratenotes}
\item The optimistic surrogate was introduced in a slightly different form by \cite{BEL16} and in the present form by \cite{LG21a}. 
The quadratic approximation was first used by \cite{LG23}, who proved most of the results in this chapter or variants thereof.
\item The parameter $\lambda$ determines the amount of smoothing. The change of measure in \cref{eq:opt:measure} blows up as $\lambda \geq 1/d$.
Meanwhile, for $\lambda \in (0, 1/d)$ there are trade-offs: 
\begin{itemizeinner}
\item A large value of $\lambda$ increases the power of the lower bound of \cref{prop:lower}, showing that $s$ is not too far below $f$.
\item A large value of $\lambda$ decreases the focus region\index{focus region} on which the quadratic surrogate is close to the non-quadratic surrogate and where
the concentration properties of the estimators are well-behaved.
\end{itemizeinner}
\end{enumeratenotes}

\chapter[Submodular Minimisation]{Submodular Minimisation\copynotice}\label{chap:submod}\index{bandit!submodular minimisation}

Let $[d] = \{1,\ldots,d\}$ for some integer $d$ and $\sP$ its powerset.
A function $f \colon \sP \to [0,1]$ is submodular if \index{submodular function}
for all $X \subset Y \subset [d]$ and $x \in [d] \setminus Y$, 
\begin{align*}
f(X \cup \{x\}) - f(X) \geq f(Y \cup \{x\}) - f(Y) \,.
\end{align*}
Submodular functions are sometimes viewed as a combinatorial analogue of convexity
via a gadget called the Lov\'asz extension that we explain in a moment.
In bandit submodular minimisation the adversary secretly chooses a sequence $(f_t)_{t=1}^n \colon \sP \to [0,1]$ of submodular functions.
Then, in each round $t$, the learner chooses a set $X_t \in \sP$ and observes $Y_t = f_t(X_t) + \eps_t$.
The optimal set is 
\begin{align*}
X_\star = \argmin_{X \in \sP} \sum_{t=1}^n f_t(X)
\end{align*}
and the regret definition is unchanged.
As usual, one can consider the stochastic version of the problem, where $f_t = f$ for some unknown $f$ and all $t$.
The raison d'\^etre of this chapter is to explain how convex bandit algorithms can be used for bandit submodular minimisation.
In particular, $\tilde O(d^{1.5} \sqrt{n})$ regret is possible in the stochastic setting by combining \cref{alg:ons:bandit} with the Lov\'asz extension;\index{setting!stochastic}
and in the adversarial setting\index{setting!adversarial} \cref{alg:ons-adv:bandit} yields a regret bound of $\tilde O(d^{2.5} \sqrt{n})$.
Besides this we explain how the special structure of bandit submodular minimisation means that the algorithm in \cref{chap:sgd} can be improved
to have regret $O(d n^{2/3})$ and $O(d^3/\eps^2)$ sample complexity.

\begin{remark}
The classical optimisation problem of finding the minimum of a submodular function $f \colon \sP \to [0,1]$ is quite interesting and we give some pointers
in Note~\ref{note:sub:opt}.
\end{remark}

Many problems in economics and operations research have some kind of submodularity based on the principles of diminishing returns or economies of scale.
Consider the following toy example. A specialty chocolate manufacturer is considering offering a subset of $[d]$ items on their website.
The expected earnings when offering item $k$ is some unknown $p(k)$ and for $X \subset [d]$ let $c(X)$ be the cost of offering subset $X$.
The expected loss (costs minus earnings) when offering $X$ is 
\begin{align*}
f(X) = c(X) - \sum_{k \in X} p(k) \,.
\end{align*}
Thanks to economies of scale one might expect that when $k \notin Y \supset X$ the cost of adding $k$ to $Y$ may be less than adding it to $X$: 
\begin{align*}
c(X \cup \{k\}) - c(X) \geq c(Y \cup \{k\}) - c(Y) \,,
\end{align*}
which implies that $f$ is submodular.
You can find many applications of submodular function minimisation in the survey by \cite{mccormick2005submodular}.

\section{Lov\'asz Extension}\index{Lov\'asz extension}
Let $f \colon \sP \to [0,1]$ be a submodular function. 
We can and will identify $\sP$ with $\{0,1\}^d$ in terms of the indicator function, so that $(1,1,\ldots,1) \equiv [d]$, $(0,0,\ldots,0) \equiv \emptyset$,
$(1,0,1,0,0,\ldots,0) = \{1,3\}$ and so on.
The Lov\'asz extension is a function $g \colon [0,1]^d \to [0,1]$ defined by
\begin{align}
g(x) = \int_0^1 f(\{i \colon x_i \geq u\}) \d{u} \,.
\label{eq:sub:int}
\end{align}
An illustration of the Lov\'asz extension is shown in \cref{fig:submod} and its integral representation above is shown in \cref{fig:submod-int}.
There are many ways to represent the Lov\'asz extension. A simple one is given in the following exercise:

\begin{exer}\label{ex:sub:mean}
\faStar \quad
Suppose that $U$ is uniformly distributed on $[0,1]$ and $S = \{i \colon x_i \geq U\}$. Show that \index{uniform measure}
$\E[f(S)] = g(x)$.
\end{exer}

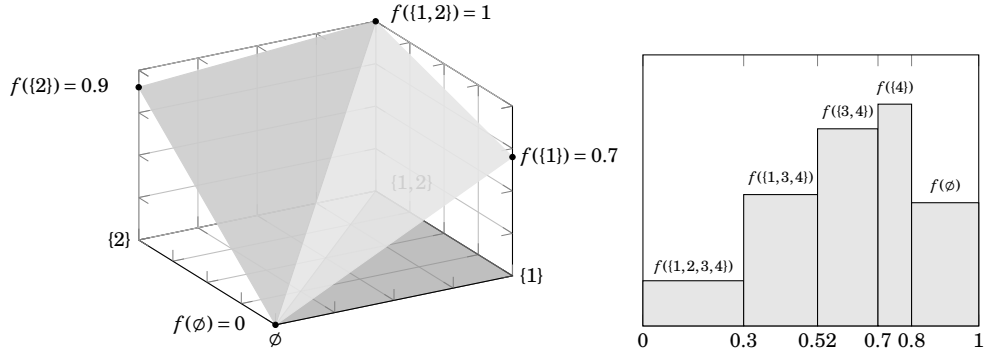
\begin{figure}
\centering
\begin{tikzpicture} 
\begin{axis}[
            width=7cm,
            grid=major,
            clip=false,
            xmin=0,xmax=1,
            ymin=0,ymax=1,
            zmin=0,zmax=1,
            xtick={0,0.25,0.5,0.75,1},
            ytick={0,0.25,0.5,0.75,1},
            xticklabels={,,,,},
            yticklabels={,,,,},
            ztick={0,0.25,0.5,0.75,1},
            zticklabels={,,,,},
            colormap={gray}{color(0cm)=(black); color(1cm)=(white)},
            view={-30}{30}, 
]
    \draw[fill=graythree,opacity=0.5] (axis cs:0,0,0) -- (axis cs:1,0,0) -- (axis cs:1,1,0) --cycle;
    \node at (axis cs:1,1,0) [right,xshift=0.1cm,yshift=0.1cm] {$\{1,2\}$};
    \addplot3 [
        patch,
        color=grayone,
        shader=flat,
        opacity=0.9,
        patch type=triangle] coordinates {
          (0,0,0) (1,0,0.7) (1,1,1)};
    \addplot3 [
        patch,
        color=graytwo,
        opacity=0.8,
        shader=flat,
        patch type=triangle] coordinates {
          (0,0,0) (0,1,0.9) (1,1,1) };
    \node at (axis cs:1, 1, 1) [right,xshift=0.1cm,yshift=0.1cm] {$f(\{1,2\}) = 1$};
    \node at (axis cs:1, 0, 0.7) [right] {$f(\{1\}) = 0.7$};
    \node at (axis cs:0, 1, 0.9) [xshift=-0.3cm,left] {$f(\{2\}) = 0.9$};
    \node at (axis cs:0, 0, 0) [left,xshift=-0.3cm] {$f(\emptyset) = 0$};
    \node at (axis cs:0, 0, 0) [below] {$\emptyset$};
    \node at (axis cs:1, 0, 0) [right] {$\{1\}$};
    \node at (axis cs:0, 1, 0) [left] {$\{2\}$};
    \draw[fill=black] (axis cs:0,0,0) circle[radius=1pt];
    \draw[fill=black] (axis cs:1,0,0.7) circle[radius=1pt];
    \draw[fill=black] (axis cs:0,1,0.9) circle[radius=1pt];
    \draw[fill=black] (axis cs:1,1,1) circle[radius=1pt];
\end{axis}
\end{tikzpicture}
\caption{The Lov\'asz extension for the submodular function defined by $f(\emptyset) = 0, f(\{1\}) = 0.7, f(\{2\}) = 0.9, f(\{1,2\}) = 1$.
The Lov\'asz extension is piecewise linear with each piece corresponding to a permutation $\sigma$ of $[d]$. The piece associated with permutation $\sigma$
is the simplex spanned by the sets $\emptyset, \{\sigma(1)\}, \{\sigma(1), \sigma(2)\}, \cdots, \{\sigma(1), \cdots, \sigma(d)\}$ where a set is associated with a corner
of the binary hypercube by its indicator function. In the figure $d = 2$ so there are only $d! = 2$ pieces.
}\label{fig:submod}
\commentAlt{A submodular function on a set of size 2 is shown as a 3-dimensional plot with the four possible sets on the corners of a hypercube
on the base and the Lovasz extension is piecewise linear with two pieces.}
\end{figure}

You should be able to see that if $x \in \{0,1\}^d$, then $g(x) = f(x)$, where we have abused notation by using the identification between $\{0,1\}^d$ and $\sP$.
The following classical theorem is what makes this chapter possible:

\begin{figure}
\begin{tikzpicture}[scale=1]
\begin{axis}[width=11.5cm,height=5cm,xmin=0,xmax=1,ymin=0,ymax=1.1,xtick={0,0.3,0.52,0.7,0.8,1},ytick=\empty,clip=false,xticklabel style={/pgf/number format/.cd,fixed,precision=3}]
\draw[fill=grayone] (axis cs:0,0) rectangle (axis cs:0.3, 0.1833);
\draw[fill=grayone] (axis cs:0.3,0) rectangle (axis cs:0.52,0.533);
\draw[fill=grayone] (axis cs:0.52,0) rectangle (axis cs:0.7,0.8);
\draw[fill=grayone] (axis cs:0.7,0) rectangle (axis cs:0.8,0.9);
\draw[fill=grayone] (axis cs:0.8,0) rectangle (axis cs:1,0.5);
\node[anchor=south] at (axis cs:0.15, 0.1833) {$f(\{1,2,3,4\})$};
\node[anchor=south,yshift=0.03cm] at (axis cs:0.41, 0.533) {$f(\{1,3,4\})$};
\node[anchor=south,yshift=0.03cm] at (axis cs:0.61, 0.8) {$f(\{3,4\})$};
\node[anchor=south,yshift=0.03cm] at (axis cs:0.75, 0.9) {$f(\{4\})$};
\node[anchor=south,yshift=0.03cm] at (axis cs:0.9, 0.5) {$f(\emptyset)$};
\end{axis}
\end{tikzpicture}
\caption{
An example of the integral computation in \cref{eq:sub:int} with $d = 4$, unspecified $f$ and $x = [0.52, 0.3, 0.7, 0.8]$; $g(x)$ is the area in grey.
}\label{fig:submod-int}
\commentAlt{On the right, is a bar chart showing the integral representation of a submodular function.}
\end{figure}

\begin{theorem}\label{thm:sub:cvx}
Let $g$ be the Lov\'asz extension of $f$. The following hold:
\begin{enumerate}
\item $g$ is convex;  \label{thm:sub:cvx:cvx}
\item $g$ is piecewise linear; and \label{thm:sub:cvx:lin}
\item $g$ is minimised on a vertex of the hypercube. \label{thm:sub:cvx:min}\index{hypercube}
\end{enumerate}
\end{theorem}

\begin{proof}
A sequence $S_1,\ldots,S_m \in \sP$ is a chain if $S_1 \subsetneq S_2 \subsetneq \cdots \subsetneq S_m$.
Remember that we identify $U \in \sP$ with an element in $\{0,1\}^d \subset [0,1]^d$.
The convex closure of $f$ is the function \index{convex closure}
\begin{align*}
h(x) = \min\left(\sum_{U \in \sP} p(U) f(U) \colon \sum_{U \in \sP} p(U) U = x,\, p \in \Delta(\sP)\right) \,,
\end{align*}
which is convex by the theory of linear programming \citep[\S5.2]{BT97}.\index{linear programming}
Suppose that $f$ is submodular. We will show that $g = h$.
Define $\phi(p) = \sum_{U \in \sP} p(U) |U|^2$ and let $x \in [0,1]^d$ be fixed.
By compactness, there exists a $p \in \Delta(\sP)$ such that 
\begin{align*}
h(x) = \sum_{U \in \sP} p(U) f(U) \quad \text{ and } \quad
x = \sum_{U \in \sP} p(U) U \,.
\end{align*}
In case of ties, let $p$ maximise $\phi$.
Suppose that $\{S \colon p(S) > 0\}$ is not a chain; that is, there exist $S, T \in \sP$ with $p(S) \geq p(T) > 0$ and $S \not \subset T$ and $T \not \subset S$.
Consider $q = p - p(T) \sind_T - p(T) \sind_S + p(T) \sind_{S \cap T} + p(T) \sind_{S \cup T}$. Without any assumptions, $\sum_{U \in \sP} q(U) U = x$ and
by submodularity, $f(S \cup T) + f(S \cap T) \leq f(S) + f(T)$, which implies that
$\sum_{U \in \sP} q(U) f(U) \leq \sum_{U \in \sP} p(U) f(U)$.
Furthermore,
\begin{align*}
|S \cup T|^2 + |S \cap T|^2 = |S|^2 + |T|^2 + 2|T \setminus S| |S \setminus T| > |S|^2 + |T|^2 \,.
\end{align*}
But from this it follows that $\phi(q) > \phi(p)$, contradicting our assumption that $p$ maximises $\phi$.
Therefore $\{S \colon p(S) > 0 \}$ is a chain and $\sum_{S \in \sP} p(S) S = x$. We leave you to prove that there is a unique chain satisfying these properties and to conclude
from \cref{eq:sub:int} that $h(x) = g(x)$.
Part~\ref{thm:sub:cvx:lin} follows since $h$ is piecewise linear \citep[\S5.2]{BT97}. 
Part~\ref{thm:sub:cvx:min} is immediate since $g$ is an average of $f$-values and the minimum is never larger than the average.
\end{proof}

\begin{exer}
\faStar \quad Finish the proof of \cref{thm:sub:cvx}\ref{thm:sub:cvx:cvx} as instructed above.
\end{exer}

The Lov\'asz extension also has nice computational properties. Given exact access to $f$ you can compute it by noticing that the integrand in
its definition is piecewise linear with at most $d+1$ pieces. You only need to evaluate $f$ at $d+1$ different sets, all of which are easily found
by sorting the coordinates of $x$.
Given $x \in [0,1]^d$ let $\sigma(\cdot|x) \colon [d] \to [d]$ be a permutation such that $k \mapsto x_{\sigma(k|x)}$ is non-increasing with ties broken arbitrarily.
We adopt the convention that $\sigma(0|x) = 0$, $\sigma(d+1|x) = d+1$, $x_0 = 1$ and $x_{d+1} = 0$.
Let $S(k|x) = \{\sigma(i|x) \colon i \in [k]\}$, which means that
\begin{align*}
\emptyset = S(0|x) \subset S(1|x) \subset \cdots \subset S(d|x) = [d] \,.
\end{align*}
When all coordinates of $x$ are distinct, then $S(k|x)$ contains exactly the coordinates associated with the $k$ largest entries of $x$.
Then, with $g$ the Lov\'asz extension of submodular function $f$,
\begin{align*}
g(x) 
&= \int_0^1 f(\{i \colon x_i \geq u\}) \d{u} \\
&= \sum_{k=0}^d f(S(k|x)) \left(x_{\sigma(k|x)} - x_{\sigma(k+1|x)}\right) \\
&= f(S(0|x)) + \sum_{k=1}^d x_{\sigma(k|x)} \left(f(S(k|x)) - f(S(k-1|x))\right)  \,. 
\end{align*}

There is also a nice expression for the subgradients\index{subgradient!of Lov\'asz extension} of the Lov\'asz extension.
Staring at the above equality yields the following standard proposition:

\begin{proposition}\label{prop:sub:sub}
Let $g$ be the Lov\'asz extension of a submodular function $f$, $x \in [0,1]^d$ and $\sigma^{-1}(\cdot|x)$ be the inverse of the permutation $\sigma(\cdot|x)$. Then 
the vector $u \in \R^d$ with
\begin{align*}
u_k = f(S(\sigma^{-1}(k|x)|x)) - f(S(\sigma^{-1}(k|x)-1|x)) 
\end{align*}
is a subgradient of $g$ at $x$.
\end{proposition}

\begin{exer}
\faStar \quad
Prove \cref{prop:sub:sub}.
\end{exer}

You may wonder what properties the Lov\'asz extension has. For example, is it smooth, strongly convex or Lipschitz?
A look at the definition reveals that it is piecewise linear and hence it cannot be strongly convex and is only smooth in the special case that it is linear.
It is $2$-Lipschitz, however:

\begin{proposition}[Lemma 1, \citealt{jegelka2011online}]
The Lov\'asz extension $g$ of a submodular function $f \colon \sP \to [0,1]$ satisfies $\lip(g) \leq 2$.
\end{proposition}

\begin{proof}
By \cref{thm:sub:cvx}, $g$ is convex
and piecewise linear so that $\lip(g) \leq \sup_x \norm{g'(x)}$ where
the supremum is over all $x$ where $g$ is differentiable.\index{differentiable}
Let $x \in (0,1)^d$ be any point where $g$ is differentiable and abbreviate $S(k) = S(k|x)$ and $\sigma(k) = \sigma(k|x)$. By \cref{prop:sub:sub},
\begin{align*}
g'_k(x) = f(S(\sigma^{-1}(k))) - f(S(\sigma^{-1}(k)-1)) \,.
\end{align*}
Let $P = \{k \colon g'_k(x) > 0\}$; then
\begin{align}
\norm{g'(x)} \leq \norm{g'(x)}_1 
&= \sum_{k \in P} g'_k(x) - \sum_{k \in [d] \setminus P} g'_k(x) \nonumber \\
&= 2\sum_{k \in P} g'_k(x) - \sum_{k \in [d]} g'_k(x) \,. 
\label{eq:submod:lip}
\end{align}
The last term telescopes:
\begin{align}
-\sum_{k \in [d]} g'_k(x) = f(\emptyset) - f([d]) \,.
\label{eq:submod:lip2}
\end{align}
For the other term in \cref{eq:submod:lip}, reorder the terms in the sum so that
\begin{align*}
2 \sum_{k \in P} g'_k(x) = 2 \sum_{m=1}^{|P|} (f(S(\pi(m))) - f(S(\pi(m) - 1)))\,,
\end{align*}
where $m \mapsto |S(\pi(m))|$ is increasing. This sum does not obviously telescope. Let us now make use of submodularity.
Let $a_m = S(\pi(m)) \setminus S(\pi(m)-1)$ and $A_m = \{a_1,\ldots,a_m\}$.
Since $\emptyset \subset S(1) \subset \cdots \subset S(d) = [d]$ is a chain, we have $A_m \subset S(\pi(m))$ and therefore
by submodularity,
\begin{align*}
f(A_m) - f(A_{m-1}) \geq f(S(\pi(m))) - f(S(\pi(m) - 1))\,.
\end{align*}
Therefore
\begin{align}
2 \sum_{m=1}^{|P|} (f(S(\pi(m))) - f(S(\pi(m) - 1))) 
&\leq 2 \sum_{m=1}^{|P|} (f(A_m) - f(A_{m-1})) \\
&= 2 f(A_{|P|}) - 2 f(\emptyset)\,.
\label{eq:submod:lip3}
\end{align}
Combining \cref{eq:submod:lip2} and \cref{eq:submod:lip3} with \cref{eq:submod:lip} shows that
\begin{align*}
\norm{g'(x)} &\leq 2 f(A_{|P|}) - f(\emptyset) - f([d]) \leq 2\,. 
\qedhere
\end{align*}
\end{proof}

\section{Bandit Submodular Minimisation}

We now explain how to use the Lov\'asz extension as a bridge between bandit convex optimisation on the hypercube and bandit submodular minimisation.
Let $g_t$ be the Lov\'asz extension of $f_t$ and $K = [0,1]^d$ the hypercube.\index{hypercube} 

\begin{exer}\label{ex:sub:linear}
\faStar \quad
Show that $f = \sum_{t=1}^n f_t$ is submodular and the Lov\'asz extension $f$ is $\sum_{t=1}^n g_t$.
\end{exer}

Your solution to \cref{ex:sub:linear} combined with \cref{thm:sub:cvx}\ref{thm:sub:cvx:min} shows that $\sum_{t=1}^n g_t$ is minimised at some $x_\star \in \{0,1\}^d$.
Consider a bandit convex optimisation algorithm playing on $K$ and let $(X^K_t)_{t=1}^n$ be actions in $K$ proposed by the bandit algorithms.
We need a way to select the real actions $X_t \subset [d]$ and return losses to the algorithm.
This is done by sampling $\lambda_t$ uniformly from $[0,1]$ and letting $X_t = \{i \colon (X^K_t)_i \geq \lambda_t\}$. Then the loss is $Y_t = f_t(X_t) + \eps_t$.
By \cref{ex:sub:mean}, $\E_{t-1}[Y_t] = g_t(X_t^K)$ so this procedure is equivalent to the learner interacting with the Lov\'asz extension sequence.
The following proposition shows that the regret of iterates $(X^K_t)$ with respect to the Lov\'asz extension implies a regret bound for the original bandit submodular optimisation
problem.

\begin{proposition}
Let $(X^K_t)_{t=1}^n \in K$ and let $(X_t)_{t=1}^n$ and $(Y_t)_{t=1}^n$ be defined as above. Then, with probability at least $1 - \delta$,
\begin{align*}
\Reg_n \triangleq \sum_{t=1}^n \left(f_t(X_t) - f_t(X_\star)\right)
&\leq \gReg_n + \sqrt{2n\log(1/\delta)}\,,
\end{align*}
where $\gReg_n = \sum_{t=1}^n \left(g_t(X_t^K) - g_t(x_\star)\right)$.
\end{proposition}

\begin{proof}
As we argued above, $\sum_{t=1}^n g_t(x_\star) = \sum_{t=1}^n f_t(X_\star)$.
Then
\begin{align*}
\gReg_n 
&= \sum_{t=1}^n \left(g_t(X_t^K) - g_t(x_\star)\right) \\
&= \sum_{t=1}^n \left(g_t(X_t^K) - f_t(X_\star)\right) \\
&= \sum_{t=1}^n \left(f_t(X_t) - f_t(X_\star)\right) + \sum_{t=1}^n \left(g_t(X_t^K) - f_t(X_t)\right) \\
&= \Reg_n + \sum_{t=1}^n \left(g_t(X_t^K) - f_t(X_t)\right)\,.
\end{align*}
By definition,
\begin{align*}
g_t(X_t^K) = \E_{t-1}[f_t(X_t)|X_t^K]\,.
\end{align*}
Therefore the sum is a martingale\index{martingale} with increments bounded in $[-1,1]$ and by Azuma's inequality (\cref{thm:azuma}), with probability at least $1 - \delta$,
\index{Azuma's inequality}
\begin{align*}
\Reg_n &\leq \gReg_n + \sqrt{2n \log(1/\delta)}\,.
\qedhere
\end{align*}
\end{proof}

Consequently, any algorithm for bandit convex optimisation can be used for submodular minimisation with very little overhead.
When looking at the complete list of algorithms in \cref{sec:table}, remember that for the hypercube we have $\diam(K) = \sqrt{d}$ and the self-concordance parameter is $\vartheta = \Theta(d)$. Moreover, except for scaling $K$ is already in L\"owner's position.\index{L\"owner's position}\index{hypercube}
The special structure of the Lov\'asz extension allows for at least one new idea, which we explain in \cref{sec:sub:sgd}.
\cref{tab:sub} more or less summarises the state of the art in bandit submodular optimisation. 

\begin{table}[h!]
\caption{Regret bounds for various algorithms for bandit submodular minimisation}
\label{tab:sub}
\renewcommand{\arraystretch}{1.6}
\centering \setlength\tabcolsep{4pt}
\begin{tabular}{|llll|}
\hline
\textsc{author} & \textsc{regret/comp} & \textsc{compute} & \textsc{notes} \\ \hline
\cite{hazan2012online} & \hyperref[thm:sub:sgd]{$d n^{2/3}$} & $O(d \log d)$ & \textendash \\
\cite{LFMV24} & \hyperref[thm:ons:bandit]{$d^{1.5} \sqrt{n}$} & $O(d^2) + \Pi + \textsc{svd}$ & stochastic only \\ 
\cite{LFMV24} & \hyperref[thm:ons-adv:bandit]{$d^{2.5} \sqrt{n}$} & $\poly(d, n)$ & \textendash \\ 
This book & \hyperref[thm:sub:sgd]{$\frac{d^3}{\eps^2}$} & $O(d \log(d))$ & stochastic only  \\ \hline
\end{tabular}
\end{table}

\FloatBarrier

\section{Gradient Descent for Submodular Minimisation}\label{sec:sub:sgd}\index{gradient descent!for submodular bandits|(}
By applying the algorithm in \cref{chap:sgd} you can immediately obtain a regret of $\E[\Reg_n] = O(d^{3/4} n^{3/4})$.
It is instructive to revisit gradient descent for submodular minimisation via the Lov\'asz extension because the special structure of the problem leads to a significant
improvement with almost no additional work.
At the same time,
in \cref{chap:overview} we promised to explain the phenomenon noted by \cite{Sha13} that for quadratic bandits\index{bandit!quadratic} the simple regret is $\Theta(d^2/n)$ while
the cumulative regret is $\Omega(d \sqrt{n})$. This curious situation is a consequence of two factors:
\begin{itemize}
\item in the simple regret setting the learner can afford to play actions far from the minimiser $x_\star$; and
\item in many parametric settings these actions can be far more informative than playing actions close to $x_\star$ because they allow the learner to reduce variance.
\end{itemize}
This situation arises in submodular bandits, for which a very simple algorithm has $\sReg_n = O(d^{1.5}/\sqrt{n})$ simple regret, 
while the algorithms with $\Reg_n = O(d^{1.5}\sqrt{n})$ are all sophisticated second-order methods.
The idea is to play gradient descent on the Lov\'asz extension and estimate the gradient by sampling from a subset of corners of the hypercube.
The cumulative regret incurred with this approach is linear but the variance of the gradient estimate is small, which leads to small simple regret.

\subsubsection*{Gradient Descent}
Algorithm~\ref{alg:sub:sgd} plays gradient descent on the Lov\'asz extension to produce a sequence of iterates $(x_t)_{t=1}^n$ but replaces the spherical smoothing
using in \cref{alg:sgd} with another mechanism for estimating the gradient.
With probability $\gamma \in (0,1)$ the algorithm explores and otherwise it exploits:
\begin{itemize}
\item When exploring, the algorithm samples $k_t$ uniformly on $\{0,1,\ldots,d\}$ and plays $S_t = S(k_t|x_t)$. The result can be used to estimate an element of $\partial g_t(x_t)$ 
using importance-weighting and \cref{prop:sub:sub}.
\item When exploiting, the algorithm samples $\lambda_t$ uniformly from $[0,1]$ and plays $\{k \colon (x_t)_k \geq \lambda_t\}$, which in expectation leads to a loss of $g_t(x_t)$.
\end{itemize}
When minimising the simple regret we choose $\gamma = 1$ so that the algorithm always explores.
Otherwise $\gamma$ is tuned to balance exploration and exploitation.

\begin{algorithm}[h!]
\begin{algcontents}
\begin{lstlisting}
args: $\eta > 0$, $\gamma \in (0,1)$
let $x_1 \in [0,1]^d$
for $t = 1$ to $n$
  sample $\lambda_t$ from $\sU([0,1])$ and let $S_t = \{k \colon (x_t)_k \geq \lambda_t\}$
  sample $k_t$ uniformly from $\{0,1,\ldots,d\}$ 
  let $E_t = \begin{cases}
    1 & \text{with prob. } \gamma \\
    0 & \text{with prob. } 1 - \gamma
  \end{cases}$ and $X_t = \begin{cases} 
  S(k_t|x_t) & \text{if } E_t = 1 \\
  S_t & \text{if } E_t = 0 
  \end{cases}$
  observe $Y_t = f(X_t) + \eps_t$
  let $\hat v_t = \frac{(d+1)Y_t E_t}{\gamma}\left[\sind(k_t \neq 0) e_{\sigma(k_t|x_t)} - \sind(k_t \neq d) e_{\sigma(k_t + 1|x_t)}\right]$
  update $x_{t+1} = \argmin_{x \in [0,1]^d} \norm{x - [x_t - \eta \hat v_t]}$
let $\widehat X_n = \frac{1}{n} \sum_{t=1}^n x_t$
sample $U$ from $\sU([0,1])$ 
return $\widehat S_n = \left\{k \colon (\widehat X_n)_k \geq U\right\}$
\end{lstlisting}
\caption{Gradient descent for bandit submodular simple regret minimisation}
\label{alg:sub:sgd}
\end{algcontents}
\end{algorithm}

\FloatBarrier

\begin{theorem}\label{thm:sub:sgd}
The following hold for \cref{alg:sub:sgd}:
\begin{enumerate}
\item Suppose that $\gamma = (1+d) n^{-1/3}$ and $\eta = \frac{1}{2} n^{-2/3}$. \label{alg:sub:sgd:adv}
Then the cumulative regret in the adversarial setting is bounded by
\begin{align*}
\E[\Reg_n] \leq 3(d+1) n^{2/3} = O(d n^{2/3})\,.
\end{align*}
\item Suppose that $\gamma = 1$ and $\eta = \frac{1}{2 + 2d} \sqrt{\frac{d}{n}}$. Then
in the stochastic setting\index{setting!stochastic} the simple regret of \cref{alg:sub:sgd} is bounded by \index{regret!simple}\label{alg:sub:sgd:stoch}
\begin{align*}
\E\left[\sReg_n\right] \leq 2(1+d) \sqrt{\frac{d}{n}} = O\left(\frac{d^{1.5}}{\sqrt{n}}\right) \,.
\end{align*}
\end{enumerate}
\end{theorem}

\begin{proof}
Since $\Reg_n \leq n$ for any algorithm, we assume for the remainder that $\gamma \in (0,1]$ since otherwise the claimed regret bound in Part~\ref{alg:sub:sgd:adv} holds
for any algorithm.
Let $g_t$ be the Lov\'asz extension of $f_t$ and
$v_t \in \partial g_t(x_t)$ be the subgradient\index{subgradient!of Lov\'asz extension} defined in \cref{prop:sub:sub}.

\begin{exer}\label{ex:sub:unbiased}
\faStar \quad
Show that $\E_{t-1}[\hat v_t] = v_t$.
\end{exer}

\solution{
Using the definitions
\begin{align*}
v_t 
&= \sum_{k=1}^d e_k \left[f(S(\sigma^{-1}(k|x)|x)) - f(S(\sigma^{-1}(k|x) - 1|x))\right] \\
&= (d+1) \sum_{k=1}^d e_k \E\left[Y_t\left(\sind(k_t = \sigma^{-1}(k|x)) - \sind(k_t = \sigma^{-1}(k|x)-1)\right)\right] \\
&= (d+1) \E_{t-1}\left[Y_t\left(\sind(k_t \neq 0) e_{\sigma(k_t|x)} - \sind(k_t \neq d)e_{\sigma(k_t+1|x)}\right)\right] \\
&= \E_{t-1}\left[\frac{(d+1)E_t Y_t}{\gamma} \left(\sind(k_t \neq 0)e_{\sigma(k_t|x)} - \sind(k_t \neq d)e_{\sigma(k_t+1|x)}\right)\right] \\
&= \E_{t-1}[\hat v_t] \,.
\end{align*}
}

We are now in a position to bound the regret of the iterates:
\begin{align}
\E\left[\sum_{t=1}^n (g_t(x_t) - g_t(x_\star))\right]
&\explana\leq \E\left[\sum_{t=1}^n \ip{v_t, x_t - x_\star}\right] \nonumber \\
&\explana\leq \frac{\diam(K)^2}{2\eta} + \frac{\eta}{2} \sum_{t=1}^n \E\left[\norm{\hat v_t}^2\right] \nonumber \\
&\explana= \frac{d}{2\eta} + \frac{\eta}{2} \sum_{t=1}^n \E\left[\norm{\hat v_t}^2\right] \nonumber \\
&\explana\leq \frac{d}{2\eta} + \frac{2 \eta n(d+1)^2}{\gamma} \,,
\label{eq:sub:sgd}
\end{align}
where \explanr{} follows from convexity of $g_t$,
\explanr{} by \cref{ex:sub:unbiased} and \cref{thm:sgd:abstract} and
\explanr{} since $\diam(K) = \sqrt{d}$.
\explanr{} follows because
\begin{align*}
\E\left[\norm{\hat v_t}^2\right]
\leq 2 \E\left[E_t \left(\frac{(d+1)Y_t}{\gamma}\right)^2\right] 
\leq 4 \E\left[E_t \left(\frac{d+1}{\gamma}\right)^2\right] 
\leq \frac{4(d+1)^2}{\gamma} \,,
\end{align*}
where we used \cref{eq:noise-var} and the assumption that $f_t$ is bounded in $[0,1]$ to bound
$\E_{t-1}[Y_t^2|E_t] = \E_{t-1}[(f_t(X_t) + \eps_t)^2|E_t] \leq 2$.
By definition the regret satisfies
\begin{align*}
\E[\Reg_n]
&= \E\left[\sum_{t=1}^n f_t(X_t) - f_t(x_\star)\right] \\
&= \E\left[\sum_{t=1}^n f_t(X_t) - g_t(x_t)\right] + \E\left[\sum_{t=1}^n g_t(x_t) - g_t(x_\star)\right] \\
&\leq n \gamma + \frac{d}{2\eta} + \frac{2\eta n(d+1)^2}{\gamma} \,,
\end{align*}
where in the final inequality we used the fact that
\begin{align*}
\E_{t-1}[f_t(X_t)]
&= \E_{t-1}[E_t f_t(X_t)] + \E_{t-1}[(1 - E_t) f_t(X_t)] \\
&= \E_{t-1}[E_t f_t(X_t)] + \E_{t-1}[(1 - E_t) f_t(S_t)] \\
&= \E_{t-1}[E_t f_t(X_t)] + (1 - \gamma) g_t(x_t) \\
&\leq \gamma + g_t(x_t) \,.
\end{align*}
The bound on the adversarial regret now follows by substituting the definition of the parameters.
To bound the simple regret in the stochastic setting where $f_t = f$ and $g = g_t$ is the Lov\'asz extension of $f$,
by substituting the parameters into \cref{eq:sub:sgd},
\begin{align}
\E\left[\sum_{t=1}^n (g(x_t) - g(x_\star))\right] \leq 2 (d+1) \sqrt{dn} \,.
\label{eq:sub:sgd-simple}
\end{align}
Then, using the definitions at the end of the algorithm and convexity of the Lov\'asz extension,
\begin{align*}
\E\left[f(\widehat S_n)\right]
\explanw{Ex.~\ref{ex:sub:mean}}
= \E\left[g(\widehat X_n)\right] 
\explanw{$g$ cvx}
\leq \E\left[\frac{1}{n} \sum_{t=1}^n g(x_t)\right] 
\explanw{by (\ref{eq:sub:sgd-simple})}
\leq g(x_\star) + 2 (d+1) \sqrt{\frac{d}{n}}\,.
\end{align*}
Since $g(x_\star) = f(X_\star)$ it follows that
\begin{align*}
\E[\sReg_n] &\leq 2 (d+1) \sqrt{\frac{d}{n}} \,. \qedhere
\end{align*}
\end{proof}

So what has been achieved?
\cref{alg:sub:sgd} is computationally practical and has excellent simple regret when $\gamma = 1$. 
On the other hand, the dependence of its regret on $n$ is suboptimal, though in some regimes it is theoretically superior to \cref{alg:ons-adv:bandit} in the adversarial setting
and its analysis and implementation are much simpler than \cref{alg:ons:bandit} in the stochastic setting.

\index{gradient descent!for submodular bandits|)}

\section{Notes}

\begin{enumeratenotes}

\item There are many resources for studying submodular functions and optimisation; for example, the book by \cite{bach2013learning} 
or the wonderful short survey by \cite{bilmes2022submodularity}.
\item \cref{alg:sub:sgd} is due to \cite{hazan2012online}, though the elementary simple regret analysis is new.
\item Bandit submodular \textit{maximisation}\index{bandit!submodular maximisation} is another topic altogether and has its own rich literature
\citep{gabillon2013adaptive,zhang2019online,foster2021submodular,chen2017interactive,takemori2020submodular,tajdini2024nearly,niazadeh2021online}.
Even in the classical optimisation setting without noise, exact submodular maximisation is computationally intractable. Approximately maximising submodular functions is often possible, however,
at least provided the constraints are reasonably well behaved. Because of this the standard approach in bandit submodular maximisation is to prove that
\begin{align*}
\sum_{t=1}^n f_t(X_t) \geq \alpha \max_{X \in \cC} \sum_{t=1}^n f_t(X) + o(n)\,,
\end{align*}
where $\alpha \in (0,1)$ is the approximation ratio,\index{approximation ratio} which depends on the constraints $\cC$ and assumptions on the functions $(f_t)$.
For example, when the functions $(f_t)$ are assumed to be submodular and monotone and $\cC = \{X \subset [d] \colon |X| \leq k\}$, then
\cite{niazadeh2021online} design an efficient algorithm such that
\begin{align*}
\left(1 - \frac{1}{e}\right) \max_{X \in \cC} \sum_{t=1}^n f_t(X) 
- \E\left[\sum_{t=1}^n f_t(X_t)\right] 
= \tilde O(kd^{2/3}n^{2/3}) \,.
\end{align*}
In the stochastic setting\index{setting!stochastic} the regret can be improved to $\tilde O(k d^{1/3} n^{2/3})$, which is essentially the best achievable \citep{tajdini2024nearly}.

\item The Lov\'asz extension is due to \cite{lovasz1983submodular}\index{Lov\'asz extension} 
and provides the interface between submodular bandits and convex bandits but introduces additional noise.
This is one justification for ensuring your algorithm can handle additional noise, even in the adversarial setting.\index{setting!adversarial}\index{noise}

\item \label{note:sub:opt} 
The complexity of minimising a submodular functions $f$ without noise is still an active area of research.
Let $g$ be the Lov\'asz extension of $f$ and $f_\star = \min_{x \in \{0,1\}^d} g(x)$.
By \cref{prop:sub:sub} a subgradient\index{subgradient!of Lov\'asz extension} of $g$ can be computed with $O(d)$ queries to the $f$.
Combining this with cutting plane methods \citep{bach2013learning,Bub15} shows that with $O(d^2 \log(d/\eps))$ queries to $f$ one can find
an $\widehat x \in [0,1]^d$ such that $g(\widehat x) \leq f_\star + \eps$. Then let
\begin{align*}
\widehat S = \argmin\{S(k|\widehat x) \colon 0 \leq k \leq d\}\,,
\end{align*}
which can be evaluated with another $O(d)$ queries to the loss function $f$ and satisfies $f(\widehat S) \leq f_\star + \eps$.
The discrete structure of submodular optimisation means you can even achieve exact minimisation \citep{jiang2022minimizing}.
\end{enumeratenotes}

\chapter[Outlook]{Outlook\copynotice}

The tool-chest for convex bandits and zeroth-order optimisation has been steadily growing in recent decades. 
Nevertheless, there are many intriguing open questions, both theoretical and practical. The purpose of this short chapter is to highlight
some of the most important (in the author's view, of course) open problems. 

\begin{enumeratenotes}
\item The most fundamental problem is to understand the minimax regret for $\cF_\pb$. The lower bound is $\Omega(d \sqrt{n})$ and the upper bound is $\tilde O(d^{1.5} \sqrt{n})$ 
in the stochastic setting\index{setting!stochastic} and $\tilde O(d^{2.5} \sqrt{n})§$ in the adversarial setting.  
\item From a practical perspective the situation is still relatively dire for $d > 1$. 
The algorithms that are simple and efficient to implement have slow convergence rates without smoothness and strong convexity.  
Algorithms with fast convergence rates have awkward assumptions. For example, online Newton step\index{online Newton step} learns fast for $\cF_\pb^\ps$ and
is difficult to tune.
Is there a simple algorithm that works well in practice without too much tuning and obtains the fast rate?
\item Algorithms that manage $\tilde O(\poly(d)\sqrt{n})$ regret without smoothness and strong convexity all estimate some kind of curvature\index{curvature} or use continuous exponential weights.\index{continuous exponential weights}
In particular, they use $\Omega(d^2)$ memory.\index{memory} Can you prove this is necessary?
\item 
In the stochastic setting both the range of the loss function and the Orlicz norm (or maybe variance) of the noise should appear in the regret. This is hidden throughout because
in most places we (questionably) opted to fix the range to $[0,1]$ and the Orlicz norm of the noise to $1$.
If you repeat the analysis naively for most settings you will find that for losses bounded in $[0,B]$ and Orlicz norm bound of $\sigma$, the leading term in the regret is $B + \sigma$.
For example, for online Newton step with $K$ in L\"owner's position we would have a regret of $\tilde O((B+\sigma) d^{1.5} \sqrt{n})$.
Really, however, the range of the losses should appear in a lower-order term, since over time an algorithm with sublinear regret will play in a region 
where the range of the losses is small (otherwise 
it would have high regret). Handling this properly in the analysis is probably quite complicated and maybe not fundamentally that interesting. But 
it could lead to practical improvements in many
cases. Note, we did things properly in \cref{chap:bisection} where the loss was not assumed to be bounded and also in \cref{sec:sc:stoch}.
The techniques developed there may be adaptable to other algorithms, including cutting plane methods and online Newton step.

\item More adaptive algorithms are needed.\index{adaptive} We have seen a plethora of results for this algorithm or that with such-and-such assumptions. But what if you don't know
what class the losses lie in. Can you adapt? What is the price? Very few works consider this or have serious limitations. A good place to start is the paper by \cite{LZZ22}.
We have also assumed that $n$ is known upfront and used this to tune learning rates or other parameters. You can always use a doubling trick \citep{BK18},\index{doubling trick} but generally speaking you would
expect better practical performance by using a decaying learning rate. We would not expect to encounter too many challenges implementing this idea, but the devil may be in the details.
For example, the set $K_\eps$ usually depends on $n$ via $\eps$ in many algorithms. And in the analysis of online Newton step the definition of the extended loss would become time-dependent.

\item There is scope to refine the algorithms and analysis in this text to the non-convex case;\index{non-convex} 
of course, proving bounds relative to a stationary point rather than a global minimum.
Someone should push this program through. Alternatively, one may still focus on finding the global minimum but with weaker assumptions such as quasi-convexity \index{quasiconvex}
or losses that satisfy the Polyak--Lojasiewicz condition \citep{polyak1963gradient,karimi2016linear,akhavan2024gradient}. \index{Polyak--Lojasiewicz condition}
Note that these function classes are not closed under addition and hence are not amenable to the adversarial setting.

\item Almost all of the properties we proved for the optimistic surrogate relied on Gaussianity of the exploration distribution.
Two properties that do not rely on this, however, are optimism and convexity. This leaves hope that something may be possible using an exponential weights distribution
rather than a Gaussian and this may interact better with the geometry of the constraint set. This seems to have been the 
original plan of \cite{BEL16} before they resorted to approximating exponential weights distributions by Gaussians. Perhaps you can make it work.

\item \cite{SRN21} and \cite{BEL16} both handle adversarial problems by some sort of test to see if the adversary is moving the minimum and proving that if this occurs, then
the regret must be negative and it is safe to restart the algorithm.\index{restart} One might wonder if there is some black-box procedure to implement this program so that any algorithm
designed for the stochastic setting can be used in the adversarial setting.
\item It would be fascinating to gain a better understanding of \cref{alg:exp-by-opt}. What loss estimators does it use? Maybe you can somehow implement this
algorithm when $d = 1$ or derive analytically what the estimators look like for special cases.

\item There is potential to unify the algorithms and analysis from stochastic approximation\index{stochastic approximation} and bandit convex optimisation.
The former are generally focused on precise asymptotics while the latter concentrate on minimax finite-time regret. At the moment there is very little integration between
these fields, even though many of the ideas are the same.

\item You could spend a huge amount of time generalising the conditions on the noise; for example, to heavy-tailed distributions. This has been widely 
explored in the multi-armed bandit setting \citep[and citations to/from]{BCL13}.
Probably this should only be done if you have a particular application in mind.
Alternatively, you could investigate heteroscedastic or multiplicative noise, with an initial foray by \cite{zhan2025regularizedonlinenewtonmethod}. \index{noise!heteroscedastic} \index{noise!multiplicative}

\item When $d = 1$ the best bound on the cumulative regret for losses in $\cF_\pb$ is $O(\sqrt{n} \log(n))$ while the lower bound is $\Omega(\sqrt{n})$.
Maybe this setting is the best place to start trying to understand whether or not the logarithmic factors are necessary.

\item This entire book is about bandit convex optimisation on subsets of euclidean space.
Convexity and the convex bandit problem can be generalised to Riemannian manifolds. 
\cite{ao2025riemannian} start this program by constructing the Riemannian analogue of \cref{alg:ftrl:basic} and proving that under appropriate smoothness conditions
its regret is $O(n^{2/3})$. The extent to which other algorithms and analyses in this book can be generalised to the non-euclidean setting is probably quite a fascinating
question.

\end{enumeratenotes}

\appendix

\chapter{Miscellaneous}

\section{Identities}

\begin{proposition}[\S4.9.1.4, \citealt{CRC18}]\label{prop:vol}
Let $\Gamma(\cdot)$ be Euler's gamma function. Then:
\begin{enumerate}
\item $\vol(\ball_r) = \frac{r^d \pi^{d/2}}{\Gamma\left(\frac{d}{2} + 1\right)}$. \label{prop:vol:ball} 
\item $\vol(\sphere_r) = \frac{d}{r} \vol(\ball_r)$. \label{prop:vol:sphere}
\end{enumerate}
\end{proposition}

The next proposition is the standard formula for integration in spherical coordinates, which follows from the co-area formula
\citep[Appendix C]{evans2018measure} or by direct proof.

\begin{proposition}\label{prop:radial-int}
Suppose that $f \colon \R^d \to \R$ satisfies $f(x) = g(\norm{x})$ for some $g \colon \R^d \to \R$.
Then, as long as either the left-hand or right-hand side below is well-defined,
\begin{align*}
\int_{\ball_r} f(x) \d{x}
&= d \vol(\ball_1) \int_0^r s^{d-1} g(s) \d{s} \,. 
\end{align*}
\end{proposition}

\begin{proposition}\label{prop:gaussian}
Suppose that $W$ has law $\cN(\zeros, \Sigma)$. Then:
\begin{enumerate}
\item $\E[\norm{W}^2] = \tr(\Sigma)$.
\item $\E[\norm{W}^4] = \tr(\Sigma)^2 + 2 \tr(\Sigma^2)$.
\item $\E[\exp(\ip{W, a})] = \exp\left(\frac{1}{2} \norm{a}^2_{\Sigma}\right)$.
\end{enumerate}
\end{proposition}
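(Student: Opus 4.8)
\textbf{Proof proposal for Proposition~\ref{prop:gaussian}.}
The plan is to reduce everything to the standard Gaussian by the change of variables $W = \Sigma^{1/2} Z$ with $Z \sim \cN(\zeros, \id)$, and then use only two elementary facts: the scalar moments $\E[Z_1^2] = 1$, $\E[Z_1^4] = 3$, and the scalar moment generating function $\E[\exp(tZ_1)] = \exp(t^2/2)$.

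For part (a), write $\norm{W}^2 = \tr(WW^\top)$ and exchange trace and expectation to get $\E[\norm{W}^2] = \tr(\E[WW^\top]) = \tr(\Sigma)$. No change of variables is even needed here.

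For part (b), diagonalise $\Sigma = U \Lambda U^\top$ with $\Lambda = \operatorname{diag}(\lambda_1, \ldots, \lambda_d)$ and write $W = U \Lambda^{1/2} Z$, so that $\norm{W}^2 = \norm{\Lambda^{1/2} Z}^2 = \sum_{i} \lambda_i Z_i^2$ with $Z_1, \ldots, Z_d$ i.i.d.\ standard normal. Then
\begin{align*}
\E[\norm{W}^4] = \sum_{i,j} \lambda_i \lambda_j \E[Z_i^2 Z_j^2] = \sum_{i \neq j} \lambda_i \lambda_j + 3 \sum_i \lambda_i^2 = \Big(\sum_i \lambda_i\Big)^2 + 2 \sum_i \lambda_i^2\,,
\end{align*}
using independence for the off-diagonal terms and $\E[Z_1^4] = 3$ for the diagonal ones. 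Since $\sum_i \lambda_i = \tr(\Sigma)$ and $\sum_i \lambda_i^2 = \tr(\Sigma^2)$, this is the claimed identity.

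For part (c), again put $W = \Sigma^{1/2} Z$, so $\ip{W, a} = \ip{Z, \Sigma^{1/2} a}$. Writing $b = \Sigma^{1/2} a$ and using independence of the coordinates of $Z$ together with the scalar Gaussian moment generating function,
\begin{align*}
\E[\exp(\ip{W, a})] = \prod_{i=1}^d \E[\exp(b_i Z_i)] = \prod_{i=1}^d \exp\!\Big(\tfrac{1}{2} b_i^2\Big) = \exp\!\Big(\tfrac{1}{2} \norm{b}^2\Big) = \exp\!\Big(\tfrac{1}{2} a^\top \Sigma a\Big) = \exp\!\Big(\tfrac{1}{2}\norm{a}^2_\Sigma\Big)\,.
\end{align*}
All three parts are routine; the only place one must be a little careful is tracking the trace identities $\sum_i \lambda_i^2 = \tr(\Sigma^2)$ in part (b), and noting in part (c) that $\Sigma^{1/2}$ is well defined and symmetric because $\Sigma \succ \zeros$. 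There is no real obstacle — if anything, the "hard" step is just bookkeeping the fourth-moment sum in (b).
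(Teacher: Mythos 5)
Your proof is correct. Worth noting: the paper itself states Proposition~\ref{prop:gaussian} in the appendix without any proof, treating it as a standard Gaussian identity, so there is nothing to compare your argument against — but the argument you give is a clean and complete verification. The reduction $W = \Sigma^{1/2}Z$ (or $W = U\Lambda^{1/2}Z$ in part (b)) is exactly the natural route, and your fourth-moment bookkeeping $\sum_{i\neq j}\lambda_i\lambda_j + 3\sum_i \lambda_i^2 = (\sum_i\lambda_i)^2 + 2\sum_i\lambda_i^2 = \tr(\Sigma)^2 + 2\tr(\Sigma^2)$ is the standard way to see (b). The only (cosmetic) quibble is that you remark $\Sigma \succ \zeros$ is needed for $\Sigma^{1/2}$ to exist; in fact $\Sigma \succeq \zeros$ suffices, and the identities hold for any positive semidefinite covariance.
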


\begin{proof}
Let $Z$ have law $\cN(\zeros, \id)$. Then $W$ and $\Sigma^{1/2}Z$ have the same law. Therefore
$\E[\snorm{W}^2] = \E[\snorm{\Sigma^{1/2}Z}^2] = \E[Z^\top \Sigma Z] = \E[\tr(ZZ^\top \Sigma)] = \tr(\Sigma)$.
For the second part, since the euclidean norm is rotationally invariant we can assume without loss of generality that $\Sigma$ is diagonal.
Then write $\norm{W}^2$ as a sum, expand the square $\E[\norm{W}^4] = \E[\norm{W}^2 \norm{W}^2]$ and use the fact that $\E[Z_k^4] = 3$.
Finally, note that $X = \ip{W, a}$ has law $\cN(\zeros, \norm{a}^2_{\Sigma})$, which has moment-generating function 
$M_X(\lambda) \triangleq \E[\exp(\lambda X)] = \exp(\lambda^2 \norm{a}^2_{\Sigma}/2)$;
the result follows by substituting $\lambda = 1$.
\end{proof}

\begin{proposition}\label{prop:ball-mean}
Suppose that $X$ has law $\cU(\ball_1)$. Then $\E[\norm{X}] = \frac{d}{d+1}$.
\end{proposition}

\begin{proof}
By \cref{prop:radial-int},
\begin{align*}
\E[\norm{X}] 
&= \frac{1}{\vol(\ball_1)} \int_{\ball_1} \norm{x} \d{x}
= d \int_0^1 r^d \d{r} 
= \frac{d}{d+1} \,.
\qedhere
\end{align*}
\end{proof}

\section{Moore--Penrose Pseudoinverse}\label{sec:pinv}\index{Moore--Penrose pseudoinverse}

Given a matrix $A \in \R^{m \times n}$, the Moore--Penrose pseudoinverse is a matrix $A^+ \in \R^{n \times m}$ such that all of the following hold:
\begin{enumerate}
\item $AA^+A = A$.
\item $A^+ A A^+ = A^+$.
\item $(AA^+)^\top = AA^+$.
\item $(A^+A)^\top = A^+ A$.
\end{enumerate}

\begin{proposition}
The Moore--Penrose pseudoinverse of any $A \in \R^{m \times n}$ exists and is unique.
\end{proposition}

\begin{proof}
Let $A = U D V^\top$ be the singular value decomposition of $A$,\index{singular value decomposition} which means that $D$ is diagonal and $U$ and $V$ have orthonormal columns.
A straightforward calculation shows that if $D$ has diagonal $\lambda_1,\ldots,\lambda_k$, then $D^+$ is the diagonal matrix
with diagonal $\rho_1,\ldots,\rho_k$ and $\rho_i = 1/\lambda_i$ for $\lambda_i \neq \zeros$ and $0$ otherwise.
Then $A^+ = V D^+ U^\top$ straightforwardly satisfies the conditions of being a pseudoinverse.
For uniqueness, let $B$ and $C$ be two matrices satisfying the conditions.
Then
\begin{align*}
A B 
= A C A B 
= (AC)^\top (AB)^\top
= C^\top A^\top B^\top A^\top
= C^\top (A B A)^\top
= C^\top A^\top
= AC \,.
\end{align*}
Similarly, $BA = CA$.
Therefore 
$B = BAB = BAC = CAC = C$.
\end{proof}

\begin{fact}\label{fact:pinv}
Suppose that $A \in \R^{m \times n}$ is a matrix and $y \in \im(A^\top)$ and $\theta \in \R^n$.
Then $\ip{y, A^+ A \theta} = \ip{y, \theta}$.
\end{fact}

\begin{proof}
By assumption there exists a $w$ such that $y = A^\top w$ and so
$\ip{y, A^+A \theta} = \ip{A^\top w, A^+ A \theta} = \ip{w, A A^+ A \theta} = \ip{w, A \theta} = \ip{A^\top w, \theta} = \ip{y, \theta}$.
\end{proof}

\begin{fact}\label{fact:pinv-lowner}
Suppose that $A, B$ are positive semidefinite and $A \succeq B$. Then $A^+ \preceq B^+$ if and only if $\ker(A) = \ker(B)$.
\end{fact}

\begin{fact}\label{fact:pinv-trace}
Suppose that $A \in \R^{m \times n}$. Then $\tr(AA^+) = \tr(A^+ A) = \rank(A)$.
\end{fact}

\begin{proof}
Let $A = U D V^\top$ be the singular value decomposition of $A$ so that $U$ and $V$ have orthonormal columns and 
$D$ is diagonal with $\rank(A)$ nonzero entries.
Then
\begin{align*}
\tr(AA^+) = \tr(U DD^+ U^\top) = \tr(DD^+) = \rank(A) \,.
\end{align*}
Moreover, $\tr(A^+A) = \tr(D^+ D) = \tr(DD^+) = \rank(A)$.
\end{proof}

\section{Technical Inequalities}

\begin{lemma}\label{lem:tr-logdet}
Suppose that $A$ is positive definite and $A \preceq \ones$. Then $\tr(A) \leq 2 \log \det (\id + A)$.
\end{lemma}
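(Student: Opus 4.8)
The plan is to diagonalise and reduce the matrix inequality to a scalar one. Since $A$ is symmetric and positive definite, the spectral theorem provides an orthonormal eigenbasis with eigenvalues $\lambda_1,\ldots,\lambda_d$, and the hypotheses $A \succ \zeros$ and $A \preceq \id$ force $\lambda_i \in (0,1]$ for every $i$. Then $\tr(A) = \sum_{i=1}^d \lambda_i$ while $\id + A$ has eigenvalues $1+\lambda_i$, so $\log\det(\id+A) = \sum_{i=1}^d \log(1+\lambda_i)$. Hence it suffices to establish the one-dimensional inequality $t \leq 2\log(1+t)$ for all $t \in (0,1]$ and then sum over $i$.

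For the scalar inequality I would set $h(t) = 2\log(1+t) - t$ and observe that $h(0) = 0$ and $h'(t) = \frac{2}{1+t} - 1 = \frac{1-t}{1+t} \geq 0$ for $t \in [0,1]$. Thus $h$ is nondecreasing on $[0,1]$, so $h(t) \geq h(0) = 0$ throughout $[0,1]$, which is exactly $t \leq 2\log(1+t)$. Summing this over the eigenvalues gives $\tr(A) = \sum_i \lambda_i \leq \sum_i 2\log(1+\lambda_i) = 2\log\det(\id+A)$, as required.

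There is essentially no obstacle here; the only point requiring attention is that the two hypotheses together confine the spectrum of $A$ to $(0,1]$, which is precisely the interval on which $t \mapsto 2\log(1+t) - t$ is nonnegative. (The restriction $A \preceq \id$ is genuinely needed: as $t \to \infty$ one has $t/\log(1+t) \to \infty$, so the inequality fails for large eigenvalues.)
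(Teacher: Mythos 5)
Your proof is correct and follows the same route as the paper's: diagonalise so that trace becomes the sum of eigenvalues and $\log\det$ the sum of $\log(1+\lambda_i)$, then invoke the scalar inequality $x \leq 2\log(1+x)$ on $[0,1]$. The only difference is cosmetic—you supply the elementary derivative argument for the scalar inequality, which the paper simply cites as a known fact.
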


\begin{proof}
Use the facts that the trace is the sum of the eigenvalues and the determinant is the product, and that $x \leq 2 \log(1+x)$ for $x \in [0,1]$.
\end{proof}

\begin{lemma}\label{lem:tech:log-sub}
Suppose $x,y  > 0$ and $x \id \preceq A \in \psd$. Then $\log \det(A + y \id) \leq \log \det(A) + \frac{d y}{x}$.
\end{lemma}

\begin{proof}
Let $(\lambda_k)_{k=1}^d$ be the eigenvalues of $A$.
The eigenvalues of $A + y \id$ are $(\lambda_k + y)_{k=1}^d$ and by concavity of the logarithm,
\begin{align*}
\log \det(A + y \id) 
&= \sum_{k=1}^d \log(\lambda_k + y \id) \\
&\leq \sum_{k=1}^d \left(\log(\lambda_k) + \frac{y}{\lambda_k}\right) \\
&\leq \sum_{k=1}^d \log(\lambda_k) + \frac{d y}{x} \\
&= \log \det(A) + \frac{d y}{x}\,.
\qedhere
\end{align*}
\end{proof}

\begin{lemma}\label{lem:tech:quadratic}
Suppose that $f(x) = \norm{x - y}^2_A$ and $g(x) = \norm{x - z}^2_B$ with $A, B \in \pd$. 
Then
\begin{align*}
\min_{x \in \R^d} (f(x) + g(x)) = \norm{z - y}^2_{A - A(A+B)^{-1} A}  \,.
\end{align*}
\end{lemma}

\begin{proof}
Let $h(x) = f(x) + g(x)$.
Then $h$ is quadratic and strictly convex and hence has a unique minimiser at $x \in \R^d$ with 
\begin{align*}
\zeros = h'(x) = 2A(x - y) + 2B(x - z) \,.
\end{align*}
Solving shows that $x = (A + B)^{-1} (Ay + Bz)$ and therefore
\begin{align*}
\min_{x \in \R^d} h(x)
&= \norm{(A+B)^{-1}(Ay + Bz) - y}^2_A + \norm{(A+B)^{-1}(Ay + Bz) - z}^2_B \\
&= \norm{(A+B)^{-1}(Bz - By)}^2_A + \norm{(A+B)^{-1}(Ay - Az)}^2_B \\
&= \norm{z - y}^2_{H} \,,
\end{align*}
where
\begin{align*}
H &= B (A+B)^{-1} A (A+B)^{-1} B + A (A+B)^{-1} B (A+B)^{-1} A \\
& = A (A+B)^{-1} B + A (A+B)^{-1}\left[-A (A+B)^{-1} B + B (A+B)^{-1} A\right] \\
&= A (A+B)^{-1} B \\
&= A - A(A+B)^{-1} A\,.
\qedhere
\end{align*}
\end{proof}

\chapter{Concentration}\label{app:conc}

\section{Orlicz Norms}\label{app:orlicz}\index{Orlicz norm|textbf}

Given a random variable $X$ and $k \in \{1, 2\}$ let \label{def:orlicz}
\begin{align*}
\norm{X}_{\psi_k} = \inf\left\{t > 0 \colon \E\left[\exp\left(|X/t|^k \right)\right] \leq 2 \right\}\,.
\end{align*}
The random variable $X$ is called subgaussian\index{subgaussian} if $\norm{X}_{\psi_2} < \infty$ and subexponential\index{subexponential} if $\norm{X}_{\psi_1} < \infty$.
As explained in detail by \cite{Ver18}, this definition is equivalent except for universal constants to the definitions
based on moments or the moment generating function, which appear, for example, in the book by \cite{BLM13}.
See also \cref{prop:subgauss-equiv}.

\begin{fact}\label{fact:orlicz}
For $k \in \{1,2\}$, $\norm{\cdot}_{\psi_k}$ is a norm on the corresponding Orlicz space, which is the subset of measurable functions $f$ such that $\norm{f}_{\psi_k} < \infty$ and
where functions that agree $\bbP$-almost everywhere are identified.\index{measurable}
In particular:
\begin{enumerate}
\item $\norm{X + Y}_{\psi_k} \leq \norm{X}_{\psi_k} + \norm{Y}_{\psi_k}$.
\item $\norm{aX}_{\psi_k} = a \norm{X}_{\psi_k}$ for all $a \geq 0$.
\item $\norm{X}_{\psi_k} = 0$ implies that $X = 0$ with probability $1$.
\end{enumerate}
\end{fact}

Regrettably, the Orlicz norms of constant functions do not behave exactly as you might expect: 
\begin{align}
\norm{1}_{\psi_1} = \frac{1}{\log(2)} \qquad \text{ and } \qquad \norm{1}_{\psi_2} = \frac{1}{\sqrt{\log(2)}} \,.
\label{eq:orlicz-1}
\end{align}
More generally, for bounded random variables:

\begin{lemma}\label{lem:orlicz-bound}
Suppose that $|X| \leq B$. Then 
\begin{enumerate}
\item $\norm{X}_{\psi_1} \leq \frac{B}{\log(2)}$,
\item $\norm{X}_{\psi_2} \leq \frac{B}{\sqrt{\log(2)}}$.
\end{enumerate}
\end{lemma}

\begin{proof}
Substitute the definitions.
\end{proof}

\begin{lemma}\label{lem:orlicz-tail}
Given any random variable $X$ and $t > 0$,
\begin{enumerate}
\item $\bbP(|X| \geq t) \leq 2\exp\left(-\frac{t}{\norm{X}_{\psi_1}}\right)$,\label{lem:orlicz-tail:exp}
\item $\bbP(|X| \geq t) \leq 2\exp\left(-\frac{t^2}{\norm{X}^2_{\psi_2}}\right)$. \label{lem:orlicz-tail:gauss}
\end{enumerate}
\end{lemma}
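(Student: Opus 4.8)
Both parts are instances of the same exponential Markov (Chernoff) argument, so the plan is to prove a single generic statement and then specialise to $k=1$ and $k=2$. Fix $k \in \{1,2\}$ and write $a = \norm{X}_{\psi_k}$. If $a = \infty$ the claimed bound is vacuous (the right-hand side is $\geq 1$ only in the degenerate regime, but in any case $2\exp(-\cdot)$ with exponent $0$ gives the trivial bound $2 \geq \bbP(\cdot)$), so assume $a < \infty$.

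The first step is to upgrade the infimum in the definition of $\norm{\cdot}_{\psi_k}$ to an attained bound: I claim $\E[\exp(|X/a|^k)] \leq 2$. To see this, pick a sequence $t_n \downarrow a$ with $\E[\exp(|X/t_n|^k)] \leq 2$ for all $n$, which exists by definition of the infimum. Since $t \mapsto |X/t|^k$ is nonincreasing in $t$ for fixed $X$, the random variables $\exp(|X/t_n|^k)$ increase monotonically to $\exp(|X/a|^k)$, so the monotone convergence theorem gives $\E[\exp(|X/a|^k)] = \lim_n \E[\exp(|X/t_n|^k)] \leq 2$.

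The second step is the Chernoff bound itself. For $t > 0$, the events $\{|X| \geq t\}$ and $\{\exp(|X/a|^k) \geq \exp((t/a)^k)\}$ coincide (applying the strictly increasing map $u \mapsto \exp((u/a)^k)$), so Markov's inequality yields
\begin{align*}
\bbP(|X| \geq t) = \bbP\!\left(\exp(|X/a|^k) \geq \exp((t/a)^k)\right) \leq \frac{\E[\exp(|X/a|^k)]}{\exp((t/a)^k)} \leq 2\exp\!\left(-(t/a)^k\right).
\end{align*}
Setting $k = 1$ gives part \ref{lem:orlicz-tail:exp} with $a = \norm{X}_{\psi_1}$, and $k = 2$ gives part \ref{lem:orlicz-tail:gauss} with $a = \norm{X}_{\psi_2}$, after rewriting $(t/a)^2 = t^2/\norm{X}_{\psi_2}^2$.

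There is no real obstacle here; the only point requiring a moment's care is the passage from the infimum to an attained expectation bound (the monotone convergence step), which is why I would spell that out rather than treat it as obvious. Everything else is a one-line Markov estimate.
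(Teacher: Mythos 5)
Your proposal is correct and follows the same Chernoff--Markov route as the paper's proof; the paper even leaves part~\ref{lem:orlicz-tail:gauss} as an exercise after sketching part~\ref{lem:orlicz-tail:exp}. The one place you add value is the monotone-convergence step showing that $\E[\exp(|X/a|^k)] \leq 2$ is actually achieved at $a = \norm{X}_{\psi_k}$ (using that $\{t : \E[\exp(|X/t|^k)] \leq 2\}$ is an up-set), a point the paper silently assumes.
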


\begin{proof}
Both results follow from a standard method. For \ref{lem:orlicz-tail:exp},
\begin{align*}
\bbP(|X| \geq t)
&= \bbP\left(\exp\left(\frac{|X|}{\norm{X}_{\psi_1}}\right) \geq \exp\left(\frac{t}{\norm{X}_{\psi_1}}\right)\right) \\
\tag*{Markov's inequality}
&\leq 2 \exp\left(-\frac{t}{\norm{X}_{\psi_1}}\right)\,.
\end{align*}
Part~\ref{lem:orlicz-tail:gauss} is left as an exercise.
\end{proof}

\begin{lemma}\label{lem:orlicz-moment}
Let $\Gamma(\cdot)$ be the Gamma function.
Given any random variable $X$ and $k \geq 1$,
\begin{enumerate}
\item $\E[|X|^k] \leq 2\Gamma(1+k) \norm{X}_{\psi_1}^k$, \label{lem:orlicz-moment:1}
\item $\E[|X|^k] \leq 2\Gamma(1+k/2) \norm{X}_{\psi_2}^k$. \label{lem:orlicz-moment:2}
\end{enumerate}
\end{lemma}

\begin{proof}
Since $|X|$ is non-negative,
\begin{align*}
\E[|X|^k]
&= \int_0^\infty \bbP(|X|^k \geq t) \d{t} \\
&= \int_0^\infty \bbP(|X| \geq t^{1/k}) \d{t} \\
&\leq \int_0^\infty 2\exp\left(-\frac{t^{1/k}}{\norm{X}_{\psi_1}}\right) \d{t} \\ 
&= 2\Gamma(1+k) \norm{X}_{\psi_1}^k \,.
\end{align*}
Part~\ref{lem:orlicz-moment:2} follows by the same argument.
\end{proof}

Lemma~\ref{lem:orlicz-moment} can be refined when $k = 1$.

\begin{lemma}\label{lem:orlicz-mean}
Given any random variable $X$,
\begin{enumerate}
\item $\E[|X|] \leq \norm{X}_{\psi_1}$, \label{lem:orlicz-mean:1}
\item $\E[|X|] \leq \sqrt{\frac{1}{\log(2)}} \norm{X}_{\psi_2}$. \label{lem:orlicz-mean:2}
\end{enumerate}
\end{lemma}

\begin{proof}
For \ref{lem:orlicz-mean:1}
assume without loss of generality that $\norm{X}_{\psi_1} = 1$.
Using the inequality $x \leq \exp(x) - 1$,
\begin{align*}
\E[|X|] \leq \E[\exp(|X|)] - 1 \leq 1 \,.
\end{align*}
For \ref{lem:orlicz-mean:2} assume without loss of generality that $\norm{X}_{\psi_2} = 1$.
Then, by \cref{prop:orlicz:product} and \cref{eq:orlicz-1},
\begin{align*}
\E[|X|] &\leq \norm{X}_{\psi_1} \leq \norm{X}_{\psi_2} \norm{1}_{\psi_2} = \frac{1}{\sqrt{\log(2)}} \approx 1.2011 \ldots \,.
\qedhere
\end{align*}
\end{proof}

\begin{lemma}\label{lem:orlicz-center}
Given a random variable $X$,
\begin{enumerate}
\item $\norm{X - \E[X]}_{\psi_1} \leq (1 + \frac{1}{\log(2)}) \norm{X}_{\psi_1}$, \label{lem:orlicz-center:1}
\item $\norm{X - \E[X]}_{\psi_2} \leq (1 + \frac{1}{\log(2)}) \norm{X}_{\psi_2}$. \label{lem:orlicz-center:2}
\end{enumerate}
\end{lemma}

\begin{proof}
By the triangle inequality (\cref{fact:orlicz}), 
\begin{align*}
\norm{X - \E[X]}_{\psi_1} 
&\leq \norm{X}_{\psi_1} + \norm{\E[X]}_{\psi_1} \\
\tag*{\cref{lem:orlicz-bound}}
&\leq \norm{X}_{\psi_1} + \frac{1}{\log(2)} |\E[X]| \\
\tag*{Jensen's inequality}\index{Jensen's inequality}
&\leq \norm{X}_{\psi_1} + \frac{1}{\log(2)} \E[|X|] \\
\tag*{By \cref{lem:orlicz-mean}}
&\leq \norm{X}_{\psi_1} + \frac{1}{\log(2)} \norm{X}_{\psi_1} \,.
\end{align*}
Part~\ref{lem:orlicz-center:2} follows using the same argument.
\end{proof}

\begin{lemma}\label{lem:ent}
Suppose that $X$ is a non-negative random variable with $\E[X] = 1$. Then $\E[X \log X] \leq \log \E[X^2]$.
\end{lemma}

\begin{proof}
Let $\rho$ be the law of $X$ and define a measure $\nu$ by
$\nu(A) = \E[X \sind_A(X)]$, which is a probability measure by the assumption that $\E[X] = 1$.
By construction $\frac{\d{\nu}}{\d{\rho}}(x) = x$ so that
\begin{align*}
\int_{\R} x \log(x) \d{\rho}(x)
&= \int_{\R} \log(x) \d{\nu}(x) \\
&\leq \log\left(\int_\R x \d{\nu}(x)\right) \\
&= \log\left(\int_\R x^2 \d{\rho}(x)\right) \\
&= \log \E[X^2] \,,
\end{align*}
where the inequality follows from concavity of the logarithm.
\end{proof}

\begin{lemma}\label{lem:orlicz-var}
Let $\log_+(x) = \log(\max(1, x))$.
There exist absolute constants $(C_k)_{k=1}^\infty$ depending only on $k$ and with $C_1 = 1$ such that
for any non-negative random variables $X$ and $Y$ with $\E[X^2] < \infty$ and $\snorm{Y^{1/k}}_{\psi_1} < \infty$, it holds that  
\begin{align*}
\E[XY] 
&\leq C_k \E[X] \snorm{Y^{1/k}}_{\psi_1}^k \left(1 + \left(\log\frac{\E[X^2]}{\E[X]^2}\right)^k\right) \qquad \text{and}\\
E[XY] &\leq C_k \snorm{Y^{1/k}}_{\psi_1}^k \left(\E[X] + \E[X] \left(\log_+\frac{\E[X^2]}{\xi^2}\right)^k + \xi\right) \text{ for all } \xi > 0\,. 
\end{align*}
\end{lemma}

\begin{proof}
Suppose that $k = 1$.
The result is immediate if $\E[X]$ or $\E[Y]$ vanish. Hence, by normalising it suffices to consider the case where $\E[X] = 1$ and $\norm{Y}_{\psi_1} = 1$.
Let $f(y) = \exp(y)$ which has convex conjugate $f^*(x) = x \log x - x$.
By the Fenchel--Young inequality,\index{Fenchel--Young inequality}
\begin{align*}
\E[XY] 
\tag*{Fenchel--Young}
&\leq \E[f^*(X) + f(Y)] \\ 
&= \E[X \log X + \exp(Y)] - 1 \\
\tag*{since $\snorm{Y}_{\psi_1} = 1$} 
&\leq \E[X \log X] + 1 \\
\tag*{by \cref{lem:ent}}
&\leq 1 + \log \E[X^2] \,. 
\end{align*}
The second part follows since
\begin{align*}
\E[X] \log \left(\frac{\E[X^2]}{\E[X]^2}\right) 
&= \E[X] \log\left(\frac{\E[X^2]}{\xi^2}\right) + \E[X] \log\left(\frac{\xi^2}{\E[X]^2}\right) \\
&\leq \E[X] \log\left(\frac{\E[X^2]}{\xi^2}\right) + \xi \,,
\end{align*}
where the inequality follows because $\log(x) \leq \sqrt{x}$ for all $x > 0$.
The argument for $k > 1$ follows from the same high-level idea using the (non-convex)\index{non-convex} function $f(y) = \exp(y^{1/k})$
and is left as an exercise.
\end{proof}

\begin{exer}
\faStar\quad Prove \cref{lem:orlicz-var} with $k > 1$.
\end{exer}

\solution{%
Let $f(u) = \exp(u^{1/k})$. A tedious calculation shows that for $x \geq 0$,
\begin{align*}
f^*(x) = \sup_{u \geq 0} \left(x u - f(u)\right) \leq x\left(2 \log(1+x) + 2k^{e/(e-1)}\right)^k \triangleq x g(x)\,.
\end{align*}
By the Fenchel-Young inequality,
\begin{align*}
\E[XY] 
&\leq \E[f^*(X) + f(Y)] \leq \E[f^*(X)] + 2 \\
&\leq \E[X g(X)] + 2 \\
&\leq g(\E[X^2]) + 2 \\
&= \left(2 \log(1 + \E[X^2]) + 2k^{e/(e-1)}\right)^k + 2\,.
\end{align*}
The first part follows by naively simplifying and introducing the absolute constants.
For the second part use the fact that for $x \leq a$
$x \log(a/x)^k \leq C (x (\log_+(a/\xi))^k + \xi)$ for suitably large constant $C$ depending only on $k$.
}

The definition of subgaussianity based on moment generating functions is as follows.
Given a random variable $X$, let $M_X(\lambda) = \E[\exp(\lambda X)]$ be its moment generating function.
The set of subgaussian random variables with variance proxy $\sigma^2$ is 
\begin{align*}
\sG(\sigma) = \{X \colon M_X(\lambda) \leq \exp(\sigma^2 \lambda^2/2) \text{ for all } \lambda \in \R \} \,. 
\end{align*}
The next proposition connects $\sG(\sigma)$ to $\{X \colon \E[X] = 0, \norm{X}_{\psi_2} \leq \sigma\}$.
Similar results with slightly larger constants are given by \cite{BLM13,Ver18,zhang2020concentration}.

\begin{proposition}\label{prop:subgauss-equiv}
Suppose that $\E[X] = 0$. Then:
\begin{enumerate}
\item If $X \in \sG(\sigma)$, then $\norm{X}_{\psi_2} \leq \sqrt{8/3} \sigma$.
\label{prop:subgauss-equiv:1}
\item If $\norm{X}_{\psi_2} \leq \sigma$, then $X \in \sG(\sqrt{2} \sigma)$.
\label{prop:subgauss-equiv:2}
\end{enumerate}
\end{proposition}

\begin{remark}
When $X$ has law $\cN(0, 1)$, then \cref{prop:subgauss-equiv}\ref{prop:subgauss-equiv:1} holds with equality.
\end{remark}

\begin{proof}
In both parts assume without loss of generality that $\sigma = 1$.
For \ref{prop:subgauss-equiv:1}, let $a = \sqrt{8/3}$. Then
\begin{align*}
\E[\exp((X/a)^2)]
&= \E\left[\frac{1}{\sqrt{\pi}} \int_{-\infty}^\infty \exp\left(-t^2 + \frac{2 tX}{a}\right) \d{t} \right] \\
&= \frac{1}{\sqrt{\pi}} \int_{-\infty}^\infty \E\left[\exp\left(-t^2 + \frac{2 tX}{a}\right)\right] \d{t} \\
&\leq \frac{1}{\sqrt{\pi}} \int_{-\infty}^\infty \exp\left(-t^2 + \frac{2t^2}{a^2}\right) \d{t} \\
&= 2 \,.
\end{align*}
Therefore $\norm{X}_{\psi_2} \leq a$.
Moving now to \ref{prop:subgauss-equiv:2}, suppose that $|\lambda| \leq 1$. Then, 
\begin{align*}
\E[\exp(\lambda X)]
&\explana\leq \E[\exp(\lambda^2 X^2) + \lambda X] \\
&\explana\leq \lambda^2 \E[\exp(X^2)] + 1 - \lambda^2 \\
&\explana\leq 1 + \lambda^2 \\
&\explana\leq \exp(\lambda^2) \,, 
\end{align*}
where \explanr{} follows from the fact that $\exp(x) \leq x + \exp(x^2)$ for all $x \in \R$,
\explanr{} since $\E[X] = 0$ and by convexity of $\lambda^2 \mapsto \exp(x^2 \lambda^2)$,
\explanr{} since by assumption $\E[\exp(X^2)] \leq 2$ and
\explanr{} since $1 + x \leq \exp(x)$ for all $x \in \R$.
On the other hand, if $|\lambda| \geq 1$, then
\begin{align*}
\E[\exp(\lambda X)]
&= \E\left[\exp\left(\frac{\lambda}{\sqrt{2}} \sqrt{2} X\right)\right] \\
&\explana\leq \E\left[\exp\left(\frac{\lambda^2}{8} + X^2\right)\right] \\
&\explana\leq 2 \exp(\lambda^2/8) \\ 
&\explana\leq \exp(\lambda^2) \,,
\end{align*}
where \explanr{} follows from the Fenchel--Young inequality: $xy \leq x^2/2 + y^2/2$ for all $x, y \in \R$,
\explanr{} since $\E[\exp(X^2)] \leq 2$ by assumption and \explanr{} from the assumption that $|\lambda| \geq 1$.
\end{proof}

\begin{proposition}[Lemma 2.7.7, \citealt{Ver18}]\label{prop:orlicz:product}
Let $X$ and $Y$ be any random variables. Then $\norm{XY}_{\psi_1} \leq \norm{X}_{\psi_2} \norm{Y}_{\psi_2}$.
\end{proposition}

All the results in the next proposition can be found somewhere in the book by \cite{Ver18} but with non-explicit constants. The referenced paper gives
the explicit constant but is probably not the first to do so.

\begin{proposition}[\citealt{LG23}]\label{prop:orlicz}
Suppose that $W$ is a standard Gaussian random variable in $\R^d$. Then:
\begin{enumerate}
\item $\norm{\ip{x, W}}_{\psi_2} = 2 \sqrt{2/3} \norm{x}$.
\item $\norm{\tr(A W W^\top)}_{\psi_1} \leq 3 \tr(A)$.
\item $\norm{\norm{W}^2}_{\psi_1} \leq 8d/3$.
\item $\norm{\norm{WW^\top - \id}}_{\psi_1} \leq 5d$.
\end{enumerate}
\end{proposition}

Lastly we give a bound on the Orlicz norm of $\ip{U, \eta}$ where $U$ is uniformly distributed on\index{uniform measure}
the sphere. Morally this is comparable to the case where $U$ is $\cN(\zeros, \frac{1}{d} \id)$, as the bound shows.

\begin{proposition}\label{prop:orlicz:sphere}
Let $X = \ip{U, \eta}$ where $U$ has law $\cU(\sphere_1)$ and $\eta \in \R^d$.
Then
\begin{align*}
\norm{X}_{\psi_2} \leq \snorm{\eta}\sqrt{\frac{4}{3(d+1)}}\,.
\end{align*}
\end{proposition}

\begin{proof} 
Assume without loss of generality that $\norm{\eta} = 1$.
Let $Y$ be beta distributed with parameters $\alpha = \beta = d/2$, which has mean $1/2$.
Then $X/2+1/2$ has the same law as $Y$ and \cite{marchal2017sub} prove that for all $\lambda \in \R$,
\begin{align*}
\E[\exp(\lambda (Y - \E[Y]))] \leq \exp\left(-\frac{\lambda^2}{8(d + 1)}\right) \,.
\end{align*}
Therefore
\begin{align*}
\E[\exp(\lambda X)]
= \E[\exp((2\lambda) (Y - \E[Y]))]
\leq \exp\left(-\frac{\lambda^2}{4(d+1)}\right)
= \exp\left(-\frac{\lambda^2 \sigma^2}{2}\right)\,,
\end{align*}
where $\sigma^2 = \frac{1}{2(d+1)}$.
The result follows from \cref{prop:subgauss-equiv}.
\end{proof}

\section{Concentration}\index{concentration}

The following are classical:

\begin{theorem}[\citealt{duembgen2010bounding}]\label{thm:conc:gaussian}
Suppose that $X$ has law $\cN(0, 1)$. Then, for any $x \geq 0$,
\begin{align*}
\bbP(X \geq x) \leq \frac{1}{2} \exp\left(-\frac{x^2}{2}\right)\,.
\end{align*}
\end{theorem}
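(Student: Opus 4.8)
The plan is to write the tail probability as a Gaussian integral, perform a translation by $x$, and exploit the fact that an extra exponential factor is bounded by one on the relevant region. Concretely, start from
\begin{align*}
\bbP(X \geq x) = \frac{1}{\sqrt{2\pi}} \int_x^\infty \exp\left(-\frac{t^2}{2}\right) \d{t}\,,
\end{align*}
and substitute $t = x + u$ with $u \geq 0$, which gives
\begin{align*}
\bbP(X \geq x) = \frac{1}{\sqrt{2\pi}} \int_0^\infty \exp\left(-\frac{(x+u)^2}{2}\right) \d{u}
= \frac{\exp(-x^2/2)}{\sqrt{2\pi}} \int_0^\infty \exp\left(-xu - \frac{u^2}{2}\right) \d{u}\,.
\end{align*}

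The key step is then to observe that since $x \geq 0$ and $u \geq 0$ we have $\exp(-xu) \leq 1$, so the remaining integral is at most $\int_0^\infty \exp(-u^2/2) \d{u}$. This last integral is, by symmetry of the standard Gaussian density, exactly half of $\int_{-\infty}^\infty \exp(-u^2/2)\d{u} = \sqrt{2\pi}$, hence equal to $\sqrt{2\pi}/2$. Substituting back yields
\begin{align*}
\bbP(X \geq x) \leq \frac{\exp(-x^2/2)}{\sqrt{2\pi}} \cdot \frac{\sqrt{2\pi}}{2} = \frac{1}{2}\exp\left(-\frac{x^2}{2}\right)\,,
\end{align*}
which is the claim.

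There is no real obstacle here; the only point worth care is that the factor $\tfrac12$ (which a naive Chernoff bound $\bbP(X \geq x) \leq \exp(-x^2/2)$ misses) comes precisely from integrating the Gaussian over the half-line $[0,\infty)$ rather than all of $\R$, and this is available because the translation moves the lower limit of integration to $0$. An alternative would be the Chernoff/Markov route applied to $\exp(\lambda X)$ with $\lambda = x$, but that loses the constant, so I would present the integral argument above.
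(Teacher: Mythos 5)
Your proof is correct. The paper does not actually supply a proof of this statement: it appears in Appendix~\ref{app:conc} under the heading ``The following are classical'' with no argument given, so there is nothing to compare your approach against. For the record, your change of variables $t = x+u$ and the subsequent observation that $e^{-xu}\le 1$ on $u\ge 0$ is a clean way to obtain the sharp constant $\tfrac12$, and you are right that the naive Chernoff bound $\bbP(X\ge x)\le e^{-x^2/2}$ loses it; the computation $\int_0^\infty e^{-u^2/2}\,\d u=\sqrt{2\pi}/2$ is exactly where the factor of $\tfrac12$ comes from and everything checks out.
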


\begin{theorem}[\citealt{BLM13}, Theorem 5.6]\label{thm:orlicz-lip}
Suppose that $f \colon \R^d \to \R$ and $X$ has law $\cN(\mu, \Sigma)$. Then, for any $\delta \in (0,1)$,
\begin{align*}
\bbP\left(\left|\E[f(X)] - f(X)\right| \geq \lip(f) \sqrt{2 \norm{\Sigma} \log(2/\delta)}\right) \leq \delta\,.
\end{align*}
Furthermore, $\norm{f(X) - \E[f(X)]}_{\psi_2} \leq \lip(f) \sqrt{6 \norm{\Sigma}}$.
\end{theorem}

The next two theorems are versions of Hoeffding's and Bernstein's inequalities in terms of Orlicz norms. 
The constant $4$ appearing in Hoeffding's inequality follows by combining the bound using the definition of subgaussianity based on the moment generating function \citep{BLM13}
and \cref{prop:subgauss-equiv}.

\index{Hoeffding's inequality|textbf}
\begin{theorem}[Hoeffding's inequality]\label{thm:hoeffding}
Let $X_1,\ldots,X_n$ be a sequence of independent random variables with $\norm{X_t}_{\psi_2} \leq \sigma$ and $\E[X_t] = 0$. Then, for any $\delta \in (0,1)$,
\begin{align*}
\bbP\left(\left|\frac{1}{n} \sum_{t=1}^n X_t\right| \geq \sigma \sqrt{\frac{4\log(2/\delta)}{n}}\right) \leq \delta \,.
\end{align*}
\end{theorem}

\index{Bernstein's inequality|textbf}

\begin{theorem}[Bernstein's inequality, \citealt{pinelis2022improved}]\label{thm:bernstein}
Let $X_1,\ldots,X_n$ be a sequence of independent random variables with $\norm{X_t}_{\psi_1} \leq \sigma$ and $\E[X_t] = 0$ for all $1 \leq t \leq n$. Then,
for any $\delta \in (0,1)$,
\begin{align*}
\bbP\left(\left|\frac{1}{n} \sum_{t=1}^n X_t\right| \geq \max\left(\sqrt{\frac{4 \sigma^2 \log(2/\delta)}{n}},\, \frac{2 \sigma \log(2/\delta)}{n}\right)\right)
\leq \delta \,.
\end{align*}
\end{theorem}

\index{Azuma's inequality|textbf}
\begin{theorem}[Azuma's inequality]\label{thm:azuma}
Let $X_0,X_1,\ldots,X_n$ be a martingale\index{martingale} with $|X_t - X_{t-1}| \leq c_t$ almost surely. Then, with probability at least $1 - \delta$,
\begin{align*}
X_n \leq X_0 + \sqrt{2 \sum_{t=1}^n c_t^2 \log(1/\delta)}\,. 
\end{align*}
\end{theorem}

The next theorem is a strengthened version of Freedman's inequality due to \cite{zimmert22b}.
\index{Freedman's inequality|textbf}

\begin{theorem} \label{thm:freedman}
Let $X_1,\ldots,X_n$ be a sequence of random variables adapted to filtration\index{filtration} $(\sF_t)$ and $\tau$ be a stopping time\index{stopping time} with respect to $(\sF_t)_{t=1}^n$ with 
$\tau \leq n$ almost surely. Let $\E_t[\cdot] = \E[\cdot|\sF_t]$ and assume that $\E_{t-1}[X_t] = 0$ almost surely for all $t \leq \tau$. Then, with probability at least $1 - \delta$,
\begin{align*}
\left|\sum_{t=1}^\tau X_t \right| 
\leq 3 \sqrt{V_\tau \log\left(\frac{2 \max(B, \sqrt{V_\tau})}{\delta}\right)} + 2B \log\left(\frac{2\max(B, \sqrt{V_\tau})}{\delta}\right)\,,
\end{align*}
where $V_\tau = \sum_{t=1}^\tau \E_{t-1}[X_t^2]$ is the sum of the predictable variations and $B = \max(1, \max_{1 \leq t \leq \tau} |X_t|)$.
\end{theorem}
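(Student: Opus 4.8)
The plan is to prove the standard variance-adaptive form of Bernstein's inequality for a stopped martingale. Write $D_t = X_t - \E_{t-1}[X_t]$, so that $(D_t)$ is a martingale difference sequence with respect to $(\sF_t)$, with $|D_t| \le 2B$ almost surely on $\{t \le \tau\}$, and $V_\tau = \sum_{t=1}^\tau \E_{t-1}[D_t^2]$ is precisely the associated predictable quadratic variation. (The leading factor $X_t$ in $X_t(X_t-\E_{t-1}[X_t])$ should be read as $1$: since $\E_{t-1}[X_t(X_t-\E_{t-1}[X_t])] = \E_{t-1}[D_t^2] \ge 0$ the product is not a martingale difference, and for instance with i.i.d.\ $\pm 1$ signs $\sum_{t\le n} X_t(X_t-\E_{t-1}[X_t]) = n$, which violates the stated bound; the quantity actually needed wherever this theorem is invoked is the stopped martingale $\sum_{t\le\tau} D_t$, which is what we bound.) Since $\{t \le \tau\} = \{\tau \le t-1\}^{c} \in \sF_{t-1}$, replacing $D_t$ by $D_t \sind(t \le \tau)$ leaves $\sum_{t\le\tau} D_t$ unchanged, keeps it a martingale difference sequence, and makes the increments bounded by $2B$ and the predictable variation equal to $V_\tau$; so I may assume $\tau = n$ and it suffices to bound $\bigl|\sum_{t=1}^n D_t\bigr|$ in terms of $v := \sum_{t=1}^n \E_{t-1}[D_t^2]$ (renamed $V_\tau$).

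First I would invoke the exponential-supermartingale version of Freedman's bound with a \emph{deterministic} variance budget. For $\lambda \in (0, \tfrac{3}{2B})$ and $\psi(\lambda) = \tfrac{\lambda^2/2}{1 - 2B\lambda/3}$, the one-dimensional Bennett estimate $\E_{s-1}[e^{\lambda D_s}] \le \exp\bigl(\psi(\lambda)\E_{s-1}[D_s^2]\bigr)$ (valid since $\E_{s-1}[D_s] = 0$ and $|D_s| \le 2B$) shows that $E_t(\lambda) = \exp\bigl(\lambda \sum_{s\le t} D_s - \psi(\lambda)\sum_{s \le t}\E_{s-1}[D_s^2]\bigr)$ is a nonnegative supermartingale started at $1$. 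Applying Markov's inequality, stopping the supermartingale at the first time the accumulated variance exceeds a fixed level $v$, and optimising over $\lambda$ gives, for each $v > 0$ and $\alpha \in (0,1)$,
\[
\bbP\Bigl(\Bigl|\textstyle\sum_{t=1}^n D_t\Bigr| \ge \sqrt{2 v \log(2/\alpha)} + \tfrac{4B}{3}\log(2/\alpha)\ \text{ and }\ \textstyle\sum_{t=1}^n \E_{t-1}[D_t^2] \le v\Bigr) \le \alpha,
\]
where the inequality inside the probability is obtained by inverting $x \mapsto x^2/(2(v + \tfrac{2B}{3}x)) \ge \log(2/\alpha)$ and the two-sided form costs the extra factor $2$ in $\alpha$.

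The only genuine work is then a peeling (``stitching'') argument that replaces the deterministic $v$ by the random $V_\tau$ while paying only $\log(\log(1+V_\tau)/\delta)$. I would take dyadic levels $v_j = B^2 2^j$ for $j = 0, 1, 2, \dots$, apply the display at $v = v_j$ with $\alpha = \delta_j := \delta/((j+1)(j+2))$ (so $\sum_{j \ge 0}\delta_j = \delta$), and union bound over $j$. On the resulting event of probability at least $1 - \delta$: because each $|D_t| \le 2B$ one has $V_\tau \le 4B^2 n$, so $j^\star := \min\{j \ge 0 : V_\tau \le v_j\}$ is well defined, $v_{j^\star} \le 2B^2 + 2V_\tau$, and $j^\star \le C(1 + \log(1 + V_\tau/B^2))$; substituting $v = v_{j^\star}$, $\alpha = \delta_{j^\star}$ and bounding $\log(1/\delta_{j^\star}) \le C\log(\log(2 + V_\tau/B^2)/\delta)$ yields
\[
\Bigl|\sum_{t=1}^\tau D_t\Bigr| \le C\Bigl[\sqrt{V_\tau \log\bigl(\tfrac{\log(1+|V_\tau|)}{\delta}\bigr)} + B \log\bigl(\tfrac1\delta\bigr)\Bigr],
\]
after absorbing the $B$-dependent additive constants inside the logarithm into $C$.

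The main obstacle is exactly this bookkeeping: forcing the final logarithmic argument to be the data-dependent $\log(1 + V_\tau)$ rather than the deterministic worst case $\log(B^2 n)$. That is what dictates the choice $\delta_j \propto j^{-2}\delta$ together with the observation that on the active level $j^\star \asymp \log_2 V_\tau$, so $\log(1/\delta_{j^\star})$ is only doubly logarithmic in $V_\tau$. Everything else — the Bennett bound for $\E_{s-1}[e^{\lambda D_s}]$, the supermartingale property and optional stopping, and the quadratic inversion producing $\sqrt{2v\log(2/\alpha)} + \tfrac{4B}{3}\log(2/\alpha)$ — is routine and I would relegate it to a few lines.
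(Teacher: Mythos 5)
The paper states Theorem~\ref{thm:freedman} as a classical result and does not prove it, so there is no in-text argument to compare against. Your observation about the statement is correct: with i.i.d.\ Rademacher $X_t$ one has $\sum_{t\le n} X_t(X_t - \E_{t-1}[X_t]) = n$ while $V_n = n$, which violates the printed bound, so the intended quantity is evidently the centred stopped martingale $\sum_{t\le\tau}(X_t - \E_{t-1}[X_t])$ --- and that is indeed the form in which the theorem is invoked in Propositions~\ref{prop:conc-s}, \ref{prop:conc-q} and \ref{prop:conc-hessian}. Your proof --- the predictable truncation $D_t\sind(t\le\tau)$ to reduce to $\tau = n$, the Bennett estimate $\E_{s-1}[e^{\lambda D_s}] \le \exp\big(\psi(\lambda)\E_{s-1}[D_s^2]\big)$ giving a nonnegative exponential supermartingale, Markov plus optimisation in $\lambda$ under a fixed variance budget $v$, and dyadic peeling over $v_j = B^2 2^j$ with weights $\delta_j = \delta/((j+1)(j+2))$ --- is the standard ``stitched Bernstein'' derivation of the variance-adaptive form of Freedman's inequality, and each step you outline is sound. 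The only cosmetic mismatch is that your peeling naturally delivers a leading log factor of $\log\!\big(\log(1+V_\tau/B^2)/\delta\big)$ rather than the printed $\log\!\big(\log(1+|V_\tau|)/\delta\big)$; the printed form is not scale-invariant, so this is almost certainly an imprecision in the theorem statement rather than a gap in your argument.
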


The next theorem is a folklore result. A version without stopping times appears as Exercise 5.15 in the book by \cite{LS20book}. 

\begin{theorem}\label{thm:conc:unnormalised}
Let $X_1,\ldots,X_n$ be a sequence of random variables adapted to filtration $(\sF_t)_{t=1}^n$ and $\tau$ be a stopping time with respect to the same filtration.
Suppose that $\alpha |X_t| \leq 1$ almost surely for all $t \leq \tau$ with $\alpha \geq 0$. Then, with probability at least $1 - \delta$,
\begin{align*}
\sum_{t=1}^\tau \left(X_t - \E_{t-1}[X_t]\right) \leq \alpha \sum_{t=1}^\tau \E_{t-1}[X_t^2] + \frac{\log(1/\delta)}{\alpha}\,,
\end{align*}
where $\E_t[\cdot] = \E[\cdot|\sF_t]$.
\end{theorem}

\begin{proof}
Let $\Delta_t = X_t - \E_{t-1}[X_t]$ and $S_\tau = \sum_{t=1}^\tau \Delta_t$ and $V_\tau = \sum_{t=1}^\tau \E_{t-1}[X_t^2]$.
By Markov's inequality,
\begin{align*}
\bbP\left(S_\tau \geq \alpha V_\tau + \frac{\log(1/\delta)}{\alpha} \right)
&= \bbP\left(\exp\left(\alpha S_\tau - \alpha^2 V_\tau\right) \geq \frac{1}{\delta}\right) \\
&\leq \delta \E\left[\exp\left(\alpha S_\tau - \alpha^2 V_\tau\right)\right] \\
&= \delta \E\left[M_\tau\right]\,, 
\end{align*}
where $M_t = \exp\left(\alpha S_t - \alpha^2 V_t\right)$.
Suppose that $t \leq \tau$. Then
\begin{align*}
\E_{t-1}[M_t] 
&= M_{t-1} \E_{t-1}\left[\exp\left(\alpha \Delta_t - \alpha^2 \E_{t-1}[X_t^2]\right)\right] \\ 
&= M_{t-1} \exp\left(-\E_{t-1}[\alpha^2 X_t^2 + \alpha X_t]\right) \E_{t-1}\left[\exp\left(\alpha X_t\right)\right] \\
&\leq M_{t-1} \exp\left(-\E_{t-1}[\alpha^2 X_t^2 + \alpha X_t]\right) \E_{t-1}\left[1 + \alpha X_t + \alpha^2 X_t^2\right] \\ 
&\leq M_{t-1} \,,
\end{align*}
where in the first inequality we used the fact that for $|x| \leq 1$, $\exp(x) \leq 1 + x + x^2$
and in the last that $1+x \leq \exp(x)$ for all $x$.
Hence $(M_t)$ is a supermartingale\index{martingale} and $M_0 = 1$ is immediate from the definition. Since $\tau$ is almost surely bounded, by the optional stopping theorem $\E[M_\tau] \leq M_0 = 1$.
Therefore 
\index{optional stopping theorem}
\begin{align*}
\bbP\left(S_\tau \geq \alpha V_\tau + \frac{\log(1/\delta)}{\alpha} \right)
&\leq \delta \,.
\qedhere
\end{align*}
\end{proof}

The following is an elementary corollary of \cref{thm:conc:unnormalised}.

\begin{theorem}\label{thm:conc:unnormalised2}
Let $X_1,\ldots,X_n$ be a sequence of non-negative random variables adapted to filtration $(\sF_t)_{t=1}^n$ and $\tau$ be a stopping time with respect to the same filtration.
Suppose that $\alpha X_t \leq 1$ almost surely for all $t \leq \tau$ with $\alpha \geq 0$. Then, with probability at least $1 - \delta$,
\begin{align*}
\sum_{t=1}^\tau X_t \leq 2\sum_{t=1}^\tau \E_{t-1}[X_t] + \frac{\log(1/\delta)}{\alpha}\,,
\end{align*}
where $\E_t[\cdot] = \E[\cdot|\sF_t]$.
\end{theorem}

\begin{proof}
Apply \cref{thm:conc:unnormalised} and bound $\alpha \E_{t-1}[X_t^2] \leq \E_{t-1}[X_t]$.
\end{proof}

\chapter{Notation}\label{app:notation}

{
\fontsize{7.5}{8}\selectfont \centering \setlength\tabcolsep{10pt}
\renewcommand{\arraystretch}{1.9}

\begin{longtable}{|lp{7cm}p{1cm}|}
\hline
$A + B$ & Minkowski addition, $A + B = \{a + b \colon a \in A, b \in B\}$ & \\
$-A$ & $-A = \{-a \colon a \in A\}$ & \\
$A - B$ & Minkowski subtraction, $A - B = A + (-B)$ & \\
$x + A$ & abbreviation for $\{x\} + A$ & \\
$u A$ & Minkowski multiplication, $uA = \{ua \colon a \in A\}$, $u \in \R$ & \\
\hline
$\subset$, $\supset$ & subset/superset, possibly equal & \\
$\succeq$, $\preceq$, $\succ$, $\prec$ & L\"oewner order operators & \\
$\ceil{\cdot}$, $\floor{\cdot}$ & ceiling/floor functions & \\
$f * g$ & convolution of $f$ and $g$ & \\
$\ip{x, y}$ & standard euclidean inner product & \\
$\ip{x, y}_A$ & inner product $x^\top A y$ for $A \in \pd$ & \\
\hline 
$\R$, $\mathbb Z$ & reals, integers & \\
$\R_{++}$, $\R_{+}$ & $(0,\infty)$ and $[0, \infty)$ & \\
$\pd$, $\psd$ & positive definite and positive semidefinite matrices acting on $\R^d$ & \\
$\ball_r$ & euclidean ball of radius $r$ in $\R^d$ & \\
$\sphere_r$ & euclidean sphere of radius $r$ embedded in $\R^d$ & \\
$E(x, A)$ & the ellipsoid $\{y \colon \norm{x - y}_{A^{-1}} \leq 1\}$ & \\
$H(x, \eta)$ & the half-space $\{y \colon \ip{y - x, \eta} \leq 0\}$ & \\
$[x, y]$ & convex chord $\{\lambda x + (1 - \lambda) y \colon x \in [0,1]\}$ & \\
\hline
$\sB(K)$ & Borel $\sigma$-algebra on $K$ & \\
$\Delta(K)$ & probability measures on $(K, \sB(K))$ & \\
$\Delta_m$ & probability distribution on $\{1,\ldots,m\}$ & \\
$\Delta_m^+$ & $\Delta_m \cap \R_{++}^m$ & \\
$\sU(A)$ & uniform probability measure on $A \subset \R^d$ & \index{uniform measure} \\
$\sF_t$ & natural filtration $\sigma(X_1, Y_1,\ldots,X_t,Y_t)$ & \cpageref{page:filtration} \\
$\bbP_t$ & conditional probability measure $\bbP(\cdot | \sF_t)$ & \cpageref{page:cond-measure} \\
$\E_t$ & conditional expectation $\E[\cdot|\sF_t]$ & \cpageref{page:cond-expect} \\
\hline
$\norm{\cdot}$ & euclidean/spectral norm of vector/matrix & \\
$\norm{\cdot}_1$ & $1$-norm, $\norm{x}_1 = \sum_k |x_k|$ & \\
$\norm{\cdot}_\infty$ & $\infty$-norm, $\norm{x}_\infty = \max_k |x_k|$ & \\
$\norm{\cdot}_{\psi_k}$ & Orlicz norms, $k \in \{1, 2\}$ & \cpageref{def:orlicz} \\
$\norm{\cdot}_A$ & $\norm{x}_A = \sqrt{x^\top A x}$ for positive semidefinite $A$ & \\
$\norm{\cdot}_K$ & $\norm{\cdot}_K = \pi_K(\cdot)$ when $K$ is a symmetric convex body & \cpageref{page:K-norm} \\
\hline
$K$ & normally a convex set & \\
$K^\circ$ & the polar of a convex set $K$ & \cpageref{page:polar} \\
$\pi_K$ & Minkowski functional of $K$, usually abbreviated to $\pi$ & \cpageref{page:mink} \\
$h_K$ & support function of $K$, usually abbreviated to $h$ & \cpageref{page:support} \\
$\interior(K)$ & interior of $K$ & \cpageref{page:interior} \\
$\ri(K)$ & relative interior of $K$ & \cpageref{page:ri} \\
$\partial K$ & boundary of $K$ & \cpageref{page:boundary} \\
$\diam(K)$ & diameter of $K$, $\sup_{x, y \in K} \norm{x - y}$ & \cpageref{page:diam} \\
$\vol(K)$ & volume of $K$ & \\
$\conv(A)$ & convex hull of $A$ & \\
$N(A, B)$, $\bar N(A, B)$ & external/internal covering numbers & \cpageref{page:cover} \\
$\ISO_K$ & affine map\index{affine!map} such that $\ISO_K(K)$ is isotropic & \cpageref{page:isotropic} \\
$\JOHN_K$ & affine map such that $\JOHN_K(K)$ is in John's position & \cpageref{page:john} \\
$\SEP_K$ & separation oracle for $K$ & \cpageref{page:separation} \\
$\MEM_K$ & membership oracle for $K$ & \cpageref{page:membership} \\
\hline
$f'$ / $f''$ & gradient/Hessian of $f \colon \R^d \to \R$ & \\
$\partial f(x)$ & subgradients\index{subgradient} of $f$ at $x$ & \\
$Df(x)[u]$ & directional derivative of $f$ at $x$ in direction $u$ & \\
$D^2f(x)[u,v]$ & second-order directional derivative in directions $u, v$ & \\ 
$D^3f(k)[u,v,w]$ & third-order directional derivative in directions $u,v,w$ & \\
$\dom(f)$ & domain of convex $f \colon \R^d \to \R \cup \{\infty\}$, $\{x \in \R^d \colon f(x) < \infty\}$ & \\
$\lip_K(f)$ & Lipschitz constant, $\sup\left\{\frac{f(x) - f(y)}{\norm{x - y}} \colon x, y \in K, x \neq y \right\}$ & \\
$\lip(f)$ & $\lip_{\dom(f)}(f)$ & \\ 
\hline
$f_1,\ldots,f_n$ & convex/submodular loss functions in rounds $t \in 1\ldots n$ & \\
$\eps_1,\ldots,\eps_n$ & noise random variables & \cpageref{page:noise} \\
$X_1,\ldots,X_n$ & iterates played by algorithm & \cpageref{page:iterates} \\
$Y_1,\ldots,Y_n$ & observed losses & \cpageref{page:responses} \\
$\alpha$ and $\beta$ & smoothness and strong convexity parameters & \cpageref{page:smooth} \\
\hline
\end{longtable}
}

\endappendix

\addtocontents{toc}{\vspace{\baselineskip}}

\bibliographystyle{cambridgeauthordate}

\bibliography{all}

\cleardoublepage
\phantomsection

\printindex

\end{document}